\setlist[enumerate]{label=(\roman*),leftmargin=0.8cm}
\newcommand{\ZZ}{\mathbb{Z}}
\newcommand{\QQ}{\mathbb{Q}}
\newcommand{\RR}{\mathbb{R}}
\newcommand{\CC}{\mathbb{C}}
\newcommand{\PP}{\mathbb{P}}
\newcommand{\HH}{\mathbb{H}}
\newcommand{\CCPP}{\mathbb{CP}}
\renewcommand{\AA}{\mathbb{A}}
\newcommand{\EE}{\mathbb{E}}
\newcommand{\calX}{\mathcal{X}}
\newcommand{\calH}{\mathcal{H}}
\newcommand{\calL}{\mathcal{L}}
\newcommand{\calP}{\mathcal{P}}
\newcommand{\calQ}{\mathcal{Q}}
\newcommand{\calM}{\mathcal{M}}
\newcommand{\calE}{\mathcal{E}}
\newcommand{\calS}{\mathcal{S}}
\newcommand{\calF}{\mathcal{F}}
\newcommand{\calZ}{\mathcal{Z}}
\newcommand{\calD}{\mathcal{D}}
\newcommand{\calT}{\mathcal{T}}
\newcommand{\calU}{\mathcal{U}}
\newcommand{\calV}{\mathcal{V}}
\newcommand{\calG}{\mathcal{G}}
\newcommand{\calA}{\mathcal{A}}
\newcommand{\scrE}{\mathscr{E}}
\newcommand{\calO}{\mathcal{O}}
\DeclarePairedDelimiter{\ip}{\langle}{\rangle}
\newcommand{\acts}{\curvearrowright}
\newcommand{\Lap}{\Delta}
\newcommand{\git}{\mathbin{
		\mathchoice{/\mkern-6mu/}% \displaystyle
		{/\mkern-6mu/}% \textstyle
		{/\mkern-5mu/}% \scriptstyle
		{/\mkern-5mu/}}}
\newcommand{\diag}{\operatorname{diag}} %diagonal matrix
\newcommand{\Hilb}{\operatorname{Hilb}} %Hilbert scheme
\newcommand{\Bl}{\operatorname{Bl}} %Blowup
\newcommand{\Ext}{\operatorname{Ext}} %Ext group
\newcommand{\codim}{\operatorname{codim}} %codimension
\newcommand{\reldim}{\operatorname{reldim}} %relative dimension
\newcommand{\DbCoh}{\mathcal{D}^b\operatorname{Coh}}
\newcommand{\YM}{\operatorname{YM}} %YM functional
\renewcommand{\deg}{\operatorname{deg}} %degree of a bundle
\newcommand{\rk}{\operatorname{rk}} %rank of a vector bundle
\newcommand{\vol}{\operatorname{vol}} %volume
\renewcommand{\Im}{\operatorname{Im}} %Imaginary part
\renewcommand{\Re}{\operatorname{Re}} %Real part
\newcommand{\sgn}{\operatorname{sgn}}
\newcommand{\grsgn}{\operatorname{grsgn}}
\newcommand{\Lie}{\operatorname{Lie}}
\newcommand{\g}{\mathfrak{g}} %Lie algebra g
\newcommand{\h}{\mathfrak{h}} %Lie algebra h
\renewcommand{\k}{\mathfrak{k}} %Lie algebra k
\newcommand{\ad}{\operatorname{ad}} %little ad
\newcommand{\Ad}{\operatorname{Ad}} %big ad
\newcommand{\DF}{\operatorname{DF}} %Donaldson-Futaki invariant
\newcommand{\Ham}{\operatorname{Ham}} %Hamiltonian functions/vector fields
\newcommand{\Ric}{\operatorname{Ric}} %Ricci curvature
\newcommand{\ch}{\operatorname{ch}} %Chern character
\newcommand{\Ch}{\operatorname{Ch}} %Chern character
\newcommand{\Td}{\operatorname{Td}} %Todd class
\newcommand{\Gr}{\operatorname{Gr}} %Graded object
\newcommand{\GrHN}{\operatorname{Gr}^\mathrm{HN}}
\newcommand{\GrJH}{\operatorname{Gr}^\mathrm{JH}}
\newcommand{\GrHNS}{\operatorname{Gr}^\mathrm{HNS}}
\newcommand{\GL}{\operatorname{GL}} %General linear group
\newcommand{\SL}{\operatorname{SL}} %Special linear group
\newcommand{\U}{\operatorname{U}} %Unitary group
\newcommand{\SU}{\operatorname{SU}} %Special unitary group
\newcommand{\PGL}{\operatorname{PGL}} %Projective general linear group
\newcommand{\Herm}{\operatorname{Herm}} %Hermitian metrics/matric
\newcommand{\Aff}{\operatorname{Aff}} %Affine group
\newcommand{\ind}{\operatorname{ind}}
\newcommand{\sym}{\mathrm{sym}}
\newcommand{\Met}{\operatorname{Met}}
\newcommand{\bdry}{\partial}
\newcommand{\closure}[1]{\overline{#1}}
\newcommand{\gl}{\mathfrak{gl}} 
\renewcommand{\sl}{\mathfrak{sl}}
\newcommand{\Sym}{\operatorname{Sym}} %Symmetric power
\newcommand{\Hom}{\operatorname{Hom}} %Homomorphisms
\newcommand{\End}{\operatorname{End}} %Endomorphisms
\newcommand{\Aut}{\operatorname{Aut}} %Automorphisms
\newcommand{\Isom}{\operatorname{Isom}} %Isometries
\newcommand{\Symp}{\operatorname{Symp}} 
\newcommand{\Proj}{\operatorname{Proj}} %Proj
\newcommand{\Coh}{\operatorname{Coh}} %Category of coherent sheaves
\newcommand{\trace}{\operatorname{tr}} %trace
\newcommand{\image}{\operatorname{im}} %image
\newcommand{\fr}{\mathcal{F}} %Frame bundle
\newcommand{\contr}{\Lambda}
\newcommand{\Exterior}{\mathchoice{{\textstyle\bigwedge}}%
	{{\bigwedge}}%
	{{\textstyle\wedge}}%
	{{\scriptstyle\wedge}}}
\newcommand{\G}{\mathscr{G}}
\newcommand{\del}{\partial}
\newcommand{\delbar}{\overline{\partial}}
\newcommand{\deldelbar}{\partial \overline{\partial}}
\newcommand{\delbardel}{\overline{\partial} \partial} 
\newcommand{\deriv}[2]{\frac{d #1}{d #2}} % derivative fraction
\newcommand{\pderiv}[2]{\frac{\partial #1}{\partial #2}} %partial derivative fraction
\newcommand{\dvol}{d \text{vol}}
\newcommand{\rest}[2]{\left.#1\right|_{#2}}
\newcommand{\isom}{\cong}
\newcommand{\id}{{\boldsymbol{1}}} %Identity matrix 
\newtheorem{theorem}{Theorem}[section]
\newtheorem{lemma}[theorem]{Lemma}
\newtheorem{corollary}[theorem]{Corollary}
\newtheorem{proposition}[theorem]{Proposition}
\newtheorem{conjecture}[theorem]{Conjecture}
\theoremstyle{definition}
\newtheorem{definition}[theorem]{Definition}
\newtheorem{example}[theorem]{Example}
\newtheorem{remark}[theorem]{Remark}
\newtheorem{question}[theorem]{Question}
\newtheorem{principle}[theorem]{Principle}
\renewcommand*{\eqref}[1]{%
	\hyperref[{#1}]{\textup{\tagform@{\ref*{#1}}}}%
}
\let\oldchi\chi
\renewcommand{\chi}{\protect\raisebox{\depth}{$\oldchi$}}
\let\oldepsilon\epsilon
\renewcommand{\epsilon}{\varepsilon}
\DeclarePairedDelimiter\floor{\lfloor}{\rfloor}
\newcommand{\into}{\hookrightarrow}
\newcommand{\onto}{\twoheadrightarrow}
\newcommand{\ses}[3]{\begin{tikzcd}[ampersand replacement=\&] 0 \arrow{r}\& #1 \arrow{r} \& #2 \arrow{r} \& #3 \arrow{r} \& 0\end{tikzcd}}
\begin{document}
	\frontmatter
	
	\newgeometry{margin=3cm}
	\begin{singlespace}
	\begin{titlepage}
		
		\centering
		\vspace*{2cm}
		\Huge \textbf{Stability conditions and\\ canonical metrics}
		
		\vspace*{1cm}
		\Large John Benjamin McCarthy
		
		\vfill
		
		Thesis submitted for the degree of\\
		\emph{Doctor of Philosophy}
		
		\vfill
		
		Supervised by\\
		Sir Simon Donaldson\\
		\&\\
		Dr Ruadha\'i Dervan
		
		\vfill
		
		\large
		Department of Mathematics\\
		Imperial College London\\
		\&\\
		London School of Geometry and Number Theory
		
		\vspace*{0.8cm}
		
		February 2023
		
		\vspace*{1cm}
		
		\begin{figure}[h]
			\centering
			\begin{minipage}[b]{0.4\textwidth}
				\includegraphics[width=\textwidth]{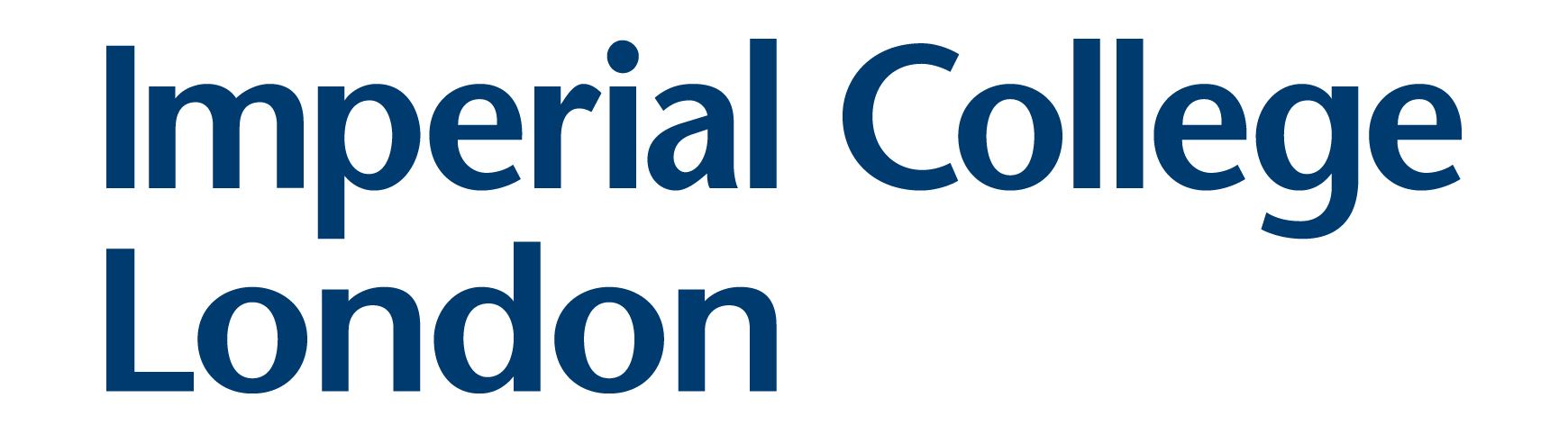}
			\end{minipage}
			\hfill
			\begin{minipage}[b]{0.4\textwidth}
				\includegraphics[width=\textwidth]{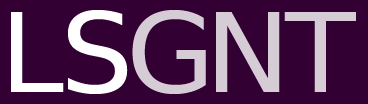}
			\end{minipage}
		\end{figure}
	\end{titlepage}
	\newpage \ \newpage
	
	%!TEX root = ../thesis.tex
\thispagestyle{empty}

\vspace*{\fill}
\begin{center}
	\emph{For Diclehan'ım.}
\end{center} 
\vspace*{\fill}

\end{singlespace}
	\newpage \ \newpage
	\vspace*{\fill}
	\begin{figure}[hb]
		\centering

		\tikzset{every picture/.style={line width=0.75pt}} %set default line width to 0.75pt        
		
		\begin{tikzpicture}[x=0.75pt,y=0.75pt,yscale=-1,xscale=1]
			%uncomment if require: \path (0,639); %set diagram left start at 0, and has height of 639
			
			%Curve Lines [id:da33695582014145087] 
			\draw  [dash pattern={on 4.5pt off 4.5pt}]  (193.06,517.08) .. controls (243.46,576.02) and (364.31,574.63) .. (471.42,553.16) ;
			%Curve Lines [id:da06134749617424806] 
			\draw [color={rgb, 255:red, 0; green, 255; blue, 7 }  ,draw opacity=1 ]   (294.64,549.39) .. controls (207.29,539.59) and (174.71,521.21) .. (140.22,498.03) ;
			\draw [shift={(138.64,496.97)}, rotate = 34] [color={rgb, 255:red, 0; green, 255; blue, 7 }  ,draw opacity=1 ][line width=0.75]    (10.93,-3.29) .. controls (6.95,-1.4) and (3.31,-0.3) .. (0,0) .. controls (3.31,0.3) and (6.95,1.4) .. (10.93,3.29)   ;
			%Curve Lines [id:da30316162420615866] 
			\draw    (57.13,571.44) .. controls (125.55,629.68) and (346.53,616.42) .. (460.29,571.53) ;
			%Curve Lines [id:da7043276532119684] 
			\draw    (57.13,571.44) .. controls (79.74,535.48) and (108.1,505.24) .. (138.64,496.97) ;
			%Curve Lines [id:da44744066368518476] 
			\draw    (138.64,496.97) .. controls (204.54,480.11) and (237.42,546.87) .. (291.51,549.39) ;
			%Curve Lines [id:da8520206118761952] 
			\draw [color={rgb, 255:red, 0; green, 0; blue, 0 }  ,draw opacity=1 ]   (291.51,549.39) .. controls (347.15,554.45) and (485.21,521.18) .. (535.55,499.18) ;
			%Curve Lines [id:da9615090052101524] 
			\draw    (460.29,571.53) .. controls (482.9,535.56) and (511.26,505.33) .. (541.8,497.06) ;
			%Curve Lines [id:da38105204580111063] 
			\draw    (541.8,497.06) .. controls (563.34,491.09) and (579.47,494.42) .. (596.22,517.17) ;
			%Curve Lines [id:da9123037812074619] 
			\draw  [dash pattern={on 4.5pt off 4.5pt}]  (138.64,496.97) .. controls (160.18,491.01) and (176.31,494.34) .. (193.06,517.08) ;
			%Curve Lines [id:da41793458734885713] 
			\draw    (473.65,552.71) .. controls (506.96,546.24) and (562.35,530.54) .. (596.22,517.17) ;
			%Shape: Ellipse [id:dp830225163098539] 
			\draw  [fill={rgb, 255:red, 0; green, 0; blue, 0 }  ,fill opacity=1 ] (291.51,549.39) .. controls (291.51,547.66) and (292.91,546.26) .. (294.64,546.26) .. controls (296.37,546.26) and (297.78,547.66) .. (297.78,549.39) .. controls (297.78,551.12) and (296.37,552.52) .. (294.64,552.52) .. controls (292.91,552.52) and (291.51,551.12) .. (291.51,549.39) -- cycle ;
			%Curve Lines [id:da6247746752645597] 
			\draw    (43.94,84.05) .. controls (125.09,294.15) and (243.17,361.49) .. (395.42,423.95) ;
			%Curve Lines [id:da5595068846259689] 
			\draw [color={rgb, 255:red, 32; green, 0; blue, 255 }  ,draw opacity=1 ]   (294.64,549.39) .. controls (255.78,551.05) and (242.17,569.31) .. (231.51,606.64) ;
			\draw [shift={(231.02,608.36)}, rotate = 285.59] [color={rgb, 255:red, 32; green, 0; blue, 255 }  ,draw opacity=1 ][line width=0.75]    (10.93,-3.29) .. controls (6.95,-1.4) and (3.31,-0.3) .. (0,0) .. controls (3.31,0.3) and (6.95,1.4) .. (10.93,3.29)   ;
			%Curve Lines [id:da3199016260692571] 
			\draw [color={rgb, 255:red, 255; green, 0; blue, 0 }  ,draw opacity=1 ]   (294.64,549.39) .. controls (347.22,549.31) and (414.51,543.02) .. (452.11,530.49) .. controls (488.76,518.27) and (478.62,523.07) .. (501.32,520) ;
			\draw [shift={(503.13,519.75)}, rotate = 171.87] [color={rgb, 255:red, 255; green, 0; blue, 0 }  ,draw opacity=1 ][line width=0.75]    (10.93,-3.29) .. controls (6.95,-1.4) and (3.31,-0.3) .. (0,0) .. controls (3.31,0.3) and (6.95,1.4) .. (10.93,3.29)   ;
			%Curve Lines [id:da9959510380531527] 
			\draw    (395.42,423.95) .. controls (449.33,433.36) and (567.53,413.75) .. (603.28,399.31) ;
			%Curve Lines [id:da7631797702750455] 
			\draw    (43.94,84.05) .. controls (97.85,93.46) and (226.7,95.87) .. (262.44,81.43) ;
			%Curve Lines [id:da275996511566711] 
			\draw [color={rgb, 255:red, 255; green, 0; blue, 0 }  ,draw opacity=1 ]   (294.64,286.34) .. controls (328.51,325.35) and (405.1,382.71) .. (462.01,408.54) ;
			\draw [shift={(463.73,409.31)}, rotate = 204.02] [color={rgb, 255:red, 255; green, 0; blue, 0 }  ,draw opacity=1 ][line width=0.75]    (10.93,-3.29) .. controls (6.95,-1.4) and (3.31,-0.3) .. (0,0) .. controls (3.31,0.3) and (6.95,1.4) .. (10.93,3.29)   ;
			%Shape: Ellipse [id:dp21131869182158503] 
			\draw  [fill={rgb, 255:red, 0; green, 0; blue, 0 }  ,fill opacity=1 ] (291.55,285.82) .. controls (291.84,284.12) and (293.46,282.97) .. (295.16,283.25) .. controls (296.87,283.54) and (298.02,285.16) .. (297.73,286.86) .. controls (297.45,288.57) and (295.83,289.72) .. (294.12,289.43) .. controls (292.42,289.14) and (291.27,287.53) .. (291.55,285.82) -- cycle ;
			%Curve Lines [id:da7461757889171439] 
			\draw [color={rgb, 255:red, 0; green, 255; blue, 7 }  ,draw opacity=1 ]   (294.64,286.34) .. controls (239.55,228.08) and (180.33,185.28) .. (149.94,95.44) ;
			\draw [shift={(149.49,94.09)}, rotate = 71.57] [color={rgb, 255:red, 0; green, 255; blue, 7 }  ,draw opacity=1 ][line width=0.75]    (10.93,-3.29) .. controls (6.95,-1.4) and (3.31,-0.3) .. (0,0) .. controls (3.31,0.3) and (6.95,1.4) .. (10.93,3.29)   ;
			%Curve Lines [id:da9635930267357617] 
			\draw [color={rgb, 255:red, 32; green, 0; blue, 255 }  ,draw opacity=1 ]   (294.64,286.34) .. controls (261.08,292.14) and (241.61,295.07) .. (182.52,297.02) ;
			\draw [shift={(180.71,297.08)}, rotate = 358.15] [color={rgb, 255:red, 32; green, 0; blue, 255 }  ,draw opacity=1 ][line width=0.75]    (10.93,-3.29) .. controls (6.95,-1.4) and (3.31,-0.3) .. (0,0) .. controls (3.31,0.3) and (6.95,1.4) .. (10.93,3.29)   ;
			%Curve Lines [id:da5963256212506093] 
			\draw    (262.44,81.43) .. controls (343.59,291.53) and (451.04,336.85) .. (603.28,399.31) ;
			%Curve Lines [id:da8435490495532882] 
			\draw  [dash pattern={on 0.84pt off 2.51pt}]  (178.19,519.22) .. controls (234.29,584.82) and (368.81,583.28) .. (488.04,559.38) ;
			%Curve Lines [id:da6760502077766694] 
			\draw  [dash pattern={on 0.84pt off 2.51pt}]  (26.88,579.73) .. controls (103.03,644.55) and (349.02,629.8) .. (475.65,579.83) ;
			%Curve Lines [id:da04177374490899155] 
			\draw  [dash pattern={on 0.84pt off 2.51pt}]  (26.88,579.73) .. controls (52.05,539.7) and (83.61,506.04) .. (117.61,496.84) ;
			%Curve Lines [id:da9726774202716719] 
			\draw  [dash pattern={on 0.84pt off 2.51pt}]  (475.65,579.83) .. controls (500.82,539.79) and (532.39,506.13) .. (566.39,496.93) ;
			%Curve Lines [id:da3658964619068785] 
			\draw  [dash pattern={on 0.84pt off 2.51pt}]  (566.39,496.93) .. controls (590.37,490.29) and (608.31,494) .. (626.96,519.32) ;
			%Curve Lines [id:da9804334223715947] 
			\draw  [dash pattern={on 0.84pt off 2.51pt}]  (117.61,496.84) .. controls (141.59,490.2) and (159.54,493.9) .. (178.19,519.22) ;
			%Curve Lines [id:da625718040160014] 
			\draw  [dash pattern={on 0.84pt off 2.51pt}]  (490.53,558.88) .. controls (527.6,551.68) and (589.25,534.2) .. (626.96,519.32) ;
			%Curve Lines [id:da03431363488356498] 
			\draw  [dash pattern={on 0.84pt off 2.51pt}]  (12.86,59.9) .. controls (104.83,292.61) and (238.69,367.19) .. (411.26,436.37) ;
			%Curve Lines [id:da243337920820202] 
			\draw  [dash pattern={on 0.84pt off 2.51pt}]  (411.26,436.37) .. controls (472.36,446.8) and (606.35,425.08) .. (646.87,409.08) ;
			%Curve Lines [id:da09426414796694638] 
			\draw  [dash pattern={on 0.84pt off 2.51pt}]  (12.86,59.9) .. controls (73.96,70.33) and (220.01,73) .. (260.53,57) ;
			%Curve Lines [id:da24764074229072708] 
			\draw  [dash pattern={on 0.84pt off 2.51pt}]  (260.53,57) .. controls (352.51,289.71) and (474.3,339.9) .. (646.87,409.08) ;
			%Straight Lines [id:da04056185916750432] 
			\draw    (354,387) -- (354,501) ;
			\draw [shift={(354,503)}, rotate = 270] [color={rgb, 255:red, 0; green, 0; blue, 0 }  ][line width=0.75]    (10.93,-3.29) .. controls (6.95,-1.4) and (3.31,-0.3) .. (0,0) .. controls (3.31,0.3) and (6.95,1.4) .. (10.93,3.29)   ;
			%Straight Lines [id:da9942277383415481] 
			\draw    (43,207) -- (43,472) ;
			\draw [shift={(43,474)}, rotate = 270] [color={rgb, 255:red, 0; green, 0; blue, 0 }  ][line width=0.75]    (10.93,-3.29) .. controls (6.95,-1.4) and (3.31,-0.3) .. (0,0) .. controls (3.31,0.3) and (6.95,1.4) .. (10.93,3.29)   ;
			%Shape: Ellipse [id:dp3215217822249198] 
			\draw  [color={rgb, 255:red, 0; green, 255; blue, 7 }  ,draw opacity=1 ][fill={rgb, 255:red, 0; green, 255; blue, 7 }  ,fill opacity=1 ] (133.55,68.82) .. controls (133.84,67.12) and (135.46,65.97) .. (137.16,66.25) .. controls (138.87,66.54) and (140.02,68.16) .. (139.73,69.86) .. controls (139.45,71.57) and (137.83,72.72) .. (136.12,72.43) .. controls (134.42,72.14) and (133.27,70.53) .. (133.55,68.82) -- cycle ;
			%Shape: Ellipse [id:dp5554257790611129] 
			\draw  [color={rgb, 255:red, 0; green, 255; blue, 7 }  ,draw opacity=1 ][fill={rgb, 255:red, 0; green, 255; blue, 7 }  ,fill opacity=1 ] (128.55,491.82) .. controls (128.84,490.12) and (130.46,488.97) .. (132.16,489.25) .. controls (133.87,489.54) and (135.02,491.16) .. (134.73,492.86) .. controls (134.45,494.57) and (132.83,495.72) .. (131.12,495.43) .. controls (129.42,495.14) and (128.27,493.53) .. (128.55,491.82) -- cycle ;
			%Shape: Ellipse [id:dp8071806958889606] 
			\draw  [color={rgb, 255:red, 255; green, 0; blue, 0 }  ,draw opacity=1 ][fill={rgb, 255:red, 255; green, 0; blue, 0 }  ,fill opacity=1 ] (519.55,434.09) .. controls (519.84,432.38) and (521.46,431.23) .. (523.16,431.52) .. controls (524.87,431.81) and (526.02,433.42) .. (525.73,435.13) .. controls (525.45,436.84) and (523.83,437.99) .. (522.12,437.7) .. controls (520.42,437.41) and (519.27,435.8) .. (519.55,434.09) -- cycle ;
			%Shape: Ellipse [id:dp807154641798608] 
			\draw  [color={rgb, 255:red, 255; green, 0; blue, 0 }  ,draw opacity=1 ][fill={rgb, 255:red, 255; green, 0; blue, 0 }  ,fill opacity=1 ] (518.55,522.09) .. controls (518.84,520.38) and (520.46,519.23) .. (522.16,519.52) .. controls (523.87,519.81) and (525.02,521.42) .. (524.73,523.13) .. controls (524.45,524.84) and (522.83,525.99) .. (521.12,525.7) .. controls (519.42,525.41) and (518.27,523.8) .. (518.55,522.09) -- cycle ;
			%Shape: Ellipse [id:dp5219353890602881] 
			\draw  [color={rgb, 255:red, 32; green, 0; blue, 255 }  ,draw opacity=1 ][fill={rgb, 255:red, 32; green, 0; blue, 255 }  ,fill opacity=1 ] (224.55,622.09) .. controls (224.84,620.38) and (226.46,619.23) .. (228.16,619.52) .. controls (229.87,619.81) and (231.02,621.42) .. (230.73,623.13) .. controls (230.45,624.84) and (228.83,625.99) .. (227.12,625.7) .. controls (225.42,625.41) and (224.27,623.8) .. (224.55,622.09) -- cycle ;
			%Shape: Ellipse [id:dp674856702655348] 
			\draw  [color={rgb, 255:red, 32; green, 0; blue, 255 }  ,draw opacity=1 ][fill={rgb, 255:red, 32; green, 0; blue, 255 }  ,fill opacity=1 ] (166.55,299.2) .. controls (166.84,297.5) and (168.46,296.35) .. (170.16,296.63) .. controls (171.87,296.92) and (173.02,298.54) .. (172.73,300.24) .. controls (172.45,301.95) and (170.83,303.1) .. (169.12,302.81) .. controls (167.42,302.53) and (166.27,300.91) .. (166.55,299.2) -- cycle ;
			
			% Text Node
			\draw (343.42,362.14) node [anchor=north west][inner sep=0.75pt]    {$\mathcal{M}$};
			% Text Node
			\draw (327.32,514.35) node [anchor=north west][inner sep=0.75pt]    {$\mathrm{Met}( E)$};
			% Text Node
			\draw (15.24,488.29) node [anchor=north west][inner sep=0.75pt]    {$\partial \mathrm{Met}( E)$};
			% Text Node
			\draw (22,180.4) node [anchor=north west][inner sep=0.75pt]    {$\mathcal{M}^{\mathbf{NA}}$};
			% Text Node
			\draw (148,38.4) node [anchor=north west][inner sep=0.75pt]    {$\mathcal{M}^{\mathbf{NA}}  >0$};
			% Text Node
			\draw (544,441.4) node [anchor=north west][inner sep=0.75pt]    {$\mathcal{M}^{\mathbf{NA}} < 0$};
			% Text Node
			\draw (92,303.4) node [anchor=north west][inner sep=0.75pt]    {$\mathcal{M}^{\mathbf{NA}} =0$};

		\end{tikzpicture}

	\end{figure}
	\vspace*{\fill}

	\newpage \restoregeometry\ \newpage

	%!TEX root = ../thesis.tex
\chapter*{Abstract}
\label{ch:abstract}

In this thesis we study the principle that \emph{extremal objects in differential geometry correspond to stable objects in algebraic geometry}. In our introduction we survey the most famous instances of this principle with a view towards the results and background needed in the later chapters. In \cref{part:zcritical} we discuss the notion of a $Z$-critical metric recently introduced in joint work with Ruadha\'i Dervan and Lars Martin Sektnan \cite{dervan2021zcritical}. We prove a correspondence for existence with an analogue of Bridgeland stability in the large volume limit, and study important properties of the subsolution condition away from this limit, including identifying the analogues of the Donaldson and Yang--Mills functionals for the equation. In \cref{part:fibrations} we study the recent theory of optimal symplectic connections on K\"ahler fibrations in the isotrivial case. We prove a correspondence with the existence of Hermite--Einstein metrics on holomorphic principal bundles.
	
	\chapter*{Statement of originality}
	I declare that the work in this thesis is my own except where explicitly stated otherwise, and that all content has been appropriately referenced. In particular the majority of \cref{ch:zcriticalconnections,ch:correspondence} is joint work with Ruadha\'i Dervan and Lars Martin Sektnan \cite{dervan2021zcritical} and has been reproduced with an emphasis on my own contributions. 
	
	{\let\cleardoublepage\relax 
	\chapter*{Copyright declaration}

	The copyright of this thesis rests with the author. Unless otherwise indicated, its contents are licensed under a Creative Commons Attribution 4.0 International Licence (CC BY).	Under this licence, you may copy and redistribute the material in any medium or format for both commercial and non-commercial purposes. You may also create and distribute modified versions of the work. This on the condition that you credit the author.
	
	When reusing or sharing this work, ensure you make the licence terms clear to others by naming the licence and linking to the licence text. Where a work has been adapted, you should indicate that the work has been changed and describe those changes. Please seek permission from the copyright holder for uses of this work that are not included in this licence or permitted under UK Copyright Law.
	
	}
	
	%!TEX root = ../thesis.tex
\chapter*{Acknowledgements}
\label{ch:acknowledgements}

{\setstretch{1.00}

First I wish to thank my supervisors, Simon Donaldson and Ruadha\'i Dervan, for their support and expertise throughout my PhD. In particular I thank Simon for providing me with some of his deep and expansive views of the subject of differential geometry and an appreciation of where to find interesting problems, and Ruadha\'i for his remarkable ability to clearly and precisely explain concepts in complex differential and algebraic geometry. The help and guidance of my supervisors has been critical for my PhD and for developing my understanding and appreciation of mathematics as a subject. 

I thank Lars Martin Sektnan for many helpful discussions throughout our joint work, my academic siblings Michael Hallam and Annamaria Ortu for companionship as we embarked on an exploration of K\"ahler geometry, and all the attendees of the complex geometry reading group for interesting and stimulating discussions and presentations. I wish to thank my fellow students at the LSGNT, including Jaime, Jordan, Jordan, Qaasim, Wendelin, Federico, Federico, Liam, Corvin, and many others for the fun and sporadically productive conversations and for surviving the pandemic as PhD students with me. 

I thank Richard Thomas for making me believe algebraic geometry is simple, even if I haven't quite figured out how yet, and for directing my attention to the deformed Hermitian Yang--Mills equation which lead to the major body of work of this thesis.

I thank my family for providing support and encouragement throughout my PhD, even if it could only come from across the globe, and for miraculously helping me find safe passage across the Australian border during the pandemic. 

Finally, thank you Diclehan for waiting patiently for me all these years. Without your kindness, joy, and endless support, my PhD may not have survived COVID-19 intact. Your commitment to me despite being a literal world apart is an inspiration and a wonder which I am truly privileged to have.

This PhD was supported by EPSRC grant number EP/L015234/1. I thank the examiners for many helpful comments and corrections.
}

	{
		\hypersetup{linkcolor=black}
		\tableofcontents
	}

	\mainmatter
	
	\counterwithin{theorem}{chapter}
	\counterwithin{principle}{chapter}
	\counterwithin{conjecture}{chapter}
	\counterwithin{corollary}{chapter}
	
	%!TEX root = ../thesis.tex
\chapter{Introduction}
\label{ch:introduction}

The following fundamental principle has guided much of the research in complex geometry since the late-20\textsuperscript{th} century:

\begin{principle}\label{principle}
	\emph{Stable objects in algebraic geometry correspond to extremal objects in differential geometry.}
\end{principle}

The origins of this principle go as far back as Riemann's work on uniformisation of algebraic curves. What exactly is meant by \emph{stable} or \emph{extremal} depends on the setting, and there are now instances of \cref{principle} appearing as celebrated theorems in the context of \emph{points}, \emph{varieties}, \emph{vector bundles}, and many  variations on these geometric themes. 

In this thesis we will investigate several new directions which test \cref{principle}. In \cref{part:zcritical} we will study the principle in the context of a new and exciting development in algebraic geometry, that of \emph{Bridgeland stability conditions}. This is largely a reproduction of joint work of the author with Ruadha\'i Dervan and Lars Martin Sektnan \cite{dervan2021zcritical} with an emphasis of the contributions of the author.

The notion of a stability condition arose out of the physics of mirror symmetry, and key features of the new algebro-geometric theory include the ability to vary the choice of stability condition (given, to first approximation, by some algebraic invariant $Z$ called a \emph{central charge}), and indeed to extend this concept of stability from Abelian to triangulated categories. 

Thus the principle suggests the enticing possibility that we may see several new features from the theory of stability conditions reflected in differential geometry: the study of wall-crossing behaviour as we \emph{vary differential equations} in interesting ways aligned with the algebraic geometry of the situation, and \emph{the study of derived phenomena using differential geometry}. 

The following is the central folklore conjecture in the study of D-branes on the B-side of mirror symmetry, and the conjecture aligned with \cref{principle} in our setting. It is the mirror analogue of the (mostly) equally vague Thomas--Yau--Joyce conjecture in symplectic geometry and can be variously attributed to Douglas, Aspinwall--Douglas, Bridgeland, Leung--Yau--Zaslow, Maria\~no--Minasian--Moore--Strominger, Collins--Jacob--Yau, and the mononymous ``Physics" \cite{douglas2005stability,aspinwall2002d,bridgeland2007stability,LYZ,marino2000nonlinear,jacob2017special,collins20151,clay,aspinwall2009dirichlet}.

\begin{conjecture}[Folklore]\label{conj:folklore}
	There exists a Bridgeland stability condition $(Z,\calA)$ on the derived category $\DbCoh(X)$ of any Calabi--Yau threefold $X$, with complexified K\"ahler form $B+i\omega$, whose central charge is given by
	$$Z(E) = -\int_X e^{-i\omega} e^{-B} \Ch(E) \sqrt{\Td(X)},$$
	and a differential equation $D_Z(h)=0$ for a Hermitian metric-type structure $h$ on an object $E\in \calA$ such that $D_Z(h)=0$ admits a solution for $E$ if and only if $E$ is Bridgeland stable with respect to $(Z,\calA)$. 
\end{conjecture}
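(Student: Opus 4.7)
The plan is to break the folklore conjecture into its two logically independent ingredients: (a) the algebraic construction of a Bridgeland stability condition $(Z,\calA)$ with the prescribed central charge, and (b) the analytic definition and Kobayashi--Hitchin-type analysis of the equation $D_Z(h)=0$, before (c) relating the two via wall-crossing and a large-volume matching argument. The algebraic side I would attack by following the tilt-stability approach of Bayer--Macr\`i--Toda: start from the standard heart $\Coh(X)$, pass to a double tilt $\calA = \calA^{B,\omega}$ defined by $\mu_\omega$-slope thresholds determined by $\Re Z$ and $\Im Z$, and verify the Bridgeland axioms. The key technical input is a generalised Bogomolov--Gieseker inequality for tilt-semistable objects, which is only known in special cases; establishing this in the Calabi--Yau threefold setting is itself a central open problem, and any attack would have to either import such an inequality as an assumption or prove it afresh using Hodge-theoretic or deformation-theoretic positivity (e.g.\ Li's approach on the quintic, or Koseki's work on weighted projective spaces).

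For the analytic side, the plan is to first pin down what a ``Hermitian metric-type structure" $h$ means on a general object of $\calA$. For torsion-free sheaves of top support one uses an ordinary Hermitian metric and defines $D_Z(h)$ as the $Z$-critical operator studied in \cref{part:zcritical}, whose leading-order asymptotics in the large volume limit recover the deformed Hermite--Einstein equation. For complexes, I would enhance this to a graded (or curved $A_\infty$) Hermitian structure on a Dolbeault resolution, so that $D_Z(h)$ becomes a coupled elliptic system for the metrics on the graded pieces together with their Chern connections, with the coupling dictated by the differentials of the complex. The $Z$-critical equation should then arise as the Euler--Lagrange equation of a generalised Donaldson/Yang--Mills functional whose existence and convexity properties along geodesic-type paths were identified in \cref{part:zcritical}.

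The correspondence itself would be approached by the continuity method: deform the complexified K\"ahler class $B+i\omega$ from the large volume limit, where we already know the equivalence with slope/tilt stability by the results of \cref{part:zcritical,ch:correspondence}, and show that solvability and Bridgeland stability are preserved simultaneously across walls. Openness follows from the ellipticity of the linearised operator plus the non-degeneracy of the functional at a solution, and closedness requires \emph{a priori} $C^0$ and higher-order estimates that degenerate precisely on semistable loci. For the reverse direction, a Harder--Narasimhan--type test-configuration computation should produce, from any destabilising subobject, a non-trivial sign of the (non-Archimedean) $Z$-energy $\calM^\NA$ along the corresponding degeneration, obstructing a solution; this is the mirror analogue on the B-side of the Thomas--Yau--Joyce filtration programme.

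The main obstacle, beyond the unresolved existence of Bridgeland stability conditions on a Calabi--Yau threefold, is controlling the analysis across walls: the heart $\calA$ itself changes as $(B,\omega)$ varies, so both the space of admissible metric-type structures and the PDE change discontinuously, and one must show that the moduli of $D_Z$-solutions glues consistently with the algebraic wall-crossing. A secondary, but still formidable, obstacle is providing a workable analytic theory of ``metrics on complexes" for which the $Z$-critical equation is genuinely elliptic modulo gauge; I would expect this to force one into a derived or $L_\infty$ framework where the Donaldson functional of \cref{part:zcritical} has to be extended to the dg setting, and convexity along appropriate geodesics must be re-established. Given these obstacles, I would regard the conjecture as a long-term programme rather than a single theorem, with the realistic intermediate target being a proof for objects contained in a fixed tilted heart near the large volume limit, together with a precise formulation of the derived enhancement elsewhere.
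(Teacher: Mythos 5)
This statement is a \emph{conjecture}: the paper does not prove it, and neither do you. What you have written is a programme outline in which every one of the three ingredients is deferred to an open problem rather than resolved: the existence of the stability condition hinges on a generalised Bogomolov--Gieseker inequality you acknowledge is unknown in general; the ``Hermitian metric-type structure'' on a complex is left as a framework to be invented (the paper's own tentative proposal, via the Bott--Chern formalism of Burgos Gil--Freixas i Montplet--Li\textcommabelow{t}canu in \cref{sec:Zcriticalfuturedirections}, does not yet yield an elliptic equation); and the continuity-method-across-walls step is asserted without any mechanism for the a priori estimates or for gluing moduli of solutions as the heart $\calA$ changes. You concede this yourself in the final sentence, so the honest verdict is that no proof has been given. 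What the paper actually establishes is only the large-volume, fixed-heart shadow of the conjecture: \cref{thm:maintheoremZstability} proves an asymptotic correspondence between $Z_k$-critical metrics and asymptotic $Z$-stability for vector bundles, which is consistent with your proposed ``realistic intermediate target'' but is far from the conjecture itself.

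A more substantive omission is that the conjecture, as literally stated, is likely false without modification, and any serious attack must confront this. As discussed in \cref{sec:counterexample}, the algebro-geometric obstruction detected by the subsolution condition for the $Z$-critical equation involves the invariant $Z_V(E)$ computed on a subvariety $V\subset X$, whereas Bridgeland stability tests against the quotient object $E\otimes \calO_V$ with invariant $Z(E\otimes\calO_V)$; by Grothendieck--Riemann--Roch these differ by a $\Td(N_V)^{-1}$ factor, and Collins--Shi exploited exactly this discrepancy to produce a Bridgeland-stable line bundle admitting no dHYM metric (on $\Bl_p\CCPP^2$, hence outside the Calabi--Yau hypothesis, but the mechanism is not special to that example). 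The paper's proposed repair --- twisting the Chan--Paton object by $K_V^{1/2}$ following the Freed--Kapustin--Witten anomaly --- reconciles the two invariants for curves in Calabi--Yau surfaces and threefolds but demonstrably fails for divisors in threefolds. Your continuity-method strategy would therefore stall at the first wall where these two notions of stability diverge, and no amount of analytic technology fixes a mismatch in the algebraic target. Any credible programme must first decide which stability condition (or which modified equation) the correspondence is supposed to pair.
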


Several features of this conjecture are left to be made more precise, and we summarise the key points here:
\begin{itemize}
	\item The existence of the Bridgeland stability condition itself is a serious open problem for general Calabi--Yau threefolds, and much of the difficulty arises in constructing the Harder--Narasimhan filtrations of elements of the derived category. A programme for how to resolve this using differential geometry is being developed by Haiden--Katzarkov--Kontsevich--Pandit \cite{haidensemistability} called ``categorical K\"ahler geometry", but any such process will in fact rely on a Donaldson--Uhlenbeck--Yau-type theorem as proposed in the conjecture.
	\item The exact central charge $Z$ may need to change (either by removing $\sqrt{\Td(X)}$, changing to $\sqrt{\hat A(X)}$, or by accounting for quantum corrections).
	\item The exact differential equation is not known, and the current best guess is the \emph{deformed Hermitian Yang--Mills equation} identified by Leung--Yau--Zaslow and Maria\~no--Minasian--Moore--Strominger \cite{LYZ,marino2000nonlinear}.\footnote{Recent discussion by Li \cite{li2022thomas} suggests that even a \emph{differential equation} may be too much to hope for in general, but physical reasoning assures that such Hermitian metric-type objects will be critical points of some kind of action functional.} This has only been derived from mirror symmetry or physics in the case of a line bundle, but natural analogues in higher rank can be justified on general grounds.
\end{itemize}

In \cref{part:zcritical} we make some progress towards \cref{conj:folklore}. In particular, guided in part by \cref{principle} we do not restrict ourselves just to the central charge $Z$ above, but to a larger class of \emph{polynomial central charges} introduced by Bayer \cite{bayer}. Nor do we consider the case of a Calabi--Yau threefold, but instead any compact K\"ahler manifold. To any of Bayer's polynomial central charges $Z$ we associate a corresponding extremal metric, appearing as the solution of a differential equation on a holomorphic vector bundle. We call such an extremal metric a \emph{$Z$-critical metric},\footnote{``Zed critical"} and prove a correspondence between a stability condition and existence of solutions to the \emph{$Z$-critical equation} in an asymptotic limit, the \emph{large volume limit}.\footnote{This is the limit where the K\"ahler metric $\omega$ is replaced by $k\omega$ and we scale $k\to \infty$. In the language of physics this is the ``quantum limit" of string theory, where string length goes to zero and strings become well-approximated by point particles.} 

The main result of \cref{part:zcritical} is the following, which justifies \emph{a posteriori} our differential equation which has been chosen on general grounds.

\begin{theorem}\label{thm:maintheoremZstability}
	A simple, slope semistable holomorphic vector bundle $E\to (X,\omega)$ over a compact K\"ahler manifold, with locally-free graded object $\Gr(E)$, admits a $Z$-critical metric in the large volume limit if and only if it is asymptotically $Z$-stable.
\end{theorem}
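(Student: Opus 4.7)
The plan is to prove the two implications separately, exploiting the fact that in the large volume limit $k \to \infty$ the $Z$-critical equation becomes a perturbation of the Hermite--Einstein equation, so that Donaldson--Uhlenbeck--Yau applied to the graded object provides the zeroth-order input. The forward direction is a Chern--Weil/integration-by-parts calculation, while the backward direction is built from a formal perturbation series plus a quantitative implicit function theorem, with the algebraic $Z$-stability inequalities appearing as the vanishing of finite-dimensional obstructions at each order.

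For the forward direction, assume $E$ admits a $Z$-critical metric $h_k$ for all sufficiently large $k$, and suppose for contradiction that some subsheaf $F \subset E$ violates the $Z$-stability inequality for infinitely many $k$. I would pair the equation $D_Z(h_k)=0$ with the $h_k$-orthogonal projection $\pi_F \in \End(E)$ (after approximating a possibly singular $F$ by smooth pseudo-sub-bundles as in Simpson) and integrate by parts. The Chern--Weil terms reconstruct a combination of characteristic numbers of $F$ and $E$ recognisable as the leading coefficients of $\Im(Z(F)\overline{Z(E)})$, while the boundary contributions are squared $L^2$-norms of second fundamental forms and hence non-negative, yielding the asymptotic $Z$-stability inequality for $F$ in the limit $k \to \infty$ and producing the contradiction.

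For the backward direction, let $0 = E_0 \subset E_1 \subset \dots \subset E_m = E$ be a Jordan--H\"older filtration with stable, locally-free quotients $G_i = E_i / E_{i-1}$. By Donaldson--Uhlenbeck--Yau each $G_i$ carries a Hermite--Einstein metric $h_i$, and their direct sum $h_0 = \bigoplus h_i$ on $\Gr(E)$ is transported through a smooth isomorphism $E \cong \Gr(E)$ to give an initial metric on $E$; this solves the $Z$-critical equation at leading order in $1/k$ but carries a residual error coming from the extension classes realising $E$ as an iterated extension of the $G_i$. I would then set up the ansatz $h_k = h_0 \exp\!\Bigl(\sum_{j\geq 1} k^{-j} s_j\Bigr)$, expand the $Z$-critical equation as a formal power series in $1/k^{-1}$, and solve for the $s_j$ order by order. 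Each step is a linear elliptic equation on $\End(E)$ whose principal part is a small deformation of the Hermite--Einstein linearisation $L_0$ on $\End(\Gr(E))$; once the formal expansion is carried out to sufficient order, a quantitative implicit function theorem in appropriate Sobolev or H\"older norms upgrades the approximate solution to an honest $Z$-critical metric.

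The main obstacle is the finite-dimensional obstruction at each order. The kernel of $L_0$ is the algebra of holomorphic endomorphisms of $\Gr(E)$, which contains the characters $\pi_{E_j} \in \End(\Gr(E))$ associated to the filtration terms $E_j$, and the residual error at each order must be projected away from this kernel before one can invert $L_0$. The key computation is to show that the component of this obstruction on the span of the $\pi_{E_j}$'s equals, to leading order in $1/k$, the $Z$-slope difference $\Im(Z(E_j)\overline{Z(E)})$ controlling $Z$-stability of $E_j \subset E$. Asymptotic $Z$-stability is therefore exactly the hypothesis that these obstructions vanish, allowing the induction to proceed. Matching the algebraic stability expressions to the analytic obstructions precisely, proving uniform boundedness of a right inverse to $L_0$ on $E$ (rather than on the polystable $\Gr(E)$), and tracking error terms with $k$-dependence through weighted function spaces, is where the bulk of the technical work lies.
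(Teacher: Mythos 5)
Your forward direction is essentially the paper's argument: one takes the trace of the $Z$-critical equation over the subobject (equivalently pairs with the orthogonal projection), integrates, and identifies the extra terms as $\|\gamma\|^2$-type contributions from the second fundamental form with a positive coefficient at the leading nontrivial order; the only difference is that the paper works with holomorphic subbundles and uniformly $C^2$-bounded solutions rather than attempting Simpson-style regularisation of singular subsheaves, which is all that is needed since the converse direction also only sees the subbundles of the Jordan--H\"older filtration.

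The backward direction, however, has a genuine gap in the mechanism by which stability enters. You claim the finite-dimensional obstruction (the component of the error on the kernel of $L_0$, spanned by the projections onto the graded pieces) ``equals the $Z$-slope difference, and asymptotic $Z$-stability is exactly the hypothesis that these obstructions vanish.'' But asymptotic $Z$-stability is a \emph{strict inequality}, not an equality, so it cannot make any obstruction vanish; and with your ansatz --- the complex structure of $E$ fixed, $h_k=h_0\exp(\sum_j k^{-j}s_j)$ with bounded $s_j$ --- the obstruction does not vanish even at the first step. Indeed, writing $\delbar_E=\delbar_0+\gamma$ with $\gamma$ the second fundamental form at full strength, the curvature of $h_0$ on $E$ contains the diagonal block $F_{E_1}-\gamma\wedge\gamma^*$, so the kernel component of the weak Hermite--Einstein error already equals a nonzero multiple of $\|\gamma\|^2>0$ (the degree terms cancel by semistability), and no choice of $s_j$ orthogonal data can remove it. The missing idea, which is the heart of the paper's proof, is to let the holomorphic structure degenerate towards $\Gr(E)$ at a rate tied to the discrepancy order $q$: one works with $\delbar_t=\delbar_0+t\gamma$, $t=\lambda\varepsilon^{q}$ (equivalently, metrics whose relative scaling between the graded pieces collapses like $\varepsilon^{2q}$, which is not of your perturbative form), so that the positive term $C\lambda^2\|\gamma\|^2$ appears at the same order $\varepsilon^{2q}$ as the first nonvanishing coefficient of $\Im\bigl(e^{-i\varphi_\varepsilon(E)}Z_\varepsilon(E_1)\bigr)$; stability enters precisely as the sign condition that this coefficient is negative, which is what allows one to solve $C\lambda^2+[\Im(e^{-i\varphi_\varepsilon(E)}Z_\varepsilon(E_1))]^{2q}=0$ for $\lambda>0$ and cancel the obstruction. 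Relatedly, your expectation of a uniformly bounded right inverse on $E$ fails: because $H^0(X,\End\Gr(E))$ is strictly larger than $H^0(X,\End E)$, the linearisation has a small eigenvalue of order $\varepsilon^{2q}$ in the direction $\id_{E_1}/\rk E_1-\id_{E_2}/\rk E_2$, so the inverse blows up like $\varepsilon^{-2q}$ and one must build approximate solutions to order roughly $\varepsilon^{4q+1}$ before the quantitative implicit function theorem closes. Without the $\varepsilon$-dependent degeneration of the extension and the sign (rather than vanishing) role of stability, the induction you describe stops at the very first obstruction.
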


The existence part of the above result has a particularly simple proof in the case where $E$ is actually slope stable, covered in \cref{sec:step1}, and such bundles give the first examples of solutions to the deformed Hermitian Yang--Mills equation on higher rank vector bundles. In \cref{sec:stabilityimpliesexistence} we will give the full details of the proof of \cref{thm:maintheoremZstability} in the case where $\Gr(E)$ has two components, which demonstrates most clearly how the algebro-geometric condition of asymptotic $Z$-stability enters into the analysis, and refer the reader to \cite[\S 4]{dervan2021zcritical} for the details of the more general case. The case we prove in \cref{sec:stabilityimpliesexistence} is sufficient to produce new ``non-trivial" examples of $Z$-critical metrics on strictly semistable rank 3 vector bundles over $\CCPP^2$ described in \cref{ex:zcritical}.

Additionally in \cref{ch:zcriticalconnections} we will study the basic geometric theory of these new equations away from the large volume limit. This leads to a new notion of a \emph{subsolution} which has not previously appeared in the context of non-Abelian gauge theory in the literature, and we study the geometric consequences of it. In particular we prove:

\begin{theorem}
	When restricted to the locus of subsolutions inside the space of Chern connections on a holomorphic vector bundle $E\to (X,\omega)$, the $Z$-critical equation is elliptic and arises from an infinite-dimensional moment map construction with respect to the action of the gauge group.
\end{theorem}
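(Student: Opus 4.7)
The plan is to mirror the classical Atiyah--Bott--Donaldson moment map picture for the Hermitian--Einstein equation, in which the curvature operator $\Lambda F_A$ arises as a moment map for the $\G$-action on the space of Chern connections $\A(E)$ with respect to the natural symplectic form $\Omega(a,b) = \int_X \trace(a\wedge b)\wedge \omega^{n-1}$. The novelty here is that the central charge $Z$ is a polynomial in $\omega$ and $\Ch(E)$, so the relevant 2-form on $\A(E)$ must be built from $Z$ itself and will genuinely depend on $F_A$; consequently it is only guaranteed to be non-degenerate --- and the linearisation of $D_Z$ to be elliptic --- on the subsolution locus, which by design enforces the appropriate positivity.

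First I would introduce a 2-form $\Omega_Z$ on $\A(E)$ whose value at a connection $A$ and on tangent vectors $a,b \in \osect{1}{X}{\End E}$ is obtained by contracting $a\wedge b$ against the imaginary part of the pointwise central-charge density $\tilde Z_A$ obtained by replacing $\Ch(E)$ with the Chern--Weil representative built from $F_A$. I would then verify that $\Omega_Z$ is closed on the whole of $\A(E)$, either directly by differentiation in a variation $A+ta$ (using the Bianchi identity to kill boundary contributions) or by exhibiting it as $d\alpha_Z$ for a Chern--Simons-type primitive built from the same polynomial expression in $F_A$ that underlies the $Z$-critical functional.

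Next I would compute the moment map. The infinitesimal gauge action on a connection is $\xi\mapsto d_A \xi$ for $\xi\in \osect{0}{X}{\ad E}$, and pairing this vector field with $\Omega_Z$ and integrating by parts against the Bianchi identity identifies $\iota_{X_\xi}\Omega_Z = -d\langle D_Z(A),\xi\rangle$, with $D_Z$ the $Z$-critical operator. This is precisely the moment map identity, so that on the subsolution locus where $\Omega_Z$ is non-degenerate we obtain a genuine infinite-dimensional symplectic (in fact K\"ahler, on restricting to integrable connections) moment map setup. For ellipticity I would linearise $D_Z$ at a subsolution $A$; the principal symbol is determined by the top-degree polynomial in $F_A$ appearing in $\tilde Z_A$, and the subsolution inequality is precisely the positivity of the associated matrix-valued coefficient, which forces the symbol to be positive definite.

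The main obstacle is that non-degeneracy of $\Omega_Z$ and ellipticity of the linearised equation are not \emph{a priori} the same condition, and both must follow simultaneously from the single notion of subsolution. The resolution is to observe that both reduce to the positivity of the same Hermitian endomorphism built from the derivative of the polynomial $Z$ evaluated at $F_A$, so that one defines the subsolution condition precisely as this positivity; once that is in place the two conclusions are parallel consequences of the same algebraic fact, and the remainder of the argument is a careful but essentially formal adaptation of the Atiyah--Bott calculation to the $Z$-weighted pairing.
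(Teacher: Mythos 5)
Your proposal follows essentially the same route as the paper: define $\Omega_Z$ on the space of Chern connections by pairing $a\wedge b$ against $\Im(e^{-i\varphi(E)}\tilde Z'(A))$, prove closedness via the Bianchi identity and Stokes, verify the moment map identity for the gauge action by integration by parts, and observe that the subsolution positivity simultaneously gives non-degeneracy of $\Omega_Z$ and positivity of the principal symbol of the linearisation. The only details to pin down are that the pairing must use the \emph{derivative} $\tilde Z'(A)$ (an $(n-1,n-1)$-form), not the density $\tilde Z(A)$ itself, and that in higher rank the non-commuting curvature factors force the graded symmetrisation $[\,\cdot\,]_{\mathrm{sym}}$ in the definition of $\Omega_Z$, in the symbol computation, and in the closedness argument.
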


Obstructions to existence of solutions to the $Z$-critical equation are expected to arise from both subsheaves and subvarieties, and the latter is a new phenomenon in gauge theory, but familiar in the literature of stability conditions in algebraic geometry. This has been recently studied in the context of the \emph{deformed Hermitian Yang--Mills equation} on line bundles, and the \emph{J-equation}, both of which can be described as $Z$-critical equations for specific choices of $Z$.\footnote{With degenerate stability vector, in the case of the J-equation.} It has been shown in those cases how algebro-geometric obstructions due to subvarieties are captured by the notion of subsolution, and we demonstrate this further in the case of complex surfaces.

\begin{theorem}
	For a holomorphic line bundle $L\to (X,\omega)$ over a compact K\"ahler surface and a choice of polynomial central charge $Z$ satisfying the volume hypothesis, the following are equivalent:
	\begin{enumerate}
		\item $L$ admits a solution to the $Z$-critical equation,
		\item $L$ admits a subsolution to the $Z$-critical equation,
		\item $L$ is $Z$-stable with respect to analytic curves $C\subset X$. 
	\end{enumerate}
\end{theorem}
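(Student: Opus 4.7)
The plan is to establish the cyclic chain (i) $\Rightarrow$ (ii) $\Rightarrow$ (iii) $\Rightarrow$ (i). The implication (i) $\Rightarrow$ (ii) is immediate from the definitions, since any solution to the $Z$-critical equation is tautologically a subsolution.

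For (ii) $\Rightarrow$ (iii), I would exploit the fact that on a complex surface the subsolution condition on the curvature $F$ of the Chern connection on $L$ can be rewritten as pointwise positivity of a $(1,1)$-form $\omega_Z$ built algebraically from $\omega$, $F$, and the coefficients of the polynomial central charge $Z$. The key step is then to show that integration of $\omega_Z$ over any analytic curve $C\subset X$ computes, up to a positive factor involving $Z(L)$, precisely the quantity governing the $Z$-stability inequality of $L$ against $C$, i.e.\ the imaginary part of $Z(L|_C)\overline{Z(L)}$ (or its slope-theoretic reformulation). Strict pointwise positivity of $\omega_Z$ on $X$ then forces the required strict inequality on every analytic curve, yielding $Z$-stability against curves. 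The volume hypothesis on $Z$ ensures that the phases entering $\omega_Z$ are well-defined and that the reformulation is a genuine equivalence rather than a one-sided implication.

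For (iii) $\Rightarrow$ (i), I would use a continuity method interpolating between the original central charge $Z$ and its large volume limit $Z_k$ associated to $k\omega$, at which existence is guaranteed for line bundles by the line-bundle specialisation of \cref{thm:maintheoremZstability} (slope stability being automatic in rank one). Openness along the deformation follows from the moment-map ellipticity of the $Z$-critical equation on the locus of subsolutions established earlier in the thesis, combined with the implicit function theorem. Closedness reduces to a priori estimates on the Hermitian metric; higher-order estimates follow from an Evans--Krylov/Calabi-type bootstrap once a uniform $C^0$ bound is available, by virtue of the concavity structure inherited from the subsolution condition. The crux is thus a uniform $C^0$ estimate along the path, which I would obtain by contradiction following the strategy developed by Collins--Jacob--Yau for deformed Hermitian Yang--Mills and by Song--Weinkove and Chen for the J-equation: failure of $C^0$ control would force concentration of the positive form $\omega_Z$ along an analytic subvariety, which on a surface is extracted as an analytic curve $C\subset X$; quantifying this concentration via a mass-shift argument would produce a violation of the $Z$-stability inequality against $C$, contradicting (iii).

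The principal obstacle is the $C^0$ estimate in (iii) $\Rightarrow$ (i), namely transforming the purely algebraic stability hypothesis against analytic curves into effective analytic control for the curvature PDE. Executing the concentration--compactness argument on a general Kähler surface requires delicate handling of the residual positive current produced in the limit and a clean identification of its support with an analytic curve; once this is in place, the remaining steps of the continuity method, and the implications (i) $\Rightarrow$ (ii) and (ii) $\Rightarrow$ (iii), are comparatively routine.
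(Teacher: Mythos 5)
There are two genuine problems with your proposal. First, the implication (i) $\Rightarrow$ (ii) is \emph{not} tautological: a solution satisfies $\Im(e^{-i\varphi(L)}\tilde Z(h)) = 0$, whereas a subsolution requires strict positivity of the \emph{derivative} $\Im(e^{-i\varphi(L)}\tilde Z'(h))$ as an $(n-1,n-1)$-form, and these are different conditions (indeed, elsewhere in this setting "solution implies subsolution" is only conjectural, and only expected in a critical phase range). On a surface the implication does hold, but only after the structural observation you are missing: writing $\alpha = \tfrac{i}{2\pi}F(h)$, the $Z$-critical equation is a quadratic in $\alpha$ and, completing the square, becomes the Monge--Amp\`ere-type equation $\bigl(a\alpha + \tfrac12\beta\bigr)^2 = \tfrac14\beta^2 - a\gamma$, where the volume hypothesis says precisely that the right-hand side is a positive volume form. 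Then $\chi := a\alpha+\tfrac12\beta$ has $\chi^2>0$, which on a complex surface forces $\chi$ (or $-\chi$) to be K\"ahler, and positivity of $\chi$ is exactly the subsolution condition. So the volume hypothesis does real work here, not merely ensuring "well-defined phases."

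Second, your route for (iii) $\Rightarrow$ (i) — a continuity method with a concentration--compactness argument to extract a destabilising curve from a failed $C^0$ estimate — is the hard machinery needed away from surfaces (Collins--Jacob--Yau, Song--Weinkove, Chen), and you yourself leave its crux unexecuted; as written the proof is incomplete at its most difficult step. The surface case admits a complete and far more elementary argument that bypasses all hard PDE estimates: from (iii), the integral of $\Im(e^{-i\varphi(L)}\tilde Z'(h))$ over every analytic curve is positive (this is the restriction/Chern--Weil identity you correctly invoke for (ii) $\Rightarrow$ (iii), read in reverse), so the Demailly--P\u{a}un theorem (the K\"ahler Nakai--Moishezon criterion, in the surface case needing positivity only against curves) shows the class $[\chi]$ is K\"ahler, giving (ii); then from (ii), writing $\tfrac14\beta^2 - a\gamma = e^F\chi^2$ turns the equation into $(\chi + i\partial\bar\partial\phi)^2 = e^F\chi^2$, which Yau's solution of the Calabi conjecture solves (the integrability normalisation holding because the $Z$-critical equation is satisfied in cohomology), giving (i). Your (ii) $\Rightarrow$ (iii) step is essentially the correct one; the missing ideas are the completion-of-the-square reduction to Monge--Amp\`ere and the use of Demailly--P\u{a}un in place of a continuity method.
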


We begin the investigation of this phenomenon in higher rank for the first time, giving the first explanation for how obstructions due to subvarieties may be captured in a non-Abelian gauge-theoretic context using the subsolution condition.

\begin{theorem}
	If a holomorphic vector bundle $E\to (X,\omega)$ admits a subsolution to the $Z$-critical equation then $E$ is $Z$-stable with respect to irreducible analytic divisors $D\subset X$. Moreover if $E$ admits a strong subsolution, then it is $Z$-stable with respect to all irreducible analytic subvarieties.
\end{theorem}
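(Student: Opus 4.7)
The strategy is to mimic the approach of Collins--Jacob--Yau and G.~Chen for the line-bundle cases of the $Z$-critical equation (the deformed Hermitian Yang--Mills and J-equations), and to interpret the subsolution condition as providing precisely the pointwise positivity needed when integrated against the cycle class of a subvariety.

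For the divisor case, I would fix an irreducible analytic divisor $D \subset X$ and the associated quotient $E \to E|_D$. After resolving singularities of $D$ via Hironaka, it suffices to treat the smooth case. The $Z$-stability inequality of $E$ with respect to $D$ is a statement about the sign of the imaginary part of a ratio of polynomial central charges, where the central charge of $E|_D$ is obtained by integrating a characteristic form over $D$. The key idea is to write the $Z$-critical equation in the form $\Im(e^{-i\varphi(Z,E)} Z_\omega(E,h)) = 0$ for a matrix-valued central charge density $Z_\omega(E,h)$ built from the curvature $F_h$, and to exploit the subsolution condition---equivalently, the ellipticity of the linearisation furnished by the preceding theorem---as a pointwise cone condition on the eigenvalues of $Z_\omega(E,h)$. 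Testing this pointwise inequality against a smooth representative of the current of integration on $D$ forces the integrated imaginary part to have the sign dictated by $Z$-stability.

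For the higher-codimension case, I would proceed by induction on codimension, with the strong subsolution condition formulated precisely so as to be preserved under restriction to a smooth divisor. Given an irreducible subvariety $V \subset X$ of codimension $k$, one passes to a resolution on which $V$ is realised (up to blowing up) as a transverse intersection of $k$ smooth divisors and iterates the divisor argument $k$ times. The strong hypothesis ensures that the subsolution structure persists along the chain of restrictions; in the Abelian setting of the dHYM and J-equations this is transparent because the cone of positive $(1,1)$-forms restricts to the corresponding cone on any complex submanifold.

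The main obstacle I anticipate is the compatibility of the subsolution condition with restriction in the non-Abelian setting. In rank one a positive $(1,1)$-form on $X$ restricts to a positive $(1,1)$-form on any submanifold, but in higher rank the subsolution condition involves eigenvalues of a matrix-valued closed $2$-form whose restriction interacts with the second fundamental form of the embedding and with the orthogonal complement chosen by the gauge. Formulating the strong subsolution condition so that it is manifestly preserved under restriction, and then iterating through resolutions of singular $V$ while maintaining control of the boundary contributions produced by the exceptional divisors, will be where the bulk of the technical work lies.
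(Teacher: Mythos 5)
There is a genuine gap: the mechanism that actually converts the subsolution hypothesis into the $Z$-stability inequality is missing from your sketch. The whole argument hinges on a Chern--Weil identity that you never isolate: because the Chern--Weil representative of $\Ch(E)$ is $\exp(\tfrac{i}{2\pi}F_A)$ and the formal derivative of $\exp$ is $\exp$, the degree-$(n-1,n-1)$ (resp.\ $(n-p,n-p)$) component of the central charge density is exactly $\trace \tilde Z'(A)$ (resp.\ $\trace \tilde Z^{(p)}(A)$), so that $Z_D(E)=\int_D \trace \tilde Z'(A)|_D$ and $Z_V(E)=\int_V \trace \tilde Z^{(p)}(A)|_V$. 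Without this identification, ``testing the pointwise inequality against the current of integration on $D$'' does not produce the invariant $\Im\bigl(Z_D(E)/Z_X(E)\bigr)$ that appears in the stability condition. Moreover, your reduction of the subsolution to ``a pointwise cone condition on eigenvalues'' is a rank-one picture: in higher rank no such eigenvalue reformulation is available (this is precisely why the strong subsolution is introduced as a separate hypothesis rather than deduced by symmetry as for the J-equation or dHYM). The correct pointwise step is to take a local holomorphic defining function $f$ of $D$ at a smooth point and feed $u=d\bar f\otimes \id_E$ into the subsolution inequality; since $d\bar f\otimes\id_E$ commutes with all endomorphisms, the non-Abelian positivity collapses to the statement that the \emph{scalar} form $\trace\,\Im\bigl(e^{-i\varphi(E)}\tilde Z'(A)\bigr)$ restricts to a positive volume form on the smooth locus of $D$; integrating (the form is globally smooth, so the singular locus is harmless and no Hironaka resolution or boundary control is needed) and applying the identity above gives $\Im\bigl(e^{-i\varphi(E)}Z_D(E)\bigr)>0$.

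Your plan for higher codimension --- induction on codimension, realising $V$ after blow-ups as a transverse intersection of smooth divisors, and requiring the (strong) subsolution to be preserved under restriction to a divisor --- is exactly the step that would fail, and it is also unnecessary. Nothing in the theorem asserts, or needs, that a subsolution on $X$ restricts to a subsolution on $D$ (the difficulty you correctly flag with second fundamental forms is real, and it is avoided, not solved). The strong subsolution is used \emph{directly}: at a smooth point of $V$ of codimension $p$ one takes local defining functions $f_1,\dots,f_p$ and tests with $u_i=d\bar f_i\otimes\id_E$ simultaneously, concluding that $\trace\,\Im\bigl(e^{-i\varphi(E)}\tilde Z^{(p)}(A)\bigr)$ is a positive volume form on the smooth locus of $V$; one then integrates and invokes the higher-derivative version of the Chern--Weil identity. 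Local defining functions at smooth points always exist, whereas a global presentation of an arbitrary irreducible $V$ as a transverse intersection of smooth divisors does not, and passing to blow-ups would introduce exceptional contributions to the integrals defining $Z_V(E)$ that your sketch gives no means to control.
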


In \cref{sec:variationalfunctional} we give further evidence of the importance of the subsolution condition by constructing Donaldson and Yang--Mills-type functionals for the $Z$-critical equation, and showing that restricted to the locus of subsolutions, the Donaldson-type functional is convex and has critical points precisely given by $Z$-critical metrics, which are also absolute minima of the Yang--Mills-type functional.

Curiously, the algebro-geometric stability condition implied by the subsolution condition for the $Z$-critical equation is not the same as the one predicted by Bridgeland stability, as has been observed by Collins--Yau \cite{collins2018moment}. Indeed due to this discrepancy, a counterexample to \cref{conj:folklore} has been constructed on (the non-Calabi--Yau) $\Bl_p \CCPP^2$ by Collins--Shi \cite{collins2020stability}. This suggests it may be necessary to modify the definition of Bridgeland stability in order to accurately reflect this discrepancy, and a possible (rudimentary) modification implied by the Freed--Kapustin--Witten anomaly has already been suggested in the physics literature, but appears to have gone unnoticed in the mathematical literature. In \cref{sec:Zcriticalfuturedirections} we briefly investigate this modification, and discuss several other future directions including how the notion of quasi-isomorphism may be reflected using the differential geometry of complexes of vector bundles.

In \cref{part:fibrations}, our focus is shifted to a completely different geometric context in which \cref{principle} appears, \emph{K\"ahler fibrations}. This new setting interpolates between the setting of vector bundles (seen in this context as arising from projective bundles, particular examples of K\"ahler fibrations) and varieties (K\"ahler fibrations over a point). These two settings have been well-studied in complex geometry, with \cref{principle} manifesting as the celebrated Donaldson--Uhlenbeck--Yau theorem and Chen--Donaldson--Sun theorem respectively, both of which we shall review in \cref{ch:preliminaries}. 

In the interpolating setting of fibrations, a hybrid of the stability conditions of bundles and varieties has been introduced by Dervan--Sektnan \cite{dervan2019moduli}, who also identified a corresponding notion of canonical metric on the fibration, an \emph{optimal symplectic connection} \cite{dervan2019optimal}. 

The principle predicts a correspondence between these notions, and we investigate this in the case of \emph{isotrivial K\"ahler fibrations}, where the complex structure of the fibres does not vary over the base. All such fibrations arise as associated bundles to holomorphic principal bundles, and in this geometric setting we prove the following.

\begin{theorem}\label{thm:introductionOSCHE}
	Suppose an isotrivial K\"ahler fibration $(X,\omega_X) \to (B,\omega_B)$ arises through the associated bundle construction from a holomorphic principal bundle $P\to (B,\omega_B)$ with connection $A$. Then the natural induced symplectic connection $\omega_X$ is an optimal symplectic connection if and only if $A$ is a Hermite--Einstein connection on $P$.
\end{theorem}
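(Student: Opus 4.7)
The plan is to compare the two partial differential equations directly by unwinding what the associated bundle construction does to the curvature of the symplectic connection. Given the principal bundle $P \to (B, \omega_B)$ with structure group $G \subseteq \Aut(F, \omega_F, J_F)$ and the connection $A$, the associated fibration $X = P \times_G F$ inherits a horizontal distribution from $A$ and a fibrewise Kähler form $\omega_F$ that is compatible with the $G$-action; together these define the natural relatively Kähler form $\omega_X$. The key observation to begin with is that the curvature of the induced symplectic connection is exactly the curvature $F_A \in \Omega^{1,1}(B, \mathrm{ad}\, P)$ composed with the infinitesimal action map $\g \to \Ham(F, \omega_F)$ which sends $\xi$ to its Hamiltonian vector field $X_\xi$ with moment map (holomorphy potential) $\mu_\xi$. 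Hence the Hamiltonian-valued curvature appearing in the optimal symplectic connection equation is, fibrewise over $b \in B$, the function $\mu_{F_A(b)}$ on $F$.

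Next I would specialise the defining PDE for an optimal symplectic connection to this isotrivial situation. Since all fibres are biholomorphically isometric to the fixed $(F, \omega_F)$, which we assume to be cscK, the fibrewise cscK hypothesis needed to make sense of the equation is automatic. Moreover the Weil--Petersson-type terms in the OSC equation, which measure the variation of fibre complex structure over $B$, vanish identically; and the scalar-curvature-of-total-space terms decompose cleanly into a constant fibrewise part and a base-horizontal part governed entirely by $F_A$. After these simplifications the OSC equation reduces to the statement that the fibrewise holomorphy-potential component of $\Lambda_{\omega_X}$ applied to the curvature of the symplectic connection is a constant holomorphy potential on each fibre. Under the identification above this component is $\mu_{\Lambda_{\omega_B} F_A}$, so the equation becomes $\mu_{\Lambda_{\omega_B} F_A} \equiv \mu_{c}$ for some central element $c \in \mathfrak{z}(\g)$, equivalently $\Lambda_{\omega_B} F_A = c \cdot \id$, which is the Hermite--Einstein equation on $P$.

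Conversely, if $A$ is Hermite--Einstein then by reading the above computation in reverse the OSC equation on $\omega_X$ is satisfied: Hermite--Einstein pushes through the moment-map identification to produce a constant holomorphy potential, and all the other OSC contributions vanish by isotriviality. This gives the biconditional. A clean alternative is to argue via the moment map picture on both sides, noting that the Hermite--Einstein equation is Atiyah--Bott's moment map for the gauge group action on connections on $P$, while the optimal symplectic connection equation is the moment map for the action of the group of fibre-preserving symplectomorphisms covering the identity on $B$; the associated bundle construction embeds the former group into the latter equivariantly, and intertwines the two moment maps up to a constant.

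The main obstacle is bookkeeping: correctly tracking the several terms in the OSC equation — in particular the coupling between the horizontal Ricci tensor of $\omega_X$ and the fibrewise metric, and the projection onto holomorphy potentials — and verifying that, under the isotrivial reduction, only the piece encoding $\Lambda_{\omega_B} F_A$ survives. The other delicate point is ensuring that the constant appearing on the right-hand side of the reduced equation lies in the centre of $\g$, so that the equation genuinely coincides with Hermite--Einstein rather than a strictly weaker averaged condition; this follows from the fact that the holomorphy-potential projection is $G$-equivariant and the moment map identifies the centre of $\g$ with the constant potentials on $F$.
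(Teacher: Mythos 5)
Your reduction of the optimal symplectic connection equation is where the argument breaks. The OSC equation is $p(\Lap_\calV \contr_{\omega_B}\mu^* F_\calH + \contr_{\omega_B}\rho_\calH)=0$, and the projected horizontal relative Ricci term $p(\contr_{\omega_B}\rho_\calH)$ does \emph{not} vanish for isotrivial fibrations: isotriviality only kills contributions pulled back from $B$ (the Weil--Petersson piece), which the projection $p$ discards anyway, not the fibrewise holomorphy-potential component of $\contr_{\omega_B}\rho_\calH$. The heart of the paper's proof is precisely the identification of this term (\cref{lemma:horizontalcomponentricciform}): one observes that $\nu^*:=\Lap\circ\mu^*$ is an equivariant comoment map for the Ricci form of the fibre (\cref{lemma:riccihamiltonian}: if $h$ is a Hamiltonian for $\omega_Y$ with field $v$, then $\Lap h$ is a Hamiltonian for $\Ric\omega_Y$ with the same field), reruns the minimal-coupling construction with $\nu^*$ in place of $\mu^*$ to get a closed $(1,1)$-form on $X$ with the same vertical part as the relative Ricci form and horizontal part $\nu^* F_\calH$, and then invokes the comparison lemma for closed $(1,1)$-forms agreeing vertically to conclude $\rho_\calH = \nu^* F_\calH - \pi^*\beta$, hence $p(\contr_{\omega_B}\rho_\calH)=p(\Lap_\calV\contr_{\omega_B}\mu^* F_\calH)$. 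So the Ricci term \emph{doubles} the curvature term rather than vanishing; your final equation agrees with the correct one only up to an irrelevant factor of $2$, and only because of this identity, which your proposal neither states nor proves. As written, the step ``only the piece encoding $\contr_{\omega_B}F_A$ survives'' would fail.

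Two secondary points. First, your proposed ``clean alternative'' presupposes that the OSC equation is a moment map for fibre-preserving symplectomorphisms; no such moment map interpretation is known (the paper records this as an open problem), so that route is not available. Second, the passage from the reduced equation to Hermite--Einstein needs more care than ``constant potential $\Leftrightarrow$ central element'': since $\mu^*$ is normalised to mean-zero fibrewise potentials, the paper pairs $h=\contr_{\omega_B}\mu^*F_\calH$ with $p(\Lap_\calV h)$ and integrates by parts (using that $h$ is itself a fibrewise potential, so $p$ can be dropped) to conclude $h=0$, not merely $h$ constant; one then has to identify which sections of $\ad P$ have vanishing potential. Your closing claim that the moment map identifies the centre of $\g$ with the constant potentials is not correct in general --- a central element of $\Aut_0(Y,H_Y)$ acts with a non-constant potential; what is true is that vanishing (mean-zero) potentials correspond to the kernel of the infinitesimal action, and this is the point at which the centrality in the Hermite--Einstein equation has to be matched against the structure group under consideration.
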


In particular this implies a version of \cref{principle} where we utilise the already-existing theory of stability of holomorphic principal bundles:

\begin{corollary}
	If a holomorphic principal bundle $P\to (B,\omega_B)$ is polystable, then the associated isotrivial K\"ahler fibration $(X,\omega_X) \to (B,\omega_B)$ admits an optimal symplectic connection. 
\end{corollary}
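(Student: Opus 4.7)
The plan is to combine Theorem \ref{thm:introductionOSCHE} with the Donaldson--Uhlenbeck--Yau correspondence for holomorphic principal bundles. The DUY theorem extended to principal $G$-bundles (Ramanathan--Subramanian for the projective case, with the transcendental Kähler version following from the same circle of ideas as the vector bundle case) asserts that a holomorphic principal bundle over a compact Kähler manifold with reductive structure group admits a Hermite--Einstein connection if and only if it is polystable with respect to the polarisation $\omega_B$. Applying this to the polystable $P \to (B,\omega_B)$ produces a Hermite--Einstein connection $A$ on $P$.

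Next I would feed $A$ into the associated bundle construction discussed in \cref{part:fibrations}: the horizontal distribution determined by $A$, together with the fibrewise Kähler form on the model fibre $F$ and the pullback of $\omega_B$, assembles into the symplectic form $\omega_X$ on the total space $X = P \times_G F$, realising $(X,\omega_X)\to (B,\omega_B)$ as an isotrivial Kähler fibration whose induced symplectic connection is precisely the one coming from $A$.

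Finally, an immediate application of Theorem \ref{thm:introductionOSCHE} finishes the argument: since $A$ is Hermite--Einstein, the induced $\omega_X$ is an optimal symplectic connection. The only mild subtlety, and the place where the most care is required, is ensuring the notion of polystability assumed for $P$ (a reduction of structure group to a Levi subgroup through which the bundle splits into stable factors of equal slope) is precisely what the principal-bundle DUY theorem consumes as input; this is standard, and no further work is needed beyond the preceding theorem.
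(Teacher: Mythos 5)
Your proposal is correct and follows the paper's own route exactly: the paper deduces this corollary by combining the Hitchin--Kobayashi correspondence for principal bundles (\cref{thm:DUYPrincipalbundle}, due to Ramanathan--Subramanian and Anchouche--Biswas) to produce a Hermite--Einstein connection $A$ on the polystable bundle $P$, and then \cref{thm:maintheoremfibrations} (the body version of \cref{thm:introductionOSCHE}) to conclude that the symplectic connection $\omega_X$ induced by $A$ on the associated bundle is optimal. The only point worth keeping in mind is that the implicit hypotheses of \cref{thm:maintheoremfibrations} — that the model fibre $(Y,\omega_Y)$ is cscK and that a maximal compact $K\subset G$ acts by holomorphic isometries — must be in force, which you acknowledge via the associated bundle set-up.
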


This statement becomes an \emph{if and only if} when the principal bundle under consideration is the bundle of relative automorphisms of the isotrivial K\"ahler fibration.

Through the work of Dervan--Sektnan about existence of constant scalar curvature K\"ahler metrics in adiabatic K\"ahler classes on the total space of K\"ahler fibrations $(X,\omega_X) \to (B,\omega_B)$, the existence result for optimal symplectic connections above allows us to construct many new examples of cscK metrics on the total space of isotrivial K\"ahler fibrations.

\begin{corollary}
	Suppose $P\to (B,\omega_B)$ is a simple, stable holomorphic principal $G$-bundle. Suppose that $(B,\omega_B)$ is cscK with discrete automorphisms. If $G$ acts holomorphically on a cscK manifold $(Y,\omega_Y)$ and the maximal compact subgroup $K\subset G$ acts by holomorphic isometries, then the total space of the associated bundle $X=P\times_G Y$ admits cscK metrics in adiabatic K\"ahler classes.
\end{corollary}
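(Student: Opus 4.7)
The plan is to combine \cref{thm:introductionOSCHE} with two external results: the Donaldson--Uhlenbeck--Yau theorem for principal bundles and the Dervan--Sektnan construction of constant scalar curvature K\"ahler metrics on the total space of a K\"ahler fibration in the adiabatic limit.

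First I would apply the Donaldson--Uhlenbeck--Yau theorem for principal bundles (as extended from the vector bundle case by Ramanathan--Subramanian and Anchouche--Biswas): since $P\to (B,\omega_B)$ is a simple stable holomorphic principal $G$-bundle over a compact K\"ahler base, it admits a Hermite--Einstein connection $A$, unique up to the action of the centre $Z(G)\subset G$. Next, I would use $A$ together with $\omega_B$ and $\omega_Y$ to build the natural symplectic connection $\omega_X$ on the associated isotrivial K\"ahler fibration $X=P\times_G Y\to (B,\omega_B)$; the hypothesis that the maximal compact $K\subset G$ acts on $(Y,\omega_Y)$ by holomorphic isometries is precisely what makes this construction descend to a K\"ahler fibration, after possibly replacing $\omega_B$ by a large multiple to ensure positivity. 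By \cref{thm:introductionOSCHE}, because $A$ is Hermite--Einstein, the induced symplectic connection $\omega_X$ is in fact an optimal symplectic connection on $X\to B$.

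With the optimal symplectic connection in hand, I would then invoke the Dervan--Sektnan adiabatic limit theorem, which produces cscK metrics on the total space in the adiabatic K\"ahler classes $k\omega_B+\omega_X$ for $k\gg 0$ provided the base is cscK with discrete automorphisms, the fibres are cscK, and the fibration admits an optimal symplectic connection. Each of these hypotheses is either directly assumed (cscK base with discrete automorphisms, and cscK fibres, since in the isotrivial case every fibre is biholomorphically isometric to the fixed $(Y,\omega_Y)$) or has been established in the previous step.

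The main obstacle to address is to verify the automorphism hypothesis of the Dervan--Sektnan theorem: one must check that the relative automorphism group of the fibration is suitably reductive, so that the linearisation underlying the implicit function theorem argument is controlled by the optimal symplectic connection. Simplicity of $P$ implies that the gauge automorphisms of $P$ reduce to $Z(G)$, and combining this with $\Isom(Y,\omega_Y)\cap\Aut(Y)$ and the discreteness of $\Aut(B,\omega_B)$ gives sufficient control on the holomorphic automorphisms of $X$ covering the identity on $B$ to run the argument. This is the delicate technical step where the simplicity assumption on $P$ (rather than merely stability) enters essentially, mirroring the role that simplicity plays in ensuring well-definedness of the standard perturbative constructions in the analogous vector bundle setting.
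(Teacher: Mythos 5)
Your proposal follows essentially the same route as the paper: the Hitchin--Kobayashi correspondence for principal bundles supplies a Hermite--Einstein connection on $P$, \cref{thm:introductionOSCHE} upgrades it to an optimal symplectic connection on $X=P\times_G Y$, and the Dervan--Sektnan adiabatic theorem then gives cscK metrics in the classes $k\omega_B+\omega_X$, with simplicity of $P$ and discreteness of $\Aut(B,\omega_B)$ controlling the automorphisms exactly as in the paper. The only point worth stating explicitly is that the base hypothesis in \cref{thm:adiabaticscKDervanSektnan} is the Weil--Petersson-twisted cscK condition, which reduces to the plain cscK assumption here precisely because the fibration is isotrivial, so the Weil--Petersson form vanishes.
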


We describe an example of such a such a fibre bundle admitting a cscK metric on the total space, using a principal $\SL(2,\CC)$-bundle and model fibre given by the Mukai--Umemura threefold, a K\"ahler--Einstein Fano threefold admitting an action of $\SL(2,\CC)$. Many such principal bundles exist, so this gives a large class of new examples of fibrations admitting cscK metrics in adiabatic K\"ahler classes.

Further to this isotrivial setting, in \cref{sec:Fibrationsfuturedirections} we will discuss the stability of isotrivial fibrations, particularly in the case of projective bundles which has been understood by Ross--Thomas using slope K-stability \cite{ross2006obstruction}, and we will discuss a description of non-isotrivial fibrations in terms of principal bundles.

	\counterwithin{theorem}{section}
	\counterwithin{principle}{section}
	\counterwithin{conjecture}{section}
	\counterwithin{corollary}{section}
	
	%!TEX root = ../thesis.tex
\chapter{Preliminaries}
\label{ch:preliminaries}

In this chapter we recall some of the preliminaries necessary to understand both $Z$-critical connections in \cref{part:zcritical} and K\"ahler fibrations in \cref{part:fibrations}, and indulge in a survey of the most famous instances of \cref{principle}.

We will give a brief overview of geometric invariant theory and its relationship to K\"ahler geometry through the Kempf--Ness theorem, which will not be used but serves as the philosophical background for the principle.

We will then recall the theory of slope stability and canonical metrics on holomorphic vector bundles over compact K\"ahler manifolds, where \cref{principle} is made precise in the celebrated Donaldson--Uhlenbeck--Yau theorem (\cref{thm:DUY}), emphasising the necessary Hermitian geometry required in later chapters. This is important in understanding the behaviour of $Z$-critical metrics \emph{at} the large volume limit $k=\infty$, and for understanding the existence of canonical metrics on isotrivial K\"ahler fibrations.

Finally we detour slightly into the corresponding stability theory for varieties, K-stability, and work towards stating the correspondence there, the Yau--Tian--Donaldson conjecture (\cref{conj:YTD}). This theory places the notion of a \emph{degeneration} as a focal point, and this language is used to understand the stability of fibrations in terms of so-called \emph{fibration degenerations} as we will describe in \cref{part:fibrations}. The same language has also been used in the study of the deformed Hermitian Yang--Mills equation by Collins--Yau \cite{collins2018moment}. 

As part of our exposition of K-stability, we will investigate an interesting new example of a singular variety whose deformation theory within the Fano deformation class $V_{14}$ can be explicitly understood in terms of the representations of $\SL(2,\CC)$. 

\section{Geometric invariant theory\label{sec:GIT}}

We will begin with the simplest, finite-dimensional manifestation of \cref{principle}, which has been either directly applied or used as a guide for most subsequent instances of the principle (except, perhaps, for Bridgeland stability). The theory of stability is known as \emph{geometric invariant theory} (GIT), and the principle becomes the Kempf--Ness theorem.

Geometric invariant theory is the language developed by Mumford to define quotients of algebraic varieties by group actions \cite{mumford1994geometric}. In accordance with the revolution of algebraic geometry in the mid 20th century, which placed the ring of functions of a variety at the centre of focus, GIT concerns itself with invariant elements of commutative rings under group actions. This problem was considered to varying extents by Hilbert in his \emph{invariant theory}, whence the name comes.

The basic observation of Mumford\footnote{To an extent, this was already implicit in the work of Hilbert, where the concept of a semi-stable point (a "Nullform") was characterised using a weight inequality \cite[p. 359]{hilbert1893vollen}.} was the following: in order for the quotient of an algebraic variety $X$ by a group action of $G$ to be a well-behaved algebraic space, it is necessary to restrict to a locus $X^s\subset X$ of well-behaved, \emph{stable} points. 

Let us restrict to the setting of a reductive group $G$ acting on a polarised variety $(X,L)$. We will only consider varieties over $\CC$, although the theory has been developed over more general fields. We require that the $G$ action on $X$ lifts to an action on the ample line bundle $L\to X$,\footnote{Equivalently if we view $X\into \PP^N$ for some $N$ then $G\subset \PGL(N,\CC)$ must lift to $G\subset \GL(N+1,\CC)$ and act on $\CC^{N+1}$.} and hence acts on all its powers $L^k$ and on the spaces of global sections $H^0(X,L^k)$. 

We have a homogeneous coordinate ring
$$R(X,L) = \bigoplus_{k\ge 0} H^0(X,L^k)$$
upon which $G$ acts, and the ring of invariants is
$$R(X,L)^G = \bigoplus_{k \ge 0} H^0(X,L^k)^G.$$
This is a graded $\CC$-algebra, and when $G$ is reductive it is finitely generated \cite{hilbert1890ueber,nagata1963invariants}. Thus we can define a projective variety
$$X\git G = \Proj R(X,L)^G,$$
the geometric invariant theory quotient of $(X,L)$ by $G$. The first key observation is that this quotient variety misses some points of $X$. That is, there is a natural rational map $X\dashrightarrow X\git G$ induced by the inclusion $R^G\into R$, and if $x\in X$ is a point such that $f(x)=0$ for every invariant section $f\in H^0(X,L^k)^G$ for every $k$, then the morphism $X\dashrightarrow X\git G$ is not defined at $x$. Such points are called \emph{unstable} by Mumford (or ``nullforms" by Hilbert). We have the following further characterisations of points in $X$:

\begin{definition}
	A point $x\in (X,L)$ is called:
	\begin{itemize}
		\item \emph{semistable} if there exists some $f\in H^0(X,L^k)^G$ such that $f(x)\ne 0$. Call the locus of semistable points $X^{ss}$.
		\item \emph{polystable} if the orbit $G\cdot x$ is closed in $X$. Call the locus of polystable points $X^{ps}$.
		\item \emph{stable} if the orbit is closed and $x$ has finite stabiliser $G_x$. Call the locus of stable points $X^{s}$.
		\item \emph{unstable} if it is not semistable. 
	\end{itemize}
\end{definition}

By definition, the rational morphism $X\dashrightarrow X\git G$ becomes a genuine morphism when restricting to the semistable locus, $X^{ss}$. Furthermore if two semistable points $x,y\in X^{ss}$ have orbits whose closures intersect, $\closure{G\cdot x} \cap \closure{G\cdot y} \ne \emptyset$, then every $G$-invariant section takes the same value on $x$ and $y$ so they become identified in $X\git G$. This relation is called \emph{S-equivalence} (after Seshadri, who investigated its consequences for the construction of moduli spaces of vector bundles). Conversely the invariant functions will separate points whose closures do not intersect, and thus the GIT quotient $X^{ps} \to X^{ps}\git G$ is actually an orbit space $X^{ps}\git G = X^{ps}/G$. In fact it will be a consequence of the Kempf--Ness theorem that the orbit space $X^{ps}/G$ is equal to the GIT quotient $X\git G$.\footnote{We note however that $X^{ps}$ does not admit any nice description as an algebraic subvariety of $X$, but is instead only a constructible subset of $X$. Therefore to obtain an algebraic quotient which is also an orbit space, one must further restrict to the stable locus $X^s$.} Thus we obtain the famous diagram of GIT quotients:
\begin{figure}[h]
	\centering
	\begin{tikzcd}
		X \arrow[dashed]{r} & X\git G \arrow[equal]{d}\\
		X^{ss} \arrow{r} \arrow[hook]{u} & X^{ss}\git G\\
		X^{ps} \arrow{r} \arrow[hook]{u} & X^{ps}/G \arrow[equal]{u}\\
		X^{s} \arrow{r} \arrow[hook]{u} & X^s/G \arrow[hook]{u}
	\end{tikzcd}
\end{figure}

It can be shown that the GIT quotient $X\git G$ satisfies the axioms of a \emph{categorical quotient} for the action of $G$ on $X$, and therefore is the optimal solution to taking quotients by reductive groups in the category of algebraic varieties (or schemes).

\begin{remark}
	There are alternative notions of GIT quotient when $X$ is affine or non-projective and still twisted by a line bundle $L\to X$. This is important in, for example, the construction of $\CCPP^n$ as a twisted affine GIT quotient of $\CC^{n+1}$, or the construction of moduli of representations as in the work of King \cite{king1994moduli}.
\end{remark}

\subsection{The Hilbert--Mumford Criterion}

The criteria to determine the stability of a point $x\in X$ above are often difficult to verify in practice, as they require information about the entire $G$-orbit of $x$. Mumford identified a simpler criterion which only requires one to know about the one parameter subgroups (1-PS) $\lambda: \CC^* \into G$ of $G$, called the \emph{Hilbert--Mumford criterion}.

Consider again the setting of a reductive group $G$ acting on a polarised variety $(X,L)$. Given a point $x\in X$ and a 1-PS $\lambda$, the limit point 
$y:= \lim_{t\to 0} \lambda(t) \cdot x$
is a fixed point of the $\CC^*$ action of $\lambda$ on $X$. Thus since the $G$ action lifts to $L$, the fibre $L_y$ over $y$ is also fixed by $\lambda$, and so comes with a weight $\mu(x,\lambda)$ such that $\lambda(t) \cdot v = t^{\mu(x,\lambda)} v$ for every $v\in L_y$. This is the \emph{Hilbert--Mumford weight} of $(x,\lambda)$. 

The Hilbert--Mumford criterion is the following \cite{mumford1994geometric,thomas2005notes}:

\begin{theorem}[Hilbert--Mumford criterion]\label{thm:HMcriterion}
	A point $x\in X$ is:
	\begin{itemize}
		\item semistable if for every 1-PS $\lambda$ in $G$, $\mu(x,\lambda) \ge 0$.
		\item polystable if $x$ is semistable and whenever $\mu(x,\lambda)=0$, $\lambda$ is in the stabiliser $G_x$ of $x$.
		\item stable if $x$ is polystable and has finite stabiliser, so that $\mu(x,\lambda)=0$ only if $\lambda$ is the trivial 1-PS.
		\item unstable if for at least one 1-PS $\lambda$ in $G$, $\mu(x,\lambda) < 0.$
	\end{itemize}
\end{theorem}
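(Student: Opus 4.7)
The plan is to translate all four conditions into statements about orbit closures in the affine cone of $(X,L)$, after which the criterion reduces to a single fundamental fact about orbit closures of reductive group actions. Concretely, let $\hat X := L^{-1} \setminus \{0_X\}$ denote the principal $\CC^*$-bundle of nonzero vectors in the dual line bundle, viewed as an open subset of the affine cone $\mathrm{Spec}\, R(X,L)$, and for each $x \in X$ fix a lift $\hat x \in \hat X$. The linearisation provides a $G$-action on $\hat X$ commuting with the scaling $\CC^*$-action. First I would recall the standard dictionary relating sections to functions on the cone: $G$-invariant sections of $L^k$ correspond to $G$-invariant weight-$k$ polynomial functions on the cone, and one checks directly from the construction of $X \git G$ that (i) $x$ is semistable iff $0 \notin \closure{G\cdot \hat x}$, (ii) $x$ is polystable iff $G\cdot \hat x$ is closed in $\hat X$, and (iii) $x$ is stable iff in addition $\dim G\cdot \hat x = \dim G$ (equivalently the stabiliser is finite).

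Next I would reinterpret the Hilbert--Mumford weight in affine terms. Given a 1-PS $\lambda$, the limit $y = \lim_{t\to 0} \lambda(t)\cdot x$ is $\lambda$-fixed, so $\lambda(t)$ acts on the fibre $L^{-1}_y$ with some integer weight $-\mu(x,\lambda)$. Consequently $\lambda(t)\cdot \hat x$ decomposes in a $\lambda$-eigenbasis so that $\lim_{t\to 0}\lambda(t)\cdot \hat x$ exists in $\hat X$ iff $\mu(x,\lambda)\geq 0$, and equals $0$ in the affine cone precisely when $\mu(x,\lambda)>0$ with opposite sign convention, or more precisely: $\mu(x,\lambda) < 0$ iff $0 \in \closure{\lambda(\CC^*)\cdot \hat x}$. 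With this translation the easy direction is immediate: if some $\lambda$ has $\mu(x,\lambda)<0$ then $0\in \closure{G\cdot \hat x}$, so $x$ is unstable.

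The hard direction, and the main obstacle, is the converse: if $0 \in \closure{G\cdot \hat x}$ then some 1-PS $\lambda$ of $G$ itself already drives $\hat x$ to $0$. This is the content of the Hilbert--Mumford (or Richardson--Kempf--Hesselink) fundamental lemma. I would prove it by first using the Iwasawa/Cartan decomposition of the reductive $G$ to reduce to a maximal torus $T \subset G$: one shows that after replacing $\hat x$ by some $g\cdot \hat x$, the point $0$ already lies in $\closure{T\cdot (g\cdot\hat x)}$. The torus case is then pure convex geometry, since the closure of a torus orbit is governed by the Newton polytope $\mathrm{Newt}(\hat x)\subset X_*(T)_\RR$ of weights appearing in the expansion of $\hat x$: the origin fails to be in $\closure{T\cdot\hat x}$ iff $0$ lies in $\mathrm{Newt}(\hat x)$, so if $0\notin \mathrm{Newt}(\hat x)$ one separates by a rational hyperplane, and its normal is the desired 1-PS. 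The step that requires the most care is the reduction from $G$ to $T$; here I would invoke either Kempf's theorem on optimal destabilising 1-PS or the Hilbert--Mumford lemma as found in \cite{mumford1994geometric}.

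Finally I would refine the argument to polystability and stability. If $x$ is semistable but not polystable, the orbit closure contains a strictly smaller orbit $G\cdot \hat z$ disjoint from $0$; applying the same reduction--to--torus argument to the pair $(\hat x,\hat z)$ produces a 1-PS $\lambda$ with $\lim_{t\to 0}\lambda(t)\cdot\hat x \in G\cdot \hat z$, forcing $\mu(x,\lambda)=0$ while $\lambda\notin G_x$. Conversely any such $\lambda$ exhibits $G\cdot \hat x$ as non-closed. The stability refinement is then formal: $x$ is stable iff polystable with finite stabiliser, and the stabiliser contains a non-trivial 1-PS $\lambda$ iff that $\lambda$ trivially satisfies $\mu(x,\lambda)=0$ without lying in the trivial subgroup, completing the correspondence.
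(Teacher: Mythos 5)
Since the paper itself offers no proof of this statement (it is quoted from Mumford with a pictorial sketch), there is nothing to compare against; your route — lift to the affine cone, translate the four conditions into orbit-closure statements, do the easy direction by expanding the lift in $\lambda$-eigenvectors, and reduce the hard direction to a maximal torus — is the standard one. Three caveats before the main issue. First, the step you defer to a citation is not a side lemma: the $G$-to-$T$ reduction (Kempf, or the Cartan/Iwahori decomposition over $\CC((t))$) \emph{is} the Hilbert--Mumford theorem, so as written your proposal supplies only the bookkeeping around the core. Second, the dictionary between $\mu$ and limits of lifts has sign slips: with the paper's convention ($\mu(x,\lambda)$ the weight on $L_y$), one has $\mu<0$ iff $\lambda(t)\cdot\hat x\to 0$, $\mu=0$ iff the limit exists and is non-zero, and $\mu>0$ iff the lift escapes to infinity; so ``the limit exists in $\hat X$ iff $\mu\ge 0$'' is wrong, and ``$\mu(x,\lambda)<0$ iff $0\in\closure{\lambda(\CC^*)\cdot\hat x}$'' only becomes correct after allowing $\lambda^{-1}$ as well, since the full $\CC^*$-orbit closure also records $t\to\infty$. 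Neither slip is fatal for semistability because you quantify over all one-parameter subgroups. Third, your dictionary item (ii) should say ``$G\cdot\hat x$ closed in the affine cone'' (equivalently, semistable with closed orbit in $\hat X$): an unstable fixed point can have lifted orbit closed in $\hat X$ while $0$ lies in its cone closure.

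The genuine gap is the converse of the polystability clause: you assert that any $\lambda$ with $\mu(x,\lambda)=0$ and $\lambda\notin G_x$ exhibits $G\cdot\hat x$ as non-closed. This is false, because the limit $\lim_{t\to0}\lambda(t)\cdot x$ can lie in $G\cdot x$ without being $x$. Concretely, let $G=\SL(2,\CC)$ act on $\PP(\sl(2,\CC)\oplus\CC)$ by the adjoint representation on the first factor and trivially on the second, linearised on $\calO(1)$, and take $x=[H:1]$ with $H=\diag(1,-1)$. The orbit of the lift $(H,1)$ is the closed affine quadric $\{\det=-1\}\times\{1\}$, so $x$ is polystable (its orbit is closed in the semistable locus, which is the intended meaning here; the closure in all of $X$ only adds unstable points at infinity, a caveat that applies equally to stable points). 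Now let $\lambda(t)=\diag(t,t^{-1})$, $u=\left(\begin{smallmatrix}1&1\\0&1\end{smallmatrix}\right)$, $e=\left(\begin{smallmatrix}0&1\\0&0\end{smallmatrix}\right)$, and $\lambda'=u\lambda u^{-1}$. Then $\lambda'(t)\cdot(H,1)=(H-2e+2t^2e,\,1)$, whose limit $(H-2e,1)$ is non-zero, so $\mu(x,\lambda')=0$; the limit lies in $G\cdot(H,1)$ because $H-2e$ is semisimple with eigenvalues $\pm1$; and it is not $(H,1)$, so $\lambda'\notin G_x$. Hence the implication you need fails for the given $\lambda$. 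The correct numerical characterisation is ``semistable, and whenever $\mu(x,\lambda)=0$ the limit $\lim_{t\to0}\lambda(t)\cdot x$ lies in $G\cdot x$''; to recover the formulation with $\lambda\in G_x$ one must first replace $\lambda$ by a conjugate under the unipotent radical of its parabolic $P(\lambda)$ (Kempf--Rousseau), and your last step should either prove that replacement or switch to the limit-in-orbit formulation. The forward direction via Kempf/Birkes (a semistable non-closed orbit admits $\lambda$ with $\mu=0$ landing in the smaller closed orbit) is fine as sketched, and the stability refinement is fine once you justify — rather than call trivial — that for semistable $x$ any $\lambda\in G_x$ has $\mu(x,\lambda)=0$ (use $\mu(x,\lambda^{-1})=-\mu(x,\lambda)$ when $\lambda$ fixes $x$; in general a one-parameter subgroup fixing $x$ acts on $L_x$ with non-zero weight).
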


\begin{figure}[h]
	\centering
	
	\tikzset{every picture/.style={line width=0.75pt}} %set default line width to 0.75pt        
	
	\begin{tikzpicture}[x=0.75pt,y=0.75pt,yscale=-1,xscale=1]
		%uncomment if require: \path (0,388); %set diagram left start at 0, and has height of 388
		
		%Curve Lines [id:da4207598465990655] 
		\draw    (151.55,278.07) .. controls (236.68,230.77) and (381.15,229.91) .. (467.15,280.65) ;
		%Curve Lines [id:da02372636996427857] 
		\draw    (115.43,329.67) .. controls (231.52,261.73) and (369.11,262.59) .. (508.42,332.25) ;
		%Curve Lines [id:da3260024936621445] 
		\draw    (77.57,116.11) .. controls (116.35,147.42) and (258.95,177.43) .. (309.42,175.59) ;
		\draw [shift={(75.87,114.68)}, rotate = 41.38] [color={rgb, 255:red, 0; green, 0; blue, 0 }  ][line width=0.75]    (10.93,-4.9) .. controls (6.95,-2.3) and (3.31,-0.67) .. (0,0) .. controls (3.31,0.67) and (6.95,2.3) .. (10.93,4.9)   ;
		%Curve Lines [id:da9732781741092273] 
		\draw  [dash pattern={on 4.5pt off 4.5pt}]  (63.83,110.38) .. controls (179.07,12.35) and (447.37,11.49) .. (540.24,98.34) ;
		%Straight Lines [id:da7228290205635612] 
		\draw    (63.83,110.38) -- (151.55,278.07) ;
		%Straight Lines [id:da7130814499401446] 
		\draw    (540.24,98.34) -- (467.15,280.65) ;
		%Shape: Ellipse [id:dp40537838131623505] 
		\draw  [fill={rgb, 255:red, 0; green, 0; blue, 0 }  ,fill opacity=1 ] (306.34,175.59) .. controls (306.34,173.89) and (307.72,172.51) .. (309.42,172.51) .. controls (311.12,172.51) and (312.5,173.89) .. (312.5,175.59) .. controls (312.5,177.29) and (311.12,178.67) .. (309.42,178.67) .. controls (307.72,178.67) and (306.34,177.29) .. (306.34,175.59) -- cycle ;
		%Shape: Ellipse [id:dp7252570253727046] 
		\draw  [fill={rgb, 255:red, 0; green, 0; blue, 0 }  ,fill opacity=1 ] (308.06,279.29) .. controls (308.06,277.59) and (309.44,276.21) .. (311.14,276.21) .. controls (312.84,276.21) and (314.22,277.59) .. (314.22,279.29) .. controls (314.22,280.99) and (312.84,282.37) .. (311.14,282.37) .. controls (309.44,282.37) and (308.06,280.99) .. (308.06,279.29) -- cycle ;
		%Curve Lines [id:da1941221432245338] 
		\draw    (309.42,175.59) .. controls (357.45,176.59) and (506.27,146.25) .. (532.6,104.76) ;
		\draw [shift={(533.36,103.5)}, rotate = 119.74] [color={rgb, 255:red, 0; green, 0; blue, 0 }  ][line width=0.75]    (10.93,-4.9) .. controls (6.95,-2.3) and (3.31,-0.67) .. (0,0) .. controls (3.31,0.67) and (6.95,2.3) .. (10.93,4.9)   ;
		%Curve Lines [id:da2598998124639452] 
		\draw    (309.42,175.59) .. controls (413.83,181.76) and (458.55,195.16) .. (491.22,221.81) ;
		%Curve Lines [id:da34239549529838686] 
		\draw    (309.42,175.59) .. controls (441.35,186.06) and (473.17,239.37) .. (467.15,280.65) ;
		%Shape: Ellipse [id:dp486927826206594] 
		\draw  [fill={rgb, 255:red, 0; green, 0; blue, 0 }  ,fill opacity=1 ] (488.14,221.81) .. controls (488.14,220.11) and (489.52,218.73) .. (491.22,218.73) .. controls (492.93,218.73) and (494.31,220.11) .. (494.31,221.81) .. controls (494.31,223.52) and (492.93,224.9) .. (491.22,224.9) .. controls (489.52,224.9) and (488.14,223.52) .. (488.14,221.81) -- cycle ;
		%Shape: Ellipse [id:dp05391890206464689] 
		\draw  [fill={rgb, 255:red, 0; green, 0; blue, 0 }  ,fill opacity=1 ] (464.06,280.65) .. controls (464.06,278.95) and (465.44,277.57) .. (467.15,277.57) .. controls (468.85,277.57) and (470.23,278.95) .. (470.23,280.65) .. controls (470.23,282.35) and (468.85,283.73) .. (467.15,283.73) .. controls (465.44,283.73) and (464.06,282.35) .. (464.06,280.65) -- cycle ;
		%Curve Lines [id:da6718331541808809] 
		\draw  [dash pattern={on 0.84pt off 2.51pt}]  (309.78,176.6) .. controls (205.08,182.5) and (156.8,199.22) .. (124.03,224.75) ;
		%Curve Lines [id:da39099258020666217] 
		\draw  [dash pattern={on 0.84pt off 2.51pt}]  (309.78,176.6) .. controls (177.48,186.62) and (145.58,237.68) .. (151.61,277.21) ;
		%Curve Lines [id:da027872770767564337] 
		\draw    (318.38,297.85) .. controls (367.84,297.01) and (416.49,314.16) .. (439.62,326.17) ;
		\draw [shift={(441.35,327.09)}, rotate = 208.3] [color={rgb, 255:red, 0; green, 0; blue, 0 }  ][line width=0.75]    (10.93,-3.29) .. controls (6.95,-1.4) and (3.31,-0.3) .. (0,0) .. controls (3.31,0.3) and (6.95,1.4) .. (10.93,3.29)   ;
		%Shape: Circle [id:dp4303091430461735] 
		\draw  [fill={rgb, 255:red, 0; green, 0; blue, 0 }  ,fill opacity=1 ] (454.25,309.38) .. controls (454.25,307.68) and (455.63,306.3) .. (457.33,306.3) .. controls (459.03,306.3) and (460.41,307.68) .. (460.41,309.38) .. controls (460.41,311.09) and (459.03,312.47) .. (457.33,312.47) .. controls (455.63,312.47) and (454.25,311.09) .. (454.25,309.38) -- cycle ;
		%Shape: Ellipse [id:dp41599721606020146] 
		\draw  [fill={rgb, 255:red, 0; green, 0; blue, 0 }  ,fill opacity=1 ] (375.13,287.03) .. controls (375.13,285.32) and (376.51,283.94) .. (378.21,283.94) .. controls (379.92,283.94) and (381.29,285.32) .. (381.29,287.03) .. controls (381.29,288.73) and (379.92,290.11) .. (378.21,290.11) .. controls (376.51,290.11) and (375.13,288.73) .. (375.13,287.03) -- cycle ;
		
		% Text Node
		\draw (312.52,258.29) node [anchor=north west][inner sep=0.75pt]    {$x$};
		% Text Node
		\draw (315.96,155.95) node [anchor=north west][inner sep=0.75pt]    {$v$};
		% Text Node
		\draw (349.84,314.18) node [anchor=north west][inner sep=0.75pt]    {$\lambda \rightarrow 0$};
		% Text Node
		\draw (378.26,267.75) node [anchor=north west][inner sep=0.75pt]    {$\lambda \cdot x$};
		% Text Node
		\draw (463.01,290.1) node [anchor=north west][inner sep=0.75pt]    {$y$};
		% Text Node
		\draw (475.05,272.91) node [anchor=north west][inner sep=0.75pt]    {$0$};
		% Text Node
		\draw (541.77,85.44) node [anchor=north west][inner sep=0.75pt]    {$\infty $};
		% Text Node
		\draw (103.55,220.02) node [anchor=north west][inner sep=0.75pt]    {$L$};
		% Text Node
		\draw (102.55,303.43) node [anchor=north west][inner sep=0.75pt]    {$X$};
		% Text Node
		\draw (544.74,108.52) node [anchor=north west][inner sep=0.75pt]    {$\mu ( x,\lambda )  >0$};
		% Text Node
		\draw (507.74,213.52) node [anchor=north west][inner sep=0.75pt]    {$\mu ( x,\lambda ) =0$};
		% Text Node
		\draw (489.74,281.52) node [anchor=north west][inner sep=0.75pt]    {$\mu ( x,\lambda ) < 0$};

	\end{tikzpicture}

	\caption{The Hilbert--Mumford criterion. If $v$ lies over $x$, then as $\lambda \to 0$, $\lambda(t)\cdot v\in L_{\lambda(t)\cdot x}$ will go to infinity for $\mu(x,\lambda)>0$ and zero for $\mu(x,\lambda)<0$. Notice that the orbit of $\tilde{x}$ is half-closed if $\mu(x,\lambda)>0$, but half-open when $\mu(x,\lambda) = 0$. One can check the other ($\lambda \to \infty$) end of the orbit by replacing the 1-PS with its inverse. Combining the statement for these two 1-PS gives the Hilbert-Mumford criterion relating the asymptotic weight of $\lambda$ to the closedness of the orbit $\lambda \cdot x$.}\label{fig:hilbertmumford}
	
\end{figure}
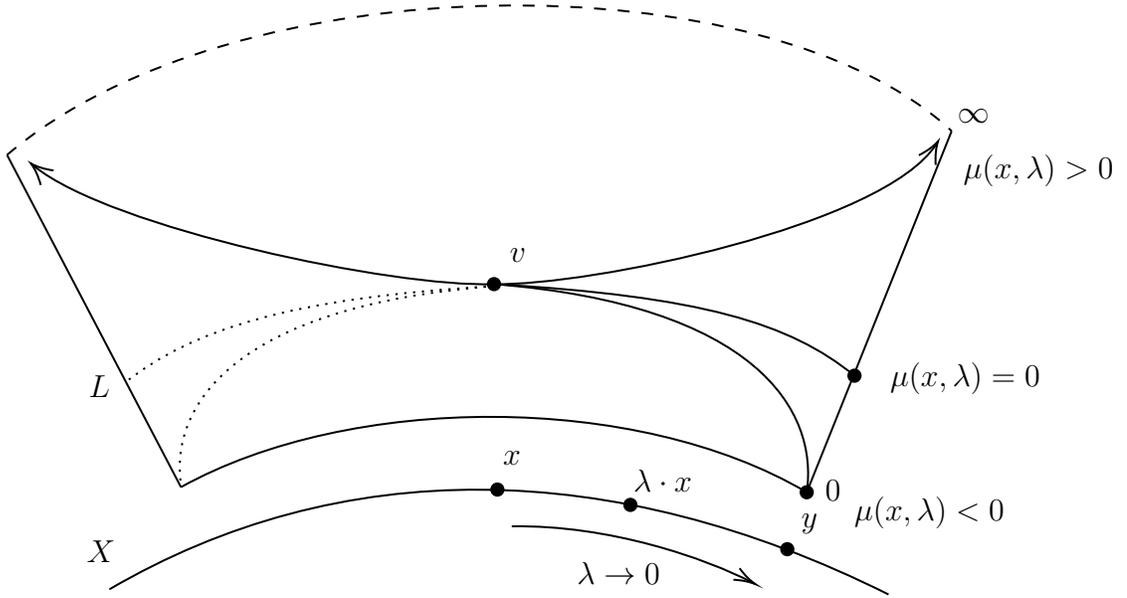

A famous pictorial proof of the Hilbert--Mumford criterion as it relates to the closedness of $\CC^*$-orbits of points is given in \cref{fig:hilbertmumford}.

We will not utilize the Hilbert--Mumford criterion directly following this, however the notion of a 1-parameter degeneration of an object is central to the understanding of stability for both bundles and varieties as we will subsequently explain.

\subsection{The Kempf--Ness theorem\label{sec:kempfness}}

According to \cref{principle} corresponding to the GIT stability of points there should be an extremal notion, which we now explain.

Consider now the setting of $(X,\omega)$ a compact symplectic manifold, and suppose a real compact Lie group $K$ acts on $X$ by Hamiltonian symplectomorphisms. That is, for each $\xi\in \k$, the induced vector field $v_\xi$ is Hamiltonian. 

\begin{definition}
	A \emph{moment map} $\mu: X \to \k^*$ for the Hamiltonian action of $K$ on $(X,\omega)$ is a $K$-equivariant map with respect to the coadjoint action on $\k^*$ such that 
	$$d\langle \mu, \xi\rangle = - i_{v_\xi} \omega$$
	for all $\xi \in \k$, where $\langle -,- \rangle$ is the natural pairing between $\k^*$ and $\k$.. The map $\k \to C^{\infty}(X)$ sending $\xi \mapsto \langle \mu, \xi\rangle$ is called the \emph{comoment map}, which is sometimes denoted $\mu^*$.
\end{definition}

The \emph{symplectic} or \emph{Marsden--Weinstein quotient} of $(X,\omega)$ by $K$ is given by
$$X\git K := \mu^{-1}(0) / K.$$
If $0$ is a regular value of $\mu$ and $K$ acts freely on $\mu^{-1}(0)$, then the symplectic quotient is a smooth manifold and naturally inherits a symplectic structure $\omega_{\mathrm{red}}$ from $X$ such that
$$\pi^* \omega_{\mathrm{red}} = \iota^* \omega$$
where $\iota: \mu^{-1}(0) \into X$ is the inclusion and $\pi: \mu^{-1}(0) \to X\git K$ the projection.

Let us now return to the case of a projective variety $(X,L)$. Fix a K\"ahler metric $\omega \in c_1(L)$ and suppose a group $G$ acts linearly on $(X,L)$. Furthermore, suppose a maximal compact subgroup $K\subset G$ acts by Hamiltonian symplectomorphisms on $(X,\omega)$. Then we now have two possible quotients of $X$, one arising from GIT and the other from symlectic geometry. The Kempf--Ness theorem relates these.

\begin{theorem}[Kempf--Ness \cite{kempf1979length}, Guillemin--Sternberg \cite{guillemin1984convexity}, Kirwan \cite{kirwan1984cohomology}]\label{thm:kempfness}
	Let $G$ be a reductive group with maximal compact subgroup $K$ acting linearly on a projective variety $(X,L)$, such that $K$ acts by Hamiltonians on $(X,\omega)$ where $\omega\in c_1(L)$ is a K\"ahler form. Suppose $\mu: X \to \k^*$ is a moment map for the $K$ action. A point $x\in X$ is GIT polystable for the $G$ action if and only if $G\cdot x$ intersects $\mu^{-1}(0)$.\footnote{Exactly which level set of the moment map one takes is critical in the statement of the Kempf--Ness theorem, as shifting the level set to some $\xi \in \mathfrak{z}(\mathfrak{k}^*)$ corresponds to twisting the $\CC^*$ linearisation on the ample line bundle $L$ by a character. See \cite[Rmk. 5.22]{szekelyhidi2014introduction}.}  Furthermore, if $x$ is polystable then $G\cdot x$ intersects $\mu^{-1}(0)$ is a unique $K$-orbit.
\end{theorem}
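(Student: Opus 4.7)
The plan is to reduce the question to analysing a canonical convex functional on the complexified orbit $G\cdot x$, called the \emph{Kempf--Ness functional}. First I would fix a $K$-invariant Hermitian metric $h$ on $L$ whose Chern curvature represents $\omega$; such a metric exists by compactness of $K$ and by the Hodge assumption $\omega \in c_1(L)$. Picking a lift $\tilde x \in L_x \setminus \{0\}$, define $F_{\tilde x}(g) = \log \|g\cdot \tilde x\|_h^2$. Since $K$ acts by $h$-isometries, $F_{\tilde x}$ descends to a function on the symmetric space $G/K$, which via the Cartan decomposition $G = K \exp(i\k)$ is diffeomorphic to the Euclidean space $i\k$ and carries a natural non-positively curved metric whose geodesics are of the form $t \mapsto \exp(it\xi) K$ for $\xi \in \k$.

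The key computation is that the derivative of $F_{\tilde x}$ along the geodesic $t \mapsto \exp(it\xi)K$ is proportional to $\langle \mu(\exp(it\xi)\cdot x), \xi\rangle$, which follows by combining the defining equation $d\langle \mu, \xi\rangle = -i_{v_\xi}\omega$ with the relation between $\omega$ and the curvature of $h$. A further calculation shows that the second derivative is nonnegative, so $F_{\tilde x}$ is geodesically convex on $G/K$, and strictly convex in directions transverse to the Lie algebra of the stabiliser $G_x$. It follows that critical points of $F_{\tilde x}$ on $G/K$ correspond exactly to zeros of the moment map in $G\cdot x$, and when they exist they form a single $K$-orbit.

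Finally, I would close both directions. If $x$ is polystable then $G\cdot x$ is closed in $X$, which forces $F_{\tilde x}$ to be proper on a slice transverse to the stabiliser: along any sequence $g_n$ escaping modulo the stabiliser, $\|g_n\cdot\tilde x\|_h \to \infty$, for otherwise a subsequence would accumulate in $\closure{G\cdot x} = G\cdot x$, contradicting escape by strict convexity. A continuous, proper, geodesically convex function on a complete non-positively curved space attains its infimum, yielding a zero of $\mu$ in $G\cdot x$. Conversely, a zero $y$ of $\mu$ in $G\cdot x$ is a critical, and hence minimising, point of $F_{\tilde x}$, so strict convexity transverse to $\Lie(G_y)$ prevents the orbit from accumulating at a point of strictly smaller norm, proving $G\cdot x$ is closed. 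The hard part is making the properness argument work uniformly in the presence of a positive-dimensional stabiliser, which requires carefully identifying the Lie-theoretic decomposition of $i\k$ along which $F_{\tilde x}$ is strictly convex; this technicality is what separates the polystable case handled here from the simpler strictly stable case and is what ultimately delivers the uniqueness statement.
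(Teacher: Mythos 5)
Your route — the log-norm Kempf--Ness functional descending to $G/K$, convexity along geodesics, critical points as moment-map zeros, and a properness-versus-closedness dichotomy — is exactly the argument the paper sketches, so the strategy itself is fine. The genuine gap is in the bridge between polystability and properness. You work throughout with the orbit $G\cdot x$ \emph{in $X$}: ``if $x$ is polystable then $G\cdot x$ is closed in $X$, which forces $F_{\tilde x}$ to be proper,'' and conversely you conclude that ``$G\cdot x$ is closed.'' Taken literally this equivalence is false: an unstable fixed point (for instance $[1:0]\in\PP^1$ for the weight-$(1,-1)$ linearised $\CC^*$-action) has closed $G$-orbit in $X$, yet $F_{\tilde x}$ is unbounded below and the orbit meets no zero of the moment map. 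The correct dichotomy, and the one the paper is careful to state, concerns closedness of the orbit of the \emph{lift} $\tilde x$ in the total space of the line (equivalently of a lift in the vector space of a projective embedding): that is where semistability enters, by excluding degeneration of $g_n\cdot\tilde x$ to the zero section. The identification of polystability with closedness of this lifted orbit is a separate invariant-theoretic input that your proposal never supplies, and without it both implications in your final paragraph do not follow.

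Even in the lifted formulation, the properness step needs more than the accumulation argument you give. If $\|g_n\cdot\tilde x\|$ stays bounded you must rule out $g_n\tilde x\to 0$ (this is the semistability point above), and even when $g_n\tilde x$ converges to a point of the closed lifted orbit you still have to show $g_n$ stays bounded modulo the stabiliser; ``contradicting escape by strict convexity'' is not yet an argument, since convexity only controls $F_{\tilde x}$ along geodesics through your basepoint. The standard repair is the explicit one-variable computation: along $t\mapsto\exp(it\xi)$ one has $F_{\tilde x}(t)=\log\sum_j e^{2t\lambda_j}|c_j|^2$, which is strictly convex unless all weights $\lambda_j$ with $c_j\neq 0$ coincide, in which case it is affine and the direction lies in the stabiliser (of $\tilde x$ if the common weight is zero, otherwise it is a destabilising direction along which $F_{\tilde x}$ is unbounded below); from this, and from the fact that the slope at infinity is the Hilbert--Mumford weight, one deduces properness modulo $G_{\tilde x}$ from the existence of a minimum and vice versa. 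Likewise your converse (``a critical point prevents the orbit from accumulating at a point of strictly smaller norm'') silently uses the nontrivial fact that a non-closed lifted orbit contains in its closure a point of strictly smaller norm reachable by a one-parameter subgroup. The uniqueness-of-the-$K$-orbit statement via convexity along the geodesic joining two critical points is fine. So: right framework, but the polystable $\Leftrightarrow$ closed lifted orbit step and the properness argument are genuine gaps as written.
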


It was first proved by Kirwan \cite{kirwan1984cohomology} that this gives a homeomorphism of quotient spaces. Indeed further to \cref{thm:kempfness} one has:
\begin{itemize}
	\item $G\cdot \mu^{-1}(0) = X^{ps}$,
	\item A point $x\in X$ is semistable if and only if $\closure{G\cdot x}$ meets $\mu^{-1}(0)$ in a single $K$-orbit.
	\item $0$ is a regular value of $\mu$ if and only if $X^{ss}=X^s$.
	\item The inclusion $\mu^{-1}(0)\into X^{ss}$ induces a homeomorphism of quotients
	$$X\git_0 K \to (X,L)\git G.$$
\end{itemize}
The homeomorphism between quotients follows from the fact that a continuous bijection from a compact space to a Hausdorff space is a homeomorphism. The continuous inverse
$$(X,L)\git G \to X\git_0 K$$
can be found by following the gradient flow of the norm squared of the moment map,
$$V = \|\mu\|^2.$$
In particular the gradient flow of $V$ takes a semistable point $x\in X^{ss}$ and flows down to a point $y\in X^{ps}$ in the unique polystable orbit in the closure of $G\cdot x$ \cite{kirwan1984cohomology}. 

The Kempf--Ness theorem may be proven working in a fixed orbit of $G$ and considering the so-called \emph{Kempf--Ness functional},
$$\calM : G/K \to \RR.$$
The Kempf--Ness functional $\calM_x$ on an orbit $G\cdot x$ satisfies (or may be defined as the unique functional satisfying) the characteristic variational property that
$$\left.\deriv{}{t}\right|_{t=0} \calM([\exp(it\xi) g]) = -2\langle \mu(g\cdot x), \xi\rangle$$
for $\xi \in \k$. In particular a point $g\cdot x\in G\cdot x$ is a zero of the moment map $\mu(g\cdot x)=0$ if and only if $[g]$ is a critical point of the Kempf--Ness functional. To prove the theorem, one goes further to establish that $\calM$ is convex along geodesics in $G/K$, which is a complete negatively curved metric space with respect to the induced Riemannian metric after choosing a bi-invariant metric on $G$. One deduces that the orbit of $g\cdot v$ is closed for $v$ some lift of $x$ if and only if $\calM_x$ is proper on $G/K$, which by the convexity of $\calM$ occurs precisely when it has a critical point \cite{szekelyhidi2014introduction,georgoulas2022moment}.

The relationship to the GIT picture can be made even more explicit. Indeed one has the following.
\begin{lemma}[See for example {\cite[Lem. 5.2]{georgoulas2022moment}}]\label{lem:limitslopeGIT}
	Let $\xi\in \k$ be rational and let $\lambda :\CC^* \into G$ be the 1-PS generated by $\xi$. Then the Hilbert--Mumford weight $\mu(x,\lambda)$ is the slope at infinity of the Kempf--Ness functional:
	$$\mu(x,\lambda) = \lim_{t\to \infty} \frac{\calM_x([\lambda(t)])}{t}.$$
\end{lemma}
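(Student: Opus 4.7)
The plan is to differentiate the Kempf--Ness functional along the 1-PS, use convergence of $\lambda(t)\cdot x$ to a fixed point of $\lambda$, and then identify the limiting value with the Hilbert--Mumford weight. For the first step, parametrising $\lambda$ additively as $t\mapsto\lambda(t)$ for $t\in\RR$ (with $\lambda(0)=e$ and generator $\xi$), the homomorphism property $\lambda(t+t_0)=\exp(it\xi)\lambda(t_0)$ combined with the characteristic variational formula for $\calM_x$ at basepoint $[\lambda(t_0)]$ with tangent $\xi$ gives
\[
\deriv{\calM_x([\lambda(t)])}{t}\bigg|_{t=t_0}=-2\bigl\langle\mu(\lambda(t_0)\cdot x),\,\xi\bigr\rangle.
\]

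For the second step I would use the standard GIT fact that the orbit $\lambda(t)\cdot x$ in the projective variety $X$ converges as $t\to\infty$ to a $\lambda$-fixed point $y\in X$. Continuity of $\mu$ then forces the derivative above to tend to the constant $-2\langle\mu(y),\xi\rangle$, so $\calM_x([\lambda(t)])$ is asymptotically linear in $t$ and L'H\^opital's rule yields
\[
\lim_{t\to\infty}\frac{\calM_x([\lambda(t)])}{t}=-2\langle\mu(y),\xi\rangle.
\]

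The third and most delicate step is to identify $-2\langle\mu(y),\xi\rangle$ with the Hilbert--Mumford weight $\mu(x,\lambda)$. This rests on the fundamental compatibility between the Hamiltonian structure and the GIT linearisation: at a $\lambda$-fixed point $y$, the fibre $L_y$ is a one-dimensional $\CC^*$-representation whose $\lambda$-weight is $\mu(x,\lambda)$ by definition, and under the normalisation of $\mu$ used in the variational formula, the pairing $-2\langle\mu(y),\xi\rangle$ encodes exactly this integer weight. A transparent way to see this and to pin down the normalisation is to embed $(X,L^N)\into(\PP(H^0(X,L^N)^*),\calO(1))$ for $N$ large, lift $x$ to an ambient vector $v$, and represent $\calM_x([g])$ up to an additive constant as a multiple of $\log\|g\cdot v\|^2$; the asymptotic slope along $\lambda(t)$ is then read off directly from the $\lambda$-weight decomposition of $v$ and matches the algebraic definition of $\mu(x,\lambda)$. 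The main obstacle is precisely this matching of conventions -- sign of $\mu$, factor of two in the variational identity, and the convention for the lift of the $G$-action to $L$ -- so that the differentially computed slope agrees on the nose with the algebraically defined integer weight; the other two steps are routine once one has the variational characterisation of $\calM_x$ and the convergence of $\lambda(t)\cdot x$ to a fixed point in hand.
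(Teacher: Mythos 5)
The paper does not actually prove this lemma—it is quoted from \cite[Lem.\ 5.2]{georgoulas2022moment}—and your argument is exactly the standard one behind that reference: differentiate $\calM_x$ along the geodesic using the defining variational formula, use convergence of $\lambda(t)\cdot x$ to the $\lambda$-fixed point $y$ to get the limiting slope $-2\langle \mu(y),\xi\rangle$, and identify this with the weight on $L_y$ by passing to a projective embedding and the $\log\|g\cdot v\|$ model of the functional, with the rationality of $\xi$ ensuring an integral weight decomposition. The one point to tighten is that for a general K\"ahler form $\omega\in c_1(L)$ the Kempf--Ness functional agrees with the Fubini--Study model only up to a \emph{bounded} term (coming from the K\"ahler potential relating $\omega$ to the restricted Fubini--Study form), not an additive constant—this is harmless for the slope at infinity—and once that and the sign/normalisation bookkeeping you flag are carried out, your proof is complete.
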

The density of rational directions in $\g/\k \isom \k$ and the above lemma produce the following philosophical statement: Provided $\calM$ has good enough behaviour, it suffices to verify properness over all of $G/K$ just by checking the limiting slopes are positive along rationally defined geodesics. This is the fundamental fact which underpins \cref{principle}, and explains how just \emph{algebraic} (i.e. \emph{rational}) information is strong enough to imply \emph{analytical} existence theorems (see for example \cite[\S 1]{boucksom2018variational} for a discussion of this principle in the finite-dimensional setting and how it relates to the variational approach to the study of K-stability).

\section{Holomorphic vector bundles\label{sec:bundles}}

The study of stability and extremal metrics on holomorphic vector bundles began with the work of Narasimhan and Seshadri relating the slope stability of an algebraic vector bundle over a compact Riemann surface, in the sense of Mumford--Takemoto, with existence of a projective unitary representation of the fundamental group of the surface \cite{narasimhan1965stable}. This work was a development in relation to ideas of Andr\'e Weil, who had already understood this correspondence for line bundles and suggested a similar picture in higher rank. 

The theorem of Narasimhan and Seshadri was placed within a much broader context by Atiyah and Bott \cite{atiyahbott}, who emphasized the role played by \emph{Yang--Mills connections}, which in the case of compact Riemann surfaces are just projectively flat connections, whose holonomy therefore produces the representations of the fundamental group. Variously, Atiyah--Bott introduced an interpretation of the Yang--Mills equation as an infinite-dimensional moment map, bringing the conceptual picture of geometric invariant theory into focus, as well as providing new techniques in equivariant Morse theory to compute the topology of these symplectic quotients --- the moduli of stable vector bundles on a curve (a theory subsequently developed by Kirwan for many types of symplectic quotients). At the same time an independent proof of the Narasimhan--Seshadri theorem using the language of Yang--Mills connections was provided by Donaldson \cite{donaldson1983new}, and interpreted in the language of Atiyah and Bott this proof could be interpreted as an infinite-dimensional Kempf--Ness theorem for the curvature as a moment map.

Over the 1980s this theory was generalised to higher dimensional bases by Donaldson, Uhlenbeck and Yau, and others, culminating in a correspondence bearing their names. We now describe this theory.

\subsection{Stability\label{sec:stability}}

Let us first turn to the stability theory of holomorphic vector bundles, and in particular slope stability and (briefly) Gieseker stability. 

\subsubsection{Slope stability}

The notion of slope stability of a vector bundle was determined by Mumford shortly after his development of geometric invariant theory \cite{mumford1963projective}. Mumford considered the problem over a curve, and the condition for higher dimensional bases was determined by Takemoto  \cite{takemoto1972stable}, so this notion is often called Mumford--Takemoto stability. We will state the definition in the setting of compact K\"ahler manifolds, which includes the projective settings of Mumford and Takemoto.

\begin{definition}
	The slope of a coherent analytic sheaf $\calE$ over a compact K\"ahler manifold $(X,\omega)$ is defined as
	$$\mu(\calE) = \frac{\deg \calE}{\rk \calE} = \frac{ (c_1(\calE) . [\omega]^{n-1}) [X]}{\rk \calE}$$
	whenever $\rk \calE > 0$, and $\mu(\calE) = +\infty$ if $\rk \calE = 0$.
\end{definition}

\begin{definition}
	A coherent analytic sheaf $\calE$ on a compact K\"ahler manifold $(X,\omega)$ is \emph{slope semistable} if for all non-trivial coherent analytic subsheaves $\calS\into \calE$, we have the inequality
	$$\mu(\calS) \le \mu(\calE).$$
	Furthermore we say $\calE$ is \emph{slope stable} if, whenever $\rk \calS < \rk \calE$, we have
	$$\mu(\calS) < \mu(\calE).$$
	We say the sheaf is \emph{slope polystable} if it is a direct sum of slope stable sheaves of the same slope. The sheaf is \emph{slope unstable} if it is not slope semistable.
\end{definition}
\begin{remark}
	Typically stability is defined for a \emph{torsion-free} coherent analytic sheaf. It is an instant consequence of our definition that a slope (semi)stable coherent sheaf of positive rank is torsion-free. In general a well-behaved theory exists for pure coherent sheaves of a fixed dimension, as detailed in \cite[\S 1.6]{huybrechts-lehn}, but the slope of torsion sheaves must be defined as the quotient of leading order coefficients of the Hilbert polynomial in that case.
\end{remark}

Slope stability can be intuitively understood when $\calE$ is a holomorphic vector bundle $E$. In particular, it is a topological condition on the holomorphic geometry of $E$, which asks that $E$ does not have sitting inside it holomorphic subsheaves which are ``more twisted" than $E$ itself (after appropriate normalization by dividing by ranks). See \cref{fig:unstablebundle}.

\begin{figure}[h]
	\centering

	\tikzset{every picture/.style={line width=0.75pt}} %set default line width to 0.75pt        
	
	\begin{tikzpicture}[x=0.75pt,y=0.75pt,yscale=-1,xscale=1]
		%uncomment if require: \path (0,399); %set diagram left start at 0, and has height of 399
		
		%Curve Lines [id:da5301979713464728] 
		\draw [color={rgb, 255:red, 65; green, 117; blue, 5 }  ,draw opacity=1 ]   (422.5,236.5) .. controls (480.4,241.33) and (562,273.73) .. (554.8,294.53) ;
		\draw [shift={(554.8,294.53)}, rotate = 154.09] [color={rgb, 255:red, 65; green, 117; blue, 5 }  ,draw opacity=1 ][line width=0.75]    (-5.59,0) -- (5.59,0)(0,5.59) -- (0,-5.59)   ;
		%Shape: Polygon Curved [id:ds4798074325789048] 
		\draw  [color={rgb, 255:red, 255; green, 0; blue, 0 }  ,draw opacity=0.6 ][fill={rgb, 255:red, 255; green, 0; blue, 0 }  ,fill opacity=0.3 ] (71,64) .. controls (71,64) and (295.33,150.67) .. (283,186) .. controls (270.67,221.33) and (283.33,185.47) .. (283,186) .. controls (282.67,186.53) and (110,169) .. (110,171) .. controls (110,173) and (71,64) .. (71,64) -- cycle ;
		%Shape: Polygon Curved [id:ds6086505278353589] 
		\draw  [color={rgb, 255:red, 255; green, 0; blue, 0 }  ,draw opacity=0.6 ][fill={rgb, 255:red, 255; green, 0; blue, 0 }  ,fill opacity=0.3 ] (283,186) .. controls (285,185) and (537,217) .. (538,218) .. controls (539,219) and (509,329) .. (509,327) .. controls (509,325) and (262,234) .. (275,207) .. controls (288,180) and (281,187) .. (283,186) -- cycle ;
		%Straight Lines [id:da3033784794203197] 
		\draw [line width=1.5]    (37,64) -- (37,171) ;
		%Straight Lines [id:da7954955933629435] 
		\draw [line width=1.5]  [dash pattern={on 1.69pt off 2.76pt}]  (37,171) -- (144,171) ;
		%Straight Lines [id:da2320258208314142] 
		\draw [line width=1.5]    (37,64) -- (144,64) ;
		%Straight Lines [id:da922837298863809] 
		\draw [line width=1.5]  [dash pattern={on 1.69pt off 2.76pt}]  (144,64) -- (144,171) ;
		%Straight Lines [id:da17528026135518815] 
		\draw [line width=1.5]    (470,219) -- (470,326) ;
		%Straight Lines [id:da49661920484355204] 
		\draw [line width=1.5]    (470,326) -- (577,326) ;
		%Straight Lines [id:da9142943731166802] 
		\draw [line width=1.5]    (470,219) -- (577,219) ;
		%Straight Lines [id:da7578864834178203] 
		\draw [line width=1.5]    (577,219) -- (577,326) ;
		%Curve Lines [id:da9093758514817] 
		\draw [line width=1.5]    (144,64) .. controls (371,113) and (357,219) .. (470,219) ;
		%Curve Lines [id:da9679072110449004] 
		\draw [line width=1.5]    (37,171) .. controls (144.64,213.29) and (226.95,227.53) .. (317.71,247.34) ;
		%Curve Lines [id:da9386767493108075] 
		\draw [line width=1.5]    (398.97,199.27) .. controls (479.62,201.47) and (511.65,203.89) .. (577,219) ;
		%Curve Lines [id:da03865205265053595] 
		\draw [line width=1.5]  [dash pattern={on 1.69pt off 2.76pt}]  (144,171) .. controls (239,198) and (337,200) .. (398.97,199.27) ;
		%Curve Lines [id:da5200219099552227] 
		\draw [line width=1.5]  [dash pattern={on 1.69pt off 2.76pt}]  (317.71,247.34) .. controls (403,265) and (470,271) .. (577,326) ;
		%Straight Lines [id:da7148347406935437] 
		\draw  [dash pattern={on 4.5pt off 4.5pt}]  (90.5,117.5) -- (523.5,272.5) ;
		%Shape: Circle [id:dp1334143644091894] 
		\draw  [color={rgb, 255:red, 0; green, 0; blue, 0 }  ,draw opacity=1 ][fill={rgb, 255:red, 0; green, 0; blue, 0 }  ,fill opacity=1 ] (88,117.5) .. controls (88,116.12) and (89.12,115) .. (90.5,115) .. controls (91.88,115) and (93,116.12) .. (93,117.5) .. controls (93,118.88) and (91.88,120) .. (90.5,120) .. controls (89.12,120) and (88,118.88) .. (88,117.5) -- cycle ;
		%Shape: Circle [id:dp6664982527696266] 
		\draw  [color={rgb, 255:red, 0; green, 0; blue, 0 }  ,draw opacity=1 ][fill={rgb, 255:red, 0; green, 0; blue, 0 }  ,fill opacity=1 ] (521,272.5) .. controls (521,271.12) and (522.12,270) .. (523.5,270) .. controls (524.88,270) and (526,271.12) .. (526,272.5) .. controls (526,273.88) and (524.88,275) .. (523.5,275) .. controls (522.12,275) and (521,273.88) .. (521,272.5) -- cycle ;
		%Straight Lines [id:da32826264663712124] 
		\draw [color={rgb, 255:red, 255; green, 0; blue, 0 }  ,draw opacity=0.6 ]   (71,64) -- (110,171) ;
		%Straight Lines [id:da750751844673612] 
		\draw [color={rgb, 255:red, 255; green, 0; blue, 0 }  ,draw opacity=0.6 ]   (538,218) -- (509,327) ;
		%Curve Lines [id:da852554951513969] 
		\draw [color={rgb, 255:red, 65; green, 117; blue, 5 }  ,draw opacity=1 ]   (58.6,151) .. controls (238.12,213.22) and (352.09,226.02) .. (426.79,237.02) ;
		\draw [shift={(58.6,151)}, rotate = 64.12] [color={rgb, 255:red, 65; green, 117; blue, 5 }  ,draw opacity=1 ][line width=0.75]    (-5.59,0) -- (5.59,0)(0,5.59) -- (0,-5.59)   ;
		%Shape: Circle [id:dp10208662350899622] 
		\draw  [color={rgb, 255:red, 65; green, 117; blue, 5 }  ,draw opacity=1 ][fill={rgb, 255:red, 65; green, 117; blue, 5 }  ,fill opacity=1 ] (420,236.5) .. controls (420,235.12) and (421.12,234) .. (422.5,234) .. controls (423.88,234) and (425,235.12) .. (425,236.5) .. controls (425,237.88) and (423.88,239) .. (422.5,239) .. controls (421.12,239) and (420,237.88) .. (420,236.5) -- cycle ;
		%Curve Lines [id:da702438899910804] 
		\draw [color={rgb, 255:red, 189; green, 16; blue, 224 }  ,draw opacity=1 ]   (82.4,97.33) .. controls (134.4,140.33) and (124,153) .. (267,181) ;
		\draw [shift={(82.4,97.33)}, rotate = 84.59] [color={rgb, 255:red, 189; green, 16; blue, 224 }  ,draw opacity=1 ][line width=0.75]    (-5.59,0) -- (5.59,0)(0,5.59) -- (0,-5.59)   ;
		%Shape: Circle [id:dp5025952579012846] 
		\draw  [color={rgb, 255:red, 189; green, 16; blue, 224 }  ,draw opacity=1 ][fill={rgb, 255:red, 189; green, 16; blue, 224 }  ,fill opacity=1 ] (111,125.5) .. controls (111,124.12) and (112.12,123) .. (113.5,123) .. controls (114.88,123) and (116,124.12) .. (116,125.5) .. controls (116,126.88) and (114.88,128) .. (113.5,128) .. controls (112.12,128) and (111,126.88) .. (111,125.5) -- cycle ;
		%Curve Lines [id:da4340452518508261] 
		\draw [color={rgb, 255:red, 189; green, 16; blue, 224 }  ,draw opacity=1 ]   (267,181) .. controls (557,253) and (391,243) .. (521,286) ;
		\draw [shift={(521,286)}, rotate = 63.3] [color={rgb, 255:red, 189; green, 16; blue, 224 }  ,draw opacity=1 ][line width=0.75]    (-5.59,0) -- (5.59,0)(0,5.59) -- (0,-5.59)   ;
		%Shape: Circle [id:dp598429272933479] 
		\draw  [color={rgb, 255:red, 189; green, 16; blue, 224 }  ,draw opacity=1 ][fill={rgb, 255:red, 189; green, 16; blue, 224 }  ,fill opacity=1 ] (453,247.5) .. controls (453,246.12) and (454.12,245) .. (455.5,245) .. controls (456.88,245) and (458,246.12) .. (458,247.5) .. controls (458,248.88) and (456.88,250) .. (455.5,250) .. controls (454.12,250) and (453,248.88) .. (453,247.5) -- cycle ;
		%Straight Lines [id:da08806070777704744] 
		\draw    (90.5,235.5) -- (523.5,390.5) ;
		%Shape: Circle [id:dp5289071648228203] 
		\draw  [color={rgb, 255:red, 65; green, 117; blue, 5 }  ,draw opacity=1 ][fill={rgb, 255:red, 65; green, 117; blue, 5 }  ,fill opacity=1 ] (420,354) .. controls (420,352.62) and (421.12,351.5) .. (422.5,351.5) .. controls (423.88,351.5) and (425,352.62) .. (425,354) .. controls (425,355.38) and (423.88,356.5) .. (422.5,356.5) .. controls (421.12,356.5) and (420,355.38) .. (420,354) -- cycle ;
		%Shape: Circle [id:dp0749026333891496] 
		\draw  [color={rgb, 255:red, 189; green, 16; blue, 224 }  ,draw opacity=1 ][fill={rgb, 255:red, 189; green, 16; blue, 224 }  ,fill opacity=1 ] (453,365.5) .. controls (453,364.12) and (454.12,363) .. (455.5,363) .. controls (456.88,363) and (458,364.12) .. (458,365.5) .. controls (458,366.88) and (456.88,368) .. (455.5,368) .. controls (454.12,368) and (453,366.88) .. (453,365.5) -- cycle ;
		%Shape: Circle [id:dp027729466953474002] 
		\draw  [color={rgb, 255:red, 189; green, 16; blue, 224 }  ,draw opacity=1 ][fill={rgb, 255:red, 189; green, 16; blue, 224 }  ,fill opacity=1 ] (111,244) .. controls (111,242.62) and (112.12,241.5) .. (113.5,241.5) .. controls (114.88,241.5) and (116,242.62) .. (116,244) .. controls (116,245.38) and (114.88,246.5) .. (113.5,246.5) .. controls (112.12,246.5) and (111,245.38) .. (111,244) -- cycle ;
		%Straight Lines [id:da5068686261496884] 
		\draw    (564,355) -- (564,368) ;
		\draw [shift={(564,371)}, rotate = 270] [fill={rgb, 255:red, 0; green, 0; blue, 0 }  ][line width=0.08]  [draw opacity=0] (8.93,-4.29) -- (0,0) -- (8.93,4.29) -- cycle    ;
		%Curve Lines [id:da1937982053294025] 
		\draw [line width=1.5]    (37,64) .. controls (323,144) and (242,264) .. (470,326) ;
		%Straight Lines [id:da15290174840226034] 
		\draw    (278,252.33) -- (278,283) ;
		\draw [shift={(278,286)}, rotate = 270] [fill={rgb, 255:red, 0; green, 0; blue, 0 }  ][line width=0.08]  [draw opacity=0] (8.93,-4.29) -- (0,0) -- (8.93,4.29) -- cycle    ;
		
		% Text Node
		\draw (528,275.9) node [anchor=north west][inner sep=0.75pt]    {$0$};
		% Text Node
		\draw (557,335.4) node [anchor=north west][inner sep=0.75pt]    {$E$};
		% Text Node
		\draw (535,189.4) node [anchor=north west][inner sep=0.75pt]  [color={rgb, 255:red, 255; green, 0; blue, 0 }  ,opacity=0.6 ]  {$F$};
		% Text Node
		\draw (556,376.4) node [anchor=north west][inner sep=0.75pt]    {$X$};
		% Text Node
		\draw (190.8,198.4) node [anchor=north west][inner sep=0.75pt]  [color={rgb, 255:red, 65; green, 117; blue, 5 }  ,opacity=1 ]  {$s$};
		% Text Node
		\draw (155.2,156.4) node [anchor=north west][inner sep=0.75pt]  [color={rgb, 255:red, 189; green, 16; blue, 224 }  ,opacity=1 ]  {$t$};
		% Text Node
		\draw (79,257.4) node [anchor=north west][inner sep=0.75pt]    {$t=0$};
		% Text Node
		\draw (420,376.4) node [anchor=north west][inner sep=0.75pt]    {$t=0$};
		% Text Node
		\draw (376,357.4) node [anchor=north west][inner sep=0.75pt]    {$s=0$};
		% Text Node
		\draw (77.2,120.07) node [anchor=north west][inner sep=0.75pt]    {$0$};

	\end{tikzpicture}
	
	\caption{An unstable bundle $E$ has holomorphic subbundles $F$ which are more twisted than $E$.}
	\label{fig:unstablebundle}
\end{figure}
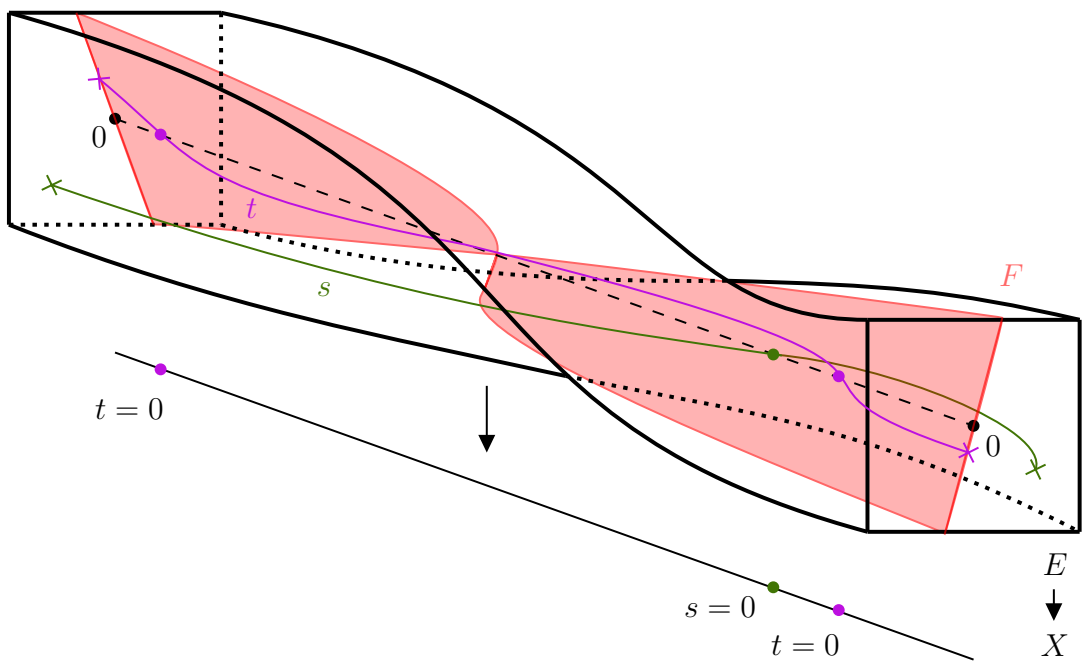

This criterion limiting the complexity of the holomorphic geometry of $E$ has various consequences, in particular for the existence of automorphisms of $E$. We recall:
\begin{proposition}[See {\cite[Prop. 5.7.11, Cor. 5.7.12]{kobayashi1987differential}}]\label{prop:stabilitysimplicity}
	Suppose $\calE,\calE'$ are slope semistable coherent sheaves over a compact K\"ahler manifold $(X,\omega)$. Then
	\begin{enumerate}
		\item If $\mu(\calE) > \mu(\calE')$ then $\Hom(\calE,\calE') = 0$.
		\item Suppose $\calE, \calE'$ are slope stable, $\mu(\calE) = \mu(\calE')$, and $u:\calE\to \calE'$ is a non-zero morphism. Then $u$ is injective and $\rk \image u = \rk \calE'$ (that is, $u$ is generically surjective). In particular $u$ is an isomorphism away from a locus of codimension $\ge 2$. If $\calE$ and $\calE'$ have the same Chern numbers, that is
		$$c_i(\calE) . [\omega]^{n-i} = c_i(\calE') . [\omega]^{n-i}$$
		for all $i$, then $u$ is in fact an isomorphism, since $\calE'/\image u$ must have vanishing Chern numbers but if it is not zero then $\Ch_k(\calE'/\image u).[\omega]^{n-k} > 0$ where $k$ is the codimension of the support of $\calE'/\image u$.\footnote{When $\calE, \calE'$ are locally free it is sufficient that $\rk \calE = \rk \calE'$ and $\deg \calE = \deg \calE'$. See \cite[Cor. 5.7.12]{kobayashi1987differential} for the proof of this case.} Furthermore, in this case after making an identification $\calE = \calE'$ then $u$ is always a constant multiple of the identity morphism $u=\lambda \id_\calE$. 
	\end{enumerate}
\end{proposition}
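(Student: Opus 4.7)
The plan is to derive both assertions from slope-comparison arguments applied to the image and kernel of a non-zero morphism, combined with the torsion-freeness of stable sheaves and a positivity property of the Chern character on torsion sheaves.

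For (i), let $u : \calE \to \calE'$ be non-zero and set $\calF := \image u$. The short exact sequence $0 \to \ker u \to \calE \to \calF \to 0$ together with semistability of $\calE$ and additivity of rank and degree yields $\mu(\calF) \geq \mu(\calE)$, while semistability of $\calE'$ applied to $\calF \subset \calE'$ gives $\mu(\calF) \leq \mu(\calE')$. Combined, these contradict $\mu(\calE) > \mu(\calE')$, so $\Hom(\calE,\calE') = 0$.

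For (ii), first I would establish injectivity. If $\ker u \neq 0$ then torsion-freeness of $\calE'$ (a consequence of stability) forces $0 < \rk \ker u < \rk \calE$, and stability of $\calE$ gives $\mu(\ker u) < \mu(\calE)$, so the additivity formula forces $\mu(\image u) > \mu(\calE) = \mu(\calE')$; this contradicts either stability of $\calE'$ in the proper-rank case, or effectivity of $c_1$ of the torsion cokernel $\calE'/\image u$ in the full-rank case. Hence $u$ is injective. A similar comparison forces $\rk \image u = \rk \calE'$, for otherwise $\image u \subsetneq \calE'$ would have strictly smaller rank and equal slope, contradicting stability of $\calE'$. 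So $\rk \calE = \rk \calE'$ and $\deg \calE = \deg \calE'$, making the torsion cokernel $\calT := \calE'/\image u$ satisfy $c_1(\calT).[\omega]^{n-1} = 0$; since $c_1$ of a torsion sheaf is effective and $[\omega]^{n-1}$ pairs strictly positively with any non-zero effective divisor on a compact K\"ahler manifold, $\calT$ must have support of codimension $\geq 2$, which is the ``isomorphism away from codimension $\geq 2$'' claim.

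Under the stronger equal-Chern-number hypothesis, additivity of $\Ch$ in the sequence $0 \to \image u \to \calE' \to \calT \to 0$ combined with $\image u \isom \calE$ forces $\Ch(\calT).[\omega]^{n-\bu} = 0$ in all degrees. If $\calT$ were non-zero and supported in pure codimension $k \geq 2$, its leading Chern character $\Ch_k(\calT)$ would be represented, up to a positive multiplicative constant, by the effective fundamental cycle of its support with multiplicities, and the strict positivity of $[\omega]^{n-k}$ against effective cycles would give $\Ch_k(\calT).[\omega]^{n-k} > 0$, contradicting the vanishing. Hence $\calT = 0$ and $u$ is an isomorphism. For the last claim, identify $\calE = \calE'$ and set $p(\lambda) := \det(u - \lambda \id_\calE)$; for each $\lambda \in \CC$ this is a global holomorphic function on compact connected $X$, hence constant, and depends polynomially on $\lambda$ of degree $\rk \calE$. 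Choosing any complex root $\lambda_0$ of $p$, the endomorphism $u - \lambda_0 \id_\calE$ has everywhere-vanishing determinant and therefore fails to be an isomorphism; the dichotomy just proved then forces $u - \lambda_0 \id_\calE = 0$, giving $u = \lambda_0 \id_\calE$.

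The main technical obstacle is the positivity step identifying $\Ch_k$ of a pure-codimension-$k$ torsion sheaf with a strictly effective cycle supported on its scheme-theoretic support; this is classical but requires either a locally free resolution together with the Whitney sum formula, or a direct analysis via Fitting ideals and an induction on the length stratification.
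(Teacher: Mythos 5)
Your proposal follows essentially the same route as the paper, which states this result with exactly this Chern-number sketch embedded in the statement and otherwise defers to Kobayashi: slope comparisons on kernel and image for (i) and for injectivity and generic surjectivity in (ii), effectivity of the degree of the torsion cokernel to get the codimension-$\ge 2$ statement, positivity of the leading Chern character of the cokernel against $[\omega]^{n-k}$ to force it to vanish, and the characteristic-polynomial (eigenvalue) argument for the final endomorphism claim. All of these steps are correct as you present them, and the determinant argument via Hartogs extension across the non-locally-free locus is the classical one.

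The one step you should tighten is the claim that additivity of $\Ch$ together with the equal-Chern-number hypothesis ``forces $\Ch(\calT).[\omega]^{n-\bullet}=0$ in all degrees'', where $\calT=\calE'/\image u$. The hypothesis is equality of the numbers $c_i(\calE).[\omega]^{n-i}$, not of the Chern character numbers, and since $\Ch_j$ involves products of Chern classes (for instance $\Ch_2=\tfrac{1}{2}(c_1^2-2c_2)$), equality of the $c_i.[\omega]^{n-i}$ alone does not yield $\Ch_j(\calE).[\omega]^{n-j}=\Ch_j(\calE').[\omega]^{n-j}$ for all $j$. What you actually need --- and all you use --- is the single vanishing $\Ch_k(\calT).[\omega]^{n-k}=0$ at $k=\codim \Supp \calT$, and this does follow with a short extra argument: because $\calT$ is supported in codimension $k$ one has $\Ch_j(\calT)=0$ for $j<k$, hence (by the Whitney formula, or by inverting the universal relations between $c_j$ and $\Ch_j$) $c_j(\calE)=c_j(\calE')$ as cohomology classes for all $j<k$; consequently $\Ch_k(\calE')-\Ch_k(\calE)$ reduces to $\frac{(-1)^{k-1}}{(k-1)!}\bigl(c_k(\calE')-c_k(\calE)\bigr)$ modulo terms that cancel, and pairing with $[\omega]^{n-k}$ and invoking the degree-$k$ Chern-number equality gives the required vanishing of $\Ch_k(\calT).[\omega]^{n-k}$. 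With this repair, the contradiction with the strict positivity you establish (the leading Chern character of a non-zero sheaf of pure codimension-$k$ support is an effective cycle with positive multiplicities) goes through exactly as you describe, and the rest of your write-up stands.
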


The second point above gives a strong restriction on the automorphisms of slope stable coherent sheaves. We recall the following definition:
\begin{definition}
	A coherent sheaf $\calE$ on $X$ is called \emph{simple} if $$H^0(X,\End(\calE)) \isom \CC\cdot \id_\calE.$$ 
\end{definition}
Thus the second point of \cref{prop:stabilitysimplicity} asserts that a slope stable coherent sheaf is simple. Every such sheaf has automorphisms given by the constant multiples of the identity morphism, so a simple sheaf is one with automorphism group \emph{as small as possible}.\footnote{One perspective is to consider the projectivised gauge group $\calG(E)/\CC^*\cdot \id_E$, under which stable bundles have trivial automorphism group.}  As observed in our discussion of geometric invariant theory, it is often the automorphisms of an object (the stabilisers of the corresponding point in the Quot scheme) which are problematic for the construction of well-behaved quotients. In particular, one expects that after restricting to (semi)stable sheaves one should obtain a better-behaved moduli space. The moduli space of slope (semi)stable sheaves is particularly well understood in one and two dimensions \cite[\S 8.2]{huybrechts-lehn}, and has more recently been understood in general \cite{greb2017compact}.

A considerable theory of the algebraic geometry of stable bundles and stable sheaves now exists, and a good survey is the text of Huybrechts and Lehn \cite{huybrechts-lehn}. We will emphasise just a few further points about stability of bundles which we will need later. Firstly, we will recall the following see-saw property for short exact sequences of sheaves. We state the see-saw lemma explicity in the case of torsion sheaves, in order to emphasize the comparison with the notion of $Z$-stability with respect to subvarieties appearing in \cref{ch:zcriticalconnections}.

\begin{lemma}[See-saw lemma; See {\cite[Prop. 5.7.6]{kobayashi1987differential}}]
	\label{lem:seesawslopestability}
	Let 
	\begin{center}
		\ses{\calS}{\calE}{\calQ}
	\end{center}
	be a short exact sequence of coherent sheaves. Suppose $\rk \calE > 0$. Then
	$$\mu(\calS) \le \mu(\calE) \quad \iff \quad \mu(\calE) \le \mu(\calQ)$$
	and
	$$\mu(\calS) \ge \mu(\calE) \quad \iff \quad \mu(\calE) \ge \mu(\calQ).$$
	Furthermore:
	\begin{itemize}
		\item If $0 < \rk \calS < \rk \calE$ then
		$$\mu(\calS) < \mu(\calE) \quad \iff \quad \mu(\calE) < \mu(\calQ)$$
		and
		$$\mu(\calS) > \mu(\calE) \quad \iff \quad \mu(\calE) > \mu(\calQ).$$
		\item If $\rk \calS = 0$ then $\mu(\calS) > \mu(\calE)$ and $\mu(\calE) \ge \mu(\calQ)$ with equality whenever $\calS$ is supported in codimension $\ge 2$.
		\item If $\rk \calS = \rk \calE$ then $\mu(\calS) \le \mu(\calE)$ with equality whenever $\calQ$ is supported in codimension $\ge 2$, and $\mu(\calE) < \mu(\calQ)$.
	\end{itemize} 
\end{lemma}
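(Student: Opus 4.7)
The plan is to reduce the entire statement to the additivity of degree and rank in short exact sequences, together with the elementary ``mediant'' inequality for rational numbers. First I would recall that from the short exact sequence $0\to\calS\to\calE\to\calQ\to 0$ one has $\rk\calE=\rk\calS+\rk\calQ$ and $c_1(\calE)=c_1(\calS)+c_1(\calQ)$ in cohomology (the latter follows from the multiplicativity of determinant line bundles or equivalently from the existence of locally free resolutions and the additivity of first Chern classes). Capping with $[\omega]^{n-1}$ and restricting to $[X]$ then gives $\deg\calE=\deg\calS+\deg\calQ$.

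For the main case where $0<\rk\calS<\rk\calE$, so both quotients are finite, I would prove the elementary arithmetic lemma: if $a,b,c,d>0$ are positive reals then $a/b\le (a+c)/(b+d)$ if and only if $ad\le bc$ if and only if $(a+c)/(b+d)\le c/d$, with strict inequalities equivalent to strict inequalities, and similarly for $\ge$. Applying this to $a=\deg\calS$, $b=\rk\calS$, $c=\deg\calQ$, $d=\rk\calQ$ and using the additivity of the previous paragraph immediately yields
\[
\mu(\calS)\le\mu(\calE)\iff\mu(\calE)\le\mu(\calQ)
\]
and the analogous statements with $\ge$, $<$, $>$. This handles every inequality in the main statement and in the first bullet.

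For the two remaining bullets one simply tracks what happens when one of the ranks degenerates, using the convention $\mu(\calF)=+\infty$ when $\rk\calF=0$. If $\rk\calS=0$ then by convention $\mu(\calS)=+\infty>\mu(\calE)$; moreover $\calS$ is a torsion sheaf, and I would cite the standard fact that a torsion sheaf $\calS$ on a compact K\"ahler manifold has $\deg\calS=(c_1(\calS).[\omega]^{n-1})[X]\ge 0$, with equality exactly when $\calS$ is supported in codimension $\ge 2$ (this follows from the fact that if $\calS$ is supported on a divisor then $c_1(\calS)$ is represented by an effective divisor class, while if $\Supp\calS$ has codimension $\ge 2$ then $c_1(\calS)=0$). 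Combined with $\deg\calE=\deg\calQ+\deg\calS$ and $\rk\calE=\rk\calQ$, this yields $\mu(\calE)\ge\mu(\calQ)$ with equality in the codimension $\ge 2$ case. The $\rk\calS=\rk\calE$ case is entirely symmetric: here $\rk\calQ=0$, $\mu(\calQ)=+\infty>\mu(\calE)$, and the degree of the torsion quotient $\calQ$ is non-negative, with equality when $\Supp\calQ$ has codimension $\ge 2$, giving $\mu(\calS)\le\mu(\calE)$ with equality in the codimension $\ge 2$ case.

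The only non-routine ingredient is the additivity of first Chern classes for short exact sequences of coherent sheaves (rather than of vector bundles) and the sign of $\deg$ of a torsion sheaf; both are standard and I would simply quote them from \cite{kobayashi1987differential} or \cite{huybrechts-lehn}. After that the proof is essentially the mediant inequality for positive rationals, so there is no real analytic obstacle; the only thing to be careful about is book-keeping the strict versus non-strict cases and correctly handling the convention $\mu=+\infty$ at rank zero.
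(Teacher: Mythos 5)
Your proposal is correct and follows essentially the same route as the paper, which omits the details but indicates precisely these ingredients: additivity of rank and degree in short exact sequences, together with the fact that a torsion sheaf has non-negative degree, vanishing when its support has codimension at least two. One minor point: in your arithmetic (mediant) lemma you should assume only that the denominators $b,d$ are positive, since $\deg\calS$ and $\deg\calQ$ may be negative; the chain of equivalences $a/b\le(a+c)/(b+d)\iff ad\le bc\iff(a+c)/(b+d)\le c/d$ holds verbatim under that weaker hypothesis, so the argument is unaffected.
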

\begin{proof}
	The proof follows quickly from noting that rank $\rk$ and degree $\deg$ are additive in short exact sequences of coherent sheaves and we omit it here. We give a similar proof for asymptotic $Z$-stability in \cref{lem:seesawazs}. The final considerations follow from the fact that if $\calT$ is a torsion sheaf, then $\deg \calT \ge 0$, and furthermore if $\calT$ is supported in codimension $\ge 2$, then $\deg \calT = 0$ (see for example \cite[Lem. 5.7.5]{kobayashi1987differential}).
\end{proof}
The see-saw property for the slope of sheaves allows us to rephrase the characteristic inequality for stability in several ways: Suppose $\calS\subset \calE$ is a coherent subsheaf with quotient $\calE\to \calQ$. Then the following notions of stability are equivalent (see \cite[Prop. 5.7.6]{kobayashi1987differential}):
\begin{enumerate}
	\item Stable if $\mu(\calS) \le \mu(\calE)$ for all coherent subsheaves $\calS\into \calE$, with equality only if $\rk \calS = \rk \calE$.
	\item Stable if $\mu(\calE) \le \mu(\calQ)$ for all coherent quotients $\calE \onto \calQ$, with equality only if $\rk \calQ = \rk \calE$.
	\item Stable if $\mu(\calS) < \mu(\calQ)$ for all short exact sequences $S\into \calE \onto \calQ$.
\end{enumerate}

We will see later in \cref{part:zcritical} that $Z$ stability is often more naturally stated in terms of condition (ii) above, as obstructions due to subvarieties $V\subset X$ appear via quotient sheaves $E\to E\otimes \calO_V$. Furthermore we will see in \cref{ch:correspondence} that it is actually condition (iii) which naturally manifests in the analysis of the $Z$-critical equation, as opposed to the equivalent but less symmetric condition (i). 

We will see a geometric manifestation of \cref{lem:seesawslopestability} in the next section, in terms of the principle of Hermitian geometry that \emph{curvature (generically) decreases in subbundles and increases in quotients}. 

\subsubsection{Filtrations\label{sec:filtrations}}

It will be useful to us to recall several kinds of filtrations which naturally appear for holomorphic vector bundles. 

First, given a semistable holomorphic vector bundle $E\to X$ there are a natural class of so-called \emph{Jordan--H\"older filtrations} of $E$. 

\begin{definition}
	A \emph{Jordan--H\"older filtration} $\scrE$ of a semistable vector bundle $E\to X$ is a filtration
	$$\scrE: \quad 0 = \calE_0 \subset \calE_1 \subset \cdots \subset \calE_k = E$$
	of $E$ by torsion-free coherent subsheaves such that for every $i$, $\calE'_i := \calE_i/\calE_{i-1}$ is slope stable and $\mu(\calE'_i) = \mu(E)$. 
	The polystable sheaf
	$$\Gr(E) := \bigoplus_i \calE'_i$$
	is called the \emph{associated graded object of $E$ with respect to $\scrE$}.
\end{definition}

There is a basic existence and uniqueness result for Jordan--H\"older filtrations of $E$. This comes with a subtlety that the graded object is not strictly unique. Since slope stability is ignorant to torsion sheaves of codimension $\ge 2$, the graded object $\Gr(E)$ is only unique up to codimension 2. This may be resolved by passing to the \emph{reflexive hull} $\Gr(E)^{**}$ of $\Gr(E)$.\footnote{A \emph{reflexive sheaf} is a sheaf $\calE$ isomorphic to its double dual $\calE \isom \calE^{**}$. The \emph{reflexive hull} of a sheaf $\calE$ is its double dual $\calE^{**}$, which is always reflexive. Reflexive sheaves are torsion-free and are close enough to being locally free to do analysis with.}

\begin{proposition}[See {\cite[Thm. 1.6.7, Prop. 1.6.10]{huybrechts-lehn}} or {\cite[Thm. 5.7.18]{kobayashi1987differential}}]
	Any semistable vector bundle admits a Jordan--H\"older filtration. The graded objects of any two Jordan--H\"older filtrations are isomorphic outside a set of codimension $\ge 2$. The reflexive hulls $\Gr(E)^{**}$ of any two Jordan--H\"older filtrations are isomorphic, and the hull is therefore uniquely defined.
\end{proposition}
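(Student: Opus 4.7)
The plan is to establish the three claims in sequence, with the existence argument being the most direct and the uniqueness of the reflexive hull being an easy consequence of uniqueness up to codimension two.

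For \emph{existence}, I would proceed by Noetherian induction on $\rk E$. If $E$ is itself stable, the trivial filtration $0\subset E$ works. Otherwise, among the coherent subsheaves of $E$ with slope equal to $\mu(E)$ (which is a nonempty collection, since $E$ itself belongs to it), pick one $\calE_1$ of minimal positive rank, and replace it by its saturation in $E$ so that $E/\calE_1$ is torsion-free (saturation only decreases rank and raises slope, both of which are ruled out by the semistability of $E$ and the choice of $\calE_1$). Any nonzero proper coherent subsheaf $\calS\subsetneq \calE_1$ satisfies $\mu(\calS)\le\mu(E)=\mu(\calE_1)$ by the semistability of $E$, and the minimality of $\rk\calE_1$ forces $\mu(\calS)<\mu(\calE_1)$; hence $\calE_1$ is slope stable. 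The see-saw lemma (\cref{lem:seesawslopestability}) then gives $\mu(E/\calE_1)=\mu(E)$, and any destabilising subsheaf of $E/\calE_1$ would, upon pulling back to an extension of $\calE_1$ inside $E$, contradict the semistability of $E$. Thus $E/\calE_1$ is semistable of strictly smaller rank, and the inductive hypothesis produces a JH filtration of $E/\calE_1$ whose pullback extends $0\subset\calE_1$ to a JH filtration of $E$.

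For \emph{uniqueness up to codimension two}, I would argue by induction on the length of the filtration. Given two JH filtrations $\scrE:\{\calE_i\}$ and $\scrF:\{\calF_j\}$, consider for each $j$ the composition
$$u_j:\calE_1\hookrightarrow E\twoheadrightarrow E/\calF_{j-1}\twoheadrightarrow \calF_j/\calF_{j-1}.$$
Each $u_j$ is a morphism between slope stable sheaves of the same slope $\mu(E)$, so by \cref{prop:stabilitysimplicity}(ii) it is either zero or injective with generically surjective image. Since $\calE_1\neq 0$ and $\calF_0=0$, there is a smallest index $j_0$ with $u_{j_0}\neq 0$; the ranks $\rk\calE_1$ and $\rk(\calF_{j_0}/\calF_{j_0-1})$ must then coincide (otherwise the image of $u_{j_0}$ would strictly destabilise its target), and combined with the equality of slopes and the Chern-class argument of \cref{prop:stabilitysimplicity}(ii) this yields an isomorphism $\calE_1\cong \calF_{j_0}/\calF_{j_0-1}$ away from a closed analytic set of codimension $\ge 2$. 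Working modulo codimension-two subsets, one may then rearrange $\scrF$ so that $\calF_{j_0}/\calF_{j_0-1}$ is placed at the front, quotient both filtrations by $\calE_1$, and apply the inductive hypothesis to conclude that $\Gr_\scrE(E)\cong \Gr_\scrF(E)$ outside a codimension-two subset.

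The main obstacle is the codimension-two bookkeeping in the uniqueness step: the argument requires repeatedly identifying stable factors that are only isomorphic outside analytic sets of codimension $\ge 2$, and rearranging the filtration $\scrF$ so that a chosen graded factor may appear first. This is where the interplay between the abelian-category arithmetic of JH factors and the geometric correction at high codimension must be handled carefully (the key input being that in the category of torsion-free sheaves modulo codimension-two phenomena, standard Jordan--H\"older manipulations go through).

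Finally, for the \emph{reflexive hulls}: if two torsion-free sheaves agree on the complement of a closed analytic subset of codimension $\ge 2$, then their double duals agree globally. Indeed, reflexive sheaves on a complex manifold satisfy Serre's condition $S_2$ and are therefore recoverable as the pushforward of their restriction to the complement of any codimension-two subset. Applied to the two graded objects $\Gr_\scrE(E)$ and $\Gr_\scrF(E)$, this immediately yields $\Gr_\scrE(E)^{**}\cong \Gr_\scrF(E)^{**}$, proving that the reflexive hull is canonically associated to $E$.
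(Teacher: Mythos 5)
The paper itself offers no proof of this proposition --- it is quoted directly from Huybrechts--Lehn and Kobayashi --- so the relevant comparison is with the standard textbook argument, and your outline follows exactly that route. Your existence step is the standard one and is correct: choosing $\calE_1$ of minimal positive rank among slope-$\mu(E)$ subsheaves and saturating works, though note that saturation \emph{preserves} the rank and can only increase the degree; the correct justification that the saturation stays in the family is that semistability of $E$ caps its slope at $\mu(E)$. Your reflexive-hull step is also fine: an isomorphism off a codimension-two analytic set dualises and then extends by the normality ($S_2$) property of reflexive sheaves, which is precisely the standard argument.

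The genuine problem is in the uniqueness step. The morphism $u_j:\calE_1\into E\onto E/\calF_{j-1}\onto \calF_j/\calF_{j-1}$ does not exist: $\calF_j/\calF_{j-1}$ is a \emph{subsheaf} of $E/\calF_{j-1}$, not a quotient of it, so there is no natural surjection in the direction you use and the assertion ``each $u_j$ is a morphism between slope stable sheaves'' fails as written. The standard repair is to take $j_0$ minimal with $\calE_1\subseteq\calF_{j_0}$ (it exists since $\calF_k=E$); minimality forces the image of $\calE_1$ in $E/\calF_{j_0-1}$ to be nonzero, and it lands inside $\calF_{j_0}/\calF_{j_0-1}$, giving a nonzero map of stable sheaves of equal slope to which \cref{prop:stabilitysimplicity}(ii) applies directly (no Chern-class input is needed for the codimension-two conclusion). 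Relatedly, the inductive ``rearrange $\scrF$ and quotient by $\calE_1$'' step is asserted rather than proved, and this is where the real content of the cited proofs lies. It does go through, but the input you need is the observation that $\calE_1\cap\calF_j=0$ for all $j<j_0$: otherwise either a proper quotient of the stable sheaf $\calE_1$, of slope strictly greater than $\mu(E)$, would inject into the semistable sheaf $E/\calF_j$, or a nonzero torsion sheaf would inject into the torsion-free sheaf $E/\calF_j$. With this, the filtration induced by $\scrF$ on $E/\calE_1$ has graded pieces $\calF_j/\calF_{j-1}$ unchanged for $j\neq j_0$, while the $j_0$-th piece is the cokernel of $\calE_1\to\calF_{j_0}/\calF_{j_0-1}$, a torsion sheaf supported in codimension $\ge 2$; one must then saturate (or formulate the inductive hypothesis ``up to codimension-two torsion'') before applying induction. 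Making that precise is exactly what \cite[Thm. 5.7.18]{kobayashi1987differential} does, so your proposal is the right skeleton but with one step that fails as literally stated and one step whose justification is deferred to the very bookkeeping the proposition is about.
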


\begin{remark}
	Due to our notion of slope stability implying torsion-freeness, it is an automatic consequence of the stability of the factors of the graded object $\Gr(E)$ is torsion-free for any semistable sheaf $E$. The existence of a filtration with this property is automatic when $E$ is torsion-free as the maximal destabilising subsheaf of a semistable torsion-free sheaf is always saturated. Moreover, a filtration with the the property that the successive quotients are only destabilised by torsion sheaves of codimension $\ge 2$ can be upgraded to a Jordan--H\"older filtration simply by replacing $\calE_i$ with the saturation of $\calE_i$ inside $\calE_{i+1}$ at every step. The resulting filtration has the same numerical properties, and each successive quotient is slope stable in our sense.
	
	The same remark applies to the existence of the Harder--Narasimhan filtration in \cref{thm:HNfiltration} (except that the summands are semistable and torsion-free). In that case the full details that the maximal destabilising sheaf is torsion-free can be found in \cite[Thm. 5.7.15]{kobayashi1987differential}.
\end{remark}

The key properties of slope stability which are used in the construction and uniqueness of Jordan--H\"older filtrations are those of \cref{prop:stabilitysimplicity} and \cref{lem:seesawslopestability}. In particular any slope-type stability function which satisfies these properties will produce an analogous Jordan--H\"older filtration theory for semistable sheaves (for example one could use Gieseker or $Z$-stability instead of slope stability). 

\begin{remark}
	Even if $E$ is a vector bundle, there is no guarantee that the polystable degeneration $\Gr(E)$ is locally free, or even reflexive. However in \cref{ch:correspondence} we will work under the assumption that $\Gr(E)$ is locally free.
	
	We will make use of the existence of a Hermite--Einstein metric on $\Gr(E)$ in the proof of the correspondence for $Z$-critical metrics. The Donaldson--Uhlenbeck--Yau theorem admits an extension to polystable reflexive sheaves (see \cite{bando1994stable}) and so the proof of the correspondence in \cref{ch:correspondence} should generalise to the setting where $\Gr(E)$ is not necessarily locally free, and a singular Hermite--Einstein metric on the polystable reflexive sheaf $\Gr(E)^{**}$ is utilized in the perturbation argument instead. The key difficulty then is to understand the behaviour of the HE metric on the singular set of $\Gr(E)^{**}$ (i.e. the analytic subvariety forming the support of the torsion sheaf $\Gr(E)^{**}/\Gr(E)$, which has been well-studied in the theory of bubbling phenomena in gauge theory).
\end{remark}

One may obtain the vector bundle $E$ from its graded object $\Gr(E)$ by a sequence of extensions. After fixing a Jordan--H\"older filtration $\scrE$ of $E$, we note that $E$ fits into a short exact sequence
\begin{center}
	\ses{\calE_{k-1}}{E}{\calE'_k}
\end{center}
and so $E$ corresponds to an extension of the graded factor $\calE'_k$ by the subsheaf $\calE_{k-1}$, classified by a class $e_k(\scrE)\in \Ext^1(\calE'_k, \calE_{k-1})$. Indeed we can repeat this process to build a sequence of extension classes $e_i(\scrE) \in \Ext^1(\calE'_i, \calE_{i-1})$ which reproduce $E$ from $\Gr(E)$ given $\scrE$. 

\begin{remark}\label{rmk:turningoffextension}
	This process should be seen as a deformation of complex structure from $\Gr(E)$ to $E$, or conversely a degeneration from $E$ to $\Gr(E)$, through the process of ``turning off an extension" (see \cite[Rmk. 5.14]{ross2006obstruction}). In particular given any one-step filtration $0\subset \calF \subset E$ of a vector bundle $E$ (not necessarily the Jordan--H\"older filtration of a semistable bundle), we obtain a short exact sequence
	\begin{center}
		\ses{\calF}{E}{E/\calF}
	\end{center}
	which defines an extension class $e\in \Ext^1(E/\calF, \calF)$. By scaling $\lambda e$ for $\lambda \in \CC$ we obtain a family $\calE \to \CC$ for which the generic fibre $\calE_t\isom E$ for $t\ne 0$, and the central fibre splits into the direct sum $\calF \oplus E/\calF$. 
	
	In \cref{part:zcritical} we will take the viewpoint of deformation of complex structure for this construction, in particular in the proof of the correspondence in \cref{ch:correspondence}.
	
	In \cref{part:fibrations} we will take the viewpoint of bundle degenerations, and this gives a different interpretation of the stability of $E$ as an object, which is more aligned with the language used in the study of K-stability of varieties and of geometric invariant theory. See for example \cite{donaldson2005lower} for a discussion of this perspective.
\end{remark}

For the sake of completeness of discussion, we will also mention the existence of a filtration for \emph{any} holomorphic vector bundle $E\to X$, where $E$ need not be slope semistable. This is the famous \emph{Harder--Narasimhan filtration} of $E$ \cite{harder1975cohomology}. 

\begin{definition}
	A Harder--Narasimhan filtration of a holomorphic vector bundle $E\to X$ is a filtration 
	$$0 = \calE_0 \subset \calE_1 \subset \cdots \subset \calE_k = E$$
	by coherent subsheaves such that each quotient $\calE'_i = \calE_i / \calE_{i-1}$ is slope semistable and the slopes strictly decrease: $\mu(\calE'_i) > \mu(\calE'_{i+1})$ for all $i$.
\end{definition}

In contrast to Jordan--H\"older filtrations which only have uniqueness of the associated graded object, the Harder--Narasimhan filtration of $E$ \emph{is} unique. Indeed we have:

\begin{theorem}[See for example {\cite[Thm. 5.7.15]{kobayashi1987differential}}.]\label{thm:HNfiltration}
	Any holomorphic vector bundle $E\to X$ over a compact K\"ahler manifold admits a Harder--Narasimhan filtration, and there is a unique filtration with each successive quotient $\calE_i'$ torsion-free.
\end{theorem}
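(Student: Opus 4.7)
The plan is to construct the filtration inductively by extracting the \emph{maximal destabilising subsheaf} at each stage, and to prove uniqueness from semistability together with the strictly decreasing slopes.

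First I would establish the key boundedness statement: for any torsion-free coherent sheaf $E$ on $(X,\omega)$, the quantity
\[
\mu_{\max}(E) := \sup\{\mu(\calF) : 0 \ne \calF \subset E \text{ coherent subsheaf}\}
\]
is finite and is attained by some coherent subsheaf. On a projective base this is Grothendieck's boundedness for subsheaves of bounded slope; on a general compact K\"ahler manifold one invokes the corresponding result of Bando--Siu, or reduces via successive hyperplane sections together with the fact that $\deg$ is computed by integrating $c_1(\calF)\wedge \omega^{n-1}$. Once sup-attainment is known, among all subsheaves realising $\mu_{\max}(E)$ I would pick one $\calE_1$ of maximal rank; the maximality forces $\calE_1$ to be slope semistable (a larger-slope subsheaf of $\calE_1$ would destabilise $E$ as well).

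Next I would verify the two structural properties of $\calE_1$. Uniqueness of $\calE_1$ follows from \cref{lem:seesawslopestability}: if $\calF,\calF'$ both achieve $\mu_{\max}$ with maximal rank, then the natural map $\calF\oplus\calF'\to \calF+\calF'\subset E$ has image of slope $\ge \mu_{\max}$, forcing $\calF=\calF'$. That $\calE_1$ is saturated (i.e.\ $E/\calE_1$ is torsion-free) follows similarly: if $\calT\subset E/\calE_1$ is the torsion subsheaf and $\widetilde{\calE}_1$ its preimage in $E$, then the see-saw property for torsion sheaves in \cref{lem:seesawslopestability} gives $\mu(\widetilde{\calE}_1) \ge \mu(\calE_1)$ with strictly larger rank, contradicting maximality (unless $\calT$ is supported in codimension $\ge 2$, in which case we may replace $\calE_1$ by its saturation without changing rank or slope by the same lemma, to ensure the quotient is genuinely torsion-free).

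With $\calE_1$ in hand, I would set $E':= E/\calE_1$, which is torsion-free, and induct on rank to obtain a filtration $0=\calF_0'\subset\cdots\subset \calF_{k-1}'=E'$ with semistable torsion-free quotients of strictly decreasing slopes. Pulling this back to $E$ gives $\calE_i$ with $\calE_i/\calE_{i-1}$ semistable and torsion-free. The strict inequality $\mu(\calE_1'=\calE_1) > \mu(\calE_2/\calE_1)$ comes from the definition of $\mu_{\max}(E')$: any subsheaf of $E'$ of slope $\ge \mu(\calE_1)$ would lift to a subsheaf of $E$ contradicting the choice of $\calE_1$.

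For uniqueness, suppose $\scrE$ and $\scrE'$ are two such filtrations with torsion-free factors. I would compare $\calE_1$ with $\calE_1'$ using \cref{prop:stabilitysimplicity}: the composition $\calE_1\into E\onto E/\calE_1'$ must vanish because its image is a quotient of a semistable sheaf of slope $\mu(\calE_1)$, inside a sheaf all of whose semistable factors have strictly smaller slope (with Hom vanishing by part (i) of the proposition, applied to the Harder--Narasimhan factors of $E/\calE_1'$, which exist by the inductive hypothesis). Hence $\calE_1\subset \calE_1'$, and by symmetry $\calE_1=\calE_1'$; iterating produces $\scrE=\scrE'$. The main obstacle is the boundedness/attainment step in the initial construction of $\calE_1$, since everything else is a fairly mechanical use of the see-saw lemma and the Hom-vanishing in \cref{prop:stabilitysimplicity}.
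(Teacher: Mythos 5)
The paper does not give its own proof of this theorem—it simply cites Kobayashi [Thm.\ 5.7.15]—and your argument is exactly the standard one from that reference (and Huybrechts--Lehn): extract the saturated maximal destabilising subsheaf, whose existence rests on the boundedness/attainment of $\mu_{\max}$ that you correctly identify as the main analytic input, then induct on rank, with uniqueness following from the see-saw lemma and Hom-vanishing between semistable sheaves of strictly decreasing slope. Your outline is correct and takes essentially the same route; the only soft spots are presentational (e.g.\ in the filtration-uniqueness step one should first arrange $\mu(\calE_1)\ge\mu(\calE_1')$ by symmetry before running the Hom-vanishing d\'evissage, and the uniqueness of the maximal destabiliser needs the saturation you introduce a sentence later), not conceptual.
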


Associated to the HN filtration of $E$ is a graded object $\GrHN(E)$ whose summands are torsion-free and slope semistable of decreasing slope. For each summand $\calE'_i$ we may then take the Jordan--H\"older filtration $\GrJH(\calE'_i)$ to obtain a double filtration of $\calE$. Since the JH filtrations of each summand of $\GrHN(E)$ are not unique, the double filtration of $E$ is not unique, but its double graded object associated to the \emph{Harder--Narasimhan--Seshadri filtration}, denoted $\GrHNS(E)$, \emph{is} unique.\footnote{More precisely, the reflexive hull of the double graded object is unique.}

\subsubsection{Gieseker stability\label{sec:giesekerstability}}

Let us briefly review the notion of Gieseker stability, which will be useful by comparison to $Z$-stability as it will appear in \cref{part:zcritical}. Gieseker stability was introduced by Gieseker and Maruyama \cite{gieseker1977moduli,maruyama1977moduli,maruyama1978moduli} and is an alternative to slope stability which is more closely related to the geometric invariant theory of sheaves. First let us recall the definition of the Hilbert polynomial of a coherent sheaf. 

\begin{definition}
	Let $(X,L)$ be a smooth polarised variety. The \emph{Hilbert polynomial} of a coherent sheaf $\calE$ on $X$ is given by
	$$\calP_\calE(k) := \chi(\calE\otimes L^k)=\sum_{i>0} (-1)^i \dim H^i(X,\calE)$$
	where $\chi$ is the holomorphic Euler characteristic of $\calE \otimes L^k$. 
\end{definition}
By the Riemann--Roch formula we can compute the Euler characteristic as
$$\chi(\calE\otimes L^k) = \int_X \Ch(\calE\otimes L^k) \Td(X)$$
which is manifestly a polynomial in $k$. If $L$ is ample (by definition for $(X,L)$ polarised), then by Serre vanishing we see $H^i(X,\calE\otimes L^k) = 0$ for $i>0$ and $k\gg 0$. Thus for $k$ sufficiently large, we have
$$\chi(\calE\otimes L^k) = \dim H^0(X,\calE \otimes L^k).$$
Note that the latter is not a polynomial for all $k$, but for $k\gg 0$ the Riemann--Roch formula implies that it is polynomial. 

\begin{definition}\label{def:giesekerstability}
	A torsion-free coherent sheaf $\calE$ on $(X,L)$ is called \emph{Gieseker stable} (resp. \emph{Gieseker semistable}) if for all proper, non-zero coherent subsheaves $\calF \subset \calE$ we have
	$$\frac{\calP_\calF(k)}{\rk \calF} < \frac{\calP_\calE(k)}{\rk \calE}\quad (\text{resp. } \le)$$
	for all $k\gg 0$. 
\end{definition}

The quantity $\frac{\calP_\calE(k)}{\rk \calE}$ is the \emph{Gieseker slope} of $\calE$. Gieseker stability is an asymptotic form of stability for a sheaf near a ``large volume" limit for $(X,L)$ (where $L$ is replaced by $L^k$), and the first key property of Gieseker stability is the following, which follows readily from the Riemann--Roch formula applied to the Gieseker slope.

\begin{lemma}\label{lem:gieskerimplications}
	To leading order in $k$, the Gieseker slope is given (up to unimportant geometric factors depending on $(X,L)$) by the slope $\mu(E)$. Thus a slope stable vector bundle is Gieseker stable. In particular there are implications
	
	{\centering Slope stable $\implies$ Gieseker stable $\implies$ Gieseker semistable $\implies$ Slope semistable.}
\end{lemma}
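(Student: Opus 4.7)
The plan is to expand the Hilbert polynomial $\calP_\calE(k)$ via Hirzebruch--Riemann--Roch, identify how the slope $\mu(\calE)$ appears in its coefficients, and then read off the chain of implications by an asymptotic comparison.

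First I would write
$$\calP_\calE(k) = \int_X \Ch(\calE) e^{k\omega} \Td(X)$$
with $\omega = c_1(L)$, and extract the degree $n$ component. Expanding $\Ch(\calE)$, $e^{k\omega}$, and $\Td(X)$ and keeping the two highest powers of $k$ gives
$$\calP_\calE(k) = \frac{\rk \calE \cdot \vol(X)}{n!}\, k^n + \frac{k^{n-1}}{(n-1)!}\left(\deg \calE + \frac{\rk \calE}{2}\int_X c_1(X)\,\omega^{n-1}\right) + O(k^{n-2}).$$
Dividing by $\rk \calE$ produces a Gieseker slope whose $k^n$ coefficient $\vol(X)/n!$ is independent of $\calE$, and whose $k^{n-1}$ coefficient is $\mu(\calE)/(n-1)!$ plus a constant $C(X,\omega)$ depending only on the polarised manifold. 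Thus for any two torsion-free coherent sheaves $\calE, \calF$ of positive rank, the difference of their Gieseker slopes has vanishing $k^n$ coefficient and $k^{n-1}$ coefficient equal to $(\mu(\calF) - \mu(\calE))/(n-1)!$; whenever these slopes differ, the sign of the comparison for $k \gg 0$ is therefore controlled by the sign of $\mu(\calF) - \mu(\calE)$.

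From here the three implications follow readily. Gieseker stable $\implies$ Gieseker semistable is immediate from the definition. For slope stable $\implies$ Gieseker stable I would take a proper non-zero subsheaf $\calF \subset \calE$: if $\rk \calF < \rk \calE$ then $\mu(\calF) < \mu(\calE)$ by slope stability, and the asymptotic comparison gives the strict Gieseker inequality for $k \gg 0$; if instead $\rk \calF = \rk \calE$ then $\calE/\calF$ is a nonzero torsion sheaf, so $\calP_\calE(k) - \calP_\calF(k) = \calP_{\calE/\calF}(k)$ is a nonzero polynomial of degree at most $n-1$ with positive leading coefficient, which, since the ranks agree, forces $\calP_\calF(k)/\rk \calF < \calP_\calE(k)/\rk \calE$ for $k \gg 0$. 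For Gieseker semistable $\implies$ slope semistable I would argue contrapositively: a slope-destabilising subsheaf $\calF \subset \calE$ must have $\rk \calF < \rk \calE$ (since an equal-rank torsion-free subsheaf of a torsion-free sheaf automatically satisfies $\mu(\calF) \le \mu(\calE)$ as the quotient is torsion of non-negative degree), and then the leading-order slope comparison violates Gieseker semistability.

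The main mild obstacle is the equal-rank case in the implication slope stable $\implies$ Gieseker stable, which the $k^{n-1}$ comparison alone does not resolve; this is precisely what the polynomial-degree argument for $\calP_{\calE/\calF}$ handles. All other cases are controlled purely by the $k^{n-1}$ coefficient, so the heart of the proof is the Riemann--Roch expansion together with the basic additivity of rank and degree in short exact sequences recorded in \cref{lem:seesawslopestability}.
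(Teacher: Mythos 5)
Your proposal is correct and follows exactly the route the paper indicates: the paper gives no written proof, only the remark that the lemma ``follows readily from the Riemann--Roch formula applied to the Gieseker slope,'' and your Hirzebruch--Riemann--Roch expansion isolating $\mu(\calE)$ in the $k^{n-1}$ coefficient is precisely that argument carried out. Your extra care with the equal-rank subsheaf case (via the positivity and degree bound of $\calP_{\calE/\calF}$ for a nonzero torsion quotient) correctly fills in the one step the leading-order comparison does not settle, which the paper leaves implicit.
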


One can go on to observe that the analogues of \cref{prop:stabilitysimplicity} and \cref{lem:seesawslopestability} hold for the Gieseker slope also, and so the discussion of \cref{sec:filtrations} could have been equally carried out for Gieseker stability. For a detailed presentation of this theory, see \cite[Ch. 1]{huybrechts-lehn}.

\subsection{Yang--Mills connections}

In view of \cref{principle}, corresponding to the preceding notion of a slope stable vector bundle there should be a notion of an extremal object in differential geometry. The first indications of what this extremal object is were provided by Donaldson and Atiyah--Bott in their rephrasing of the theorem of Narasimhan--Seshadri in terms of gauge theory \cite{donaldson1983new,atiyahbott}.

In particular Atiyah and Bott made the following remarkable insight: If $A$ is a Chern connection on a Hermitian vector bundle $(E,h)$ over a compact Riemann surface then the map
$$\mu: A \mapsto F(A)$$
is a moment map for the action of the unitary gauge group $\calG$ acting on $E$. Indeed we recall that $A$ lives in the space $\calA(h)$ of integrable $h$-unitary connections, which is an affine space modelled on $\Omega^1(X,\End_{SH}(E,h))$ where $\End_{SH}(E,h)$ denotes the skew-Hermitian endomorphisms of $E$ with respect to $h$. The curvature $F_A\in \Omega^{1,1}(X,\End_{SH}(E,h))$ may be viewed as an element of the formal dual of the Lie algebra $\g=\Omega^0 (X, \End_{SH}(E,h))$ of $\calG$ under the natural pairing
$$(\varphi, \psi) \mapsto -\int_X \trace \varphi \psi$$
for $\varphi \in \g$, $\psi \in \Omega^2(X,\End_{SH}(E,h))$. Here the underlying symplectic structure on $\calA(h)$ is given by the \emph{Atiyah--Bott symplectic form},
$$\Omega_{AB}(a,b) = - \int_X \trace(a\wedge b).$$

Primarily through observations of Donaldson \cite[\S 4]{donaldson1985anti}, this understanding of the curvature as a moment map on compact Riemann surfaces admits an upgrade to compact K\"ahler manifolds.\footnote{Here the Atiyah--Bott symplectic form simply picks up a $\wedge \omega^{n-1}$ in the integrand.} Associated to this moment map construction are two natural functionals which have finite-dimensional analogues we have seen in \cref{sec:GIT}. The first is the \emph{Yang--Mills functional} $\|\mu\|^2$, given by
\begin{equation}\label{eq:ymfunctional}\YM(A) = \int_X |F_A|^2 \dvol.\end{equation}
To identify the second functional, let us now recall that any Chern connection $A$ for $h$ is determined uniquely by a Dolbeault operator $\nabla_A^{0,1} = \delbar_A$ satisfying the integrablity condition $\delbar_A^2 = F_A^{0,2} = 0$. Such Dolbeault operators are in one-to-one correspondence with holomorphic structures on the underlying smooth vector bundle $E$, so using this identification of the space of Chern connections $\calA(h)$ with holomorphic structures we transfer the action of the complex gauge group $\calG^\CC$ onto $\calA(h)$, and we obtain the formal Kempf--Ness picture from finite dimensions analogous to \cref{sec:GIT}. 

On a fixed $\calG^\CC$ orbit inside $\calA(h)$ we may consider the ``Kempf--Ness functional" of our problem, just as in \cref{sec:kempfness}. Indeed in this setting we switch our perspective from varying the holomorphic structure of $(E,h)$ to fixing a holomorphic structure $\delbar_E$ and varying the metric, as follows: In a fixed complex gauge orbit $\calG^\CC \cdot \delbar_E$ inside $\calA(h)$, pulling back the fixed Hermitian metric $h$ from $(E,g\cdot \delbar_E)$ to $(E,\delbar_E)$ gives a new Hermitian metric $g^*h$ on $(E,\delbar_E)$ (modulo the unitary gauge transformations $g\in \calG$), which produces a Chern connection $A(g^* h)$. This construction gives a one-to-one correspondence between Dolbeault operators $\delbar_A \in \calG^\CC \cdot \delbar_E$ and Hermitian metrics $h$ on $(E,\delbar_E)$, and the orbit in $\calA(h)$ may now be described as the quotient $\calG^\CC/\calG \isom \Herm(E)$.

The Kempf--Ness functional in this setting is known as the \emph{Donaldson functional}, and was introduced by Donaldson in \cite{donaldson1985anti}. Given a fixed reference metric $h$ on $E$, the Donaldson functional is the unique functional
$$\calM: \calG^\CC/\calG \to \RR$$
satisfying $\calM(h) = 0$ and with first variation given by
$$\deriv{}{t} \calM(h_t) = \int_X \trace (h_t^{-1} \del_t h_t \circ (\contr_\omega i F(h_t) - \lambda \id_E)) \omega^n.$$

\subsubsection{Hermite--Einstein metrics\label{sec:hermiteeinsteinmetrics}}

As is typical in a Kempf--Ness formalism for \cref{principle}, there are two types of extremal objects we can study:

\begin{enumerate}
	\item Critical points of the functional $\|\mu\|^2$ on $\calA(h)$.
	\item Critical points of $\calM$ on $\calG^\CC/\calG\isom \Herm(E)$. 
\end{enumerate}

The latter critical points correspond to the \emph{absolute minima} of $\|\mu\|^2$ on $\calA(h)$ inside $\G^\CC$ orbits, but one also has higher critical points of $\|\mu\|^2$ which we will remark on later.

An analysis of the variation of the Yang--Mills functional $\YM$ or Donaldson functional $\calM$ reveals that the absolute minima of the Yang--Mills functional $\YM = \|\mu\|^2$ are given by so-called \emph{Hermite--Einstein metrics} (see \cite[Thm. 4.3.9]{kobayashi1987differential}).

\begin{definition}\label{def:hermiteeinstein}
	A Hermitian metric $h$ on a holomorphic vector bundle $E\to (X,\omega)$ over a compact K\"ahler manifold is called \emph{Hermite--Einstein} (or \emph{Hermitian Yang--Mills}) if 
	\begin{equation}
		\label{eq:hermiteeinstein}
		iF(h)\wedge \omega^{n-1} = \lambda \id_E\otimes \omega^n
	\end{equation}
	for some constant $\lambda \in \RR$. 
	
	If a metric $h$ satisfies the Hermite--Einstein equation where $\lambda=f \in C^\infty(X)$ is a non-constant function, we call $h$ a \emph{weak Hermite--Einstein metric with function $f$}.
\end{definition}

This is a second order elliptic partial differential equation in the metric $h$, which is linear in the curvature $F(h)$. Let us make several remarks:

\begin{remark}
	We note that the Einstein constant $\lambda$ is purely topological, and by integrating and using Chern--Weil theory we deduce
	$$\lambda = \frac{2\pi}{n! \vol(X)} \mu(E).$$
	When instead we have a non-constant Einstein function $\lambda = f$ the above calculation simply determines the topological average of $f$ over $X$.
\end{remark}
\begin{remark} 
	After a conformal change of metric any weak Hermite--Einstein metric may be transformed into a genuine Hermite--Einstein metric, so without loss of generality we may always take $\lambda$ to be constant (see \cite[\S 4.2]{kobayashi1987differential}). In particular the relationship of stability with existence applies to weak Hermite--Einstein metrics also.
	
	Weak Hermite--Einstein metrics will naturally appear in the large volume limit of the $Z$-critical equation in \cref{part:zcritical}.
\end{remark}

\begin{remark}\label{rmk:highercriticalym}
	The higher critical points of the Yang--Mills functional $\YM$ correspond to Hermite--Einstein-type metrics with a block diagonal matrix whose coefficients depend on the Harder--Narasimhan--Seshadri type of $E$ (see \cite[Thm. 4.3.27]{kobayashi1987differential}). In particular suppose $E\to (X,\omega)$ is a holomorphic vector bundle with HNS type $\nu(E)$. If a Hermitian metric $h$ on $E$ is Yang--Mills, that is $h$ is a critical point of the Yang--Mills functional \eqref{eq:ymfunctional}, then $E$ admits a holomorphic orthogonal decomposition 
	$$E = E'_1 \oplus \cdots \oplus E'_k$$
	with summands $E'_i$ each vector subbundles precisely so $E \isom \GrHNS(E)$ biholomorphically. One can characterise when the Hermitian metric $h$ on $E$ is Yang--Mills by a differential equation similar to the Hermite--Einstein equation. The metric is Yang--Mills if and only if
	$$iF(h) \wedge \omega^{n-1} = \frac{2\pi}{n! \vol(X)} \diag (\nu(E)) \otimes \omega^n$$
	with respect to this orthogonal decomposition of $E$, where $\diag(\nu(E))$ has value $\mu(E'_i)$ on the summand $E'_i$. 
\end{remark}

We will not delay any longer in stating the celebrated correspondence between Hermite--Einstein metrics and slope stability. 

\begin{theorem}[Donaldson--Uhlenbeck--Yau]\label{thm:DUY}
	A holomorphic vector bundle $E\to (X,\omega)$ over a compact K\"ahler manifold admits a Hermite--Einstein metric if and only if it is slope polystable.
\end{theorem}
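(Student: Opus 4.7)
The plan is to prove each direction of the correspondence separately, with the forward direction (Hermite--Einstein $\implies$ polystable) being essentially formal, and the reverse direction (polystable $\implies$ Hermite--Einstein) requiring the substantial analytic work.

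For the easy direction, suppose $(E,h)$ is Hermite--Einstein with constant $\lambda$ and let $\calF \subset E$ be a coherent subsheaf. First one reduces to the case of a genuine subbundle away from an analytic subset of codimension $\ge 2$, using that the degree and slope are unaffected by such subsets. On the subbundle locus, the induced metric $h|_\calF$ has curvature $F(h|_\calF) = \pi_\calF \circ F(h)|_\calF - \beta \wedge \beta^*$ where $\beta$ is the second fundamental form of $\calF \subset E$. Taking trace with $\omega^{n-1}/(n-1)!$ and using the Hermite--Einstein equation for $E$, Chern--Weil theory yields
\[
\deg(\calF) = \frac{1}{2\pi} \int_X \trace(i F(h|_\calF)) \wedge \frac{\omega^{n-1}}{(n-1)!} \le \frac{\rk(\calF)}{\rk(E)} \deg(E),
\]
with equality only when $\beta \equiv 0$, so that $\calF$ splits off holomorphically and orthogonally. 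Iterating on $E/\calF$ identifies $E$ as a direct sum of Hermite--Einstein summands, each of which is slope stable by the strict inequality in the previous step, so $E$ is polystable.

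For the hard direction it suffices by linearity of \eqref{eq:hermiteeinstein} and the direct-sum decomposition above to treat the slope stable case. I would use the Uhlenbeck--Yau continuity method: fix a background metric $h_0$ and for $\epsilon \in [0,1]$ consider the perturbed equation
\[
\contr_\omega i F(h_\epsilon) - \lambda \id_E + \epsilon \log(h_0^{-1} h_\epsilon) = 0.
\]
At $\epsilon = 1$ a solution exists by a standard monotonicity argument (the equation becomes coercive), and openness of the solution set $S \subset [0,1]$ follows from the implicit function theorem after checking that the linearised operator is self-adjoint and elliptic with trivial kernel at any solution (the perturbation term ensures invertibility). The core analytic problem is closedness: given solutions $h_\epsilon$ for $\epsilon \in (\epsilon_0, 1]$, one must produce a limit as $\epsilon \to \epsilon_0$.

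The decisive step is the $C^0$ estimate on $\log(h_0^{-1} h_\epsilon)$, for once this is in hand the higher regularity follows from bootstrapping in the elliptic equation and standard Schauder theory. The hard part of the proof is therefore the following dichotomy: either the $C^0$ norms $\sup_\epsilon \|\log(h_0^{-1} h_\epsilon)\|_{C^0}$ are bounded, in which case closedness is clear, or they blow up along a subsequence, in which case one renormalises $s_\epsilon := \log(h_0^{-1} h_\epsilon)/\|\log(h_0^{-1} h_\epsilon)\|_{C^0}$ and extracts a weak $L^2_1$ limit $s_\infty$, a nonzero self-adjoint endomorphism with eigenvalues bounded in $[-1,1]$. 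The Uhlenbeck--Yau regularity theorem for weakly holomorphic subbundles then shows that the spectral projections of $s_\infty$ onto its eigenspaces define coherent subsheaves of $E$, and a calculation using the $\epsilon \to \epsilon_0$ limit of the perturbed equation (or equivalently a lower bound for the Donaldson functional) shows that at least one of these subsheaves $\calF \subsetneq E$ satisfies $\mu(\calF) \ge \mu(E)$, contradicting slope stability. Thus the $C^0$ estimate holds, $S$ is closed, and $0 \in S$ produces the desired Hermite--Einstein metric.
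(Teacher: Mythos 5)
Your plan is correct in outline, but it is worth being clear about how it sits relative to the text: the paper does not prove Theorem \ref{thm:DUY} at all — it cites Kobayashi--L\"ubke for the easy direction and Donaldson and Uhlenbeck--Yau for the hard one — and the only proof content it supplies is the easy direction restricted to holomorphic \emph{subbundles}, via the block decomposition \eqref{eqn:curvaturesubbundle} and the ``curvature decreases in subbundles'' Chern--Weil computation. Your easy direction is exactly that argument (note your Gauss--Codazzi sign is opposite to the paper's convention, where $(F_E)_{SS}=F_S-\beta\wedge\beta^*$, but the inequality and the equality case $\beta\equiv 0$ come out the same), extended to subsheaves by passing to the locus where the saturation is a subbundle; that extension is standard but does require the usual care that the Chern--Weil integral for the degenerating induced metric still computes $\deg\calF$, a point the paper sidesteps by restricting to subbundles. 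Your hard direction is the Uhlenbeck--Yau continuity method — the perturbed equation $\contr_\omega iF(h_\epsilon)-\lambda\id_E+\epsilon\log(h_0^{-1}h_\epsilon)=0$, openness from the shifted (hence invertible) linearisation, and the $C^0$ dichotomy in which blow-up is renormalised to a weak $L^2_1$ limit whose spectral projections are weakly holomorphic and, by the Uhlenbeck--Yau regularity theorem, define a destabilising coherent subsheaf — which is one of the two original routes the paper cites, the alternative being Donaldson's proof via the Donaldson functional and an induction on dimension. The two remaining points your sketch compresses, both standard but genuinely where the work lies, are: (i) that the renormalised limit $s_\infty$ has almost-everywhere constant eigenvalues, so that cutoff functions of $s_\infty$ really produce $L^2_1$ projections $\pi$ with $\pi^2=\pi$ and $(\id-\pi)\delbar\pi=0$; and (ii) the slope estimate $\mu(\calF)\ge\mu(E)$ for the resulting subsheaf, which comes from the limit of the perturbed equation (equivalently a lower bound on the Donaldson functional) and is what contradicts stability. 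With those supplied from the literature your plan is a complete and faithful account of the Uhlenbeck--Yau proof, and it is more than the paper itself attempts.
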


This correspondence was conjectured independently by Kobayashi and Hitchin \cite{kobayashi1982curvature,hitchin1979non} at the beginning of the 1980s. The ``easy" direction that existence implies stability was proven by Kobayashi and L\"ubke \cite{kobayashi1980first,lubke1983stability}. In the reverse direction, the case of compact Riemann surfaces is essentially the theorem of Narasimhan--Seshadri \cite{narasimhan1965stable} reinterpreted using Donaldson's proof \cite{donaldson1983new}. The case of algebraic surfaces was proven by Donaldson \cite{donaldson1985anti}, and the full correspondence for compact K\"ahler manifolds using a continuity method by Uhlenbeck and Yau the following year \cite{uhlenbeck1986existence}. Donaldson afterwards gave a new proof of the theorem for all projective manifolds using a different technique to Uhlenbeck and Yau, with an inductive argument along the lines of the case of algebraic surfaces \cite{donaldson1987infinite}. 

There are generalisations of \cref{thm:DUY} known for non-K\"ahler manifolds, proved for surfaces by Buchdahl \cite{buchdahl1986hermitian} and in general by Li--Yau \cite{li1987hermitian}. Additionally we recall that a version of the correspondence is known when $E$ is instead a reflexive sheaf over $X$ due to Bando--Siu, which in particular applies in some settings to the graded object $\Gr(E)$ of a semistable bundle $E$, which is always polystable but not necessarily locally free \cite{bando1994stable}. 

As is true for the slope stability of vector bundles, we note that there is a significant body of work using the Hermite--Einstein equation to construct moduli of metrics on bundles, and indeed a rich interplay between the algebraic and differential-geometric techniques to describe the same moduli spaces. We will not comment further on these results, and refer for example to \cite{greb2021complex} for more details.

\subsubsection{Hermitian geometry of Chern connections\label{sec:chernconnections}}
In this section we will analyse the Hermite--Einstein equation and in doing so collect a number of facts in the Hermitian geometry of complex vector bundles which will be important in \cref{part:zcritical,part:fibrations}. As part of this we will observe many of the basic properties of stability which appeared in \cref{sec:stability} manifesting in differential geometry, and in particular we will prove the easy direction of the Donaldson--Uhlenbeck--Yau theorem (at least when the subobject is also locally free). 

Let us begin by linearising the Hermite--Einstein equation \eqref{eq:hermiteeinstein}. To do so, let us recall the tangency structure of the space of Hermitian metrics $\Herm(E)$ on a holomorphic vector bundle. The isomorphism $\Herm(E) \isom \calG^\CC / \calG(E,h)$ for a fixed reference metric $h$ shows that any nearby metric (equivalently, any nearby Chern connection in the same gauge orbit of $A(h)$) is given by 
$$h_t = \exp(tV)\cdot h = h(-,\exp(tV)-)$$
where $V\in \Omega^0(X,\End_{H}(E,h))$ is a Hermitian endomorphism with respect to $h$. This has the effect of transforming
$$\delbar_{h_t} = \delbar_h = \delbar_E$$
and
$$\del_{h_t} = \exp(-tV) \circ \del_h \circ \exp(tV).$$
Indeed we have:

\begin{lemma}\label{lem:linearisationcurvatureMetric}
	The curvature $F(h)$ of a Hermitian metric transforms as
	$$F(h_t) = F(h) + \delbar \del_h V t + O(t^2).$$
\end{lemma}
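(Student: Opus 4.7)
The plan is to expand $F(h_t)=\nabla_{h_t}^2=(\del_{h_t}+\delbar_E)^2$ directly, using the two facts supplied in the paragraph preceding the lemma: that $\delbar_{h_t}=\delbar_E$ is independent of $t$, and that $\del_{h_t}=\exp(-tV)\circ \del_h\circ \exp(tV)$. Since the Chern connection of a holomorphic bundle has curvature of pure type $(1,1)$, I would first record the simplification $F(h_t)=\delbar_E\del_{h_t}+\del_{h_t}\delbar_E$, so that all of the $t$-dependence is concentrated in the $(1,0)$-part $\del_{h_t}$.

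Next I would linearise the conjugation $\del_{h_t}=\exp(-tV)\circ\del_h\circ\exp(tV)$ in $t$. Writing $\exp(\pm tV)=\id\pm tV+O(t^2)$ and applying the Leibniz rule $\del_h\circ \exp(tV)=\exp(tV)\circ \del_h + \del_h(\exp(tV))$, where $\del_h(\exp(tV))$ is interpreted as wedge-multiplication by the $\End(E)$-valued $(1,0)$-form obtained by applying the induced Chern connection on $\End(E)$ to the endomorphism $\exp(tV)$, I get, as first-order differential operators on sections of $E$,
$$\del_{h_t} \;=\; \del_h + t\,\del_h V + O(t^2).$$

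Substituting this into $F(h_t)=\delbar_E\del_{h_t}+\del_{h_t}\delbar_E$ and collecting by order in $t$ gives
$$F(h_t) \;=\; F(h) + t\bigl(\delbar_E\circ (\del_h V) + (\del_h V)\circ \delbar_E\bigr) + O(t^2).$$
Finally I would apply the graded Leibniz rule for $\delbar_E$ acting on $\End(E)$-valued forms: for any $\End(E)$-valued $(1,0)$-form $\alpha$, one has $\delbar_E\circ m_\alpha + m_\alpha\circ \delbar_E = m_{\delbar\alpha}$ as operators on sections. Taking $\alpha=\del_h V$ gives precisely the claimed linear correction $\delbar\del_h V$.

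The calculation is essentially formal; the only point requiring care is the sign in the graded Leibniz rule, since $\del_h V$ is a $1$-form so the usual anti-commutator appears rather than a commutator. Once this is handled, no further ingredients are needed.
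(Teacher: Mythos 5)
Your proof is correct, and it is essentially the same argument as the paper's: both hinge on linearising the $(1,0)$-part of the Chern connection to get $\del_{h_t}=\del_h+t\,\del_h V+O(t^2)$ and then applying $\delbar$. The only difference is presentational — you work globally with the conjugation formula and the graded Leibniz rule (correctly noting the anticommutator $\delbar_E\circ m_{\del_h V}+m_{\del_h V}\circ\delbar_E=m_{\delbar\del_h V}$ for an odd-degree form), whereas the paper does the identical computation in a local frame via $A=H^{-1}\del H$, $H_t=H\exp(tV)$, and then reads off $F(h_t)=\delbar A_t$.
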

\begin{proof}
	Recall in local coordinates the $(1,0)$-part of the Chern connection takes the form
	$$A = H^{-1} \del H$$
	where $H$ is the local Hermitian matrix representing $h$. In local coordinates $H_t = H\exp(tV)$ so $H_t^{-1} = \exp(-tV)H^{-1}$. Then we compute
	\begin{align*}
		A_t &= H_t^{-1} \del H_t\\
		&= \exp(-tV)H^{-1} (\del H \exp(tV) + H t\del V \exp (tV))\\
		&= \exp(-tV) A \exp(tV) + t \exp(-tV) \del V \exp(tV)\\
		&= A + (-VA + AV + \del V)t + O(t^2)\\
		&= A + \del_h V t + O(t^2).
	\end{align*}
	The Chern curvature is further computed as $\delbar A_t$ which produces
	\begin{equation*}F(h_t) = F(h) + \delbar \del_h V t + O(t^2).\qedhere\end{equation*}
\end{proof}
Instead of working with a fixed holomorphic structure on $E$ and varying $h$ inside $\Herm(E)$, one could instead fix $h$ and vary the Chern connection $A$ within its complex gauge orbit inside $\calA(h)$. Acting by a unitary gauge transformation $g\in \G^\CC$ (i.e. such that $g^* g = \id_E$) conjugates the Chern connection, and in particular $F_A$ will satisfy a gauge-invariant equation if and only if $F_{g\cdot A}$ will. Therefore if we wish to capture a genuine change of the curvature inside a complex gauge orbit, we should work orthogonal to the unitary gauge transformations. The orthogonal complement to the unitary transformations are the Hermitian transformations $g\in \G^\CC$ such that $g^* = g$ (in the sense that the Hermitian endomorphisms are the orthogonal complement to the unitary Lie algebra of $\G$ with respect to the natural trace pairing). The well-known formula for the complex gauge group acting on Chern connections is given by
\begin{align}
	g\cdot d_A &= g^* \circ \delbar_A \circ g^{*-1} + g^{-1} \circ \del_A \circ g\label{eq:gaugeaction}\\
	&= g \circ \delbar_A \circ g^{-1} + g^{-1} \circ \del_A \circ g\nonumber
\end{align}
where in the second line we have used that $g=g^*$ in our case.\footnote{Some sources use the convention that $g\cdot \delbar_E = g^{-1} \circ \delbar_E \circ g$ which produces a sign difference in the computation of the linearisation, but no other differences. Our convention matches the transformation of the Hermitian metric and follows for example \cite{donaldson1985anti}, at the cost of treating the action on $\Herm(E)$ as a right action and on $\calA(h)$ as a left action.} To produce a small perturbation of a Chern connection $A$ within its complex gauge orbit, we act by $g_t=\exp (tV)$ where $V$ is a Hermitian endomorphism to produce $d_{A_t} = g_t \cdot d_A$. Then we have the following.

\begin{lemma}\label{lem:linearisationcurvatureChern}
	The curvature $F_A$ of a Chern connection for a fixed Hermitian metric $h$ transforms as
	$$F_{A_t} = F_A + \left( \delbar_{A^{\End E}} \del_{A^{\End E}} - \del_{A^{\End E}} \delbar_{A^{\End E}} \right) V t + O(t^2).$$
\end{lemma}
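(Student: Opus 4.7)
The plan is a direct calculation. First I would expand the gauge-transformed operators to first order in $t$: using the formula \eqref{eq:gaugeaction} and the Leibniz rule for $\delbar_A$ acting on the product $g_t^{-1} s$ for $s$ a section of $E$, one obtains, as operators on sections,
\begin{align*}
\delbar_{A_t} &= \delbar_A + g_t \cdot \delbar_{A^{\End E}}(g_t^{-1}) = \delbar_A - t\,\delbar_{A^{\End E}} V + O(t^2),\\
\del_{A_t} &= \del_A + g_t^{-1} \cdot \del_{A^{\End E}}(g_t) = \del_A + t\,\del_{A^{\End E}} V + O(t^2),
\end{align*}
so writing $a_t := d_{A_t} - d_A$, we have $a_t = t(\del_{A^{\End E}} - \delbar_{A^{\End E}})V + O(t^2)$ as an $\End E$-valued one-form.

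Next I would use the standard variation-of-curvature identity $F_{A_t} = F_A + d_{A^{\End E}} a_t + a_t\wedge a_t$. The quadratic term $a_t\wedge a_t$ is $O(t^2)$ and so drops out, leaving
$$F_{A_t} - F_A = t\,(\del_{A^{\End E}} + \delbar_{A^{\End E}})(\del_{A^{\End E}} V - \delbar_{A^{\End E}} V) + O(t^2).$$
Expanding this gives four terms: $\del_{A^{\End E}}^2 V$, $\delbar_{A^{\End E}}^2 V$, and the mixed terms $\delbar_{A^{\End E}}\del_{A^{\End E}} V - \del_{A^{\End E}}\delbar_{A^{\End E}} V$.

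Finally I would invoke the fact that $A$ is a Chern connection on a \emph{holomorphic} bundle, so its curvature $F_A$ is of type $(1,1)$; the induced curvature on $\End E$ is the commutator with $F_A$ and is therefore also of type $(1,1)$. Consequently $\del_{A^{\End E}}^2 = 0$ and $\delbar_{A^{\End E}}^2 = 0$, killing the two pure-type terms and leaving exactly the claimed expression
$$F_{A_t} = F_A + \left(\delbar_{A^{\End E}}\del_{A^{\End E}} - \del_{A^{\End E}}\delbar_{A^{\End E}}\right) V\, t + O(t^2).$$
There is no real obstacle here beyond careful bookkeeping; the only subtle point is correctly identifying that when $\delbar_A$ is composed with multiplication by the endomorphism $g_t^{-1}$, the resulting correction term is governed by the induced connection $\delbar_{A^{\End E}}$, which is what produces the $\End E$-valued expression in the final formula (and ensures comparison with Lemma~\ref{lem:linearisationcurvatureMetric}, where only $\delbar\del_h V$ appears because the reference $\delbar$ operator is untwisted on $\End E$).
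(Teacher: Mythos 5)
Your proof is correct, and it reaches the paper's formula by a cleaner organization of the same first-order computation. You first identify the variation $a_t = d_{A_t}-d_A$ as a genuine $\End E$-valued one-form, using the Leibniz rule for the induced connection to get $a_t = t(\del_{A^{\End E}} - \delbar_{A^{\End E}})V + O(t^2)$, then apply the affine identity $F_{A+a} = F_A + d_{A^{\End E}}a + a\wedge a$ and kill the pure-type terms via $\del_{A^{\End E}}^2 = \delbar_{A^{\End E}}^2 = 0$, which holds because the induced curvature on $\End E$ is $[F_A,\cdot\,]$ and $F_A$ has type $(1,1)$. The paper instead expands the operator square $F_{A_t} = d_{A_t}^2$ directly from \eqref{eq:gaugeaction}, uses $\del_A^2 = \delbar_A^2 = 0$ on $E$, collects the six first-order terms, and only at the end rewrites them in terms of the endomorphism connections via the local substitutions $\del_A V = \del_{A^{\End E}}V - A^{1,0}V$ and $\delbar_A V = \delbar_{A^{\End E}}V - A^{0,1}V$. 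Your packaging avoids the messy intermediate expression and the local formulas, at the cost of invoking the curvature-variation identity, which the paper in any case uses elsewhere (for instance in the proof of \cref{prop:Kahlerformclosed}), so both routes rest on the same ingredients: the gauge-action formula and the $(1,1)$-type of the Chern curvature. One small quibble: your closing parenthetical about \cref{lem:linearisationcurvatureMetric} is slightly off — the $\delbar$ appearing there is still the Dolbeault operator of $\End E$, i.e.\ the $(0,1)$-part of the induced Chern connection; the reason only $\delbar\del_h V$ appears in that lemma is that varying the metric perturbs only the $(1,0)$-part of the Chern connection, whereas the fixed-metric gauge action perturbs both parts, which is what produces the extra $-\del_{A^{\End E}}\delbar_{A^{\End E}}V$ term here. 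This side remark does not affect the validity of your argument.
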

\begin{proof}
	Using the formula $F_{A_t} = d_{A_t}^2$ and the fact that $\delbar_A^2 = \del_A^2 = 0$ since the Chern curvature has type $(1,1)$, we have
	\begin{align}
		F_{A_t} &= (g_t \circ \delbar_A \circ g_t^{-1} + g_t^{-1} \circ \del_A \circ g_t) \circ (g_t \circ \delbar_A \circ g_t^{-1} + g_t^{-1} \circ \del_A \circ g_t)\nonumber\\
		&= g_t \delbar_A^2 g_t^{-1} + g_t \delbar_A g_t^{-2} \del_A  g_t \nonumber\\
		&\quad + g_t^{-1} \del_A g_t^2 \delbar_A g_t^{-1} + g_t^{-1} \del_A^2  g_t\nonumber\\
		&= g_t \delbar_A g_t^{-2} \del_A g_t + g_t^{-1} \del_A g_t^2 \delbar_A g_t^{-1}\nonumber\\
		&= \delbar_A \del_A + \del_A \delbar_A + t(V \delbar_A \del_A - 2\delbar_A V \del_A + \delbar_A \del_A V\nonumber\\
		&\quad -V \del_A \delbar_A + 2\del_A V \delbar_A - \del_A \delbar_A V) + O(t^2).\label{eq:curvatureexpansionchern}
	\end{align}
	Recalling the endomorphism connections
	$$\del_{A^{\End E}} V = \del V + [A^{1,0},V],\quad \delbar_{A^{\End E}} V = \delbar V + [A^{0,1},V]$$
	and expanding $\del_A = \del + A^{1,0}$ and $\delbar_A = \delbar + A^{0,1}$ we have 
	$$\del_A V = \del_{A^{\End E}} V - A^{1,0} V,\quad \delbar_A V = \delbar_{A^{\End E}} V - A^{0,1} V,$$
	and substituting these into \eqref{eq:curvatureexpansionchern} one may compute
	\[F_{A_t} = F_A + \left( \delbar_{A^{\End E}} \del_{A^{\End E}} - \del_{A^{\End E}} \delbar_{A^{\End E}} \right) V t + O(t^2).\qedhere \]
\end{proof}

\begin{remark}
	One can see that the two ways of linearising the curvature are equivalent. Indeed recall that the isomorphism between $\G^\CC \cdot d_A$ and $\Herm(E,\delbar_E)$ is given by sending $(h,g\cdot \delbar_E)$ to $(g\cdot h,\delbar_E)$ where $g$ is a Hermitian endomorphism. One can show 
	$$F(h,g^{1/2} \cdot \delbar_E) = g^{1/2} \circ F(g\cdot h, \delbar_E) \circ g^{-1/2}$$
	and setting $g=\exp(tV)$ and computing the first order expressions in $t$, one observes that the expression in \cref{lem:linearisationcurvatureChern} appearing on the left-hand side is transformed into the expression from \cref{lem:linearisationcurvatureMetric} appearing on the right.
\end{remark}

As a consequence of these computations, we can now compute the linearisation of the Hermite--Einstein equation.

\begin{proposition}\label{prop:linearisationHermiteEinstein}
	The linearisation $P$ of the Hermite--Einstein operator
	$$D: A \mapsto iF_A \wedge \omega^{n-1} - \lambda \id_E \otimes \omega^n$$
	under the action of the Hermitian gauge transformations\footnote{Instead we could work with Hermitian metrics on a fixed bundle in an orbit.} is (up to a constant factor) given by the $\End E$-Laplacian $\Lap_{A^{\End E}} = d_A^* d_A$.
\end{proposition}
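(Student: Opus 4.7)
The plan is to apply Lemma~\ref{lem:linearisationcurvatureChern} to get the first-order variation of the curvature under a Hermitian gauge transformation $g_t = \exp(tV)$, and then use the K\"ahler identities to identify the resulting operator on $V$ with the Laplacian $\Delta_{A^{\End E}} = d_A^* d_A$ (up to constants coming from how $\omega^{n-1}$ relates to the contraction $\contr_\omega$).

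First I would note that the $\lambda \id_E \otimes \omega^n$ term is constant along the Hermitian gauge orbit, since the topological constant $\lambda$ depends only on $\mu(E)$ and $\vol(X)$ and the form $\omega^n$ is fixed. Thus the linearisation of $D$ equals the linearisation of $A\mapsto i F_A \wedge \omega^{n-1}$. By Lemma~\ref{lem:linearisationcurvatureChern}, this linearisation in the direction $V\in \Omega^0(X,\End_H(E,h))$ is given by
\[
P(V) \;=\; i\bigl(\delbar_{A^{\End E}}\del_{A^{\End E}} - \del_{A^{\End E}}\delbar_{A^{\End E}}\bigr)V \wedge \omega^{n-1}.
\]

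Next I would convert the wedge with $\omega^{n-1}$ into a contraction. For any $(1,1)$-form $\alpha$ (now with values in $\End E$, but the linear-algebraic identity is pointwise) we have $\alpha \wedge \omega^{n-1} = \tfrac{1}{n}(\contr_\omega \alpha)\,\omega^n$, so
\[
P(V) \;=\; \tfrac{1}{n}\,\contr_\omega\!\bigl[\,i(\delbar_{A^{\End E}}\del_{A^{\End E}} - \del_{A^{\End E}}\delbar_{A^{\End E}})V\,\bigr]\otimes \omega^n.
\]
The main computation is then to apply the K\"ahler identities $[\contr_\omega,\delbar_{A^{\End E}}] = -i\del_{A^{\End E}}^*$ and $[\contr_\omega, \del_{A^{\End E}}] = i\delbar_{A^{\End E}}^*$, which extend to the bundle-valued setting (since the curvature correction only affects higher-degree forms, and we are working with the Chern connection). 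Using that $\contr_\omega$ vanishes on $(1,0)$- and $(0,1)$-forms, we obtain
\[
\contr_\omega \delbar_{A^{\End E}} \del_{A^{\End E}} V = -i\,\del_{A^{\End E}}^* \del_{A^{\End E}} V,\qquad
\contr_\omega \del_{A^{\End E}}\delbar_{A^{\End E}} V = i\,\delbar_{A^{\End E}}^*\delbar_{A^{\End E}} V,
\]
so that
\[
\contr_\omega\bigl[i(\delbar_{A^{\End E}}\del_{A^{\End E}} - \del_{A^{\End E}}\delbar_{A^{\End E}})V\bigr] = \del_{A^{\End E}}^*\del_{A^{\End E}} V + \delbar_{A^{\End E}}^*\delbar_{A^{\End E}} V.
\]

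Finally I would recognise the right-hand side as the full Chern Laplacian on sections of $\End E$. Since $V$ is a $0$-form, the cross terms in $d_A^* d_A = (\del_A+\delbar_A)^*(\del_A+\delbar_A)$ vanish for type reasons, giving $\Delta_{A^{\End E}} V = \del_{A^{\End E}}^*\del_{A^{\End E}} V + \delbar_{A^{\End E}}^*\delbar_{A^{\End E}} V$. Putting everything together yields $P(V) = \tfrac{1}{n}\,\Delta_{A^{\End E}}(V)\otimes \omega^n$, which is the Laplacian up to the claimed constant. The only point requiring a little care is the sign conventions in the K\"ahler identities for a bundle-valued form where the Chern connection introduces curvature corrections; I would verify that on $0$-forms and $(1,0)$- or $(0,1)$-forms these corrections do not contribute, so that the scalar identities transfer directly.
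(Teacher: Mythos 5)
Your proposal is correct and follows essentially the same route as the paper: apply Lemma~\ref{lem:linearisationcurvatureChern}, rewrite $\alpha\wedge\omega^{n-1}$ as $\tfrac{1}{n}(\contr_\omega\alpha)\,\omega^n$, and invoke the Nakano/K\"ahler identities to identify the resulting operator with $\tfrac{1}{n}\Lap_{A^{\End E}}$. Your extra care with the sign conventions and the vanishing of cross terms on $0$-forms is consistent with the paper's (terser) argument, so there is nothing to correct.
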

\begin{proof}
	Let $A_t = \exp(tV) \cdot A$ be a small perturbation of a Chern connection, where $V$ is Hermitian. Then by \cref{lem:linearisationcurvatureChern} we have
	\begin{align*}
		\left. \deriv{}{t}\right|_{t=0} D(A_t) &= i \omega^{n-1} \wedge \left( \delbar_{A^{\End E}} \del_{A^{\End E}} - \del_{A^{\End E}} \delbar_{A^{\End E}} \right) V\\
		&= \frac{i}{n} \contr_{\omega} \left(\left( \delbar_{A^{\End E}} \del_{A^{\End E}} - \del_{A^{\End E}} \delbar_{A^{\End E}} \right) V\right) \omega^n\\
		&= \frac{1}{n} (\Lap_{A^{\End E}} V) \omega^n.
	\end{align*}
	The last equality follows from the Nakano identities 
	$$\contr_{\omega} \del_{A^{\End E}} = i \delbar_{A^{\End E}}^*,\quad \contr_{\omega} \delbar_{A^{\End E}} = -i \del_{A^{\End E}}^*.$$
	See for example \cite[Prop. 5.22]{ballmann2006lectures}.
\end{proof}

Let us explicitly record some useful corollaries of our computation of the linearisation, which we will use in \cref{part:zcritical}. These follow immediately from well-known properties of the bundle Laplacian.

\begin{corollary}\label{cor:HermiteEinsteinelliptic}
	The Hermite--Einstein equation is elliptic, and the kernel of the linearisation is given by the global holomorphic endomorphisms
	$$\ker P_A = H^0(X,\End (E,d_A^{0,1})).$$
\end{corollary}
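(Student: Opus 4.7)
The plan is to deduce both claims directly from the identification in \cref{prop:linearisationHermiteEinstein} of the linearisation $P_A$ with (a constant multiple of) the bundle Laplacian $\Lap_{A^{\End E}} = d_A^* d_A$ acting on sections of $\End E$, using standard facts about the Hodge-theoretic analysis of this operator on a compact K\"ahler manifold.

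First, for ellipticity, I would compute the principal symbol of $\Lap_{A^{\End E}}$. Since the principal symbol of $d_A$ at a cotangent vector $\xi$ is exterior multiplication by $\xi$, and the symbol of $d_A^*$ is its adjoint, the symbol of $d_A^* d_A$ at $\xi \ne 0$ acts on $\End E$-valued $0$-forms as scalar multiplication by $|\xi|^2 \id_{\End E}$. This is an isomorphism for every non-zero covector, which is exactly ellipticity of $P_A$; hence the Hermite--Einstein operator $D$ is elliptic at every Chern connection, and this part is immediate from \cref{prop:linearisationHermiteEinstein}.

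Second, for the kernel, I would use integration by parts on the compact manifold $X$. If $V \in \Omega^0(X, \End E)$ lies in $\ker \Lap_{A^{\End E}}$, then pairing against $V$ with the natural Hermitian pairing on $\End E$ gives
\begin{equation*}
0 = \int_X \langle d_A^* d_A V, V\rangle \, \omega^n = \int_X |d_A V|^2 \, \omega^n,
\end{equation*}
so $d_A V = 0$. Decomposing into types on the K\"ahler manifold yields $\delbar_{A^{\End E}} V = 0$ and $\del_{A^{\End E}} V = 0$ separately. The first of these is precisely the condition that $V$ is holomorphic with respect to the induced Dolbeault operator on $\End E$ coming from $d_A^{0,1}$, so $V \in H^0(X, \End(E, d_A^{0,1}))$. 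Conversely, on a K\"ahler manifold the Nakano identities used already in the proof of \cref{prop:linearisationHermiteEinstein} give, on $\End E$-valued functions, $\Lap_{A^{\End E}} = 2 \delbar_{A^{\End E}}^* \delbar_{A^{\End E}}$, so any holomorphic section of $\End E$ automatically lies in the kernel of the Laplacian.

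There is essentially no serious obstacle here; the only minor point worth being explicit about is the identification of $\ker d_A$ acting on sections with $\ker \delbar_{A^{\End E}}$, which on a K\"ahler manifold follows either from the type decomposition of $d_A V$ or, equivalently, from the K\"ahler identity $\Lap_{A^{\End E}} = 2\Lap_{\delbar_{A^{\End E}}}$ on sections so that harmonic sections are holomorphic. Both phrasings give the stated identification $\ker P_A = H^0(X, \End(E, d_A^{0,1}))$.
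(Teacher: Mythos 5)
Your ellipticity argument and the forward inclusion are correct and are essentially the paper's (implicit) proof: by \cref{prop:linearisationHermiteEinstein} the linearisation is the bundle Laplacian $d_A^* d_A$, whose symbol at $\xi\ne 0$ is $|\xi|^2\id$, and integration by parts on the compact manifold gives $\ker (d_A^* d_A)=\ker d_A$, whose type decomposition shows every element satisfies $\delbar_{A^{\End E}}V=0$ and is therefore holomorphic.

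The gap is in the converse inclusion. The identity you invoke, $\Lap_{A^{\End E}} = 2\,\delbar_{A^{\End E}}^*\delbar_{A^{\End E}}$ on $\End E$-valued sections, is \emph{not} a consequence of the Nakano identities for a general Chern connection: the Bochner--Kodaira--Nakano identity on sections of a Hermitian holomorphic bundle carries a curvature term, which for $W=\End E$ acts by the commutator $[i\contr_{\omega}F_A,\,\cdot\,]$. The paper records exactly this discrepancy in the remark immediately following the corollary, namely $2\Lap_{\del_h}V = \Lap_{A^{\End E}}V + [i\contr_{\omega}F_h,V]$. Concretely, $\ker(d_A^*d_A)$ is the space of $d_A$-parallel endomorphisms, which at a general Chern connection can be a proper subspace of $H^0(X,\End (E,d_A^{0,1}))$: a holomorphic endomorphism need not be parallel. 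Your converse (and the corollary as literally stated) does hold at a Hermite--Einstein connection, because there $i\contr_{\omega}F_A=\lambda\id_E$ is central, so the commutator term vanishes identically and $\delbar_{A^{\End E}}V=0$ forces $\del_{A^{\End E}}V=0$ (equivalently, one runs the usual Weitzenb\"ock argument). So to close the proof you must either invoke the Hermite--Einstein condition at $A$ --- which is the situation the paper actually uses, as its remark notes that only the linearisation at a solution is needed --- or weaken the conclusion to $\ker P_A = \{V : d_A V = 0\}$, the covariantly constant endomorphisms.
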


\begin{remark}
	The linearisation of the Hermite--Einstein operator as a map 
	$$D: h\mapsto iF(h) \wedge \omega^{n-1} - \lambda \id_E \otimes \omega^n$$
	produces the $\del_h$-Laplacian $\Lap_{\del_h}$ by the Nakano identity
	$$-i \del_h^* = \contr \delbar.$$
	This differs from the bundle Laplacian $\Lap_{A^{\End E}}$ in \cref{prop:linearisationHermiteEinstein} by a commutator involving the contraction of the curvature:
	$$2 \Lap_{\del_h} V = \Lap_{A^{\End E}} V + [i\contr_{\omega} F_h, V].$$
	Thus \emph{at a solution} to the Hermite--Einstein equation, the linearisations agree and are both the standard bundle Laplacian. We will only need the linearisation at a solution, so there is no subtlety introduced here.
\end{remark}

We will now briefly recall the theory of second fundamental forms and the Hermitian geometry of subbundles, which produces the fundamental principle that ``curvature (generically) decreases in holomorphic subbundles" which underpins the easy direction of the Donaldson--Uhlenbeck--Yau theorem, and which we will make extensive use of in \cref{thm:existencestabilitysubbundles}.

Let $A$ be a Chern connection for a Hermitian metric $h$ on a holomorphic vector bundle $E\to X$. Suppose $S\subset E$ is a holomorphic subbundle, producing a short exact sequence
\begin{center}
	\ses{S}{E}{Q}
\end{center}
where $Q=E/S$ is the quotient. With respect to the smooth (but not necessarily holomorphic!) splitting 
$$E\isom S\oplus Q$$
induced by the metric $h$, the Chern connection may be written in block matrix form as
\begin{equation}
	A_E=\begin{pmatrix}
		A_S & \beta\\
		-\beta^* & A_Q
	\end{pmatrix}\label{eqn:connectionsubbundle}
\end{equation}
where $\beta \in \Omega^{0,1}(X,\Hom(Q,S))$ is the \emph{second fundamental form} of the subbundle $S\subset E$. Further, with respect to this same splitting the curvature $F$ of $A$ may be computed as
\begin{equation}
	F_E = \begin{pmatrix}
		F_S - \beta \wedge \beta^* & \del_{A^{\Hom(Q,S)}} \beta\\
		-\delbar_{A^{\Hom(S,Q)}} \beta^* & F_Q - \beta^* \wedge \beta
	\end{pmatrix}.\label{eqn:curvaturesubbundle}
\end{equation}

\begin{remark}\label{rmk:secondfundamentalformatiyah}
	The second fundamental form can be given an algebraic interpretation using the Dolbeault isomorphism theorem, for example following a computation similar to \cite[Prop. 4]{atiyah1957complex}.\footnote{Atiyah's calculation is for the particular problem of defining a holomorphic connection on a principal bundle, but the particular sequence (the ``Atiyah sequence") being split is unimportant to his proof.} Namely, if $e\in \Ext^1(Q,S)$ is the extension class of the short exact sequence, then using the identification $\Ext^1(Q,S) \isom H^1(X, \Hom(Q,S))$ when $Q$ and $S$ are locally free, and the Dolbeault isomorphism
	$$H^1(X,\Hom(Q,S)) \isom H_{\delbar_E}^{0,1} (X, \Hom(Q,S))$$
	one has $e=[\beta]$. In particular if $\beta$ is zero in Dolbeault cohomology, then the short exact sequence splits. Indeed in that case the Chern connection $A$ can be transformed by a change of gauge to one in which $\beta=0$, and then $S$ is invariant under the covariant derivative $d_A$, so the orthogonal complement is too (since $d_A$ is unitary), making $Q$ a holomorphic subbundle of $E$. 
\end{remark}

To finish off our discussion of Hermitian geometry of subbundles, let us demonstrate the principle that curvature (generically) decreases in subbundles and increases in quotients. This is the analogue in Hermitian geometry of the see-saw property \cref{lem:seesawslopestability} for slope stability, and it quickly produces a proof of one direction of \cref{thm:DUY} (at least when the subobject is a subbundle). 

\begin{proposition}
	Suppose $E\to(X,\omega)$ is indecomposable and admits a Hermite--Einstein metric $h$. Then $E$ is slope stable with respect to holomorphic subbundles $S\subset E$.
\end{proposition}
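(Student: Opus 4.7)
The plan is to use the second fundamental form decomposition \eqref{eqn:connectionsubbundle}--\eqref{eqn:curvaturesubbundle} to compare $\mu(S)$ with $\mu(E)$ directly. First I would fix a holomorphic subbundle $S\subset E$ with quotient $Q=E/S$, choose the smooth orthogonal splitting $E\cong S\oplus Q$ provided by $h$, and read off the block formula $F_S = \pi_S F_E|_S + \beta\wedge\beta^*$ from \eqref{eqn:curvaturesubbundle}, where $\beta\in\Omega^{0,1}(X,\Hom(Q,S))$ is the second fundamental form.

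Next I would pair this identity against $\omega^{n-1}$ and take the trace. Since $h$ is Hermite--Einstein for $E$, the $S$-block of $iF_E\wedge\omega^{n-1}$ is simply $\lambda\id_S\otimes\omega^n$ with $\lambda=\tfrac{2\pi}{n!\vol(X)}\mu(E)$. Integrating and using Chern--Weil to compute $\deg S$, one obtains a relation of the form
\[
\mu(S) \;=\; \mu(E) \;+\; \frac{1}{2\pi(n-1)!\,\rk S}\int_X \trace_S(i\beta\wedge\beta^*)\wedge\omega^{n-1}.
\]
The key step is then to verify that the correction term is non-positive. This is a pointwise algebraic computation: in a local unitary frame one writes $\beta=\beta^i_\alpha\,e_i\otimes f^\alpha$ with $\beta^i_\alpha$ a $(0,1)$-form, so that $\trace_S(i\beta\wedge\beta^*)\wedge\omega^{n-1}$ is $-c_n|\beta|^2_{h,\omega}\dvol$ for a positive constant $c_n$. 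I would present this as a brief pointwise computation combined with the standard identity expressing a $(1,1)$-form wedged with $\omega^{n-1}$ in terms of its contraction with $\omega$.

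From this I conclude $\mu(S)\le \mu(E)$, which establishes slope semistability. For the strict inequality when $0<\rk S<\rk E$, one reads off that equality forces $\beta\equiv 0$. The final step is to invoke \cref{rmk:secondfundamentalformatiyah}: the vanishing of $\beta$ means the extension class $e\in\Ext^1(Q,S)$ vanishes, so the short exact sequence $0\to S\to E\to Q\to 0$ splits holomorphically, giving a holomorphic direct sum decomposition $E\cong S\oplus Q$ with both summands of positive rank. This contradicts the indecomposability hypothesis, so strict inequality holds whenever $0<\rk S<\rk E$.

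The main obstacle is essentially bookkeeping with signs and normalisation constants in the pointwise identification of $\trace_S(i\beta\wedge\beta^*)\wedge\omega^{n-1}$ with a negative multiple of the volume form; everything else is a direct application of the block formulas already established, the Hermite--Einstein equation, and the algebraic interpretation of the second fundamental form as the extension class of $S\subset E$.
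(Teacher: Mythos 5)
Your proposal is correct and follows essentially the same route as the paper's proof: restrict the Hermite--Einstein equation to the $S$-block of the curvature decomposition \eqref{eqn:curvaturesubbundle}, trace and integrate to identify the $\beta\wedge\beta^*$ contribution as a non-positive multiple of $\|\beta\|^2$, and use indecomposability (via the splitting criterion of \cref{rmk:secondfundamentalformatiyah}) to rule out $\beta\equiv 0$ and upgrade $\mu(S)\le\mu(E)$ to strict inequality. The only cosmetic difference is that the paper invokes indecomposability up front to get $\|\beta\|^2>0$ and the strict inequality in one step, whereas you derive semistability first and treat the equality case separately; the constants/normalisations you flag are immaterial.
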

\begin{proof}
	Here we use the decomposition of the Chern curvature and the properties of the Hermite--Einstein metric. We know
	$$iF(h) \wedge \omega^{n-1} = \lambda \id_E \otimes \omega^n.$$
	Suppose $S\subset E$ is any proper non-zero holomorphic subbundle and $Q=E/S$ is the quotient bundle. Then with respect to the $C^\infty$ direct sum decomposition $E=S\oplus Q$ the $\End(S)$ component of the Hermite--Einstein equation is
	$$i(F_S - \beta\wedge \beta^*) \wedge \omega^{n-1} = \frac{2\pi}{n! \vol(X)}\mu(E) \id_S \otimes \omega^n.$$
	Taking trace and integrating we note that
	$$-\int_X \trace (\beta \wedge \beta^*)\wedge \omega^{n-1} =  \frac{i}{2\pi} \|\beta\|^2$$
	(suitably normalised) and thus we obtain
	$$2\pi (\deg S + \|\beta\|^2) = 2\pi \mu(E) \rk S$$
	and using that $\|\beta\|^2>0$ since $E$ is indecomposable (otherwise $E=S\oplus Q$ holomorphically as $\beta=0$ implies the sequence splits) we obtain
	$$\mu(S) < \mu(E).$$
	Thus $E$ is slope stable with respect to $S$.
\end{proof}

\subsubsection{Almost Hermite--Einstein metrics\label{sec:AHE}}

In order to complete our background on holomorphic vector bundles, we will discuss the differential-geometric analogue of Gieseker stability discussed in \cref{sec:giesekerstability}. This theory was developed by Leung in \cite{leung1997einstein,leung1998symplectic}, and shares a number of ideas with those in \cref{part:zcritical}.

Let us take the moment map approach to the problem. As was observed by Atiyah--Bott and Donaldson, the Hermite--Einstein equation can be obtained as the moment map equation for the symplectic form
$$\Omega_{AB}(a,b) = -\int_X\trace(a\wedge b) \wedge \omega^{n-1}$$
on the space of integrable unitary connections $\calA(h)$ on a Hermitian vector bundle $(E,h)\to (X,\omega)$. Donaldson constructed a determinant line bundle $\calL\to \calA(h)$ for which the Atiyah--Bott symplectic form is the curvature of a Hermitian metric \cite{donaldson1987infinite}. Donaldsons construction uses a local version of the index theorem and ideas due to Bismut--Freed, and indeed going back to Quillen, to identify the correct determinant bundle \cite{quillen1985determinants,bismut1986analysis,bismut1986analysisII}. As part of this construction, one considers the universal bundle
$$\EE \to \calA(h) \times X$$
which has a universal connection $\AA$ with the property that $\rest{\AA}{\{A\}\times X} = A$. This connection has a universal curvature $F_\AA \in \Omega^2(\calA(h)\times X)$. It follows from Bismut--Freed's local version of the Atiyah--Singer index theorem for families that
$$\Omega := c_1(\ind \EE) = \left\{ \int_X \exp\left(\frac{i}{2\pi} F_\AA\right) \Td(X) \right\}_{(2)}$$
where on the right-hand side one takes the degree two component of the form on $\calA(h)$. 

The symplectic structure which arises from this form on $\calA(h)$ was studied by Leung \cite{leung1998symplectic}. In particular Leung considers the setting where $E\to X$ is replaced by $E\otimes L^k$ where $L\to X$ is an ample line bundle and $\omega\in c_1(L)$ is a representative K\"ahler form. Leung computes 
\begin{equation}
	\Omega_k(A)(a,b) = \int_X \trace \left[ \exp\left(\frac{i}{2\pi} F_A + k \omega \otimes \id_E\right), a, b\right]_{\sym} \Td(X)\label{eq:Leungsymplecticform}
\end{equation}
where the symmetric bracket is defined as in \cref{def:symmetrisation}. With respect to the action of the gauge group $\calG^\CC$ on $\calA(h)$, one obtains a moment map equation
\begin{equation}\label{eq:almosthermiteeinstein}
	\left\{\exp\left(\frac{i}{2\pi} F_A + k\omega \otimes \id_E\right) \widetilde \Td(X)\right\}^{(2n)} = \frac{1}{\rk(E)} \chi(E\otimes L^k) \frac{\omega^n}{n!} \otimes \id_E
\end{equation}
where $\widetilde \Td(X)$ is given by the Chern--Weil representatives of the Chern classes of $X$ with respect to the Levi-Civita connection induced by the K\"ahler metric $\omega$. The form of this moment map is highly suggestive of a link to Gieseker stabiliy. A connection $A$ satisfying \eqref{eq:almosthermiteeinstein} is called an \emph{almost Hermite--Einstein connection} (or for a Hermitian metric $h$, an \emph{almost Hermite--Einstein metric}).\footnote{Not to be confused with an ``approximate Hermite--Einstein metric" as defined by Kobayashi \cite{kobayashi1987differential}, which is any sequence of Hermitian metrics which are $\epsilon$-close to being Hermite--Einstein in an appropriate norm. Indeed almost HE metrics are examples of approximate HE metrics, as are the $Z_k$-critical metrics for $k\gg 0$ which will appear in \cref{part:zcritical}.} The almost Hermite--Einstein equation converges to the Hermite--Einstein equation in the large volume limit, and Leung proves the following.

\begin{theorem}[\cite{leung1997einstein}]\label{thm:leungAHEgieseker}
	Let $E\to (X,\omega)$ be a simple, slope semistable vector bundle with locally free graded object $\Gr(E)$. Then $E$ admits an almost Hermite--Einstein metric for all $k\gg 0$ if and only if $E$ is Gieseker stable. 
\end{theorem}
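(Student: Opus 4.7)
The strategy is a perturbative construction starting from a Hermite--Einstein metric on $\Gr(E)$, modelled on the Kempf--Ness picture for the symplectic form $\Omega_k$ of \eqref{eq:Leungsymplecticform}. The easy direction is the cleaner one: if $h_k$ is an almost Hermite--Einstein metric on $E\otimes L^k$ and $\calF\subset E$ is a proper nonzero coherent subsheaf, then, using \eqref{eq:almosthermiteeinstein}, the Hermitian geometry of subbundles from \cref{sec:chernconnections} (the block decomposition \eqref{eqn:curvaturesubbundle}), and integration by parts, the second fundamental form contributes a nonnegative term that forces a Gieseker-type inequality for $\calF$. Reading off coefficients in $k$, the leading $k^n$-term gives slope semistability and the next nontrivial coefficient produces Gieseker (semi)stability; strictness follows from $E$ being simple and the subsheaf case being handled by saturating and using the polystable reduction on $\Gr(E)$.

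For the hard direction I would use the by-now-standard perturbation approach, in spirit identical to the one the thesis will soon employ for $Z$-critical metrics. Since $E$ is simple, slope semistable and $\Gr(E)$ is locally free and polystable, the Donaldson--Uhlenbeck--Yau theorem (\cref{thm:DUY}) endows $\Gr(E)$ with a Hermite--Einstein metric $h_\infty$. The plan is to build, for $k\gg 0$, a formal power series expansion
\begin{equation*}
h_k = \exp\!\left(\tfrac{1}{k}\phi_1 + \tfrac{1}{k^2}\phi_2 + \cdots\right)\cdot h_\infty
\end{equation*}
on $E\otimes L^k$, where the extension classes degenerating $E$ to $\Gr(E)$ (cf. \cref{rmk:turningoffextension}) are incorporated as sub-leading terms, and then apply a quantitative implicit function theorem to obtain a genuine solution to \eqref{eq:almosthermiteeinstein} from a sufficiently good approximate one. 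Expanding the almost HE operator to order $1/k^j$ produces at each step a linear equation of the form $\Lap_{A^{\End \Gr(E)}} \phi_j = R_j$, with $R_j$ built from the data of the previous orders, the extension classes and the Todd-class correction.

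The main obstacle, exactly as in the DUY-perturbative literature, is the failure of the Laplacian to be invertible: by \cref{cor:HermiteEinsteinelliptic} its kernel is $H^0(X,\End(\Gr(E)))$, which strictly contains $\CC\cdot \id_E$ because $\Gr(E)$ decomposes as a direct sum of stable factors while $E$ itself is only simple. The hard work is therefore to check that the obstruction $R_j$ is $L^2$-orthogonal to this kernel modulo $\CC\cdot\id$ (the $\id$-component being absorbed by the topological Einstein constant). I would compute the pairing of $R_1$ against a trace-free block-diagonal endomorphism $\psi\in \End(\Gr(E))$: by the Bismut--Freed-style expansion of \eqref{eq:Leungsymplecticform} and the moment-map identity, this pairing is precisely (up to a universal factor) the Gieseker-slope difference of the Jordan--H\"older factor cut out by $\psi$, weighted by the extension class between them. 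Gieseker stability of $E$ gives strict sign control on these pairings, so the obstruction can be killed by an appropriate infinitesimal deformation of the extension data/gauge at the previous order, allowing the expansion to proceed to all orders. Once an approximate solution with error $O(k^{-N})$ is constructed for $N$ large, uniform invertibility estimates on the linearisation in the transverse-to-kernel direction, combined with the Banach-space inverse function theorem, yield the honest almost Hermite--Einstein metric for all $k\gg 0$.
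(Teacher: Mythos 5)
You should know at the outset that the thesis does not prove this theorem: it is quoted from Leung \cite{leung1997einstein}, and the thesis only describes the strategy (a perturbation around the large volume limit using \cref{thm:DUY} and the Hermite--Einstein metric on $\Gr(E)$) and proves the analogous statement for $Z$-critical metrics, \cref{thm:maintheoremZstability}, in \cref{ch:correspondence}. Your overall plan is that same route, so at the level of architecture there is nothing new or wrong. However, the two points you pass over most quickly are precisely the two points the thesis flags as the genuine difficulties, and as written both are gaps.

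First, in the ``easy'' direction your claim that the subsheaf case is ``handled by saturating and using the polystable reduction on $\Gr(E)$'' does not close the argument. Saturation reduces the problem to saturated subsheaves, but in dimension at least two a saturated subsheaf need not be a subbundle, and the second-fundamental-form/curvature computation you invoke (the block decomposition \eqref{eqn:curvaturesubbundle} and integration by parts) requires the smooth splitting that only a subbundle provides. This is exactly where Leung's own extension from subbundles to subsheaves breaks down: as recorded in \cref{rmk:leungproof}, his argument rests on the assertion that a nonzero map from a Gieseker-stable sheaf to a slope-stable sheaf of the same slope must be an isomorphism, which is false (it need only be generically surjective), and this is why the thesis's \cref{thm:existenceimpliesstability} is stated only for subbundles. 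So as it stands your easy direction yields Gieseker stability with respect to subbundles, not the full statement. Second, in the ``hard'' direction the obstruction analysis is under-specified at the decisive step. The kernel of the linearisation at $\Gr(E)$ is $H^0(X,\End \Gr(E)) \supsetneq \CC\cdot\id_E$, but it is not true that the error terms are $L^2$-orthogonal to the extra (trace-free, block-diagonal) kernel directions order by order; in the thesis's proof the component along $\id_\pm$ is cancelled at the discrepancy order only by tuning the \emph{rate} $\lambda$ of the deformation of complex structure, which is where the stability inequality enters (\cref{prop:approximatesolutiondiscrepancy}), and at all higher orders it is absorbed by moving in the $\id_\pm$ direction itself, using that the linearisation acts on $\id_\pm$ with a nonzero coefficient only at a definite subleading order in $k$. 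As a consequence the inverse of the linearisation is \emph{not} uniformly bounded ``transverse to the kernel'': it degenerates polynomially in $k$ along the automorphism directions (\cref{lem:inversebound}), which is why the approximate solution must be built to an order dictated by that rate before the quantitative inverse function theorem applies. The remark following the theorem notes that Leung's published analysis does not account for these automorphisms of the graded object at all, so this bookkeeping — how the stability inequality fixes $\lambda$, and how good the approximate solution must be relative to the blow-up of the inverse — is the actual content of the hard direction, and your sketch currently asserts rather than supplies it.
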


\begin{remark}
	\cref{thm:leungAHEgieseker} is the direct analogue of our main theorem \cref{thm:maintheoremZstability} for Gieseker stability as opposed to asymptotic $Z$-stability. The idea of the proof is similar, in that Leung applies a perturbation result around the limit $k\to \infty$ utilising the Donaldson--Uhlenbeck--Yau theorem and the existence of a Hermite--Einstein metric on the graded object. However the details of the proof are different. In particular Leung's analysis of the linearised operator does not take into account possible automorphisms arising from symmetries of the graded object, which introduces difficulties in bounding the inverse of the linearisation in the proof of \cref{thm:existenceimpliesstability} in the general case.\footnote{We note that these terms do not appear in the proof in \cref{ch:correspondence} where we restrict to the simpler case where $\Gr(E)$ has two components.}
\end{remark}

\section{Varieties\label{sec:varieties}}

Having described in some detail the background of stability on holomorphic vector bundles, we will now discuss the corresponding theory for varieties. Here the notion of \emph{degenerations} takes the central role over the \emph{subobject} perspective which was most geometrically meaningful in the case of bundles. Such degenerations are called \emph{test configurations}, and we will make use of this language in \cref{part:fibrations} to describe the notion of a \emph{fibration degeneration}. 

\subsection{K-stability}

The theory of stability of varieties grew out of the success of the Donaldson--Uhlenbeck--Yau theorem \cref{thm:DUY} in predicting the existence of solutions to geometric PDEs on bundles and a search for a criterion which would imply the existence of K\"ahler--Einstein metrics on Fano manifolds (the cases of Calabi--Yau and general type having been resolved by Yau's proof of the Calabi conjecture). Indeed it was known by work of Matsushima and Lichnerowicz that the Lie algebra of the automorphism group of any K\"ahler--Einstein Fano manifold must be reductive \cite{matsushima1957structure,lichnerowicz1958geometrie}. However the example of $X=\Bl_p \CCPP^2$ is Fano with non-reductive automorphism group.\footnote{The automorphism group is isomorphic to $\Aff(2,\CC)$ which is not reductive as it contains a factor of the additive group $\CC_+^2$.} 

Yau conjectured that the existence of a K\"ahler--Einstein metric on a smooth Fano variety should be equivalent to a stability condition analogous to slope stability of bundles, and suggested perhaps that existence should be equivalent to the slope polystability of the tangent bundle \cite{yau1993open}. The correct criterion was identified by Tian, who called this condition \emph{K-stability} \cite{tian1997kahler}. The ``K" was labelled after the K-energy functional introduced by Mabuchi \cite{mabuchi1986k}, and stands for \emph{Kinetic} or \emph{Kanonisch} (and not \emph{K\"ahler}!).\footnote{This was confirmed in a private correspondence between Ruadha\'i Dervan and Mabuchi.} Tian's stability condition was rephrased purely algebro-geometrically and expanded to include the case of all polarised varieties by Donaldson \cite{donaldson2002scalar}. We will describe the notion of K-stability in Donaldson's language.

K-stability is defined in direct analogy with the Hilbert--Mumford criterion for stability of points on a variety with group action. In order to make this analogy precise, one must therefore define:
\begin{itemize}
	\item a notion of one-parameter degeneration of a polarised variety $(X,L)$, and 
	\item a notion of weight computed on the limiting fibre of that one-parameter degeneration.
\end{itemize}

First we will describe the one-parameter degenerations.

\begin{definition}[Test configuration]\label{def:testconfiguration}
	A test configuration (see \cref{fig:testconfiguration}) $(\calX,\calL)$ with exponent $r$ of a polarised variety $(X,L)$ is a scheme $\calX$ with a flat morphism $\pi: \calX \to \CC$ and a relatively ample line bundle $\calL$ over it, such that
	\begin{itemize}
		\item There is a $\CC^*$ action on the polarised family $(\calX,\calL)$ covering the standard $\CC^*$ action on $\CC$ making $\pi$ equivariant.
		\item For every $t\in \CC^*$, $(\calX_t,\calL_t)\isom (X,L^r)$.
	\end{itemize}
	We say a test configuration is a \emph{product} if $\calX \isom X \times \CC$ and \emph{trivial} if it is a product and the $\CC^*$ action on the $X$ factor is trivial.
\end{definition}

\begin{figure}[h]
	\centering

	\tikzset{every picture/.style={line width=0.75pt}} %set default line width to 0.75pt        
	
	\begin{tikzpicture}[x=0.75pt,y=0.75pt,yscale=-1,xscale=1]
		%uncomment if require: \path (0,385); %set diagram left start at 0, and has height of 385
		
		%Straight Lines [id:da9722130543574147] 
		\draw    (91.7,313) -- (481.23,313) ;
		%Straight Lines [id:da941200188029028] 
		\draw    (118.91,84.85) -- (118.91,271.27) ;
		%Straight Lines [id:da002519055557495431] 
		\draw    (454.18,84.85) -- (454.18,271.27) ;
		%Straight Lines [id:da9856416954343216] 
		\draw    (370.71,84.85) -- (370.71,271.27) ;
		%Straight Lines [id:da27632777153317667] 
		\draw    (202.84,84.85) -- (202.84,271.27) ;
		%Straight Lines [id:da10813006743634712] 
		\draw    (329.44,85.32) -- (329.44,271.73) ;
		%Straight Lines [id:da3050622919625062] 
		\draw    (411.05,84.85) -- (411.05,271.27) ;
		%Straight Lines [id:da5312110246082712] 
		\draw    (245.5,85.78) -- (245.5,272.2) ;
		%Straight Lines [id:da7210959732182645] 
		\draw    (162.5,85.32) -- (162.5,271.73) ;
		%Curve Lines [id:da6723911027973366] 
		\draw    (285.85,271.27) .. controls (327.27,227.83) and (250.29,240.82) .. (305.01,190.74) ;
		%Curve Lines [id:da008936881380736339] 
		\draw    (304.09,163.84) .. controls (273.48,159.2) and (254,201.86) .. (305.01,214.85) ;
		%Curve Lines [id:da9279647208018934] 
		\draw    (284.61,88.72) .. controls (274.41,110.97) and (296.67,161.06) .. (305.94,126.74) .. controls (315.22,92.43) and (290.17,110.97) .. (278.93,124.28) .. controls (267.68,137.58) and (281.42,161.48) .. (304.09,163.84) ;
		%Shape: Ellipse [id:dp5525404757726127] 
		\draw  [fill={rgb, 255:red, 0; green, 0; blue, 0 }  ,fill opacity=1 ] (285.85,313) .. controls (285.85,311.55) and (287.02,310.38) .. (288.47,310.38) .. controls (289.93,310.38) and (291.1,311.55) .. (291.1,313) .. controls (291.1,314.45) and (289.93,315.63) .. (288.47,315.63) .. controls (287.02,315.63) and (285.85,314.45) .. (285.85,313) -- cycle ;
		%Shape: Ellipse [id:dp1723043269977338] 
		\draw  [fill={rgb, 255:red, 0; green, 0; blue, 0 }  ,fill opacity=1 ] (408.08,312.85) .. controls (408.08,311.4) and (409.26,310.22) .. (410.71,310.22) .. controls (412.16,310.22) and (413.34,311.4) .. (413.34,312.85) .. controls (413.34,314.3) and (412.16,315.48) .. (410.71,315.48) .. controls (409.26,315.48) and (408.08,314.3) .. (408.08,312.85) -- cycle ;
		%Straight Lines [id:da16043162112481302] 
		\draw    (62.33,212.07) -- (62.33,273.99) ;
		\draw [shift={(62.33,276.99)}, rotate = 270] [fill={rgb, 255:red, 0; green, 0; blue, 0 }  ][line width=0.08]  [draw opacity=0] (8.93,-4.29) -- (0,0) -- (8.93,4.29) -- cycle    ;
		%Shape: Rectangle [id:dp6532846097292331] 
		\draw  [dash pattern={on 0.84pt off 2.51pt}] (91.7,85.01) -- (482.16,85.01) -- (482.16,271.27) -- (91.7,271.27) -- cycle ;
		%Shape: Ellipse [id:dp6520691237706738] 
		\draw  [color={rgb, 255:red, 255; green, 0; blue, 0 }  ,draw opacity=1 ][fill={rgb, 255:red, 255; green, 3; blue, 3 }  ,fill opacity=1 ] (286.85,209) .. controls (286.85,207.55) and (288.02,206.38) .. (289.47,206.38) .. controls (290.93,206.38) and (292.1,207.55) .. (292.1,209) .. controls (292.1,210.45) and (290.93,211.63) .. (289.47,211.63) .. controls (288.02,211.63) and (286.85,210.45) .. (286.85,209) -- cycle ;
		%Shape: Ellipse [id:dp7142061955527439] 
		\draw  [color={rgb, 255:red, 255; green, 0; blue, 0 }  ,draw opacity=1 ][fill={rgb, 255:red, 255; green, 3; blue, 3 }  ,fill opacity=1 ] (301.46,163.84) .. controls (301.46,162.39) and (302.64,161.21) .. (304.09,161.21) .. controls (305.54,161.21) and (306.71,162.39) .. (306.71,163.84) .. controls (306.71,165.29) and (305.54,166.47) .. (304.09,166.47) .. controls (302.64,166.47) and (301.46,165.29) .. (301.46,163.84) -- cycle ;
		%Shape: Ellipse [id:dp9496648020774292] 
		\draw  [color={rgb, 255:red, 255; green, 0; blue, 0 }  ,draw opacity=1 ][fill={rgb, 255:red, 255; green, 3; blue, 3 }  ,fill opacity=1 ] (281.85,119) .. controls (281.85,117.55) and (283.02,116.38) .. (284.47,116.38) .. controls (285.93,116.38) and (287.1,117.55) .. (287.1,119) .. controls (287.1,120.45) and (285.93,121.63) .. (284.47,121.63) .. controls (283.02,121.63) and (281.85,120.45) .. (281.85,119) -- cycle ;
		
		% Text Node
		\draw (56.53,171.85) node [anchor=north west][inner sep=0.75pt]    {$\mathcal{X}$};
		% Text Node
		\draw (55.6,302.16) node [anchor=north west][inner sep=0.75pt]    {$\mathbb{C}$};
		% Text Node
		\draw (283.4,327.04) node [anchor=north west][inner sep=0.75pt]    {$0$};
		% Text Node
		\draw (277.7,57.7) node [anchor=north west][inner sep=0.75pt]    {$\mathcal{X}_{0}$};
		% Text Node
		\draw (386.56,57.7) node [anchor=north west][inner sep=0.75pt]    {$\mathcal{X}_{t} \cong X$};
		% Text Node
		\draw (406.95,324.57) node [anchor=north west][inner sep=0.75pt]    {$t$};

	\end{tikzpicture}

	\caption{A test configuration. The generic fibre $\calX_t$ is isomorphic to $X$, but the central fibre $\calX_0$ may have singular points or multiple components.}\label{fig:testconfiguration}
\end{figure}
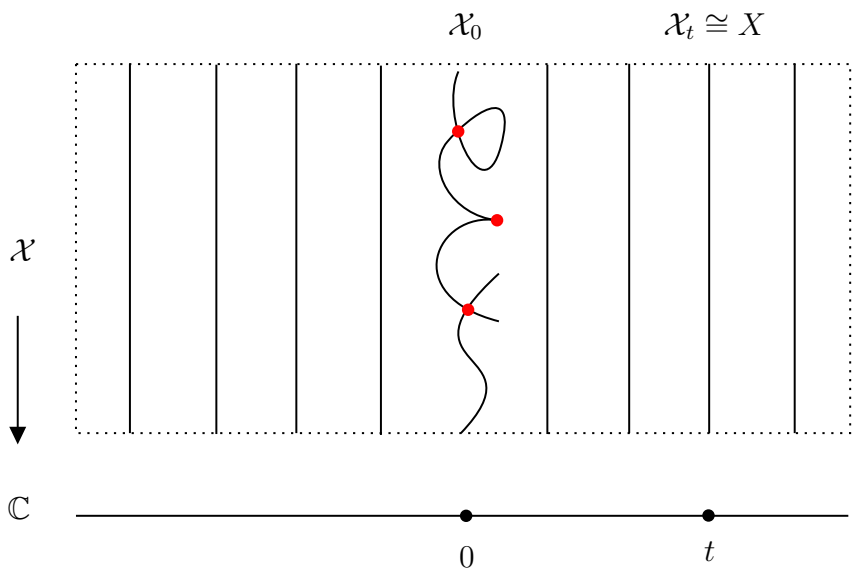

One consequence of the flatness of the morphism $\pi$ is that the Hilbert polynomial $\chi(\calX_t, \calL_t) = \chi(X,L)$ agrees with $\chi(\calX_0,\calL_0)$. Indeed, through more than just an analogy, test configurations \emph{literally} arise as (closures of) orbits of one-parameter subgroups applied to $(X,L^r)$ as a point inside a Hilbert scheme $\Hilb_{\calP(r)}$ for $\calP(r) = \chi(X,L^r)$ (see \cite[Prop. 3.7]{ross2007study}).

The weight associated to a test configuration is defined as follows. The central fibre $(\calX_0,\calL_0)$ is fixed by the $\CC^*$ action on $(\calX,\calL)$ and therefore this action lifts to $H^0(\calX_0,\calL_0^k)$ for $k\gg 0$. A $\CC^*$ action on a vector space splits it into a sum of weight spaces, and the \emph{total weight} is the sum of the weights on each factor. Equivalently we define the total weight function $w(k)$ as the weight of the induced $\CC^*$ action on $\det H^0(\calX_0, \calL_0^k)$. Let us explicitly write
$$\calP(k) = \dim H^0(\calX_0,\calL_0^k) = a_0 k^n + a_1 k^{n-1} + O(k^{n-2}),\quad w(k) = b_0 k^{n+1} + b_1 k^n + O(k^{n-2}).$$
\begin{definition}[Donaldson--Futaki invariant]\label{def:donaldsonfutakiinvariant}
	The weight or \emph{Donaldson--Futaki invariant} of a test configuration $(\calX,\calL)$ for $(X,L^r)$ is given by
	$$\DF(\calX,\calL) := \frac{a_0b_1 - b_0a_1}{a_0}.$$
\end{definition}

An alternative computation of the Donaldson--Futaki weight in terms of intersection theory is now known. Indeed an intersection-theoretic formula was already understood by Tian in terms of the CM line bundle when he introduced his original condition for K-stability \cite{tian1997kahler}, and the equivalence of these notions was later proven \cite{paultian,phong2008deligne} (see also \cite{wang2012height,odaka2013generalization}). Compactify the test configuration trivially at $\infty$ to a family $(\bar \calX,\bar \calL)\to \CCPP^1$. Then the DF invariant or $\mathrm{CM}$-weight is given by
\begin{equation}\label{eq:DFintersectiontheory}\DF(\calX,\calL) := \frac{1}{2(n+1) L^n} \left( n \mu(X,L) (\bar \calL)^{n+1} + (n+1) K_{\bar \calX/\CCPP^1} . \bar \calL^n\right)\end{equation}
where $\mu(X,L) = -(L^{n-1}.K_X)/L^n$ is the \emph{slope} of $(X,L)$. The intersection-theoretic formula will be useful in the definition of fibration stability in \cref{part:fibrations}.

\begin{definition}\label{def:Kstability}
	Let $(X,L)$ be a polarised variety. We say the pair is
	\begin{itemize}
		\item \emph{K-semistable} if $\DF(\calX,\calL) \ge 0$ for any test configuration $(\calX,\calL)$.
		\item \emph{K-polystable} if it is K-semistable and $\DF(\calX,\calL) = 0$ only if $(\calX,\calL)$ normalises to a product test configuration.
		\item \emph{K-stable} if $\DF(\calX,\calL) > 0$ for any test configuration which does not normalise to the trivial test configuration.
		\item \emph{K-unstable} if it is not K-semistable.
	\end{itemize}
\end{definition}

\begin{remark}\label{rmk:slopeKstability}
	An attempt to define a notion of K-stability more in line with Yau's original prediction of a stability condition analogous to slope stability was carried out by Ross--Thomas \cite{ross2006obstruction,ross2007study}. The notion of \emph{slope K-stability} is useful for finding obstructions to K-stability, but except in the case of curves it is not equivalent to K-stability.\footnote{However in that case it essentially provides the first purely algebraic proof of the K-stability of smooth curves, which can only be slope K-destabilised by singular points, therefore allowing the uniformization theorem to take on an interpretation as an instance of \cref{principle}.} Indeed Panov--Ross showed that $\Bl_{p,q} \CCPP^2$ is slope K-stable but not K-stable \cite[Ex. 7.8]{panov2009slope}. The notion of slope K-stability considers test configurations arising from the deformation to the normal cone of a subscheme $Z\subset X$ (see \cite[\S 4]{ross2006obstruction}), but to get a theory equivalent to K-stability one must instead consider \emph{flags} of subschemes. This was already noted by Mumford \cite{mumford1977stability} and has been discussed in detail by Ross--Thomas \cite[\S 3]{ross2007study}. We will discuss certain test configurations arising out of flags of subschemes in the study of stability of fibrations in \cref{part:fibrations}.
\end{remark}

\begin{remark}
	The definition of K-stability is expected to need modification in the case where $(X,L)$ is not Fano in order to be strong enough to imply the existence of a constant scalar curvature K\"ahler metric. A modification was proposed by Sz\'ekelyhidi based on ideas of Witt-Nystr\"om \cite{szekelyhidi2015filtrations,nystrom2012test}, known as \emph{filtration K-stability}. This notion has been expanded upon greatly in the wake of work by Berman--Boucksom--Jonsson (see \cite{berman2021variational}) on a variational approach to K-stability, and the exciting work of Li on the YTD conjecture for polarised varieties \cite{li2020geodesic}.
\end{remark}

\subsection{Canonical K\"ahler metrics\label{sec:kahlermetrics}}

Repetitiously, guided by \cref{principle} we now introduce the corresponding notion of extremal object to K-stability. Indeed K-stability arose directly from its relationship to the existence of canonical K\"ahler metrics, and we emphasise Donaldson's point of view on \emph{constant scalar curvature K\"ahler metrics}. The Ricci form associated to a K\"ahler manifold $(X,\omega)$ is given by
$$\Ric(\omega) = - \frac{i}{2\pi} \deldelbar \log \det \omega$$
and the scalar curvature by
$$S(\omega) = \contr_{\omega} \Ric (\omega).$$
The average value of $S$ is determined topologically and given by
$$\hat S = \frac{n c_1(X) . [\omega]^{n-1}}{[\omega]^n} = n \mu(X,L)$$
where $[\omega]=c_1(L)$ if $\omega$ is integral and a polarising ample line bundle has been chosen.

\begin{definition}
	A K\"ahler metric is \emph{K\"ahler--Einstein} if
	$$\Ric(\omega) = \lambda \omega$$
	for some $\lambda \in \RR$. It is \emph{constant scalar curvature K\"ahler} (cscK) if 
	$$S(\omega) = \hat S.$$
\end{definition}

It was proposed by Calabi to search for examples of canonical K\"ahler metrics satisfying the above property \cite{calabi1982extremal}, which ultimately lead to Calabi's notion of an extremal metric (which we will not comment on further). The instance of \cref{principle} in this setting is a conjecture, proposed for Fano manifolds by Yau and Tian, and for polarised manifolds by Donaldson (although the idea that such a correspondence should hold in this generality was already implicit in the work of Fujiki \cite{fujiki1992moduli}).

\begin{conjecture}[Yau--Tian--Donaldson]\label{conj:YTD}
	A smooth polarised variety $(X,L)$ is K-polystable if and only if it admits a constant scalar curvature K\"ahler metric $\omega\in c_1(L)$. 
\end{conjecture}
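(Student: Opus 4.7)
The plan is guided by the Kempf--Ness philosophy of \cref{sec:kempfness}: the Mabuchi K-energy $\calM: \calH(\omega) \to \RR$ on the space of K\"ahler potentials plays the role of the infinite-dimensional Kempf--Ness functional, whose first variation
\[
\deriv{}{t} \calM(\phi_t) = -\int_X \dot\phi_t (S(\omega_{\phi_t}) - \hat S) \omega_{\phi_t}^n
\]
has cscK metrics as critical points. One would first exploit convexity of $\calM$ along weak geodesics in $\calH(\omega)$ (Berman--Berndtsson, Chen--Li--Paun), which is the infinite-dimensional analogue of the geodesic convexity of the finite-dimensional Kempf--Ness functional on $G/K$ used to prove \cref{thm:kempfness}.

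For the easy direction (cscK implies K-polystability), the plan is to apply the slope-at-infinity principle of \cref{lem:limitslopeGIT} in this infinite-dimensional setting. Every test configuration $(\calX,\calL)$ produces a weak geodesic ray $\phi_t$ in $\calH(\omega)$ by solving the homogeneous complex Monge--Amp\`ere equation on the total space, and the Phong--Ross--Sturm slope formula identifies $\lim_{t\to\infty} \calM(\phi_t)/t$ with $\DF(\calX,\calL)$ up to correction terms accounting for singular central fibres (as refined by Boucksom--Hisamoto--Jonsson). Convexity of $\calM$ together with the existence of a cscK minimiser then forces the limiting slope to be non-negative, giving K-semistability; strengthening this to K-polystability requires showing that vanishing slope forces $\phi_t$ to be generated by a holomorphic vector field on $(X,L)$, so that $(\calX,\calL)$ normalises to a product test configuration.

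The hard direction (K-polystability implies existence) is the real obstacle, and my plan would be to adopt the variational approach of Berman--Boucksom--Jonsson. The four steps are: (i) extend the domain of $\calM$ from $\calH(\omega)$ to the finite-energy space $\calE^1(X,\omega)$ on which the direct method of the calculus of variations applies; (ii) promote K-polystability to a \emph{uniform} K-stability, for instance with respect to the non-Archimedean $J$-functional; (iii) prove that uniform K-stability implies coercivity $\calM(\phi) \ge \delta J(\phi) - C$ on $\calE^1$; (iv) extract a minimiser by weak compactness and invoke a regularity theorem to show it is smooth, hence cscK. The main obstacle is step (iii): passing from an algebraic positivity condition on countably many test configurations to an analytic coercivity estimate on the whole of $\calE^1$. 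This requires a non-Archimedean Calabi--Yau-type theorem matching $\calM$ with its non-Archimedean counterpart $\calM^\NA$, together with a density statement for test-configuration-induced rays among all finite-energy geodesic rays, and it is precisely here that \cref{conj:YTD} remains open beyond the Fano case (resolved by Chen--Donaldson--Sun via a cone-singularity continuity method) and the recent resolution of the polarised case by Li.
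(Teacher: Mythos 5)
There is nothing to compare against here: \cref{conj:YTD} is recorded in the paper as a \emph{conjecture}, and the paper offers no proof of it — it only surveys the known cases (the Fano case via Chen--Donaldson--Sun and the later proofs of Tian, Datar--Sz\'ekelyhidi, Chen--Sun--Wang and Berman--Boucksom--Jonsson). Your proposal is therefore correctly pitched as a strategy rather than a proof, and the strategy you describe is the standard one in the literature the paper cites: the K-energy as infinite-dimensional Kempf--Ness functional (matching \cref{sec:kempfness} and the slope-at-infinity principle of \cref{lem:limitslopeGIT}), convexity along weak geodesics, the Phong--Ross--Sturm/Boucksom--Hisamoto--Jonsson slope formula for the easy direction, and the Berman--Boucksom--Jonsson variational programme for the hard direction. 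You are also honest about where it breaks: step (iii), the passage from an algebraic positivity statement to coercivity of $\calM$ on $\calE^1$, is exactly why the statement remains a conjecture outside the Fano setting.

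Two caveats are worth making explicit. First, in the "easy" direction the upgrade from semistability to polystability is not automatic from convexity alone: vanishing of the limit slope along the ray attached to $(\calX,\calL)$ must be converted into the statement that the ray is generated by a holomorphic vector field, which requires the uniqueness and regularity theory for finite-energy minimisers (Berman--Darvas--Lu) together with careful handling of the correction terms for non-normal central fibres; as stated, your sketch elides this, and it is the genuinely delicate part of what is otherwise a theorem in the literature. Second, your step (ii) — "promote K-polystability to uniform K-stability" — is not a step one can simply take: the paper itself remarks, immediately after its definition of K-stability, that for general polarised (non-Fano) manifolds the notion is expected to need strengthening (e.g.\ filtration K-stability in the sense of Sz\'ekelyhidi/Witt-Nystr\"om), and Li's recent work resolves a uniform/geodesic-stability version rather than \cref{conj:YTD} as literally stated with test configurations alone. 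So the gap is not merely analytic coercivity; it is that the algebraic hypothesis of the conjecture may itself be too weak, and any honest plan should flag that the equivalence might only hold after replacing test configurations by filtrations or non-Archimedean data.
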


The case of Fano manifolds, where a cscK metric is K\"ahler--Einstein, is known in the smooth case due to Chen--Donaldson--Sun \cite{chen2015kahlerI,chen2015kahlerII,chen2015kahlerIII}, with later proofs provided by Tian, and using a variety of different approaches by Datar--Sz\'ekelyhidi, Chen--Sun--Wang, and Berman--Boucksom--Jonsson \cite{tian2015k,datar2016kahler,chen2018kahler,berman2021variational}. There are now versions of the YTD conjecture known for singular Fanos in great generality due to the work of Li--Tian--Wang, Li, and Liu--Xu--Zhuang, whose work combines to show any log Fano pair $(X,\Delta)$ admits a weak K\"ahler--Einstein metric if and only if it is K-polystable \cite{li2021uniform,li2022g,liu2021finite}.

Let us complete our short survey of \cref{principle} for varieties by emphasising the importance of the Kempf--Ness-type functional in this setting also. Here the functional was introduced by Mabuchi, and is otherwise known as the K-energy \cite{mabuchi1986k}. As usual, we specify the functional by its first variation. If $\omega$ is a K\"ahler metric and $\omega_\varphi = \omega + i \deldelbar \varphi$ is any metric in the same K\"ahler class, then the Mabuchi functional is the unique functional $\calM$ on the space of K\"ahler metrics in the class $[\omega]$ such that $\calM(\omega) = 0$ and 
$$\deriv{}{t} \calM(\varphi_t) = \int_X \dot \varphi_t (\hat S - S(\omega_t)) \omega_t^n.$$
By construction, a critical point of the Mabuchi functional is necessarily a cscK metric. A version of \cref{lem:limitslopeGIT} for the Mabuchi functional on a Fano manifold, computing the limit slope as the Donaldson--Futaki invariant of a test configuration corresponding to a geodesic ray, is now understood well in terms of the variational framework (see \cite{boucksom2019uniform} and the references therein). A proof of the YTD conjecture utilising this functional, which follows the approach of the original Kempf--Ness theorem, was subsequently carried out by Berman--Boucksom--Jonsson \cite{berman2021variational}. There is now a great body of work proving the analogous properties to the Kempf--Ness functional for arbitrary polarised manifolds, particularly by Chen and Cheng \cite{chen2021constant,chen2021constant2}.

\subsection{Deformations of Fano threefolds with $\SL(2,\CC)$ actions}

In this section we will briefly describe the explicit K-stability problem for the Mukai--Umemura threefold as exposited by Donaldson \cite[\S 5]{donaldson2008kahler}, and describe a new, singular threefold with large automorphism group for which the same techniques may apply. This singular threefold has not appeared in the literature, and we suggest how its deformation theory may interact with K-stability. 

We will make use of the Mukai--Umemura threefold to construct examples of isotrivial fibrations associated to principal $\SL(2,\CC)$-bundles which admit optimal symplectic connections in \cref{ch:isotrivialfibrations}.

These threefolds with automorphism group $\SL(2,\CC)$ are constructed as common zero-loci of sections of bundles over Grassmannians, and we will describe three such examples. In order to find threefolds with automorphism group $G=\SL(2,\CC)$, we make use of the basic representation theory of $\SL(2,\CC)$ which we now recall.

\begin{theorem}[See for example {\cite[Ch. 11]{fulton2013representation}}]
	For each non-negative integer $k\in \ZZ_{\ge 0}$, there exists a unique, irreducible, finite-dimensional representation of $\SL(2,\CC)$ of dimension $k+1$, denoted $s^k$. This representation can be explicitly described as 
	$$s^k = \CC[X,Y]_k,$$
	the homogeneous polynomials of degree $k$ in two variables acted upon in the standard way.
\end{theorem}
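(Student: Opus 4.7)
The plan is to pass to the Lie algebra $\sl_2(\CC)$ and carry out the standard highest-weight argument. Since $\SL(2,\CC)$ is simply connected, finite-dimensional holomorphic representations of the group correspond bijectively to finite-dimensional complex representations of $\sl_2(\CC)$, so it suffices to classify the latter. First I would fix the standard basis $e, f, h$ of $\sl_2(\CC)$ satisfying $[h,e]=2e$, $[h,f]=-2f$, $[e,f]=h$, and observe that on any finite-dimensional representation $V$ the operator $h$ is diagonalisable with integer eigenvalues (the integrality coming from the fact that $h$ exponentiates to the maximal torus $\mathrm{diag}(t,t^{-1})\subset \SL(2,\CC)$).

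The second step is to construct the representation $\CC[X,Y]_k$ explicitly and verify irreducibility. The $\SL(2,\CC)$ action on $\CC^2$ with basis $X,Y$ induces the action on polynomials, and differentiation gives the infinitesimal action
\[ e = X\pderiv{}{Y}, \qquad f = Y\pderiv{}{X}, \qquad h = X\pderiv{}{X} - Y\pderiv{}{Y}. \]
The monomials $X^{k-j} Y^j$ for $j=0,\ldots,k$ form an $h$-eigenbasis with weights $k-2j$, with $X^k$ a highest weight vector killed by $e$. A straightforward calculation gives $f^j X^k = j!\, X^{k-j} Y^j$, so repeated application of $f$ generates the whole of $\CC[X,Y]_k$ from $X^k$. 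Any non-zero $\sl_2(\CC)$-subrepresentation $W \subseteq \CC[X,Y]_k$ is $h$-stable, hence contains some monomial $X^{k-j}Y^j$, and applying $e$ enough times gives $X^k \in W$; applying $f$ then fills out all of $\CC[X,Y]_k$, proving irreducibility.

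The third step is existence and uniqueness of highest-weight reps in general. Given any finite-dimensional irreducible $V$, finite-dimensionality forces a highest weight vector $v\in V$ (an $h$-eigenvector killed by $e$), say with $hv = \lambda v$. A direct induction using $[e,f^j]=j f^{j-1}(h-j+1)$ shows $e f^j v = j(\lambda - j + 1) f^{j-1} v$, and finite-dimensionality forces some $f^{k+1} v = 0$ with $f^k v \neq 0$; applying $e$ to $f^{k+1}v = 0$ then yields $\lambda = k \in \ZZ_{\geq 0}$. The span of $v, fv, \ldots, f^k v$ is then an invariant subspace of dimension $k+1$, which by irreducibility equals $V$. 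Uniqueness follows because the map sending the highest weight vector of one such $V$ to that of another intertwines the $\sl_2(\CC)$ action on the explicit bases $\{f^j v\}$.

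The hardest step is really the weight integrality, which requires knowing that the group-level representation (not just the Lie algebra one) is what we want; this is handled by invoking simple connectedness of $\SL(2,\CC)$ so that every $\sl_2(\CC)$-representation exponentiates. Everything else is a bookkeeping exercise with the commutation relations, and for this survey section I would likely just cite \cite{fulton2013representation} rather than reproducing the full calculation.
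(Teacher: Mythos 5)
Your argument is correct and is exactly the standard highest-weight classification found in the cited reference; the thesis itself states this theorem without proof, simply citing \cite[Ch. 11]{fulton2013representation}, so your plan to do likewise is consistent with the paper. One harmless slip: the coefficient in $f^j X^k$ is the falling factorial $k(k-1)\cdots(k-j+1)$ rather than $j!$, but all that matters for your irreducibility argument is its nonvanishing for $j\le k$.
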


\begin{lemma}
	Given two irreducible representations $s^k, s^l$ of $\SL(2,\CC)$ with $k\ge l$, we have decompositions
	\begin{align*}
		s^k \otimes s^l &\isom s^{k+l} \oplus s^{k+l-2}\oplus \cdots \oplus s^{k-l+2} \oplus s^{k-l},\\
		\Sym^2 s^k &\isom s^{2k} \oplus s^{2k-4}\oplus \cdots \oplus s^4 \oplus s^0,\\
		\Exterior^2 s^k &\isom s^{2k-2} \oplus s^{2k-6} \oplus \cdots \oplus s^6 \oplus s^2.
	\end{align*}
\end{lemma}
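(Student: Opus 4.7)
The plan is to apply character theory for $\SL(2,\CC)$. The maximal torus $T \cong \CC^*$ acts on $s^k = \CC[X,Y]_k$ with weights $-k, -k+2, \ldots, k-2, k$, so the character is
$$\chi_k(t) := \chi_{s^k}(t) = t^k + t^{k-2} + \cdots + t^{-k} = \frac{t^{k+1} - t^{-k-1}}{t - t^{-1}}.$$
Since $\SL(2,\CC)$ is reductive and its maximal compact $\SU(2)$ is compact, a finite-dimensional representation is determined by its character, and irreducible multiplicities are recovered by decomposing into the basis $\{\chi_k\}_{k\ge 0}$.

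For the first decomposition, I would compute $\chi_k(t) \chi_l(t)$ directly. Writing $\chi_l(t) = \sum_{i=0}^{l} t^{l-2i}$ and multiplying against the closed form for $\chi_k$ yields a telescoping sum which collapses to $\sum_{j=0}^{l} \chi_{k+l-2j}(t)$. By linear independence of irreducible characters this gives $s^k \otimes s^l \isom \bigoplus_{j=0}^l s^{k+l-2j}$ as claimed.

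For the symmetric and exterior squares I would invoke the standard identities
$$\chi_{\Sym^2 V}(t) = \tfrac{1}{2}\bigl(\chi_V(t)^2 + \chi_V(t^2)\bigr), \qquad \chi_{\Exterior^2 V}(t) = \tfrac{1}{2}\bigl(\chi_V(t)^2 - \chi_V(t^2)\bigr).$$
The first term inside the parentheses decomposes as $\sum_{j=0}^{k} \chi_{2k-2j}$ by the tensor product formula, while the second satisfies $\chi_k(t^2) = \sum_{j=0}^k (-1)^j \chi_{2k-2j}(t)$, which one verifies by induction on $k$ or by telescoping using the closed form of $\chi_k$. Adding and subtracting then leaves $\Sym^2 s^k \isom \bigoplus_{j \text{ even}} s^{2k-2j}$ and $\Exterior^2 s^k \isom \bigoplus_{j \text{ odd}} s^{2k-2j}$, matching the stated ranges.

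There is no real obstacle here: this is the classical Clebsch--Gordan theorem for $\SL(2,\CC)$, and the only subtlety is the parity bookkeeping when splitting $s^k \otimes s^k$ into its symmetric and antisymmetric parts. A more conceptual alternative would avoid characters altogether by exhibiting a highest weight vector of weight $2k-2j$ in $s^k \otimes s^k$ explicitly, e.g. $\sum_{i=0}^{j} (-1)^i \binom{j}{i} X^{k-i}Y^i \otimes X^{k-j+i}Y^{j-i}$, and observing that its behaviour under swapping tensor factors is controlled by the parity of $j$; this immediately assigns the summand to either $\Sym^2 s^k$ or $\Exterior^2 s^k$.
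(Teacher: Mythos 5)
Your proof is correct, and it is essentially the approach the paper itself gestures at: the paper offers only a one-line sketch (``analyse the highest weights of the tensor products'', or use the Cayley--Sylvester formula), and your character computation, together with the $\Sym^2$/$\Exterior^2$ character identities and the explicit highest-weight vectors $\sum_i (-1)^i\binom{j}{i}X^{k-i}Y^i\otimes X^{k-j+i}Y^{j-i}$, is exactly that weight analysis carried out in full. One cosmetic remark: your parity-of-$j$ description is the correct general statement, whereas the endpoints $s^0$ and $s^2$ displayed in the lemma are literally accurate only for $k$ even (for odd $k$ the lists terminate at $s^2$ and $s^0$ respectively, as the paper itself uses for $\Exterior^2 s^5$), so the discrepancy lies in the lemma's shorthand rather than in your argument.
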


These decompositions can be obtained by analysing the highest weights of the tensor products of irreducible representations, and using the fact that each irreducible representation $s^k$ of $\SL(2,\CC)$ is acted on with highest weight $k$. One may also use the Cayley--Sylvester formula by viewing $s^k$ as the symmetric product $\Sym^k s^1$ of the standard representation $s^1 = \CC^2 \isom \CC[X,Y]_1.$

\subsubsection{Mukai--Umemura threefold\label{sec:mukaiumemura}}

In order to construct the Mukai--Umemura threefold as a subset of a Grassmannian, one considers the 7-dimensional vector space $V=s^6$, and the Grassmannian $\Gr_3(V)$ of 3-planes in $V$. This is $3\cdot (7-3) = 12$ dimensional. Begin by choosing a 3-dimensional subspace $\Pi \subset \Exterior^2 V^*$, and define
$$X_\Pi := \{P\in \Gr_3(V) \mid \rest{\omega}{P} = 0 \text{ for all } \omega \in \Pi\}.$$
We can alternatively view $X_\Pi$ as a zero locus of sections of bundles on the Grassmannian as follows. Denote by $$\calV\to \Gr_3(V)$$
the tautological bundle of 3-planes over $\Gr_3(V)$, and let $\omega_1,\omega_2,\omega_3$ be a basis of $\Pi\subset \Exterior^2 V^*$. Then define sections $\sigma_1,\sigma_2,\sigma_3$ of $\Exterior^2 \calV^*$ by $\sigma_i(P) := \rest{\omega_i}{P}$ and observe 
$$X_\Pi = Z(\sigma_1) \cap Z(\sigma_2) \cap Z(\sigma_3) \subset \Gr_3(V).$$
The resulting variety only depends on the three-plane $\Pi$, and a dimension count shows $X_\Pi$ is a threefold for a generic choice of $\Pi$. Indeed by perturbing the 2-forms $\omega_i$ we have a Zariski-open subspace $\calU \subset \Gr_3(\Exterior^2 V^*)$ of 3-planes $\Pi$ such that $X_\Pi$ is a smooth threefold, and a result of Mukai shows that the inequivalent 3-planes lie in separate orbits of the action of $\SL(V)$ on $\calU$. Therefore the possible such threefolds $X_\Pi$ are classified by the quotient space
$$\calM = \calU / \SL(V).$$

\begin{lemma}
	The threefold $X_\Pi$ is Fano for any $\Pi\in \calU$.
\end{lemma}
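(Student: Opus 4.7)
The plan is to compute $K_{X_\Pi}$ via adjunction applied to the realisation of $X_\Pi$ as the zero locus of a section of a vector bundle on the Grassmannian $Y := \Gr_3(V)$, and then observe that the resulting anticanonical bundle is ample.

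First I would note that $X_\Pi = Z(\sigma)$ where $\sigma = (\sigma_1,\sigma_2,\sigma_3)$ is a section of the rank-$9$ bundle $E := (\Exterior^2 \calV^*)^{\oplus 3}$ on $Y$. Since $\dim Y = 12$ and $\dim X_\Pi = 3$ for $\Pi \in \calU$ by hypothesis, the zero locus has the expected codimension, so $\sigma$ is a regular section and the standard adjunction formula applies to give
$$K_{X_\Pi} = \left( K_Y \otimes \det E \right) \big|_{X_\Pi}.$$

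Next I would perform the two line-bundle computations. For the canonical bundle of the Grassmannian, the Euler sequence yields $T_Y = \calV^* \otimes \calQ$ with $\calQ$ the tautological quotient, and since $\det \calQ = \det \calV^*$ one finds $\det T_Y = (\det \calV^*)^{7} = \calO(7)$, where $\calO(1) = \det \calV^*$ is the Pl\"ucker polarisation; hence $K_Y = \calO(-7)$. For $\det E$, I would invoke the identity $\det(\Exterior^p W) = (\det W)^{\binom{r-1}{p-1}}$ for a bundle $W$ of rank $r$, which with $r=3,\ p=2$ gives $\det(\Exterior^2 \calV^*) = (\det \calV^*)^{2} = \calO(2)$, so $\det E = \calO(6)$. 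Combining these yields $K_{X_\Pi} = \calO(-1)|_{X_\Pi}$.

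Finally, since $\calO(1)$ is very ample on $Y$ (it induces the Pl\"ucker embedding of $\Gr_3(V)$), its restriction to the closed subvariety $X_\Pi$ is ample, and so $-K_{X_\Pi} = \calO(1)|_{X_\Pi}$ is ample, proving that $X_\Pi$ is Fano of index $1$. There is no real obstacle in the argument: the only non-trivial ingredient is the regularity of the defining section, and this is built into the assumption $\Pi \in \calU$ that $X_\Pi$ is smooth of the expected dimension.
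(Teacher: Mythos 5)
Your proof is correct and follows essentially the same route as the paper: adjunction for the zero locus of a regular section of $(\Exterior^2\calV^*)^{\oplus 3}$ on $\Gr_3(V)$, with $\det T_{\Gr_3(V)} = (\det\calV^*)^{7}$ and $\det(\Exterior^2\calV^*) = (\det\calV^*)^{2}$, giving $-K_{X_\Pi} \isom \det\calV^*|_{X_\Pi}$, which is ample. The paper phrases the adjunction via the exact sequence $0\to TX_\Pi \to T\Gr_3(V) \to H^{\oplus 3}\to 0$ and computes $\det(\Exterior^2\calV^*)$ using $\Exterior^2\calV^* \isom \calV\otimes\det\calV^*$, but these are only cosmetic differences from your argument.
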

\begin{proof}
	Let $H=\Exterior^2 \calV^*$. Then $H=\calV \otimes L$ where $L=\det \calV^*$. Therefore we have
	$$\det H = L^{\otimes 2}.$$
	The tangent bundle of $\Gr_3(V)$ is given
	$$T \Gr_3(V) = \calV^* \otimes ((V\otimes \calO)/\calV),$$
	so we see $\det T \Gr_3(V) = L^{\otimes 7}$. Given some $\Pi \in \calU$, the tangent bundle to $X_\Pi$ is the kernel of a surjective map $T \Gr_3(V) \to H\oplus H \oplus H$. Therefore we obtain
	$$\det TX_\Pi \isom L^{\otimes 7} \otimes 3 L^{\otimes -2} \isom L$$
	so $K_{X_\Pi}^{-1} \isom L > 0$ is ample.
\end{proof}

The Mukai--Umemura threefold arises as a particular choice of $\Pi$. Namely if $V=s^6$ then $V^* \isom V$ since the 7-dimensional irreducible representation of $\SL(2,\CC)$ is unique, and consequently
$$\Exterior^2(V^*) = s^{10} \oplus s^6 \oplus s^2.$$
The $s^2$ summand defines a 3-plane $\Pi_0$ in $\Gr_3(\Exterior^2 V^*)$, and the corresponding threefold $X_0 = X_{\Pi_0}$ is the \emph{Mukai--Umemura threefold}. It was shown by Mukai--Umemura that $X_0$ is non-singular, or in other words that $s^2 \in \calU \subset \Gr_3(\Exterior^2 V^*)$ \cite[\S 2]{mukaiumemura}. Since $\Pi_0$ is $\SL(2,\CC)$-invariant, one obtains a natural $\SL(2,\CC)$ action on $X_0$. 

Donaldson computed explicitly Tian's alpha invariant for the Mukai--Umemura threefold, which is a criterion implying the existence of a K\"ahler--Einstein metric on a smooth Fano manifold \cite{tian1987kahler}. Indeed one obtains:

\begin{proposition}[\cite{donaldson2008kahler}]
	The Mukai--Umemura threefold $X_0$ is smooth and admits a K\"ahler--Einstein metric. In particular it is K-polystable.
\end{proposition}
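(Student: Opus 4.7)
The plan is to prove this proposition in two stages: first smoothness (which follows from Mukai--Umemura's original construction), and then existence of a K\"ahler--Einstein metric via Tian's alpha invariant, with K-polystability then a formal consequence.

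For smoothness, I would verify that the three-plane $\Pi_0 = s^2 \subset \Exterior^2 V^*$ actually lies in the open stratum $\calU \subset \Gr_3(\Exterior^2 V^*)$. Concretely, the condition is that the sections $\sigma_1,\sigma_2,\sigma_3$ of $\Exterior^2 \calV^*$ obtained from a basis of $\Pi_0$ vanish transversally at every point of $X_0$. Using the $\SL(2,\CC)$-equivariance, it suffices to check transversality at orbit representatives, and in particular at the unique closed $\SL(2,\CC)$-orbit, which is the rational normal sextic curve $C_6 \subset \PP(V)$ viewed as a curve of $3$-planes (the space of osculating planes along the sextic). A local Jacobian computation in equivariant coordinates then shows transversality, recovering the Mukai--Umemura smoothness result.

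The main work is the existence of the K\"ahler--Einstein metric, and here I would invoke Tian's criterion: a smooth Fano $n$-fold $X$ with a compact group $G$ acting admits a K\"ahler--Einstein metric provided the $G$-invariant alpha invariant satisfies $\alpha_G(X) > n/(n+1)$. For $X_0$ with $G = \SU(2) \subset \SL(2,\CC)$ and $n=3$ this amounts to showing $\alpha_{\SU(2)}(X_0) > 3/4$. Equivalently (in the algebro-geometric formulation via log canonical thresholds) we must show that every effective $\QQ$-divisor $D \in |{-K_{X_0}}|_\QQ$ which is $\SL(2,\CC)$-invariant has log canonical threshold $\mathrm{lct}(X_0,D) > 3/4$. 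The key step is to classify such invariant divisors: using that $-K_{X_0} = L = \det \calV^*$ and the representation-theoretic decompositions of $H^0(X_0, -mK_{X_0})$ into $\SL(2,\CC)$-isotypic components, one reduces to understanding the finitely many invariant sections in each pluricanonical system, and in particular to analysing the geometry of the divisors cut out by invariant sections in $H^0(X_0, -K_{X_0})$ and by quadratic expressions in them.

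The hard part is the singularity analysis of these invariant divisors along the unique closed orbit, the sextic $C_6$, where the divisors acquire their worst singularities. One stratifies the invariant divisors by their multiplicity along $C_6$ and computes, via local equivariant coordinates transverse to $C_6$, that the multiplicities and tangent cones are mild enough that $\mathrm{lct} > 3/4$ in every case. Once the alpha-invariant bound is established, Tian's theorem yields a K\"ahler--Einstein metric in $c_1(X_0)$. Finally, K-polystability is immediate: the existence of a K\"ahler--Einstein metric on a smooth Fano manifold implies K-polystability (originally by Tian, or via the easy direction of the Yau--Tian--Donaldson conjecture, \cref{conj:YTD}), which gives the last assertion.
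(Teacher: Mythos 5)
Your proposal is correct and follows essentially the same route as the paper, which simply cites Donaldson's computation of Tian's equivariant alpha invariant: smoothness is Mukai--Umemura's result, the K\"ahler--Einstein metric comes from the bound $\alpha_G(X_0) > 3/4$ for $G=\SL(2,\CC)$ (Donaldson in fact gets $\alpha_G(X_0)\ge 5/6$, with the only contribution coming from the single invariant divisor at infinity, a union of tangent lines to a rational curve with cusps $z_1^2=z_2^3$), and K-polystability is the standard consequence of the existence of a K\"ahler--Einstein metric on a smooth Fano. Your singularity analysis along the closed orbit is just the concrete form this log canonical threshold computation takes, so there is no genuine divergence from the paper's approach.
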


Further to this, one has an $\SL(2,\CC)$-equivariant embedding
$$X_0 \into \PP(H^0(X_0,K_{X_0}^{-1}))$$
where $H^0(X_0,K_{X_0}^{-1}) \isom s^0 \oplus s^{12}.$ The divisor at $\infty$ of $X_0$ is given by the zero set of the section $\sigma\in H^0(X_0,K_{X_0}^{-1})$ generating the $s^0\isom \CC$ factor. The geometry of $X_0$ can be explicitly understood in terms of this divisor, and for example one may observe that $X_0\backslash Z(\sigma)$ is isomorphic to $\SL(2,\CC)/\Gamma$ where $\Gamma$ is the group of symmetries of an icosahedron, and $X_0$ is therefore a natural compactification of this space.

Furthermore the deformation theory of $X_0$ can be computed in terms of the representation theory of $\SL(2,\CC)$. Recall that the variety $X_0$ is identified with a point $\Pi_0 \in \calU \subset \Gr_3(\Exterior^2 V^*)$. The versal deformation space at $[\Pi_0] \in \calM = \calU / \SL(V)$ can be computed in the following way.\footnote{By a result of Mukai all smooth Fano threefolds of rank 1 and degree 22 appear as sections of the Grassmannian and are parametrised by $\calU$, so $\calM$ is a representative for the moduli of Fanos in this deformation family.} We have $\Exterior^2 V^* = s^{10} \oplus s^6 \oplus s^2$ so 
$$T_{\Pi_0} \Gr_3(\Exterior^2 V^*) = (s^{10} \oplus s^6) \otimes s^2 = s^{12} \oplus s^{10} \oplus 2 s^8 \oplus s^6 \oplus s^4.$$
There is an action of $\SL(V) = \SL(s^6)$ on $\Gr_3(\Exterior^2 V^*)$ whose kernel is given by the stabiliser at $\Pi_0=s^2$, which is $\SL(2,\CC)$. The Lie algebra of $\GL(s^6) \isom \End (s^6) \isom s^6\otimes s^6$ is given by
$$\gl(s^6) = s^6\otimes s^6 = s^{12} \oplus s^{10} \oplus s^8 \oplus s^6 \oplus s^4 \oplus s^2 \oplus s^0$$
and therefore $\mathfrak{sl}(s^6) \isom s^{12} \oplus \cdots \oplus s^2$ after quotienting out the factor $s^0$ generating $\CC^* \cdot \id \in \GL(s^6)$. The induced action on $T_{\Pi_0} \Gr_3(s^6)$ does not see the stabiliser $s^2$ factor. One therefore obtains that the Zariski tangent space $T_{[\Pi_0]} \calM$ is given by the quotient of the morphism
$$\frac{\mathfrak{sl}(s^6)}{s^2} \to T_{\Pi_0} \Gr_3(s^6)$$
which by the above computation is
$$T_{[\Pi_0]} \calM = s^8.$$
Note that the generic expected dimension of the smooth locus of $\calM$ is 6, and the point $[\Pi_0]\in \calM$ is expected to be a singular point, corresponding to the fact that $X_0$ has 3-dimensional automorphisms in comparison to the generic member of $\calM$ which has trivial automorphisms (and so the Zariski tangent space is 6+3=9 dimensional, as computed).

The GIT stability of points in a representation $s^k$ of $\SL(2,\CC)$ is well-understood and can be phrased in terms of the zeroes of the associated homogenous polynomials on $\CCPP^1$ (see for example \cite{thomas2005notes}). This lead to the enticing conjecture of Donaldson relating this GIT stability to the K-stability of deformations of $X_0$ in $\calU$ \cite{donaldson2008kahler}. This conjecture was resolved due to the work of Sz\'ekelyhidi and Br\"onnle \cite{szekelyhidi2010kahler,bronnle2011deformation}, whose work applies in far greater generality to deformations of cscK manifolds with automorphisms.

\begin{theorem}
	A small deformation $X_\Pi$ of $X_0$ in $\calU$ is K-(semi/poly)stable if and only if the associated point $v_\Pi\in T_{[\Pi_0]} \calM$ is GIT (semi/poly)stable for the action of $\SL(2,\CC)=\Aut(X_{\Pi_0}, L)$ on $T_{[\Pi_0]} \calM$.
\end{theorem}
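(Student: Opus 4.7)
My plan is to follow the general deformation-theoretic framework of Székelyhidi and Brönnle, which exploits the reductive automorphism group of $X_0$ and the existence of a KE metric, reducing the K-stability problem for nearby deformations to a finite-dimensional GIT problem on the versal space.

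The first step is to identify, via a Kuranishi-type slice construction, an $\Aut(X_0)$-equivariant analytic neighbourhood of $[\Pi_0]\in\calM$ with an $\SL(2,\CC)$-invariant neighbourhood of the origin in $T_{[\Pi_0]}\calM = s^8$. Concretely, since $X_0$ is smoothable and the obstruction space $H^2(X_0,TX_0)$ either vanishes or is transverse in a controlled way, the versal family is (formally) parametrised by an open set of $T_{[\Pi_0]}\calM$ with the natural linear $\SL(2,\CC)$-action. This is the standard setup for studying moduli of complex structures near a fixed point with automorphisms. A deformation $X_\Pi$ corresponds to a vector $v_\Pi\in s^8$, well-defined up to the $\SL(2,\CC)$-action.

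The second step is to set up the infinite-dimensional moment map picture of Fujiki--Donaldson: on a fixed symplectic manifold $(M,\omega)$ the space $\calJ(M,\omega)$ of compatible almost complex structures carries an action of the Hamiltonian symplectomorphism group $\Ham(M,\omega)$, with moment map given essentially by the scalar curvature $S(J)-\hat S$. Zeros of this moment map correspond to cscK structures. Near the KE structure $J_0$ on $X_0$ the group $\Ham(M,\omega)$ contains a finite-dimensional reductive subgroup corresponding to $\Aut(X_0,L)$, and one performs a finite-dimensional reduction: using the implicit function theorem transverse to the $\Ham$-orbit and the fact that $\ker\calL_{J_0}$ of the linearised cscK operator at the KE point consists precisely of holomorphy potentials for $\Aut(X_0,L)$, one obtains a smooth finite-dimensional moment map $\widetilde{\mu}\colon T_{[\Pi_0]}\calM \to \mathfrak{sl}(2,\CC)^*$ whose zeros in a neighbourhood of the origin correspond bijectively (up to gauge) to nearby cscK deformations. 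Moreover, $\widetilde{\mu}$ has leading order given by the linear moment map for the $\SU(2)$-action on $s^8\subset T_{[\Pi_0]}\calM$ (viewed as a linear symplectic representation).

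The third step applies the finite-dimensional Kempf--Ness theorem \cref{thm:kempfness} to this reduced picture: the $\SL(2,\CC)$-orbit of $v_\Pi\in s^8$ meets $\widetilde{\mu}^{-1}(0)$ exactly when $v_\Pi$ is GIT polystable for the linear $\SL(2,\CC)$-action on $s^8$, and analogous statements hold for semistability and stability via the Hilbert--Mumford criterion \cref{thm:HMcriterion}, provided $v_\Pi$ is small enough that the higher-order terms in $\widetilde{\mu}$ do not spoil the picture. Combined with the previous step, this gives the equivalence between nearby cscK existence on $X_\Pi$ and GIT (semi/poly)stability of $v_\Pi$. Finally, to translate cscK existence into K-(semi/poly)stability as claimed, one invokes the Chen--Donaldson--Sun theorem (a case of \cref{conj:YTD} for Fano manifolds).

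The main obstacle is the finite-dimensional reduction in step two: one must verify that the finite-dimensional moment map $\widetilde{\mu}$ is genuinely obtained by restriction from the infinite-dimensional one in an equivariant way, and that its leading-order part is the linear moment map on $s^8$. This uses the fact that at a KE point the linearised scalar curvature operator is self-adjoint with kernel exactly $\Lie\Aut(X_0,L)$, so that an equivariant slice in $\calJ(M,\omega)$ transverse to the $\Ham$-orbit can be identified with a neighbourhood of $0$ in $T_{[\Pi_0]}\calM$ on which the restricted moment map is, to leading order, the linear one. Controlling the higher-order corrections to deduce that small polystable (resp.\ unstable) deformations remain polystable (resp.\ unstable) for the true nonlinear moment map is the technical heart of the Székelyhidi--Brönnle argument.
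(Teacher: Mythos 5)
The thesis does not prove this statement itself; it quotes it from Sz\'ekelyhidi and Br\"onnle, and your outline correctly reproduces the skeleton of their argument (equivariant Kuranishi slice identified with a ball in $T_{[\Pi_0]}\calM = s^8$, finite-dimensional reduction of the Fujiki--Donaldson moment map, Kempf--Ness for the linear $\SL(2,\CC)$-action). The genuine gap is in your last step, where you convert metric existence into the stability statements by invoking Chen--Donaldson--Sun. That theorem only gives the equivalence between existence of a K\"ahler--Einstein metric and K-\emph{poly}stability. From ``$v_\Pi$ GIT unstable $\Rightarrow$ no nearby KE metric'' you may conclude only that $X_\Pi$ is not K-polystable, which is strictly weaker than K-instability; likewise neither the K-semistable nor the K-stable clause of the theorem follows from metric existence alone. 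The missing idea --- and the part of Sz\'ekelyhidi's and Br\"onnle's work that your sketch does not touch --- is the weight comparison: a one-parameter subgroup $\lambda\colon \CC^*\to\SL(2,\CC)$ applied to $v_\Pi$ produces, through the equivariant versal family over $T_{[\Pi_0]}\calM$, a test configuration for $(X_\Pi,-K_{X_\Pi})$, and one must show that its Donaldson--Futaki invariant is a positive multiple of the Hilbert--Mumford weight $\mu(v_\Pi,\lambda)$ (in the spirit of \cref{lem:limitslopeGIT}). With this in hand a GIT-destabilising 1-PS yields a destabilising test configuration, and the semistable/stable/polystable equivalences all transfer; without it the directions involving semistability and stability do not close. (Alternative modern routes, e.g.\ openness of K-semistability in Fano families together with uniqueness of K-polystable degenerations, would also work, but they equally go beyond CDS and are absent from your plan.)

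A smaller point: the identification of the slice with $s^8$ should not rest on ``$H^2(X_0,TX_0)$ either vanishes or is transverse in a controlled way''. For this example the tangent space to the moduli is computed directly from the Grassmannian construction, as the cokernel of $\sl(s^6)/s^2 \to T_{\Pi_0}\Gr_3(\Exterior^2 (s^6)^*)$, which gives exactly $s^8$; equivalently one can appeal to unobstructedness of deformations of smooth Fano threefolds. Since the whole finite-dimensional reduction takes place on this specific $\SL(2,\CC)$-representation, that identification is part of the content and needs to be justified rather than assumed.
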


Computations of Tian show that manifolds arising from GIT unstable or strictly GIT semistable orbits above are not K\"ahler--Einstein \cite{tian1987kahler}. Deformations of $X_0$ will break the $\SL(2,\CC)$ action, but some may preserve the $\CC^*$ action, and there is a one-parameter family $X(\tau)$ of such threefolds near $X_0$, with $X_0$ corresponding to $\tau = 1$. Donaldson made a refined conjecture in this case \cite[\S 4.1]{donaldson2015stability} where K-polystability should be characterised by properties of the singularities of the divisor, and Cheltsov--Shramov have resolved this conjecture showing that $X(\tau)$ is K-polystable except for a small number of values of $\tau$ \cite{cheltsov2018kahler}. 

\subsubsection{$V_5$ manifold}

One can construct another example of a smooth Fano threefold with an action of $\SL(2,\CC)$ as a zero-locus of sections inside a Grassmannian, this time in the deformation family $V_5$. In this case one uses the irreducible representation $V=s^4$ and considers a triple intersection in $\Gr_2(V)$, which is $2\cdot(5-2)=6$-dimensional. Indeed if $\calV \to \Gr_2(V)$ is the tautological bundle, then $\Exterior^2\calV^*$ is a line bundle and a triple of sections defined by a plane $\Pi \subset \Exterior^2 V^*$ produces a Fano threefold, and a similar formal theory exists as in the case of $V_{22}$ above and the Mukai--Umemura threefold.

Indeed one may compute 
$$\Exterior^2 V^* = s^6\oplus s^2$$
and setting $\Pi_0 = s^2$ one obtains a smooth Fano threefold $Y_0$ with $\SL(2,\CC)$ action. The algebraic geometry of the manifold $Y_0$ has been described in \cite[\S 6]{przyjalkowski2019fano} \cite{sanna2014rational}. In particular the alpha invariant of $Y_0$ can be computed and is greater than $\frac{5}{6}$, so $Y_0$ admits a K\"ahler--Einstein metric and is K-polystable \cite{cheltsov2009extremal}.

However a computation of the versal deformation space for $Y_0$ reveals $\calM = \{Y_0\}$ and $Y_0$ is rigid. Therefore the same deformation picture which exists for the Mukai--Umemura threefold $X_0$ cannot be studied for $Y_0$.

\subsubsection{A singular threefold in the class $V_{14}$}

There is a third deformation family of Fano threefolds which may be constructed as zeroes of sections of a Grassmannian in a straight-forward way, class $V_{14}$. Here one takes 5-fold intersections of forms in $\Gr_2(\CC^6)$. There are 15 dimensions of local moduli of such smooth Fano threefolds. Their K-stability is completely understood.

\begin{theorem}[{\cite[Ex. 4.1.7]{calabiproblem}}]
	Every smooth Fano threefold in the deformation class $V_{14}$ has discrete automorphism group and is K-stable.
\end{theorem}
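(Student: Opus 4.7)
The plan is to exploit the explicit description of $V_{14}$ threefolds as transverse linear sections of the Grassmannian $\Gr_2(\CC^6) \subset \PP^{14}$ cut out by a codimension-five linear subspace $L \subset \PP^{14}$. The moduli of such smooth sections has dimension $5 \cdot 10 - (\dim \PGL(6,\CC) + \dim \mathrm{Gr}_5(15)) \cdot (\text{correction})$, which after the standard count yields 15, in agreement with the classical value.

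First I would establish that $\Aut(X)$ is discrete for every smooth $X$ in the family. Any automorphism of $X$ acts linearly on $H^0(X,-K_X) \isom \CC^{15}$, and the induced action on the anticanonical embedding $X \into \PP^{14}$ preserves the Grassmannian $\Gr_2(\CC^6) \supset X$ (which is intrinsically recoverable as, for example, the closure of the union of the linear spans of conics on $X$). Hence $\Aut(X)$ embeds into $\mathrm{Stab}_{\Aut(\Gr_2(\CC^6))}(L)$, where $\Aut(\Gr_2(\CC^6)) \isom \PGL(6,\CC) \rtimes \ZZ/2$. An infinitesimal stabiliser calculation analogous to that carried out in \cref{sec:mukaiumemura} for the Mukai--Umemura threefold, but now for the action of $\pgl(6,\CC)$ on $\Gr_5(\Exterior^2(\CC^6)^*)$, shows that \emph{no} smooth point $[L]$ of this parameter space has positive-dimensional stabiliser: if $H^0(X,T_X) \ne 0$ then the $\SL(6,\CC)$-orbit of $L$ would have positive codimension in a proper invariant subvariety of the parameter space, contradicting smoothness of the generic fibre over that orbit (all smooth $V_{14}$'s are deformation-equivalent and the locus of those with extra infinitesimal symmetry is closed and proper).

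Second, for K-stability, I would use the stability threshold $\delta(X)$ and the Fujita--Odaka--Li criterion that $\delta(X)>1$ implies K-stability. To estimate $\delta(X)$ from below I would apply the Abban--Zhuang adjunction method: choose a flag $X \supset S \supset C$ with $S \in |-K_X|$ a carefully-chosen smooth anticanonical divisor (for instance a smooth $K3$ section) and $C \subset S$ a curve adapted to the Grassmannian geometry; then bound $\delta(X)$ in terms of relative stability thresholds computed inductively along the flag. The explicit geometry of anticanonical divisors on $V_{14}$'s, together with bounds on the Seshadri constants of curves on $K3$ surfaces of the relevant degree, should yield $\delta(X)>1$.

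The main obstacle will be the concrete execution of the Abban--Zhuang estimate, as it requires a uniform lower bound on the local stability threshold at every scheme-theoretic point of $X$, and so reduces to a finite but intricate case analysis of the possible configurations of flags arising from the Grassmannian embedding. This is precisely the work carried out in \cite[Ex. 4.1.7]{calabiproblem}. Once $\delta(X)>1$ is obtained, K-polystability is immediate, and upgrading to K-stability requires only the discreteness of $\Aut(X)$ from the first step, since then no non-trivial product test configuration of $(X,-K_X)$ exists.
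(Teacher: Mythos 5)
This statement is not proved in the thesis at all: it is quoted directly from the literature (\cite[Ex. 4.1.7]{calabiproblem}), so there is no ``paper proof'' to compare against, and your proposal is essentially an outline of the strategy used in that reference. As a standalone argument, however, it has a genuine gap in the automorphism step. The claim that $H^0(X,T_X)\ne 0$ would ``contradict smoothness of the generic fibre'' because the locus of sections with extra infinitesimal symmetry is closed and proper is not a valid argument: a particular smooth $X$ is perfectly free to lie in a proper closed invariant sublocus of the parameter space. The deformation class $V_{22}$ discussed in this very chapter is the counterexample to your reasoning pattern — the generic member has trivial automorphisms, yet the Mukai--Umemura threefold is a smooth member with $\Aut_0 \isom \SL(2,\CC)/\{\pm 1\}$ acting nontrivially. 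So discreteness of $\Aut(X)$ for \emph{every} smooth $V_{14}$ cannot follow from a dimension count or from the statement that the symmetric locus is proper; it requires an actual classification input (e.g.\ the known finiteness results for automorphism groups of prime Fano threefolds of genus $8$, via the Hilbert scheme of conics and the associated cubic threefold, or the discussion in the cited book itself). Your reduction of $\Aut(X)$ to the stabiliser of the linear section inside $\Aut(\Gr_2(\CC^6))\isom \PGL(6,\CC)\rtimes \ZZ/2$ is fine and is the right starting point, but the finiteness of that stabiliser for every smooth section is exactly the nontrivial content.

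For the K-stability half, you correctly identify the modern route ($\delta(X)>1$ via Abban--Zhuang adjunction along flags adapted to the Grassmannian geometry), but you also concede that the uniform lower bound on local stability thresholds at every point of $X$ — the entire substance of the proof — is ``precisely the work carried out in'' the cited reference. That makes the proposal a plan rather than a proof: nothing in it would fail in principle, but neither step is actually executed, and the one step you do argue in detail (discreteness of automorphisms) is argued incorrectly. Note also a small logical redundancy at the end: once $\delta(X)>1$ is established one gets uniform K-stability, hence K-stability outright, without needing the automorphism statement as an extra ingredient; discreteness of $\Aut(X)$ is part of the theorem's conclusion rather than a hypothesis needed to ``upgrade'' polystability.
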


We will now describe a \emph{singular} threefold in this deformation class with large automorphism group. Consider now $V=s^5$ the 6-dimensional irreducible representation of $\SL(2,\CC)$. As usual let $\calV \to \Gr_2(V)$ denote the tautological bundle, and consider a quintuple of sections of $\Exterior^2 \calV^*$ defined by a 5-plane $\Pi\subset \Exterior^2 V^*$. We have
$$\Exterior^2 s^5 = s^8 \oplus s^4 \oplus s^0$$
and take $\Pi_0 := s^4\subset \Exterior^2 s^5$. Then set
$$Z_0 := Z_{\Pi_0} \subset \Gr_2(V).$$
Since $\Pi_0$ is $\SL(2,\CC)$-invariant, $Z_0$ comes with an $\SL(2,\CC)$ action. Unlike in the cases of the Mukai--Umemura threefold and $V_5$ manifold (which can be shown to be smooth due to explicit descriptions as compactifications of $\SL(2,\CC)/\Gamma$ for $\Gamma$ a finite subgroup), the above classification of smooth $V_{14}$ manifolds shows that $Z_0$ must be singular.\footnote{It would be interesting to understand if $Z_0$ may be identified as a singular compactification of $\SL(2,\CC)/\Gamma$ for some finite subgroup $\Gamma$. The subgroups admitting smooth compactifications were identified by Mukai and Umemura.} Indeed we will explicitly describe $Z_0$ and identify two singular points.

\begin{remark}
	Whilst we cannot prove $Z_0$ itself is Fano, we expect this to be the case. A similar argument to the smooth case of $V_5$ or $V_{22}$ shows that the ample bundle $\det \calV^*$ is generically isomorphic to $K_{Z_0}^{-1}$, so what remains is a more detailed understanding of how the singular locus of $Z_0$ changes its anticanonical bundle. Nevertheless $Z_0$ is a degeneration of smooth Fano threefolds, and therefore has nef anticanonical bundle.
\end{remark}

Consider the Pl\"ucker embedding
$$\Gr_2(V) \into \PP(\Exterior^2 V).$$
Under the canonical identification $\Exterior^2 (s^5)^* \isom \Exterior^2 s^5$ the $s^4$-summand is sent to itself. Indeed $Z_0$ can be identified with the 2-planes $P\in \Gr_2(V)$ which are orthogonal to $s^4\subset \Exterior^2 s^5$. Thus the Pl\"ucker embedding of $\Gr_2(s^5)$ restricts to an $\SL(2,\CC)$-equivariant embedding
$$Z\into \PP(s^0 \oplus s^8).$$
The $s^0$ summand defines an $\SL(2,\CC)$-invariant divisor $D$ at $\infty$ of $Z_0\subset \PP(s^8\oplus s^8)$ corresponding to the intersection $D=Z_0\cap \PP(s^8)\subset \PP(s^0\oplus s^8)$. This summand is generated by a section of the ample line bundle $\det \calV^*$ restricted to $Z_0$, which we will explicitly identify.

Let us now explicitly describe the embedding. First we will describe the representations of $\SL(2,\CC)$ explicitly. Fix a basis $x^i y^{5-i}$ of $s^5 = \CC[x,y]_5$. Then the generators of $\sl(2)$ are given by
$$X=x\pderiv{}{y},\quad Y = y\pderiv{}{x},\quad H = [X,Y] = x\pderiv{}{x} - y \pderiv{}{y}.$$
The subspace $s^4\subset s^8\oplus s^4\oplus s^0 = \Exterior^2 s^5$ is a 5-dimensional space spanned by the forms
\begin{multline*}
	x^2 y^3 \wedge x^5 - 3 x^3 y^2 \wedge x^4 y,\quad 2xy^4 \wedge x^5 - 4 x^2 y^3 \wedge x^4 y,\quad y^5\wedge x^5 + xy^4 \wedge x^4 y - 8 x^2y^3 \wedge x^3 y^2,\\
	3 y^5 \wedge x^4 y - 6 x y^4 \wedge x^3 y^2,\quad y^5 \wedge x^3 y^2 - 3 x y^4 \wedge x^2 y^3.
\end{multline*}
The 10-dimensional complement $s^8\oplus s^0$ is spanned by the forms
\begin{multline}
	x^4 y \wedge x^5,\quad x^3 y^2 \wedge x^5,\quad 3x^2 y^3 \wedge x^5 + 5 x^3 y^2 \wedge x^4 y,\\
	x y^4 \wedge x^5 + 5 x^2 y^3 \wedge x^4 y,\quad y^5\wedge x^5 + 15 x y^4 \wedge x^4 y + 20 x^2 y^3 \wedge x^3 y^2,\\
	y^5 \wedge x^5 - 5 xy^4 \wedge x^4 y + 10 x^2 y^3 \wedge x^3 y^2,\quad y^5 \wedge x^4 y + 5 x y^4 \wedge x^3 y^2,\quad 3 y^5 \wedge x^3 y^2 + 5 x y^4 \wedge x^2 y^3,\\
	y^5 \wedge x^2 y^3,\quad y^5 \wedge x y^4,\label{eq:s8s0basis}
\end{multline}
where the irreducible component $s^0$ is spanned by the form
$$\tilde \sigma = y^5 \wedge x^5 - 5 xy^5 \wedge x^4 y + 10 x^2 y^3 \wedge x^3 y^2.$$
Thus the invariant section $\sigma$ can be identified with the induced section of the ample line bundle $\Exterior^2 \calV^*$ from the form $\tilde \sigma$ defined by sending
$$P \mapsto \rest{\tilde \sigma}{P}$$
for all $P\in \Gr_2(s^5)$ such that $\rest{\omega}{P} = 0$ for all $\omega$ in the span of $s^4$. The points of $Z_0 \subset \PP(s^0 \oplus s^8)$ can be identified with the elements $P=a\wedge b$ for $a,b,\in s^5$ with are orthogonal to every 2-form in $\Pi_0$. In particular we may single out four points which all lie along the $\SL(2,\CC)$-invariant divisor $D=Z(\sigma) = \PP(s^8) \cap Z_0 \subset Z_0$ at $\infty$ of $Z_0$:
\begin{equation}\label{eq:fixedpoints}x^4 y \wedge x^5, \quad x^3 y^2 \wedge x^5, \quad y^5\wedge x^2 y^3,\quad  y^5\wedge xy^4.\end{equation}
We may now translate these representations into an explicit embedding $Z_0 \into \Gr_2(s^5) \into \PP(\Exterior^2 s^5))$ of the variety $Z_0$. Let us change notation slightly to denote $x^5\in s^5$ by $e_1$, $x^4 y = e_2,\dots, y^5 = e_6$. Then if $W=a\wedge b \in \Exterior^2 s^5$ is a 2-plane in $s^5$ with $a=\sum a_i e_i, b = \sum b_i e_i$, the condition that $W$ lies in $Z_0$ is given by the following relations:
\begin{align*}
	(a_4 b_1 - a_1 b_4) -3 (a_3 b_2 - a_2 b_3) &= 0\\
	2(a_5 b_1 - a_1 b_5) - 4 (a_4 b_2 - a_2 b_4) &=0\\
	(a_6 b_1 - a_1 b_6) + (a_5 b_2 - a_2 b_5)-8(a_4 b_3 - a_3 b_4)&=0\\
	3(a_6 b_2 - a_2 b_6) - 6 (a_5 b_3 - a_3 b_5) &= 0\\
	(a_6 b_3 - a_3 b_6) - 3 (a_5 b_4 - a_4 b_5) &= 0.
\end{align*}
In Pl\"ucker coordinates for $W$ these relations become
\begin{align*}
	-W_{14} + 3 W_{23} &= 0\\
	-2W_{15} + 4 W_{24} &= 0\\
	-W_{16} - W_{25} + 8 W_{34} &= 0\\
	-3W_{26} + 6 W_{35} &= 0\\
	-W_{36} + 3 W_{45} &= 0.
\end{align*}

Combined with the Pl\"ucker relations, this produces an explicit representation of $Z_0$ embedded in $\PP(\Exterior^2 s^5)$ as the intersection of the Grassmannian $\Gr_2(s^5)$ with 5 hyperplanes in $\Exterior^2 s^5$.

\begin{remark}
	At this point it becomes clear that despite the non-genericity of $s^4 \subset \Exterior^2 V$ (e.g. $Z_0$ is singular), the variety $Z_0$ has the expected dimension 3, being the intersection of the 8-dimensional $\Gr_2(\CC^6)$ with 5 hyperplanes.
\end{remark}

Let us now describe the $\SL(2,\CC)$-equivariant embedding $Z_0 \into \PP(s^8 \oplus s^0)$. The variety $Z_0$ can be identified with the locus of totally decomposable two-forms $a\wedge b\in s^8 \oplus s^0$. Using the basis of \eqref{eq:s8s0basis} we can distil the embedding of $Z_0$ from the above embedding as follows. Let $\alpha \in s^8 \oplus s^0$ be a two-form. Then $\alpha$ is specified as a vector
$$\alpha = \alpha_i v_i$$
where $v_1,\dots,v_{10}$ are the basis vectors listed in \eqref{eq:s8s0basis}. Such a vector is totally decomposable if it satisfies the Pl\"ucker relations for $\Exterior^2 s^5$. We express the basis of $s^8\oplus s^0$ in terms of the standard basis of $s^5$:
\begin{multline}
	e_2 \wedge e_1,\quad e_3 \wedge e_1,\quad 3e_4 \wedge e_1 + 5 e_3 \wedge e_2,\\
	e_5 \wedge e_1 + 5 e_4 \wedge e_2,\quad e_6\wedge e_1 + 15 e_5 \wedge e_2+ 20 e_4 \wedge e_3,\\
	e_6 \wedge e_1 - 5 e_5 \wedge e_2 + 10 e_4 \wedge e_3,\quad e_6 \wedge e_2 + 5 e_5 \wedge e_3,\quad 3 e_6 \wedge e_3 + 5 e_5 \wedge e_4,\\
	e_6 \wedge e_4,\quad e_6\wedge e_5,\label{eq:s8s0ebasis}
\end{multline}
Then the element $\alpha$ has Pl\"ucker coordinates given by
\begin{align*}
	W_{12} &= - \alpha_1\\
	W_{13} &= -\alpha_2\\
	W_{14} &= -3\alpha_3\\
	W_{15} &= -\alpha_3\\
	W_{16} &= -\alpha_5 - \alpha_6\\
	W_{23} &= -5\alpha_3\\
	W_{24} &= -5\alpha_4\\
	W_{25} &= -15\alpha_5 + 5\alpha_6\\
	W_{26} &= -\alpha_7\\
	W_{34} &= -20\alpha_5 - 10\alpha_6\\
	W_{35} &= -5\alpha_7\\
	W_{36} &= -3\alpha_8\\
	W_{45} &= -5\alpha_8\\
	W_{46} &= -\alpha_9\\
	W_{56} &= -\alpha_{10}.
\end{align*}
Using the Pl\"ucker relations gives the homogeneous ideal defining $Z_0$
\begin{multline*}
	\alpha_{8}^{2}-5\,\alpha_{7}\alpha_{9}+20\,\alpha_{5}\alpha_{10}+10\,\alpha_{6}\alpha_{10},\\
	\,5\,\alpha_{7}\alpha_{8}-15\,\alpha_{5}\alpha_{9}+5\,\alpha_{6}\alpha_{9}+5\,\alpha_{4}\alpha_{10},\\
	\,5\,\alpha_{7}^{2}-45\,\alpha_{5}\alpha_{8}+15\,\alpha_{6}\alpha_{8}+5\,\alpha_{3}\alpha_{10},\\
	\,20\,\alpha_{5}\alpha_{7}+10\,\alpha_{6}\alpha_{7}-15\,\alpha_{4}\alpha_{8}+5\,\alpha_{3}\alpha_{9},\\
	\,5\,\alpha_{5}\alpha_{8}+5\,\alpha_{6}\alpha_{8}-\alpha_{3}\alpha_{9}+3\,\alpha_{3}\alpha_{10},\\
	\,5\,\alpha_{5}\alpha_{7}+5\,\alpha_{6}\alpha_{7}-3\,\alpha_{3}\alpha_{8}+\alpha_{2}\alpha_{10},\\
	\,15\,\alpha_{5}^{2}+10\,\alpha_{5}\alpha_{6}-5\,\alpha_{6}^{2}-\alpha_{3}\alpha_{7}+\alpha_{1}\alpha_{10},\\
	\,20\,\alpha_{5}^{2}+30\,\alpha_{5}\alpha_{6}+10\,\alpha_{6}^{2}-9\,\alpha_{3}\alpha_{8}+\alpha_{2}\alpha_{9},\\
	\,5\,\alpha_{4}\alpha_{5}+5\,\alpha_{4}\alpha_{6}-3\,\alpha_{3}\alpha_{7}+\alpha_{1}\alpha_{9},\\
	\,5\,\alpha_{3}\alpha_{5}+5\,\alpha_{3}\alpha_{6}-\alpha_{2}\alpha_{7}+3\,\alpha_{1}\alpha_{8},\\
	\,300\,\alpha_{5}^{2}+50\,\alpha_{5}\alpha_{6}-50\,\alpha_{6}^{2}-25\,\alpha_{4}\alpha_{7}+25\,\alpha_{3}\alpha_{8},\\
	\,20\,\alpha_{3}\alpha_{5}+10\,\alpha_{3}\alpha_{6}-15\,\alpha_{3}\alpha_{7}+5\,\alpha_{2}\alpha_{8},\\
	\,5\,\alpha_{3}\alpha_{4}-45\,\alpha_{3}\alpha_{5}+15\,\alpha_{3}\alpha_{6}+5\,\alpha_{1}\alpha_{8},\\
	\,5\,\alpha_{3}^{2}-15\,\alpha_{2}\alpha_{5}+5\,\alpha_{2}\alpha_{6}+5\,\alpha_{1}\alpha_{7},\\
	\,15\,\alpha_{3}^{2}-5\,\alpha_{2}\alpha_{4}+20\,\alpha_{1}\alpha_{5}+10\,\alpha_{1}\alpha_{6}
\end{multline*}
inside $\CC[\alpha_1,\dots,\alpha_{10}]$.

One can observe directly from computing the Jacobian of partial derivatives that:
\begin{lemma}
	The variety $Z_0$ is singular at 
	$$P=x^4 y \wedge x^5,\quad Q = y^5 \wedge xy^4.$$
\end{lemma}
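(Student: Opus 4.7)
The plan is to verify singularity at $P$ and $Q$ by a direct calculation of the Zariski tangent space at each point, showing that its dimension strictly exceeds $\dim Z_0 = 3$.

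First, I would identify the points in the homogeneous coordinates $(\alpha_1,\dots,\alpha_{10})$ associated to the basis $v_1,\dots,v_{10}$ of $s^8\oplus s^0$ listed in \eqref{eq:s8s0ebasis}. The point $P=e_2\wedge e_1$ coincides with $v_1$, so $P=[1:0:\cdots:0]$, while $Q=e_6\wedge e_5$ coincides with $v_{10}$, so $Q=[0:\cdots:0:1]$. To compute $T_P Z_0$ I would work in the affine chart $\alpha_1=1$, in which each of the $15$ quadratic generators of the homogeneous ideal of $Z_0$ dehomogenises to a polynomial in $\alpha_2,\dots,\alpha_{10}$ vanishing at $P$. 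The corresponding row of the Jacobian at $P$ is then obtained, for each generator, by extracting only the monomials of the form $\alpha_1\alpha_k$ with $k\ne 1$: such a monomial contributes the covector $d\alpha_k$, while monomials $\alpha_i\alpha_j$ with $1\notin\{i,j\}$ vanish to order two at the origin and $\alpha_1^2$ dehomogenises to a constant.

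Second, scanning the $15$ generators shows that only the six equations containing a factor of $\alpha_1$ contribute nonzero rows, producing $d\alpha_{10}$, $d\alpha_9$, $3\,d\alpha_8$, $5\,d\alpha_8$, $5\,d\alpha_7$ and $20\,d\alpha_5+10\,d\alpha_6$. These span the five-dimensional subspace $\langle d\alpha_5+\tfrac{1}{2}d\alpha_6,\, d\alpha_7,\, d\alpha_8,\, d\alpha_9,\, d\alpha_{10}\rangle$, so $\dim T_P Z_0 \ge 9-5 = 4 > 3 = \dim Z_0$ and $P$ is singular. The same calculation at $Q$ in the affine chart $\alpha_{10}=1$, extracting $\alpha_{10}\alpha_k$-monomials from the six equations containing $\alpha_{10}$, produces rows spanning $\langle d\alpha_1,\, d\alpha_2,\, d\alpha_3,\, d\alpha_4,\, d\alpha_5+\tfrac{1}{2}d\alpha_6\rangle$, again of dimension $5$, so $Q$ is singular as well.

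As a shortcut, once singularity at $P$ is known one can deduce singularity at $Q$ for free from the $\SL(2,\CC)$-equivariance of the embedding $Z_0\subset\PP(s^8\oplus s^0)$: the nontrivial Weyl element of $\SL(2,\CC)$ acts on $s^5$ by the involution $e_i\leftrightarrow e_{7-i}$ up to sign, which interchanges $P$ and $Q$ as the extremal weight vectors of weights $\pm 8$ and therefore must preserve the singular locus. The only real difficulty in the plan is the bookkeeping needed to identify the surviving linear parts of the $15$ quadratic generators, but this is mechanical since those generators are already presented in monomial form and the two points both lie on coordinate axes.
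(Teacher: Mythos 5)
Your proof is correct and is essentially the computation the paper itself performs: its proof notes that the Jacobian of the same fifteen quadrics has generic rank $6$ and drops to rank $5$ at $P$ and $Q$, which is exactly what your explicit extraction of the linear parts in the charts $\alpha_1=1$ and $\alpha_{10}=1$ verifies (and the Weyl-involution remark is a valid shortcut for deducing $Q$ from $P$). The paper also records a less computational alternative — a plane $W\in Z_0$ is singular precisely when $W\subset\ker\alpha$ for some $\alpha\in\Pi_0$, with explicit such forms exhibited for $P$ and $Q$ — but your route coincides with its primary Jacobian-rank argument.
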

\begin{proof}
	The generic rank of the Jacobian of the defining ideal of $Z_0$ is $6$, and at the points $P$ and $Q$ drops to $5$. Note that the other points lying along the invariant divisor $D$ in \eqref{eq:fixedpoints} are non-singular points of $Z_0$ where the Jacobian has rank $6$.
	
	In fact, a less computational argument can be given that $P$ and $Q$ are singular. Following the discussion in \cite[Prop. A.4]{kuznetsov2003derived} we note that a 2-dimensional subspace $P\subset V$ such that $P\in Z_0$ is a singular point of $Z_0$ if and only if $P\subset \ker \alpha$ for some 2-form $\alpha \in \Pi_0$. Now see that $P=x^4 y \wedge x^5$ is contained in the kernel of the form
	$$\alpha = y^5 \wedge x^3 y^2 - 3xy^5 \wedge x^2 y^3\in s^4$$
	and $Q= y^5 \wedge xy^4$ lies entirely within the kernel of the form
	\[\beta = x^2 y^3 \wedge x^5 - 3x^3 y^2 \wedge x^4 y\in s^4.\qedhere\]
\end{proof}

Notice that the $\CC^*$ action of $\SL(2,\CC)$ is infinitesimally generated by the commutator $H=[X,Y]$, and planes $P\in Z_0$ of the form $a\wedge b$ for $a,b$ eigenvectors of $H$ will be fixed by the $\CC^*$ action on $Z_0$. Indeed the above four points \eqref{eq:fixedpoints} on $D$ are fixed points of the action of $\CC^* \subset \SL(2,\CC)$.

Following the same process of geometrically describing the Mukai--Umemura threefold by Donaldson \cite[\S 5.2]{donaldson2008kahler}, one could proceed to attempt to describe further the structure of the divisor $D$ at $\infty$ and the symplectic geometry of $Z_0$. Indeed choosing an invariant symplectic structure on $Z_0$ for example by pulling back the $\SL(2,\CC)$-invariant Fubini--Study form via the $\SL(2,\CC)$-equivariant Pl\"ucker embedding $Z_0 \into \PP(s^8\oplus s^0)$, one can consider a moment map for the $S^1\subset \CC^*$ action for which we have four fixed points. This should act like a Morse function for the $\SL(2,\CC)$-invariant divisor at $\infty$ (except for the fact that $Z_0$ is now singular), and for example the index of this Morse function at the $\CC^*$ fixed points should be given by the weight of the $\CC^*$ action on the fibre of $\det \calV^*$ over those points. In particular we have weights
$$w(x^4 y \wedge x^5) = 8, \quad w(x^3 y^2 \wedge x^5) = 6, \quad w(y^5 \wedge x^2 y^3) = -6,\quad w(y^5 \wedge xy^4) = -8.$$

We see the points $P$ and $Q$ should be thought of as the highest and lowest critical points of the Morse function. As in \cite{donaldson2008kahler} we predict that by by studying the level sets of the Morse function on the divisor $D$, one may build a fairly explicit description of the geometry of $Z_0$ around the divisor $D$, and for example recover its homotopy type, and explicitly describe the local defining equation for the divisor $D$ aiding in the computation of the alpha invariant.

We predict that the points identified above are the only singular points of $Z_0$.

\begin{conjecture}
	The points $P, Q$ are the only singular points of $Z_0$.
\end{conjecture}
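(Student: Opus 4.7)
The plan is to apply the singularity criterion from \cite[Prop.~A.4]{kuznetsov2003derived} already invoked in the excerpt: a 2-plane $W \in Z_0$ is a singular point of $Z_0$ if and only if there exists a non-zero $\alpha \in \Pi_0 = s^4 \subset \Exterior^2 s^5$ with $W \subset \ker \alpha$. Proving the conjecture reduces to classifying, up to the $\SL(2,\CC)$-action, all pairs $(\alpha, W)$ with $\alpha \in \Pi_0$ of rank at most $4$ (so $\dim \ker \alpha \ge 2$) and $W \subset \ker \alpha$ a 2-plane satisfying $\omega|_W = 0$ for every $\omega \in \Pi_0$, and then showing that the only such $W$ are $P$ and $Q$.

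The first step is to classify the rank-deficient elements of $\Pi_0$. On the six-dimensional space $s^5$, the condition $\rk(\alpha) \le 4$ is equivalent to vanishing of the Pfaffian cubic $\alpha \wedge \alpha \wedge \alpha$. The Pfaffian is $\SL(s^5)$-invariant; its restriction to the irreducible summand $\Pi_0 = s^4$ therefore lies in the one-dimensional space $\Sym^3(s^4)^{\SL(2,\CC)}$, which by classical binary-form invariant theory is spanned by the catalecticant $I_3$. Evaluating the Pfaffian on a single explicit basis vector of $\Pi_0$ shows the proportionality constant is non-zero, so the rank-deficient locus in $\PP(\Pi_0)$ is the cubic threefold $\{I_3 = 0\}$.

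Next I would enumerate the finitely many $\SL(2,\CC)$-orbits inside $\{I_3 = 0\}$ using the classical root-type stratification of binary quartics (one quadruple root, one triple and one simple root, two double roots, one double and two simple roots, or four distinct roots), and for each orbit representative $\alpha$ compute $\ker \alpha \subset s^5$ explicitly and parametrise the space of 2-planes $W \subset \ker \alpha$ which further satisfy $\omega|_W = 0$ for every $\omega \in \Pi_0$. This is a finite sequence of straightforward linear-algebra problems producing an explicit short list of candidate singular 2-planes. One would then verify that each candidate is either $P$ or $Q$, and as a cross-check compute that the rank of the $15 \times 10$ Jacobian matrix of the homogeneous ideal listed in the excerpt drops from its generic value $6$ to $5$ precisely on this list.

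The main obstacle is the compatibility check in the final step, which is complicated by the $\SL(2,\CC)$-invariance of the singular locus. A direct computation shows that the stabiliser of $P$ in $\SL(2,\CC)$ already contains the diagonal torus and the upper unipotent (since the upper unipotent sends $x^4 y \mapsto tx^5 + x^4 y \in \mathrm{span}(x^4 y, x^5)$), and similarly the stabiliser of $Q$ contains the opposite Borel; the orbits of $P$ and $Q$ in $\Gr_2(s^5)$ must therefore be constrained very tightly by the incidence conditions cut out by $\Pi_0$ in order for the singular locus to reduce to the discrete set $\{P, Q\}$. Reconciling this orbit picture with the algebraic list emerging from the Kuznetsov analysis is the technical heart of the argument, and is precisely where any hidden singular 2-planes would be uncovered or excluded.
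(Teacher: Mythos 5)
Your strategy --- the Kuznetsov singularity criterion combined with a Pfaffian/root-type classification of the degenerate forms in $\Pi_0$ and of the isotropic $2$-planes in their kernels --- is the right machinery, but as written this is a plan rather than a proof: the orbit enumeration and the final compatibility check are exactly where all the content sits, and both are deferred ("I would enumerate\dots", "One would then verify\dots"). More seriously, your own observation in the closing paragraph, pushed one step further, shows that the statement cannot be proved because it is false as literally stated. The singular locus of $Z_0$ is $\SL(2,\CC)$-invariant: $\Pi_0=s^4$ is an invariant subspace, so the group acts on $Z_0$ by automorphisms, and automorphisms preserve $\mathrm{Sing}(Z_0)$. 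You correctly note that $\Stab(P)$ contains the torus and the upper unipotent, i.e.\ a Borel $B$; since $P$ is not preserved by the lower unipotent (as $Y(x^4y)=4x^3y^2\notin P$), the stabiliser is exactly $B$ and the orbit is $\SL(2,\CC)\cdot P\cong \SL(2,\CC)/B\cong\PP^1$ --- the curve of tangent $2$-planes to the rational normal quintic $\{(ax+by)^5\}$. Every point of this orbit lies in $Z_0$ and is singular, and $Q=w\cdot P$ for the Weyl element $w$, so $P$ and $Q$ sit on an entire $\PP^1$ of singular points. No ``incidence conditions cut out by $\Pi_0$'' can shrink this: the orbit of a point of a $G$-variety is determined by the group action alone. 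In your own framework the same curve appears as follows: $P=\ker\alpha_0$ for the lowest-weight vector $\alpha_0=y^5\wedge x^3y^2-3\,xy^4\wedge x^2y^3$ of $s^4$, which corresponds to the binary quartic $y^4$ with a quadruple root; its orbit in $\PP(\Pi_0)$ is a rational normal quartic of rank-$4$ forms, and the $2$-dimensional kernel of each is automatically isotropic for all of $\Pi_0$, yielding the $\PP^1$ of singular planes your enumeration would uncover.

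The correct outcome of your programme is therefore a reformulation rather than a proof: the assertion should be that $\mathrm{Sing}(Z_0)$ is precisely the closed one-dimensional orbit $\SL(2,\CC)\cdot P$ containing the two torus-fixed points $P$ and $Q$, and your Pfaffian-locus and root-type analysis is then exactly the right tool to show that no other degenerate element of $\Pi_0$ contributes additional singular planes (here you should also verify, as you note, that the Pfaffian cubic does not vanish identically on $s^4$, since otherwise every element of $\Pi_0$ is degenerate and more orbits must be examined). Note finally that the paper states this only as a conjecture and supplies no argument, so there is no proof of record to compare against; the point of this review is that the statement itself needs amending before any proof can succeed.
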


Let us now turn to the versal deformation space of $Z_0$, similarly to the case of the Mukai--Mumemura threefold. To vary our Fano threefold within the deformation class, we can vary the choice of 5-plane of two-forms on $V=s^5$, so we are concerned with the Grassmannian $\Gr_5(\Exterior^2 V^*)$ which is 50-dimensional. The tangent space at $\Pi=s^4$ is given by
$$T_{s^4} \Gr_5 (\Exterior^2 s^5) = s^4 \otimes (s^8 \oplus s^0) = s^{12} \oplus s^{10} \oplus s^8 \oplus s^6 \oplus 2s^4$$
and there is an action of $\SL(V)$ on $\Gr_3(\Exterior^2 V^*)$ which is infinitesimally described by
$$\sl(s^5) = \frac{s^5\otimes s^5}{s^0} = s^{10} \oplus s^8 \oplus s^6 \oplus s^4 \oplus s^2.$$
The stabiliser at $s^4$ for this action of $\SL(s^5)$ is given by the $s^2$ generating the $\SL(2,\CC)$ action, and thus we obtain an 18-dimensional versal deformation space
$$T_{[s^4]} \calM = s^{12} \oplus s^4.$$
\begin{remark}
	Note that this deformation space is larger than the dimension of the moduli space of smooth Fanos of type $V_{14}$. The higher-dimensional Zariski tangent space to the moduli space at the point $[s^4]$ indicates the singular nature of $\calM$ at this point, and the discrepancy is captured precisely by the extra 3-dimensional stabiliser at $[s^4]$ which is not present for the smooth points, all of which have discrete automorphism groups.
\end{remark}

Motivated by the analogous picture in the smooth case of the Mukai--Umemura threefold, we conjecture the following:

\begin{conjecture}\label{conj:deformationstable}
	The singular threefold $(Z_0,-K_{Z_0})$ is Fano and K-polystable. Furthermore a small deformation $Z_x$ of $Z_0$ in $\calM$ is K-(un/semi/poly)stable if and only if the corresponding point $x\in T_{[s^4]} \calM$ is GIT (un/semi/poly)stable for the $\SL(2,\CC)$ action on this vector space.
\end{conjecture}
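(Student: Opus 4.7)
The plan is to establish the conjecture in three stages: first, that $(Z_0, -K_{Z_0})$ is Fano; second, that $Z_0$ is K-polystable; and third, that the local deformation correspondence with GIT holds on $T_{[s^4]}\calM = s^{12}\oplus s^4$. To show $Z_0$ is Fano, I would analyse the singularities at $P = x^4 y \wedge x^5$ and $Q = y^5 \wedge xy^4$ more carefully, using the characterisation $P \subset \ker \alpha$ for some $\alpha \in \Pi_0$ from the lemma proven above. Since $Z_0$ is a codimension-$5$ linear section of $\Gr_2(\CC^6)$ whose singular locus drops in rank only at these two isolated points, one expects Gorenstein terminal (likely nodal) singularities there. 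With this in hand, the adjunction computation carried out in the smooth case of $V_{22}$ extends to give $-K_{Z_0} \sim \det\calV^*|_{Z_0}$ as a Weil divisor class, with ampleness preserved.

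For the K-polystability of $Z_0$, I would invoke the equivariant criterion: by the singular YTD theorem of Li--Tian--Wang, Li, and Liu--Xu--Zhuang, K-polystability of a log Fano variety is equivalent to the existence of a weak K\"ahler--Einstein metric, and by Datar--Szekelyhidi-type reductions in the singular setting it suffices to check against $\SL(2,\CC)$-equivariant test configurations. The natural approach is Tian's alpha invariant adapted equivariantly: if $\alpha_{\SL(2,\CC)}(Z_0) > \tfrac{3}{4}$ then $Z_0$ is K-polystable. The $\SL(2,\CC)$-invariant divisor $D = Z(\sigma) = \PP(s^8)\cap Z_0$ constructed explicitly above provides the starting point for this computation, and one should study its singularities in the Pl\"ucker coordinates together with the four $\CC^*$-fixed points identified in \eqref{eq:fixedpoints}, whose weights $\pm 8, \pm 6$ furnish input for any Donaldson--Futaki computation along toric test configurations.

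For the deformation correspondence, I would extend the Szekelyhidi--Br\"onnle argument to the singular setting. Their proof reduces the K-stability of a nearby deformation $Z_x$ to the GIT stability of the corresponding tangent vector $x \in T_{[s^4]}\calM$ under $\Aut(Z_0) = \SL(2,\CC)$, by using the implicit function theorem on the space of cscK metrics to produce a finite-dimensional local Kempf--Ness picture modelled on the versal deformation space. In the singular case this reduction should still function, using the singular K\"ahler--Einstein metric on $Z_0$ afforded by K-polystability, and replacing classical Kuranishi theory with the singular analogue developed within the variational framework of Berman--Boucksom--Jonsson. Once this reduction is established, the identification of $T_{[s^4]}\calM$ computed above as the $\SL(2,\CC)$-representation $s^{12}\oplus s^4$ makes the GIT stability concrete, reducing it to the well-understood stability problem for homogeneous polynomials on $\CCPP^1$.

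The main obstacle will be the direct verification of K-polystability of $Z_0$ itself. Unlike the Mukai--Umemura threefold, which admits a transparent geometric description as a compactification of $\SL(2,\CC)/\Gamma$ amenable to an explicit alpha-invariant bound, here $Z_0$ is only presented via its Pl\"ucker-coordinate ideal, so the equivariant alpha computation is combinatorially delicate and it is not \emph{a priori} clear that the bound $\alpha_{\SL(2,\CC)}(Z_0) > \tfrac{3}{4}$ holds --- should it fail, one would need to rule out destabilising $\CC^*$-equivariant test configurations directly via their Donaldson--Futaki invariants. Secondarily, extending Szekelyhidi--Br\"onnle to the singular setting requires analytic control near the nodes $P, Q$, although the algebraic nature of the deformation space $\calM$ as a Hilbert scheme of linear sections of $\Gr_2(\CC^6)$ provides an a priori algebraic framework which mitigates this difficulty considerably.
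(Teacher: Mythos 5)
The statement you are addressing is presented in the paper as a conjecture and is not proved there: the paper only sketches the expected route (an equivariant alpha, beta or delta invariant computation localised at the single invariant divisor $D = Z(\sigma)$, together with an appeal to Spotti--Sun--Yao for the deformation statement), so there is no proof of the paper's to compare yours against. Your proposal is likewise a plan rather than a proof, and the two central claims remain open in it. First, Fano-ness: you assert that the singularities at $P$ and $Q$ are Gorenstein terminal (``likely nodal'') and that the adjunction computation from the smooth $V_{22}$ case extends; neither is justified. The paper explicitly states that it cannot prove $Z_0$ is Fano, and it only \emph{conjectures} that $P,Q$ exhaust the singular locus --- your argument silently assumes the singular locus is exactly $\{P,Q\}$ and that the singularities are mild enough for $-K_{Z_0} \sim \det\calV^*|_{Z_0}$ to hold and remain ample, which is precisely the content requiring proof. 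Second, K-polystability: the equivariant alpha bound (the paper hopes for $\alpha_G(Z_0)\ge \tfrac{5}{6}$ by analogy with the Mukai--Umemura case, which would imply your $>\tfrac{3}{4}$ threshold) is the heart of the matter and is left untouched; you acknowledge this honestly, but without it none of the later steps have input, so the proposal does not establish the conjecture.

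On the deformation correspondence your route genuinely differs from the paper's intended one: you propose to re-derive a singular Sz\'ekelyhidi--Br\"onnle theorem by hand, using a weak K\"ahler--Einstein metric and the Berman--Boucksom--Jonsson variational framework, whereas the paper observes that Spotti--Sun--Yao have already generalised Sz\'ekelyhidi--Br\"onnle to smoothable $\QQ$-Gorenstein K-polystable Fanos, so that once $Z_0$ is known to be $\QQ$-Gorenstein, Fano and K-polystable, the local GIT correspondence with $T_{[s^4]}\calM \isom s^{12}\oplus s^4$ follows by citation. Your version would demand substantial new analysis near the singular points (which you concede), analysis that the existing result renders unnecessary; conversely, invoking Spotti--Sun--Yao adds one item to the checklist, namely that $Z_0$ is $\QQ$-Gorenstein, which again reduces to the unproved singularity analysis at $P$ and $Q$.
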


As in the case of the Mukai--Umemura threefood, the K-polystability of $Z_0$ should be verifiable just by analysing the behaviour around the single $\SL(2,\CC)$-invariant divisor $D=Z(\sigma)$. Indeed if we assumed that the equivariant $\alpha_G$-invariant was applicable in this setting, this invariant divisor would produce the only contribution to the calculation.

We expect that the recent computational techniques of the beta or delta invariant, which are highly effective in computations of K-stability for singular Fanos, will be applicable to this setting. The formal structure of the divisor appears similar to the case of $X_0$ or $Y_0$ and if its local description is similarly given by a union of tangent lines to a rational curve with a cusp singularity of the form $z_1^2 = z_2^3$ it is reasonable to expect the same bound $\alpha_G(Z_0)\ge \frac{5}{6}$ holds in this case.

The conjecture \cref{conj:deformationstable} is in fact a theorem for $\QQ$-Gorenstein Fano varieties by the work of Spotti--Sun--Yao \cite{spotti2016existence}, who generalised the techniques of Sz\'ekelyhidi and Br\"onnle to the case of smooth deformations of singular log Fanos in that case. If the $Z_0$ is $\QQ$-Gorenstein and K-polystable, then the results of Spotti--Sun--Yao would resolve the conjecture.

To be more precise about the conjectural K-polystability of deformations, let us recall the structure of GIT of $\SL(2,\CC)$ acting on $s^p$.
\begin{proposition}[See for example {\cite[Thm. 3.10]{thomas2005notes}}]\label{prop:sl2GIT}
	The orbits of $\SL(2,\CC)$ on $s^p$ come in one of five types:
	\begin{enumerate}
		\item The trivial orbit $\{0\}$. 
		\item The orbits of polynomials in $s^p$ having no zero of multiplicity $\ge p/2$. These orbits are closed in $s^p$.
		\item The orbits of polynomials having two distinct zeroes, each of multiplicity $=p/2$. These orbits are closed and have positive-dimensional stabiliser $\CC^* \in \SL(2,\CC)$.
		\item The orbits of polynomials having one zero of multiplicity $=p/2$ and all other zeroes with smaller multiplicity. Then orbits are not closed, but contain an orbit of type (iii) in their closure.
		\item The orbits of polynomials with a zero of multiplicity $>p/2$. These orbits are not closed and $0$ is contained in their closure.
	\end{enumerate}
\end{proposition}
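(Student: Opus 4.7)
The plan is to apply the Hilbert--Mumford criterion (\cref{thm:HMcriterion}) to the $\SL(2,\CC)$-action on $\PP(s^p)$, and then translate the resulting stability picture back to the affine orbit structure in $s^p$. Every nonzero binary form $f\in s^p = \CC[X,Y]_p$ factors (up to a scalar) as $\prod_i (b_i X - a_i Y)^{m_i}$ with $\sum_i m_i = p$, so the $\SL(2,\CC)$-action on $s^p$ is equivalent, after projectivisation, to the M\"obius action on unordered point configurations of $\CCPP^1$ with multiplicities. Since every nontrivial 1-PS of $\SL(2,\CC)$ is conjugate to $\lambda(t) = \diag(t,t^{-1})$, and $\SL(2,\CC)$ acts transitively on $\CCPP^1$, only a single model weight computation is required: after using the group to move a prospective ``worst'' zero of $f$ to the $\lambda$-fixed point $[0:1]\in\CCPP^1$, I would compute the Hilbert--Mumford weight of $f$ against $\lambda$.

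Writing $f = \sum_{i=0}^p c_i X^i Y^{p-i}$, one has $\lambda(t)\cdot f = \sum_i c_i t^{2i-p} X^i Y^{p-i}$, so if $f$ vanishes to order $m$ at $[0:1]$ and to order $n$ at $[1:0]$ then the weights appearing are precisely $\{2i-p : m\le i\le p-n\}$, and the limit $\lim_{t\to 0}\lambda(t)\cdot f$ exists in $s^p$ and equals the weight-$(2m-p)$ monomial if and only if $m\ge p/2$. From this I would deduce that $f$ is GIT semistable in $\PP(s^p)$ if and only if every zero has multiplicity $\le p/2$, and stable if and only if every zero has multiplicity strictly less than $p/2$: semistability against a general 1-PS reduces, by $\SL(2,\CC)$-transitivity on $\CCPP^1$ and conjugation to diagonal form, to the single diagonal weight computation at the worst root.

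With semistability understood, the affine orbit picture in $s^p$ can be read off. Case (i) is the fixed point $\{0\}$. If every zero has multiplicity $<p/2$ then $f$ is GIT stable in $\PP(s^p)$ with finite stabiliser, and since $\SL(2,\CC)$ has centre $\{\pm\id\}$ the $\SL(2,\CC)$-orbit of $f$ in $s^p$ is already closed, giving (ii). If $f$ has a single zero of multiplicity exactly $p/2$ with all others strictly smaller, moving that zero to $[0:1]$ shows $\lim_{t\to 0}\lambda(t)\cdot f = c X^{p/2}Y^{p/2}$ for some $c\ne 0$, so the orbit is not closed and contains an orbit of type (iii) in its closure (case (iv)). If two distinct zeroes each have multiplicity exactly $p/2$, moving them to $[0:1]$ and $[1:0]$ puts $f$ in the form $c X^{p/2}Y^{p/2}$, which is fixed by the entire 1-PS $\lambda$; the stabiliser contains $\CC^*$, which makes the orbit closed (case (iii)). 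Finally, if some zero has multiplicity $>p/2$, the analogous diagonal computation gives $\lim_{t\to 0}\lambda(t)\cdot f = 0$, so $\{0\}\subset \overline{G\cdot f}$ (case (v)).

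The one technical point requiring care is the passage between projective GIT, where Hilbert--Mumford is cleanly formulated, and the affine orbit structure of $s^p$: I would need to confirm that the orbit closures I identify via the diagonal limit exhaust the boundary of each orbit (using the standard fact that the closure of every affine orbit contains a unique closed orbit, reachable by some 1-PS), and that the $\CC^*$-stabiliser in case (iii) really makes the orbit closed (which follows either from Matsushima's criterion for the reductivity of the stabiliser or directly from $\lambda$-invariance of $cX^{p/2}Y^{p/2}$). The hard part is essentially absent here: everything reduces to linear algebra in the weight decomposition of $s^p$ under the diagonal torus, together with the transitivity of $\SL(2,\CC)$ on $\CCPP^1$.
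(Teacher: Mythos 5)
Your overall route is the standard one and is essentially the argument of the source the paper cites (the paper itself gives no proof of \cref{prop:sl2GIT}, it just refers to Thomas's notes): conjugate any one-parameter subgroup to $\lambda(t)=\diag(t,t^{-1})$, use transitivity of $\SL(2,\CC)$ on $\CCPP^1$ to move the worst zero to a $\lambda$-fixed point, and read off the weights $2i-p$ from the monomial decomposition. Cases (i), (iv), (v) and the semistable/stable dichotomy at multiplicity $p/2$ are handled correctly (modulo the small slip that for a zero of multiplicity $m>p/2$ the diagonal limit is $0$, not ``the weight-$(2m-p)$ monomial''; you in fact use the correct statement in case (v)).

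The genuine weak point is your justification of \emph{closedness} in cases (ii) and (iii). ``The stabiliser contains $\CC^*$, which makes the orbit closed'' is not a valid implication, and Matsushima's criterion does not supply it: reductivity of the stabiliser is equivalent to the orbit being an \emph{affine} variety, not to its being closed. Your own case (iv) is a counterexample inside this very problem: a form with exactly one zero of multiplicity $p/2$ has finite (hence reductive) stabiliser, yet its orbit is not closed. Likewise the remark that $\SL(2,\CC)$ has centre $\{\pm\id\}$ has no bearing on closedness in case (ii). The fix is already contained in your computation together with the fact you quote at the end (an affine orbit is closed if and only if every one-parameter-subgroup limit that exists stays in the orbit; equivalently the unique closed orbit in the closure is reached along some $\lambda$ --- Kempf/Birkes): your weight analysis shows $\lim_{t\to 0}\lambda(t)\cdot f$ exists for a nontrivial $\lambda$ only if the $\lambda$-fixed point singled out by the computation is a zero of $f$ of multiplicity at least $p/2$. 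In case (ii) no zero has multiplicity $\ge p/2$, so no nontrivial limit exists and the orbit is closed; in case (iii) any limit that exists is, in suitable coordinates, $c\,X^{p/2}Y^{p/2}$ with $c\ne 0$, i.e.\ it lies in the orbit of $f$ itself, so again the orbit is closed. (Alternatively, for (ii) one may invoke the standard affine-cone characterisation of stability: $[f]$ properly stable means precisely that the affine orbit of a lift is closed with finite stabiliser.) You should also say explicitly in case (iv) why the limit $cX^{p/2}Y^{p/2}$ is not in $G\cdot f$ --- the multiplicity profile of the zero set is a $G$-invariant --- so that ``not closed'' actually follows.
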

In the sense of GIT these orbits can be categorized as follows. Orbits of type (i) and (v) are GIT-unstable. Those of type (iv) are semistable, of type (iii) and (ii) polystable. The orbits of type (iii) are strictly polystable.

Let us now use this description to describe the orbits inside the direct sum $s^{12}\oplus s^4$.

\begin{proposition}
	An orbit inside $s^{12}\oplus s^4$ is GIT (un/semi/poly)stable if and only if the union of zeros of the two polynomials is (un/semi/poly)stable in the sense of \cref{prop:sl2GIT} for $p=16$.
\end{proposition}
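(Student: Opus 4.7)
The plan is to apply the Hilbert--Mumford numerical criterion to each side and compare. Every nontrivial 1-PS of $\SL(2,\CC)$ is conjugate to $\lambda(t)=\diag(t,t^{-1})$ and is determined up to sign by its attracting fixed point $p\in\CCPP^1$. In coordinates adapted to $p$, this $\lambda_p$ acts on $x^iy^{k-i}\in s^k$ with weight $k-2i$, so its minimum weight on a polynomial $h\in s^k$ is $-(k-2\,\mathrm{ord}_p h)$, strictly positive exactly when $\mathrm{ord}_p h > k/2$.

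Applied to $h\in s^{16}$ this reproduces \cref{prop:sl2GIT}: $h$ is unstable iff some root has multiplicity $>8$, stable iff no root has multiplicity $\geq 8$, and strictly polystable iff $h$ is supported on exactly two distinct roots each of multiplicity $8$. Applied to $(f,g)\in s^{12}\oplus s^4$, the weights of $\lambda_p$ on the direct sum are the union of those on the two factors, and so by the affine Hilbert--Mumford criterion the pair is unstable iff some $p$ satisfies $\mathrm{ord}_p f>6$ and $\mathrm{ord}_p g>2$ simultaneously, while the strict semistability and stabiliser structure are governed by the zero-weight monomials at $p$.

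The two criteria are then matched through the $\SL(2,\CC)$-equivariant multiplication map $(f,g)\mapsto fg$. Writing $\mathrm{ord}_p(fg)=\mathrm{ord}_p f+\mathrm{ord}_p g$, a simultaneous bad point for $(f,g)$ yields $\mathrm{ord}_p(fg)>8$, so unstable pairs map to unstable polynomials, and the reverse inclusion follows by tracking how a multiplicity of the product greater than $8$ at $p$ must split between a degree-$12$ and a degree-$4$ factor. A nontrivial $\lambda_p$ fixes $(f,g)$ iff $f=c\,x^6y^6$ and $g=c'\,x^2y^2$ in the adapted coordinates, which corresponds under multiplication to the type (iii) polystable $fg=cc'\,x^8y^8$, matching the polystable strata on each side.

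The main obstacle is the strictly semistable (type (iv)) case, where one must analyse the limits of 1-PS with $\mu=0$ that do not fix the pair. These degenerations of $(f,g)$ need to be shown to correspond through the multiplication map to the degenerations of a single multiplicity-$8$ root configuration in $s^{16}$, which requires a case-by-case verification of the ways in which a multiplicity-$8$ root of $fg$ at a point $p$ can be distributed between the factors $f$ and $g$, tracking the corresponding one-parameter limits in both $s^{12}\oplus s^4$ and $s^{16}$.
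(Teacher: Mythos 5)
Your forward direction is fine, and your statement of the direct-sum Hilbert--Mumford criterion is the correct one: the pair $(f,g)$ is unstable precisely when some single point $p$ satisfies $\mathrm{ord}_p f>6$ \emph{and} $\mathrm{ord}_p g>2$, since one and the same one-parameter subgroup must drive both components to zero. The genuine gap is the step you defer, the ``reverse inclusion by tracking how a multiplicity of the product greater than $8$ must split between the two factors'': that tracking does not close the argument, because a root of $fg$ of multiplicity $>8$ can occur with $\mathrm{ord}_p g\le 2$, in which case there is no common destabilising one-parameter subgroup at all. Concretely take $(f,g)=(x^{12},y^4)$: nonnegative weights on $x^{12}$ force the subgroup to be centred at $[0:1]$, where $y^4$ has strictly negative weights, so no nontrivial one-parameter subgroup has nonnegative weights on both components; hence there are no proper degenerations of the pair, its orbit is closed, its stabiliser is finite, and the pair is GIT \emph{stable} --- while $fg=x^{12}y^4$ has a root of multiplicity $12>8$ and the union of the $16$ zeros is \emph{unstable} in the sense of \cref{prop:sl2GIT}. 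The boundary cases fail for the same reason: $(x^6y^6,x^4)$ is strictly semistable (the limit along the subgroup centred at $[0:1]$ is $(x^6y^6,0)$, which lies outside the orbit), whereas $x^{10}y^6$ is unstable.

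So the postponed case analysis cannot be completed as stated: simultaneity of the two order conditions at a common point is strictly stronger than $\mathrm{ord}_pf+\mathrm{ord}_pg>8$, and your own criterion from the first half of the argument already contradicts the matching you assert in the second half. Note also that this is not an artifact of using Hilbert--Mumford rather than the paper's route. The paper argues via Kempf--Ness, using that the moment map of $s^{12}\oplus s^4$ is the sum of the moment maps of the factors and reading this as the centre of mass of the combined $16$ zeros; but the two summands contribute with norm-dependent weights that the torus through the roots rescales against each other along an orbit, which is exactly what lets $(x^{12},y^4)$ be balanced (hence polystable) even though an equal-weight centre of mass of $12+4$ points at two points of $S^2$ can never vanish. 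The honest output of your computation is the simultaneous-order criterion itself; any reformulation in terms of the $16$ zeros has to remember which zeros come from the degree-$12$ and which from the degree-$4$ component, rather than treating them with equal weight as in \cref{prop:sl2GIT} for $p=16$.
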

\begin{proof}
	This can be seen most directly from the corresponding moment map description. The moment map for the action of $\SL(2,\CC)$ on $s^p$ is given by the centre of mass of the configuration of $p$ points on $\CCPP^1 = S^2$. The moment map for the action on $s^{12}\oplus s^4$ is given by the sum of moment maps, corresponding to the centre of mass of the combined system of 16 zeroes.
\end{proof}

Let us denote $M:=s^{12}\oplus s^4$. Let us denote by $M^s$ the locus inside $M$ corresponding to smooth representatives of $V_{14}$ near $Z_0$. Then the fact that every smooth representative of $V_{14}$ has trivial automorphism group reveals that $M^s$ must be contained inside the union of orbits of stable points in $M$. Since smoothness is an open condition, one expects $M^s$ to define an 18-dimensional Zariski-open subset inside $M$ of GIT-stable points, and the quotient $M^s/\SL(2,\CC)$ to provide a model for the 15-dimensional moduli of smooth representatives of class $V_{14}$. 

\subsubsection{General remarks}

The singular threefold above should be a typical example in the study of K-stability of varieties. Namely, the study of K-stabiliy of polarised varieties $(X,L)$ in terms of test configurations $(\calX,\calL)$ implies a menagerie of mildly singular polarised varieties with large automorphism groups. Indeed the central fibre of any test configuration admits at least a $\CC^*$ action.

In the case where $(X,L)$ is K-semistable there is conjecturally an optimal degeneration to a possibly singular K-polystable variety, so generally one expects a wealth of examples of singular K-polystable varieties with non-discrete automorphism groups. Existence results for K\"ahler--Einstein metrics on singular varieties have strengthened in recent years, particularly in the Fano case. The study of K-stability of deformations of K-polystable varieties (see for example \cite{szekelyhidi2010kahler,spotti2016existence}, and in the case of bundles for example \cite{buchdahl2020polystability}) suggests that an effective method of understanding the local moduli of K-polystable varieties, and indeed of K\"ahler--Einstein or cscK manifolds, is to look at the GIT stability of smooth loci near singular points in the moduli space which have large automorphism groups. 

The above example in the deformation class $V_{14}$ appears to be a to fit into the existing framework of this deformation theory, and serves as an excellent test case for this approach to local moduli, since the structure of the smooth deformations of $Z_0$ are completely understood due to the strong classification results of K-stable Fano manifolds \cite{calabiproblem}.

	\part{$Z$-critical metrics\label{part:zcritical}}
	
	\chapter{Background\label{ch:zcriticalbackground}}
	
	In this chapter we will survey the origins of the folkore \cref{conj:folklore}, Bridgeland stability, the deformed Hermitian Yang--Mills equation, and the $Z$-critical equation as it relates to mirror symmetry. The material of this chapter will not be directly used in the subsequent chapters, but we hope that it provides an updated survey of the place the dHYM or $Z$-critical equation fits into the mirror symmetry picture. For previous surveys which emphasize the role of stability conditions as they relate to physics, see \cite{clay,aspinwall2004d,aspinwall2009dirichlet}.

	\section{Mirror symmetry and BPS branes}
	
	In this section we briefly summarise the physical origins of BPS D-branes and their incarnations in various models of string theory. See \cite{aspinwall2004d} for a survey of D-branes on Calabi--Yau manifolds.
	
	In superstring theory the model for spacetime $\calS$ is 10-dimensional, and a common toy model is to consider the product structure
	$$\calS= \RR^{3,1} \times X$$
	where $X$ is a 6-dimensional manifold. In order to macroscopically reflect the regular 4-dimensional spacetime of (super)gravity, one assumes the manifold $X$ is compact, with a diameter roughly near the Planck length. 
	
	By analysing the supersymmetry condition for $\calS$, Candelas--Horowitz--Strominger--Witten deduced that the manifold $X$ must have, ignoring quantum corrections (i.e. to leading order in the string tension $\alpha'$), $\SU(3)$ holonomy \cite{candelas1985vacuum}. That is, $X$ is a Calabi--Yau threefold with K\"ahler metric $\omega$ and holomorphic volume form $\Omega$ satisfying the equation
	$$\frac{\omega^n}{n!} = c \Omega \wedge \bar \Omega$$
	for some constant $c\in \RR$. 
	
	Inside the spacetime $\calS$, open string worldsheets $\iota: \Sigma \into \calS$ are required to satisfy boundary conditions.\footnote{A \emph{string worldsheet} is the surface $\iota: \Sigma \into \calS$ inside spacetime which is swept out by the string as it moves through time. When such a string is open, its worldsheet can be modelled by a Riemann surface $\Sigma$ with boundary, embedded in $\calS$.} The Dirichlet-type boundary conditions lead to the notion of a ``D-brane". This consists of the following data:
	\begin{itemize}
		\item A submanifold $L\subset \calS$ of spacetime on which the D-brane is ``wrapped." This submanifold is required to satisfy certain geometric criteria depending on the model of string theory considered.
		\item A ``Chan--Paton bundle" $E$ over the submanifold with a gauge field (connection) on it. This bundle (with connection) is often viewed as a geometric representative of the \emph{charge} of the D-brane, which takes values in the K-theory (possibly K-theory with connection, i.e. differential K-theory) of the submanifold.
	\end{itemize}
	The D-brane serves as a boundary condition by requiring that $\iota(\bdry \Sigma) \subset L$ and additional couplings between the gauge fields over $\Sigma$ and that on $E$. The particular geometry of the submanifolds $L\subset X$ of spacetime and the bundles $E\to L$ over them depends on the model of superstring theory being considered. In Type II string theory one has two different models for D-branes, arising from the A and B topological twists of the theory. In either model spacetime is compactified on a Calabi--Yau manifold $(X,\omega,\Omega)$. 
	
	In the A-model, D-branes are wrapped on Lagrangian submanifolds $L\subset (X,\omega)$ and the Chan--Paton bundle is a flat unitary bundle over $L$. In the B-model the D-branes are wrapped on complex submanifolds $L\subset X$ and the Chan--Paton bundle admits a unitary connection with curvature of type $(1,1)$, a Chern connection. Therefore the Chan--Paton bundle is a holomorphic bundle over $L$. In fact the process of taking sums 
	$$\calE = \bigoplus_{n\in \ZZ} (E_n,L_n)$$
	of D-branes wrapped on different submanifolds, combined with a study of deformations of such objects, shows that one should enlarge the D-branes in Type IIB string theory to include \emph{complexes} of holomorphic bundles \cite[\S 5.3]{aspinwall2009dirichlet}. This homological description of D-branes was first predicted by Kontsevich \cite{kontsevich1995homological} before the precise notion of a D-brane had been distilled in the physical literature.
	
	\subsection{Large volume limit}
	
	The study of superstring theory is generally considered \emph{perturbatively} around the limit in which string length $\ell_s$ goes to zero.\footnote{The string length is often replaced in the physical literature by the string tension $\alpha'\sim\ell_s^2$. This factor of $\alpha'$ always appears as a perturbative constant in front of any curvature term $F$ for the gauge field on the Chan--Paton bundle $E$ over a D-brane. The limit $\alpha'\to 0$ is equivalent to $\ell_s \to 0$ and in \cref{ch:correspondence} where we consider the scaling of the $Z$-critical equation by $\varepsilon^2 = 1/k$ we note $\varepsilon\sim \ell_s$ is a similar scaling limit.} This is equivalent to the \emph{large volume limit} on the Calabi--Yau manifold $(X,\omega,\Omega)$ in which $\omega\mapsto k\omega$ and $k\to \infty$.\footnote{Either think of the string length become very small compared to the size of the spacetime, or the spacetime becoming very large compared to the fixed string size.} The validity of the predictions of superstring theory and mirror symmetry which do not depend only on the topological A or B model (i.e. metric considerations, or those considerations which require understanding both the K\"ahler $\omega$ and complex $\Omega$ structures on the Calabi--Yau) should be understood as only approximate, depending on corrections arising from quantum and stringy effects as one moves away from the large volume limit (or its mirror, the \emph{large complex structure limit}).
	
	For example as has been recently discussed by Li \cite{li2020metric} (see also \cite[\S 7]{aspinwall2009dirichlet} for an earlier discussion) the existence of an SYZ fibration by special Lagrangian tori should only be expected in some generic region of the Calabi--Yau threefold $X$. The mass of the region on which the SYZ fibration exists should approach the full volume of $X$ as one approaches the large complex structure limit. 
	
	Similarly the study of BPS D-branes discussed in the next section, which is a non-topological\footnote{Here ``topological" is used in the physical sense, and refers to those properties of the geometry which do not depend on the Riemannian metric. Namely this is not taken to exclude effects depending on the complex structure or symplectic structure, so is certainly not only topological in the mathematical sense.} aspect of string theory depending on both the K\"ahler and complex moduli of $X$, should experience similar corrections away from the large volume limit. Near the large volume limit understanding of D-branes, $Z$-critical metrics, and stability conditions is closer to the non-derived theory occurring at $k=\infty$ (which correponds to the traditional slope stability and Hermite--Einstein metrics discussed in \cref{sec:stability}). As we move away from this limit, more features of the derived category and influences from corrections arising out of enumerative geometry should manifest. In some sense this explains the effectiveness of the asymptotic assumptions appearing in \cref{ch:zcriticalconnections,ch:correspondence} for studying $Z$-critical connections, or the comparative ease through which Bayer's polynomial Bridgeland stability conditions may be studied (see \cref{sec:polynomialbridgelandstability}) compared to genuine Bridgeland stability.
	
	\subsection{BPS branes}
	
	Critical to superstring theory is that the D-branes under consideration respect supersymmetry. Such branes are known as \emph{BPS branes} and given the choice of Type IIA or IIB there are alternative perspectives on what geometric conditions a pair $(E,L)$ must satisfy to be BPS.
	
	\begin{itemize}
		\item In Type IIA string theory the condition for a D-brane to be BPS was first derived in \cite{becker1995fivebranes} and requires that the submanifold $L$ should be a \emph{special Lagrangian} in the sense that
		$$\Im(e^{-i\varphi} \rest{\Omega}{L}) = 0$$
		where $\Omega$ is the holomorphic volume form on $X$ (which is therefore a top-degree form on $L$) and $\varphi$ is a constant argument.
		\item Inspired by \cref{principle} and the success of \cref{thm:DUY}, Thomas and Thomas--Yau conjectured an alternative characterisation of BPS D-branes in the A-model in terms of stability of Lagrangians \cite{thomas2001moment,thomas2001special}, now known as the \emph{Thomas--Yau conjecture}. This has since been upgraded to the derived Fukaya category by Joyce \cite{joyce2014conjectures} and so sometimes obtains the suffix \emph{--Joyce}. In this case the stability condition should have central charge
		$$Z(L) = \int_L \Omega$$
		where $L\into X$ is the Lagrangian and $\Omega$. See \cite{li2022thomas} for a recent discussion of the status of this proposal. 
		\item On the B-side a BPS condition was proposed on physical grounds by Douglas, called \emph{$\Pi$-stability} \cite{douglas2005stability,douglas-icm}. This was distilled into the mathematical concept of \emph{Bridgeland stability} by Bridgeland \cite{bridgeland2007stability}. We will discuss this notion in more detail in \cref{sec:Bridgelandstability}. 
		\item A characterisation of the BPS condition in terms of the associated gauge fields in Type IIB string theory was carried out directly in \cite{marino2000nonlinear} for Abelian D-branes (that is, $E\to L$ being a line bundle\footnote{One can only apologise for the confusing notation here, where $L$ is the submanifold not the line bundle.}). A derivation, again for a line bundle, using a semiflat model of SYZ mirror symmetry was carried out in \cite{LYZ}. On the A-side the special Lagrangian equation is transformed into the \emph{deformed Hermitian Yang--Mills equation} which we will review in \cref{sec:dhym}. 
	\end{itemize}

	Guided by \cref{principle} and the physical origins of $\Pi$-stability and the dHYM equation, one is lead to the folklore \cref{conj:folklore} dicussed in the introduction. Combined with mirror symmetry, which predicts that the BPS branes in the A and B model should be interchanged \cite[Conj. 1.4]{aspinwall2009dirichlet}, we get a larger body of conjectured correspondences represented in \cref{fig:mirrorsymmetryprinciple}.
	
	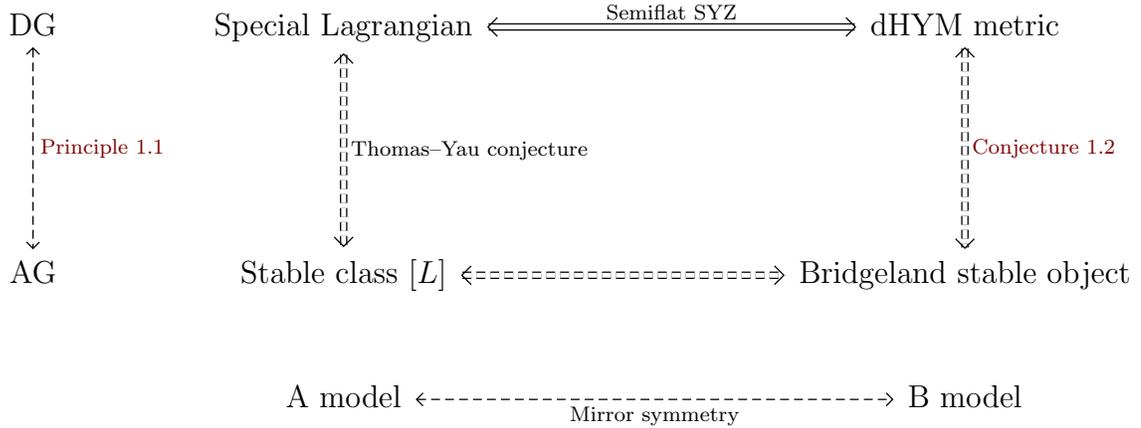
\begin{figure}[h]
		\centering
		\begin{tikzcd}[
			column sep={10em,between origins},
			row sep={4em,between origins},
			]
			\text{DG} \arrow[leftrightarrow,dashed]{dd}{\text{\cref{principle}}} & \text{Special Lagrangian} \arrow[Leftrightarrow]{rr}{\text{Semiflat SYZ}} \arrow[Leftrightarrow,dashed]{dd}{\text{Thomas--Yau conjecture}}&  & \text{dHYM metric} \arrow[Leftrightarrow,dashed]{dd}{\text{\cref{conj:folklore}}}\\
			&  &  & \, \\
			\text{AG} & \text{Stable class $[L]$} \arrow[Leftrightarrow,dashed,swap]{rr} & \, & \text{Bridgeland stable object}\\
			& \text{A model} \arrow[leftrightarrow,dashed,swap]{rr}{\text{Mirror symmetry}} & & \text{B model}
		\end{tikzcd}
		\caption{The grand mirror symmetry/\cref{principle} conjecture.}\label{fig:mirrorsymmetryprinciple}
	\end{figure}

	\section{$\Pi$-stability and Bridgeland stability\label{sec:Bridgelandstability}}
	
	The notion of $\Pi$-stability was introduced by Douglas \cite{douglas2005stability} as a criteria for a D-brane in Type IIB string theory to be a BPS brane. See \cite{aspinwall2002d} for a discussion of the physical interpretations of $\Pi$-stability. The essential principle is to regard a short exact sequence
	\begin{center}
		\ses{F}{E}{Q}
	\end{center}
	as a decay of the D-brane $E$ to the pair $F$ and $Q$ (or conversely to view $E$ as a ``bound state" of $F$ and $Q$). In order for $E$ to be BPS, it must be extremal in the sense that it has maximal supersymmetry charge density. One criteria for determining this is that $E$ cannot decay into any D-brane $F$ with \emph{larger} supersymmetry charge density than $E$ itself. This translates to the criterion that
	$$\mu_Z(F) < \mu_Z(E)$$
	where $Z$ is the supersymmetry charge (and $\mu_Z$ is the charge density or ``normalised charge," given by the slope). 
	
	This criterion closely resembles the traditional slope stability discussed in \cref{sec:stability}, but as mentioned previously, a deeper understanding of D-branes in light of homological mirror symmetry reveals that one must consider the case where $E,F$, and $Q$ are \emph{complexes of coherent sheaves}. Here the notion of a short exact sequence fails (instead being replaced by an \emph{exact triangle}) and it is unclear which complex out of $F,E$ and $Q$ is meant to be decaying into the other two. In order to consistently keep track of this, one must introduce a \emph{grading} on complexes.\footnote{Or on the central charge, although as discussed in \cite[\S 8]{aspinwall2002d} this introduces other difficulties.} This physical theory was made precise by Bridgeland \cite{bridgeland2007stability}. Let us now recall the notion of a Bridgeland stability condition (see \cite{macri2017lectures} for a survey of the many interesting aspects of this theory in algebraic geometry). We will consider just the setting where the triangulated category $\calD = \DbCoh(X)$ is the bounded derived category of coherent sheaves on a manifold $X$.
	
	In order to emphasise the links with the Abelian theory appearing in \cref{ch:zcriticalconnections}, we will first introduce the notion of a stability condition on an Abelian category $\calA$, which we will use to define a Bridgeland stability condition.
	
	\begin{definition}[See for example \cite{rudakov1997stability}]\label{def:abelianstabilitycondition}
		Let $\calA$ be an Abelian category. A \emph{stability condition} $Z$ on $\calA$ is an additive group homomorphism $$Z: K_0(\calA) \to \CC$$
		such that
		\begin{itemize}
			\item For every non-zero $E\in \calA$, we have $\Im Z(E) \ge 0$ and if equality occurs then $\Re Z(E) < 0$.\footnote{Here we can actually ask just that $Z(E) \in e^{i\theta} \cdot \HH$ for every non-zero $E$, where $\theta\in [0,2\pi)$ is some angle depending only on $Z$, which is the same for every $E$. Any such stability function can be rotated into one landing in the upper-half plane without changing the theory.}
			\item Any non-zero $E\in \calA$ admits a \emph{Harder--Narasimhan filtration},
			$$0 = E_0 \subset E_1 \subset \cdots \subset E_{\ell-1} \subset E_\ell = E$$
			of objects $E_i\in \calA$ with strictly decreasing generalised slope such that $E_i' = E_i/E_{i-1}$ is semistable with respect to $Z$, in the following sense.
		\end{itemize}
		Define the generalised slope of $E$ by
		$$\mu_Z(E) := - \frac{\Re Z(E)}{\Im Z(E)}$$
		when $\Im Z(E) >0$ and $+\infty$ when $\Im Z(E) = 0$. Then an object $E\in \calA$ is \emph{(semi)stable} if for all proper, non-zero subobjects $0\to F \to E$ we have
		$$\mu_Z(F) < \mu(E) \quad \text{(resp. }\le \text{)}.$$
		Equivalently we can use the argument $\arg Z(F) < \arg Z(E)$ instead of the generalised slope $\mu_Z$, since they lie in the same half-plane (see \cref{lem:equivalentstabilitycondition}).
	\end{definition}

	Using the notion of a stability condition on an Abelian category, we will give a definition of Bridgeland stability which is slightly different to the standard presentation, although completely equivalent.
	
	\begin{definition}[\cite{bridgeland2007stability}]\label{def:bridgelandstabilitycondition}
		A \emph{Bridgeland stability condition} on $\DbCoh(X)$ consists of a choice of surjective group homomorphism onto a finite-rank lattice
		$$v: K_0(X) \onto \Lambda,$$
		and a pair $\sigma = (Z,\calA)$ of
		\begin{itemize}
			\item a \emph{heart} $\calA$ of a bounded $t$-structure on $\DbCoh(X)$, which is a full Abelian subcategory of $\DbCoh(X)$, and
			\item a stability condition $Z$ on $\calA$ such that $$Z: K_0(X) = K_0(\calA) = K_0(\DbCoh(X)) \to \CC$$ factors through $v$, and hence only depends on the image of $[E]\in K_0(X)$ in $\Lambda$. 
		\end{itemize}
		These must satisfy the support property (originally due to Kontsevich--Soibelman \cite{kontsevich2008stability}, but equivalent to the \emph{full locally-finite} condition of Bridgeland):
		\begin{itemize}
			\item Let $\|\cdot\|$ be any norm on $\Lambda_\RR$ (all being equivalent), then
			$$\inf \left\{ \left.\frac{|Z(v(E))|}{\|v(E)\|} \,\right| \,0\ne E \in \calA \text{ is semistable} \right\} > 0.$$
		\end{itemize}
	\end{definition}

	\begin{remark}\label{rmk:slicing}
		This differs from the standard approach to defining a stability condition in terms of a \emph{slicing} of $\DbCoh(X)$, which is more aligned with the notion of grading mentioned above for elements of the derived category. A slicing $\calP$ of $\DbCoh(X)$ is essentially an assignment of a lift $\phi$ of the phase angle $\varphi$ for any $E\in \DbCoh(X)$, such that if 
		$$Z(E) = r e^{i\pi \varphi}$$
		then $\varphi = \phi \mod \ZZ$. The full subcategories $\calP(\phi)\subset \DbCoh(X)$ of semistable objects of phase $\phi$ must satisfy
		\begin{itemize}
			\item $\calP(\phi)[1] = \calP(\phi+1)$,
			\item if $\phi_1 > \phi_2$ then $\Hom(A,B)=0$ for any $A\in \calP(\phi_1), B\in \calP(\phi_2)$, and
			\item any object $E\in \DbCoh(X)$ admits a \emph{Harder--Narasimhan filtration} whose successive quotients have decreasing phase $\phi_i$ and live in $\calP(\phi_i)$. 
		\end{itemize}
		Given such a slicing, the subcategory $\calA := \calP((0,1])$ defines a heart of a bounded $t$-structure. Notice that the second condition above demonstrates how this grading encodes possible decays of objects $E$ to $F,Q$ appearing in exact triangles $F \to E \to Q \to F[1]$ by requiring that $\phi(F) < \phi(E)$. 
	\end{remark}

	The standard example of a heart in $\DbCoh(X)$ is given by $\calA = \Coh(X)$. A choice of finite rank lattice and homomorphism $v$ is typically given by the Chern character
	$$v=\Ch: K_0(X) \to H^*(X,\QQ)$$
	of a complex $E\in \DbCoh(X)$. An ideal choice of stability condition (although one that would demonstrate that Bridgeland stability is no more complicated than the slope stability of \cref{sec:stability}) is
	$$Z(E) := -\deg E + i \rk E$$
	on $\Coh(X)$. Stability with respect to $Z$ reproduces slope stability (since the generalised slope $\mu_Z$ is simply the slope). On a compact Riemann surface $X=\Sigma$ the pair $(Z,\calA)$ so described gives a Bridgeland stability condition on $\DbCoh(\Sigma)$. However in general we have the following important observation:
	\begin{proposition}[See for example {\cite[Ex. 4.3]{macri2017lectures}}]
		There exists no stability condition on $\DbCoh(X)$ with heart $\calA=\Coh(X)$ or central charge $Z=-\deg + i \rk$ when $X$ has dimension at least two.
	\end{proposition}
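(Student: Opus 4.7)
The plan is to exploit the skyscraper sheaves $\calO_x$ at points $x\in X$: since they are supported in codimension $n\geq 2$, we have $\rk(\calO_x)=0$ and $c_1(\calO_x)=0$, so in particular $Z(\calO_x)=0$ for the central charge $Z=-\deg+i\rk$. The K-class $[\calO_x]\in K_0(X)$ is nevertheless non-trivial, as witnessed by $\Ch_n(\calO_x)=[\mathrm{pt}]\neq 0$, and this mismatch between $Z$ and $[\calO_x]$ is the source of both obstructions.

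For the case $\calA=\Coh(X)$ with any central charge $Z$, deformation invariance (the skyscrapers form a flat family over $X$) makes $z_0 := Z(\calO_x)$ independent of $x$. If $z_0=0$, then $\calO_x$ is a non-zero object of the heart with vanishing central charge, immediately contradicting the axioms. Otherwise $z_0 = r_0 e^{i\pi\phi_0}$ with $\phi_0\in(0,1]$, and for each $k\geq 1$, choosing a length-$k$ 0-dimensional subscheme $Z_k\subset X$, the short exact sequence
$$0 \to \mathcal{I}_{Z_k} \to \calO_X \to \calO_{Z_k} \to 0$$
in $\Coh(X)$ gives $Z(\mathcal{I}_{Z_k}) = Z(\calO_X) - k z_0$. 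For $\phi_0\in(0,1)$ the argument of the right-hand side tends to $\pi(1+\phi_0) \in (\pi, 2\pi)$ as $k\to\infty$, eventually pushing $Z(\mathcal{I}_{Z_k})$ into the open lower half-plane, which contradicts $\mathcal{I}_{Z_k}\in\Coh(X)\setminus\{0\}$. The boundary case $\phi_0=1$ (so $z_0<0$ is real) requires a supplementary argument: by constraining $Z(\calO_X)$ using the see-saw property applied to auxiliary sheaves (e.g.\ $\calO_C$ for subvarieties $C\subset X$), one reduces to a variant of the same contradiction where $Z(\mathcal{I}_{Z_k})$ eventually lands on the positive real axis.

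For the case $Z=-\deg+i\rk$ with any heart $\calA$, the class $[\calO_x]$ lies in $\ker Z$. If some shift $\calO_x[-k]$ belongs to $\calA$, then $Z(\calO_x[-k])=0$ immediately contradicts the axioms; otherwise the cohomology objects $H^i_\calA(\calO_x)\in\calA$ are non-zero in at least two distinct degrees and satisfy $\sum_i (-1)^i Z(H^i_\calA(\calO_x)) = 0$, forcing delicate cancellation between even and odd degrees. Combining this with the support property---which produces a quadratic form $Q$ negative definite on $\ker Z$ and non-negative on the classes of all semistable objects---together with the fact that the continuous family $\{\calO_x\}_{x\in X}$ contributes infinitely many objects of the derived category with fixed non-trivial class in $\ker Z$, one obtains the desired contradiction by analysing how the HN filtrations must vary with $x$. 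The main obstacle will be this last analysis together with the edge case $\phi_0=1$ in the first part, both being considerably more delicate than the generic cases treated above.
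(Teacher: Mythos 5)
The paper itself gives no proof of this proposition -- it is quoted directly from \cite{macri2017lectures} -- so your argument has to stand on its own. Your opening moves are the standard ones, and the easy cases are fine: $Z(\calO_x)=0$ immediately rules out the specific pair $(\Coh(X),-\deg+i\rk)$, and for a general central charge with heart $\Coh(X)$ your ideal-sheaf computation correctly disposes of the cases $Z(\calO_x)=0$ and $\Im Z(\calO_x)>0$ (with the minor caveat that to write $Z(\calO_{Z_k})=kz_0$ you should either take $Z_k$ a fat point at a single $x$, or invoke that $Z$ factors through the finite-rank lattice $\Lambda$, rather than appeal loosely to ``deformation invariance'').

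The first genuine gap is the case $Z(\calO_x)\in\RR_{<0}$, which is not a boundary nuisance but the entire content of the first claim (Toda's lemma), and your proposed reduction fails as stated: nothing forces $\Im Z(\calO_X)=0$, so you cannot ``constrain $Z(\calO_X)$'' to be real and $Z(\mathcal{I}_{Z_k})$ need never approach the positive real axis. The objects that actually do the work are twists of large \emph{negative} degree, which never appear in your sketch. Concretely: for a curve $C\subset X$ and $A$ ample, the torsion sheaves $\calO_C(-kA)$ have class $[\calO_C]-k(A\cdot C)[\calO_x]$, so $\Re Z(\calO_C(-kA))\to+\infty$ while $\Im Z(\calO_C(-kA))=\Im Z(\calO_C)$ is fixed; this forces $\Im Z(\calO_C)>0$ for every curve, in particular for $C\in|H|$. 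But then, since $[\calO_{mH}]=m[\calO_C]+(\text{point classes})$ and $Z$ is real on point classes, $\Im Z(\calO_X(-mH))=\Im Z(\calO_X)-m\,\Im Z(\calO_C)\to-\infty$, a contradiction. Some argument of this shape (testing against $\calO_C(-kA)$ and $\calO_X(-mH)$) is needed; the see-saw manipulation of $\calO_X$ and $\calO_C$ alone does not produce it.

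The second claim -- that \emph{no} heart works with $Z=-\deg+i\rk$ -- is essentially unaddressed. The correct starting point is there: $Z([\calO_x])=0$ while $[\calO_x]\neq 0$, and since a $\sigma$-semistable object always has non-vanishing central charge, no shift of $\calO_x$ can be semistable, so every skyscraper has a non-trivial HN filtration. But the passage from this to a contradiction -- your ``analysing how the HN filtrations must vary with $x$'' -- is precisely the missing argument and is where all of the work lies (one must play the HN factors of $\calO_x$ against the vanishing of $\Hom^\bullet(\calO_x,\calO_y)$ for $x\neq y$, or run a support-property argument on a lattice that genuinely sees the point class). Note also that your quadratic-form step presupposes $v([\calO_x])\neq 0$ in the chosen lattice $\Lambda$; for the minimal lattice $(\rk,\deg)$ through which this $Z$ factors one has $v([\calO_x])=0$, and the form negative definite on $\ker Z$ then says nothing about $\calO_x$. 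As written, the second half is a plan rather than a proof.
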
 
	This observation reveals the fundamental difficulty in the study of Bridgeland stability conditions: their existence. In order to find adequate hearts, one considers the process of \emph{tilting} of $t$-structures, and candidate central charges $Z$ arise primarily out of physical input. The difficulty in understanding when a heart $\calA$ is a candidate for some Bridgeland stability condition comes in proving the existence of Harder--Narasimhan filtrations in the sense of \cref{def:abelianstabilitycondition}. To do so, one often requires strong Chern number inequalities known as \emph{generalied Bogomolov--Gieseker inequalities}.
	
	The central charges of primary interest arising out of string theory (see \cite[\S 2.3]{aspinwall2004d}) are the following:
	\begin{equation}\label{eq:centralcharges}Z_{\mathrm{dHYM}}(E) = \int_X e^{-i\omega} \Ch^B(E),\quad Z_{\Td}(E) = \int_X e^{-i\omega} \Ch^B(E) \sqrt{\Td(X)}\end{equation}
	where $\Ch^B(E) = e^{-B} \Ch(E)$ for some class $B\in H^{1,1}(X,\RR)$ and $[\omega]$ is some K\"ahler class on $X$. We will return to these central charges in more depth in \cref{ch:zcriticalconnections}.
	
	\begin{remark}
		As has been computed using physical techniques (see \cite[\S 5.1.4]{aspinwall2009dirichlet} and the references therein), the expected central charge of a D-brane $(L,E)$ where $\iota: L\into X$ is some submanifold is given by the formula
		$$Z(L,E) = -\int_X e^{-i\omega} \Ch^B(\iota_*E) \sqrt{\hat A(X)}$$
		where $\hat A(X)$ is the A-hat genus, which is related to the Todd class by $\Td = \exp(\frac{1}{2} c_1) \hat A$. This suggests that in general for non-Calabi--Yau varieties one should also consider central charges of this form (which will be genuinely different to $Z_{\Td}$ above since $c_1(X)$ will not necessarily vanish). In particular when computed \emph{on} the submanifold $L$ we expect a correction term relating to $c_1(L)$ to appear. This has been predicted physically by the so-called ``Freed--Kapustin--Witten anomaly" and seems to at least partly resolve the discrepancy between $Z_V(L)$ and $Z(L\otimes \calO_V)$ which occurs in the study of the deformed Hermitian Yang--Mills equation. We will discuss this in more detail in \cref{sec:counterexample}.
	\end{remark}
	
	In the case of complex surfaces stability conditions with the above central charges have been constructed (see for example \cite{bridgeland2008stability} for stability conditions with central charge $Z_{\Td}$ constructed on K3 surfaces). 
	
	For strict Calabi--Yau threefolds until recently not a single example of a stability condition with one of the above central charges was known.\footnote{Due to techniques developed in work of Bayer--Macr\'i--Toda \cite{bayer2014bridgeland} existence of stability conditions on threefolds such as $\PP^3$, Abelian threefolds \cite{maciocia2015I,maciocia2015II} and crepant resolutions of finite quotients thereof \cite{bayer2016space} are known.} However we now have
	\begin{theorem}[\cite{li2019stability}]
		There is a family of stability conditions $(Z,\calA)$ on each smooth quintic threefold $X$ with central charge given by the first central charge in \eqref{eq:centralcharges} for varying choice of class $B,[\omega]$.		
	\end{theorem}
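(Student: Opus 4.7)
The plan is to follow the double-tilting framework of Bayer--Macr\'i--Toda \cite{bayer2014bridgeland} and reduce the construction of the Bridgeland stability condition to a generalised Bogomolov--Gieseker (BG) inequality for certain tilt-semistable objects on $X$, which is the genuinely hard input that must be verified specifically for the quintic. Throughout, fix a K\"ahler class $[\omega]$ and a class $B\in H^{1,1}(X,\RR)$, and define the twisted Chern character $\Ch^B(E) = e^{-B}\Ch(E)$.

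First I would construct the heart $\calA$ by iterated tilting of $\Coh(X)$. Starting from ordinary slope stability with respect to $\mu_{\omega,B}(E) = \frac{[\omega]^{2}.\Ch_1^B(E)}{[\omega]^3 \rk E}$, one forms the torsion pair $(\calT_{\omega,B}, \calF_{\omega,B})$ of sheaves whose HN factors have $\mu_{\omega,B}$ positive, respectively non-positive, and obtains a first tilted heart $\Coh^{B,\omega}(X)$. On this heart, the \emph{tilt slope} $\nu_{\omega,B}(E) := \frac{[\omega].\Ch_2^B(E) - \tfrac{1}{2}[\omega]^3 \rk E}{[\omega]^2.\Ch_1^B(E)}$ is a weak stability function; one checks existence of HN filtrations (standard, via the generic flatness / noetherianness arguments in \cite[\S 4]{bayer2014bridgeland}), and then forms a second torsion pair out of $\nu_{\omega,B}$-HN factors to produce the double-tilted heart $\calA = \calA^{B,\omega} \subset \DbCoh(X)$. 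By construction, the candidate central charge $Z(E) = -\int_X e^{-i\omega} \Ch^B(E)$ sends every non-zero $E\in \calA$ into the closed upper half-plane; zero imaginary part forces negative real part provided one excludes a measure-zero locus of $(\omega,B)$ and inputs the BG inequality below.

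The heart of the matter, and the genuine content of \cite{li2019stability}, is verifying the conjectural BG inequality of Bayer--Macr\'i--Toda for $\nu_{\omega,B}$-tilt-semistable objects $E\in \Coh^{B,\omega}(X)$: a quadratic inequality of the form
\[
[\omega].\Ch_2^B(E)^2 \,\ge\, \bigl([\omega]^2.\Ch_1^B(E)\bigr)\bigl([\omega]^0.\Ch_3^B(E)\bigr) + C([\omega],B)\bigl([\omega]^2.\Ch_1^B(E)\bigr)^2,
\]
with a correction term of the right sign. The strategy is to reduce, via wall-and-chamber analysis in the space of tilt stability parameters, to the case where $E$ is the derived dual of a $\mu$-stable torsion-free sheaf (or a shift thereof), and then exploit the very concrete projective geometry of the quintic, in particular the restriction of $E$ to hyperplane sections and the classification of stable sheaves on a smooth quintic surface $S\subset X$, to bound the discriminant. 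This is the step I expect to be the main obstacle: Li's argument uses sharp numerical invariants of quintic threefolds and cannot be imported verbatim from the earlier cases of $\PP^3$ or Abelian threefolds. The allowed range of $B$ for which the BG inequality holds is exactly what cuts out the parameter region on which the construction produces an honest stability condition.

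Granting the BG inequality, the remaining steps are formal. Positivity of $Z$ on $\calA\setminus\{0\}$, together with noetherianness of $\calA$, yields existence of HN filtrations by the standard argument in \cite[\S 5]{bayer2014bridgeland}. The support property with respect to $v=\Ch: K_0(X)\to H^*(X,\QQ)$ follows from the BG inequality by the quadratic-form criterion of Bayer--Macr\'i--Stellari \cite[Prop. A.5]{bayer2014bridgeland}: the inequality is precisely a negative-semidefinite quadratic form on $\ker Z\subset \Lambda_\RR$, which is equivalent to the support property. Finally, deforming $(\omega,B)$ within the open locus where the BG inequality remains valid and $Z$ stays nondegenerate produces the claimed family of Bridgeland stability conditions, with continuous dependence on $(\omega,B)$ furnished by Bridgeland's deformation theorem.
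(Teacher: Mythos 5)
Your outline follows exactly the route the thesis attributes to Li: the Bayer--Macr\'i--Toda double tilt of $\Coh(X)$, a Bogomolov--Gieseker-type inequality as the key input, and the support property via the quadratic-form criterion of Bayer--Macr\'i--Stellari. Since the thesis only cites \cite{li2019stability} and sketches this strategy in one sentence, at that level your proposal and the paper agree; the formal steps (heart construction, HN filtrations from noetherianness, support property from a negative semi-definite form on $\ker Z$, deformation in $(\omega,B)$) are correctly assembled.

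The genuine gap is the one you flag yourself: the inequality is the entire content of Li's theorem, and you grant it. Two points about that step, so the roadmap is not misleading. First, Li does not verify the original conjectural BMT inequality on the quintic (that remains out of reach); he proves a \emph{weaker, modified} BG-type inequality --- in effect a strengthened Bogomolov bound on $\Ch_2^B$ for slope-(semi)stable sheaves whose twisted slope lies in a bounded window, with non-optimal constants --- which, through a correspondingly modified quadratic form, yields stability conditions only on part of the tilt-parameter space. This is precisely why the statement asserts ``a family'' for varying $B,[\omega]$; and since $h^{1,1}(X)=1$ for a quintic threefold, ``varying $B,[\omega]$'' just means varying two real multiples $(\beta,\alpha)$ of the hyperplane class, not independent $(1,1)$-classes as your setup suggests. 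Second, the reduction is not via a ``classification of stable sheaves on a quintic surface'': Li reduces, by a limiting argument at the boundary of the tilt-stability region, to slope-stable sheaves, then restricts to quintic surfaces and further to curves, where Clifford-type bounds on global sections of semistable bundles give the numerical estimate. Without carrying out that argument (or a substitute), the proposal is a correct architecture but not a proof of the stated theorem.
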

	This was proven using earlier work of Bayer--Macr\'i--Toda and Bayer--Macr\'i--Stellari \cite{bayer2014bridgeland,bayer2016space} which shows that, if a generalised Bogmolov--Gieseker-type inequality for sheaves holds on a given threefold $X$, then tiltings of the heart $\Coh(X)$ admit stability conditions with the above central charges. 

	We remark that there is a rich body of work in studying the \emph{space} of stability conditions, which was already identified by Bridgeland to be a complex manifold \cite{bridgeland2007stability}. We are primarily concerned with the study of stable objects for a given stability condition in the following, so we refer the reader to \cite{bridgeland2009spaces} which in addition contains an interesting discussion of how the space of space of stability conditions admits an interpretation in terms of the moduli space of string backgrounds in superstring theory.

	\subsection{Polynomial stability conditions\label{sec:polynomialbridgelandstability}}
	
	Let us briefly discuss a limiting version of the above theory introduced by Bayer \cite{bayer}, which is closely aligned with the study of $Z$-critical metrics in \cref{ch:zcriticalconnections}. The main alteration to the definition of Bridgeland stability in \cref{def:bridgelandstabilitycondition} is that the central charge
	$$Z:K_0(X) \to \CC$$
	is now polynomial-valued,
	$$Z_k : K_0(X) \to \CC[k].$$
	This produces a phase $\varphi_k = \arg Z_k$ or generalised slope $\mu_{Z_k}$ on a heart $\calA$ which depends on $k$. The polynomial central charge $Z$ is required to be compatible with a \emph{polynomial phase function} $\phi=\phi_k$ on $\calA$, corresponding to a choice of slicing on $\calA$ in the sense of \cref{rmk:slicing} which is compatible with the limiting behaviour $k\to \infty$ in the sense that
	$$Z_k(E) \in \RR_{>0} \cdot e^{i\pi \phi_k(E)}$$
	for any non-zero $E$ in $\calA$ and $k\gg 0$. Such stability conditions are closely related to the notion of polynomial stability on an Abelian category introduced by Rudakov \cite[\S 2]{rudakov1997stability} as a generalisation of Gieseker stability.
	
	The asymptotic nature of polynomial Bridgeland stability enables the existence of such stability conditions on any projective manifold to be proven more easily than genuine Bridgeland stability conditions. Namely by considering hearts of $t$-structures defined by perverse sheaves, the leading order terms in the polynomial central charge can be compared in the large volume limit $k\to \infty$ allowing strong Artinian and Noetherian properties to be proven, from which the existence of Harder--Narasimhan filtrations follows. We will meet the central charges of most interest to Bayer in \cref{ch:zcriticalconnections}.

	\section{Deformed Hermitian Yang--Mills equation\label{sec:dhym}}
	
	In this section we will briefly recall the existing literature on the \emph{deformed Hermitian Yang--Mills equation}. This equation, which characterises BPS D-branes in Type IIB string theory, is derived through SYZ mirror symmetry in the semi-flat limit from the special Lagrangian equation \cite{LYZ}, or by a direct analysis of the supersymmetry and critical points of the Dirac--Born--Infield action as in \cite{marino2000nonlinear}. 
	
	Let $(X,\omega)$ be a compact K\"ahler manifold and $[\alpha] \in H^{1,1}(X,\RR)$ a real class of type $(1,1)$. The dHYM equation asks for a canonical representative $\alpha \in [\alpha]$ satisfying the equation
	\begin{equation}\label{eq:dhym} \Im(e^{-i\hat \theta} (\omega + i \alpha)^n) = 0\end{equation}
	where $\hat \theta = \arg \hat z$ and 
	$$\hat z = \int_X (\omega + i \alpha)^n.$$
	One typically subjects $\alpha$ to the positivity condition
	\begin{equation}\label{eq:dhymalmostcalibrated} \Re(e^{-i\hat \theta} (\omega + i \alpha)^n) > 0\end{equation}
	in the sense of positivity of $(n,n)$-forms. This is called the \emph{almost calibrated} condition. A typical example is the case where $[\alpha] = c_1(L)$, and then given a Hermitian metric $h$ on $L$ producing a Chern curvature $F(h)$, any other form in the class $c_1(L)$ can be identified with $\frac{i}{2\pi} F(e^{\varphi} h)$ for a smooth function $\varphi: X\to \RR$. 
	
	The analytical study of \eqref{eq:dhym} was initiated by Jacob--Yau \cite{jacob2017special} and continued in a series of works \cite{collins20151,collins2018deformed,collins2018moment,collins2020stability}. Indeed already it was observed by Jacob--Yau that \eqref{eq:dhym} is a fully non-linear elliptic partial differential equation for $\alpha$, and an intimate relationship to some algebro-geometric stability condition was identified. One key observation is that if $\lambda_i$ denote the eigenvalues of $\alpha$ with respect to $\omega$, then \eqref{eq:dhym} can be rewritten as
	\begin{equation}\label{eq:dhymlocal} \Theta := \sum_{i=1}^n \arctan(\lambda_i) = \hat \theta \mod 2\pi \end{equation}
	where we take the branch $\arctan: \RR\to (-\pi/2,\pi/2)$. Therefore a solution of the dHYM equation produces a canonical lift of $\hat \theta$ to a real-valued phase for $\hat z$ (see \cite[\S 2]{collins2018deformed} for more details). The behaviour of the dHYM equation is highly sensitive to what region $\Theta$ lies inside. The maximum possible value of $\Theta$ is given by $\frac{n\pi}{2}$. Values near that maximum are known as the critical phase range.
	
	\begin{definition}
		The \emph{hypercritical phase range} is given by $$\Theta \in \left(\frac{(n-1)\pi}{2}, \frac{n\pi}{2}\right)$$
		and the \emph{supercritical phase range} is given by
		 $$\Theta \in \left(\frac{(n-2)\pi}{2}, \frac{n\pi}{2}\right)$$
	\end{definition}
	
	The central conjecture proposed by Collins--Jacob--Yau relating to the dHYM equation, given as a rudimentary formulation of \cref{conj:folklore} in this setting, is the following.
	
	\begin{conjecture}[{\cite[Conj. 1.4]{collins20151}}]\label{conj:collinsjacobyaudHYM}
		Let $(X,\omega)$ be a compact K\"ahler manifold and $[\alpha]\in H^{1,1}(X,\RR)$ a real $(1,1)$-class. Let
		$$\Theta_V := \arg \int_V (\alpha + i \omega)^{\dim V}.$$
		There exists a solution to the dHYM equation \eqref{eq:dhymlocal} if and only if for every proper irreducible analytic subvariety $V\subset X$ we have
		$$\Theta_V > \Theta_X - (n-\dim V)\frac{\pi}{2}.$$
	\end{conjecture}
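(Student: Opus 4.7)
The strategy is to attack the two directions separately, with the forward direction (existence implies the numerical inequalities) serving essentially as a pointwise positivity computation, and the reverse direction requiring a continuity method in which the subvariety inequalities enter only at the level of $C^0$-estimates.

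For the forward direction, suppose $\alpha$ solves \eqref{eq:dhym} and is almost calibrated in the sense of \eqref{eq:dhymalmostcalibrated}. At a point where $\alpha$ has eigenvalues $\lambda_1,\dots,\lambda_n$ with respect to $\omega$, the form $(\omega+i\alpha)^n$ is a positive multiple of $\omega^n$ with phase $\hat\theta_X = \sum_j \arctan\lambda_j$. Given a proper irreducible analytic subvariety $V\subset X$ of dimension $k$, the plan is to expand $(\omega+i\alpha)^n$ by separating the restriction $(\omega+i\alpha)^k$ tangential to $V$ from the transverse factor $\omega^{n-k}$, and to integrate against the current $[V]$. Using the trigonometric identity relating $\sum_j\arctan\lambda_j$ to a sum over any partition of the eigenvalues, together with the pointwise almost calibrated inequality, one extracts a bound on $\arg\int_V (\alpha+i\omega)^k$ from $\arg\int_X (\alpha+i\omega)^n$ that accounts precisely for the $(n-k)\pi/2$ deficit, and the strict inequality comes from the positivity of the transverse eigenvalues much as strictness in the Donaldson--Uhlenbeck--Yau case comes from the second fundamental form computation of \cref{sec:chernconnections}.

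For the reverse direction I would run a continuity method along the path $\omega_t = \omega + (1-t)s\,\omega_0$ for some auxiliary K\"ahler class $[\omega_0]$ and large $s$, so that at $t=0$ one is deep in the large volume limit where solutions exist by perturbation around the trivial form, and at $t=1$ one recovers the original equation. Openness is standard: in the supercritical phase range the linearisation is uniformly elliptic, with kernel only the constants, so the implicit function theorem applies in an appropriate Sobolev completion. Closedness reduces to a priori estimates $\|\varphi_t\|_{C^k}$ on the potential $\alpha_t = \alpha_0 + i\deldelbar\varphi_t$; the $C^2$ estimate follows from the concavity of $\sum_j\arctan\lambda_j$ in supercritical phase, and higher-order estimates by bootstrap applied to the resulting concave fully nonlinear elliptic equation.

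The main obstacle is the $C^0$-estimate on $\varphi_t$, and this is the point at which the subvariety inequalities enter in an essential way. I would argue by contradiction: if $\|\varphi_t\|_{C^0}\to\infty$ along a sequence, then after normalisation $\varphi_t/\|\varphi_t\|_{C^0}$ extracts a weak limit in the space of $\omega$-plurisubharmonic functions whose positive Lelong level set, by Siu's theorem, is an analytic subvariety $V\subset X$. Passing the dHYM equation to the limit and integrating over $V$ should force $\Theta_V \le \Theta_X - (n-\dim V)\pi/2$, contradicting the assumed stability. This mirrors the strategy of Collins--Jacob--Yau in the surface case and of Gao Chen in the supercritical phase regime. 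The truly hard part is to escape the supercritical phase assumption: outside that regime concavity fails and the standard $C^2$-estimate collapses, so a genuinely new input is needed. A promising alternative is to pursue a variational proof along the lines of Berman--Boucksom--Jonsson for K-stability, working with the Donaldson-type functional of \cref{sec:variationalfunctional} and attempting to characterise its coercivity on the subsolution locus directly in terms of the subvariety phase inequalities; the discrepancy between Bridgeland-type and subsolution-type stability noted in \cref{sec:Zcriticalfuturedirections} suggests that even the statement of \cref{conj:collinsjacobyaudHYM} may need a derived-categorical refinement before a full proof is possible.
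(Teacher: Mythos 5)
You are attempting to prove a statement that the thesis itself records only as a conjecture: \cref{conj:collinsjacobyaudHYM} is Collins--Jacob--Yau's Conjecture 1.4, and the paper gives no proof of it. It only notes that the conjecture is known in dimension two (Jacob--Yau, via a Monge--Amp\`ere reduction and Demailly--P\u{a}un, the same mechanism as \cref{thm:existencesurfaces}) and, for projective manifolds, in the supercritical phase range (\cref{thm:dhymexistencesupercritical}, due to Chen, Datar--Pingali and Chu--Lee--Takahashi). So there is no ``paper proof'' for your argument to match, and your proposal should be judged as an attack on an open problem --- which, as you yourself concede in the final sentences, it does not close.

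On the substance, both halves of your sketch have genuine gaps. In the forward direction, ``separating the tangential factor $(\omega+i\alpha)^k$ from the transverse factor $\omega^{n-k}$ and integrating against $[V]$'' is not a pointwise positivity computation: the restriction of a solution to $V$ satisfies no equation on $V$, the arctan-partition identity only gives a useful lower bound on the phase of the restricted form when the total phase is supercritical (otherwise sums of arctans over sub-collections of eigenvalues need not exceed $\hat\theta-(n-k)\pi/2$), and strictness does not follow from an ``analogy with the second fundamental form'' --- the actual necessity argument of Collins--Jacob--Yau uses the supercritical/almost-calibrated hypothesis in an essential way, and outside that range even the necessity direction is open. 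In the reverse direction, the decisive step is exactly the one you wave at: you assert that a $C^0$ blow-up produces, via Siu's theorem, an analytic set $V$ on which ``passing the dHYM equation to the limit should force $\Theta_V\le\Theta_X-(n-\dim V)\pi/2$,'' but nothing in the sketch extracts that integral inequality from the Lelong-level set, and the known supercritical proofs do not proceed this way (they run uniform, Nakai--Moishezon-type arguments reducing to twisted equations on subvarieties rather than a normalised-potential compactness argument). Moreover your continuity path's openness and $C^2$ estimate rely on concavity of $\sum_j\arctan\lambda_j$, which fails outside supercritical phase, so the scheme collapses precisely where the conjecture is open. In short, the proposal re-sketches the known partial results and correctly identifies that a genuinely new idea is needed, but it is not a proof of the stated conjecture.
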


	\begin{remark}
		After rewriting the dHYM equation in terms of the central charge $Z_{\mathrm{dHYM}}$ as in \cref{ex:dhymcentralcharge}, the above stability condition is precisely the $Z$-stability condition \cref{def:Zstabilitysubvariety}, as was already noted for example in \cite{collins2018moment}.
	\end{remark}

	In dimension two \cref{conj:collinsjacobyaudHYM} was resolved already by Jacob--Yau \cite{jacob2017special} by appealing to the Demailly--Pǎun theorem and Yau's proof of the Calabi conjecture. We will prove an existence result for $Z$-critical metrics on line bundles over surfaces following the same idea in \cref{thm:existencesurfaces}. 
	
	In general \cref{conj:collinsjacobyaudHYM} is now well-understood in the supercritical phase range. Let us call \eqref{eq:dhym} with $\Theta$ in the supercritical phase range the ``supercritical deformed Hermitian Yang--Mills equation." 
	
	\begin{theorem}[Chen {\cite{chen2021j}}, Datar--Pingali \cite{datar2021numerical}, Chu--Lee--Takahashi \cite{chu2021nakai}]\label{thm:dhymexistencesupercritical}
		Let $(X,\omega)$ be a projective K\"ahler manifold where $\omega \in c_1(L)$ for some ample line bundle $L$. There exists a solution in the class $[\alpha]$ to the supercritical dHYM equation if and only if $[\alpha]$ is stable in the sense of Collins--Jacob--Yau.
	\end{theorem}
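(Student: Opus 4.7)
My plan is to treat the two directions separately. The necessity of stability is a pointwise argument, while for sufficiency I would set up a continuity method in the style of Uhlenbeck--Yau, with the central analytic input being the Sz\'ekelyhidi--Guan theory of $C$-subsolutions for concave fully nonlinear elliptic operators.

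For necessity, suppose $\alpha$ solves \eqref{eq:dhym} with $\Theta > (n-2)\pi/2$; the pointwise form \eqref{eq:dhymlocal} of the equation holds uniformly and the almost-calibrated condition \eqref{eq:dhymalmostcalibrated} is automatic. Fix an irreducible analytic subvariety $V \subset X$ of dimension $p$ and consider $\rest{(\omega + i\alpha)^p}{V}$. Choosing at $x \in V_{\mathrm{reg}}$ unitary coordinates diagonalising $\rest{\omega}{V}$ and $\rest{\alpha}{V}$ simultaneously with eigenvalues $\mu_1,\dots,\mu_p$, the pointwise argument of $\rest{(\omega + i\alpha)^p}{V}$ equals $\sum_{j=1}^p \arctan \mu_j$. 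Since the $\mu_j$ arise as eigenvalues of a Hermitian sub-block of $\alpha$, Cauchy interlacing together with the bound $|\arctan| < \pi/2$ yields the pointwise estimate $\sum_{j=1}^p \arctan \mu_j > \Theta - (n-p)\pi/2$. Integrating over $V$ and taking arguments then produces the Collins--Jacob--Yau stability inequality $\Theta_V > \Theta - (n-p)\pi/2$.

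For sufficiency, fix a reference $\alpha_0 \in [\alpha]$ and consider the continuity path
\[
\sum_{i=1}^n \arctan \lambda_i(\alpha_0 + t\omega + i \deldelbar \varphi_t) = \hat\theta(t),\qquad t \in [0,T],
\]
where $\hat\theta(t)$ is the topological phase of $[\alpha] + t[\omega]$. For $T \gg 0$ the class $[\alpha] + T[\omega]$ is K\"ahler with large eigenvalues, placing the equation in the hypercritical regime where existence is direct by a perturbation of the trivial solution $\varphi_T \equiv 0$. Openness in $t$ follows from ellipticity of the linearisation: the function $F(\lambda) = \sum \arctan \lambda_i$ has strictly positive partial derivatives and is concave on the supercritical cone $\Gamma = \{F > (n-2)\pi/2\}$, so the linearisation is strictly elliptic and the implicit function theorem applies in H\"older spaces modulo constants.

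The main obstacle is closedness as $t \to 0$, which reduces to a uniform a priori $C^{2,\alpha}$-estimate for $\varphi_t$. The $C^{2,\alpha}$-bound follows from the $C^2$-bound by Evans--Krylov applied to the concave operator $F$ on $\Gamma$, while the $C^0$ and $C^1$ bounds follow from standard maximum-principle and blow-up arguments adapted to concave operators. The essential input of algebro-geometric stability is in establishing the $C^2$-estimate, which by the Sz\'ekelyhidi--Guan framework is automatic once one produces a $C$-subsolution of the equation. My plan is therefore to show that Collins--Jacob--Yau stability implies the existence of such a $C$-subsolution for each $t$, and I expect this to be the main difficulty of the argument. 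The proof should proceed via a Nakai--Moishezon-type mass-concentration in the spirit of Demailly--P\u{a}un: failure of the subsolution condition should force a sequence of putative solutions along which $\Theta(\varphi_{t_k})$ degenerates, yielding after rescaling a weak limit of closed positive $(1,1)$-currents concentrated on a proper analytic subvariety $V \subset X$, whose mass against $(\omega + i\alpha_t)^{\dim V}$ would then contradict the stability inequality at $V$.
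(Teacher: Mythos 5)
This theorem is stated in the thesis as background and attributed to Chen, Datar--Pingali and Chu--Lee--Takahashi; the thesis contains no proof of it, so your proposal has to be measured against those works. Your necessity direction is essentially the standard Collins--Jacob--Yau argument and is fine in outline: co-diagonalise $\rest{\omega}{V}$ and $\rest{\alpha}{V}$ at a smooth point, use Cauchy interlacing of the compressed eigenvalues together with $|\arctan|<\pi/2$ to get the pointwise bound, and then integrate. The one step you gloss over is the integration: a pointwise bound on arguments does not directly bound the argument of the integral; you need the observation that the pointwise phase of $\rest{(\omega+i\alpha)^p}{V}$ lies in an arc of length $n\pi/2-\Theta<\pi$ (this is exactly where supercriticality is used), so that the integrand takes values in a convex cone and the argument of the integral obeys the same strict inequality.

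The sufficiency direction has two genuine problems. First, your claim that $F(\lambda)=\sum_i\arctan\lambda_i$ is concave on the supercritical cone is false: in the supercritical but non-hypercritical range one eigenvalue may be negative, and $\arctan$ is convex on $(-\infty,0)$; what is true is only that the level sets $\{F=c\}$ are convex for $c\ge (n-2)\pi/2$ (Wang--Yuan, Collins--Jacob--Yau). This is precisely the reason the supercritical case is hard: before Evans--Krylov and the Sz\'ekelyhidi--Guan $C$-subsolution machinery can be invoked one must pass to a concave reformulation of the operator, which is what Chen's reduction to twisted J-type equations and Datar--Pingali's generalised Monge--Amp\`ere formulation accomplish, and your plan does not address this. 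Second, the step you defer --- ``Collins--Jacob--Yau stability implies the existence of a $C$-subsolution along the continuity path'' --- is not a technical lemma to be filled in later; it is the entire content of the cited theorems. The mass-concentration sketch you give (degeneration of $\Theta(\varphi_{t_k})$, rescale, extract a positive current supported on a subvariety) does not, as written, produce a subsolution or a contradiction: the known arguments proceed by a careful induction on dimension, producing subsolutions (or solutions) on proper subvarieties and propagating them upward, with projectivity exploited by Datar--Pingali to restrict to divisors, and with the Demailly--P\u{a}un-type concentration executed in a far more structured way (Chen; Chu--Lee--Takahashi in the K\"ahler case). You would also need to check that stability of $[\alpha]$ implies stability of $[\alpha]+t[\omega]$ along your path, and to reconcile the fact that Chen's argument requires a uniform form of the stability inequality --- removing that uniformity in the projective setting is exactly the contribution of Datar--Pingali and Chu--Lee--Takahashi. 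So the skeleton of your plan matches the literature, but as it stands it contains a wrong concavity claim and leaves the genuinely hard implication unproved.
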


	This was shown using a uniform version of the stability condition by Chen, and extended to the non-uniform case by Datar--Pingali and Chu--Lee--Takahashi. These works also prove a form of \cref{thm:dhymexistencesupercritical}  for arbitrary compact K\"ahler manifolds under slightly more analytical stability hypotheses, which can be strengthened to the purely algebro-geometric criteria conjectured by Collins--Jacob--Yau in the projective case.
	
	\subsection{Higher rank}
	
	There is no derivation in general of the dHYM equation for higher rank vector bundles, either from mirror symmetry or directly from the analysis of the Dirac--Born--Infield action for non-Abelian gauge theory.\footnote{In \cite{marino2000nonlinear} the Abelian case is derived from the Dirac--Born--Infield action directly, as opposed to \cite{LYZ}, but the higher rank version of this action is not fully understood with various proposals existing in the literature. It would be interesting to choose the most popular (using a symmetrised trace \cite{tseytlin1997non}) and attempt to derive the higher rank dHYM following similar arguments to the Abelian case.} The higher rank equation should appear as the mirror of the special Lagrangian equation for a Lagrangian multisection of a semiflat SYZ fibration \cite{LYZ}. However, the na\"ive derivation using the techniques of Leung--Yau--Zaslow produces a $\U(1)^n$-dHYM equation as opposed to a $\U(n)$-dHYM equation, because it fails to take into account monodromy around ramification points of the multisection. 
	
	Nevertheless Collins--Yau proposed on aesthetic grounds a higher rank analogue of \eqref{eq:dhym} for a Hermitian metric $h$ on a vector bundle $E\to(X,\omega)$ \cite[\S 8.1]{collins2018moment}.
	\begin{align}
		\Im\left(e^{-i\hat \theta} \left(\omega \otimes \id_E - \frac{F(h)}{2\pi}\right)^n \right) &=0\\
		\trace \left( \Re\left(e^{-i\hat \theta} \left(\omega \otimes \id_E - \frac{F(h)}{2\pi}\right)^n \right) \right) &>0
	\end{align}
	where $\hat \theta = \arg \int_X \trace (\omega \otimes \id_E - \frac{i}{2\pi} F(h))^n$ and $\Re$ and $\Im$ refer to the $h$-Hermitian and skew-Hermitian parts of the endomorphism respectively.
	
	This equation is expected to be one of the first instances in non-Abelian gauge theory of an equation with algebro-geometric obstructions to solutions arising both from subsheaves (as in the case of slope stable bundles discussed in \cref{sec:bundles}), \emph{and} subvarieties (and indeed sheaves supported on subvarieties). 
	
	A further derivation in the case of \emph{complexes} of vector bundles so as to make contact with the upgraded notion of D-brane in Type IIB string theory is also lacking. We will make some remarks about this problem in \cref{sec:Zcriticalfuturedirections}. In general we do not necessarily expect a \emph{differential equation} in the traditional sense (an observation which has also been discussed by Li \cite{li2022thomas}), but generally one expects an equation of the above tensorial form including some singular or current-like objects related to the geometry of the underlying complex.
	
	%!TEX root = ../thesis.tex
\chapter{$Z$-critical connections}
\label{ch:zcriticalconnections}

In this chapter we introduce the notion of a $Z$-critical connection (and a $Z$-critical metric) and investigate their basic properties. $Z$-critical metrics are generalisations of dHYM metrics which allow one to vary the choice of stability input, and in light of the folklore \cref{conj:folklore} one expects according to \cref{principle} a correspondence with a stability condition for \emph{any} choice of $Z$-critical equation. 

The discussion of this chapter is geared towards understanding the correspondence in \cref{ch:correspondence} and so remains well within the world of metrics on holomorphic vector bundles. Any subtleties of the derived category are suppressed here, but we will remark on various shadows hinting at a deeper theory as they arise. We will discuss the possibility of a theory on complexes in \cref{sec:Zcriticalfuturedirections}. 

The majority of the content of this chapter with the exception of \cref{sec:stabilitysubvarieties,sec:variationalfunctional,sec:Zcriticalfuturedirections} is joint work with Ruadha\'i Dervan and Lars Sektnan \cite{dervan2021zcritical}. This work has been reproduced here with an emphasis on the contributions of the author.

\section{Asymptotic $Z$-stability\label{sec:AZS}}

In this section we define the key notion of stability which will be relevant for the correspondence in \cref{ch:correspondence}. In particular this is a limiting form of the stability conditions one sees arise in the derived category, and we also fix a simple geometric background.

The primary input of this notion of stability is a \emph{central charge}, an assignment of an algebraic invariant to each sheaf over a compact K\"ahler manifold. We work with a particular explicit class of such central charges identified by Bayer \cite{bayer}, known as polynomial central charges, which we have briefly discussed in \cref{sec:polynomialbridgelandstability}. This class is large enough to include the deformed Hermitian Yang--Mills equation and interesting alternatives to or perturbations of that equation. 

First we need to define the notion of a stability vector.\footnote{In Bayer's work one also introduces the the \emph{perversity function}, which is important in identifying a particular t-structure consisting of perverse sheaves for his notion of polynomial stability condition. We will not need that technology here.}

\begin{definition}[Stability vector]
	A \emph{stability vector} $\rho \in (\CC^*)^{n+1}$ consists of a collection $\rho=(\rho_0,\dots,\rho_n)$ of non-zero complex numbers such that $\Im(\rho_n)>0$ and 
	$$\Im\left(\frac{\rho_d}{\rho_d+1}\right) > 0$$
	for $d=0,\dots,n-1$.
\end{definition}

\begin{figure}[h]
	\centering

	\tikzset{every picture/.style={line width=0.75pt}} %set default line width to 0.75pt        
	
	\begin{tikzpicture}[x=0.75pt,y=0.75pt,yscale=-1,xscale=1]
		%uncomment if require: \path (0,412); %set diagram left start at 0, and has height of 412
		
		%Straight Lines [id:da7850370354342895] 
		\draw    (191.67,244.33) -- (191.67,80) ;
		\draw [shift={(191.67,77)}, rotate = 90] [fill={rgb, 255:red, 0; green, 0; blue, 0 }  ][line width=0.08]  [draw opacity=0] (8.93,-4.29) -- (0,0) -- (8.93,4.29) -- cycle    ;
		%Straight Lines [id:da621526635736605] 
		\draw    (102.5,169.67) -- (277.83,169.67) ;
		\draw [shift={(280.83,169.67)}, rotate = 180] [fill={rgb, 255:red, 0; green, 0; blue, 0 }  ][line width=0.08]  [draw opacity=0] (8.93,-4.29) -- (0,0) -- (8.93,4.29) -- cycle    ;
		%Straight Lines [id:da8344473516433605] 
		\draw  [dash pattern={on 0.84pt off 2.51pt}]  (191.67,169.67) -- (250.67,143.67) ;
		\draw [shift={(250.67,143.67)}, rotate = 336.22] [color={rgb, 255:red, 0; green, 0; blue, 0 }  ][fill={rgb, 255:red, 0; green, 0; blue, 0 }  ][line width=0.75]      (0, 0) circle [x radius= 3.35, y radius= 3.35]   ;
		%Straight Lines [id:da26138667967605345] 
		\draw  [dash pattern={on 0.84pt off 2.51pt}]  (191.67,169.67) -- (232.67,103.67) ;
		\draw [shift={(232.67,103.67)}, rotate = 301.85] [color={rgb, 255:red, 0; green, 0; blue, 0 }  ][fill={rgb, 255:red, 0; green, 0; blue, 0 }  ][line width=0.75]      (0, 0) circle [x radius= 3.35, y radius= 3.35]   ;
		%Straight Lines [id:da7368302065775478] 
		\draw  [dash pattern={on 0.84pt off 2.51pt}]  (191.67,169.67) -- (176.67,120.33) ;
		\draw [shift={(176.67,120.33)}, rotate = 253.09] [color={rgb, 255:red, 0; green, 0; blue, 0 }  ][fill={rgb, 255:red, 0; green, 0; blue, 0 }  ][line width=0.75]      (0, 0) circle [x radius= 3.35, y radius= 3.35]   ;
		%Straight Lines [id:da9291258129251669] 
		\draw  [dash pattern={on 0.84pt off 2.51pt}]  (191.67,169.67) -- (290.67,265) ;
		\draw [shift={(290.67,265)}, rotate = 43.92] [color={rgb, 255:red, 0; green, 0; blue, 0 }  ][fill={rgb, 255:red, 0; green, 0; blue, 0 }  ][line width=0.75]      (0, 0) circle [x radius= 3.35, y radius= 3.35]   ;
		%Straight Lines [id:da07612523246119884] 
		\draw  [dash pattern={on 0.84pt off 2.51pt}]  (191.67,169.67) -- (171.33,223) ;
		\draw [shift={(171.33,223)}, rotate = 110.87] [color={rgb, 255:red, 0; green, 0; blue, 0 }  ][fill={rgb, 255:red, 0; green, 0; blue, 0 }  ][line width=0.75]      (0, 0) circle [x radius= 3.35, y radius= 3.35]   ;
		
		% Text Node
		\draw (261.33,131.07) node [anchor=north west][inner sep=0.75pt]    {$\rho _{0}$};
		% Text Node
		\draw (162,233.73) node [anchor=north west][inner sep=0.75pt]    {$\rho _{1}$};
		% Text Node
		\draw (163.33,92.4) node [anchor=north west][inner sep=0.75pt]    {$\rho _{2}$};
		% Text Node
		\draw (237.33,77.73) node [anchor=north west][inner sep=0.75pt]    {$\rho _{3}$};
		% Text Node
		\draw (292.67,268.4) node [anchor=north west][inner sep=0.75pt]    {$\rho _{4}$};

	\end{tikzpicture}
	
	\caption{A stability vector consists of a sequence of non-zero complex numbers with angles moving clockwise around the origin.}
\end{figure}
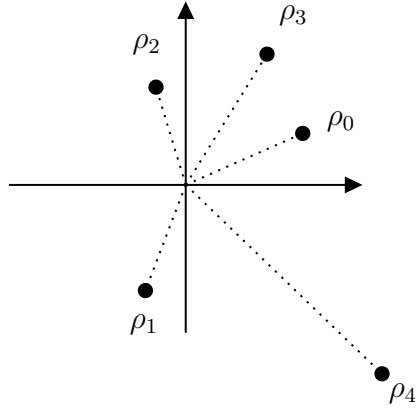

\begin{remark}
	The condition $\Im(\rho_n)>0$ is a normalisation condition which ensures that in the large volume limit the central charge lands in the upper-half plane $\HH$. As discussed in \cref{sec:Bridgelandstability} what is actually important is that the algebraic invariants of sheaves land in the \emph{same} half-plane of $\CC$, and indeed any stability vector may be rotated so that $\Im(\rho_n)>0$ without changing the condition on successive quotients of $\rho_d$ or the corresponding notion of stability.
\end{remark}

Let us now define a polynomial central charge.

\begin{definition}[Polynomial central charge]
	\label{def:polynomialcentralcharge}
	A \emph{polynomial central charge} $$Z: K(X) \to \CC$$ on a compact K\"ahler manifold $X$ consists of the data of:
	\begin{itemize}
		\item A \emph{stability vector} $\rho=(\rho_0,\dots,\rho_n)$ where $n=\dim X$.
		\item A K\"ahler class $[\omega]$.
		\item A real unipotent class $U\in H^*(X,\RR)$ such that $U=1+N$ where $N\in H^{>0}(X,\RR)$.
	\end{itemize}
	The \emph{density of the central charge} $[Z]: K(X) \to H^*(X,\RR)$ is the inhomogeneous class
	$$[Z(E)] := \left( \sum_{d=0}^n \rho_d  [\omega]^d \right) . \Ch(E) . U \in H^*(X,\RR).$$
	The central charge $Z: K(X) \to \CC$ is defined by
	$$Z(E) := [Z(E)] ([X]) = \int_X \sum_{d=0}^n \rho_d  [\omega]^d . \Ch(E) . U.$$
\end{definition}

The term \emph{polynomial} arises for the following reason: If the K\"ahler class $[\omega]$ is replaced by $k[\omega]$ for an indeterminant $k>0$, then one obtains a polynomial map $Z_k : K(X) \to \CC[k]$ and a polynomial central charge in the sense of \cref{sec:polynomialbridgelandstability}. The limit $k\to \infty$ is the large volume limit, and in the following we will denote by $Z_k$ a central charge of the above form with K\"ahler class replaced by $k[\omega]$. 

To define stability, we use $Z$ to produce a suitable notion of \emph{slope} in the sense of the traditional slope stability. Alternatively we can use the \emph{phase}, as is more traditionally used in the Bridgeland stability literature.

\begin{definition}[Slope and phase]
	Let $Z$ be a polynomial central charge. Suppose $E$ is a bundle with $Z(E)\ne 0$. Define the \emph{$Z$-slope} as
	$$\mu_{Z}(E) := - \frac{\Re(Z(E))}{\Im(Z(E))}$$
	and $\mu_Z(E) = +\infty \quad (\text{resp. }-\infty)$ if $\Im Z(E) = 0$ and $\Re(Z(E)) < 0 \quad (\text{resp. }>0)$. Define the \emph{$Z$-phase} of $E$ as
	$$\varphi(E) := \arg Z(E)$$
	where we use the principal branch $\arg: \CC^* \to (-\pi,\pi]$.	If $Z(E) = 0$ define $\mu_Z(E) = +\infty$ and $\varphi(E) = \pi$. 
\end{definition}

We now define our key notion of stability for the correspondence in \cref{ch:correspondence}.

\begin{definition}[Asymptotic $Z$-stability]
	A holomorphic vector bundle $E\to X$ over a compact K\"ahler manifold with $Z_k(E)\ne 0$ for $k\gg 0$ is said to be \emph{asymptotically $Z$-stable} with respect to a polynomial central charge $Z$ if, for every proper, non-zero coherent subsheaf $\calF \subset E$, one has
	$$\varphi_k(\calF) < \varphi_k(E)$$
	for all $k\gg 0$.
\end{definition}

We also explicitly define the alternative notion of $Z$-stability away from the large volume limit.

\begin{definition}[$Z$-stability]\label{def:Zstability}
	A bundle $E\to X$ with $Z(E) \ne 0$ is called $Z$-stable if for every proper, non-zero coherent subsheaf, one has
	$$\varphi(\calF) < \varphi(E).$$
\end{definition}

Let us now make a series of remarks about the definiton of $Z$-stability.

\begin{remark}
	Notice that asymptotic $Z$-stability is not the same as $Z_k$-stability for all $k\gg 0$, as the choice of $k$ may depend on the particular coherent subsheaf $\calF$ in asymptotic stability. One expects that $Z$-stability should be the relevant stability condition away from the large volume limit, but here one will certainly need to take into account the full derived category for most purposes. 
\end{remark}

\begin{remark}
	We could, similarly to the settings of slope stability and Gieseker stability in \cref{sec:stability}, define (asymptotic) $Z$-(semi/poly)stability, the existence of Jordan--H\"older and Harder--Narasimhan-type filtrations with respect to the $Z$-slope and so on. These constructions will not be relevant to our work, and indeed one expects them to be poorly behaved away from the large volume limit, requiring the technology of the derived category.
\end{remark}

\begin{remark}\label{rmk:ze0}
	The assumption $Z(E)\ne 0$ should not be serious, as in the setting of Bridgeland stability this must imply $E$ is numerically trivial. In our more simplified setting, it has already been observed by Collins--Jacob--Yau that if a line bundle has $Z(L)=0$ then it cannot admit any solution to the deformed Hermitian--Yang--Mills equation, so one might include the case $Z(E)=0$ in the definition of a $Z$-unstable bundle, for the purposes of proving a correspondence with existence of solutions to the $Z$-critical equation.
	
	On the other hand by definition if $E$ admits a subsheaf with $Z(\calF)=0$ then $E$ is $Z$-destabilised by $\calF$ and in particular an (asymptotically) $Z$-stable never contains coherent subsheaves with $Z(\calF) = 0$. This is motivated by the analogous assumption for slope stability (that a torsion sheaf supported in codimension $\ge 2$ has slope $+\infty$) and the definition of a weak stability condition \cite{todaweakstability} and weak polynomial stability condition \cite{lo2020weight}. 
\end{remark}

\begin{remark}
	As explained in \cite{bayer} it is possible to embed the theory of Gieseker stability in the language of polynomial stability conditions. Whilst for a small class of such central charges it follows from, for example \cite{rudakov1997stability}, that Gieseker stability coincides with asymptotic $Z$-stability, in general we allow for enough freedom in the choice of $Z$ to explore genuinely different stability conditions.
\end{remark}

The inequality of phases given in the definition of asymptotic stability is cumbersome, and in the setting where all phases of bundles and subbundles lie in the same half-plane, it can be replaced by a simpler condition.

\begin{lemma}\label{lem:equivalentstabilitycondition}
	Let $E\to X$ be a holomorphic vector bundle and $\calF\subset E$ a proper, non-trivial coherent subsheaf. Then consider the following conditions:
	\begin{enumerate}
		\item $\varphi_k(\calF) < \varphi_k(E)\text{ for all $k\gg 0$}$
		\item $\Im\left( \frac{Z_k(\calF)}{Z_k(E)} \right) < 0\text{ for all $k\gg 0$}$
		\item $\mu_{Z_k}(\calF) < \mu_{Z_k}(E)\text{ for all $k\gg 0$}$
	\end{enumerate}
	If $Z_k(\calF) \ne 0$ for all $k\gg 0$ then the above conditions are equivalent. If $Z_k(\calF) = 0$ then conditions (i) and (iii) are equivalent.
\end{lemma}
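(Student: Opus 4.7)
The plan is to reduce everything to elementary trigonometry in the upper half-plane $\HH$, after first establishing that both $Z_k(E)$ and $Z_k(\calF)$ eventually land there as $k \to \infty$.

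I would begin by extracting the leading-order behaviour of the polynomials $Z_k(E)$ and $Z_k(\calF)$ in $k$. Since $U = 1 + N$ contributes $1$ in top degree and $\Ch_0(E) = \rk E > 0$, the leading term of $Z_k(E)$ is a positive real multiple of $\rho_n k^n$, which has positive imaginary part because $\Im \rho_n > 0$. Moreover, as $\calF$ is a nonzero coherent subsheaf of the torsion-free sheaf $E$, it is itself torsion-free and hence of positive rank, so the same argument shows $Z_k(\calF)$ is a polynomial of degree $n$ with $\Im Z_k(\calF) > 0$ for $k \gg 0$. In particular the hypothesis ``$Z_k(\calF) \ne 0$ for all $k \gg 0$'' is automatic under the assumptions of the lemma; the degenerate case where this fails would, by polynomiality in $k$, force $Z_k(\calF) \equiv 0$ and is treated separately below.

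Assuming both $Z_k(E), Z_k(\calF) \in \HH$ for $k \gg 0$, the identity
\[
\Im\!\left(\frac{Z_k(\calF)}{Z_k(E)}\right) = \frac{\Im Z_k(\calF)\,\Re Z_k(E) - \Re Z_k(\calF)\,\Im Z_k(E)}{|Z_k(E)|^2},
\]
combined with the positivity of $\Im Z_k(\calF)\,\Im Z_k(E)$, yields
\[
\sgn\,\Im\!\left(\frac{Z_k(\calF)}{Z_k(E)}\right) = \sgn\bigl(\mu_{Z_k}(\calF) - \mu_{Z_k}(E)\bigr),
\]
after dividing numerator and denominator by the product of imaginary parts. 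This establishes (ii) $\iff$ (iii). For (i) $\iff$ (iii), I would note that both phases lie in $(0,\pi)$, that $\mu_{Z_k} = -\cot \varphi_k$ under this assumption, and invoke strict monotonicity of $-\cot$ on $(0,\pi)$.

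For the degenerate case $Z_k(\calF) = 0$, the conventions $\varphi_k(\calF) = \pi$ and $\mu_{Z_k}(\calF) = +\infty$ make both (i) and (iii) manifestly false, since $\varphi_k(E) < \pi$ and $\mu_{Z_k}(E)$ is finite for $k \gg 0$; they are therefore trivially equivalent. Condition (ii) likewise fails (one has $\Im 0 = 0$), but it is not tied to (i), (iii) through the sign identity above, which degenerates. The only mild subtlety is keeping track of the principal branch of $\arg$ and the $\pm\infty$ conventions for slopes on the boundary $\Im Z = 0$, but once both charges lie strictly inside $\HH$ for large $k$ these never activate, so no serious obstacle arises.
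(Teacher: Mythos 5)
Your proposal is correct and takes essentially the same route as the paper: both arguments reduce to the observation that $\Im\rho_n>0$ forces $Z_k(E)$ and $Z_k(\calF)$ into the open upper half-plane for $k\gg0$, after which the equivalences are elementary half-plane trigonometry (the paper chains (i)$\iff$(ii) via a $\sin(\theta-\theta')$ computation and gets (ii)$\iff$(iii) from the definition of the slope, while you chain (ii)$\iff$(iii) through the explicit numerator identity and (i)$\iff$(iii) via monotonicity of $-\cot$). Your side remark that $Z_k(\calF)\ne 0$ is automatic for a nonzero subsheaf of a vector bundle is also correct and is exactly the content of the corollary the paper records immediately afterwards.
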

\begin{proof}
	The normalisation condition on the stability vector $\rho$ of the polynomial central charge $Z$ and a computation of the leading order coefficient using Riemann--Roch ensures that the leading order term in $k$ has positive imaginary part, $\Im(\rho_n) > 0$. Working in the large volume limit we thus have for $k$ sufficiently large, 
	$$Z_k(E), Z_k(\calF) \in \HH.$$
	For two non-zero complex numbers $z=re^{i\theta},z'=r'e^{i\theta'}$, working with the branch $\arg: \CC^* \to (-\pi,\pi]$ we have
	$$\Im\left(\frac{z}{z'}\right) = \frac{r}{r'} \sin(\theta-\theta').$$
	If $|\theta-\theta'| <\pi $ so that $z$ and $z'$ lie in the same half-plane, then the sign of $\sin x$ is the same as that of $x$, so
	$$\Im\left(\frac{z}{z'}\right) < 0$$
	if and only if $\theta < \theta'$. Thus (i) is equivalent to (ii) except when $Z_k(\calF) = 0$ for all $k\gg 0$.
	
	That (ii) is equivalent to (iii) follows immediately from the definition of the $Z$-slope.
\end{proof}

The equivalent stability condition appearing in \cref{lem:equivalentstabilitycondition} (ii) given in terms of the $Z$-slopes is the one which will naturally appear from the analysis of the $Z$-critical equation in the proof of the correspondence in \cref{ch:correspondence}. It also naturally appears in the context of subsolutions and stability with respect to subvarieties.

To that end, let us also explicitly define $Z$-stability with respect to subvarieties. Let $\iota: V\subset X$ be a subvariety and $E\to X$ a vector bundle. Define the central charge of $E$ over $V$ by 
\begin{equation}\label{eq:Zslopesubvariety}Z_V(E) := \iota^* [Z(E)]([V]) = \int_V \left(\sum_{d=0}^n \rho_d[\omega]^d\right) . \Ch(E) . U.\end{equation}

\begin{remark}
	This defintion of $Z_V(E)$ differs from the central charge $Z(E\otimes \calO_V)$ of the coherent sheaf $E\otimes \calO_V$ on $X$, which would be the natural quotient object in Bridgeland stability. We will return to this point in \cref{sec:counterexample}.
\end{remark}

Note that with the above notation, $Z(E) = Z_X(E)$. Let us now define $Z$-stability with respect to subvarieties.

\begin{definition}[$Z$-stability with respect to subvarieties]\label{def:Zstabilitysubvariety}
	A holomorphic vector bundle $E\to (X,\omega)$ is $Z$-stable with respect to an irreducible subvariety $V\subset X$ with $Z_V(E)\ne 0$ if
	$$\Im\left( \frac{Z_V(E)}{Z_X(E)}\right) > 0.$$
\end{definition}

Again we remark that this notion of stability is not well-behaved away from the large volume limit, as we need to guarantee that every $Z_V(E)$ lands in the same half-plane as $Z_X(E)$ (see \cref{rmk:jacobsheu} where it is observed one sometimes needs to ask for the above inequality to be flipped depending on the codimension of $V\subset X$). Indeed this type of stability will not actually be relevant for us near the large volume limit, and we will discuss this more in \cref{sec:subsolutions} when we study subsolutions to the $Z$-critical equation.

Indeed $Z$-stability in the form \cref{def:Zstabilitysubvariety} is equivalent to the stability condition of Collins--Jacob--Yau in \cref{conj:collinsjacobyaudHYM} for $Z=Z_{\mathrm{dHYM}}$ on a line bundle, where by \cref{thm:dhymexistencesupercritical} it is a necessary and sufficient condition in the supercritical phase of the dHYM equation. Under this phase assumption it is automatic that the charges $Z_V(E)$ for all $V\subset X$ lie in the same half-plane.\footnote{This seems to suggest that there is a strong link between the assumptions of phase ranges for the dHYM equation and hearts of $t$-structures.}

Let us now discuss the basic properties of asymptotic $Z$-stability. Firstly, we have the following relation to slope stability.

\begin{lemma}\label{lemma:slopesemistable}
	If a coherent sheaf $E$ is asymptotically $Z$-stable, then it is slope semistable. 
	
\end{lemma}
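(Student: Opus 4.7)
The strategy is to use \cref{lem:equivalentstabilitycondition} to translate asymptotic $Z$-stability into the asymptotic sign condition $\Im(Z_k(\calF)\overline{Z_k(E)}) < 0$ for $k\gg 0$, and then read off the slope inequality $\mu(\calF)\leq\mu(E)$ from the two leading powers of $k$ in the expansion of $Z_k(\calF)\overline{Z_k(E)}$ as $k\to\infty$.

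First I would expand $Z_k(E)$ and $Z_k(\calF)$ asymptotically using the density description in \cref{def:polynomialcentralcharge}. Writing $V = \int_X [\omega]^n$ and $\alpha = \int_X [\omega]^{n-1}\cdot U_1$, where $U_1 \in H^{1,1}(X,\RR)$ is the degree-two component of $U$, the two leading powers in $Z_k(E)$ are
$$Z_k(E) = \rho_n k^n\,\rk(E)\,V + \rho_{n-1} k^{n-1}\bigl(\deg E + \rk(E)\,\alpha\bigr) + O(k^{n-2}),$$
and the same expansion holds for $Z_k(\calF)$ upon replacing $(\rk(E),\deg E)$ by $(\rk(\calF),\deg\calF)$.

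Next I would multiply out $Z_k(\calF)\overline{Z_k(E)}$, take imaginary parts, and collect terms by order in $k$. The $k^{2n}$ contribution is proportional to $\Im(|\rho_n|^2)=0$ and therefore vanishes, leaving the leading contribution at order $k^{2n-1}$. The key computation is that at this order the $\alpha$-dependent pieces cancel pairwise and the surviving coefficient is a positive real multiple of $\Im(\rho_n\overline{\rho_{n-1}})\cdot(\mu(E)-\mu(\calF))$; explicitly, the coefficient is $V\,\Im(\rho_n\overline{\rho_{n-1}})\,\rk(E)\rk(\calF)\bigl(\mu(E)-\mu(\calF)\bigr)$.

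To conclude, I would invoke the stability-vector axiom $\Im(\rho_{n-1}/\rho_n)>0$, which is equivalent to $\Im(\rho_n\overline{\rho_{n-1}})<0$. This fixes the leading sign of $\Im(Z_k(\calF)\overline{Z_k(E)})$ to match the sign of $\mu(\calF)-\mu(E)$, so the asymptotic inequality $\Im(Z_k(\calF)\overline{Z_k(E)})<0$ for $k\gg 0$ immediately forces $\mu(\calF)\leq\mu(E)$. Torsion subsheaves do not require separate treatment since any subsheaf of a vector bundle is torsion-free, hence of positive rank. The main (mild) obstacle is verifying the $\alpha$-cancellation at order $k^{2n-1}$: this is a short but essential algebraic check which ensures the comparison depends on the genuine slope $\mu$ rather than a shifted version involving the unipotent twist $U$.
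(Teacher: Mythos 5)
Your proof is correct and follows essentially the same route as the paper: both reduce asymptotic $Z$-stability to the sign of the imaginary part (you clear denominators and expand $Z_k(\calF)\overline{Z_k(E)}$ where the paper expands the quotient $Z_k(\calF)/Z_k(E)$, which is equivalent), extract the order-$k^{2n-1}$ coefficient, observe that the $U$-twist terms cancel so only $\rk E\,\deg\calF-\rk\calF\,\deg E$ survives, and conclude via the stability-vector condition $\Im(\rho_{n-1}/\rho_n)>0$. No gaps.
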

\begin{proof}
	By \cref{lem:equivalentstabilitycondition}, if $\calF$ is any proper, non-zero coherent subsheaf (which must therefore satisfy $Z_k(\calF) \ne 0$ for $k\gg 0$ by the definition of asymptotic $Z$-stability), we have
	\begin{equation}\label{eq:imsuchandsuch}\Im\left(\frac{Z_k(\calF)}{Z_k(E)}\right) < 0.\end{equation}
	Consider the expansion
	\begin{equation}Z_k(E) = \rho_n k^n [\omega]^n \rk(E) + \rho_{n-1} k^{n-1} [\omega]^{n-1}.(c_1(E) + \rk(E) U_2) + O(k^{n-2})\label{eq:Zkexpansion}\end{equation}
	where $U_2$ denotes the degree two component of the class $U\in H^*(X, \RR).$ For simplicity let us write 
	\begin{equation}\label{degU}\deg_U (E) := [\omega]^{n-1}.(c_1(E) + \rk(E) U_2).\end{equation}
	Then dividing we see
	$$\frac{Z_k(\calF)}{Z_k(E)} = \frac{\rk(\calF)}{\rk(E)} + k^{-1} \frac{\rho_{n-1}}{\rho_n} \left( \frac{\deg_U \calF \rk E - \deg_U E \rk \calF}{([\omega]^n \rk E)^2}\right) + O(k^{-2}).$$
	Since $\rho$ is a stability vector we have
	$$\Im \left(\frac{\rho_{n-1}}{\rho_n}\right) > 0.$$
	As \eqref{eq:imsuchandsuch} holds for all $k$, the leading order term of its expansion in $k$ is non-positive. Thus by taking imaginary parts, this inequality produces
	$$\deg_U \calF \rk E - \deg_U E \rk \calF \leq 0$$
	which is equivalent to asking
	$$\frac{\deg_U \calF}{\rk \calF} \leq \frac{\deg_U E}{\rk E}.$$
	Since $\deg_U (E) = \deg E + \rk E [\omega]^{n-1}.U_2$, this can be further simplified to the usual slope inequality
	$$\frac{\deg \calF}{\rk \calF} \leq \frac{\deg E}{\rk E}.$$ \end{proof}

\begin{remark}\label{Ucondition}The condition that $U$ (or rather its degree two component) is a \emph{real} operator is used in the proof above; in general, for $U$ complex, one needs to impose a further topological hypothesis on the imaginary part of its degree two component, which seems slightly unnatural. For this reason we included $U$ being real in the definition of a central charge. This includes the examples of most interest which we discuss in the next section.\end{remark}

The following shows that the considerations of \cref{rmk:ze0} are not important in the large volume regime.

\begin{corollary}\label{non-trivialphase} For an arbitrary torsion-free coherent sheaf $\calE$ and polynomial central charge $Z_k$, the value $Z_k(\calE) \neq 0$ does not vanish for $k \gg0$.
\end{corollary}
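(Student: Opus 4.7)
The plan is to exploit the fact that $Z_k(\calE)$ is a polynomial in $k$, so it suffices to exhibit a non-zero leading-order coefficient. From the expansion \eqref{eq:Zkexpansion} already derived in the proof of \cref{lemma:slopesemistable}, we have
$$Z_k(\calE) = \rho_n\, k^n\, [\omega]^n\, \rk(\calE) + O(k^{n-1}).$$
The factor $\rho_n$ is non-zero by the definition of a stability vector, $[\omega]^n > 0$ since $\omega$ is a K\"ahler class, so the leading coefficient vanishes if and only if $\rk(\calE) = 0$.

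The key step is therefore to rule out $\rk(\calE) = 0$. For a non-zero coherent sheaf $\calE$ on a compact complex manifold $X$ of positive dimension, rank zero means that $\calE$ is a torsion sheaf supported on a proper analytic subset. By hypothesis $\calE$ is torsion-free, so $\rk(\calE) = 0$ forces $\calE = 0$ (in which case the statement is vacuous, and we may as well assume $\calE \neq 0$ throughout). Hence $\rk(\calE) \geq 1$ and the leading coefficient $\rho_n [\omega]^n \rk(\calE)$ is genuinely non-zero.

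Finally, $Z_k(\calE)$ being a non-zero polynomial in $k$ has only finitely many zeros as a function of $k \in \RR$, so $Z_k(\calE) \neq 0$ for all $k$ sufficiently large. There is no real obstacle here; the content of the corollary is just the observation that torsion-freeness ensures the top-degree term of the central charge, which depends only on the numerical data $\rk(\calE)$ and $[\omega]^n$, cannot be killed.
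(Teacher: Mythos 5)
Your argument is correct and is essentially the paper's (implicit) proof: the corollary follows immediately from the expansion \eqref{eq:Zkexpansion}, whose leading coefficient $\rho_n[\omega]^n\rk(\calE)$ is non-zero because torsion-freeness of a non-zero sheaf forces $\rk(\calE)>0$, and $\rho_n\neq 0$, $[\omega]^n>0$. Your handling of the degenerate rank-zero case (which only occurs for the zero sheaf) is a reasonable reading of the statement, so there is nothing to add.
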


Let us also note explicitly the following, which is the direct analogy of \cref{lem:gieskerimplications} in the case of asymptotic $Z$-stability.

\begin{corollary}\label{cor:stabilityimpliesazs}
	If $E\to X$ is slope stable, then it is asymptotically $Z$-stable for any $Z$. In particular there are implications
	
	\begin{center} Slope stable $\implies$ asymptotic $Z$-stability $\implies$ Slope semistable.\end{center}
\end{corollary}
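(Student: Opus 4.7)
The second implication is precisely Lemma~\ref{lemma:slopesemistable}, so what remains is to show that slope stability forces asymptotic $Z$-stability. My plan is to reuse the $k$-expansion already set up in the proof of that lemma and to observe that slope stability simply upgrades its leading inequality from non-strict to strict.

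First I would fix any proper non-zero coherent subsheaf $\calF\subset E$. By Corollary~\ref{non-trivialphase}, $Z_k(\calF)\ne 0$ for $k\gg 0$, and by Lemma~\ref{lem:equivalentstabilitycondition} the phase inequality $\varphi_k(\calF)<\varphi_k(E)$ is then equivalent to $\Im(Z_k(\calF)/Z_k(E))<0$. Dividing the expansions \eqref{eq:Zkexpansion} of $Z_k(\calF)$ and $Z_k(E)$ exactly as in the proof of Lemma~\ref{lemma:slopesemistable} gives
\[
\Im\!\left(\frac{Z_k(\calF)}{Z_k(E)}\right)=\frac{1}{k}\,\frac{\Im(\rho_{n-1}/\rho_n)}{([\omega]^n\,\rk E)^{2}}\bigl(\deg\calF\cdot\rk E-\deg E\cdot\rk\calF\bigr)+O(k^{-2}),
\]
where the $U_2$ contributions to $\deg_U$ cancel in the numerator, being proportional to $\rk\calF\cdot\rk E$ on both terms. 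The stability-vector condition forces $\Im(\rho_{n-1}/\rho_n)>0$, so the sign of this leading coefficient is exactly that of $\mu(\calF)-\mu(E)$. Whenever $\rk\calF<\rk E$, slope stability makes it strictly negative, so $\Im(Z_k(\calF)/Z_k(E))<0$ for all $k$ large, which is the required asymptotic $Z$-stability inequality.

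The only delicate case is $\rk\calF=\rk E$, where the see-saw lemma (Lemma~\ref{lem:seesawslopestability}) only gives $\mu(\calF)\le\mu(E)$, with equality precisely when $E/\calF$ is torsion supported in codimension $\ge 2$. If the slope inequality is strict the argument above applies verbatim; if it is an equality the $k^{-1}$ coefficient vanishes and one has to proceed to order $k^{-2}$. I expect this to be the only genuine obstacle: either one agrees (as is standard in discussions of slope stability) to compare subsheaves modulo codimension-two identifications, so that such $\calF$ is not destabilising, or one must check by direct computation that $\Im(\rho_{n-2}/\rho_n)\,[\omega]^{n-2}\cdot\Ch_2(E/\calF)$ has the correct sign. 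Apart from this subcase, the whole corollary is immediate from the expansion already performed for Lemma~\ref{lemma:slopesemistable}.
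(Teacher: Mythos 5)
The main line of your argument is exactly the one the paper intends: the corollary is recorded without a separate proof precisely because it is meant to follow from the expansion \eqref{eq:Zkexpansion} already used in \cref{lemma:slopesemistable}, with the non-strict leading inequality upgraded to a strict one. Your observation that the $U_2$-contributions cancel, so that the order-$k^{-1}$ coefficient of $\Im(Z_k(\calF)/Z_k(E))$ is a positive multiple of $\Im(\rho_{n-1}/\rho_n)\,(\deg\calF\,\rk E-\deg E\,\rk\calF)$, is correct, and together with \cref{non-trivialphase} and \cref{lem:equivalentstabilitycondition} this disposes of every subsheaf with $\rk\calF<\rk E$, as well as of full-rank subsheaves whose quotient is supported in codimension one (where the degree drops strictly by \cref{lem:seesawslopestability}).

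The case you flag as delicate, $\rk\calF=\rk E$ with $T=E/\calF$ supported in codimension $c\ge 2$, is a genuine issue, but the second escape route you offer cannot work. By additivity, $\Im(Z_k(\calF)/Z_k(E))=-\Im(Z_k(T)/Z_k(E))$, whose leading term is, up to a positive factor, $-k^{-c}\,\Im(\rho_{n-c}/\rho_n)\,\bigl([\omega]^{n-c}.\Ch_c(T)\bigr)$. The stability-vector axioms only constrain $\Im\rho_n$ and the \emph{consecutive} ratios $\rho_d/\rho_{d+1}$; they do not force $\Im(\rho_{n-c}/\rho_n)>0$ for $c\ge 2$, since the successive arguments may rotate through a total angle exceeding $\pi$ (e.g.\ on a surface take $\rho_2=i$, $\rho_1=-1$, $\rho_0=-i+\epsilon$, a legitimate stability vector with $\Im(\rho_0/\rho_2)<0$). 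Worse, even for $Z_{\mathrm{dHYM}}$ with trivial B-field on a surface, taking $E=\calO_X$ (slope stable) and $\calF=I_p$ the ideal sheaf of a point gives $Z_k(\calO_p)=-1$ and $Z_k(\calO_X)$ a positive real multiple of the same phase, so $\varphi_k(I_p)=\varphi_k(\calO_X)$ and the strict inequality simply fails. So the statement is only tenable under the first convention you mention: asymptotic $Z$-stability should be tested against saturated subsheaves (equivalently, subsheaves with $0<\rk\calF<\rk E$), quotients supported in codimension $\ge 2$ being disregarded exactly as in slope stability. This is also consistent with how the notion is used in the rest of the paper, where the correspondence in \cref{ch:correspondence} only ever tests subbundles. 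You should commit to that restriction explicitly rather than leaving both options open, since the ``direct computation'' option is not available.
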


Let us also observe that asymptotic $Z$-stability satisfies the see-saw property just as slope stability does in \cref{lem:seesawslopestability}.

\begin{lemma}\label{lem:seesawazs}
	Consider a short exact sequence of coherent sheaves
	\begin{center}
		\ses{\calS}{\calE}{\calQ}
	\end{center}
	with $\rk \calE > 0$. Then
	$$\mu_{Z_k}(\calS) \le \mu_{Z_k}(\calE) \text{ for all $k\gg 0$}\quad \iff \quad \mu_{Z_k}(\calE) \le \mu_{Z_k}(\calQ)\text{ for all $k\gg 0$}$$
	and 
	$$\mu_{Z_k}(\calS) \ge \mu_{Z_k}(\calE) \text{ for all $k\gg 0$}\quad \iff \quad \mu_{Z_k}(\calE) \ge \mu_{Z_k}(\calQ)\text{ for all $k\gg 0$}.$$
	Furthermore:
	\begin{itemize}
		\item If $Z_k(\calS) \ne 0$ and $Z_k(\calQ) \ne 0$ then
		$$\mu_{Z_k}(\calS) < \mu_{Z_k}(\calE) \text{ for all $k\gg 0$}\quad \iff \quad \mu_{Z_k}(\calE) < \mu_{Z_k}(\calQ)\text{ for all $k\gg 0$}$$
		and 
		$$\mu_{Z_k}(\calS) > \mu_{Z_k}(\calE) \text{ for all $k\gg 0$}\quad \iff \quad \mu_{Z_k}(\calE) > \mu_{Z_k}(\calQ)\text{ for all $k\gg 0$}.$$
		\item If $Z_k(\calS) = 0$ then $\mu_{Z_k}(\calS) > \mu_{Z_k}(\calE)$ and $\mu_{Z_k}(\calE) = \mu_{Z_k}(\calQ).$
		\item If $Z_k(\calQ) = 0$ then $\mu_{Z_k}(\calS) = \mu_{Z_k}(\calE)$ and $\mu_{Z_k}(\calE) < \mu_{Z_k}(\calQ).$
	\end{itemize}
	The same conclusions hold if we replace the $Z$-slope by the phase $\varphi_k$ by \cref{lem:equivalentstabilitycondition}.
\end{lemma}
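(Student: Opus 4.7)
The plan is to reduce the entire statement to additivity of the central charge in short exact sequences together with a small amount of elementary complex arithmetic, exactly mirroring the classical slope see-saw of \cref{lem:seesawslopestability}. Since the Chern character is additive on short exact sequences of coherent sheaves and $Z_k$ is linear in $\Ch$, one has
$$Z_k(\calE) = Z_k(\calS) + Z_k(\calQ);$$
this single identity will do essentially all the work.

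First I would dispose of the half-plane hypothesis needed to invoke \cref{lem:equivalentstabilitycondition}. By the normalisation $\Im(\rho_n)>0$ built into a stability vector, the leading order expansion \eqref{eq:Zkexpansion} shows that for $k\gg 0$ any coherent sheaf of positive rank has $Z_k$ in the open upper half plane; so for $k$ sufficiently large the three values $Z_k(\calS), Z_k(\calE), Z_k(\calQ)$ each either vanish or lie in the upper half plane, and phase inequalities, slope inequalities and imaginary-part inequalities for ratios all translate into each other in the generic case.

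The central calculation is then immediate. Assuming $Z_k(\calE) \ne 0$ (which holds for $k \gg 0$ by \cref{non-trivialphase}) we may divide $Z_k(\calE)=Z_k(\calS)+Z_k(\calQ)$ by $Z_k(\calE)$ to obtain
$$\Im\!\left(\frac{Z_k(\calS)}{Z_k(\calE)}\right) + \Im\!\left(\frac{Z_k(\calQ)}{Z_k(\calE)}\right) = 0,$$
so the two imaginary parts have opposite signs. Combining this with the elementary observation that $\Im(w)$ and $\Im(1/w)$ have opposite signs for any $w\in \CC^*$ yields the chain of equivalences
$$\mu_{Z_k}(\calS)\le \mu_{Z_k}(\calE)\ \iff\ \Im\!\left(\tfrac{Z_k(\calS)}{Z_k(\calE)}\right)\le 0\ \iff\ \Im\!\left(\tfrac{Z_k(\calE)}{Z_k(\calQ)}\right)\le 0\ \iff\ \mu_{Z_k}(\calE)\le \mu_{Z_k}(\calQ),$$
with the same argument giving the reversed inequalities. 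The strict version goes through identically provided both $Z_k(\calS)$ and $Z_k(\calQ)$ are non-zero, since exactly then are the M\"obius rearrangements above genuine equivalences rather than implications. The phase formulation is then read off from \cref{lem:equivalentstabilitycondition}.

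Finally, the degenerate cases with $Z_k(\calS)=0$ or $Z_k(\calQ)=0$ will be handled directly from $Z_k(\calE)=Z_k(\calS)+Z_k(\calQ)$ together with the conventions $\mu_{Z_k}=+\infty$ and $\varphi_k=\pi$ at the zero. If $Z_k(\calS)=0$ then $Z_k(\calE)=Z_k(\calQ)$, forcing $\mu_{Z_k}(\calE)=\mu_{Z_k}(\calQ)$ while $\mu_{Z_k}(\calS)=+\infty>\mu_{Z_k}(\calE)$; the case $Z_k(\calQ)=0$ is symmetric. I do not anticipate any real obstacle: the entire argument is a short M\"obius computation anchored on additivity, and the only subtlety is taking $k$ large enough to justify the upper-half-plane assumption, which is a one-line consequence of \eqref{eq:Zkexpansion}.
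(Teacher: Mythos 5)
Your route is essentially the paper's own: everything is reduced to the additivity $Z_k(\calE)=Z_k(\calS)+Z_k(\calQ)$ plus elementary manipulation of real and imaginary parts, which you package as sign statements about $\Im$ of ratios while the paper expands $\Im Z_k(\calS)(\mu_{Z_k}(\calS)-\mu_{Z_k}(\calE))$ directly; your treatment of the degenerate cases $Z_k(\calS)=0$ and $Z_k(\calQ)=0$ also matches. The genuine gap is the step where you claim to ``dispose of the half-plane hypothesis''. The expansion \eqref{eq:Zkexpansion} puts $Z_k$ in the upper half-plane for $k\gg0$ only for sheaves of \emph{positive rank}, whereas in this lemma $\calS$ or $\calQ$ is allowed to be torsion --- indeed the extra bullet points exist precisely to mirror the torsion cases of \cref{lem:seesawslopestability}. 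For a torsion sheaf supported in codimension $c$ the leading term of $Z_k$ is $\rho_{n-c}k^{n-c}[\omega]^{n-c}.\Ch_c$, and the stability-vector axioms do not force $\Im\rho_{n-c}>0$: on a surface one may take $\rho=(1-i,-1-i,i)$, and then $Z_k(\calO_D)$ lies in the lower half-plane for all large $k$. So ``each either vanishes or lies in the upper half plane'' is a non sequitur, and it is exactly what your chain needs: the translation $\mu_{Z_k}(\calE)\le\mu_{Z_k}(\calQ)\iff\Im\bigl(Z_k(\calE)/Z_k(\calQ)\bigr)\le0$ comes from multiplying the slope inequality by $\Im Z_k(\calE)\,\Im Z_k(\calQ)$, and when $\Im Z_k(\calQ)<0$ this flips the inequality and the chain breaks. (You also pass silently over the boundary cases $\Im Z_k(\calS)=0$ or $\Im Z_k(\calQ)=0$ with nonzero real part, i.e.\ slope $\pm\infty$; those do check out, but they are not covered by the displayed M\"obius chain as written.)

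By contrast, the paper's proof never asserts a global half-plane statement: it argues by cases on the sign data --- $\Im Z_k(\calS)>0$, or $\Im Z_k(\calS)=0$ together with the sign of $\Re Z_k(\calS)$, and symmetrically on the $\calQ$ side --- using only $\Im Z_k(\calE)>0$, which genuinely follows from $\rk\calE>0$. Even there, only the configurations with $\Im Z_k(\calS),\Im Z_k(\calQ)\ge0$ are treated, which is the regime in which the lemma is later applied (in \cref{lem:azsimpliessimple} and in \cref{ch:correspondence} all sheaves involved are torsion-free of positive rank). To repair your write-up, either restrict the ratio argument explicitly to that regime, or run the case analysis on the signs of $\Im Z_k(\calS)$ and $\Im Z_k(\calQ)$ as the paper does, rather than deriving half-plane containment from \eqref{eq:Zkexpansion}, which only applies to the positive-rank member $\calE$.
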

\begin{proof}
	Since $Z$ is additive in short exact sequences, we have
	$$Z_k(\calE) = Z_k(\calS) + Z_k(\calQ)$$
	for all $k\gg 0$. Therefore $\Re Z$ and $\Im Z$ are also additive in short exact sequences. 
	
	Since $\rk \calE > 0$, we have by the expansion \cref{eq:Zkexpansion} that $\Im Z_{k}(E) > 0$ for all $k\gg 0$. Suppose then that $\mu_{Z_k}(\calS) < \mu_{Z_k}(\calE)$ for all $k\gg 0$. If this occurs either $\Im Z_k(\calS) > 0$ for all $k\gg 0$ or $\Im Z_k(\calS) = 0$ and $\Re Z_k(\calS) > 0$ for all $k\gg 0$, (in the latter case if $\Re Z_k(\calS) < 0$ then $\mu_{Z_k}(\calS) = +\infty$ but $\mu_{Z_k}(\calE)$ is finite). If $\Im Z_k(\calS) > 0$ then 
	\begin{align*}
		0 &> \Im Z_k(\calS) (\mu_{Z_k} (\calS) - \mu_{Z_k}(\calE))\\
		&= -\Re Z_k (\calS) + \frac{\Im Z_k(\calS) \Re Z_k(\calE)}{\Im Z_k(\calE)}\\
		&= \Re Z_k(\calQ) - \Re Z_k(\calE) + \frac{(\Im Z_k(\calE) - \Im Z_k(\calQ)) \Re Z_k(\calE)}{\Im Z_k(\calE)}\\
		&= \Re Z_k(\calQ) - \Im Z_k(\calQ) \mu_{Z_k}(\calE).
	\end{align*}
	If $\Im Z_k(\calQ) > 0$ then we conclude $\mu_{Z_k}(\calE) < \mu_{Z_k}(\calQ)$. If $\Im Z_k(\calQ) = 0$ then we conclude $\Re Z_k(\calQ) < 0$ so $\mu_{Z_k}(\calQ) = +\infty$ and again $\mu_{Z_k}(\calE) < \mu_{Z_k}(\calQ)$. 
	Now consider the case where $\Im Z_k(\calS) = 0$ and $\Re Z_k(\calS) > 0$ for all $k \gg 0$. Then we must have $\Im Z_k(\calQ) = \Im Z_k(\calE) > 0$ for all $k\gg 0$. Therefore 
	$$\Im Z_k(\calQ) (\mu_{Z_k}(\calQ) - \mu_{Z_k}(\calE)) = \Re Z_k(\calS) > 0$$
	from which we conclude $\mu_{Z_k}(\calE) < \mu_{Z_k}(\calQ)$ for all $k\gg 0$. We can repeat the same argument in the reverse direction to conclude $\mu_{Z_k}(\calE) < \mu_{Z_k}(\calQ)$ for all $k\gg 0$ implies $\mu_{Z_k}(\calS) < \mu_{Z_k}(\calE)$ for all $k\gg 0$ also.
	
	On the other hand, repeating the same argument with inequalities reversed we obtain the analogous statement that $\mu_{Z_k}(\calS) > \mu_{Z_k}(\calE)$  for all $k \gg 0$ if and only if $\mu_{Z_k}(\calE) > \mu_{Z_k}(\calQ)$  for all $k \gg 0$.
	
%	
%	Now suppose on the other hand $\mu_{Z_k}(\calS) > \mu_{Z_k}(\calE)$. Then either $\Im Z_k(\calS) > 0$ or $\Im Z_k(\calS) = 0$ and $\Re Z_k(\calS) < 0$. In the first case we have
%	\begin{align*}
%		0 &< \Im Z_k(\calS) (\mu_{Z_k} (\calS) - \mu_{Z_k}(\calE))\\
%		&= \Re Z_k(\calQ) - \Im Z_k(\calQ) \mu_{Z_k}(\calE).
%	\end{align*}
%	Then if $\Im Z_k(\calQ) > 0$ we have $\mu_{Z_k}(\calE) > \mu_{Z_k}(\calQ)$. If $\Im Z_k(\calQ) = 0$ then we conclude $\Re Z_k(\calQ) > 0$ in which case $\mu_{Z_k}(\calQ)=-\infty$ and similarly the inequality holds. Therefore consider the case where $\Im Z_k(\calS) = 0$ and $\Re Z_k(\calS) < 0$. Then we must have $\Im Z_k(\calQ) = \Im Z_k(\calE) > 0$ for all $k\gg 0$. Now we compute
%	$$\Im Z_k(\calQ) (\mu_{Z_k}(\calQ) - \mu_{Z_k}(\calE)) = \Re Z_k(\calS) < 0$$
%	and therefore $\mu_{Z_k}(\calQ) - \mu_{Z_k}(\calE) < 0$ as desired.

	Now suppose $\mu_{Z_k}(\calS) = \mu_{Z_k}(\calE)$ for all $k\gg 0$. Since $\Im Z_k(\calE) > 0$ and $\mu_{Z_k}(\calE) < \infty$ we must have $\Im Z_k(\calS) > 0$. We have
	$$\Im Z_k(\calE) = \Im Z_k(\calS) \frac{\Re Z_k(\calE)}{\Re Z_k(\calS)} > 0.$$
	Now we compute
	$$0 = \Im Z_k(\calS) (\mu_{Z_k}(\calS) - \mu_{Z_k}(\calE)) = -\Re Z_k(\calQ) - \Im Z_k(\calQ) \mu_{Z_k}(\calE).$$
	From this we conclude either $\mu_{Z_k}(\calQ) = \mu_{Z_k}(\calE)$ or $Z_k(\calQ) = 0$ for all $k \gg 0$, in which case $\mu_{Z_k}(\calE) < \mu_{Z_k}(\calQ) = +\infty$. The same argument resolves the case $\mu_{Z_k}(\calE) = \mu_{Z_k}(\calQ)$. 
\end{proof}

In particular just as for slope stability we obtain the simplicity of an asymptotically $Z$-stable sheaf, which will be critical in our proof that stability implies existence of a $Z$-critical metric in \cref{ch:correspondence}.

\begin{lemma}\label{lem:azsimpliessimple}
	An asymptotically $Z$-stable vector bundle $E$ is simple.
\end{lemma}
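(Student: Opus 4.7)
The plan is to mimic the classical argument that a slope stable sheaf is simple (second part of \cref{prop:stabilitysimplicity}), replacing slope inequalities with the see-saw property for asymptotic $Z$-stability (\cref{lem:seesawazs}). The subtlety is that asymptotic $Z$-stability only guarantees slope \emph{semi}stability by \cref{lemma:slopesemistable}, so one cannot reduce directly to the slope stable case; instead one must argue with the phase functions $\varphi_k$ directly and keep track of when $k$ must be taken large.

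First I would show that any non-zero endomorphism $u \colon E \to E$ is injective. Consider the short exact sequence
\begin{center}
\ses{\ker u}{E}{\image u}
\end{center}
where $\image u \neq 0$ because $u \neq 0$. If $\image u$ is a proper non-zero subsheaf of $E$, asymptotic $Z$-stability yields $\varphi_k(\image u) < \varphi_k(E)$ for all $k \gg 0$. If in addition $\rk(\ker u) > 0$, then $\ker u$ is also a proper non-zero coherent subsheaf of $E$, so $\varphi_k(\ker u) < \varphi_k(E)$ for $k \gg 0$. The strict form of the see-saw property in \cref{lem:seesawazs} (applicable because $0 < \rk(\ker u) < \rk E$, and automatically $Z_k(\ker u),Z_k(\image u)\ne 0$ for $k\gg 0$ by \cref{non-trivialphase}) would then force $\varphi_k(E) < \varphi_k(\image u)$ for $k \gg 0$, contradicting the previous inequality. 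The remaining case is $\image u = E$, where $\rk(\image u) = \rk E$ forces $\rk(\ker u) = 0$. Since $E$ is torsion-free, $\ker u = 0$ in both situations.

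Next I would use a standard trick to conclude that $u$ is a scalar multiple of the identity. Pick any point $x \in X$ and let $\lambda \in \CC$ be an eigenvalue of $u_x \colon E_x \to E_x$. The pointwise determinant $\det(u - \lambda \id_E)$ defines a global holomorphic function on the compact connected complex manifold $X$, hence is constant; since it vanishes at $x$, it vanishes identically. Consequently $(u - \lambda \id_E)_y$ is singular for every $y \in X$, so the coherent sheaf $\ker(u - \lambda \id_E)$ has positive generic rank and is in particular non-zero. Applying the injectivity statement of the first step to the endomorphism $u - \lambda \id_E$, we conclude $u - \lambda \id_E$ must itself be zero, i.e.\ $u = \lambda \id_E$. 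Since the constant endomorphisms are always global sections, we obtain $H^0(X,\End E) = \CC \cdot \id_E$, as required.

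The main obstacle is the first step: unlike the slope stable case, we cannot invoke \cref{prop:stabilitysimplicity} directly, and the see-saw argument must be carried out carefully, handling both the subsheaf $\ker u$ and the quotient $\image u$ simultaneously, with attention to the rank conditions (especially the possibility $\image u = E$) so that the strict inequalities in \cref{lem:seesawazs} genuinely apply. Once injectivity of non-zero endomorphisms is established, the eigenvalue argument is identical to the classical one.
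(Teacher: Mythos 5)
Your proof is correct and follows essentially the same route as the paper: the key step — applying asymptotic $Z$-stability to both $\ker u$ and $\image u$ and deriving a contradiction via the see-saw property of \cref{lem:seesawazs} — is exactly the paper's argument (with a slightly more explicit case analysis). The only difference is the endgame: you use the eigenvalue/vanishing-determinant trick, whereas the paper upgrades injectivity to an isomorphism via $\det E$ and concludes that $H^0(X,\End E)$ is a finite-dimensional division algebra over $\CC$; both are standard and equivalent.
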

\begin{proof}
	This only uses the see-saw property, and so follows from the standard argument for slope stable or Gieseker stable sheaves \cite[Cor. 1.2.8]{huybrechts-lehn}. Let $u: E \to E$ be a non-zero sheaf endomorphism and suppose $\ker u \ne 0$. Then by asymptotic $Z$-stability since $\ker u \into E$ we have
	$$\varphi_k(\ker u) < \varphi_k(E)$$
	for all $k\gg 0$. By the see-saw property \cref{lem:seesawazs} this gives
	$$\varphi_k(E) < \varphi_k(E/\ker u) = \varphi_k(\image u)$$
	which contradicts asymptotic $Z$-stability with respect to the subsheaf $\image u \into E$. Thus $\ker u = 0$. Thus $u$ is an injective morphism from $E$ to $E$, and therefore induces an injective morphism $u: \det E \to \det E$, which is therefore an isomorphism (since it is a non-zero map of line bundles of the same rank and $c_1$). Thus $u$ is an isomorphism too.
	
	It is then a standard result that $H^0(X,\End E) = \CC$. Indeed this vector space is a finite-dimensional division algebra over $\CC$, and thus must be isomorphism to $\CC$.
\end{proof}

\subsection{Examples}

Here we give several important examples of central charges we will be interested in. For the purposes of stating the central charges in the same form that has appeared in the literature, we will ignore the normalisation condition $\Im(\rho_n)>0$ on the stability vector for now.

\begin{example}[dHYM central charge]\label{ex:dhymcentralcharge}
	Fix a class $B\in H^{1,1}(X,\RR)$ (the class of a ``B-field"). Let $(X,\omega)$ be a compact K\"ahler manifold of dimension $n$. Define data
	$$\rho_d = -\frac{(-i)^d}{d!},\quad U=e^{-B}.$$
	Then using the given K\"ahler form $\omega$ on $X$ as the K\"ahler class, one obtains a central charge
	$$Z_{\mathrm{dHYM}}(E) = -\int_X e^{-i[\omega]-B} \Ch(E),$$
	which we call the \emph{deformed Hermitian Yang--Mills central charge}. 
\end{example}

\begin{example}[String theory central charge]\label{ex:stringtheorycentralcharge}
	Fix $\rho_d$ as in the above example, but now set $U=e^{-B} \sqrt{\Td(X)}.$ The resulting central charge
	$$Z_{\Td}(E) = -\int_X e^{-i[\omega]-B} \Ch(E) \sqrt{\Td(X)}$$
	is the one most relevant for considerations in string theory. We will also be interested in the variant with $U=e^{-B} \sqrt{\hat A(X)}$, which we denote $Z_{\hat A}(E)$. 
\end{example}

Let us now discuss a detailed example where we can easily understand asymptotic $Z$-stability for the above central charges. In order to be interesting, such an example should consist of a strictly slope-semistable bundle which is not slope polystable, and such an example was considered by Maruyama.

\begin{example}\label{ex:zcritical}
	We consider a simple rank three bundle on $\CCPP^2$ considered by Maruyama in the context of Gieseker stability, and we follow its presentation in \cite[p.96]{okonekschneiderspindler80}. Thus let $F$ be a slope stable vector bundle of rank $2$ on $\CCPP^2$ with $c_1(F) =0$ and $H^1 (\CCPP^2, F ) \neq 0$. Given $\tau \in H^1 (\CCPP^2, F )  \setminus \{ 0 \}$, define an extension 
	$$ 
	0 \to F \to E \to \calO_{\CCPP^2} \to 0
	$$
	using $\tau.$ The Chern characters of $E$ are then given by $$c_1 (E) = 0 \qquad c_2(E) = c_2(F).$$ Note by the Bogomolov inequality that $c_2 (F) \geq 0$, since $F$ is a slope stable bundle with vanishing first Chern class. The bundle $E$ is then not slope stable, since $\mu(F) = \mu(E)$ and hence destabilises $E$, but $E$ is in fact Gieseker stable. It follows that $E$ is simple and slope semistable, with
	$$0 \subset F \subset E	$$ 
	giving a Jordan--H\"older filtration of $E$, meaning $E$ has associated graded object 
	$$\Gr (E) = F \oplus \calO_{\CCPP^2}.$$
	Since this graded object is locally free, the assumptions of \cref{thm:maintheoremZstability} apply. First, consider the dHYM central charge with B-field, given by
	$$
	Z_{\mathrm{dHYM}} (E) = - \int_{\CCPP^2} e^{- i k [\omega]} \ch (E) e^{-B},
	$$
	for some B-field $B\in H^2(\CCPP^2, \RR).$ Then since $c_1(E)=0$, we have $\ch(E) = \rk(E) - c_2(E)$. Let us denote $$\sigma := \int_{\CCPP^2} c_2(E) = \int_{\CCPP^2} c_2(F) \ge 0.$$ 
	Denoting $h=c_1(\calO(1)) = [\omega]$, one can compute
	$$Z_{\mathrm{dHYM}}(E) = \sigma + \frac{\rk(E)}{2} k^2 - ik\rk(E) B.h - \frac{\rk{E}}{2} B^2,$$
	and so the imaginary part of $Z(F)\big/Z(E)$ is a positive constant multiple of 
	\begin{align*}
		&(\sigma + \frac{\rk(E)}{2} (k^2-B^2) ) \cdot (-k\rk(F) B.h) - (\sigma + \frac{\rk(E)}{2} (k^2-B^2) ) \cdot (-k\rk(E) B.h)\\
		&= k \sigma B.h (\rk(E)-\rk(F)).
	\end{align*}
	
	When $\sigma>0$ is positive,  since $\rk(E) - \rk(F) >0$, we have $\varphi_k(F) < \varphi_k(E)$ whenever the B-field is chosen such that $B.h<0$. Thus with an appropriately chosen $B$-field, \cref{thm:maintheoremZstability} implies $E$ admits deformed Hermitian Yang-Mills connections for all $k \gg 0$; note that this requires the $B$-field to be non-trivial.  In the remaining case $B.h\ge 0$ or $\sigma=0$, which includes the case of trivial $B$-field, we have $\varphi_k(F)\ge \varphi_k(E)$, which implies $E$ is not asymptotically $Z_{\mathrm{dHYM}}$-stable, and therefore by \cref{thm:maintheoremZstability} $E$ cannot admit deformed Hermitian Yang--Mills connections for $k \gg 0$. Note that we have only used the fact that $c_2(E)=c_2(F)$ in this calculation. Considering the same question for the dual $E^*$, we have that $c_2(E)=c_2(E^*)$, but the subbundle $F\subset E$ induces a quotient $E^* \to F^*$, and so the opposite conclusion will hold for $E^*$. That is, $E^*$ fails to be asymptotically $Z_{\mathrm{dHYM}}$-stable whenever $\sigma B.h\le 0$ and is stable otherwise. 
	
	Indeed one may actually compute that if $E$ admits a $Z_{\mathrm{dHYM},B}$-critical metric for some $B$ with $B.h<0$ then the Chern connection for the induced Hermitian structure on $E^*$ satisfies the $Z_{\mathrm{dHYM},-B}$-critical equation, which combined with \cref{thm:maintheoremZstability} recovers the above algebro-geometric observation. It is interesting to interpret the above duality by noting that $Z_{\mathrm{dHYM},-B}$ is the ``dual polynomial stability condition" to $Z_{\mathrm{dHYM},B}$ in the sense of Bayer \cite[\S 3.3]{bayer} provided we abusively ignore whether the heart of our bounded $t$-structure contains the bundle $E$. There it is shown that if $E$ is $Z_{\mathrm{dHYM},B}$-stable then $E^*$ is $Z_{\mathrm{dHYM},-B}$-stable.\footnote{Technically Bayer considers the dual object $\mathbb{D}(E)$ which in our case would be $E^* \otimes \omega_{\PP^2}$, and the dual stability condition involves transforming $e^{-B}$ to $\Ch(\omega_{\PP^2})^{-1} \cdot e^B$, so the numerics work out to align exactly with the above calculations in our example.}
	%
	%Denoting $h=c_1 ( \calO(1) ) = [\omega]$, we have
	%$$
	% e^{- i k [\omega]} = 1 - i k h - \frac{k^2 h^2}{2}
	%$$
	%and 
	%$$
	% \ch (E) = \rk (E) - c_2(E),
	%$$
	%since $c_1(E) =0$.  Thus
	%$$
	%Z_{\mathrm{dHYM}} (E) =- \frac{\rk(E) }{2} k^2 -  \int_{\CCPP^2} c_2(E) .
	%$$
	%It follows that for all $k$ $$\Ima (Z_{\mathrm{dHYM},k}(E) ) = \Ima (Z_{\mathrm{dHYM},k}(F) ) = 0,$$ since $c_1(F) =0$. In particular, $\varphi_k(E) = \varphi_k(F)$, and so $E$ is not asymptotically $Z$-stable for this central charge. The only fact we used was that $c_1(E) = c_1(F) =0$, and so the same conclusion holds for the dual bundle $E^*$. Thus by \cref{thm:intromainthm}, $E$ does not admit deformed Hermitian Yang--Mills connections for $k \gg 0$.
	
	Next, we consider the other central charge of relevance to string theory
	$$
	Z_{\Td} (E)= - \int_{\CCPP^2} e^{- i k [\omega]} \ch (E) \sqrt{\Td (\CCPP^2) } e^{-B};
	$$
	that is, where the unipotent operator $U=\sqrt{\Td (\CCPP^2) } e^{-B}$ contains both the B-field and the square root of the Todd class. The Todd class of $\CCPP^2$ is given by 
	$$
	\Td(\CCPP^2) = 1 + \frac{3}{2} h + h^2,
	$$
	and so 
	$$
	\sqrt{\Td (\CCPP^2) } = 1 + \frac{3}{4}h + \frac{7}{32}h^2.
	$$
	By the same computations above we obtain that the imaginary part of $\frac{Z_{\Td}(F)}{Z_{\Td}(E)}$ is a positive constant multiple of
	$$\frac{3k}{4} \sigma (\rk E - \rk F) \left(B.h - \frac{3}{4}\right).$$
	Thus $\varphi_k(F)<\varphi_k(E)$ holds whenever $B.h < \frac{3}{4}$, producing $Z_{\Td}$-critical connections, and in the case $B.h\ge \frac{3}{4}$ asymptotic $Z_{\Td}$-stability is violated and such connections cannot exist. As before, the opposite conclusions will hold when taking $E^*$ instead of $E$. 
	
	%\begin{align*}
	%&(\sigma - \frac{ 7 \cdot \rk (E)}{32}  + \frac{\rk (E)}{2} k^2 ) \cdot  \frac{3 \rk (F) }{4} \cdot k \\
	%&-(\sigma - \frac{ 7 \cdot \rk (F)}{32}  + \frac{\rk (F)}{2} k^2 ) \cdot  \frac{3 \rk (E) }{4}  \cdot k \\
	%%=&\frac{9 k \rk (E) \rk (F) }{16} \cdot ( \frac{1}{\rk(E)} - \frac{1}{\rk(F)} ) \cdot \sigma ,
	%=&\frac{3 k  }{4} \cdot ( \rk (F) - \rk (E) ) \cdot \sigma ,
	%\end{align*}

	As mentioned above, as a consequence of the proof of \cref{thm:maintheoremZstability} it suffices to check asymptotic $Z$-stability only with respect to $F$. Since $c_2(E) \geq 0$ always holds by the slope stability of $F$, we can summarise our findings as follows:
	%In summary:
	\begin{enumerate}
		\item If $c_2(E)>0$ then $E$ is asymptotically $Z_{\mathrm{dHYM}}$-stable whenever we have chosen a B-field with $B.h<0$, and asymptotically $Z_{\mathrm{dHYM}}$-unstable if $B.h\ge 0$. Therefore $E$ admits a deformed Hermitian Yang--Mills metric with B-field for $B.h<0$, but does not admit deformed Hermitian Yang--Mills connections when $B.h\ge 0$ in the large volume regime. In particular $E$ does not admit a deformed Hermitian Yang--Mills connection with vanishing B-field.
		\item The opposite conclusions as above hold when $E$ is replaced by $E^*$. That is, $E^*$ admits deformed Hermitian Yang--Mills connections  whenever the B-field is chosen such that $B.h>0$, and does not admit deformed Hermitian Yang--Mills connections when $B.h\le 0$. In particular neither $E$ or $E^*$ admit dHYM metrics with vanishing B-field.
		\item If $c_2(E)>0$ then $E$ is asymptotically $Z_{\Td}$-stable whenever $B.h<\frac{3}{4}$, and is unstable otherwise. Therefore $E$ admits a $Z_{\Td}$-critical connetions whenever $B.h<\frac{3}{4}$. In particular this includes the case with vanishing $B$-field. 
		\item The opposite conclusion as above holds for $E^*$. That is, $E^*$ admits a $Z_{\Td}$-critical metric whenever $B.h>\frac{3}{4}$. In particular this does \emph{not} include the case with vanishing B-field, in contrast to $E$.
		\item If $c_2(E)=0$ then $E$ is asymptotically $Z$-semistable with respect to either central charge.
		
		%\item If $c_2(E) <0$, $E$ is asymptotically $Z$-unstable and $E^*$ is asymptotically $Z$-stable. In particular, $E^*$ admits $Z_{\mathrm{Todd},k}$-critical connections for all sufficiently large $k$.
		%\item  If $c_2(E) > 0$, $E$ is asymptotically $Z$-stable and $E^*$ is asymptotically $Z$-unstable, with the metric consequence as above.
		%\item  If $c_2(E) > 0$, $E$ is asymptotically $Z$-stable, and in particular, $E$ admits $Z_{\mathrm{Todd},k}$-critical connections for all sufficiently large $k$. $E^*$ is then asymptotically $Z$-unstable.
		%\item If $c_2 (E) = 0$, $E$ and $E^*$ are both strictly asymptotically $Z$-semistable.
	\end{enumerate}
\end{example}

\begin{remark}
	The example above demonstrates a simple example of a wall-crossing phenomenon for $Z$-stability and the existence of $Z$-critical connections. In particular as the stability conditions $Z_{\mathrm{dHYM}}$ and $Z_{\Td}$ are varied by replacing $B$ with $t B$ and letting $t\in \RR$ vary, we see a jump from stability to instability across critical thresholds ($t=0$ and $t=\frac{3}{4}$ respectively). 
	
	One can observe similar wall-crossing phenomena if we choose to vary the stability vector for our stability conditions instead. Let us work just with $Z_{\mathrm{dHYM}}$. If we change our stability vector to $\rho = (\rho_0, \rho_1, -1)$ to obtain a new stability condition $Z$, having normalised $\rho_2=-1$ for convenience, then regardless of the B-field chosen we obtain $Z$-stablity whenever $\Im \rho_0 < 0$, and also when $\Im \rho_0 = 0$ and $B.h<0$. Instability follows whenever $\Im \rho_0 > 0$ or $\Im \rho_0 = 0$ and $B.h\ge 0$. Similar conclusions will follow for $Z_{\Td}$, and one can also replace $E$ by $E^*$ and flip the inequality $B.h<0$ to $B.h>0$. 
\end{remark}

\begin{remark}
	These examples are the first non-trivial solutions to the dHYM equation and $Z$-critical equation in higher rank. Here non-trivial means where $E\to X$ is not itself slope stable, all such bundles admitting solutions in the large volume limit by \cref{thm:stabilityimpliesexistencestable}, although even the existence in that case had not been previously observed.
\end{remark}

\section{$Z$-critical connections}

Let $Z$ be a polynomial central charge. In this section we will define a differential equation the existence of solutions to which formally aligns with $Z$-stability. To do so we must specify some differential-geometric representative data for the polynomial central charge as follows:

\begin{itemize}
	\item Let $\omega \in [\omega]$ be a fixed K\"ahler form in the K\"ahler class of the polynomial central charge $Z$.
	\item Let $\tilde U \in \Omega^*(X,\RR)$ be any differential form representing the unipotent class $U$.
	\item Let $h$ be a Hermitian metric on $E$.
\end{itemize}

Given a Hermitian metric $h$ on $E$ one obtains \emph{Chern--Weil representatives} of the characteristic classes of $E$ by the expression
$$\tilde \Ch(h) = \exp\left(\frac{i}{2\pi} F(h)\right) \in \Omega^*(X, \End(E))$$
where $F(h)$ is the Chern curvature of $h$. For example one obtains
$$\tilde \Ch_0(h) = \id_E,\quad \tilde\Ch_1(h) = \frac{i}{2\pi} F(h), \quad \tilde \Ch_2(h) = -\frac{1}{8\pi^2} F(h)^2$$
and so on. These define representatives of the Chern classes in the sense that
$$\left[ \trace \tilde \Ch_i(h)\right] = \Ch_i(E).$$
Using the above data, we define the differential-geometric representative $\tilde Z_k(h)$ of the density of the polynomial central charge $[Z_k(E)]$ by

$$\tilde Z_k(h) := \left( \sum_{d=0}^n \rho_d k^d \omega^d\right) \wedge \tilde \Ch(h) \wedge \tilde U.$$

Let us now define a $Z$-critical metric. Given an endomorphism $T\in \Gamma(\End(E))$ then with respect to the Hermitian metric $h$ one obtains a splitting
$$T= \Re(T) + i \Im(T)$$
where $\Re(T), \Im(T)$ are Hermitian endomorphisms, and therefore $i\Im(T)$ is skew-Hermitian. Indeed $\Re(T) = (T+T^*)/2$ and $\Im(T) = (T-T^*)/2i$. Note that with respect to a metric $h$, the Chern curvature $F(h)$ is an imaginary endomorphism-valued form, so that $i F(h)$ is real.\footnote{Indeed recall that the Chern connection is unitary so its curvature $F(h)$ has values in a $\mathfrak{u}(n)$-bundle which consists of skew-Hermitian (i.e. ``imaginary") endomorphisms.}

\begin{definition}[$Z$-critical metric and connection]
	Let $h$ be a Hermitian metric on a holomorphic vector bundle $E\to (X,\omega)$ over a compact K\"ahler manifold. Let $Z$ be a polynomial central charge with data $[\omega], U, \rho$ and fixed representative data $\omega, \tilde U$. Suppose $Z(E)\ne 0$. Then we say $h$ is a \emph{$Z$-critical metric} if
	$$\Im(e^{-i\varphi(E)} \tilde Z(h)) = 0.$$
	If $A$ is the Chern connection of $h$ we say $A$ is a \emph{$Z$-critical connection}. We call the above equation the \emph{$Z$-critical equation}.
\end{definition}

In the following we will freely interchange perspectives between $Z$-critical metrics or connections, depending on which language is most convenient at the time. As has already been discussed in \cref{sec:chernconnections} there is no loss of generality in taking either perspective.

\begin{remark}\label{rmk:AlternativeZcriticalequation}
	Under the assumption $Z(E)\ne 0$ we could alternatively define a $Z$-critical metric as one satisfying the equation
	\begin{equation}\label{eq:AlternativeZcriticalequation}\arg \tilde Z(h) = e^{i\theta} \id_E\end{equation}
	where $\arg \tilde Z(h)$ is the unitary part of $\tilde Z(h)$ with respect to the polar decomposition induced by $h$ and $\theta$ is a constant.\footnote{Note the unitary part of the polar decomposition is not uniquely defined unless the positive semi-definite part is in fact strictly positive-definite. This is precisely the assumption of being $Z$-almost calibrated.} This would be more adapated to definition of $Z$-stability in terms of phases \cref{def:Zstability}. An equation of this form has been previously proposed for metrics on quiver representations by Kontsevich \cite{kontsevich2015homological} inspired by King's stability and moment map criterion \cite{king1994moduli}. The author thanks Pranav Pandit for pointing this reference out to them.	
	
	On a line bundle the two equations are equivalent, and in general \eqref{eq:AlternativeZcriticalequation} implies the $Z$-critical equation. It would be interesting to understand the relationship between these different formulations in general. One expects that the $Z$-almost calibrated condition \cref{def:almostcalibrated} may play some role as it does in the case of line bundles.\footnote{The equations should not be equivalent in general, since after passing to higher rank taking real and imaginary parts does not interact with the polar decomposition in the same way as for line bundles.}
\end{remark}

Note that this equation makes sense on the level of cohomology. That is, we have
\begin{equation}\label{eq:Zcriticaltraceintegral}\int_X \trace \Im(e^{-i\varphi(E)} \tilde Z(h)) = 0.\end{equation}

Let us state the $Z$-critical equation for \cref{ex:dhymcentralcharge,ex:stringtheorycentralcharge}. 

\begin{example}[Deformed Hermitian Yang--Mills equation]
	Consider the example \cref{ex:dhymcentralcharge} of the dHYM central charge
	$$Z_{\mathrm{dHYM}}(E) = - \int_X e^{-i[\omega]-B} \Ch(E).$$
	Choose representatives $\omega \in [\omega]$ and $\beta \in B$, and let $h$ be a Hermitian metric on a vector bundle $E\to X$.
	We compute
	\begin{align*}
		-e^{-i\omega -\beta} \tilde{\Ch}(E) &= -\sum_{j=0}^n (-i)^j \frac{(\omega-i\beta)^j}{j!} \wedge \tilde{\ch}_{n-j} (E)\\
		&= -\sum_{j=0}^n (-i)^j \frac{(\omega-i\beta)^j}{j! (n-j)!} \wedge \left( \frac{i}{2\pi} F(h)\right)^{n-j}\\
		&= -\frac{1}{i^n n!} \sum_{j=0}^n {n\choose j} (\omega-i\beta)^j \wedge i^{n-j} \left( \frac{i}{2\pi} F(h)\right)^{n-j}\\
		&= \frac{(-1)^{n+1} i^n}{n!} \left( \omega \otimes \id_E - i\beta \otimes \id_E - \frac{F(h)}{2\pi}\right)^n.
	\end{align*}
	The phase is given by
	\begin{align*}
		e^{-i\varphi(E)} &= \frac{r(E)}{Z(E)}\\
		&= r(E) \left( \frac{(-1)^{n+1} i^n}{n!} \int_X \trace \left( \omega \otimes \id_E - i\beta \otimes \id_E -  \frac{F(h)}{2\pi}\right)^n\right)^{-1}.
	\end{align*}
	If we define $$\hat z = \int_X \trace \left( \omega \otimes \id_E - i\beta \otimes \id_E - \frac{F(h)}{2\pi}\right)^n$$
	and $\hat z = \hat r e^{i\hat \phi}$, then since $r/\hat r = \frac{1}{n!}$ it follows that
	$$\Im (e^{-i\varphi} \tilde{Z}(E)) = \frac{1}{n!} \Im \left(e^{-i\phi} \left( \omega \otimes \id_E - i\beta \otimes \id_E - \frac{F(h)}{2\pi}\right)^n\right).$$ This is the deformed Hermitian Yang--Mills equation with B-field in higher rank, as discussed in \cref{sec:dhym}.
	
	In the literature on the dHYM equation where $E$ is a line bundle one normally writes $\alpha = \frac{i}{2\pi} F(h) - \beta$ which is a real $(1,1)$-form, and the dHYM equation for $\alpha$ is written
	$$\Im(e^{-i\phi} (\omega + i\alpha)^n) = 0.$$
\end{example}

\begin{example}[String theory central charge equation]
	Consider the central charge $Z_{\Td}$ or $Z_{\hat A}$ of \cref{ex:stringtheorycentralcharge}. Then again choosing representatives $\omega \in [\omega], \beta \in B$, and now representatives $\widetilde{\sqrt{\Td(X)}}$ of $\sqrt{\Td(X)}$ or $\widetilde{ \sqrt{\hat A(X)}}$, one produces the equations
	$$\Im(e^{-i\varphi(E)} \tilde Z_{\Td}(h)) = 0$$
	and 
	$$\Im(e^{-i\varphi(E)} \tilde Z_{\hat A}(h)) = 0.$$
	The former equation was introduced in the physics literature by Enger--L\"utken \cite{enger-lutken}, who proved a version of the correspondence in \cref{ch:correspondence} on a Calabi--Yau threefold. 
	
	We note that the most natural choice of representative forms for $\Td(X)$ or $\hat A(X)$ are those arising as the Chern--Weil representatives of $TX$ with respect to the Levi-Civita connection of the K\"ahler metric $\omega$. Indeed in works studying coupled dHYM metrics such as \cite{schlitzer2021deformed} it is likely very important to choose these particular representatives (although in their work they consider the standard dHYM equation rather than a version involving $\sqrt{\Td(X)}$ or $\sqrt{\hat A(X)}$). Nevertheless which particular representatives are chosen does not seem to be relevant to our work.
\end{example}

It is interesting to consider the $Z$-critical equation on a line bundle, to point out the analogy with the forms of the dHYM equation which have previously appeared in the literature.

\begin{example}
If $L\to X$ is a line bundle, then $\tilde Z(h)$ is a complex $(n,n)$-form on $X$. The $Z$-critical equation can be formulated as
$$\Im \left( e^{-i\varphi(L)} \exp (i \tilde \varphi(h))\right)= 0$$
where we define
$$\tilde \varphi(h) = \arctan\left(\frac{\Im \tilde Z(h)}{\Re \tilde Z(h)}\right)$$
as the phase function 
$$\tilde \varphi(h) = \arg \frac{\tilde Z(h)}{\omega^n} : X \to (-\pi, \pi).$$
Thus the $Z$-critical equation is equivalent to the condition
\begin{equation}
	\tilde \varphi(h) = \varphi(L) \mod 2\pi \ell.\label{eq:zcriticallinebundle}
\end{equation}
This scalar form of the $Z$-critical equation is similar to that which has appeared previously for the dHYM equation \cite[\S 2]{jacob2017special}. In general if one defines the eigenvalues of $F(h)$ relative to $\omega$ as $\lambda_1,\dots,\lambda_n$, then there is a function $f_Z$ such that
$$\tilde \varphi(h) = f_Z(\lambda_1,\dots,\lambda_n).$$
Equations in the form \eqref{eq:zcriticallinebundle} have appeared in the context of Sz\'ekelyhidi's work on fully non-linear equations \cite{szekelyhidi2018fully}, and the behaviour of such equations is sensitive to the properties of the function $f_Z$. In particular for the J-equation one has
$$f_Z(\lambda_1,\dots,\lambda_n) = \sum_i \frac{1}{\lambda_i}$$
and for the dHYM equation one has
$$f_Z(\lambda_1,\dots,\lambda_n) = \sum_i \arctan(\lambda_i).$$
The symmetry of the function $f_Z$ in these examples is important in the analysis of those equations, and we will explain this in \cref{sec:subsolutions}.
\end{example}

Related to the algebraic statement \cref{lem:azsimpliessimple} showing that asymptotic $Z$-stability approaches slope stability, there is a corresponding property of the $Z$-critical equation in the large volume limit.

\begin{lemma}\label{lem:largevolume}
	In the large volume regime $k \gg 0$, the leading order condition for $h$ to be a $Z$-critical metric for a polynomial stability condition is given by the weak Hermite--Einstein equation. More precisely, there is an expansion in $k$ of the form \begin{align*}\Im \big( &e^{-i\varphi_k(E)} \tilde{Z}_{k}(h) \big)=  \\ & ck^{2n-1}\left([\omega]^n \rk (E) \omega^{n-1} \wedge \left( \frac{i}{2\pi} F(h) + \tilde U_2 \id_E\right) - \deg_U( E)  \omega^n\otimes \id_E\right) + O(k^{2n-2}),\end{align*} 
	where $c\in \RR_{>0}$ is a positive constant depending on $\rho$,  $\tilde U_2$ is the degree two part of the differential form $\tilde U$, and $\deg_U(E)$ is the degree defined as Equation \eqref{degU}. 
	To leading order therefore, the $Z$-critical equation is equivalent to  $$F(h) \wedge \omega^{n-1} = - 2\pi i \left( \frac{\deg_U(E)}{ [\omega]^n \rk(E)} - \frac{1}{n}\contr_{\omega} \tilde U_2 \right) \id_E \otimes \omega^n,$$ which is the weak Hermite--Einstein equation (\cref{def:hermiteeinstein}) with function $$f = -2\pi i \left( \frac{\deg_U(E)}{(n-1)! [\omega]^n\rk(E)} - \frac{1}{n}\contr_{\omega} \tilde U_2 \right).$$
\end{lemma}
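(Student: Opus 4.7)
My approach is a direct computation: expand both the form-valued $\tilde Z_k(h)$ and the scalar $Z_k(E)$ to the first two orders in $k$, multiply, take imaginary part, and identify the leading coefficient as a multiple of the weak Hermite--Einstein operator.

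First I would extract the bidegree-$(n,n)$ component of $\tilde Z_k(h)$. Since $\tilde\Ch(h)\wedge\tilde U = \id_E + \frac{i}{2\pi}F(h) + \tilde U_2\,\id_E + (\text{terms of degree}\ge 4)$, only the summands $\rho_n k^n \omega^n$ and $\rho_{n-1}k^{n-1}\omega^{n-1}$ in $\sum_d \rho_d k^d \omega^d$ contribute to the two leading orders, yielding
\begin{equation*}
\tilde Z_k(h) = \rho_n k^n\, \omega^n \id_E + \rho_{n-1} k^{n-1}\,\omega^{n-1}\wedge\!\left(\tfrac{i}{2\pi}F(h) + \tilde U_2\id_E\right) + O(k^{n-2}).
\end{equation*}
Taking traces and integrating and using \eqref{degU} gives the corresponding expansion
$Z_k(E) = \rho_n k^n [\omega]^n\rk(E) + \rho_{n-1}k^{n-1}\deg_U(E) + O(k^{n-2})$.

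Next I would compute $\Im(e^{-i\varphi_k(E)}\tilde Z_k(h))$. Since $e^{-i\varphi_k(E)} = \overline{Z_k(E)}/|Z_k(E)|$ and $|Z_k(E)|$ is a positive real, it is equivalent up to rescaling to expand $\Im(\overline{Z_k(E)}\,\tilde Z_k(h))$. The $k^{2n}$ term has coefficient $|\rho_n|^2[\omega]^n\rk(E)\,\omega^n\id_E$, which is real-scalar times a Hermitian form and contributes nothing to the imaginary part. The $k^{2n-1}$ term has coefficient
\begin{equation*}
\bar\rho_n\rho_{n-1}\,[\omega]^n\rk(E)\,\omega^{n-1}\wedge\!\left(\tfrac{i}{2\pi}F(h)+\tilde U_2\id_E\right) + \bar\rho_{n-1}\rho_n\,\deg_U(E)\,\omega^n\id_E.
\end{equation*}
Both of the forms $\omega^{n-1}\wedge(\tfrac{i}{2\pi}F(h)+\tilde U_2\id_E)$ and $\omega^n\id_E$ are Hermitian-endomorphism-valued with real scalar coefficients, so using $\Im(\bar\rho_{n-1}\rho_n) = -\Im(\bar\rho_n\rho_{n-1})$ the imaginary part factors as $c\,k^{2n-1}$ times precisely the bracketed expression in the statement, with
\begin{equation*}
c = \Im(\bar\rho_n\rho_{n-1}) = |\rho_n|^2\,\Im(\rho_{n-1}/\rho_n).
\end{equation*}
The positivity $c>0$ is exactly the stability-vector condition.

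Finally, vanishing of this leading coefficient reads $[\omega]^n\rk(E)\,\omega^{n-1}\wedge(\tfrac{i}{2\pi}F(h)+\tilde U_2\id_E) = \deg_U(E)\,\omega^n\id_E$, and applying the identity $\omega^{n-1}\wedge\alpha = \tfrac{1}{n}\contr_\omega(\alpha)\omega^n$ (valid on the $(1,1)$-part of a real two-form, the $(2,0)$ and $(0,2)$ parts contributing zero for dimensional reasons) and solving for $F(h)\wedge \omega^{n-1}$ gives the weak Hermite--Einstein equation with the prescribed function $f$. The whole argument is elementary bookkeeping of leading orders, so I do not anticipate any substantial obstacle; the only place where the defining hypotheses enter essentially is in the sign of $c$, which is the defining condition of a stability vector.
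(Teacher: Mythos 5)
Your proposal is correct and follows essentially the same route as the paper's proof: a direct expansion of $\tilde Z_k(h)$ and $Z_k(E)$ to the first two orders, cancellation of the $k^{2n}$ term, and identification of the $k^{2n-1}$ coefficient, with the positivity $c=\Im(\bar\rho_n\rho_{n-1})=\Re\rho_n\Im\rho_{n-1}-\Im\rho_n\Re\rho_{n-1}>0$ coming from the stability-vector condition $\Im(\rho_{n-1}/\rho_n)>0$ exactly as in the paper. The only cosmetic difference is that you multiply by $\overline{Z_k(E)}$ and note explicitly that the positive factor $|Z_k(E)|$ is harmless, whereas the paper expands $r_k\cos\varphi_k$ and $r_k\sin\varphi_k$ separately — if anything your phrasing makes the implicit rescaling in the paper's stated $k^{2n-1}$ leading order more transparent.
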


\begin{proof}
	This follows from computing the leading order terms directly. Writing $Z_k(E) = r_k (\cos \varphi_k + i \sin \varphi_k)$, one computes
	\begin{align*}
		r_k \cos \varphi_k &= \Re \rho_n k^n [\omega]^n \rk E + \Re \rho_{n-1} k^{n-1} [\omega]^{n-1}.(c_1(E) + \rk (E) U_2) + O(k^{n-2}),\\
		r_k \sin \varphi_k &=  \Im \rho_n k^n [\omega]^n \rk E + \Im \rho_{n-1} k^{n-1} [\omega]^{n-1}.(c_1(E) + \rk (E) U_2) + O(k^{n-2}).
	\end{align*}
	We also have
	\begin{align*}
		\Re \tilde Z_k(h) &= \Re \rho_n k^n \omega^n \otimes \id_E + \Re \rho_{n-1} k^{n-1} \omega^{n-1} \wedge \left( \frac{i}{2\pi} F(h) + \tilde U_2 \id_E\right) + O(k^{n-2}),\\
		\Im \tilde Z_k(h) &= \Im \rho_n k^n \omega^n \otimes \id_E + \Im \rho_{n-1} k^{n-1} \omega^{n-1} \wedge \left( \frac{i}{2\pi} F(h) + \tilde U_2 \id_E\right) + O(k^{n-2}).
	\end{align*}
	The condition for $h$ to be $Z_k$-critical is thus
	$$r_k \cos \varphi_k \Im \tilde Z_k(h) - r_k \sin \varphi_k \Re \tilde Z_k(h) = 0$$
	Computing the induced expansion in $k$ gives
	\begin{align*}
		&\Im \big( e^{-i\varphi_k(E)} \tilde{Z}_{k}(E) \big) \\
		&= k^{2n-1}\bigg((\Re \rho_n [\omega]^n \rk E)\left(\Im \rho_{n-1} \omega^{n-1} \wedge \left( \frac{i}{2\pi} F(h) + \tilde U_2 \id_E\right)\right)\\
		&\quad+ (\Re \rho_{n-1} \deg_U(E)) (\Im \rho_n \omega^n \otimes \id_E)\\
		&\quad- (\Im \rho_n [\omega]^n \rk E ) \Re \rho_{n-1} k^{n-1} \omega^{n-1} \wedge \left( \frac{i}{2\pi} F(h) + \tilde U_2 \id_E\right)\\
		&\quad-  (\Im \rho_{n-1} \deg_U(E)) (\Re \rho_n \omega^n \otimes \id_E)\bigg) + O(k^{2n-2}).
	\end{align*}
	We simplify by dividing by the factor $$c = \Re \rho_n \Im \rho_{n-1} - \Im \rho_n \Re \rho_{n-1},$$ which is non-zero, and in fact positive, by the stability vector assumption that $\Im (\rho_{n-1}/\rho_n) >0.$ Thus the $k^{2n-1}$-coefficient becomes
	$$[\omega]^n \rk (E) \omega^{n-1} \wedge \left( \frac{i}{2\pi} F(h) + \tilde U_2 \id_E\right) - \deg_U (E)  \omega^n\otimes \id_E,$$
	as desired. 
\end{proof}

\begin{remark}
	For the purposes of proving the correspondence between asymptotic $Z$-stability and existence of $Z$-critical metrics in the large volume limit, the above and \cref{lem:azsimpliessimple} only use the condition
	$$\Im\left( \frac{\rho_{n-1}}{\rho_n}\right) > 0$$
	and the lower order conditions on the stability vector $\rho$ are not necessary. One can therefore allow certain degenerate examples of polynomial central charges as examples for $Z$-critical equations without effecting the strength of the main result. These should include examples of so-called ``weak" stability conditions for which $Z(E)=0$ does not imply $E$ is numerically trivial. In particular slope stability satisfies all the necessary conditions for our theory to apply. 
\end{remark}

For completeness let us define the almost calibrated condition for the $Z$-critical equation, which was proposed by Collins--Yau \cite[\S 8.1]{collins2018moment}. 

\begin{definition}\label{def:almostcalibrated}
	Let $E\to (X,\omega)$ be a vector bundle and $Z$ a polynomial stability condition. Define the space of \emph{$Z$-almost calibrated} metrics by
	$$\calH_Z := \left\{ h \mid \Re \left( \trace \left( e^{-i\varphi(E)} \tilde Z(h)\right) \right) > 0\right\}.$$
\end{definition}

Part of the importance of the almost calibrated condition is that it implies the positivity of a natural $L^2$ inner product on endomorphisms $\phi,\psi$ of $E$, given by
$$(\phi,\psi) \mapsto \int_X \phi \psi \Re \left( \trace \left( e^{-i\varphi(E)} \tilde Z(h)\right) \right).$$
This defines a positive Riemannian metric on the space $\calH_Z$, and the geodesics with respect to this metric were utilized by Collins--Yau to obtain strong analytical results for the dHYM equation. It is likely that this will be important for the $Z$-critical equation in the future. We will note see the almost calibrated condition near the large volume limit, due to the following lemma.

\begin{lemma}
	A Hermitian metric $h$ on $E\to (X,\omega)$ is $Z$-almost calibrated for all $k\gg 0$.
\end{lemma}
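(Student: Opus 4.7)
The plan is to extract the leading-order asymptotic behaviour of $\Re(\trace(e^{-i\varphi_k(E)} \tilde{Z}_k(h)))$ in $k$, exactly in the spirit of the computation carried out in the proof of \cref{lem:largevolume}, and observe that the leading term is a strictly positive multiple of $\omega^n$.

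First I would expand $Z_k(E) = r_k e^{i\varphi_k}$, noting from the Riemann--Roch-type expansion that $r_k \cos \varphi_k = (\Re \rho_n)\, k^n [\omega]^n \rk(E) + O(k^{n-1})$ and $r_k \sin \varphi_k = (\Im \rho_n)\, k^n [\omega]^n \rk(E) + O(k^{n-1})$, so in particular $r_k = |\rho_n|\, k^n [\omega]^n \rk(E) + O(k^{n-1})$. Similarly the top-degree piece of $\tilde{Z}_k(h)$ satisfies $\Re \tilde{Z}_k(h) = (\Re \rho_n)\, k^n \omega^n \otimes \id_E + O(k^{n-1})$ and $\Im \tilde{Z}_k(h) = (\Im \rho_n)\, k^n \omega^n \otimes \id_E + O(k^{n-1})$.

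Next I would use the identity $\Re(e^{-i\varphi_k} \tilde{Z}_k(h)) = \cos\varphi_k \Re\tilde{Z}_k(h) + \sin\varphi_k \Im\tilde{Z}_k(h)$ and multiply numerator and denominator by $r_k$ to write
\begin{equation*}
\Re(e^{-i\varphi_k(E)} \tilde{Z}_k(h)) = \frac{(r_k\cos\varphi_k)\Re\tilde{Z}_k(h) + (r_k\sin\varphi_k)\Im\tilde{Z}_k(h)}{r_k}.
\end{equation*}
Substituting the expansions above, the numerator's leading term is $|\rho_n|^2 k^{2n} [\omega]^n \rk(E)\, \omega^n \otimes \id_E$, and after dividing by the leading part of $r_k$ one obtains
\begin{equation*}
\Re(e^{-i\varphi_k(E)} \tilde{Z}_k(h)) = |\rho_n|\, k^n\, \omega^n \otimes \id_E + O(k^{n-1}).
\end{equation*}

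Taking traces gives $\trace \Re(e^{-i\varphi_k(E)} \tilde{Z}_k(h)) = |\rho_n|\, \rk(E)\, k^n \omega^n + O(k^{n-1})$. Since $\omega^n$ is a strictly positive volume form on the compact manifold $X$, the $O(k^{n-1})$ error is bounded pointwise relative to $\omega^n$, and so for all $k \gg 0$ this trace is strictly positive pointwise on $X$. This is precisely the $Z_k$-almost calibrated condition for $h$, so $h \in \calH_{Z_k}$ for all sufficiently large $k$. There is no serious obstacle here; the statement is essentially an unpacking of the leading-order asymptotics already established in \cref{lem:largevolume}, the only arithmetical care needed being the correct normalisation by $r_k$.
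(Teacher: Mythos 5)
Your proof is correct and follows essentially the same route as the paper: both expand $\Re(\trace(e^{-i\varphi_k(E)}\tilde Z_k(h)))$ and observe that its leading term is a positive multiple of $\omega^n$ (namely $|\rho_n|^2\rk(E)[\omega]^n$ at order $k^{2n}$ in the paper, which works with the $r_k$-scaled quantity, versus your normalised $|\rho_n|\rk(E)$ at order $k^n$), which dominates the lower-order terms pointwise on the compact manifold $X$ for $k\gg 0$. The only difference is this harmless choice of whether to divide out $r_k$, so the two arguments are the same in substance.
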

\begin{proof}
	One simply verifies
	$$\Re\left(\trace \left( e^{-i\varphi(E)} \tilde Z(h)\right) \right) = k^{2n} (\rk E [\omega]^n (|\rho_n|^2) \omega^n + O(k^{2n-1})$$
	which is positive for $k\gg 0$ since $\rho_n \ne 0$. 
\end{proof}

We note that the $Z$-almost calibrated condition should be related to certain phase assumptions for Hermitian metrics, just as for similar almost calibrated conditions in the study of Lagrangian submanifolds or the dHYM equation. Indeed working with the principal branch of $\arg$ we observe:\footnote{In general one expects a decomposition of the space of $Z$-almost calibrated metrics into pieces consisting of shifts of the argument bound in \cref{prop:almostcalibrated} similarly to those observed by Collins--Yau \cite{collins2018moment} for the dHYM equation on a line bundle. We will make no further comment on this important consideration.}

\begin{proposition}\label{prop:almostcalibrated}
	Suppose $h$ is a $Z$-almost calibrated metric on $E\to(X,\omega)$. Then 
	$$\left| \arg \trace \left(\tilde Z(h)\right) - \arg Z(E) \right| < \frac{\pi}{2}.$$
\end{proposition}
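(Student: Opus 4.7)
The proof will be essentially a direct unpacking of the definition of $Z$-almost calibrated in terms of arguments of complex numbers, with the only subtlety being the careful interpretation of branches of $\arg$. The plan is as follows.

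First I would pass from the $(n,n)$-form condition to a pointwise condition on complex-valued functions. Since $\omega^n$ is a strictly positive top-degree form on $X$, we may set
$$f := \frac{\trace(\tilde Z(h))}{\omega^n} \in C^\infty(X,\CC),$$
so that $\trace(\tilde Z(h)) = f\,\omega^n$. The $Z$-almost calibrated hypothesis $\Re\bigl(\trace(e^{-i\varphi(E)}\tilde Z(h))\bigr) > 0$ then becomes, pointwise on $X$,
$$\Re\!\left(e^{-i\varphi(E)} f(x)\right) > 0.$$
In particular $f(x)\neq 0$ everywhere, so $\arg f(x)$ is well-defined.

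Next I would translate this half-plane condition into an argument bound. Setting $g(x) := e^{-i\varphi(E)}f(x)$, the hypothesis says that $g(x)$ lies strictly in the right half-plane $\{\Re z > 0\}$ for every $x\in X$. Equivalently, using the principal branch $\arg\colon \CC^{*}\to(-\pi,\pi]$, we have $\arg g(x) \in (-\pi/2, \pi/2)$, so
$$|\arg g(x)| < \frac{\pi}{2}.$$
Since $g(x) = e^{-i\varphi(E)} f(x)$ with $\varphi(E) = \arg Z(E)$, choosing the branch compatible with $g(x)$ lying in the right half-plane gives $\arg g(x) = \arg f(x) - \varphi(E)$, from which
$$\left|\arg f(x) - \arg Z(E)\right| < \frac{\pi}{2},$$
i.e.\ $|\arg \trace(\tilde Z(h)) - \arg Z(E)| < \pi/2$, as claimed.

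The only potential obstacle is conceptual rather than technical: one must be careful that the $\arg$ appearing in the statement is interpreted as the natural angular distance (i.e.\ the argument of the ratio $\trace(\tilde Z(h))/Z(E)$ taken in the principal branch), rather than the literal difference of principal values, which could otherwise wrap around by $2\pi$. With that convention fixed, the inequality $|\arg g(x)| < \pi/2$ immediately yields the proposition. No further analytic input is required beyond the trivial observation that $\omega^n > 0$ and that a complex number with positive real part has principal argument in $(-\pi/2,\pi/2)$.
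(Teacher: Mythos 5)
Your proof is correct and follows essentially the same route as the paper: both reduce the almost calibrated condition to the pointwise positivity of $\Re\bigl(e^{-i\varphi(E)}\,\trace(\tilde Z(h))/\omega^n\bigr)$ and conclude that the phase lies in $(-\pi/2,\pi/2)$ within the principal branch. Your explicit remark about interpreting the difference of arguments as the argument of the ratio (to avoid $2\pi$ wrap-around) is a slightly more careful treatment of the branch issue that the paper handles only implicitly.
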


The above inequality is straightforward from noting that 
$$\Re\left(\trace \left( e^{-i\varphi(E)} \tilde Z(h)\right) \right) = r \cos\left(\trace \left( e^{-i\varphi(E)} \tilde Z(h)\right)\right)$$ where $r: X \to \RR_{>0}$ is a positive function, and the positivity of $\cos(x)$ within the principal branch of $\arg, x\in (-\pi,\pi)$ occurs whenever $|x|<\frac{\pi}{2}.$

\section{Subsolutions\label{sec:subsolutions}}

To begin our study of subsolutions, we will first take a detour and investigate the linearisation of the $Z$-critical equation.

\subsection{Linearisation and ellipticity}

First we will define the symmetrised product of a collection of endomorphisms. 
\begin{definition}\label{def:symmetrisation}
	Let $B_1,\dots,B_j$ be such a collection of endomorphisms. We define
	$$[B_1\cdots B_j]_{\mathrm{sym}} := \frac{1}{j!} \sum_{\sigma \in S_j} B_{\sigma(1)} \cdots B_{\sigma(j)}.$$
	When the $B_i$ are endomorphism-valued forms, we must take the graded symmetrisation. Define the graded sign $\grsgn$ of a permutation $\sigma$ as follows. If $\sigma$ is an adjacent transposition $(i\,\,\, i+1)$ then 
	$$\grsgn \sigma = \deg B_i \deg B_{i+1} \mod 2.$$
	In general any permutation $\sigma \in S_j$ can be written as a product of adjacent transpositions $\sigma = \sigma_1\cdots \sigma_\ell$, and we set
	$$\grsgn \sigma = \sum_{i=1}^\ell \grsgn \sigma_i \mod 2.$$
	Then the symmetrisation is defined by
	$$[B_1\wedge\cdots\wedge B_j]_{\sym} = \frac{1}{j!} \sum_{\sigma \in S_j} (-1)^{\grsgn \sigma} B_{\sigma(1)} \wedge \cdots \wedge B_{\sigma(j)}.$$
\end{definition}

The graded symmetrisation is constructed so that, for example, the graded symmetrisation of even degree endomorphism-valued forms is just the regular symmetrisation, and for endomorphism-valued one-forms is just the standard graded symmetrisation with sign changing by $\sgn \sigma$. In the following we will have expressions involving symmetrisations of two-forms and one-forms. We note that the graded sign of a permutation of $B_1\wedge \cdots \wedge B_j$ does not depend on the choice of decomposition into adjacent transpositions.

\begin{remark}
	We must caution that it is not the same to consider $[B_1 B_2 B_3]_{\sym}$ and $[(B_1B_2) B_3]_{\sym}$. In general such expressions will only agree when all the $B_i$ commute. This will for example be the case when $E$ is a line bundle so the curvature is just an imaginary two-form, which explains why the formalism above is not necessary in that setting. In the following we will always treat each curvature term $F_A$ appearing in a factor of the $Z$-critical equation as a separate endomorphism of $E$. 
\end{remark}

Let us now work in the convention of fixing the Hermitian metric $h$ and varying the Chern connection $A$. To compute the linearisation of the $Z$-critical equation, let us recall \cref{lem:linearisationcurvatureChern} which computed the linearisation of the curvature as
$$F_{A_t} = F_A + (\delbardel - \deldelbar) V t + O(t^2)$$
for $A_t = \exp(tV) \cdot A$ where $V$ is a Hermitian endomorphism of $(E,h)$. 

Let $D_Z$ denote the $Z$-critical operator
$$D_Z: A \mapsto \Im(e^{-i\varphi(E)} \tilde Z(A)).$$
Each term appearing in $D_Z$ is some real constant multiple of a form
\begin{equation}\label{eq:DZterm}\omega^d \wedge \left(\frac{i}{2\pi} F_A\right)^j \wedge \tilde U_\ell\end{equation}
such that $d+j+\ell = n$. Perturbing $A$ to $A_t=\exp(tV)\cdot A$, we linearise \eqref{eq:DZterm} as
\begin{align}
	&\left.\deriv{}{t}\right|_{t=0} \omega^d \wedge \left(\frac{i}{2\pi} F_{A_t}\right)^j \wedge \tilde U_\ell\nonumber\\
	&=  \omega^d \wedge j \left[ \underbrace{\left(\frac{i}{2\pi} F_A\right) \wedge \cdots \wedge \left(\frac{i}{2\pi} F_A\right)}_{j-1 \text{ times}} \wedge \frac{i}{2\pi} (\delbardel - \deldelbar) V \right]_{\sym} \wedge \tilde U_{\ell}.\label{eq:DZlinearisationoneterm}
\end{align}

\begin{definition}
	Define the derivative $\tilde Z'(A)$ to be the $\End E$-valued $(n-1,n-1)$-form given by taking the formal derivative with respect to $\frac{i}{2\pi} F_A$, where $d (\frac{i}{2\pi} F_A) = \id_E$. 
\end{definition}

\begin{proposition}\label{prop:linearisationDZ}
	The linearisation $P_Z$ of $D_Z$ at $A$ is given by
	$$P_Z(A)(V) = \left[\Im(e^{-i\varphi(E)} Z'(A))\wedge \frac{i}{2\pi} (\delbardel - \deldelbar) V\right]_{\sym}.$$
\end{proposition}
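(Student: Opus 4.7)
The plan is to reduce the proof to the single-term calculation \eqref{eq:DZlinearisationoneterm} already carried out in the preceding paragraph, and then to reinterpret the sum of linearised terms as a symmetrised wedge with the formal derivative $\tilde Z'(A)$. The key conceptual point is that $\tilde Z(A)$ is polynomial in $\frac{i}{2\pi}F_A$, so differentiation in $A$ can be interchanged with formal differentiation in $\frac{i}{2\pi}F_A$, provided we track the graded symmetrisation carefully because the different factors of curvature are endomorphism-valued $(1,1)$-forms which do not commute.

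First I would expand $\tilde Z(A)$ explicitly as
\begin{equation*}
\tilde Z(A) = \sum_{d+j+\ell = n} c_{d,j,\ell}\, \rho_d\, \omega^d \wedge \left(\tfrac{i}{2\pi}F_A\right)^j \wedge \tilde U_\ell,
\end{equation*}
where $c_{d,j,\ell}$ are the combinatorial constants coming from $\tilde\Ch(h) = \exp(\tfrac{i}{2\pi}F_A)$. Applying the derivation $\left.\deriv{}{t}\right|_{t=0}$ along $A_t = \exp(tV)\cdot A$ term-by-term, using \cref{lem:linearisationcurvatureChern} and the general Leibniz rule for the graded-symmetric power of a non-commuting variable, I would obtain exactly \eqref{eq:DZlinearisationoneterm} for each summand. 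Summing over $(d,j,\ell)$ yields
\begin{equation*}
\left.\deriv{}{t}\right|_{t=0} \tilde Z(A_t) = \left[\tilde Z'(A) \wedge \tfrac{i}{2\pi}(\delbardel - \deldelbar) V\right]_{\sym},
\end{equation*}
where the inner graded symmetrisation makes sense because in $\tilde Z'(A)$ the $(j-1)$ remaining factors of $\frac{i}{2\pi}F_A$ are themselves to be symmetrised among themselves and with the new factor $\frac{i}{2\pi}(\delbardel-\deldelbar)V$.

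The final step is to multiply by the constant $e^{-i\varphi(E)}$ and take imaginary parts, which commutes with the linearisation since $\varphi(E)$ is a topological quantity depending only on $[E]$ and not on the connection $A$. This produces
\begin{equation*}
P_Z(A)(V) = \left[\Im\bigl(e^{-i\varphi(E)}\tilde Z'(A)\bigr) \wedge \tfrac{i}{2\pi}(\delbardel - \deldelbar) V\right]_{\sym},
\end{equation*}
as claimed. The only real subtlety is the bookkeeping of graded signs in the symmetrisation: since $\frac{i}{2\pi}(\delbardel - \deldelbar)V$ is a $(1,1)$-form valued endomorphism of even total degree, moving it past the other curvature factors introduces no sign, and similarly past the scalar forms $\omega^d$ and $\tilde U_\ell$, so the graded symmetrisation reduces in practice to an ordinary symmetrisation over the $j$ endomorphism-valued $(1,1)$-form slots. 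This observation is the heart of why the formal derivative encodes the linearisation exactly; no analytic difficulty appears because the computation is purely algebraic once \cref{lem:linearisationcurvatureChern} has been invoked.
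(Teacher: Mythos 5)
Your proposal is correct and follows essentially the same route as the paper: the paper's proof simply applies the single-term calculation \eqref{eq:DZlinearisationoneterm} to every term of the $Z$-critical operator and repackages the sum via the formal derivative $\tilde Z'(A)$, exactly as you do. Your extra remarks — that $\Im(e^{-i\varphi(E)}\,\cdot\,)$ commutes with the linearisation since $\varphi(E)$ is independent of $A$, and that the graded signs are trivial for even-degree factors — are accurate but just make explicit what the paper leaves implicit.
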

\begin{proof}
	This follows from applying the calculation in \eqref{eq:DZlinearisationoneterm} to every term appearing in the $Z$-critical equation. 
\end{proof}

In order to understand the ellipticity of our equation and the resulting notion of a subsolution, let us define a notion of positivity for $\End$-valued forms.

\begin{definition}\label{def:positivityEndEforms}
	A real $\End E$-valued $(n-1,n-1)$-form 
	$$T=\sum B_1 \wedge\cdots \wedge B_j$$
	is \emph{positive} if at every $p\in X$ and every $u\in T_{0,1}^* X_p \otimes \End E_p$ with $u\ne 0$ one has
	$$\trace i \sum \left[ {B_1}_p \wedge \cdots \wedge {B_j}_p \wedge u^* \wedge u \right]_{\sym} > 0.$$
\end{definition}

This definition of positivity recovers the regular notion of positivity such as in \cite{song2008convergence} or \cite[Ch. III \S 1]{demailly1997complex} when $T$ is just an $(n-1,n-1)$-form. It enters our discussion through the definition of a subsolution.

\begin{definition}[Subsolution]\label{def:subsolution}
	A Chern connection $A$ on $E\to (X,\omega)$ is a \emph{subsolution} of the $Z$-critical equation if 
	$$\Im(e^{-i\varphi(E)} Z'(A)) > 0$$
	in the sense of $\End E$-valued $(n-1,n-1)$-forms \cref{def:positivityEndEforms}.
\end{definition}

The relevance of this definition to the linearisation computed above is the following.

\begin{proposition}\label{prop:subsolutionimpliesellipticity}
	Suppose $A$ is a subsolution to the $Z$-critical equation. Then the $Z$-critical operator $D_Z$ is elliptic at $A$.
\end{proposition}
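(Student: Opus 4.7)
The plan is to compute the principal symbol of $P_Z(A)$ and show that, for every nonzero real covector $\xi \in T^*_pX$, the map $V \mapsto \sigma_\xi P_Z(A)(V)$ is injective on Hermitian endomorphisms of $E_p$. Since the source and target bundles (Hermitian endomorphisms, and Hermitian-endomorphism-valued $(n,n)$-forms) have equal rank at each point, injectivity of the symbol implies it is an isomorphism and hence $D_Z$ is elliptic at $A$.

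First, one computes the symbol using \cref{prop:linearisationDZ}. The only second-order differential operator in $P_Z(A)(V)$ is $\frac{i}{2\pi}(\delbardel - \deldelbar)V$, and locally $\delbar, \del$ have principal symbols $i\xi^{0,1}\wedge(-)$ and $i\xi^{1,0}\wedge(-)$ respectively. Thus the symbol of the second order operator on $V$ is $-\frac{i}{\pi}\xi^{0,1}\wedge\xi^{1,0}V$, and all remaining factors in $P_Z(A)$ are zeroth-order multiplicative. Therefore
$$\sigma_\xi P_Z(A)(V) = -\frac{i}{\pi}\bigl[\Im(e^{-i\varphi(E)}Z'(A))\wedge\xi^{0,1}\wedge\xi^{1,0}V\bigr]_{\sym}.$$

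Second, we pair the symbol with $V$ under the trace. Set $u := \xi^{0,1}V$, an $\End E$-valued $(0,1)$-form which is nonzero whenever $V$ and $\xi$ are both nonzero (since $V$ acts injectively by multiplication on $\End E_p$ and $\xi^{0,1}\ne 0$ for real $\xi\ne 0$). Since $V$ is Hermitian and $\xi$ real, $u^* = \xi^{1,0}V$, whence $u^*\wedge u = (\xi^{1,0}\wedge\xi^{0,1})V^2 = -\xi^{0,1}\wedge\xi^{1,0}V^2$. The key technical observation is that for wedges of $\End E$-valued forms, the graded cyclic invariance of the trace implies $\trace(B_{\sigma(1)}\wedge\cdots\wedge B_{\sigma(j)}) = (-1)^{\grsgn\sigma}\trace(B_1\wedge\cdots\wedge B_j)$, so the symmetrisation bracket disappears inside the trace. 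Using this together with cyclic invariance to move the extra $V$ into place, we obtain
$$\trace\bigl(V\cdot\sigma_\xi P_Z(A)(V)\bigr) = \frac{1}{\pi}\trace i\bigl[\Im(e^{-i\varphi(E)}Z'(A))\wedge u^*\wedge u\bigr]_{\sym}.$$

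Third, we invoke the subsolution hypothesis. By \cref{def:subsolution} applied to $T = \Im(e^{-i\varphi(E)}Z'(A))$, the right-hand side above is strictly positive as a multiple of the volume form whenever $u\ne 0$. Thus $\trace(V\cdot\sigma_\xi P_Z(A)(V)) > 0$ for any nonzero Hermitian $V$ and nonzero $\xi$, which forces $\sigma_\xi P_Z(A)(V) \ne 0$, giving injectivity and hence ellipticity.

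The most delicate step is the second: one must reconcile the different symmetrisation patterns between the symbol (one $\End E$-valued factor $\xi^{0,1}\wedge\xi^{1,0}V$, linear in $V$) and the positivity condition (two separate factors $u^*$ and $u$, quadratic in $V$). The resolution relies on the fact that the graded-cyclic invariance of the trace removes the symmetrisation bracket entirely, reducing matters to the simple identity $V\cdot(\xi^{0,1}\wedge\xi^{1,0}V) = -u^*\wedge u$ inside the trace. Once this reconciliation is in place, the proof is essentially immediate from the definitions.
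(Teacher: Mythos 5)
Your overall strategy is the same as the paper's: compute the principal symbol from \cref{prop:linearisationDZ}, pair it with $V$ under the trace, set $u=\xi\otimes V$, and invoke the subsolution positivity to conclude the symbol has trivial kernel. However, the step you yourself flag as the delicate one rests on a false claim. The trace of a wedge product of $\End E$-valued forms is invariant (up to graded sign) only under \emph{cyclic} permutations, not arbitrary ones: for non-commuting endomorphism parts $M_1,M_2$ one has $\trace(M_1\wedge M_2\wedge\cdots)\neq\trace(M_2\wedge M_1\wedge\cdots)$ in general, and more to the point, orderings in $\left[\Im(e^{-i\varphi(E)}Z'(A))\wedge u^*\wedge u\right]_{\sym}$ that place a curvature factor between $u^*$ and $u$ contribute traces of the form $\trace(\cdots V M V\cdots)$, which are genuinely different from $\trace(\cdots V^2 M\cdots)$. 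So "the symmetrisation bracket disappears inside the trace" fails as soon as $Z'(A)$ contains two or more curvature factors (which is the generic situation in dimension $\geq 3$), and your displayed identity equating $\trace\bigl(V\cdot\sigma_\xi P_Z(A)(V)\bigr)$ with the subsolution quantity is not justified as written.

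The good news is that the identity you want is true, and the correct justification is the one the paper gives: in each term of the symmetrised expression $\left[\Im(e^{-i\varphi(E)}Z'(A))\wedge u^*\wedge u\right]_{\sym}$, use \emph{only} graded cyclic invariance of the trace to rotate one of the two copies of $V$ to the end; after pulling the scalar-valued form factors out front (which costs no sign issues beyond the ones the graded symmetrisation already accounts for), the resulting terms match, one for one, the terms of $\trace\bigl(V\cdot\left[\Im(e^{-i\varphi(E)}Z'(A))\wedge \xi\wedge\bar\xi\otimes V\right]_{\sym}\bigr)$ — including the "separated" terms $\trace(\cdots VMV\cdots)$, which occur on both sides (on the symbol side they arise when the slot $\xi\wedge\bar\xi\otimes V$ sits between two curvature factors and the external $V$ closes the cycle). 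With that local repair your argument coincides with the paper's proof; the remaining steps (symbol computation, choice of $u$, positivity from \cref{def:subsolution}, injectivity implies ellipticity) are fine.
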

\begin{proof}
	By definition, a non-linear differential operator $D_Z$ is elliptic at $A$ if the linearisation $P_Z(A)$ is a linear elliptic differential operator. Consider the linearisation \cref{prop:linearisationDZ} as an operator
	$$P_Z(A): \Gamma(\End_H(E,h)) \to \Gamma(\End_H(E,h))$$
	on the space of Hermitian endomorphisms of $E$, defined by
	$$P_Z(A): V \mapsto \frac{ \left[\Im(e^{-i\varphi(E)} Z'(A))\wedge \frac{i}{2\pi} (\delbardel - \deldelbar) V\right]_{\sym}}{\omega^n}.$$
	As the notation already suggests, we may ignore any dependence of $\del$ and $\delbar$ on the Chern connection $A$, as these occur only at first order and below in $\delbardel$ (so these terms are irrelevant for demonstrating ellipticity). 
	
	To compute the symbol, notice that if we have a test differential form $\eta = a_i dx^i + b_j dy^j$ then
	$$\sigma\left(\pderiv{}{z^i}\right) = \frac{1}{2} (a_i - i b_i), \quad \sigma\left(\pderiv{}{\bar z^j}\right) = \frac{1}{2} (a_j + i b_j).$$
	In particular if we define $\xi := \xi_i d\bar z^i = (a_i + i b_i) d\bar z^i\in T_{0,1}^* X_p$ then in the operator
	$$V\mapsto \delbardel V = \sum_{j,k} \pderiv{}{\bar z^k} \pderiv{}{z^j}(V) d\bar z^k \wedge dz^j$$
	we compute the symbol by formally replacing
	$$\pderiv{}{\bar z^k} \mapsto \xi_k, \quad \pderiv{}{z^j} \mapsto \bar \xi_j.$$
	Thus the principal symbol $\sigma_\xi (P_Z(A)): \End E \to \End E$ is defined by
	$$\sigma_\xi(P_Z(A))(V) =  \frac{ \left[\Im(e^{-i\varphi(E)} Z'(A))\wedge \frac{i}{2\pi} 2 \xi \wedge \bar \xi \otimes V \right]_{\sym}}{\omega^n}.$$
	Ellipticity holds when $\sigma_\xi(P_Z(A))$ is invertible for any $\xi\ne 0$. Since $A$ is a subsolution, we have
	$$\trace i \left[ \Im(e^{-i\varphi(E)} Z'(A)) \wedge u^* \wedge u \right]_{\sym} > 0.$$
	If we choose $u=\xi \otimes V$ for some non-zero endomorphism $V$ then using the tracial property we may cyclically permute one of the $V$'s in each term in the above expression to the end, and we obtain
	$$\trace (\sigma_\xi(P_Z(A))(V) V) = \frac{1}{2} \trace i \left[ \Im(e^{-i\varphi(E)} Z'(A)) \wedge u^* \wedge u \right]_{\sym} \ne 0.$$
	It follows that $\sigma_\xi(P_Z(A))(V)\ne 0$ for any $V\ne 0$, so $\sigma_\xi(P_Z(A))$ has no kernel and $D_Z$ is elliptic.
\end{proof}

\begin{remark}
	The subsolution condition does not appear in the large volume limit of critical interest in \cref{ch:correspondence} as the leading order term in $k$ has coefficient $[\omega]^{n-1}$ and so dominates the other terms for $k\gg 0$. Indeed \cref{prop:subsolutionimpliesellipticity} is not necessary in the large volume limit, where ellipticity is easily inherited from the Hermitian Yang--Mills equation at $k=\infty$, using the property that invertibility of the symbol is an open condition. Let us record this in the following proposition.
\end{remark}

\begin{proposition}\label{lem:linearisation}
	In the large volume limit every Chern connection $A$ is a subsolution of the $Z$-critical equation, and in particular the $Z$-critical equation is elliptic in the large volume limit. Furthermore there is an expansion of the linearisation
	$$P_{Z_k}(A) = C (\rk E) [\omega]^n k^{2n-1} \Lap_{A^{\End E}} + O(k^{2n-2})$$
	for some constant $C\ne 0$.
\end{proposition}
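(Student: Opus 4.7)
The plan is to expand the formal derivative $\tilde Z_k'(A)$ to leading order in $k$ using the structure of a polynomial central charge, and then exploit the fact that the leading term is a \emph{scalar} form times $\id_E$ to simultaneously (a) verify the subsolution condition and (b) collapse the graded symmetrisation in \cref{prop:linearisationDZ} to an ordinary wedge, after which the Nakano identities recover the $\End E$-Laplacian exactly as in \cref{prop:linearisationHermiteEinstein}.

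First I would unpack \cref{def:polynomialcentralcharge}: because $\tilde Z_k'(A)$ only sees terms at least linear in $\frac{i}{2\pi}F_A$, the highest-order term in $k$ comes from $d=n-1$, $j=1$, $\ell=0$ and equals
$$\tilde Z_k'(A) = \rho_{n-1}\,k^{n-1}\,\omega^{n-1}\id_E + O(k^{n-2}).$$
Combining with the asymptotic $e^{-i\varphi_k(E)} = \bar\rho_n/|\rho_n| + O(k^{-1})$ read off from the leading term $\rho_n k^n [\omega]^n \rk E$ of \eqref{eq:Zkexpansion}, the imaginary part becomes
$$\Im(e^{-i\varphi_k(E)}\tilde Z_k'(A)) = \frac{c}{|\rho_n|}\,k^{n-1}\,\omega^{n-1}\id_E + O(k^{n-2}),$$
where $c = \Re\rho_n\,\Im\rho_{n-1} - \Im\rho_n\,\Re\rho_{n-1}>0$ is the same positive constant arising in \cref{lem:largevolume} from the stability-vector condition $\Im(\rho_{n-1}/\rho_n)>0$.

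Next, because $\omega^{n-1}\id_E$ is central in $\End E$, a direct calculation shows that the graded symmetrisation in \cref{def:positivityEndEforms} collapses:
$$\bigl[\omega^{n-1}\id_E \wedge u^* \wedge u\bigr]_{\sym} = \omega^{n-1}\wedge u^*\wedge u$$
for any $u\in T^*_{0,1}X_p\otimes \End E_p$. Writing $u = \sum_i \eta^i \otimes W_i$ in a unitary coframe $\{\eta^i\}$ of $T^*_{0,1}X_p$, orthogonality gives $i\omega^{n-1}\wedge\bar\eta^i\wedge\eta^j = c'\delta_{ij}\omega^n$ for some $c'>0$, and hence
$$\trace\,i\bigl[\omega^{n-1}\id_E \wedge u^*\wedge u\bigr]_{\sym} = c' \sum_i \trace(W_i^*W_i)\,\omega^n,$$
which is strictly positive whenever $u\ne 0$. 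Positivity is open in $k$, so it persists for $k\gg 0$; hence $A$ is a subsolution, and \cref{prop:subsolutionimpliesellipticity} gives ellipticity immediately.

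Finally, substituting the expansion into \cref{prop:linearisationDZ} and again using centrality to collapse the symmetrisation yields
$$P_{Z_k}(A)(V) = \frac{c}{|\rho_n|}\,k^{n-1}\,\omega^{n-1}\wedge\frac{i}{2\pi}(\delbardel - \deldelbar)V + O(k^{n-2}).$$
Just as in the proof of \cref{prop:linearisationHermiteEinstein}, the Nakano identities identify $\omega^{n-1}\wedge\tfrac{i}{2\pi}(\delbardel - \deldelbar)V$ with $\tfrac{1}{2\pi n}(\Lap_{A^{\End E}} V)\,\omega^n$. The only bookkeeping step that requires any care --- and the main obstacle to writing down the constants as stated --- is to absorb the normalising factor $r_k = |Z_k(E)| = |\rho_n|k^n[\omega]^n\rk E + O(k^{n-1})$ exactly as is done implicitly in \cref{lem:largevolume}; this promotes the $k^{n-1}$ order to the stated $k^{2n-1}$ and produces the factors $(\rk E)[\omega]^n$ and constant $C = c/(2\pi n)\ne 0$ appearing in the claimed expansion.
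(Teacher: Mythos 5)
Your proposal is correct and takes essentially the same route as the paper: identify the leading-order term of $\Im(e^{-i\varphi_k(E)}\tilde Z_k'(A))$ as a positive multiple (via $\Im(\rho_{n-1}/\rho_n)>0$, i.e.\ the constant $c$ of \cref{lem:largevolume}, with the $r_k=|Z_k(E)|$ normalisation accounting for the jump from $k^{n-1}$ to $k^{2n-1}$) of $\omega^{n-1}\otimes\id_E$, let this dominate for $k\gg0$ to get the subsolution property and hence ellipticity, and reduce the linearisation to $\Lap_{A^{\End E}}$ exactly as in \cref{prop:linearisationHermiteEinstein}. One cosmetic remark: the three-factor symmetrisation $[\omega^{n-1}\id_E\wedge u^*\wedge u]_{\sym}$ is $\tfrac12\left(\omega^{n-1}\wedge u^*\wedge u-\omega^{n-1}\wedge u\wedge u^*\right)$ rather than $\omega^{n-1}\wedge u^*\wedge u$ on the nose, but since the subsolution condition in \cref{def:positivityEndEforms} only involves the trace, where the two terms agree, your positivity argument (and the two-factor collapse used for the linearisation) goes through unchanged.
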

\begin{proof}
	The claim about subsolutions is immediate from the expansion \cref{lem:largevolume} and computing the derivative $\tilde Z'(A)$. Indeed to leading order $k^{2n-1}$ the subsolution condition consists of a form
	$$c i \trace u^* \wedge u \wedge \omega^{n-1}$$
	for a constant $c>0$ depending on the stability vector $\rho$, which dominates all other terms in the large volume limit.
	
	The expansion of the linearisation is a straight-forward application of the calculation \cref{lem:linearisationcurvatureChern} of the linearisation for the Hermite--Einstein equation applied to the expansion \cref{lem:largevolume} of the $Z$-critical equation.
\end{proof}

For posterity we also record one more property of the linearisation of the $Z$-critical equation unrelated to the subsolution condition.

\begin{proposition}\label{prop:selfadjoint}
	The linearisation $P_Z(A)$ of the $Z$-critical operator is self-adjoint as an operator on the space of smooth sections of the bundle of Hermitian endomorphisms of $E$. In particular $P_Z(A)$ extends to a symmetric unbounded operator from the $L^2$ sections of the bundle of Hermitian endomorphisms to itself.
\end{proposition}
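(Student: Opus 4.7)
The plan is to verify the formal self-adjointness identity $\langle P_Z(A) V, W \rangle = \langle V, P_Z(A) W \rangle$ for all smooth Hermitian endomorphisms $V, W$, with respect to the natural $L^2$ pairing $\langle V, W\rangle = \int_X \trace(VW)\,\omega^n$, and then to invoke standard functional analysis to extend $P_Z(A)$ to a densely-defined symmetric unbounded operator on $L^2$.

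First I would unpack the operator using \cref{prop:linearisationDZ}: each summand of $P_Z(A) V$ has the shape
\[
c \cdot \omega^d \wedge \bigl[F_A^{\,j-1} \wedge \tfrac{i}{2\pi}(\delbar_{A^{\End E}} \del_{A^{\End E}} - \del_{A^{\End E}}\delbar_{A^{\End E}}) V\bigr]_{\sym} \wedge \tilde U_\ell,
\]
where the operators $\del, \delbar$ appearing in \cref{prop:linearisationDZ} are really the endomorphism-bundle operators, and $c \in \RR$ absorbs $\Im(e^{-i\varphi(E)}\rho_d)$ together with combinatorial factors. Choosing a $d$-closed representative $\tilde U$ of the unipotent class $U$, I note that the Bianchi identity $d_{A^{\End E}} F_A = 0$ splits by bidegree into $\del_{A^{\End E}} F_A = 0$ and $\delbar_{A^{\End E}} F_A = 0$ separately, since $F_A$ is of pure type $(1,1)$. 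Combined with $d\omega = 0$, every ``background'' factor surrounding $V$ in the integrand is annihilated by both $\del_{A^{\End E}}$ and $\delbar_{A^{\End E}}$, which is exactly the input needed for integration by parts.

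Writing $R := \tfrac{i}{2\pi}(\delbar_{A^{\End E}}\del_{A^{\End E}} - \del_{A^{\End E}}\delbar_{A^{\End E}})$ and expanding the symmetrisation as $\tfrac{1}{j}\sum_{k=0}^{j-1} F_A^{k}\wedge RV \wedge F_A^{j-1-k}$, the heart of the proof would be to apply Stokes' theorem on the closed manifold $X$ twice per summand: a $\delbar$-integration by parts followed by a $\del$-integration by parts, each of which produces no boundary terms thanks to the closedness of $F_A$, $\omega$, and $\tilde U$. The two sign changes from integration by parts combine with the antisymmetry between the two summands of $R$ to yield
\[
\int_X \trace\bigl(F_A^{k} \wedge RV \wedge F_A^{j-1-k} \wedge W\bigr) = \int_X \trace\bigl(F_A^{k}\wedge V \wedge F_A^{j-1-k} \wedge RW\bigr).
\]

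Finally, summing on $k$ and reindexing via $k \mapsto j-1-k$, I would invoke the graded cyclic symmetry of the trace---with $V, W$ of form-degree zero and all other factors of even form-degree, so all graded signs are trivial---to rearrange the right-hand side into the integrand defining $\langle V, P_Z(A) W\rangle$. This gives the formal adjoint identity on smooth sections, from which the existence of a symmetric closed unbounded extension to $L^2$ follows by standard functional analysis. The main obstacle will be keeping track of the non-commutativity of the $\End E$-factors inside the symmetrisation: the integration by parts must proceed one $F_A$-slot at a time, and it is crucial that $F_A$, $RV$, and $RW$ are all of bidegree $(1,1)$ (and hence even) so that the cyclic reorderings under trace are sign-free. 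Type $(1,1)$ together with the Bianchi identity is precisely the structural input which makes the self-adjointness argument go through.
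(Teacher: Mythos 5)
Your proposal is correct and follows essentially the same route as the paper: integrate by parts with respect to $\delbar_A$ and $\del_A$, using that the background factor $\Im(e^{-i\varphi(E)}\tilde Z'(A))$ is $d_A$-closed (Bianchi identity plus closedness of $\omega$ and $\tilde U$), together with the graded symmetrisation and the tracial/cyclic property to move the second-order operator from $V$ onto $W$. The paper states this more tersely, noting the identity holds for any $d_A$-closed endomorphism-valued $(n-1,n-1)$-form in place of $\Im(e^{-i\varphi(E)}\tilde Z'(A))$, which is exactly the structural point you isolate.
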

\begin{proof}
	This follows simply from integration by parts. Indeed we consider
	$$P_Z(A): V \mapsto \frac{ \left[\Im(e^{-i\varphi(E)} Z'(A))\wedge \frac{i}{2\pi} (\delbardel - \deldelbar) V\right]_{\sym}}{\omega^n}$$
	for a Hermitian endomorphism $V$. Then we wish to show
	$$\langle U, P_Z(A)(V) \rangle = \langle V, P_Z(A) (U) \rangle$$
	where the inner product is given by the pairing on smooth Hermitian endomorphism $U,V$ defined by
	$$\langle U,V \rangle = \int_X \trace (UV) \omega^n.$$
	Integrating by parts with respect to $\delbar_A$ and $\del_A$ and using the graded symmetrisation and tracial property gives the result, which in fact holds individually for any term of the form
	$$T \wedge \frac{i}{2\pi} (\delbar_A \del_A - \del_A \delbar_A) V$$
	where $T$ is any $d_A$-closed endomorphism-valued $(n-1,n-1)$-form. 
\end{proof}

\subsection{Stability with respect to subvarieties\label{sec:stabilitysubvarieties}}

Let us now reveal the importance of the subsolution condition for the algebraic geometry of the bundle $E$. 

First we emphasise the following somewhat remarkable property of the $Z$-critical equation, which is fundamentally due to the Chern--Weil theory of the Chern character.

\begin{lemma}\label{lem:Zderivativerestriction}
	Let $D\subset X$ be an irreducible divisor, and $E\to X$ a vector bundle. Then
	$$Z_D(E) = \int_D \trace \rest{\tilde Z'(A)}{D}$$
	where $A$ is a Chern connection on $E$.
\end{lemma}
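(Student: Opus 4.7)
The plan is to unwind the definitions on both sides and apply Chern--Weil theory, noting that $\trace \tilde Z'(A)$ is designed to be of exactly the right degree to become a top-form on $D$ upon restriction. Writing $X := \frac{i}{2\pi} F_A$ for brevity and denoting by $\tilde U_{(r,r)}$ the $(r,r)$-component of the representative form $\tilde U$, the top-degree $(n,n)$-part of $\tilde Z(A)$ is
\begin{equation*}
	\tilde Z(A)_{(n,n)} = \sum_{d+j+r=n} \rho_d \, \omega^d \wedge \frac{X^j}{j!} \wedge \tilde U_{(r,r)}.
\end{equation*}
Formally differentiating in $X$ (which sends $X^j/j!$ to $X^{j-1}/(j-1)!$ via $dX = \id_E$) gives
\begin{equation*}
	\tilde Z'(A) = \sum_{\substack{d+j+r=n\\ j\geq 1}} \rho_d \, \omega^d \wedge \frac{X^{j-1}}{(j-1)!} \wedge \tilde U_{(r,r)},
\end{equation*}
which is an $\End E$-valued $(n-1,n-1)$-form. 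Any ambiguity arising from non-commutativity of $X$ in the monomial $X^{j-1}$ will disappear once the trace is taken, by cyclicity of $\trace$.

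Next I would take the trace and reindex with $j' = j-1$ to obtain
\begin{equation*}
	\trace \tilde Z'(A) = \sum_{d+j'+r = n-1} \rho_d \, \omega^d \wedge \frac{\trace X^{j'}}{j'!} \wedge \tilde U_{(r,r)},
\end{equation*}
which I would recognise as precisely the $(n-1,n-1)$-component of the differential form $\left(\sum_d \rho_d \omega^d\right) \wedge \trace \tilde \Ch(A) \wedge \tilde U$ on $X$. Since $D\subset X$ is irreducible of complex dimension $n-1$, pullback along $\iota : D \into X$ preserves type and annihilates anything of total degree exceeding $2(n-1)$, so restriction to $D$ picks out exactly this $(n-1,n-1)$-component.

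Finally, integrating over $D$ and invoking Chern--Weil theory --- namely that $\trace \tilde \Ch(A)$ represents $\Ch(E)$ in de Rham cohomology, compatibly with pullback along $\iota$ --- yields
\begin{equation*}
	\int_D \trace \rest{\tilde Z'(A)}{D} = \int_D \left(\sum_d \rho_d [\omega]^d\right) \cdot \Ch(E) \cdot U = Z_D(E),
\end{equation*}
which is the desired identity. The proof is essentially a bookkeeping exercise; the only (modest) subtlety is tracking the degrees so that $\trace \tilde Z'(A)$ lands in exactly the right degree $2(n-1)$ to serve as a top-form on $D$.
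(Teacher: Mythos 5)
Your proof is correct and follows essentially the same route as the paper's: both identify $\trace \tilde Z'(A)$ with the $(n-1,n-1)$-component of the Chern--Weil representative of the density $[Z(E)]$, using that the formal derivative of $\exp\left(\tfrac{i}{2\pi}F_A\right)$ shifts $\tilde\Ch_j$ to $\tilde\Ch_{j-1}$, and then integrate over the $(n-1)$-dimensional divisor $D$ so that only this component contributes. (The paper phrases the comparison at the level of cohomology classes rather than representative forms, but the content is identical.)
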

\begin{proof}
	Recall that by definition of the $Z$-slope of a subvariety \eqref{eq:Zslopesubvariety}, we have
	$$Z_D(E) = \int_D [Z(E)]$$
	where $[Z(E)]$ is the density of the central charge $Z$. To verify the claim, it suffices to check
	$$[Z(E)]^{(n-1,n-1)} = [\trace Z'(A)].$$
	Indeed
	$$[Z(E)] = \left( \sum_{d=0}^n \rho_d \omega^d \right) . \Ch(E) . U$$
	so the degree $(n-1,n-1)$ component is
	\begin{align*}
		[Z(E)]^{(n-1,n-1)} &= \sum_{d+j+\ell = n-1} \rho_d \omega^d . \Ch_j(E) . U_j\\
		&=\sum_{d+j+\ell = n} \rho_d \omega^d . \Ch_{j-1}(E) . U_j
	\end{align*}
	where we interpret $\Ch_{-1}(E) = 0$. Now we use the fact that the Chern--Weil representative of $\Ch(E)$ with respect to a connection $A$ is
	$$\exp\left( \frac{i}{2\pi} F_A\right)$$
	and the formal derivative of $\tilde \Ch_j(A)$ with respect to $\frac{i}{2\pi} F_A$ is $\tilde \Ch_{j-1}(A)$, since $\deriv{}{x} \exp(x) = \exp(x).$

	Therefore we have
	$$[\trace Z'(A)] = [Z(A)]^{(n-1,n-1)}.$$
\end{proof}

\begin{remark}
	Let us take the perspective that other gauge-theoretic equations, such as the Hermite--Einstein, J-equation (and higher rank J-equation), and dHYM equation can be written as $Z$-critical equations possibly for degenerate choices of stability condition. Then \cref{lem:Zderivativerestriction} manifests for each of these equations, and demonstrates that if the equation can be expressed in terms of the Chern character, then (due to the Chern--Weil representation in terms of $\exp$) positivity of successive linearisations of the equation can be related to algebro-geometric stability with respect to subvarieties. 
\end{remark}

Let us now use \cref{lem:Zderivativerestriction} to deduce a stability criterion implied by the subsolution condition.

\begin{theorem}\label{thm:subsolutiondivisorstability}
	Suppose $E\to (X,\omega)$ admits a subsolution to the $Z$-critical equation. Then $E$ is $Z$-stable with respect to quotients $E\onto E\otimes \calO_D$ for $D\subset X$ an irreducible analytic divisor.
\end{theorem}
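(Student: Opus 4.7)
The key tool is \cref{lem:Zderivativerestriction}, which identifies
$$Z_D(E) = \int_D \trace \rest{\tilde Z'(A)}{D}$$
for any Chern connection $A$. The plan is to extract pointwise positivity of the integrand from the subsolution assumption, then integrate to obtain $\Im(Z_D(E)/Z_X(E))>0$, which by \cref{def:Zstabilitysubvariety} is exactly the desired $Z$-stability with respect to $D$.

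First I would establish pointwise positivity on the smooth locus $D_{reg}$. Fix $p\in D_{reg}$ and choose local holomorphic coordinates $z^1,\dots,z^n$ with $D=\{z^n=0\}$, so that $d\bar z^{\hat n}\wedge dz^{\hat n}:=dz^1\wedge d\bar z^1\wedge\cdots\wedge dz^{n-1}\wedge d\bar z^{n-1}$ is (up to a positive factor) the volume form on $D$ at $p$. Testing the positivity condition of \cref{def:positivityEndEforms} with the special choice $u = d\bar z^n\otimes \id_E$ collapses the graded symmetrisation, since $\id_E$ commutes with every endomorphism factor and the $(1,1)$-form $i\,dz^n\wedge d\bar z^n$ commutes with every form factor. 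The subsolution hypothesis therefore yields
$$\trace\Bigl(\Im(e^{-i\varphi(E)}\tilde Z'(A))\wedge i\,dz^n\wedge d\bar z^n\Bigr)\Big|_p > 0.$$
The only component of the $(n-1,n-1)$-form $\trace\Im(e^{-i\varphi(E)}\tilde Z'(A))$ that survives wedging with $dz^n\wedge d\bar z^n$ is its $dz^{\hat n}\wedge d\bar z^{\hat n}$-component, which is precisely $\rest{\trace\Im(e^{-i\varphi(E)}\tilde Z'(A))}{D}$. Hence this restriction is a strictly positive smooth volume form on $D_{reg}$.

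Next I would integrate this inequality, interchanging the imaginary part with integration:
$$\Im\Bigl(e^{-i\varphi(E)}\int_D \trace\rest{\tilde Z'(A)}{D}\Bigr) > 0,$$
where the integral is well-defined because the singular locus $D_{sing}$ has real codimension at least two in $D$ and contributes nothing (the integrand extends smoothly from all of $X$). By \cref{lem:Zderivativerestriction} the integral equals $Z_D(E)$, so using $e^{-i\varphi(E)} = \overline{Z_X(E)}/|Z_X(E)|$ the inequality becomes
$$\frac{1}{|Z_X(E)|}\,\Im\bigl(\overline{Z_X(E)}\,Z_D(E)\bigr) > 0,$$
equivalently $\Im(Z_D(E)/Z_X(E))>0$, proving $Z$-stability against the quotient $E\onto E\otimes\calO_D$.

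The main obstacle is the first step: one has to verify carefully that the $\End E$-valued graded-symmetrised positivity of \cref{def:positivityEndEforms} really reduces, for $u=d\bar z^n\otimes \id_E$, to the scalar pointwise positivity of the $D$-restriction of $\trace\Im(e^{-i\varphi(E)}\tilde Z'(A))$. Once this commutativity/symmetrisation bookkeeping is dispatched, the remainder of the argument is Chern--Weil (via \cref{lem:Zderivativerestriction}) and elementary complex arithmetic. A minor subtlety worth mentioning is that the argument only requires the pointwise subsolution inequality on the open dense subset $D_{reg}$, so no hypothesis on the singularities of the irreducible analytic divisor $D$ is needed beyond what is automatic from irreducibility.
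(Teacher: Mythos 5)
Your proposal is correct and follows essentially the same route as the paper's proof: testing the subsolution condition with $u = d\bar f\otimes\id_E$ for a local defining function $f$ of $D$ (your $d\bar z^n\otimes\id_E$), using that $\id_E$ collapses the graded symmetrisation to deduce that $\rest{\trace\Im(e^{-i\varphi(E)}\tilde Z'(A))}{D}$ is a positive volume form on the smooth locus, then integrating and invoking \cref{lem:Zderivativerestriction} to conclude $\Im(Z_D(E)/Z_X(E))>0$. No gaps; the symmetrisation bookkeeping and the treatment of the singular locus are handled exactly as in the paper.
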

\begin{proof}
	Let $D\subset X$ be an irreducible codimension one analytic subvariety of $X$, and let $A$ be the subsolution of the $Z$-critical equation on $E$. Let $f \in \calO_X(U)$ be a holomorphic function on an open subset $U\subset X$ such that $\rest{D}{U} = \{f = 0\}.$ Set $u=d\bar f \otimes \id_E$ at a smooth point $p\in D\cap U$. The subsolution condition implies
	$$\trace i\left[ \Im(e^{-i\varphi(E)} Z'(A)), df\otimes \id, d\bar f\otimes \id_E\right]_{\sym} > 0$$
	at $p$. Since $d\bar f \otimes \id_E$ commutes with other endomorphisms, this is equivalent to the condition
	$$\trace \left( \Im(e^{-i\varphi(E)} \tilde Z'(A))\right)\wedge i df \wedge d\bar f>0.$$
	This implies that the $(n-1,n-1)$-form 
	$$\trace \left( \Im(e^{-i\varphi(E)} \tilde Z'(A))\right)$$
	is a positive volume form on $T_p D$ at $p\in D$. Indeed choosing submanifold coordinates $(z^1,\dots,z^n)$ on $U$ such that $z^1 = f$, then expanding 
	$$\alpha = \rest{\trace \left( \Im(e^{-i\varphi(E)} \tilde Z'(A))\right)}{U}$$
	in local coordinates $(z^2,\dots,z^n)$ on $D\cap U$, we observe that 
	$$\trace \left( \Im(e^{-i\varphi(E)} \tilde Z'(A))\right)\wedge i df \wedge d\bar f = \alpha \wedge i df \wedge d\bar f$$
	on $U$. Since the kernel of $idf \wedge d\bar f$ is generated by the coordinate vector fields $\pderiv{}{z^i}$ for $i=2,\dots,n$ but $\alpha \wedge i df \wedge d \bar f > 0$, we must have that $\alpha$ is positive in these directions.	This result also follows from applying \cite[Ch. III \S 1 (1.6)]{demailly1997complex}, which we have observed directly here in our case.
	
	Using the subsolution condition for all points $p$ in the smooth locus of $D$, we obtain that the top-degree form
	$$\rest{\trace \left( \Im(e^{-i\varphi(E)} \tilde Z'(A))\right)}{D}$$
	is positive on the smooth locus of $D$, and conclude
	$$\int_D \rest{\trace \left( \Im(e^{-i\varphi(E)} \tilde Z'(A))\right)}{D} > 0,$$
	where the integral only depends on the smooth locus since this form extends, being the restriction of a globally-defined smooth form from $X$.	By \cref{lem:Zderivativerestriction} this is equal to
	$$\Im(e^{-i\varphi(E)} Z_D(E)) > 0$$
	which occurs if and only if 
	$$\Im\left( \frac{Z_D(E)}{Z_X(E)}\right) > 0.$$
	Thus $E$ is $Z$-stable with respect to $D$ in the sense of \cref{def:Zstabilitysubvariety}. 
\end{proof}

\begin{remark}
	In the case where $Z = -\deg + i \rk$ is the degenerate central charge which reproduces the Hermite--Einstein equation, the above condition that $\trace \left( \Im(e^{-i\varphi(E)} \tilde Z'(A))\right)$ restricts to a volume form on any divisor is the simple fact that 
	$$\int_D \omega^{n-1} > 0$$
	for any divisor $D\subset X$ combined with the fact that $\rk E > 0$. In particular translating to generalised slopes this is equivalent to the fact that 
	$$\mu_Z(E) < \mu_Z(E\otimes \calO_D) = +\infty,$$
	that is, $E$ is never slope-destabilised by a torsion sheaf with support in codimension one. Later on \cref{thm:subsolutionsubvarietystability} in this setting will be vacuous (since the further derivatives of the central charge $Z$ will vanish) which manifests in the fact that the quotient $E\onto E\otimes \calO_D$ with $\codim D \ge 2$ has a kernel $F\into E$ which is a coherent subsheaf of rank $\rk F = \rk E$ and equal slope, and such subsheaves are irrelevant by definition of slope stability.
\end{remark}

In the case of the deformed Hermitian Yang--Mills equation and J-equation the relationship between stability with respect to subvarieties and the subsolution condition is well-understood. Let us consider just the example of the J-equation
$$\omega \wedge \alpha^{n-1} = c \alpha^n$$
for a K\"ahler metric $\alpha$ with fixed input $\omega$. Then the subsolution condition identified by Song--Weinkove \cite{song2008convergence} is given by 
\begin{equation}
	nc \alpha^{n-1} - (n-1) \omega \wedge \alpha^{n-2} > 0\label{eqn:jequationsubsolution}
\end{equation}
in the sense of $(n-1,n-1)$-forms. Writing the eigenvalues of $\alpha$ relative to $\omega$ as $\lambda_1,\dots,\lambda_n$, the J-equation takes the form
$$\sum_{i=1}^n \frac{1}{\lambda_i} = nc$$
and the subsolution condition becomes
\begin{equation}
	\sum_{i=1 \\ i\ne j}^n \frac{1}{\lambda_i} < nc,\quad j=1,\dots,k. \tag{\ref*{eqn:jequationsubsolution}} \label{eqn:jequationsubsolutionlambda}
\end{equation}
One may exploit the inherent symmetry of the J-equation with respect to the eigenvalues $\lambda_i$ to deduce from the subsolution condition \eqref{eqn:jequationsubsolutionlambda} the bound
$$\frac{1}{\lambda_i} < \frac{nc}{n-1}$$
for all $i=1,\dots,n$. In particular as was noted by Lejmi--Sz\'ekelyhidi \cite{lejmi-szekelyhidi}, this quickly implies the stability condition
$$\int_V c\alpha^p - p\alpha^{p-1} \wedge \omega > 0$$
for all $p$-dimensional subvarieties $V\subset X$.\footnote{Lejmi--Sz\'ekelyhidi conjectured the converse, that this stability condition implies the existence of a subsolution, and a uniform version was proven for any compact K\"ahler manifold by Chen \cite{chen2019j}. This was strengthened to the full conjecture in the projective case by Datar--Pingali \cite{datar2021numerical} and in general by Song \cite{song2020nakai}.}

A similar analysis can be done for the dHYM equation which has the symmetric expression
$$\sum_{i=1}^n \arctan(\lambda_i) = \hat \theta,$$
provided one works near the hypercritical phase range.

To summarise, the inherent symmetry of the J-equation and dHYM equation allow one to pass from a subsolution condition just on an $(n-1,n-1)$-form such as in \cref{def:subsolution} to stability criterion like \cref{thm:subsolutiondivisorstability} for all higher codimension analytic subvarieties. In general we therefore expect the following notion of subsolution to be necessary and stronger than \cref{def:subsolution}.

\begin{definition}[Strong subsolution]
	A Hermitian metric $h$ on $E\to (X,\omega)$ is a \emph{strong subsolution} for the $Z$-critical equation if 
	$$\Im(e^{-i\varphi(E)} \tilde Z^{(p)} (h))>0$$
	in the sense of $\End E$-valued $(n-p,n-p)$-forms for all $p=1,\dots,n$. Here $\tilde Z^{(p)}(h)$ denotes the $p$th formal matrix derivative of $\tilde Z(h)$ with respect to $iF(h)/2\pi$.
\end{definition}

Here we say an $\End E$-valued $(n-p,n-p)$-form $T$ is positive if for all $x\in X$ and all non-zero, linearly independent $u_1,\dots,u_p\in T_{0,1}^* X_x \otimes \End E_x$, we have
$$\trace i^p \left[ \rest{T}{x} \wedge u_1^* \wedge u_1 \wedge \cdots \wedge u_p^* \wedge u_p\right]_{\sym} > 0.$$

This is a generalisation to bundles of the positivity of $(n-p,n-p)$-forms in \cite[Ch. III \S 1]{demailly1997complex}.

Based on the above discussion, the dHYM equation in higher rank should still exhibit some of the symmetries of the line bundle setting, albeit with added difficulty due to the curvature $F_A$ being matrix-valued. Nevertheless let us propose a possibly very strong conjecture, which we do not expect to hold in the pointwise sense except in the case where $Z$ exhibits particular symmetries as described above.

\begin{conjecture}
	A subsolution $h$ of the higher rank deformed Hermitian Yang--Mills equation is a strong subsolution.
\end{conjecture}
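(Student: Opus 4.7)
The plan is to work pointwise and exploit the specific structure $\tilde Z(h)\propto(\omega\otimes\id_E + i\Psi)^n$ of the dHYM central charge, where I set $\Psi:=iF(h)/(2\pi)-\beta\otimes\id_E$ to absorb the $B$-field. At a fixed point $x\in X$, choose a unitary coframe $dz^1,\dots,dz^n$ in which $\omega_x=i\sum_j dz^j\wedge d\bar z^j$, and write $\Psi_x=\sum_{j,k}H_{j\bar k}\,dz^j\wedge d\bar z^k$ with $H_{j\bar k}^*=H_{k\bar j}$ Hermitian-conjugate endomorphisms of $E_x$. The formal-derivative prescription $d(iF_A/(2\pi))=\id_E$ then gives
$$\tilde Z^{(p)}(h)=\frac{(-1)^{n+1}i^{n+p}}{(n-p)!}\bigl[(\omega\otimes\id_E+i\Psi)^{n-p}\bigr]_{\sym},$$
so the subsolution hypothesis is positivity of $\Im(e^{-i\varphi(E)}\tilde Z'(h))$, while the desired strong subsolution is positivity of $\Im(e^{-i\varphi(E)}\tilde Z^{(p)}(h))$ for every $p$ in the sense of the extension of \cref{def:positivityEndEforms} to $\End E$-valued $(n-p,n-p)$-forms.

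\textbf{Commuting case.} First I would prove the conjecture under the additional hypothesis that the matrices $\{H_{j\bar k}\}$ pairwise commute. Simultaneous diagonalisation then yields a splitting $E_x=L_1\oplus\cdots\oplus L_r$ on which $\Psi$ acts block-diagonally with scalar blocks $\Psi_a$, and diagonalising each $\Psi_a$ against $\omega$ produces eigenvalues $\lambda^a_1,\dots,\lambda^a_n$. The symmetrised $(n-p)$-th power decomposes as a sum over the lines of the ordinary rank-one expression $\sum_{|S|=n-p}\prod_{i\in S}(1+i\lambda^a_i)$, wedged with the corresponding $\omega_S$. On each line the subsolution condition becomes, after multiplying by $e^{-i\varphi(E)}$ and taking imaginary parts, the family of symmetric inequalities $\sum_{i\neq j}\arctan\lambda^a_i > \varphi(E)-\pi/2$ for each $j$, and by the Collins--Jacob--Yau symmetrisation argument recalled in \cref{sec:stabilitysubvarieties} these immediately imply the stronger inequalities for every proper partial sum $\sum_{i\in S}\arctan\lambda^a_i$. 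Summing the resulting pointwise positivities over the lines $L_a$ recovers the strong subsolution condition in the commuting case.

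\textbf{Main obstacle.} The hardest part, and the reason this is stated only as a conjecture, is removing the commuting hypothesis: in general the $H_{j\bar k}$ admit no simultaneous diagonalisation and the symmetrised expansion of $(\omega\otimes\id_E+i\Psi)^{n-p}$ produces genuine commutator terms $[H_{j\bar k},H_{\ell\bar m}]$ with no a priori sign. My proposed strategy is twofold. On the one hand, I would attempt a continuity argument: the subsolution condition is open in the space $\mathcal{H}$ of endomorphism-valued Hermitian $(1,1)$-forms, and I would try to connect $\Psi$ to a commuting approximation $\Psi_0$ by a path $\Psi_t\in\mathcal{H}$ along which the subsolution condition is preserved, perhaps by flowing $\Psi$ under a heat-type equation adapted to the symmetrised structure of $(\omega\otimes\id+i\Psi)^n$. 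On the other hand, I would attempt a direct algebraic estimate: test the $p=1$ subsolution condition against all $u=\xi\otimes v$ with $v$ ranging over a Hermitian-orthonormal basis adapted to a diagonal part of $\Psi$, and show that the resulting diagonal bounds dominate the off-diagonal commutator contributions in the expansion of $\tilde Z^{(p)}$ by a Cauchy--Schwarz-type inequality that uses $H_{j\bar k}^*=H_{k\bar j}$.

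\textbf{Closing the argument.} The two ingredients would combine as follows: the commuting case gives the strong subsolution at $\Psi_0$, and the commutator estimate propagates positivity along the path from $\Psi_0$ back to $\Psi$. Crucially, this strategy has to use the precise combinatorial shape of the symmetrised power, since as noted in the discussion preceding the conjecture an analogous pointwise statement is not expected for a general polynomial central charge $Z$; it is the $(\omega\otimes\id_E+i\Psi)^n$-form of dHYM, together with the symmetry of $\sum_j\arctan\lambda_j$, that must enter in an essential way. Should the pointwise commutator estimate ultimately fail, the same outline would still yield an integrated version of the strong subsolution statement, which by an argument parallel to \cref{thm:subsolutiondivisorstability} applied to the restriction of $\tilde Z^{(p)}$ to codimension-$p$ subvarieties would suffice to deduce $Z$-stability of $E$ against all analytic subvarieties, recovering the principal algebro-geometric consequence of the conjecture.
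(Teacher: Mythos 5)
There is no proof in the paper for you to match: this statement is left as an open conjecture, and the surrounding discussion explicitly warns that it is ``possibly very strong'' and is not expected to hold pointwise except when $Z$ has special symmetries, and then only in some critical phase range. Your text is a strategy sketch rather than a proof, and you say so yourself: the entire non-commuting case --- which is precisely the content of the conjecture, since for a line bundle (or commuting curvature) the statement reduces to the known Collins--Jacob--Yau symmetrisation picture --- is deferred to two speculative devices, a ``heat-type'' continuity path along which the subsolution condition is somehow preserved, and an unproven Cauchy--Schwarz bound on commutator terms. Neither is carried out, no candidate flow or path is written down, and no estimate is even stated precisely, so the central difficulty is not addressed; the fallback ``integrated version'' at the end proves a different (weaker, cohomological) statement, not the conjecture.

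Even the commuting case as you present it has gaps. First, you impose no phase hypothesis, but already in rank one the implication ``subsolution $\Rightarrow$ strong subsolution'' relies on working in the hypercritical (or at least supercritical) phase range, exactly as recalled in \cref{sec:stabilitysubvarieties}; without a phase restriction the partial-sum inequalities for $\sum_{i\in S}\arctan\lambda_i$ do not follow from the single-index ones. Second, in higher rank both \cref{def:subsolution} and the strong subsolution condition are tested against \emph{arbitrary} elements $u\in T^*_{0,1}X_x\otimes\End E_x$, not only against $u=\xi\otimes v$ with $v$ preserving your eigenline decomposition $E_x=L_1\oplus\cdots\oplus L_r$. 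Off-diagonal test endomorphisms $v:L_a\to L_b$ produce mixed inequalities coupling the eigenvalue tuples $\lambda^a_\bullet$ and $\lambda^b_\bullet$ inside the graded symmetrisation, so the subsolution hypothesis is not simply the direct sum of line-bundle subsolution conditions, and the desired strong subsolution is likewise not obtained by ``summing the pointwise positivities over the lines.'' You would need to show that the mixed conditions implied by the subsolution hypothesis dominate the mixed conditions required by the strong subsolution, and that step is missing. As it stands, the proposal neither proves the conjecture nor improves on the status it has in the paper, where it is deliberately left open.
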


We note that similarly to the case of the dHYM equation, we may only expect the above conjecture to hold in some ``critical phase range" (what exactly is meant by the phase for the higher rank $Z$-critical equation is not clear). 

In any case, after assuming the strong subsolution condition, we can produce an analogue of \cref{thm:subsolutiondivisorstability} for any codimensional subvariety.

\begin{theorem}\label{thm:subsolutionsubvarietystability}
	Suppose $E\to (X,\omega)$ admits a strong subsolution to the $Z$-critical equation. Then $E$ is $Z$-stable with respect to all irreducible analytic subvarieties $V\subset X$.
\end{theorem}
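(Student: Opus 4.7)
The proof will follow the template of Theorem~\ref{thm:subsolutiondivisorstability}, replacing the single defining function of a divisor by a system of $p=\codim V$ defining functions, and using the $p$th rather than the first derivative of the central charge. Thus the plan has three steps.

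First I would extend Lemma~\ref{lem:Zderivativerestriction} to higher codimension: for any irreducible analytic subvariety $\iota:V\hookrightarrow X$ of codimension $p$, and any Chern connection $A$ on $E$,
$$Z_V(E) \;=\; \int_V \trace\,\rest{\tilde Z^{(p)}(A)}{V}.$$
By definition $Z_V(E)$ is the pairing of $[V]$ with the degree $(n-p,n-p)$ component of $[Z(E)] = \bigl(\sum_d \rho_d[\omega]^d\bigr)\cdot \Ch(E)\cdot U$. Since $\tilde\Ch_j(A)=(iF_A/2\pi)^j/j!$ is the Chern--Weil representative of $\Ch_j(E)$ and formal differentiation with respect to $iF_A/2\pi$ sends $\tilde\Ch_j$ to $\tilde\Ch_{j-1}$, iterating $p$ times gives precisely the degree $(n-p,n-p)$ component, establishing the identity exactly as in the proof of Lemma~\ref{lem:Zderivativerestriction}.

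Next I would apply the strong subsolution condition along $V$. Let $x \in V^{\mathrm{sm}}$ be a smooth point and choose holomorphic submanifold coordinates $(z^1,\ldots,z^n)$ on an open neighbourhood $U\ni x$ such that $V \cap U = \{f_1 = \cdots = f_p = 0\}$ with $f_i = z^i$. Set $u_i := d\bar f_i \otimes \id_E \in T_{0,1}^*X_x \otimes \End E_x$; these are linearly independent because the $f_i$ cut out $V$ transversally at a smooth point. Since each $u_i$ is scalar-valued tensor identity it commutes with every endomorphism, so the graded symmetrisation in the strong subsolution inequality collapses (up to combinatorial constants absorbed into the sign) into the scalar wedge, and one obtains
$$\trace\!\left(\Im\bigl(e^{-i\varphi(E)}\tilde Z^{(p)}(A)\bigr)\right)\wedge i^p\,df_1\wedge d\bar f_1\wedge\cdots\wedge df_p\wedge d\bar f_p \;>\; 0$$
at $x$. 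By the same local argument used in the proof of Theorem~\ref{thm:subsolutiondivisorstability} (equivalently, \cite[Ch.~III~\S1]{demailly1997complex}), this is exactly the statement that the $(n-p,n-p)$-form $\alpha := \rest{\trace \Im(e^{-i\varphi(E)}\tilde Z^{(p)}(A))}{V}$ is a strictly positive volume form on $T_x V$.

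Finally I would integrate. The form $\alpha$ extends as a smooth form over all of $X$, and is pointwise positive at every smooth point of $V$; since $V^{\mathrm{sm}}\subset V$ is dense and of full measure, $\int_V \alpha = \int_{V^{\mathrm{sm}}} \alpha > 0$. Combined with Step~1 this gives $\Im\!\bigl(e^{-i\varphi(E)} Z_V(E)\bigr) > 0$, equivalently $\Im\bigl(Z_V(E)/Z_X(E)\bigr)>0$, which is the definition of $Z$-stability with respect to $V$. The only real subtlety is the combinatorial check in Step~2 that inserting scalar-tensor-identity factors $u_i$ into the graded symmetrisation reduces cleanly to the ordinary wedge; this is routine once one notes that $\id_E$ commutes with everything and that the trace of a symmetrised product of factors, cycled appropriately, collapses onto the scalar piece. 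Everything else parallels the divisor case already proved.
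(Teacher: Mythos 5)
Your proposal is correct and follows essentially the same route as the paper: extend Lemma~\ref{lem:Zderivativerestriction} to the $p$th formal derivative so that $Z_V(E)=\int_V\trace\rest{\tilde Z^{(p)}(A)}{V}$, apply the strong subsolution inequality with $u_i=d\bar f_i\otimes\id_E$ for local defining functions of $V$ to see that $\trace\Im(e^{-i\varphi(E)}\tilde Z^{(p)}(A))$ restricts to a positive volume form on the smooth locus of $V$, and integrate to conclude $\Im(Z_V(E)/Z_X(E))>0$. The paper's proof is exactly this adaptation of Theorem~\ref{thm:subsolutiondivisorstability}, so no further comparison is needed.
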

\begin{proof}
	The proof is a straightforward adaptation of \cref{thm:subsolutiondivisorstability} and \cref{lem:Zderivativerestriction} to the case of higher derivatives. Namely the exact same argument as in \cref{lem:Zderivativerestriction} applied to successive derivatives of $\tilde Z(A)$ implies 
	$$Z_V(E) = \int_V \trace \rest{\tilde Z^{(p)} (A)}{V}$$
	where $\codim V = p$. Now supppose $A$ is a strong subsolution and let $f_1,\dots,f_p \in \calO_X(U)$ be a collection of holomorphic functions on some open subset $U\subset X$ such that
	$$\rest{D}{U} = \{f_1=\cdots=f_p=0\}.$$
	We apply the subsolution condition with $u_1=d\bar f_1 \otimes \id_E, \dots, u_p = d\bar f\otimes \id_E$. Then just as in \cref{thm:subsolutiondivisorstability} we see that the $(n-p,n-p)$-form
	$$\trace \Im(e^{-i\varphi(E)} \tilde Z^{(p)}(A))$$
	is a positive volume form when restricted to $V$, and so 
	$$\int_V \Im(e^{-i\varphi(E)} \tilde Z^{(p)}(A)) > 0.$$
	But by the analogue of \cref{lem:Zderivativerestriction} explained above, this is equivalent to 
	$$\Im(e^{-i\varphi(E)} Z_V(E)) > 0$$
	which is equivalent to 
	$$\Im\left( \frac{Z_V(E)}{Z_X(E)}\right) > 0$$
	so $E$ is $Z$-stable with respect to $V$. 
\end{proof}

\begin{remark}\label{rmk:jacobsheu}
	As observed in the work of Jacob--Sheu \cite{jacob2020deformed} and in \cite[Rmk. 1.10]{chen2021j}, when working far from the large volume limit and under ``lower phase" assumptions, the required sign of algebro-geometric invariant
	$$\Im\left(\frac{Z_V(E)}{Z_X(E)}\right)$$
	in $Z$-stability may change depending on the codimension of $V\subset X$. Thus our definition \cref{def:subsolution} and results \cref{thm:subsolutiondivisorstability,thm:subsolutionsubvarietystability} should be viewed as preliminary. In particular it is likely that the correct definition of strong subsolution should have a sign which depends on the codimension of the subvarieties being considered. We note that in all cases one will require \emph{non-degeneracy} of the resulting $\End E$-valued $(n-p,n-p)$-forms, and so any alternative definition will still imply the ellipticity of the $Z$-critical equation and the non-degeneracy of the (potentially negative or indefinite) K\"ahler form $\Omega_Z$ on $\calA(h)$. 
	
	Again we predict that the correct formalism for identifying these conditions is that of the derived category. Indeed we note that in the study of polynomial stability conditions \cite{bayer} the perverse sheaves identified as hearts of bounded $t$-structures for a given stability vector $\rho$ and perversity function $p$ are arranged \emph{precisely} so that quotient objects $\calF$ in the heart corresponding to sheaves supported in codimension $p$ have central charges $Z(\calF)$ all lying in the same half-plane. Under a suitable formulation of the $Z$-critical equation on such perverse sheaves for example, the required sign of the algebraic invariants identified by Jacob--Sheu should be identified by $(-1)^{p(d)}$ where $p(d)=-\floor{\frac{d}{2}}$ is the perversity function for the stability vector $\rho$ for $Z_{\mathrm{dHYM}}$ and $\codim V = n-d$.
\end{remark}

In \cref{fig:subsolutionconjectures} we sketch the conjectured relationships between the the various notions of solution, subsolution, and stability we have discussed away from the large volume limit. We suppress any mention of the derived category in the figure, acknowledging that it will be necessary to make sense of the proposed equivalences in general.

\begin{figure}[h]
	\centering
	
	\begin{tikzcd}[
		column sep={17em,between origins},
		row sep={6em,between origins},
		]
		\text{$\exists Z$-critical metric $h$} \arrow[double,dashed,Leftrightarrow]{r} \arrow[double,dashed]{d} \arrow[dashed,double,bend right=77]{dd} & \text{$E$ $Z$-stable} \arrow[double]{d} \arrow[double,bend left=77]{dd}\\
		\text{$h$ is a strong subsolution} \arrow[double,shift right]{r} \arrow[double]{d} & \text{$Z$-stable for subvarieties} \arrow[double,dashed,shift right]{l} \arrow[double]{d}\\
		\text{$h$ is a  subsolution}  \arrow[double,shift right]{r} & \text{$Z$-stable for divisors}  \arrow[double,dashed,shift right]{l}
	\end{tikzcd}
	\caption{The conjectural relationships between stability and solutions and subsolutions.}\label{fig:subsolutionconjectures}
\end{figure}
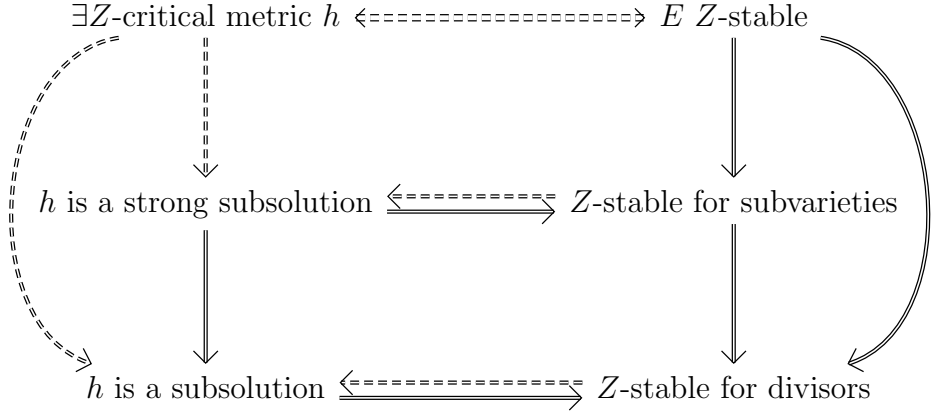

For posterity, we single out the most important analytical conjecture related to subsolutions in \cref{fig:subsolutionconjectures}.

\begin{conjecture}
	A solution to the $Z$-critical equation is a strong subsolution.
\end{conjecture}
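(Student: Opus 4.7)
The plan is to first establish the conjecture in the large volume limit and then argue for the general case via a pointwise analysis combined with an appropriate phase range assumption generalising the hypercritical condition of the dHYM equation. For $k \gg 0$ the expansion of \cref{lem:largevolume} applied to the successive formal derivatives $\tilde Z_k^{(p)}(h)$ shows that $\Im(e^{-i\varphi(E)} \tilde Z_k^{(p)}(h))$ is dominated to leading order by a positive real constant (coming from the stability vector $\rho$) times $\omega^{n-p} \otimes \id_E$. Since positivity of $\End E$-valued $(n-p,n-p)$-forms is an open condition, this immediately shows that any solution $h$ of the $Z_k$-critical equation in the large volume limit is automatically a strong subsolution, recovering \cref{lem:linearisation} for every order $p$.

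Away from the large volume limit, I would proceed pointwise. Fix $x \in X$, choose a unitary frame of $(E,h)$ which diagonalises the endomorphism $\Lambda_\omega (iF(h)/2\pi)$ on the fibre $E_x$, and reduce each term $\omega^d \wedge [(iF(h)/2\pi)^j]_{\sym} \wedge \tilde U_\ell$ appearing in $\tilde Z(h)$ to a sum over bundle eigendirections, producing a smooth function $F_Z(\lambda_1^\alpha, \dots, \lambda_n^\alpha)$ of the joint eigenvalues of $F(h)$ and $\omega$. Under an almost-calibrated assumption (\cref{def:almostcalibrated}) the form $\Re(e^{-i\varphi(E)} \tilde Z(h))$ is a positive volume form, and the $Z$-critical equation forces $\Im(e^{-i\varphi(E)} \tilde Z(h)) = 0$. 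The key technical step is to impose a suitable generalisation of the hypercritical phase condition, namely that the real part $\Re(e^{-i\varphi(E)} F_Z)$ is monotone in each eigenvalue $\lambda_i^\alpha$ separately. Under such a hypothesis the pointwise algebraic identity $\frac{\partial F_Z}{\partial \lambda_i^\alpha} > 0$, when translated back using the Chern--Weil identification of \cref{lem:Zderivativerestriction}, yields precisely the positivity of $\Im(e^{-i\varphi(E)} \tilde Z'(h))$ as an $\End E$-valued $(n-1,n-1)$-form. Iterating this argument using Maclaurin-type inequalities for the elementary symmetric polynomials appearing in $\tilde Z^{(p)}(h)$ would then give positivity at every order $p$, that is, the strong subsolution property.

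The hard part is the higher-rank case, where the non-commutativity of $F(h)$ at a point prevents a literal reduction to the line-bundle setting eigendirection-by-eigendirection. The graded symmetrisation $[\cdots]_{\sym}$ of \cref{def:symmetrisation} is designed exactly to tame this non-commutativity at the level of the trace, so I expect one can still express $\trace[\Im(e^{-i\varphi(E)} \tilde Z^{(p)}(h)) \wedge u_1^* \wedge u_1 \wedge \cdots \wedge u_p^* \wedge u_p]_{\sym}$ as a pointwise polynomial in the joint $\omega$-eigenvalues of the matrix entries of $F(h)$ and then apply the symmetric-function argument of the previous paragraph. The principal obstacle I anticipate is that the cross-terms arising from the symmetrisation, which are invisible in the line-bundle case, may require a strictly stronger phase range hypothesis than the naive generalisation of the hypercritical condition, and as noted in \cref{rmk:jacobsheu} the correct signs likely depend on the codimension and will ultimately need to be formulated in the derived category with a perversity function $p$ as in Bayer's polynomial stability conditions. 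I would therefore expect the cleanest statement of the conjecture to require both an almost-calibrated assumption and a codimension-sensitive phase range, with the resulting positivity proven by combining the symmetrised trace expansion with Newton-type identities for the Chern character.
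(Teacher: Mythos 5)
First, a point of orientation: the paper does not prove this statement at all. It is recorded as an open conjecture, immediately followed by the remark that it is only expected to hold in some critical phase range, so there is no argument in the paper to compare yours against, and any unconditional "proof" should be treated with suspicion from the outset. Judged on its own terms, your proposal has a concrete gap already in its first step. You claim that for $k\gg 0$ the form $\Im\bigl(e^{-i\varphi_k(E)}\tilde Z_k^{(p)}(h)\bigr)$ is dominated by a positive constant times $\omega^{n-p}\otimes\id_E$, so that every metric is a strong subsolution near the large volume limit. The coefficient of the would-be leading term $k^{n-p}\,\omega^{n-p}\otimes\id_E$ is (up to a positive factor) $\Im\bigl(\overline{\rho_n}\,\rho_{n-p}\bigr)$, i.e. it is controlled by $\Im(\rho_{n-p}/\rho_n)$, whereas the stability vector axioms only constrain the consecutive quotients $\Im(\rho_d/\rho_{d+1})>0$; arguments add under composition, so positivity of $\Im(\rho_{n-p}/\rho_n)$ for $p\ge 2$ simply does not follow. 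Indeed for the dHYM charge of \cref{ex:dhymcentralcharge}, $\rho_d=-(-i)^d/d!$, one has $\rho_{n-2}/\rho_n=-n(n-1)$, which is real and negative, so for $p=2$ your claimed positive leading term vanishes identically and the sign of $\Im\bigl(e^{-i\varphi_k(E)}\tilde Z_k^{(2)}(h)\bigr)$ is decided by curvature-dependent subleading terms. This is exactly why \cref{lem:linearisation} in the paper asserts only the $p=1$ subsolution property in the large volume limit; the statement "every Chern connection is a strong subsolution for $k\gg 0$" is not a consequence of the expansion, and even for solutions it would require a genuine argument using the equation.

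The second half of the proposal is, by your own account, a programme rather than a proof. Away from the large volume limit you introduce an almost calibrated hypothesis (\cref{def:almostcalibrated}) and an additional monotonicity/phase-range hypothesis on $\Re\bigl(e^{-i\varphi(E)}F_Z\bigr)$ that is not part of the statement, and the passage from the equation $\Im\bigl(e^{-i\varphi(E)}\tilde Z(h)\bigr)=0$ to positivity of all the derivatives $\tilde Z^{(p)}(h)$ rests on symmetric-function inequalities that are known for the specific functions $\sum_i\arctan\lambda_i$ and $\sum_i\lambda_i^{-1}$ in their (hyper)critical ranges, but which you have not established for a general polynomial central charge even in the line bundle case. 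In higher rank the pointwise diagonalisation does not reduce the $\End E$-valued forms to scalar eigenvalue functions, and the cross-terms produced by the graded symmetrisation of \cref{def:symmetrisation} are precisely the terms you leave unaddressed. Finally, as \cref{rmk:jacobsheu} indicates, outside a suitable phase range the expected signs of $\Im\bigl(e^{-i\varphi(E)}\tilde Z^{(p)}\bigr)$ should flip with the codimension $p$, so the conjecture as literally stated likely needs a reformulation (codimension-dependent signs, or a restriction to a critical phase range) before any argument along these lines can close; your proposal correctly senses this but does not resolve it.
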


Again we expect this to hold only in some critical phase range for the $Z$-critical equation.

\section{Existence on surfaces}

Away from the large volume limit, the analytical difficulties of the dHYM equation or $Z$-critical equation are significant. However in the case of a line bundle over a complex surface the equation admits a transformation to a Monge--Ampere-type equation and using Yau's proof of the Calabi conjecture an existence result can be proven. In the case of the dHYM equation this is a result due to Jacob--Yau \cite{jacob2017special}, and our proof follows a similar approach.

Let $L\to (X,\omega)$ be a holomorphic line bundle over a compact K\"ahler surface and let us denote $\alpha := \frac{i}{2\pi} F(h)$ for some Hermitian metric $h$ on $L$. Let $Z$ be a polynomial central charge for $(X,\omega)$. The Chern--Weil representative may be written
$$\tilde Z(h) = \rho_2 \omega^2 + \rho_1 \omega \wedge (\alpha + \tilde U_2) + \rho_0 \left(\tilde U_4 + 2\alpha \wedge \tilde U_2 + \frac{\alpha^2}{2}\right).$$

The $Z$-critical equation for $h$ is given by
\begin{multline}
	(\Re \rho_2 \Im \rho_1 - \Im \rho_2 \Re \rho_1) \left( [\omega]^2 \omega \wedge (\alpha + \tilde U_2) - (\deg L + [\omega].U_2) \omega^2 \right)\\ 
	+ (\Im \rho_2 \Re \rho_0 - \Re \rho_2 \Im \rho_0)( (U_4 + [\alpha].U_2 + \frac{[\alpha]^2}{2}) \omega^2 \\- [\omega]^2 (\tilde U_4 + \alpha \wedge \tilde U_2 + \frac{\alpha^2}{2})) \\
	+ (\Re \rho_0 \Im \rho_1 - \Im \rho_0 \Re \rho_1) ((U_4 + [\alpha].U_2 +\frac{[\alpha]^2}{2}) \omega \wedge (\alpha + \tilde U_2) \\
	- (\deg L + [\omega].U_2)(\tilde U_4 + \alpha \wedge \tilde U_2 + \frac{\alpha^2}{2})) = 0.\label{eq:Zcriticalsurface}
\end{multline}

Let us write \eqref{eq:Zcriticalsurface} in a different form by collecting powers of $\alpha$. Write the equation as
\begin{equation}\label{eq:Zcriticalsurfacesimplified}a \alpha^2 + \alpha \wedge \beta + \gamma = 0\end{equation}
for some $\beta$ and $\gamma$ and constant $c$. Explicitly, we have
\begin{align*}
	a &= -(\Im \rho_2 \Re \rho_0 - \Re \rho_2 \Im \rho_0)\frac{[\omega]^2}{2} - (\Re \rho_0 \Im \rho_1 - \Im \rho_0 \Re \rho_1) \frac{(\deg L + [\omega].U_2)}{2}.\\
	\beta &= (\Re \rho_2 \Im \rho_1 - \Im \rho_2 \Re \rho_1) [\omega]^2 \omega -(\Im \rho_2 \Re \rho_0 - \Re \rho_2 \Im \rho_0) [\omega]^2 \tilde U_2 \\
	&+ (\Re \rho_0 \Im \rho_1 - \Im \rho_0 \Re \rho_1)\left( (U_4 + [\alpha].U_2 +\frac{[\alpha]^2}{2}) \omega - (\deg L + [\omega].U_2) \tilde U_2\right).\\
	\gamma &= (\Re \rho_2 \Im \rho_1 - \Im \rho_2 \Re \rho_1) ([\omega]^2 \omega \wedge \tilde U_2 - (\deg L + [\omega].U_2) \omega^2) \\
	&+ (\Im \rho_2 \Re \rho_0 - \Re \rho_2 \Im \rho_0)  ((U_4 + [\alpha].U_2 + \frac{[\alpha]^2}{2}) \omega^2-[\omega]^2 \tilde U_4)\\
	&+(\Re \rho_0 \Im \rho_1 - \Im \rho_0 \Re \rho_1) ((U_4 + [\alpha].U_2 +\frac{[\alpha]^2}{2})\omega \wedge \tilde U_2 - (\deg L + [\omega].U_2) \tilde U_4).
\end{align*}

From now on we will make the assumption that $a\ne 0$. In the case of the dHYM equation where $\rho_2 = i, \rho_1 = 1, \rho_0 = -i$ and $U_2=0$, $a\ne 0$ except in the case where $\deg L = 0$, in which case the trivial metric is a dHYM metric.

Given a polynomial
$$p(x) = ax^2 + bx + c$$
with $a\ne 0$ we may rewrite it as 
$$p(x) = \frac{1}{a} \left( \frac{1}{2} p'(x)\right)^2 + c-\frac{1}{4a} b^2.$$
Applying this completion of the square to \eqref{eq:Zcriticalsurfacesimplified}, we can rewrite the $Z$-critical equation on a surface as the Monge--Ampere-type equation
\begin{equation}\label{eq:Zcriticalsurfacemongeampere} \left( a \alpha + \frac{1}{2} \beta\right)^2 = \frac{1}{4} \beta^2 - a\gamma.\end{equation}

\begin{definition}\label{def:volumehypothesis}
	We say $(L,\omega,\tilde U)$ satisfies the \emph{volume hypothesis} if $a\ne 0$ and 
	$$\frac{1}{4} \beta^2 - a\gamma > 0$$
	as a volume form on $X$.
\end{definition}

The importance of the volume hypothesis is contained in the following theorem.

\begin{theorem}\label{thm:existencesurfaces}
	Suppose $L\to X$ is a holomorphic line bundle and $Z$ is a polynomial central charge for which $(L,\omega,\tilde U)$ satisfies the volume hypothesis. Then the following are equivalent.
	\begin{enumerate}
		\item $L$ admits a $Z$-critical metric,
		\item $L$ admits a subsolution to the $Z$-critical equation,
		\item $L$ is $Z$-stable with respect to analytic curves $C\subset X$.
	\end{enumerate}
\end{theorem}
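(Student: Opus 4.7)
The plan is to prove $(ii) \Rightarrow (iii) \Rightarrow (i) \Rightarrow (ii)$, using the Monge--Amp\`ere reformulation \eqref{eq:Zcriticalsurfacemongeampere} together with the Demailly--P\u{a}un theorem on K\"ahler classes on surfaces and Yau's theorem on the Calabi conjecture --- essentially adapting the strategy of Jacob--Yau \cite{jacob2017special} to the general polynomial central charge setting.

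The implication $(ii) \Rightarrow (iii)$ is immediate from \cref{thm:subsolutiondivisorstability}, since on a complex surface every irreducible analytic divisor is a curve. For $(iii) \Rightarrow (i)$, introduce $\eta := a\alpha + \tfrac{1}{2}\beta$, so that the $Z$-critical equation in the form \eqref{eq:Zcriticalsurfacesimplified} becomes the Monge--Amp\`ere equation $\eta^2 = \tfrac{1}{4}\beta^2 - a\gamma$. The cohomology class $[\eta] = a\, c_1(L) + \tfrac{1}{2}[\beta]$ is fixed, and cohomologically one has the identity
$$[\eta]^2 \;=\; \int_X \left(\tfrac{1}{4}\beta^2 - a\gamma\right),$$
which is automatic from the definitions of $\beta$ and $\gamma$ together with the tautology $\int_X \Im(e^{-i\varphi(L)}\tilde{Z}(h)) = 0$ that defines the phase. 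By the volume hypothesis this integral is strictly positive. The key computation to carry out is then to verify that $[\eta]$ is a positive real scalar multiple of $[\Im(e^{-i\varphi(L)}\tilde{Z}'(L))]$; once this is established, \cref{lem:Zderivativerestriction} gives $[\eta].[C]$ equal to a positive constant times $\Im(e^{-i\varphi(L)} Z_C(L))$ for every irreducible analytic curve $C$, which is positive by $Z$-stability with respect to $C$. The Demailly--P\u{a}un theorem on surfaces then yields that $[\eta]$ is K\"ahler, and Yau's theorem produces a K\"ahler $\eta$ in this class solving $\eta^2 = \tfrac{1}{4}\beta^2 - a\gamma$; setting $\alpha := \tfrac{1}{a}(\eta - \tfrac{1}{2}\beta) \in c_1(L)$ yields the desired $Z$-critical metric.

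For $(i) \Rightarrow (ii)$, at a solution the form $\eta = a\alpha + \tfrac{1}{2}\beta$ satisfies $\eta^2 > 0$ pointwise by the volume hypothesis, so $\eta$ is everywhere a nondegenerate real $(1,1)$-form, hence definite at every point. The sign is constant on $X$ by continuity, and is forced to be positive rather than negative by the integral condition $[\eta]^2 > 0$ (which rules out $\eta$ being globally anti-K\"ahler). Transferring positivity back through the identification $[\eta] \propto [\Im(e^{-i\varphi(L)}\tilde{Z}'(L))]$ at the level of forms then yields $\Im(e^{-i\varphi(L)}\tilde{Z}'(h)) > 0$, which is the subsolution condition in the line bundle case.

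The main obstacle is the explicit verification that, as a real $(1,1)$-form, $\eta$ is a positive real multiple of $\Im(e^{-i\varphi(L)}\tilde{Z}'(h))$. This is a direct algebraic computation using $e^{-i\varphi(L)} = \overline{Z(L)}/|Z(L)|$ together with the explicit expressions for $a$ and $\beta$ in terms of the stability vector $\rho$, but it involves tracking several cancellations among the real and imaginary parts of $\rho_0,\rho_1,\rho_2$. A secondary technical point, worth handling carefully, is the case $\Im(e^{-i\varphi(L)}\rho_0) = 0$, where the subsolution condition becomes purely cohomological and the argument above collapses directly to the Demailly--P\u{a}un criterion applied to $[\Im(e^{-i\varphi(L)}\tilde{Z}'(L))]$.
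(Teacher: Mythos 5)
Your overall route is the same as the paper's: complete the square to put the equation in the Monge--Amp\`ere form \eqref{eq:Zcriticalsurfacemongeampere}, use \cref{thm:subsolutiondivisorstability} for (ii)$\Rightarrow$(iii), and combine the Demailly--P\u{a}un criterion with Yau's theorem to go from stability back to existence (the paper splits this into (iii)$\Rightarrow$(ii) and (ii)$\Rightarrow$(i), but the content is identical, and your cohomological check that $[\eta]^2=\int_X(\tfrac14\beta^2-a\gamma)>0$ is exactly the compatibility the paper invokes when saying $e^F$ integrates to the right constant). Two of the difficulties you flag are not real ones: the surface equation \eqref{eq:Zcriticalsurfacesimplified} is, by construction, the expansion of $\Im\big(\overline{Z(L)}\,\tilde Z(h)\big)$, so its formal derivative in $\alpha$, namely $2a\alpha+\beta=2\eta$, equals $|Z(L)|\,\Im\big(e^{-i\varphi(L)}\tilde Z'(h)\big)$ identically as forms --- no bookkeeping of cancellations among $\rho_0,\rho_1,\rho_2$ is required --- and your ``secondary'' case $\Im(e^{-i\varphi(L)}\rho_0)=0$ is precisely the case $a=0$, which is excluded by the volume hypothesis (\cref{def:volumehypothesis}), since $a=\tfrac12\Im\big(\overline{Z(L)}\rho_0\big)$.

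The genuine gap is in your (i)$\Rightarrow$(ii) step. From $\eta^2=\tfrac14\beta^2-a\gamma>0$ you correctly conclude that $\eta$ is definite of constant sign, but your argument that the sign must be positive --- ``the integral condition $[\eta]^2>0$ rules out $\eta$ being globally anti-K\"ahler'' --- is false: if $-\eta$ is K\"ahler then $[\eta]^2=[-\eta]^2>0$ just as well, so the integral cannot distinguish the two signs, and you have not established $\Im\big(e^{-i\varphi(L)}\tilde Z'(h)\big)>0$ rather than $<0$. The paper faces the same dichotomy ($\chi$ or $-\chi$ K\"ahler) and does not resolve it cohomologically; it excludes the negative case by appealing to the normalisation of the polynomial central charge (the charge landing in a fixed half-plane), while acknowledging, as in \cref{rmk:jacobsheu}, that for some choices of $Z$ the subsolution convention itself should flip sign, in which case one simply applies Demailly--P\u{a}un to $-\chi$. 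To close your cycle you must either invoke that normalisation (or a phase-range restriction) to pin down the sign, or state the conclusion up to the corresponding sign convention; note also that since you argue in a cycle, this flawed step is the only link from existence of a solution back to the other two conditions, so the gap affects both (i)$\Rightarrow$(ii) and (i)$\Rightarrow$(iii).
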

\begin{proof}
	First let us rephrase the subsolution condition. If $L$ admits a subsolution to the $Z$-critical equation, the subsolution condition asks that 
	$$2a\alpha + \beta > 0$$
	in the sense of $(n-1,n-1)$-forms. For two-forms on a surface this is simply asking that $a\alpha + \frac{1}{2}\beta$ is a K\"ahler form.
	
	Now suppose $L$ admits a $Z$-critical metric with curvature $\alpha$ solving \eqref{eq:Zcriticalsurfacemongeampere}. Defining $\chi = a\alpha + \frac{1}{2}\beta$ we have the equation
	$$\chi^2 = \left( \frac{1}{4} \beta^2 - a\gamma\right)$$
	where the right side is a positive volume form on $X$ by the volume hypothesis. Since $X$ is a complex surface, this implies $\chi$ or $-\chi$ is K\"ahler, and hence that $L$ admits a subsolution.\footnote{The case that $-\chi$ is K\"ahler should be excluded by the assumption that the polynomial central charge always lands in the upper half-plane, but as remarked on in \cref{rmk:jacobsheu}, for certain choices of central charge the subsolution condition should appropriately reverse sign to account for this discrepancy. In this case one simply applies the Demailly--Pǎun theorem to $-\chi$ to conclude the theorem holds.} So (i) implies (ii).
	
	On the other hand suppose $\chi$ is K\"ahler for some Hermitian metric $h$ on $L$, that is $L$ admits a subsolution. Then by the volume hypothesis, comparing volume forms on $X$ we may write
	$$\frac{1}{4} \beta^2 - a\gamma = e^F \chi^2$$
	for some function $F: X\to \RR$. Then \eqref{eq:Zcriticalsurfacemongeampere} may be rewritten
	$$(\chi + i\deldelbar \phi)^2 = e^F \chi^2.$$
	By Yau's solution to the Calabi conjecture \cite{yau}, this equation admits a solution whenever $e^F$ integrates to the appropriate constant (which follows in this case from the $Z$-critical equation always holding on the level of cohomology). Thus (ii) implies (i). 
	
	\cref{thm:subsolutiondivisorstability} shows (ii) implies (iii). In order to deduce that $Z$-stability with respect to analytic curves implies the subsolution condition, we must apply Demailly--Pǎun's theorem \cite{DP}, the generalisation of the Nakai--Moishezon criterion to K\"ahler classes. In this setting of a surface, the theorem states that that a class $[\chi]$ on $X$ is K\"ahler if and only if
	$$\int_C \chi > 0$$
	for all analytic curves $C\subset X$. Now if we assume $Z$-stability with respect to analytic curves, then we have
	$$\int_C \Im(e^{-i\varphi(L)} \tilde Z'(h)) > 0$$
	for every analytic curve $C\subset X$, so Demailly--Pǎun's theorem asserts that 
	$$[\Im(e^{-i\varphi(L)} \tilde Z'(h))] = [\chi]$$
	is a K\"ahler class on $X$. 
\end{proof}

\begin{remark}
	In the case of the dHYM equation on a line bundle $L$, the volume hypothesis is automatically satisfied, where it is equivalent to the condition that
	$$(1+(\cot(\varphi(L)))^2) \omega^2 > 0$$
	which is automatic if $\omega$ is a K\"ahler form. In general the volume hypothesis is a positivity condition on the data of the central charge $Z$. For example, consider the case $Z_{\Td}$ on $X$ a Calabi--Yau surface with trivial B-field. Then we have $\rho_2=i, \rho_1=1,\rho_0=-i$ and 
	$$U=\sqrt{\Td(X)} = 1 + \frac{c_2(X)}{24}.$$
	The condition $a\ne 0$ is equivalent to
	$$\deg L \ne 0.$$
	The condition 
	$$\frac{1}{4} \beta^2 - a \gamma > 0$$
	is equivalent to the cumbersome looking requirement that
	$$\frac{1}{4} \left( \left([\omega]^2 - \frac{c_2(X)}{24} + \frac{c_1(L)^2}{2}\right)^2 + \frac{(\deg L)^2}{2}\right) \omega^2 - \frac{(\deg L)^2}{2} \frac {\tilde c_2(X)}{24} > 0$$
	as a $(2,2)$-form on $X$. Indeed for fixed $L$ we can always scale $\omega$ large enough that this form will be positive.\footnote{However it is not necessary to go all the way to the large volume limit, since the size required of $[\omega]$ is bounded in terms of the topology of $L$ and $X$. It would be interesting to have an interpretation of this condition in terms of chambers of the stability manifold of the surface $X$.} Then \cref{thm:existencesurfaces} provides new examples of $Z_{\Td}$-critical metrics on line bundles in this case.
	
\end{remark}

\section{Moment map\label{sec:momentmap}}

In this section we will exhibit the $Z$-critical equation as a moment map on the space $\calA(h)$ of integrable unitary connections on a holomorphic vector bundle $E\to (X,\omega)$. 

Recall that the tangent space to $\calA(h)$ at a connection $A$ consists of skew-Hermitian endomorphisms $a\in \Omega^1(\End_{SH}(E,h))$ such that $(\delbar_A + a^{0,1})^2 = 0$. Let $Z$ be a polynomial central charge with representative data $\omega$ and $\tilde U$. Define a two-form on $\calA(h)$ by the expression
\begin{equation}\label{eq:Kahlerform}\Omega_Z(A)(a,b) = \int_X \trace \left[a, b , \Im(e^{-i\varphi(E)} \tilde Z'(A)) \right]_{\sym}\end{equation}
for $A\in \calA(h)$, $a,b\in T_A\calA(h)$. By the definition of the graded symmetrisation, this is a skew bilinear form.

Associated to $\Omega_Z$ is a Hermitian pairing. Recall that there is a complex structure $J$ on $\calA(h)$ defined by
$$J(a) = -i a^{1,0} + i a^{0,1}$$
for $a\in T_A\calA(h) = \Omega^1(\End_{SH}(E,h)).$ The two-form $\Omega_Z$ is compatible with $J$ in the sense that $J^* \Omega_Z = \Omega_Z$ (which can be verified using the fact that $\Im(e^{-i\varphi(E)} Z'(A))$ is type $(n-1,n-1)$). Therefore we can define a Hermitian pairing 
$$\langle a,b\rangle_A = \Omega_Z(a,J(b)) + i \Omega_Z(a,b).$$
Explicitly one may compute
$$\langle a,b \rangle_A = i \int_X \trace \left[ \Im(e^{-i\varphi(E)} \tilde Z'(A)), a^{1,0}, b^{0,1}\right]_{\sym}.$$

\begin{proposition}\label{prop:subsolutionnondegenerate}
	If $A$ is a subsolution of the $Z$-critical equation, then the two-form $\Omega_Z$ is non-degenerate at $A$.
\end{proposition}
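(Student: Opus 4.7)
The plan is to show that the subsolution condition implies the associated Hermitian pairing $\langle \cdot,\cdot\rangle_A$ is positive definite on $T_A\calA(h)$, from which nondegeneracy of $\Omega_Z$ is immediate (since $\Omega_Z(a,b)=\Im\langle a,b\rangle_A$, any $a$ in the kernel of $\Omega_Z$ would have $\langle a,Ja\rangle_A=0$, forcing $a=0$).

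Concretely, I would first fix $a\in T_A\calA(h)=\Omega^1(\End_{SH}(E,h))$ and use the skew-Hermiticity of $a$ to write $a^{0,1}=-(a^{1,0})^*$, where $*$ denotes the combined operation of Hermitian conjugation in $\End E$ and complex conjugation of the form part. This identifies $u:=a^{0,1}$ as an element of $T^*_{0,1}X\otimes \End E$ at each point, with $u^{*}=-a^{1,0}$. Then I would rewrite the pointwise integrand of
\[
\langle a,a\rangle_A \;=\; i\int_X \trace \bigl[\Im(e^{-i\varphi(E)}\tilde Z'(A)),\,a^{1,0},\,a^{0,1}\bigr]_{\sym}
\]
by pulling out the sign from $u^{*}=-a^{1,0}$, so that up to a universal positive factor it becomes
\[
\int_X \trace \, i\bigl[\Im(e^{-i\varphi(E)}\tilde Z'(A)) \wedge u^{*}\wedge u\bigr]_{\sym}.
\]
This is precisely the expression appearing in \cref{def:positivityEndEforms} applied to $T=\Im(e^{-i\varphi(E)}\tilde Z'(A))$.

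Next, invoking the subsolution hypothesis (\cref{def:subsolution}), which asserts positivity of $\Im(e^{-i\varphi(E)}\tilde Z'(A))$ in the sense of $\End E$-valued $(n-1,n-1)$-forms, the integrand is pointwise $\ge 0$ and strictly positive wherever $u(p)=a^{0,1}(p)\ne 0$. Since $a\ne 0$ and $a$ is skew-Hermitian (so $a^{0,1}$ vanishes exactly where $a$ does), the set $\{p\in X\mid a^{0,1}(p)\ne 0\}$ is non-empty and open, and by continuity of the integrand we conclude $\langle a,a\rangle_A>0$.

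The main obstacle I anticipate is purely bookkeeping: verifying that the graded symmetrisation $[\,\cdot,\cdot,\cdot]_{\sym}$ combined with the cyclicity of trace genuinely reproduces the form $\trace \, i[T\wedge u^{*}\wedge u]_{\sym}$ of \cref{def:positivityEndEforms}, up to the correct positive constant and with the sign from $u^{*}=-a^{1,0}$ cancelling the explicit $i$ in the pairing. Once this is pinned down the positivity is immediate, giving that $\langle\cdot,\cdot\rangle_A$ is a positive-definite Hermitian form whose imaginary part $\Omega_Z(A)$ is therefore a nondegenerate real $2$-form on $T_A\calA(h)$.
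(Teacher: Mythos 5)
Your proposal is correct and follows essentially the same route as the paper: the pointwise subsolution positivity with $u=a^{0,1}$ gives $\langle a,a\rangle_A>0$ for $a\ne 0$, so the Hermitian pairing is positive-definite and its imaginary part $\Omega_Z$ is non-degenerate. The sign bookkeeping you flag (from $u^*=-a^{1,0}$ and the graded symmetrisation) is handled no more explicitly in the paper's proof than in yours, so there is no substantive difference.
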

\begin{proof}
	The subsolution condition at $A$ gives
	$$i\trace \left[ \Im(e^{-i\varphi(E)} \tilde Z'(A), u^*, u\right]_{\sym} > 0$$
	at every point $p\in X$. Integrating this gives the positivity condition
	$$i \int_X \trace  \left[ \Im(e^{-i\varphi(E)} \tilde Z'(A), u^*, u\right]_{\sym} > 0$$
	for any non-zero $u\in \Omega^{0,1}(\End E)$. Setting $u=a^{0,1}$ where $a\in T_A \calA(h)$ we obtain
	$$\langle a,a \rangle_A \ge 0$$
	for any $a\in T_A\calA(h)$, and equality occurs if and only if $a=0$. Therefore $\langle -, -\rangle_A$ is a positive-definite Hermitian inner product on $T_A\calA(h)$, and consequently its imaginary part $\Omega_Z$ is a non-degenerate two-form.
\end{proof}

Let us denote $\calA(h)^{Z} \subset \calA(h)$ the locus of subsolutions to the $Z$-critical equation, which is open since the subsolution condition is an open condition. Then \cref{prop:subsolutionnondegenerate} asserts that $\calA(h)^Z$ is an infinite-dimensional Hermitian manifold. Notice that this locus is invariant under the action of the complex gauge group $\calG^\CC$ (as can be easily checked from the definition of subsolution, using the presense of the trace). We will now show that $\Omega_Z$ is in fact a $\calG(h)$-invariant closed form on $\calA(h)^Z$, and therefore $\calA(h)^Z$ is an infinite-dimensional K\"ahler manifold. In fact in the following it is not necessary to restrict to the locus of subsolutions, and $\Omega_Z$ is closed and $\calG(h)$-invariant on all of $\calA(h)$.

\begin{proposition}\label{prop:Kahlerformclosed}
	The two-form $\Omega_Z$ is closed and $\calG(h)$-invariant on $\calA(h)$. Consequently it is a K\"ahler form on $\calA(h)^Z$.
\end{proposition}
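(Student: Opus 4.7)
The proof splits into two claims: $\calG(h)$-invariance and closedness. For invariance I would observe that a unitary gauge transformation $g \in \calG(h)$ acts on the defining data by $F_{g \cdot A} = g F_A g^{-1}$ and on tangent vectors by $g \cdot a = g a g^{-1}$, while $\omega$ and $\tilde U$ are unaffected. Since $\tilde Z'(A)$ is a polynomial in $F_A$ with coefficients built from $\omega, \tilde U$, it transforms by conjugation, $\tilde Z'(g \cdot A) = g\, \tilde Z'(A)\, g^{-1}$, and hence so does the graded-symmetric expression $[a, b, \tilde Z'(A)]_{\sym}$. Cyclicity of the trace absorbs the conjugation, giving $\Omega_Z(g \cdot A)(g \cdot a, g \cdot b) = \Omega_Z(A)(a, b)$.

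For closedness my plan is to adapt the standard Chern--Weil argument. Extend $a, b, c \in T_A\calA(h)$ to constant vector fields on the affine space $\calA(h)$ so that all Lie brackets vanish; Cartan's formula then reduces $d\Omega_Z(a, b, c)$ to the cyclic sum $a \cdot \Omega_Z(b, c) - b \cdot \Omega_Z(a, c) + c \cdot \Omega_Z(a, b)$. Each directional derivative acts only on $\tilde Z'(A)$; using $\delta_a F_A = d_A a$ and the chain rule for the formal derivative with respect to $\tfrac{i}{2\pi} F_A$, one has $\delta_a \tilde Z'(A) = \bigl[\tilde Z''(A),\, \tfrac{i}{2\pi} d_A a\bigr]_{\sym}$, where $\tilde Z''(A)$ denotes the second formal derivative. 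Thus each term in the Cartan sum becomes an integral over $X$ of the trace of a graded-symmetric expression involving $\Im(e^{-i\varphi(E)} \tilde Z''(A))$, one $d_A$-derivative, and the two remaining insertions.

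The crucial point is that $\Im(e^{-i\varphi(E)}\tilde Z''(A))$ is $d_A$-closed, because $\omega$ and $\tilde U$ are closed and the Bianchi identity $d_A F_A = 0$ combined with graded-symmetrisation kills all contributions from $d_A$ acting on the $F_A$-factors. Applying the Leibniz rule for $d_A$ to the graded-symmetric product $\bigl[\Im(e^{-i\varphi(E)}\tilde Z''(A)),\, a, b, c\bigr]_{\sym}$ then produces exactly, up to an overall sign, the cyclic sum arising from the Cartan formula, while the ``derivative hits $\tilde Z''$'' term vanishes. Using $\trace \circ d_A = d \circ \trace$ on $\End E$-valued forms, the integrand of $d\Omega_Z(a, b, c)$ is exhibited as $d$ of a globally defined form on $X$, and Stokes' theorem gives $d\Omega_Z(a, b, c) = 0$. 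Combined with $J$-compatibility (already noted in the construction) and the non-degeneracy on $\calA(h)^Z$ from \cref{prop:subsolutionnondegenerate}, this upgrades $\Omega_Z$ to a K\"ahler form on $\calA(h)^Z$.

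The main technical obstacle is the sign bookkeeping: one must verify that the graded-symmetrisation conventions from \cref{def:symmetrisation}, when applied to the odd-degree insertions $a, b, c, d_A a$ interleaved with the even-degree factors ($F_A$, $\omega$, $\tilde U_{2k}$) of $\tilde Z''(A)$, produce signs in the Leibniz expansion of $d_A$ that match those in the Cartan cyclic sum. This is essentially the same calculation as in the classical Chern--Weil closedness proof, but carried out in the presence of the two insertions $a, b$ (or $a, b, c$ at the level of $d\Omega_Z$) and the fixed closed factors $\omega, \tilde U$. A more conceptual alternative, which sidesteps the bookkeeping, is to realise $\Omega_Z$ as the degree-$(2, 2n)$ component of the Chern--Weil form $\Im(e^{-i\varphi(E)}\tilde Z(\AA))$ on $\calA(h) \times X$ built from the universal Chern connection $\AA$ on $\EE \to \calA(h) \times X$, integrated along the fibre $X$; fibre integration of a closed form is closed, so closedness of $\Omega_Z$ is automatic once one checks that this $(2, 2n)$-component agrees with the explicit expression \eqref{eq:Kahlerform}.
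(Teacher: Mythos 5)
Your argument is correct and essentially the same as the paper's: the invariance follows from conjugation of $F_A$ and the tangent vectors together with cyclicity of the trace, and the closedness is the identical Chern--Weil computation with constant vector fields, identifying the Cartan cyclic sum with $\frac{i}{2\pi}\int_X \trace d_A\bigl[\Im(e^{-i\varphi(E)}\tilde Z^{(2)}(A)), a, b, c\bigr]_{\sym}$ via the Bianchi identity and closedness of $\omega$, $\tilde U$, then concluding by $\trace \circ d_A = d \circ \trace$ and Stokes. Your closing alternative via the universal connection and fibre integration is also consistent with the paper, which notes this determinant-line-bundle perspective (following Leung) in the remark immediately after the proposition.
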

\begin{proof}
	The $\calG(h)$-invariance of $\Omega_Z$ is clear from the presence of the trace in \eqref{eq:Kahlerform}. Indeed if $g\in \calG(h)$ then 
	$$g^* \Omega_Z(A)(a,b) = \Omega_Z(g\cdot A)(\Ad(g)a, \Ad(g)b)$$
	and using the fact that for a unitary gauge transformation we have
	$$F_{g\cdot A} = g F_A g^{-1}$$
	and 
	$$\Ad(g) a = g a g^{-1}$$
	then 
	$$ g^* \Omega_Z(A)(a,b) = \int_X \trace g \left[ \Im(e^{-i\varphi(E)} \tilde Z'(A), a, b \right]_{\sym} g^{-1}= \Omega_Z(A)(a,b).$$
	
	We will verify the closedness of $\Omega_Z$ in the space of all unitary connections for $h$. In particular we will consider vector fields $a,b,c\in \Gamma(T\calA(h))$ constant with respect to $A\in \calA(h)$. Therefore the Lie brackets vanish:
	$$[a,b]=[a,c]=[b,c]=0.$$
	To show the closedness of $\Omega_Z$ we will show
	$$(d\Omega_Z)(A)(a,b,c)=0.$$
	Since the Lie brackets vanish, we must verify 
	$$(d\Omega_Z)(A)(a,b,c) = d(\Omega_Z(A)(b,c))(a) - d(\Omega_Z(A)(a,c))(b) + d(\Omega_Z(A)(a,b))(c) = 0.$$
	In order to differentiate $F_A$ in the direction of $a$, we take the constant path $A+ta$. Then the curvature transforms as 
	$$F_{A+ta} = F_A + t d_A(a) + t^2 a\wedge a$$
	and so the curvature linearises as $d_A(a)$. In particular an arbitrary term in, for example, $d(\Omega_Z(A)(b,c))(a)$ is of the form
	$$\int_X \trace \left[ \omega^{n-j-k} \wedge \tilde U_k \wedge \left(\frac{i}{2\pi} F_A\right)^{j-2} \wedge \frac{i}{2\pi} d_A(a) \wedge b \wedge c \right]_{\sym}.$$
	Summing this expression over all cyclic permutations of $\{a,b,c\}$ with sign according to the graded symmetrisation we obtain 
	$$d(\Omega_Z)(A)(a,b,c) = \frac{i}{2\pi}\int_X \trace d_A \left[ \Im(e^{-i\varphi(E)} \tilde Z^{(2)}(A), a, b, c\right]_{\sym}$$
	where $\tilde Z^{(2)}(A)$ is the second formal derivative of $\tilde Z(A)$ with respect to $iF_A/2\pi$. Here we have used the Bianchi identity $d_A F_A = 0$ and the closedness of $\omega$ and $\tilde U$. 
	
	Now since $\trace d_A = d \trace$, Stokes' theorem implies 
	$$d(\Omega_Z)(A)(a,b,c) = 0$$
	so $\Omega$ is closed.
\end{proof}

\begin{remark}
	\cref{prop:Kahlerformclosed} clearly demonstrates the necessity of the graded symmetrisation introduced when studying subsolutions in \cref{def:symmetrisation}. The necessity of this symmetrisation is apparent in Leung's work on the moment map for the almost Hermite--Einstein equation described in \cref{sec:AHE}. In fact in that case one can also prove the closedness of the corresponding symplectic form by exploiting that it is the curvature of a connection on a universal determinant line bundle over $\calA(h)$, and the symmetrisation manifests in the formula for the curvature of the universal connection. 
\end{remark}

We will now show that the $Z$-critical equation is a moment map for the action of $\G(h)$ on $\calA(h)^Z$ with respect to the K\"ahler form $\Omega_Z$. It is customary to formally identify the dual space of $\Lie \G(h) \isom \End_{SH}(E,h)$ with $\Omega^{n,n}(\End_{SH}(E,h))$ using the non-degenerate pairing
\begin{equation}\label{eq:pairing}(\phi, \psi) \mapsto -\int_X \trace \phi \psi.\end{equation}

\begin{theorem}\label{thm:momentmap}
	The map
	\begin{align*}
		D_Z: \calA(h)^Z &\to \Omega^{n,n}(\End_{SH}(E,h))\\
		A&\mapsto 2\pi i \Im(e^{-\varphi(E)} \tilde Z(A))
	\end{align*}
	is a moment map for the $\calG(h)$-action on $(\calA(h)^Z,\Omega_Z)$. 
\end{theorem}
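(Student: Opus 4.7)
The plan is to verify the two defining properties of a moment map: equivariance and the comoment identity. Throughout I will use the pairing \eqref{eq:pairing} to identify $\Lie \calG(h)^*$ with $\Omega^{n,n}(\End_{SH}(E,h))$, and I will use that the infinitesimal action of $\xi \in \Lie \calG(h) = \Omega^0(\End_{SH}(E,h))$ at $A \in \calA(h)$ is the vector field $v_\xi(A) = d_A \xi \in \Omega^1(\End_{SH}(E,h))$ (with whichever fixed sign matches the gauge action convention used in \cref{eq:gaugeaction}).

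\textbf{Step 1 (equivariance).} For $g \in \calG(h)$ the Chern curvature transforms as $F_{g\cdot A} = g F_A g^{-1}$, so each Chern--Weil term $\omega^d \wedge (\tfrac{i}{2\pi} F_A)^j \wedge \tilde U_\ell$ transforms by conjugation by $g$, and consequently so does $\tilde Z(A)$ and hence $D_Z(A)$. Since the coadjoint action on $\Omega^{n,n}(\End_{SH}(E,h))$ under the pairing \eqref{eq:pairing} is also conjugation, this gives $\calG(h)$-equivariance of $D_Z$ exactly as in the proof of $\calG(h)$-invariance of $\Omega_Z$ in \cref{prop:Kahlerformclosed}.

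\textbf{Step 2 (comoment identity).} Fix $\xi \in \Lie \calG(h)$ and $a \in T_A\calA(h)$. Using the pairing \eqref{eq:pairing}, the comoment is
\[
\langle D_Z(A), \xi \rangle = -2\pi i \int_X \trace\bigl(\xi\, \Im(e^{-i\varphi(E)} \tilde Z(A))\bigr).
\]
Differentiating along the constant path $A+ta$, the same computation that underlies \cref{prop:linearisationDZ} gives
\[
\left.\tfrac{d}{dt}\right|_{t=0} \tilde Z(A+ta) = \bigl[\tilde Z'(A) \wedge \tfrac{i}{2\pi} d_A a\bigr]_{\sym},
\]
so that
\[
d\langle D_Z, \xi\rangle(a) = -\!\int_X \trace\Bigl(\xi\,\bigl[\Im(e^{-i\varphi(E)}\tilde Z'(A)) \wedge d_A a\bigr]_{\sym}\Bigr),
\]
after absorbing the $2\pi i$ factor. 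On the other hand, by definition of $\Omega_Z$,
\[
\Omega_Z(A)(v_\xi, a) = \int_X \trace\bigl[d_A\xi,\, a,\, \Im(e^{-i\varphi(E)}\tilde Z'(A))\bigr]_{\sym}.
\]
Now I will integrate by parts. The key point is that the endomorphism-valued $(n-1,n-1)$-form $T := \Im(e^{-i\varphi(E)}\tilde Z'(A))$ is $d_A$-closed: $\omega$ and $\tilde U$ are closed, and $d_A F_A = 0$ by the Bianchi identity, so $d_A T = 0$. Using the Leibniz rule for $d_A$ together with the graded symmetrisation and the tracial property, Stokes' theorem yields
\[
\int_X \trace\bigl[d_A\xi,\, a,\, T\bigr]_{\sym} = -\int_X \trace\Bigl(\xi\,\bigl[T \wedge d_A a\bigr]_{\sym}\Bigr).
\]
Comparing with the previous display gives
\[
d\langle D_Z, \xi\rangle(a) = \Omega_Z(A)(v_\xi, a),
\]
which is the comoment map identity (up to the overall sign, which is fixed by the convention chosen for $v_\xi$).

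\textbf{Main obstacle.} The routine content is clear but the bookkeeping is delicate: one must handle the graded symmetrisation carefully when integrating by parts, keep track of the $(p,q)$-types to justify $d_A T = 0$ from the separate Bianchi/closedness statements, and be consistent about the sign of the infinitesimal gauge action (which fixes the sign in the moment map identity). Once these are in hand, closedness and equivariance of $\Omega_Z$ from \cref{prop:Kahlerformclosed}, combined with non-degeneracy of $\Omega_Z$ on $\calA(h)^Z$ from \cref{prop:subsolutionnondegenerate}, guarantee that this comoment identity really does make $D_Z$ a moment map on the K\"ahler manifold $\calA(h)^Z$.
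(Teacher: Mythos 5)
Your proposal is correct and follows essentially the same route as the paper: the comoment identity is verified by linearising $\tilde Z$ along a constant path and integrating by parts against the $d_A$-closed form $\Im(e^{-i\varphi(E)}\tilde Z'(A))$ using Stokes, the Bianchi identity, closedness of $\omega$ and $\tilde U$, and the tracial/graded-symmetrisation bookkeeping, with equivariance following from conjugation invariance of the trace exactly as in the invariance argument for $\Omega_Z$. The only difference is cosmetic: the paper pins down the sign explicitly via the gauge action \eqref{eq:gaugeaction} to get $d(D_Z,\phi)(A)(b) = -\Omega_Z(A)(d_A\phi,b)$, whereas you leave it to the convention for $v_\xi$.
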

\begin{proof}
	The proof is a reasonably straight-forward adapation of the Collins--Yau moment map for the dHYM equation on a line bundle \cite[\S 2]{collins2018moment} and Leung's proof that the almost HE equation is a moment map \cite[\S 3]{leung1998symplectic}. Let $\phi\in \Lie \G(h)$ be a skew-Hermitian endomorphism of $E$. Then by the infinitesimal action obtained by differentiating the action of $\calG(h)$ on $\calA(h)$ (see \eqref{eq:gaugeaction}) the induced vector field is given by $d_A \phi$. We wish to verify the moment map identity
	$$d(D_Z, \phi)(A)(b) = - \Omega_Z(A) (d_A \phi, b)$$
	for any $b\in T_A \calA(h)$. On the right-hand side we compute
	\begin{align*}
		\Omega_Z (d_A \phi, b) &= \int_X \trace \left[ \Im(e^{-i\varphi(E)} \tilde Z'(A)) \wedge d_A \phi \wedge b \right]_{\sym}\\
		&= - \int_X \trace \left[ \Im(e^{-i\varphi(E)} \tilde Z'(A)) \phi \wedge d_A b \right]_{\sym}
	\end{align*}
	using integration by parts. Explicitly we have
	\begin{align*}
		0 &= \int_X d \trace \left[ \Im(e^{-i\varphi(E)} \tilde Z'(A) \wedge \phi  b \right]_{\sym}\\ 
		&= \int_X \trace d_A \left[ \Im(e^{-i\varphi(E)} \tilde Z'(A) \wedge \phi  b \right]_{\sym}\\
		&= \int_X \trace \left[ d_A \left(\Im(e^{-i\varphi(E)} \tilde Z'(A) \right) \wedge \phi  b \right]_{\sym} + \int_X \trace \left[ \left(\Im(e^{-i\varphi(E)} \tilde Z'(A) \right) \wedge d_A (\phi  b) \right]_{\sym}\\
		&= \int_X \trace \left[ \Im(e^{-i\varphi(E)} \tilde Z'(A)) \wedge d_A \phi \wedge b \right]_{\sym} + \int_X \trace \left[ \Im(e^{-i\varphi(E)} \tilde Z'(A)) \phi \wedge d_A b \right]_{\sym}
	\end{align*}
	where we have used in the first step Stokes' theorem, in the second step that $\trace d_A = d \trace$, in the third step the product rule for $d_A$ combined with the Bianchi identity $d_A F_A=0$ and closedness of $\omega$ and $\tilde U$ for each term appearing in $\Im(e^{-i\varphi(E)} \tilde Z'(A))$, and in the final step the product rule $d_A(\phi b) = d_A(\phi) \wedge b + \phi \wedge d_A(b)$.
	
	Now recall that if $F_A$ is a curvature form then in the direction of a tangent vector $b$, the curvature transforms as $F_{A+tb} = F_A + t d_A(b) + t^2 b\wedge b$. In particular computing the exterior derivative on the left-hand side similarly to in \cref{prop:Kahlerformclosed} we have, using our non-degenerate pairing \eqref{eq:pairing},
	\begin{align*}
		d(D_Z, \phi)(A)(b) &= -2\pi i \left.\deriv{}{t}\right|_{t=0} \int_X \trace \Im(e^{-i\varphi(E)} \tilde Z(A+tb)) \phi\\
		&= -2\pi i \int_X \trace \left[\Im (e^{-i\varphi(E)} \tilde Z'(A)) \wedge \frac{i}{2\pi} d_Ab \right]_{\sym} \phi\\
		&=  \int_X \trace \left[\Im (e^{-i\varphi(E)} \tilde Z'(A)) \wedge d_Ab \phi \right]_{\sym}.
	\end{align*}
	Thus we have
	$$d(D_Z,\phi)(A)(b) = -\Omega_Z(d_A \phi, b)$$
	as desired. The $\calG(h)$-equivariance of $D_Z$ is straightforward and similar to the computation of $\calG(h)$-invariance of $\Omega_Z$ in \cref{prop:Kahlerformclosed}.
\end{proof}

\begin{remark}
	Note that the subsolution condition was not used in the proof of \cref{thm:momentmap}. In particular the moment map condition is still formally satisfied on all of $\calA(h)$ for the possibly degenerate form $\Omega_Z$. That is, the form $\Omega_Z - D_Z$ is $\calG(h)$-equivariantly closed on $\calA(h)$. 
\end{remark}

\begin{remark}
	This moment map formalism is new even in the case of dHYM on a line bundle with non-zero B-field. The moment map formalism of Collins--Yau only considered the case of dHYM for $\alpha = \frac{i}{2\pi} F(h)$, although the calculations there could have been extended in a straightforward way to include the case of non-zero B-field. 
\end{remark}

\begin{remark}
	A moment map criterion of this form for gauge-theoretic equations $f(F_A)=0$ on vector bundles was predicted by Thomas \cite[Eq. 38.11]{clay}.
\end{remark}

\subsection{Variational functional\label{sec:variationalfunctional}}

As we observed in \cref{ch:preliminaries} associated to a moment map formalism is a Kempf--Ness-type functional $\calM$, which is formally defined by its first variation. Let us deduce the Kempf--Ness functional for the $Z$-critical equation as a moment map. 

We work now in a fixed complex gauge orbit inside $\calA(h)$, and take the view of fixing the holomorphic structure on $E\to (X,\omega)$ and varying the Hermitian metric $h$. First we define a functional $\calM(h,h')$ of a pair of Hermitian metrics. To do so take the $Z$-critical operator $D_Z(h)$ and replace any form
$$\gamma \wedge \left( \frac{i}{2\pi} F(h)\right)^k$$ 
with 
$$\gamma \wedge R_{k+1}(h,h')$$
where $R_k(h,h')\in \Omega^{k-1,k-1}$ is the $k$th Bott--Chern form (see for example \cite[\S 1.2]{donaldson1985anti} or \cite[\S 6.3]{kobayashi1987differential}). We interpret terms of the form $\gamma \otimes \id_E$ as $\gamma \wedge \left(\frac{i}{2\pi} F(h)\right)^0$. Recall that the Bott-Chern forms are characterised by the following properties:
\begin{enumerate}
	\item $R_k(h,h) = 0$ and $R_k(h,h') + R_k(h',h'') = R_k(h,h'')$,
	\item $\deriv{}{t} R_k(h_t, h') =  \left[h_t^{-1} \del_t h_t \left( \frac{i}{2\pi} F(h_t)\right)^{k-1}\right]_{\sym}$,
	\item $i \deldelbar R_k(h,h') = \left( \frac{i}{2\pi} F(h)\right)^k - \left( \frac{i}{2\pi} F(h')\right)^k$.
\end{enumerate}

Write $\tilde \calM_Z(h,h')$ for the resulting expression, and define a functional
$$\calM_Z(h,h') := \int_X \trace \tilde \calM_Z(h,h').$$

\begin{theorem}\label{thm:ZDonaldsonfunctionalfirstvariation}
	Fix a reference metric $\tilde h$. Then $h$ is a critical point of $\calM_Z(h,\tilde h)$ if and only if it is $Z$-critical.
\end{theorem}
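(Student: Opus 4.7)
The plan is to compute the first variation of $\calM_Z(h_t, \tilde h)$ along a smooth path of Hermitian metrics $h_t$ with $h_0 = h$, and show that the resulting pairing vanishes for all variations precisely when $h$ is $Z$-critical. Let $V = h_t^{-1} \partial_t h_t$, which is a Hermitian endomorphism of $(E,h_t)$. The functional $\calM_Z(h,\tilde h)$ is built by replacing each term $\gamma \wedge (\tfrac{i}{2\pi}F(h))^k$ appearing in $\Im(e^{-i\varphi(E)}\tilde Z(h))$ (where $\gamma = \omega^d \wedge \tilde U_\ell$ is a closed form independent of $h$) by $\gamma \wedge R_{k+1}(h,\tilde h)$ and then applying $\int_X \trace$.

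Using property (ii) of the Bott--Chern forms, we differentiate term-by-term. Since $\gamma$ does not depend on $h_t$, a single term contributes
\[
\deriv{}{t}\int_X \trace \left(\gamma \wedge R_{k+1}(h_t, \tilde h)\right) = \int_X \trace\left(\gamma \wedge \left[V \wedge \left(\tfrac{i}{2\pi}F(h_t)\right)^k\right]_{\sym}\right).
\]
Because $V$ is an endomorphism-valued zero-form, its graded sign is trivial under reordering, and under the trace the cyclic invariance allows us to move $V$ to the front of every summand in the symmetrization. Since all remaining factors $(\tfrac{i}{2\pi}F(h_t))^k$ are identical, each of the $(k+1)$ terms in $[V\wedge (\tfrac{i}{2\pi} F(h_t))^k]_{\sym}$ contributes equally, and the result simplifies to $\trace(V \cdot \gamma \wedge (\tfrac{i}{2\pi}F(h_t))^k)$. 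Summing the contributions of all terms appearing in $\Im(e^{-i\varphi(E)}\tilde Z(h))$ with their appropriate constants yields
\[
\deriv{}{t} \calM_Z(h_t, \tilde h) = \int_X \trace\left(V \cdot \Im\!\left(e^{-i\varphi(E)}\tilde Z(h_t)\right)\right).
\]

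With this variational formula in hand, $h$ is a critical point of $\calM_Z(\cdot, \tilde h)$ if and only if the above expression vanishes for every Hermitian $V$. Since $\Im(e^{-i\varphi(E)}\tilde Z(h))$ is an $(n,n)$-form with values in Hermitian endomorphisms, and the pairing $(\phi,\psi)\mapsto \int_X \trace(\phi\psi)$ on Hermitian endomorphism-valued $(n,n)$-forms against Hermitian endomorphisms is non-degenerate (this is exactly the pairing used to identify the dual of $\Lie \calG(h)$ in Section~\ref{sec:momentmap}), vanishing for all $V$ forces $\Im(e^{-i\varphi(E)}\tilde Z(h)) = 0$, i.e.~$h$ is $Z$-critical. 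Conversely, at a $Z$-critical metric the first variation vanishes identically.

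The main technical subtlety will be the handling of the graded symmetrization: one must verify carefully that $\trace[V\wedge (\tfrac{i}{2\pi}F)^k]_{\sym} = \trace(V (\tfrac{i}{2\pi}F)^k)$ under the integral, which follows from cyclic invariance of the trace together with the fact that the $k$ copies of $\tfrac{i}{2\pi}F$ are identical and $V$ is of form-degree zero (so no graded signs appear). This is precisely the calculation implicit in the moment-map identity of Theorem~\ref{thm:momentmap}, and indeed the present statement is the Kempf--Ness/Donaldson functional counterpart of that moment map formalism, restricted to a single $\calG^\CC$-orbit.
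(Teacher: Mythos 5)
Your proposal is correct and follows essentially the same route as the paper: differentiate $\calM_Z(h_t,\tilde h)$ term-by-term using Bott--Chern property (ii), simplify the symmetrisation under the trace to obtain $\deriv{}{t}\calM_Z(h_t,\tilde h)=\int_X \trace\big(h_t^{-1}\del_t h_t\circ D_Z(h_t)\big)$, and conclude by non-degeneracy of the trace pairing on Hermitian endomorphisms. The extra care you take with the graded symmetrisation and cyclic invariance is exactly the calculation the paper leaves implicit.
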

\begin{proof}
	Let $h_t$ be a family of Hermitian metrics on $E$ with $h_0 = h$ and compute the first variation 
	$$\deriv{}{t} \calM(h_t,\tilde h) =  \int_X \trace\left( h_t^{-1} \del_t h_t \circ D_Z(h_t) \right).$$
	Near $t=0$ we may consider a path $h_t=\exp(tV)\cdot h$ in which case $h_t^{-1} \del_t h_t = V$. Using the non-degeneracy of the trace pairing we see the first variation vanishes for all such $V$ if and only if $D_Z(h) = 0$.
\end{proof}

\begin{theorem}
	The functional $\calM_Z$ is convex along geodesics on the locus $\Herm(E)^Z\subset \Herm(E)$ of subsolutions to the $Z$-critical equation.
\end{theorem}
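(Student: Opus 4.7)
The plan is to imitate the proof of convexity of the Donaldson functional along geodesics in $\Herm(E)$, using the subsolution condition precisely where positivity of the linearisation would be needed. Fix a reference metric $\tilde h$, and let $h_t = \exp(tV)\cdot h$ be a geodesic in $\Herm(E)$, so that $V\in \Gamma(\End_H(E,h))$ is Hermitian and $h_t^{-1}\partial_t h_t = V$ is independent of $t$. Assume that $h_t\in \Herm(E)^Z$ for all $t$ in some interval, which is what is meant by ``along geodesics on the locus of subsolutions.'' By \cref{thm:ZDonaldsonfunctionalfirstvariation} the first variation is
\[
\deriv{}{t}\calM_Z(h_t,\tilde h) = \int_X \trace\bigl(V\circ D_Z(h_t)\bigr),
\]
so since $V$ is constant along the geodesic, differentiating a second time gives
\[
\frac{d^2}{dt^2}\calM_Z(h_t,\tilde h) = \int_X \trace\!\left(V\circ \left.\deriv{}{s}\right|_{s=t} D_Z(h_s)\right).
\]

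The inner derivative is exactly the linearisation of the $Z$-critical operator at $h_t$ in the direction $V$. By \cref{lem:linearisationcurvatureMetric} the curvature perturbs as $F(h_s) = F(h_t) + \overline\partial\partial_{h_t} V\,(s-t) + O((s-t)^2)$, so substituting into each monomial of $\tilde Z(h)$ and graded-symmetrising, one obtains (in analogy with \cref{prop:linearisationDZ})
\[
\left.\deriv{}{s}\right|_{s=t} D_Z(h_s) = \left[\Im\!\bigl(e^{-i\varphi(E)}\tilde Z'(h_t)\bigr)\wedge \tfrac{i}{2\pi}\,\overline\partial\partial_{h_t} V\right]_{\sym}.
\]
Substituting this back and integrating by parts, exactly as in the proof of closedness of $\Omega_Z$ in \cref{prop:Kahlerformclosed} (using the Bianchi identity and closedness of $\omega$ and $\tilde U$ to move $\overline\partial$ off of $V$), we arrive at
\[
\frac{d^2}{dt^2}\calM_Z(h_t,\tilde h) = \int_X i\,\trace\!\left[\Im\!\bigl(e^{-i\varphi(E)}\tilde Z'(h_t)\bigr) \wedge \partial_{h_t} V \wedge \overline\partial V\right]_{\sym}.
\]

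Now observe that $V$ being Hermitian with respect to $h_t$ forces $(\overline\partial V)^{\ast} = \partial_{h_t} V$, so the right-hand side is precisely the value at $u=\overline\partial V$ of the positive-definite Hermitian pairing $\langle-,-\rangle_{h_t}$ from \cref{sec:momentmap}. Since $h_t$ is a subsolution, the pointwise positivity in \cref{def:subsolution} applied to $u=\overline\partial V$ yields
\[
i\,\trace\!\left[\Im\!\bigl(e^{-i\varphi(E)}\tilde Z'(h_t)\bigr) \wedge (\overline\partial V)^{\ast}\wedge \overline\partial V\right]_{\sym} \ge 0
\]
pointwise, with equality only where $\overline\partial V = 0$. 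Hence $\tfrac{d^2}{dt^2}\calM_Z(h_t,\tilde h)\ge 0$, which is the required convexity; moreover equality on the whole geodesic forces $\overline\partial V = 0$ globally, so $V\in H^0(X,\End E)$ is a holomorphic endomorphism, recovering the expected infinitesimal automorphism obstruction familiar from the Donaldson functional.

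The step I expect to demand the most care is the integration-by-parts argument converting $\overline\partial\partial_{h_t}V$ into $\partial_{h_t} V \wedge \overline\partial V$ inside a graded-symmetrised expression where the curvature factors in $\tilde Z'(h_t)$ are matrix-valued and do not commute past $\partial_{h_t}V$ and $\overline\partial V$. One must expand $\tilde Z'(h_t)$ monomial by monomial, commute $\partial_{h_t}$ through the symmetrisation using the Bianchi identity on each curvature factor and the closedness of $\omega$ and $\tilde U$, and verify that the boundary terms arising from Stokes cancel precisely as in \cref{prop:Kahlerformclosed,thm:momentmap}; this is the non-Abelian analogue of the straightforward Abelian integration by parts appearing in the line bundle case of Collins--Yau, and is the reason the symmetrised definition of positivity in \cref{def:positivityEndEforms} was introduced.
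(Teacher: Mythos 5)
Your argument is correct and is essentially the paper's own proof: second variation via the metric linearisation of the curvature, integration by parts to produce $\left[\Im(e^{-i\varphi(E)}\tilde Z'(h))\wedge i(\del_h V)^*\wedge \delbar V\right]_{\sym}$, and then the subsolution condition applied with $u=\delbar V$. Your added remark that equality forces $\delbar V=0$, so $V$ is a holomorphic endomorphism, is a slightly more careful treatment of the degenerate case than the paper's stated strict positivity, but the approach is the same.
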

\begin{proof}
	The condition for a path $h_t$ of metrics in $\calA(h)$ to be geodesic is that $\del_t (h_t^{-1} \del_t h_t) = 0$ (see \cite[Eq. 6.1.11]{kobayashi1987differential}). Note in particular this is satisfied for $h_t = \exp(tV) \cdot h$ and in fact all geodesics are locally of this form. We need to compute the second variation 
	$$\frac{d^2}{dt^2} \calM(h_t,\tilde h).$$
	From the expression in \cref{thm:ZDonaldsonfunctionalfirstvariation} and the geodesic condition applied to $h_t = \exp(tV) \cdot h$ we obtain
	\begin{align*}
		\left.\frac{d^2}{dt^2}\right|_{t=0} \calM(h_t,\tilde h) &= \int_X \trace V \left[ \Im(e^{-i\varphi(E)} \tilde Z'(h)) \wedge \frac{i}{2\pi} \delbar \del_h V \right]_{\sym}\\
		&=\int_X \trace  \left[ \Im(e^{-i\varphi(E)} \tilde Z'(h)) \wedge \frac{i}{2\pi} (\delbar \del_h V) V \right]_{\sym}.
	\end{align*}
	Here we have used the linearisation of the curvature under a change of metric identified in \cref{lem:linearisationcurvatureMetric}, which naturally produces the subsolution expression as discussed in \cref{sec:subsolutions}. Integrating by parts we obtain
	\begin{align*}
		\left.\frac{d^2}{dt^2}\right|_{t=0} \calM(h_t,\tilde h) &= \int_X \trace  \left[ \Im(e^{-i\varphi(E)} \tilde Z'(h)) \wedge \frac{i}{2\pi} (\del_h V) \wedge \delbar V \right]_{\sym}\\
		&= \frac{1}{2\pi} \int_X \trace  \left[ \Im(e^{-i\varphi(E)} \tilde Z'(h)) \wedge i (\del_h V)^* \wedge \delbar V \right]_{\sym}\\
		& > 0
	\end{align*}
	where we have set $u=\delbar V$ and used the fact that $(\del_h V)^* = \delbar V$ since $V$ is Hermitian, and finally the assumption that $h\in \Herm(E)^Z$ is a subsolution to obtain strict positivity.\footnote{Note that this expression is just $\frac{1}{2\pi}$ times the norm $\|\delbar_h V\|_Z^2$ for the Hermitian pairing $\langle -,-\rangle_Z$ defined in \cref{sec:momentmap}, which is non-degenerate on the locus of subsolutions.}
\end{proof}

Previously a functional of a similar form has appeared for the dHYM equation and was introduced by Collins--Yau \cite{collins2018moment}. They refer to the \emph{complexified Calabi--Yau functional} $\mathrm{CY}_\CC$ defined by the first variation
$$d\mathrm{CY}_\CC (\varphi)(\psi) = \int_X \psi(\omega + i (\alpha+i\deldelbar \varphi))^n$$
where $\alpha$ is some closed real $(1,1)$-form and $\varphi,\psi$ are smooth functions on $(X,\omega)$. Indeed we remark that the property (iii) for the Bott--Chern classes for a pair of Hermitian metrics $h,h'$ on a line bundle $L\to X$ shows that, for example, $R_1(h,h') = \varphi$ where $F(h') = F(h) - 2\pi \deldelbar \varphi$. Indeed replacing $\alpha = F(\tilde h)$ for some reference metric $h$ we see that $\calM_Z$ recovers $\mathrm{CY}_\CC$ in the case where $Z=Z_{\mathrm{dHYM}}$ is the dHYM central charge on a line bundle.

Another variational framework was proposed earlier by Jacob--Yau \cite{jacob2017special} (see also \cite[\S 2]{collins2018moment} for further discussion) where it is observed that dHYM metrics are absolute minimisers of the functional
$$V(\alpha) = \int_X |\tilde Z_{\mathrm{dHYM}}(\alpha)|.$$
That is, dHYM metrics define forms $\tilde Z(\alpha)$ with constant phase angle, and which minimise the corresponding radial norm. The functional $V$ is the ``Yang--Mills functional" in comparison to $\calM_Z$ being the ``Donaldson functional" of the problem. 

In general one might define a ``$Z$-Yang--Mills" functional as follows.

\begin{definition}
	Let $Z$ be a polynomial central charge and fix representative data $\omega, \tilde U$. Let $A$ be a Chern connection and write $\tilde Z(A) = \zeta \frac{\omega^n}{n!}$ for an endomorphism $\zeta\in \Gamma(\End E)$.
	Define the \emph{$Z$-Yang--Mills functional} by
	$$Z\YM(A) = \int_X |\zeta(A)|^2 \frac{\omega^n}{n!}$$
	where we use the trace pairing $\langle \zeta, \xi \rangle = \trace (\zeta \xi^*)$ on endomorphisms.
\end{definition}

\begin{theorem}
	A Chern connection $A$ in $\calA(h)$ is a minimum of the $Z\YM$ functional if and only if it is $Z$-critical and
	\begin{equation}\label{eq:Zcriticalrealpart}\Re(e^{-i\varphi(E)} \tilde Z(A)) = \lambda \id_E \otimes \omega^n\end{equation} 
	for some constant $\lambda\in \RR$. On the locus of $Z$-almost calibrated connections $\lambda > 0$.
\end{theorem}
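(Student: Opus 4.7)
The plan is to bound $Z\YM(A)$ below by a topological quantity $\frac{|Z(E)|^2}{\rk(E)\cdot \mathrm{Vol}(X)}$ (where $\mathrm{Vol}(X) := \int_X \omega^n/n!$), and then show that equality in this bound is equivalent to the two stated conditions. The argument is a gauge-theoretic analogue of the L\"ubke-type inequality used to characterise absolute minima of the ordinary Yang--Mills functional (see \cite[Thm.~4.3.9]{kobayashi1987differential}), adapted to the complex-valued $\tilde Z(A)$.

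\textbf{Step 1 (pointwise orthogonal splitting).} Writing $\tilde Z(A) = \zeta \, \omega^n/n!$ for $\zeta \in \Gamma(\End E)$ and decomposing $e^{-i\varphi(E)}\zeta = a + ib$ into its Hermitian and skew-Hermitian-times-$i$ parts, so that $a=\Re(e^{-i\varphi(E)}\zeta)$ and $b=\Im(e^{-i\varphi(E)}\zeta)$ are both Hermitian, the identity $(a+ib)(a-ib) = a^2 + b^2 + i[b,a]$ and vanishing of the trace of a commutator give the pointwise decomposition
\[
|\zeta|^2 \;=\; \trace(\zeta\zeta^*) \;=\; |a|^2 + |b|^2,
\]
so that $Z\YM(A) = \int_X |a|^2 \, \omega^n/n! + \int_X |b|^2 \, \omega^n/n!$. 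The second summand is (up to a constant) the squared $L^2$-norm of the $Z$-critical operator $D_Z(A)$ of \cref{thm:momentmap}, and hence vanishes if and only if $A$ is $Z$-critical.

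\textbf{Step 2 (topological bound via Cauchy--Schwarz).} By the Chern--Weil property $\int_X \trace \tilde Z(A) = Z(E) = |Z(E)|e^{i\varphi(E)}$, so taking real parts after multiplying by $e^{-i\varphi(E)}$:
\[
\int_X \trace(a)\,\frac{\omega^n}{n!} \;=\; |Z(E)|.
\]
The pointwise Cauchy--Schwarz inequality for the trace pairing on Hermitian endomorphisms gives $(\trace a)^2 \le \rk(E)\,|a|^2$, with equality iff $a = \lambda(x)\id_E$ for some scalar function $\lambda$. Combined with the integral Cauchy--Schwarz inequality against the constant function $1$,
\[
|Z(E)|^2 \;\le\; \mathrm{Vol}(X)\int_X (\trace a)^2 \,\frac{\omega^n}{n!} \;\le\; \rk(E)\cdot\mathrm{Vol}(X)\int_X |a|^2\,\frac{\omega^n}{n!},
\]
with equality in the second step iff $\lambda(x)$ is constant on $X$. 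Dropping the non-negative $b$-term yields the desired lower bound $Z\YM(A) \ge |Z(E)|^2/(\rk(E)\cdot\mathrm{Vol}(X))$.

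\textbf{Step 3 (equality and sign).} Tracking equality in both applications of Cauchy--Schwarz and vanishing of the $b$-term shows that $Z\YM(A)$ attains its topological lower bound -- equivalently, $A$ is an absolute minimum of $Z\YM$ -- if and only if $A$ is $Z$-critical \emph{and} $a = \Re(e^{-i\varphi(E)}\zeta)$ equals a constant multiple of $\id_E$, which is precisely equation~\eqref{eq:Zcriticalrealpart} after absorbing $1/n!$ into $\lambda$. Integrating that equation gives $\lambda \cdot \rk(E)\int_X \omega^n = |Z(E)|$, so $\lambda \ge 0$ automatically; if in addition $A$ is $Z$-almost calibrated then $\Re\trace(e^{-i\varphi(E)}\tilde Z(A)) = \lambda\,\rk(E)\,\omega^n > 0$ pointwise, forcing the stronger $\lambda > 0$. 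The only genuine work is bookkeeping of factors of $n!$ and justifying the two Cauchy--Schwarz inequalities in this endomorphism-valued setting; there is no hard analysis involved.
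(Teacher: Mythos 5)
Your proposal is correct and follows essentially the same route as the paper's proof: decompose $e^{-i\varphi(E)}\zeta$ into its Hermitian real and imaginary parts so that $Z\YM$ splits as $\|\Re\|^2+\|\Im\|^2$, then apply Cauchy--Schwarz against $\id_E$ to obtain the topological bound $|Z(E)|^2/(\rk(E)\vol(X))$, with equality precisely when $A$ is $Z$-critical and $\Re(e^{-i\varphi(E)}\tilde Z(A))$ is a constant multiple of $\id_E\otimes\omega^n$. The only differences are cosmetic: you split the paper's single $L^2$ Cauchy--Schwarz into a pointwise plus an integral step, and you spell out the sign argument for $\lambda$ under the almost calibrated hypothesis, which the paper's written proof leaves implicit.
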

\begin{proof}
	First note that we have
	$$Z\YM(A) = \int_X |\zeta(A)|^2 \frac{\omega^n}{n!} = \int_X |e^{-i\varphi(E)} \zeta(A)|^2 \frac{\omega^n}{n!}.$$
	Now decompose $e^{-i\varphi(E)} \zeta = \Re (e^{-i\varphi(E)} \zeta) + i \Im (e^{-i\varphi(E)} \zeta)$ and consider the inequality
	$$0 \le \int_X |e^{-i\varphi(E)} \zeta - \Re (e^{-i\varphi(E)} \zeta)|^2  \frac{\omega^n}{n!}.$$
	Then on the right hand side we have
	$$Z\YM(A) + \|\Re(e^{-i\varphi(E)} \zeta)\|^2 - \langle e^{-i\varphi(E)} \zeta, \Re(e^{-i\varphi(E)} \zeta)\rangle - \langle \Re(e^{-i\varphi(E)} \zeta), e^{-i\varphi(E)} \zeta\rangle$$
	and expanding the inner products and using the sesquilinear property this reduces to
	$$Z\YM(A) - \|\Re(e^{-i\varphi(E)} \zeta)\|^2$$
	so we obtain the inequality 
	$$ \|\Re(e^{-i\varphi(E)} \zeta)\|^2 \le Z\YM(A)$$
	with equality if and only if $A$ satisfies $\Im(e^{-i\varphi(E)} \zeta) = 0$, that is if $A$ is $Z$-critical. Now using the fact that $\Re(e^{-i\varphi(E)} \zeta)$ is Hermitian with respect to $h$, and applying Cauchy--Schwarz we obtain
	$$\left|\int_X \trace \Re(e^{-i\varphi(E)} \zeta) \frac{\omega^n}{n!} \right|^2 = |\langle \Re(e^{-i\varphi(E)} \zeta), \id_E \rangle|^2 \le \rk (E) \vol(X) \|\Re(e^{-i\varphi(E)} \zeta)\|^2.$$
	Thus we get the topological bound
	$$\frac{\Re(e^{-i\varphi(E)} Z(E))^2}{\rk (E) \vol(X)} \le Z\YM(A)$$
	with equality if and only if $A$ is $Z$-critical and $\Re(e^{-i\varphi(E)} \zeta)$ is a constant multiple of $\id_E$.\footnote{It might be interesting to the reader to think about this inequality in the case where $Z(E) = -\deg E + i \rk E$, in which case the $Z\YM$ functional is essentially $1 + \YM(A)$ and the bound on the left-hand side is essentially $1 + \frac{\deg E^2}{\rk E}$. Thus we recover the traditional lower bound on the Yang--Mills functional \cite[Thm. 4.3.9]{kobayashi1987differential} which is attained if and only if a connection is Hermitian Yang--Mills.} 
\end{proof}

\begin{remark}
	The above theorem suggests that the definition of a $Z$-critical connection may be modified to include the stronger Einstein-type condition on the real part of $e^{-i\varphi(E)} \tilde Z(A)$. Note that in the case of line bundles this second equation is automatically implied by the dHYM equation due to the rank being one, so for example does not appear in Jacob--Yau's analysis of the minima of the $Z\YM$ functional for the dHYM equation \cite{jacob2017special}. 
	
	It may be interesting to determine if a more careful perturbation argument for the existence of solutions proved in the large volume limit in \cref{ch:correspondence} could find a solution satisfying this stronger system of equations. One expects this to involve a more careful understanding of the errors introduced when solving the $Z$-critical equation and how they change the expression $\Re(e^{-i\varphi(E)} \tilde Z(A))$. For example note that to leading order in $k$ this is a constant multiple of $\id_E$, and to subleading order is a constant multiple of the Hermite--Einstein equation, so an approximate solution to the $Z$-critical equation is also an approximate solution to \eqref{eq:Zcriticalrealpart} to second order. If such an approximate solution can be improved (or is manifestly a higher order approximate solution due to the structure of the $Z$-critical equation) then a similar perturbation argument may solve the stronger system of equations.
\end{remark}

\begin{remark}
	Notice that in the proof of the moment map criterion in \cref{thm:momentmap}, one could have equally considered the ``complex moment map"
	$$\mu: A\mapsto e^{-i\varphi(E)}  \tilde Z(A)$$
	which formally satisfies the moment criterion with respect to the alternating form
	$$(a,b) \mapsto \int_X \trace \left[a, b, e^{-i\varphi(E)} \tilde Z'(A) \right]_{\sym}.$$
	This represents the ``complexified K\"ahler geometry" upgrade of the Yang--Mills functional and Atiyah--Bott symplectic structure of \cref{sec:bundles}, and in particular we note
	$$Z\YM = \|\mu\|^2$$
	and we have essentially ``taken the imaginary part" of this complex moment map in \cref{thm:momentmap}. It would be interesting to understand if the complexified K\"ahler geometry point of view for the Yang--Mills functional can be pushed further to obtain analogues of the many interesting facts about $\YM$ in the complexified setting.
\end{remark}

\section{Future directions\label{sec:Zcriticalfuturedirections}}

In this section we will discuss some future directions of research in the study of $Z$-critical metrics. 

\subsection{Modification due to Freed--Kapustin--Witten anomaly\label{sec:counterexample}}

In this section we will discuss a counterexample to \cref{conj:folklore} as it relates to the dHYM equation and Bridgeland stability, identified by Collins--Shi \cite{collins2020stability}. We will investigate how a modification to the stability condition which has been suggested in the physics literature appears to resolve this counterexample.

Recall that the existence of solutions to the $Z$-critical equation away from the large volume limit is obstructed both by subbundles and subvarieties. The way this is predicted to manifest in Bridgeland stability is due to destabilising quotient objects of the form
$$E\to E\otimes \calO_V$$
where $V\subset X$ is a proper subvariety of $X$, or more generally quotients 
$$E\to F$$
where $F$ is a torsion coherent sheaf.

Correspondingly, recall \cref{conj:collinsjacobyaudHYM} and its resolution in the supercritical phase \cref{thm:dhymexistencesupercritical} provide an algebro-geometric stability condition from subvarieties which is equivalent to the existence of a dHYM metric on a line bundle.  However, Collins--Shi have produced an explicit counterexample to \cref{conj:folklore} on $\Bl_p \CCPP^2$, a line bundle $L$ which is Bridgeland stable with respect to the dHYM stability condition described in \cref{ex:dhymcentralcharge}, but does not admit a solution to the dHYM equation \cite{collins2020stability}. In particular, the line bundle $L$ fails to satisfy the algebro-geometric stability condition equivalent to the dHYM equation, which depends on the invariant $Z_C(L)$, but \emph{is} Bridgeland stable, and in particular satisfies stability with respect to quotients $L\to L\otimes \calO_C$ for analytic curves $C\subset \Bl_p \CCPP^2$. 

The key point of this discrepancy is that the algebraic invariants
$$Z_V(L) := -\int_V e^{-i\omega} \Ch(L)\text{ and }Z_X(L\otimes \calO_V) := -\int_X e^{-i\omega} \Ch(L\otimes \calO_V)$$
do not agree. Indeed the difference can be easily computed from the Riemann--Roch theorem as follows.

\begin{lemma}\label{lem:cherncharacterstructuresheaf}
	Let $\iota: C\subset X$ be a smooth curve in a surface $X$. Then 
	$$\Ch(\iota_* \calO_C) = \iota_* \left(1 - \frac{c_1(K_C\otimes \rest{K_X}{C}^*)}{2}\right).$$
\end{lemma}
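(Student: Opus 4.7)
The plan is to compute $\Ch(\iota_*\calO_C)$ directly using the ideal sheaf sequence and the adjunction formula, which avoids invoking Grothendieck--Riemann--Roch explicitly (though GRR gives the same answer as a sanity check).

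First I would write down the ideal sheaf short exact sequence
\begin{center}
\ses{\calO_X(-C)}{\calO_X}{\iota_*\calO_C}
\end{center}
and use additivity of the Chern character to deduce
$$\Ch(\iota_*\calO_C) = \Ch(\calO_X) - \Ch(\calO_X(-C)) = 1 - e^{-[C]},$$
where $[C]\in H^2(X,\ZZ)$ denotes the cohomology class Poincar\'e dual to the curve $C\subset X$. Since $X$ has complex dimension two, the exponential truncates after the second term, giving
$$\Ch(\iota_*\calO_C) = [C] - \tfrac{1}{2}[C]^2.$$

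Next I would identify each term as a pushforward from $C$. The degree-two term is simply the fundamental class $[C] = \iota_*(1_C)$. For the degree-four term, the self-intersection formula gives $[C]^2 = \iota_*(\iota^*[C]) = \iota_*(c_1(N_{C/X}))$, where $N_{C/X}$ is the normal bundle of $C$ in $X$. Combining these via the projection formula,
$$\Ch(\iota_*\calO_C) = \iota_*\!\left(1 - \tfrac{1}{2} c_1(N_{C/X})\right).$$

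The final step is to rewrite $c_1(N_{C/X})$ using the adjunction formula for the smooth curve $C\subset X$, namely $K_C \cong \rest{K_X}{C} \otimes \det N_{C/X} = \rest{K_X}{C}\otimes N_{C/X}$ (the normal bundle being a line bundle). Taking first Chern classes yields $c_1(N_{C/X}) = c_1(K_C\otimes \rest{K_X}{C}^*)$, and substituting gives exactly
$$\Ch(\iota_*\calO_C) = \iota_*\!\left(1 - \tfrac{1}{2} c_1(K_C\otimes \rest{K_X}{C}^*)\right),$$
as required. There is no real obstacle here; the only subtlety worth flagging explicitly is keeping track of the sign in the adjunction identity $N_{C/X} = K_C\otimes \rest{K_X}{C}^*$, which is precisely what produces the class appearing in the statement.
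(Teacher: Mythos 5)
Your proof is correct, but it takes a different route from the paper. The paper invokes Grothendieck--Riemann--Roch for the embedding (citing Atiyah--Hirzebruch so that GRR applies to compact complex, not necessarily projective, manifolds), writes $\Ch(\iota_*\calO_C)=\iota_*(\Td(N_C)^{-1})$, and then identifies $N_C\isom K_C\otimes \rest{K_X}{C}^*$ via the conormal sequence before expanding the inverse Todd class. You instead exploit that $C$ is a divisor: the ideal-sheaf sequence plus Whitney additivity give $\Ch(\iota_*\calO_C)=1-e^{-[C]}=[C]-\tfrac{1}{2}[C]^2$ on a surface, and the self-intersection formula, projection formula and adjunction convert this into the stated pushforward. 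Your argument is more elementary — it needs no Riemann--Roch theorem at all, so in particular you never have to worry about its validity off the projective setting — and every step is standard divisor calculus. What the paper's GRR computation buys is uniformity: the identity $\Ch(\iota_*\calF)=\iota_*(\Ch(\calF)\Td(N)^{-1})$ is reused verbatim in the immediately following propositions (curves in Calabi--Yau threefolds, divisors in threefolds with the twist by $K_D^{1/2}$), including the codimension-two case where the ideal sheaf is no longer a line bundle and your truncation trick would require a Koszul resolution instead. So your proof is a clean, self-contained replacement for this particular lemma, while the paper's choice of method is dictated by the later, higher-codimension variants.
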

\begin{proof}
	We note that by the Grothendieck--Riemann--Roch formula, which applies even to non-projective compact complex manifolds by \cite{ATIYAH1962151}, for the embedding $\iota$ we have
	$$\Ch(\iota_* \calO_C) = \iota_* (\Td(N_C)^{-1} \Ch(\calO_C))$$
	where $N_C$ is the normal bundle to $C$. Recalling the short exact sequence
	$$0\to N_C^* \to \iota^* \Omega_X \to K_C \to 0$$
	and taking determinants we have
	$$\rest{K_X}{C} \isom N_C^* \otimes K_C$$
	so $N_C \isom K_C \otimes \rest{K_X}{C}^*$. Computing the inverse of the Todd class
	$$\Td(N_C)^{-1} = 1 - \frac{c_1(K_C)}{2} + \frac{c_1(\rest{K_X}{C})}{2}.$$
\end{proof}

By the lemma, we note that
$$Z_X(L\otimes \calO_C) = -\int_C e^{-i\omega} \Ch(L\otimes K_C^{-1/2} \otimes \rest{K_X}{C}^{1/2}) $$
and thus we see the discrepancy between the algebraic invariants arising from Bridgeland stability and the dHYM equation come from the class $-\frac{1}{2} ( c_1(K_C) - c_1(\rest{K_X}{C}))$. 

It has been suggested in the physics literature (see \cite{minasian1997k,katz2002d,caldararu2003d}, \cite[\S 5.4]{aspinwall2004d}, and \cite[\S 5.1.4, \S 5.3.3.4]{aspinwall2009dirichlet}) that taking into account the Freed--Kapustin--Witten anomaly (see for example \cite{freed1999anomalies,kapustin2000d} and the discussion there) and a detailed understanding of the Ext groups defining morphisms of intersecting D-branes, the coherent sheaf on $X$ corresponding to a D-brane on a submanifold $\iota: Y \into X$ with Chan--Paton bundle $E\to Y$ should not be $\iota_* E$, but $\iota_* (E\otimes K_Y^{1/2}).$

Note that in general the square root of the canonical bundle of an arbitrary $Y\subset X$ may not exist. Indeed recall the following fundamental theorem from spin geometry.

\begin{theorem}[For example \cite{hitchin1974harmonic}]
	A compact K\"ahler manifold $X$ is spin if and only if the canonical bundle $K_X$ admits a holomorphic square root.
\end{theorem}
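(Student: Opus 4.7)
The plan is to reduce both conditions to the same mod 2 characteristic class statement, namely the vanishing of $c_1(X) \bmod 2 \in H^2(X, \mathbb{Z}/2)$.

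First I would handle the spin side. Recall that a compact oriented manifold $X$ admits a spin structure if and only if its second Stiefel--Whitney class $w_2(X) \in H^2(X,\mathbb{Z}/2)$ vanishes. For a complex (in particular K\"ahler) manifold, the reduction of Chern classes mod 2 computes the Stiefel--Whitney classes of the underlying real tangent bundle; in particular $w_2(X) = c_1(TX) \bmod 2 = c_1(K_X) \bmod 2$. This identity follows from the fact that the total Stiefel--Whitney class of the realification of a complex vector bundle is the mod 2 reduction of the total Chern class, and can be verified directly for the universal example or by Wu's formula.

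Next I would handle the square root side. A holomorphic line bundle $M\to X$ with $M^{\otimes 2} \isom K_X$ exists if and only if the class $[K_X]\in \Pic(X)=H^1(X,\calO_X^*)$ lies in the image of the squaring homomorphism $\Pic(X)\xrightarrow{\cdot 2}\Pic(X)$. Consider the short exact sequence of sheaves
\begin{equation*}
	0 \to \underline{\mathbb{Z}/2} \to \calO_X^* \xrightarrow{\,\cdot 2\,} \calO_X^* \to 0,
\end{equation*}
where the first map sends $1\mapsto -1$ (this is the Kummer sequence for $n=2$, exact because $\calO_X^*$ is a sheaf of divisible groups in the analytic topology, via local branches of the square root). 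The associated long exact sequence gives
\begin{equation*}
	\Pic(X) \xrightarrow{\cdot 2} \Pic(X) \xrightarrow{\delta} H^2(X,\mathbb{Z}/2),
\end{equation*}
and one checks that $\delta([L]) = c_1(L) \bmod 2$ by compatibility with the connecting map of the exponential sequence $0\to \underline{\mathbb{Z}}\to \calO_X\to \calO_X^*\to 0$ under reduction mod 2. Therefore $K_X$ admits a holomorphic square root if and only if $c_1(K_X)\bmod 2 = 0$.

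Combining the two steps, both conditions are equivalent to the vanishing of $c_1(K_X) \bmod 2$ in $H^2(X,\mathbb{Z}/2)$, which proves the theorem. The only potential obstacle is verifying carefully that the connecting homomorphism $\delta$ in the Kummer sequence really is mod 2 reduction of the first Chern class, but this is standard and follows from the commutative diagram relating the Kummer and exponential sequences.
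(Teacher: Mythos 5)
Your argument is correct, and in fact the thesis does not prove this statement at all --- it is quoted as a standard result with a citation to Hitchin's \emph{Harmonic spinors}, and the proof you give is precisely the standard one underlying that reference. Both halves check out: for a complex manifold $w_2(X)$ is the mod $2$ reduction of $c_1(TX)$ (equivalently of $c_1(K_X)$), so spin is equivalent to $c_1(K_X)\equiv 0 \bmod 2$; and the analytic Kummer sequence $0\to \mathbb{Z}/2\to \calO_X^*\xrightarrow{(\cdot)^2}\calO_X^*\to 0$ is exact since nonvanishing holomorphic functions have local holomorphic square roots, with connecting map equal to $c_1 \bmod 2$ --- your appeal to the exponential sequence is justified by the morphism of short exact sequences given by $n\mapsto n\bmod 2$, $f\mapsto \exp(\pi i f)$, and the identity on $\calO_X^*$. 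Two small remarks: it is worth noting that $c_1(K_X)\bmod 2=0$ is equivalent to $c_1(K_X)$ being divisible by $2$ integrally (Bockstein exactness), though your route via the Kummer sequence bypasses this and, importantly, produces a \emph{holomorphic} square root directly rather than merely a topological one; and neither compactness nor the K\"ahler condition is actually used, so your argument proves the statement for arbitrary complex manifolds, the hypotheses in the statement being just the context of the thesis.
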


Whilst not every complex submanifold is spin, they are all Spin\textsuperscript{C}, and the holomorphic square root $K_Y^{1/2}$ exists as a gerbe. In this case, the Freed--Witten anomaly in the presence of a topologically trivial B-field asserts exactly that if $\calE$ is a D-brane on $X$ which is locally free supported on a submanifold $Y$, and $Y$ is only Spin\textsuperscript{C}, then there exists a twisted bundle $E'$ (twisted by a gerbe) precisely so that
$$\iota^* \calE = E' \otimes K_Y^{-1/2}$$
where $\iota^* \calE$ is a regular vector bundle.

This suggests that the correct formalism for studying the dHYM equation or $Z$-critical equation in order to get a direct correspondence with Bridgeland stability as predicted in \cref{conj:folklore} is not connections on vector bundles, but connections twisted by Spin\textsuperscript{C} structures, or twisted Dirac operators. The key property these objects must have is that restriction to a submanifold requires a twisting which precisely introduces a factor of $\Td(N)^{-1}$ so that the natural algebro-geometric invariants arising out of the subsolution conditions such as \cref{def:subsolution} agree with those from Bridgeland stability.

Let us consider the case of a locally free sheaf $\calE$ on a manifold $(X,\omega)$. Then associated to $\calE$ is the gauge ``bundle" $E= \calE \otimes K_X^{-1/2}.$ Suppose we wish to consider an obstruction from a quotient $\calE \onto \calE \otimes \calO_V$ for some submanifold $\iota: V\into X$. Then the appropriate gauge ``bundle" corresponding to $\calE \otimes \calO_V$ is $E' = \calE \otimes K_V^{-1/2}$. 

Thus upon restriction of twisted bundles $\iota^* E = E' \otimes \det N_V^{-1/2}$. We propose that this twisting operation is the first approximation to the correct differential-geometric operation which captures stability with respect to quotients $E\to E\otimes \calO_V$. In order to justify this proposal, consider the following straightforward observation.

\begin{proposition}
	Let $C$ be a smooth curve on a Calabi--Yau surface $X$ and $E\to X$ a vector bundle. Then 
	$$Z_C(E\otimes K_C^{-1/2}) = Z(E\otimes \calO_C).$$
\end{proposition}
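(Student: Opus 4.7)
The plan is to reduce the statement to a direct computation via the Grothendieck--Riemann--Roch theorem applied to the closed embedding $\iota : C \into X$, using crucially that the Calabi--Yau assumption forces $N_C \isom K_C$.

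First, by the adjunction formula $K_C \isom \rest{(K_X \otimes \calO(C))}{C}$. Since $K_X \isom \calO_X$ on a Calabi--Yau, this gives $K_C \isom \rest{\calO(C)}{C} \isom N_C$, so $\Td(N_C)^{-1} = 1 - c_1(K_C)/2$ on the curve $C$. Observing that $\Ch(K_C^{-1/2}) = \exp(-c_1(K_C)/2) = 1 - c_1(K_C)/2$ (higher terms vanish on a curve), this coincides with $\Ch(K_C^{-1/2})$. This is precisely the numerical shadow of the Freed--Kapustin--Witten prescription.

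Next, I would apply Grothendieck--Riemann--Roch to $\iota$, which yields
\[
\Ch(E \otimes \calO_C) = \Ch(\iota_* \iota^* E) = \iota_*\!\left(\Ch(\iota^* E) \cdot \Td(N_C)^{-1}\right) = \iota_*\Ch(\iota^* E \otimes K_C^{-1/2}),
\]
where in the last equality we used the identification above and multiplicativity of the Chern character. Then by the projection formula applied to $\iota^* e^{-i\omega} = e^{-i\omega|_C}$,
\[
Z(E\otimes \calO_C) = -\int_X e^{-i\omega}\, \iota_*\Ch(\iota^*E \otimes K_C^{-1/2}) = -\int_C e^{-i\omega|_C}\, \Ch(\iota^*E \otimes K_C^{-1/2}),
\]
and the right-hand side is by definition $Z_C(E \otimes K_C^{-1/2})$, where the twisted object $E\otimes K_C^{-1/2}$ is interpreted on $C$ as $\iota^*E \otimes K_C^{-1/2}$ (well-defined at least on the level of Chern characters, which is all that enters $Z_C$).

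There is no real obstacle here: all ingredients, GRR for a closed immersion, adjunction on a Calabi--Yau surface, and the projection formula, are classical, and the identity is essentially a bookkeeping check that $\Td(N_C)^{-1} = \Ch(K_C^{-1/2})$ under the Calabi--Yau hypothesis. The only subtlety worth flagging is that $K_C^{1/2}$ may only exist as a Spin\textsuperscript{C} square root, but since the statement is purely on the level of Chern characters, this is harmless; the conceptual content is that the Freed--Kapustin--Witten twist $E \rightsquigarrow E \otimes K_C^{-1/2}$ is exactly what is needed to match the na\"ive restriction $Z_C$ with the Bridgeland-style quotient $Z(E \otimes \calO_C)$.
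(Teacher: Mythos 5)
Your proof is correct and follows essentially the same route as the paper: identify $N_C \isom K_C$ from the Calabi--Yau condition (the paper uses the conormal sequence and determinants, you use adjunction — the same computation), deduce $\Td(N_C)^{-1} = \Ch(K_C^{-1/2})$, and conclude via Grothendieck--Riemann--Roch for $\iota$ together with the projection formula. The bookkeeping and the remark that everything is only needed at the level of Chern characters match the paper's argument.
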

\begin{proof}
	First let us note that
	$\Ch(K_C^{-1/2}) = 1 - \frac{c_1(K_C)}{2}.$
	By the preceeding calculation in \cref{lem:cherncharacterstructuresheaf} for the Todd class this gives
	$$\Ch(K_C^{-1/2}) = \Td(N_C)^{-1}.$$
	This implies
	\begin{align*}
		Z(E\otimes \calO_C) &= -\int_X e^{-i\omega} \Ch(E \otimes \calO_C)\\
		&= -\int_C e^{-i\omega} \Ch(E) \Td(N_C)^{-1}\\
		&= -\int_C e^{-i\omega} \Ch(E\otimes K_C^{-1/2})\\
		&= Z_C(E\otimes K_C^{-1/2}).
	\end{align*}	
\end{proof}

By a similar computation as above but for Calabi--Yau threefolds, one obtains the following:

\begin{proposition}
	Let $X$ be a Calabi--Yau threefold, and $C\subset X$ a smooth curve. Then 
	$$Z_C(E\otimes K_C^{-1/2}) = Z(E\otimes \calO_C)$$
	for any vector bundle $E\to X$.
\end{proposition}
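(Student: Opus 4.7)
The plan is to mirror the surface proof, with the only substantive new ingredient being the computation of $\Td(N_C)^{-1}$ for a rank-two normal bundle on a curve. First I would apply the Grothendieck--Riemann--Roch formula to the embedding $\iota: C \hookrightarrow X$ (valid in the compact K\"ahler setting by Atiyah--Hirzebruch) to write
\[
\Ch(\iota_* \calO_C) = \iota_* \bigl(\Td(N_C)^{-1} \Ch(\calO_C)\bigr) = \iota_* \Td(N_C)^{-1},
\]
exactly as in \cref{lem:cherncharacterstructuresheaf}, but now $N_C$ has rank two.

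Next I would compute $\Td(N_C)^{-1}$. For any rank-two bundle the Todd class is
\[
\Td(N_C) = 1 + \frac{c_1(N_C)}{2} + \frac{c_1(N_C)^2 + c_2(N_C)}{12},
\]
but since $C$ is one-dimensional all classes in $H^{\geq 4}(C)$ vanish, so this collapses to $\Td(N_C) = 1 + c_1(N_C)/2$ and therefore $\Td(N_C)^{-1} = 1 - c_1(N_C)/2$. By adjunction one has $\det N_C^* \otimes K_C \cong \rest{K_X}{C}$, and since $X$ is Calabi--Yau the bundle $K_X$ is trivial, giving $c_1(N_C) = c_1(K_C)$. Meanwhile, as $c_1(K_C)^2 = 0$ on the curve,
\[
\Ch(K_C^{-1/2}) = \exp\bigl(-c_1(K_C)/2\bigr) = 1 - \frac{c_1(K_C)}{2} = \Td(N_C)^{-1}.
\]

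Finally, I would combine these identities with the projection formula. Since $\iota^*\omega$ is the K\"ahler form restricted to $C$, the projection formula gives
\[
Z(E \otimes \calO_C) = -\int_X e^{-i\omega} \Ch(E)\, \iota_* \Td(N_C)^{-1} = -\int_C e^{-i\omega} \Ch(E)\, \Td(N_C)^{-1},
\]
and the displayed identification $\Ch(K_C^{-1/2}) = \Td(N_C)^{-1}$ converts the right-hand side into $-\int_C e^{-i\omega}\Ch(E \otimes K_C^{-1/2}) = Z_C(E \otimes K_C^{-1/2})$, as required.

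The only step requiring genuine care is the computation of $\Td(N_C)^{-1}$, where two dimensional accidents conspire in our favour: all cup products of classes in $H^2(C)$ vanish, collapsing the Todd series to its linear term, and the Calabi--Yau condition forces $\det N_C = K_C$ via adjunction so that the linear term agrees with $c_1(K_C)/2$. Without both of these the identity would fail; in particular the analogous statement on a higher-dimensional Calabi--Yau, or for a higher-dimensional subvariety, is expected to require a correction by $\sqrt{\Td(N)}$ rather than the plain twist by $K^{-1/2}$, consistent with the physical prescription of \cref{sec:counterexample}.
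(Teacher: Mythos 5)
Your proof is correct and follows essentially the same route as the paper: Grothendieck--Riemann--Roch for $\iota: C \hookrightarrow X$, the dimensional collapse of $\Td(N_C)^{-1}$ to $1 - c_1(\det N_C)/2$, adjunction plus triviality of $K_X$ to identify $\det N_C \cong K_C$, and hence $\Ch(K_C^{-1/2}) = \Td(N_C)^{-1}$. You have merely spelled out the details that the paper compresses into the remark that only $c_1(\det N_C)$ enters for dimensional reasons.
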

\begin{proof}
	This follows from the same calculation as a curve in a Calabi--Yau surface, but now the normal bundle $N_C$ is rank two. However only the first Chern class of the determinant $\det N_C$ enters due to dimensional reasons, and the same formula $\Ch(K_C^{-1/2}) = \Td(N_C)^{-1}$ holds.
\end{proof}

However, in dimension three we observe a failure of the anomaly condition to produce this matching for divisors. This suggests in general a more sophisticated understanding of the restriction of these twisted connections is necessary to accurately capture the stability condition.

\begin{proposition}
	Let $X$ be a Calabi--Yau threefold, and $D\subset X$ a divisor. Then 
	$$\Ch(\iota_* K_D^{1/2}) = \iota_* \left(1 + \frac{1}{24} c_1(K_S)^2\right).$$
	In particular 
	$$Z_D(E) \ne Z(E\otimes K_D^{1/2})$$
	in general for vector bundles $E\to X$. 
\end{proposition}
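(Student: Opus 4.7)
The plan is to apply the Grothendieck--Riemann--Roch formula to the embedding $\iota: D\hookrightarrow X$ exactly as in the proof of \cref{lem:cherncharacterstructuresheaf}. This gives
\[
\Ch(\iota_* K_D^{1/2}) = \iota_*\bigl(\Ch(K_D^{1/2}) \cdot \Td(N_D)^{-1}\bigr),
\]
and the problem reduces to a Chern-class computation on $D$ up to degree two (which is all that survives after pushing forward to a threefold). First I would use the Calabi--Yau hypothesis $K_X\cong\calO_X$ together with the adjunction isomorphism $\rest{K_X}{D}\cong K_D\otimes N_D^{*}$ to deduce $N_D\cong K_D$, so $c_1(N_D)=c_1(K_D)$.

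Next I would expand both factors to degree two. Writing $\kappa:=c_1(K_D)$,
\[
\Ch(K_D^{1/2}) = 1 + \tfrac{\kappa}{2} + \tfrac{\kappa^2}{8},
\qquad
\Td(N_D)^{-1} = 1 - \tfrac{\kappa}{2} + \tfrac{\kappa^2}{6},
\]
where the second formula comes from inverting $\Td(x)=1+\tfrac{x}{2}+\tfrac{x^2}{12}$ modulo higher order. Multiplying, the degree-one term cancels and the degree-two term is
\[
\tfrac{\kappa^2}{8}-\tfrac{\kappa^2}{4}+\tfrac{\kappa^2}{6} = \tfrac{3-6+4}{24}\kappa^2 = \tfrac{\kappa^2}{24},
\]
giving $\Ch(\iota_*K_D^{1/2}) = \iota_*\bigl(1 + \tfrac{1}{24}c_1(K_D)^2\bigr)$, as claimed. (The notation $K_S$ in the statement should read $K_D$.) Note that $K_D^{1/2}$ need only exist as a Spin\textsuperscript{C} gerbe; the computation takes place in rational cohomology, where the class $\tfrac{1}{2}c_1(K_D)$ is always well defined, so this causes no trouble.

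For the inequality, I would simply compare the two invariants. By the same calculation as in the curve case one finds $\Ch(\iota_*\calO_D) = \iota_*\bigl(1 - \tfrac{\kappa}{2} + \tfrac{\kappa^2}{6}\bigr)$ (using the Calabi--Yau hypothesis), whereas $\Ch(\iota_* K_D^{1/2})=\iota_*(1+\tfrac{\kappa^2}{24})$ differs from $\iota_*(1)$ by a non-trivial degree-two correction. Taking any vector bundle $E\to X$ and pairing with $e^{-i\omega}\Ch(E)$ against $\iota_*[D]$ then shows
\[
Z(E\otimes K_D^{1/2}) - Z_D(E) \;=\; -\int_D e^{-i\omega}\,\Ch(E)\cdot\tfrac{c_1(K_D)^2}{24},
\]
which generically does not vanish (for instance one may pick $E=\calO_X$ on any $X$ containing a divisor $D$ with $c_1(K_D)^2\ne 0$, such as a divisor of general type in a quintic threefold). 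This confirms that the Freed--Kapustin--Witten twisting $E\mapsto E\otimes K_Y^{1/2}$ no longer reproduces the Bridgeland central charge once $\codim Y = 1$ inside a threefold. The only ``hard'' point is really a bookkeeping one: keeping sign conventions consistent between $K_D^{1/2}$ and $K_D^{-1/2}$ and making sure the adjunction formula is applied in the right direction so that $N_D\cong K_D$ (not $K_D^{*}$), which is the reason the cross term cancels rather than doubling.
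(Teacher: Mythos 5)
Your proof is correct and follows essentially the same route as the paper: Grothendieck--Riemann--Roch for $\iota$, adjunction plus $K_X\cong\calO_X$ to identify $N_D\cong K_D$, and the degree-two expansion $\tfrac18-\tfrac14+\tfrac16=\tfrac1{24}$. Your explicit verification that the resulting $\tfrac{1}{24}c_1(K_D)^2$ correction makes $Z_D(E)\ne Z(E\otimes K_D^{1/2})$ (e.g.\ $E=\calO_X$ and $D$ with $K_D^2\ne 0$) only spells out what the paper leaves implicit.
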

\begin{proof}
	The key point is to observe that 
	$$\Ch(\iota_* K_D^{1/2}) = \iota_* (\Td(N_D)^{-1} \Ch(K_D^{1/2}))$$
	and one may compute
	$$\Td(N_D)^{-1} = 1 - \frac{c_1(K_S)}{2} + \frac{c_1(K_S)^2}{6}$$
	whilst we have
	$$\Ch(K_D^{1/2}) = 1 + \frac{c_1(K_S)}{2} + \frac{c_1(K_S)^2}{8}.$$
	Thus we see 
	$$\iota_* (\Td(N_D)^{-1} \Ch(K_D^{1/2})) = \iota_*\left(1 + \left(\frac{1}{8} + \frac{1}{6} - \frac{1}{4}\right) c_1(K_S)^2\right).$$
\end{proof}

\subsection{Categorical K\"ahler geometry}

An upcoming proposal\footnote{The author thanks Pranav Pandit for explaining many of the aspects of this proposal to them.} of Haiden--Katzarkov--Kontsevich--Pandit (mentioned for example in \cite{haidensemistability}) suggests that Bridgeland stability conditions may be defined on the derived category through an analogue of geometric invariant theory in the derived setting. In particular the proposal, dubbed ``categorical K\"ahler geometry", hopes to make sense of the following:

\begin{itemize}
	\item A categorical notion of ``ample line bundle" on $\DbCoh(X)$. This should be essentially equivalent to the notion of a Bridgeland stability condition. 
	\item A categorical notion of ``K\"ahler metric" on $\DbCoh(X)$.
	\item Bridgeland stability of an object $\calE$ with respect to some condition $(Z,\calA)$ corresponds to a categorical GIT stability with respect to the ample line bundle.
	\item There exists a flow for the categorical K\"ahler metric on any object $\calE \in \DbCoh(X)$ which converges to a metric on the Harder--Narasimhan filtration of $\calE$.
\end{itemize}

In particular the proposal of HKKP relies on a notion of a space $\Met(\calE)$ of metrics for each object $\calE\in \DbCoh(X)$, and eventually on an analogue of the Donaldson--Uhlenbeck--Yau theorem which would allow a geometric flow to choose out the algebraically stable components of the Harder--Narasimhan filtration of $\calE$. 

This ambitious proposal was studied in an elementary form in \cite{haidensemistability} where the way that canonical Jordan--H\"older filtrations of semistable objects arise from geometric flows was studied. In particular they consider the the case of the Hermitian Yang--Mills flow on a compact Riemann surface, and flows for Hermitian metrics on quiver representations, and show how the asymptotic convergence rates of the flow cause the filtration to manifest.

Under any reasonable upcoming interpretation of HKKPs proposal, we note:
\begin{itemize}
	\item The categorical K\"ahler metric on $\DbCoh(X)$ depending on a stability condition $(Z,\calA)$ should be approximated by the K\"ahler forms $\Omega_Z$ on the spaces of integrable unitary connections $\calA(h)$ on vector bundles considered in \cref{sec:momentmap}.
	\item The categorical line bundle can be given a literal interpretation as a kind of Quillen determinant line bundle $\calL\to \calA(h)$ for $\Omega_Z$ such that $\Omega_Z \in c_1(\calL)$. Such a line bundle will generally only be a $\QQ$ or $\RR$-line bundle over $\calA(h)$ depending on the stability condition $(Z,\calA)$.
\end{itemize}
A first verification of the proposals of HKKP in this setting would be to repeat the analysis of canonical Jordan--H\"older filtrations for asymptotic $Z$-stability.

\subsection{Metrics on complexes and a variational proposal}

In order to develop a theory of $Z$-critical metrics for objects in the derived category, it is necessary to identify a well-behaved notion of \emph{Hermitian metric} on an object $\calE\in \DbCoh(X)$. A preliminary notion of such a metric has been identified by Burgos Gil--Freixas i Montplet--Li\textcommabelow{t}canu \cite{gil2012hermitian}.\footnote{The author thanks Mario Garcia-Fernandez for directing their attention to this work.}

\begin{definition}
	A \emph{Hermitian structure} $h$ on $\calE\in \DbCoh(X)$ is given by
	\begin{itemize}
		\item A quasi-isomorphism $q: E \dashrightarrow \calE$ in $\DbCoh(X)$ where $E$ is a complex of vector bundles, and
		\item a choice of Hermitian metric $h_i$ on $E_i$ for each term in the complex $E$. 
	\end{itemize}
\end{definition}

Note that when $X$ is a smooth projective variety every object $\calE\in \DbCoh(X)$ admits a quasi-isomorphism to a complex of locally free sheaves, so for such $X$ objects will have many Hermitian structures.

In \cite{gil2012hermitian} the notion of a Hermitian structure is upgraded to give a refinement $\overline{\DbCoh}(X)$ of the derived category consisting of complexes with Hermitian structures, with a well-behaved forgetful functor $\mathfrak{F}: \overline{\DbCoh}(X) \to \DbCoh(X)$. It is necessary to make precise the notion of equivalence of such structures. Essentially, one considers two Hermitian structures on an object $\calE$ to be equivalent if they differ by \emph{meager} complex, which is to first approximation an orthogonally split complex of Hermitian vector bundles.

One can then identify a ``space" of Hermitian structures on an object $\calE$.

\begin{definition}
	Define $$\Met(\calE) := \mathfrak{F}^{-1}(\calE) \subset \overline{\DbCoh}(X)$$ to be the collection of all Hermitian structures on an object $\calE$.
\end{definition}

This space is a torsor over a group $\mathrm{\mathbf{KA}}(X)$ of complexes of Hermitian vector bundles up to meager complexes. It should be formally thought of as an analogue of the set $\calG^\CC/\calG(h)$ parametrising Hermitian metrics on a vector bundle $E$.

One motivation for \cite{gil2012hermitian} was to define a notion of Bott--Chern class which makes sense for complexes of vector bundles and for coherent sheaves, for other applications. 

Recall that in \cref{sec:variationalfunctional} we defined a Kempf--Ness-type functional for the $Z$-critical equation using the Bott--Chern classes for a Chern connection on a vector bundle, whose critical points are precisely $Z$-critical metrics.

In \cite{gil2012hermitian} the Bott--Chern class of an additive genus $\varphi$ (such as the Chern character) is defined for a pair $h,h'$ of Hermitian structures on an object $\calE\in \DbCoh(X)$ as a class 
$$\tilde \varphi(h,h') \in \bigoplus_{n,p} \tilde \calD^{n-1} (X,p)$$
in Deligne cohomology. Such a class is represented by a Bott--Chern form
$$\varphi(h,h') \in \bigoplus_{n,p} \calD^{n-1} (X,p)$$
in the Deligne algebra of differential forms. 

We propose therefore a variational functional on $\Met(\calE)$ using the above formalism for which ``critical points" should be $Z$-critical metrics on an object $\calE$. 

\begin{definition}
	Let $(Z,\calA)$ be a Bridgeland stability condition on $\DbCoh(X)$ for a projective manifold $X$, such that $Z$ is given by a polynomial central charge $Z$ of the form \cref{def:polynomialcentralcharge}. 
	
	Let $\calE \in \calA$ and suppose $h,h'$ are two Hermitian structures on $\calE$. Define a functional
	$$\calM_Z(h,h') := \int_X \trace \tilde \calM_Z(h,h')$$
	where as in \cref{sec:variationalfunctional} we replace terms in the $Z$-critical equation with the corresponding Bott--Chern class for the $k$th Chern character of the complex $\calE$. 
\end{definition}

After making sense of such a functional in terms of the Bott--Chern classes for a complex $\calE$, one could define a $Z$-critical metric on the object $\calE$ to be a minimiser of $\calM_Z$ on $\Met(\calE)$.

	%!TEX root = ../thesis.tex
\chapter{Correspondence in the large volume limit}
\label{ch:correspondence}

In this chapter we prove our main correspondence \cref{thm:maintheoremZstability}, that existence of $Z$-critical metrics in the large volume limit is equivalent to asymptotic $Z$-stability. 

The results of this chapter are joint work with Ruadha\'i Dervan and Lars Martin Sektnan \cite{dervan2021zcritical}, and have been reproduced here with an emphasis on the contributions of the author. In particular in the proof that existence implies stability, we will restrict to the case where the graded object $\Gr(E)$ of the semistable bundle $E\to X$ has just two components. This is a simplification of the general result proven in \cite{dervan2021zcritical}, however many of the technical difficulties already occur at this step, and in particular the importance of asymptotic $Z$-stability is apparent in this setting (in fact, its relevance is demonstrated more clearly in this simpler setting than in full generality). We refer to the joint work for the details of the full proof when $\Gr(E)$ is locally free with arbitrarily many components. We will briefly comment on the difficulties that manifest in this more general setting at the end of the chapter.

\section{Existence implies stability\label{sec:existenceimpliesstability}}

In this section we prove that existence of solutions to the $Z$-critical metric in the large volume limit implies asymptotic $Z$-stability. 

\begin{theorem}\label{thm:existenceimpliesstability}
	Let $E\to (X,\omega)$ be a holomorphic vector bundle over a compact K\"ahler manifold. If $E$ admits $Z_k$-critical metrics for all $k\gg 0$ which are uniformly bounded in $k$, then $E$ is asymptotically $Z$-stable with respect to holomorphic subbundles. 
\end{theorem}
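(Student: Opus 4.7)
The plan is to imitate the Kobayashi--Lübke argument for the ``easy'' direction of Donaldson--Uhlenbeck--Yau (\cref{thm:DUY}), testing the $Z_k$-critical equation against the second fundamental form of a holomorphic subbundle and then comparing the resulting cohomological identities to leading order in $k$ via the large volume expansion of \cref{lem:largevolume}. The uniform $C^\infty$ bound on the sequence of $Z_k$-critical metrics will control all the subleading ``error'' terms that arise from the higher powers of curvature present in $\tilde Z_k$.

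Let $S\subset E$ be a proper, non-zero holomorphic subbundle with quotient $Q$, and let $h_k$ be the supposed uniformly bounded family of $Z_k$-critical metrics on $E$. Using the smooth $h_k$-orthogonal splitting $E\cong S\oplus Q$, the Chern connection and curvature decompose as in \eqref{eqn:connectionsubbundle}--\eqref{eqn:curvaturesubbundle} with second fundamental form $\beta_k\in \Omega^{0,1}(\Hom(Q,S))$. Let $\pi_S^{h_k}\in \Gamma(\End_H(E,h_k))$ denote the orthogonal projection onto $S$. Pairing the $Z_k$-critical equation with $\pi_S^{h_k}$, taking trace, and integrating gives
\[
\int_X \trace\bigl(\pi_S^{h_k}\circ \Im(e^{-i\varphi_k(E)}\tilde Z_k(h_k))\bigr) = 0.
\]
Using the block decomposition of the curvature to expand $\tilde Z_k(h_k)$ term by term, the $\End(S)$-block of each monomial $\omega^d\wedge (iF(h_k)/2\pi)^j\wedge \tilde U_\ell$ splits into a pure $S$-piece (built from $F_{A_S}-\beta_k\wedge\beta_k^*$, $\omega$, and $\tilde U$) plus mixed terms involving the off-diagonal blocks $\del_A\beta_k$ and $-\delbar_A\beta_k^*$ appearing in pairs. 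The pure $S$-piece, after integration, gives precisely a Chern--Weil representative of the density of $Z_k(S)$, so its integral equals $\int_X [Z_k(S)]$. The mixed terms, after applying Stokes' theorem together with the Bianchi identity and the closedness of $\omega$ and $\tilde U$ (as in the proof of \cref{prop:Kahlerformclosed}), recombine into expressions involving $\beta_k\wedge\beta_k^*$ and its higher analogues.

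To leading order in $k$, the expansion of \cref{lem:largevolume} reduces the identity to the classical Kobayashi identity
\[
\deg_U S - \tfrac{\rk S}{\rk E}\deg_U E + c\|\beta_k\|^2_{\omega} + O(1) = 0
\]
multiplied by a positive constant of order $k^{2n-1}$, where $c>0$ comes from the positivity $\trace(-\beta_k\wedge\beta_k^*\wedge \omega^{n-1})\ge 0$ of the Atiyah--Bott-type pairing arising from $\Omega_{Z_k}$ at leading order (\cref{lem:linearisation}, \cref{prop:subsolutionnondegenerate}). The successive subleading coefficients in $k$ produce further terms built out of polynomial expressions in $F_{A_S}$, $F_{A_Q}$, $\beta_k$, and $\tilde U$; by the uniform boundedness assumption on $h_k$, all these subleading contributions are uniformly bounded in $k$, while the leading coefficient whose imaginary part controls $\Im(Z_k(S)/Z_k(E))$ strictly dominates. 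Matching imaginary parts of the resulting identity against the expansion of $Z_k(S)/Z_k(E)$ used in \cref{lemma:slopesemistable}, one reads off
\[
\Im\left(\frac{Z_k(S)}{Z_k(E)}\right) = -\,\frac{c'\,\|\beta_k\|^2_{\omega}}{k\,([\omega]^n\rk E)^2} + O(k^{-2})
\]
for some $c'>0$ depending only on the stability vector $\rho$, hence $\varphi_k(S) < \varphi_k(E)$ for all $k\gg 0$, which is asymptotic $Z$-stability with respect to $S$.

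The main obstacle is the last step: in full generality the $Z$-critical equation contains all powers of the curvature, so the algebraic identity coming from the $\End(S)$-block trace involves a large number of mixed monomials in $F_{A_S},F_{A_Q},\beta_k$, and it is delicate to verify that, after integration by parts against $\omega$ and $\tilde U$, the net contribution at the critical order $k^{2n-2}$ is a positive definite quadratic form in $\beta_k$ modulo terms that are bona fide subleading when combined with the topological data $\deg_U S$, $\rk S$. This is where the positivity of $\Omega_{Z_k}$ at a subsolution (\cref{prop:subsolutionnondegenerate}) and the fact that every connection is a subsolution for $k\gg 0$ (\cref{lem:linearisation}) enter essentially: they guarantee that the quadratic-in-$\beta_k$ piece has the sign recorded above, so that if equality of phases were to hold asymptotically then $\|\beta_k\|\to 0$, forcing $S$ to split off holomorphically and hence contradicting the strict inequality already encoded in slope semistability (\cref{lemma:slopesemistable}).
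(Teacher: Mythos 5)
Your strategy coincides with the paper's own proof (take $\trace_S$ of the $Z_k$-critical equation, expand via the block decomposition of $F(h_k)$, identify the pure $F_{A_S}$-monomials with the density of $Z_k(S)$, and handle the mixed $\del_A\beta_k$, $\delbar_A\beta_k^*$ terms by Stokes, Bianchi and closedness of $\omega$ and $\tilde U$), but the final comparison step has a genuine gap. You bound all subleading contributions by an absolute $O(k^{-2})$ and claim the leading term $-c'\|\beta_k\|^2_\omega/k$ ``strictly dominates.'' That does not follow: nothing in the hypotheses prevents $\|\beta_k\|^2$ from decaying like $k^{-1}$ or faster, in which case the $O(k^{-2})$ error can overwhelm the negative main term and the sign of $\Im\bigl(Z_k(S)/Z_k(E)\bigr)$ is undetermined. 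What is needed — and what the paper's argument supplies — is a bound on the errors \emph{relative to} $\|\beta_k\|^2$: every monomial at order $k^{-j}$, $j\ge 2$, that is not absorbed into $Z_k(S)$ contains at least two factors of $\beta_k$ (directly, or after integrating the $d_A\beta_k$ terms by parts and using $d_A^2\beta_k=F_{A}\wedge\beta_k$), so by the uniform $C^2$ bound on $h_k$ (hence $C^0$ bounds on $F_k$ and $C^1$ bounds on $\beta_k$, so that $k^{-1}\|F_k\|_{C^0}\to 0$ and $k^{-1}\|\beta_k\|_{C^1}\to 0$) each such term is $\le c_k\, k^{-1}\|\beta_k\|^2$ with $c_k\to 0$, and can be absorbed into the leading negative term without flipping its sign. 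This relative absorption is the whole point of the uniform boundedness hypothesis; an absolute error bound is not enough.

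Your fallback argument for the borderline case also does not work as stated: slope semistability (\cref{lemma:slopesemistable}) only yields a \emph{non-strict} inequality, so there is no ``strict inequality already encoded in slope semistability'' to contradict, and $\|\beta_k\|\to 0$ along a sequence of metrics does not by itself force the extension class $[\beta_k]$ to vanish in Dolbeault cohomology (i.e.\ does not force a holomorphic splitting of $S\subset E$) without an additional limiting/compactness argument. Note also that the asymptotic $Z$-stability you must prove is the exact identity
$$\int_X \Im\bigl(e^{-i\varphi_k(E)}\trace_S \tilde Z_k(h_k)\bigr)=\Im\bigl(e^{-i\varphi_k(E)}Z_k(S)\bigr)+C_k\|\beta_k\|^2=0$$
with $C_k>0$, from which strict negativity of $\Im\bigl(e^{-i\varphi_k(E)}Z_k(S)\bigr)$ follows only because $\|\beta_k\|>0$, i.e.\ one must assume $E$ irreducible (as the paper's \cref{thm:existencestabilitysubbundles} does); your sketch never invokes this hypothesis, and without it the conclusion is false for split bundles. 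Finally, the positivity of the quadratic term at leading order comes directly from the stability vector condition $\Im(\rho_{n-1}/\rho_n)>0$ in the expansion of \cref{lem:largevolume}; the subsolution/nondegeneracy machinery you cite is not needed for this direction.
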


We proceed following the general strategy for existence implying stability in gauge theory, using the principle that curvature decreases in subbundles. 

It will be convenient to rephrase the $Z$-critical equation in terms of the parameter $\oldepsilon = 1/k$ in the following. Recall that our definition of asymptotic $Z$-stability asks that for all holomorphic subbundles $S \subset E$, for $0 < \oldepsilon \ll 1$ we have strict inequality $$\varphi_{\oldepsilon}(S) < \varphi_{\oldepsilon}(E).$$ Our restriction to \emph{subbundles} rather than \emph{subsheaves} is natural given our hypothesis that $E$ is sufficiently smooth, and will arise naturally in the analysis to follow.

In our new notation, using $\oldepsilon$ rather than $k$, our central charge takes a slightly different form. Namely, define 
$$\ch^{\oldepsilon}(E) = \sum_{j=0}^n \oldepsilon^j \ch_j(E),\quad U^{\oldepsilon} = \sum_{j=0}^n \oldepsilon^j U_j$$
where we recall $U_j$ refers to the degree $2j$ component of $U$. The central charge associated to the input $(\omega, \rho, U)$ may then be written in terms of $\oldepsilon$ as
$$Z_{\oldepsilon}(E) = \int_X \sum_{d=0}^n \rho_{n-d} \omega^{n-d} \cdot \ch^{\oldepsilon}(E) \cdot U^{\oldepsilon},$$
and noting that we have $$\frac{1}{\oldepsilon^n} Z_{\oldepsilon}(E) = Z_k(E).$$ The $Z$-critical equation is simply produced as before, and we see a solution to the $Z_k$-critical equation is equivalent to a solution to the $Z_{\oldepsilon}$-critical equation at $\oldepsilon=1/k$.

\begin{remark}
	In \cref{sec:stabilityimpliesexistence} we will make the different substitution $\varepsilon^2 = 1/k$, which will avoid the introduction of fractional powers in that argument, but is not necessary here. To emphasise the differences in parameters, we use the notation $\oldepsilon = \varepsilon^2$.
\end{remark}

In this section we will take the view of Hermitian metrics on $E\to (X,\omega)$ rather than Chern connections. In addition to assuming that the vector bundle $E$ admits $Z_{\oldepsilon}$-critical metrics $h_{\oldepsilon}$ for all $0 < \oldepsilon \ll 1$, we make in \cref{thm:existenceimpliesstability} the assumption that these metrics $h_{\oldepsilon}$ are uniformly bounded as tensors in the $C^2$-norm (with respect to any fixed Hermitian metric on $E$). This extra assumption will be justified in \cref{sec:stabilityimpliesexistence}, where we prove the reverse direction that asymptotic $Z$-stability implies the existence of $Z_{\oldepsilon}$-critical connections for all $0 < \oldepsilon \ll 1$. There the metrics $h_{\oldepsilon}$ will be constructed as perturbations of the Hermite--Einstein metric on the bundle $\Gr(E)$, and hence uniform boundedness in $C^2$, or even in $C^{\infty}$ holds. The reason we employ the $C^2$-norm is the following.

\begin{lemma}\label{lemma:bounds} Suppose $h_{\oldepsilon}$ is a family of Hermitian metrics on a holomorphic vector bundle $E$ with uniformly bounded $C^2$-norm with respect to a fixed Hermitian structure $h_0$, and suppose $S \subset E$ is a holomorphic subbundle. Then the second fundamental forms $\gamma_{\oldepsilon}$ and the curvature forms $F_{\oldepsilon}$ are uniformly bounded in $C^1$ and $C^0$ respectively.
\end{lemma}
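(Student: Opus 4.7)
The plan is to derive both bounds by writing the relevant objects through local formulas in holomorphic frames and reading off the orders of derivatives of $h_\oldepsilon$ they involve. Throughout, I interpret the hypothesis that the family $h_\oldepsilon$ is uniformly $C^2$-bounded with respect to the fixed background $h_0$ as also providing a uniform lower eigenvalue bound on $h_\oldepsilon$, so that the inverses $h_\oldepsilon^{-1}$ are themselves uniformly $C^2$-bounded. This is automatic in the application of \cref{sec:stabilityimpliesexistence}, where $h_\oldepsilon$ will be constructed as a small perturbation of a fixed smooth metric on $\Gr(E)$, and is the only potentially delicate point in the whole argument.

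For the curvature, fix any local holomorphic frame in which $h_\oldepsilon$ is represented by the Hermitian matrix $H_\oldepsilon$. The Chern curvature is given by the standard formula $F_\oldepsilon = \delbar(H_\oldepsilon^{-1} \del H_\oldepsilon)$, a polynomial expression in $H_\oldepsilon$, $H_\oldepsilon^{-1}$ and their first and second derivatives. A uniform $C^2$-bound on $h_\oldepsilon$ (with the invertibility remark above) therefore passes immediately to a uniform $C^0$-bound on $F_\oldepsilon$, and by a partition of unity this local bound globalises.

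For the second fundamental form the plan is to first bound the $h_\oldepsilon$-orthogonal projection $\pi_\oldepsilon : E \to S$, and then use that $\gamma_\oldepsilon$ involves only one further derivative of $\pi_\oldepsilon$. Working in a local holomorphic frame adapted to $S$, the conditions that $\pi_\oldepsilon$ is $h_\oldepsilon$-self-adjoint with image $S$ determine $\pi_\oldepsilon$ as a rational expression in the block entries of $H_\oldepsilon$, whence a uniform $C^2$-bound on $h_\oldepsilon$ gives a uniform $C^2$-bound on $\pi_\oldepsilon$. The second fundamental form then appears as the off-diagonal block in the decomposition \eqref{eqn:connectionsubbundle} of the Chern connection of $h_\oldepsilon$ with respect to the smooth splitting $E = S \oplus S^{\perp_\oldepsilon}$, and a direct calculation expresses $\gamma_\oldepsilon$ as a linear combination of $\pi_\oldepsilon$ and its first covariant derivatives (equivalently, as $(\delbar \pi_\oldepsilon)\vert_{S^{\perp_\oldepsilon}}$ transported to $\Hom(Q,S)$ via the smooth isomorphism $Q \isom S^{\perp_\oldepsilon}$ induced by $h_\oldepsilon$). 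Losing only one derivative relative to $\pi_\oldepsilon$, this produces the required uniform $C^1$-bound on $\gamma_\oldepsilon$, completing the proof.
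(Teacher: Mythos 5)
Your proposal is correct and follows essentially the same route as the paper: both arguments read the bounds off the local formulas, using that $A_{\oldepsilon}=\del h_{\oldepsilon}\cdot h_{\oldepsilon}^{-1}$ (hence $F_{\oldepsilon}=dA_{\oldepsilon}+A_{\oldepsilon}\wedge A_{\oldepsilon}$) costs two derivatives of $h_{\oldepsilon}$, and that $\gamma_{\oldepsilon}$ is the off-diagonal block of $A_{\oldepsilon}$ in the $h_{\oldepsilon}$-orthogonal splitting $E\isom S\oplus Q$, costing one fewer. Your explicit remark that one needs uniform control of $h_{\oldepsilon}^{-1}$ (a uniform lower eigenvalue bound) is a point the paper leaves implicit, and you are right that it is harmless in the intended application.
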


\begin{proof} This is an immediate consequence of the definitions. Recall that locally, the Chern connection of $h_{\oldepsilon}$ is given as $$A_{\oldepsilon}  = \partial h_{\oldepsilon}\cdot h_{\oldepsilon}^{-1},$$ while the curvature is given as $$F_{\oldepsilon} = d A_{\oldepsilon} + A_{\oldepsilon}\wedge A_{\oldepsilon}.$$ Thus the Chern connections $A_{\oldepsilon}$ are uniformly bounded in $C^1$ due to the uniform $C^2$-bound on $h_{\epsilon}$, and  the curvature forms  $F_{\oldepsilon}$ are uniformly bounded in $C^0$. 
	
	The holomorphic subbundle $S\subset E$ induces a short exact sequence
	\begin{center}
		\begin{tikzcd}
			0\arrow{r} & S \arrow{r} & E \arrow{r} & Q \arrow{r} & 0.
		\end{tikzcd}
	\end{center} The Hermitian metrics $h_{\oldepsilon}$ define an $\oldepsilon$-dependent orthogonal splitting of $E$ into a direct sum of complex vector bundles $$E \cong S \oplus Q,$$ where $Q = E/S$. Via this orthogonal splitting one writes the connections $A_{\oldepsilon}$ as $$A_{\oldepsilon}=\begin{pmatrix}
		A_{F, \oldepsilon} & \gamma_{\oldepsilon}\\
		-\gamma_{\oldepsilon}^* & A_{Q,{\oldepsilon}}
	\end{pmatrix}.$$ Here $$\gamma_{\oldepsilon}\in \Omega^{0,1}(X,\Hom(Q,S))$$ is the second fundamental form of the subbundle $F$ of $E$ with respect to the Hermitian structure $h_\oldepsilon$. Thus the uniform $C^2$-bound on $h_{\oldepsilon}$ induces a uniform $C^1$-bound on the second fundamental forms $\gamma_{\oldepsilon}$. \end{proof}

We are now ready to prove the main result of the Section, which uses some techniques of Leung in the study of almost Hermite--Einstein metrics   and Gieseker stability \cite[Proposition 3.1]{leung1997einstein}.

\begin{theorem}\label{thm:existencestabilitysubbundles}
	Suppose $E$ is irreducible and sufficiently smooth, and for every $0< \oldepsilon\ll 1$, $E$ admits a solution $h_{\oldepsilon}$ to the $Z$-critical equation such that the metrics $h_{\oldepsilon}$ are uniformly bounded in $C^2$ with respect to some fixed Hermitian metric $h_0$. Then $E$ is asymptotically $Z$-stable with respect to holomorphic subbundles.
\end{theorem}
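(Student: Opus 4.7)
The plan is to adapt the classical argument recalled at the end of \cref{sec:chernconnections} that existence of a Hermite--Einstein metric forces slope stability, to the present setting of a metric-dependent central charge. Fix a proper holomorphic subbundle $S\subset E$ with quotient $Q=E/S$. Each metric $h_\oldepsilon$ induces an orthogonal $C^\infty$-splitting $E\isom S\oplus Q$, in terms of which the Chern connection and curvature take the block forms \eqref{eqn:connectionsubbundle} and \eqref{eqn:curvaturesubbundle}, with second fundamental form $\gamma_\oldepsilon\in \Omega^{0,1}(X,\Hom(Q,S))$ uniformly bounded in $C^1$ by \cref{lemma:bounds}. The first step is to project the pointwise $Z$-critical equation $\Im(e^{-i\varphi_\oldepsilon(E)}\tilde Z_\oldepsilon(h_\oldepsilon))=0$ onto the $\End(S)$-summand, take the $S$-trace, and integrate over $X$, yielding
\[
\Im\!\left(e^{-i\varphi_\oldepsilon(E)}\int_X \trace \big(\tilde Z_\oldepsilon(h_\oldepsilon)\big)_{SS}\right)=0.
\]

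Next I would expand using \eqref{eqn:curvaturesubbundle} and the additivity of the Chern character under the smooth splitting $E=S\oplus Q$, writing
\[
\int_X \trace\big(\tilde Z_\oldepsilon(h_\oldepsilon)\big)_{SS} = Z_\oldepsilon(S) + R_\oldepsilon(\gamma_\oldepsilon),
\]
where $R_\oldepsilon$ is a polynomial correction in $\gamma_\oldepsilon$, $\gamma_\oldepsilon^*$ and their $d_A$-derivatives, paired against $\omega^d\wedge\tilde U_\ell$ and products of $F_S$, $F_Q$. The crucial observation is the leading order in $k=1/\oldepsilon$: only the degree-one Chern-character term in $\tilde Z_\oldepsilon$ contributes at order $k^{n-1}$, and its $(S,S)$-trace differs from $\trace F_S$ exactly by $-\trace(\gamma_\oldepsilon\wedge\gamma_\oldepsilon^*)$. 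The curvature-in-subbundle computation from the proof of the easy direction of \cref{thm:DUY} then identifies this contribution to $R_\oldepsilon$ as a positive multiple of $\rho_{n-1}k^{n-1}\|\gamma_\oldepsilon\|_{L^2}^2$, with all remaining terms of size $O(k^{n-2})$ controlled uniformly via \cref{lemma:bounds}.

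The argument then splits according to whether the leading slope inequality is strict. If $\mu(S)<\mu(E)$, the leading $k^{n-1}$-term of $\Im(e^{-i\varphi_\oldepsilon(E)}Z_\oldepsilon(S))$ is itself strictly negative (by the computation in the proof of \cref{lemma:slopesemistable}), and \cref{lem:equivalentstabilitycondition} immediately gives $\varphi_\oldepsilon(S)<\varphi_\oldepsilon(E)$ for $\oldepsilon\ll 1$. The delicate case is the slope-equality boundary $\mu(S)=\mu(E)$, in which the leading $k^{n-1}$-term of $Z_\oldepsilon(S)$ cancels after rotation by $e^{-i\varphi_\oldepsilon(E)}$. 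Using the expansion $e^{-i\varphi_\oldepsilon(E)}=\bar\rho_n/|\rho_n|+O(k^{-1})$ and the stability-vector inequality $\Im(\bar\rho_n\rho_{n-1})>0$, the identity reduces to
\[
\Im\!\left(e^{-i\varphi_\oldepsilon(E)} Z_\oldepsilon(S)\right) = -c\,k^{n-1}\|\gamma_\oldepsilon\|_{L^2}^2 + O(k^{n-2})
\]
for some $c>0$, so the desired inequality will follow once $\|\gamma_\oldepsilon\|_{L^2}$ is bounded below uniformly in $\oldepsilon$.

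This uniform lower bound is where I expect the main difficulty to lie. By \cref{rmk:secondfundamentalformatiyah}, each $\gamma_\oldepsilon$ is a $\delbar$-representative of the fixed Dolbeault class $e\in H^{0,1}(X,\Hom(Q,S))$ of the extension $0\to S\to E\to Q\to 0$, and the non-vanishing $e\ne 0$ uses precisely the irreducibility hypothesis on $E$: were $e=0$ the sequence would split holomorphically and $E\isom S\oplus Q$ would contradict indecomposability. The uniform $C^2$-bound on $h_\oldepsilon$ (and uniform positivity as a metric) makes the associated Hodge-theoretic harmonic projection a uniformly bounded operator on $L^2$, so the harmonic $h_\oldepsilon$-representative of $e$ has $L^2$-norm bounded below uniformly in $\oldepsilon$; since $\gamma_\oldepsilon$ dominates its harmonic part, the same uniform lower bound transfers to $\|\gamma_\oldepsilon\|_{L^2}$. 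The subleading $O(k^{n-2})$-terms in $R_\oldepsilon$, which involve quartic and higher expressions in $\gamma_\oldepsilon$ and $d_A\gamma_\oldepsilon$ with a priori indefinite sign, are absorbed using the uniform $C^1$-bound on $\gamma_\oldepsilon$ from \cref{lemma:bounds}. Combining these inputs makes the right-hand side of the displayed expansion strictly negative for $\oldepsilon\ll 1$, which via \cref{lem:equivalentstabilitycondition} yields $\varphi_\oldepsilon(S)<\varphi_\oldepsilon(E)$ and completes the proof of asymptotic $Z$-stability with respect to $S$.
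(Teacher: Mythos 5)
Your proposal is correct in substance and arrives at the same central identity as the paper --- the $S$-trace of the $Z$-critical equation, integrated, equals $\Im\bigl(e^{-i\varphi_{\oldepsilon}(E)}Z_{\oldepsilon}(S)\bigr)$ plus a positive multiple of $\|\gamma_{\oldepsilon}\|^2$ at leading order --- but you handle the subleading terms by a genuinely different route, and this changes which analytic input is required. The paper shows, through a term-by-term case analysis (integrating by parts to remove $d_A\gamma_{\oldepsilon}$ factors via the Bianchi identity and closedness of $\omega$, $\tilde U$), that \emph{every} error term beyond the topological contribution still carries a factor $\gamma_{\oldepsilon}\wedge\gamma_{\oldepsilon}^*$, hence is bounded by $c_{\oldepsilon}\,\oldepsilon\,\|\gamma_{\oldepsilon}\|^2$ with $c_{\oldepsilon}$ arbitrarily small; the final identity is then $\Im\bigl(e^{-i\varphi_{\oldepsilon}(E)}Z_{\oldepsilon}(S)\bigr)=-C_{\oldepsilon}\|\gamma_{\oldepsilon}\|^2$ with $C_{\oldepsilon}>0$, so only the qualitative statement $\|\gamma_{\oldepsilon}\|>0$ coming from irreducibility is needed, with no lower bound uniform in $\oldepsilon$. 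You instead estimate the errors crudely as uniformly $O(k^{n-2})$ via \cref{lemma:bounds} and compensate with a uniform-in-$\oldepsilon$ lower bound on $\|\gamma_{\oldepsilon}\|_{L^2}$, extracted from the fixed nonzero extension class of \cref{rmk:secondfundamentalformatiyah}. That extra step does work, but for the right reason: the lower bound is not a consequence of the harmonic projection being bounded, rather of the fact that the harmonic representative \emph{minimises} the $L^2$-norm in its Dolbeault class, combined with uniform two-sided equivalence of the $L^2$-norms induced by the $h_{\oldepsilon}$ (so one needs control of $h_{\oldepsilon}^{-1}$ as well as of $h_{\oldepsilon}$ --- the same equivalence the paper invokes when it says any $h_{\oldepsilon}$ may serve as reference metric). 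The trade-off is clear: the paper's bookkeeping is heavier but sharper, needing no lower bound and yielding strict negativity proportional to $\|\gamma_{\oldepsilon}\|^2$ however small; your argument is more modular, at the price of the Hodge-theoretic input and of making the uniform metric equivalence essential rather than a convenience. Two small points to tidy: your case split should also dispose of $\mu(S)>\mu(E)$ --- your own displayed identity does this, since then the left-hand side grows like $+k^{n-1}$ while the right-hand side is at most $O(k^{n-2})$, a contradiction for $k\gg 0$ --- and in the slope-equality case it is the order-$k^{n}$ contribution that cancels identically after rotation, while the order-$k^{n-1}$ contribution vanishes because it is proportional to the slope difference.
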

\begin{proof} We follow the notation introduced  in  \cref{lemma:bounds}. The  $\oldepsilon$-dependent orthogonal splittings $E \cong S \oplus Q$ defined through the Hermitian metrics $h_{\oldepsilon}$ induce block matrix decompositions
	$$A=\begin{pmatrix}
		A_{S,\oldepsilon} & \gamma_{\oldepsilon}\\
		-\gamma_{\oldepsilon}^* & A_{Q,\oldepsilon}
	\end{pmatrix},\qquad F_{\oldepsilon} = \begin{pmatrix}
		F_{S, \oldepsilon} - \gamma_{\oldepsilon} \wedge \gamma_{\oldepsilon}^* & d_{A_{\oldepsilon}} \gamma_{\oldepsilon}\\
		-d_{A_{\oldepsilon}} \gamma_{\oldepsilon}^* & F_{Q,\oldepsilon} - \gamma_{\oldepsilon}^* \wedge \gamma_{\oldepsilon}
	\end{pmatrix},$$ where as above $\gamma_{\oldepsilon}$ is the second fundamental form of $F \subset E$ and $F_{\oldepsilon}$ is the curvature of $h_{\oldepsilon}$. We fix a reference Hermitian metric $h_0$ with which to measure the norms of the various tensors of interest; by our assumption of uniform boundedness, any of the $h_{\oldepsilon}$ suffice, all producing equivalent $L^2$-norms on tensors with values in $\End E$. Since we have assumed that  $E$ is irreducible, the second fundamental form is non-trivial, thus $$\|\gamma_{\oldepsilon}\| >0.$$ By the uniform boundedness obtained in \cref{lemma:bounds} we have \begin{equation}\label{eqn:uniformoldepsilonbounds}\lim_{\oldepsilon\to 0} \oldepsilon \|F_{\oldepsilon}\|_{C^0} = 0, \quad \lim_{\oldepsilon \to 0} \oldepsilon \|\gamma_{\oldepsilon}\|_{C^1} = 0.\end{equation}

	In order to show $E$ is asymptotically $Z$-stable with respect to $S$, we must show that
	$$\varphi_{\oldepsilon}(S) < \varphi_{\oldepsilon}(E)$$
	for all $0<\oldepsilon \ll 1$, where $\varphi_{\oldepsilon}(E) = \arg Z_{\oldepsilon}(E).$ This is equivalent to the inequality
	$$\Im\left( \frac{Z_{\oldepsilon} (S)}{Z_{\oldepsilon}(E)} \right) < 0,$$
	which in turn is equivalent to the inequality
	$$\Im\left( e^{-i\varphi_\oldepsilon(E)} Z_{\oldepsilon}(S) \right) < 0.$$
	Now since $h_{\oldepsilon}$ solves the $Z_{\oldepsilon}$-critical equation, we have
	$$\Im(e^{-i\varphi_{\oldepsilon}(E)} \tilde{Z}_{\oldepsilon}(h_{\oldepsilon})) = 0$$
	and so
	\begin{equation}\label{eqn:zeqnsubbundle}
		\Im(e^{-i\varphi_{\oldepsilon}(E)} \trace_S( \tilde Z_{\oldepsilon} (h_{\oldepsilon}))) = 0.
	\end{equation} Here $\tilde Z_{\oldepsilon} (h_{\oldepsilon})$ is an $\End(E)$-valued $(n,n)$-form, which restricts to an $\End(S)$-valued $(n,n)$-form via the splitting $E \cong S \oplus Q$ induced by $h_\oldepsilon$, and $ \trace_S( \tilde Z_{\oldepsilon} (h_{\oldepsilon}))$ is the induced $(n,n)$-form on $X$ obtained by taking trace. Note in particular that $\trace_S$ depends on $\oldepsilon$.	
	
	We will argue that for sufficiently small $\oldepsilon$, there is a positive $\oldepsilon$-dependent constant $C_{\oldepsilon}$ such that
	\begin{equation}\label{eqn:wanted}\int_X \Im(e^{-i\varphi_{\oldepsilon}(E)} \trace_S( \tilde Z_{\oldepsilon} (h_{\oldepsilon}))) = \Im(e^{-i\varphi_{\oldepsilon}(E)} Z_{\oldepsilon}(S)) + C_{\oldepsilon} \|\gamma_{\oldepsilon}\|^2,\end{equation}
	which will imply the result since the left hand side vanishes by \eqref{eqn:zeqnsubbundle}. Note that the leading order term in the $Z_{\oldepsilon}$-critical equation occurs at order $\oldepsilon$, so the constant $C_{\oldepsilon}$ will have lowest order $\oldepsilon$. 
	We begin by considering the order $\oldepsilon = \oldepsilon^1$ term. By \cref{lem:largevolume}, to leading order the $Z_{\oldepsilon}$-critical equation is given by the weak Hermite--Einstein equation. That is, there is a positive constant $c>0$ such that the leading order term of the $\oldepsilon$-expansion of the $Z_{\oldepsilon}$-critical equation is given by	
	$$c\left([\omega]^n \rk (E) \omega^{n-1} \wedge \left( \frac{i}{2\pi} F_{A_{\oldepsilon}} + \tilde U_2 \id_E\right) - \deg_U( E)  \omega^n\otimes \id_E\right).$$
	Thus we see the leading order $\oldepsilon^1$-term of $\Im(e^{-i\varphi_{\oldepsilon}(E)} \trace_S( \tilde Z_{\oldepsilon} (h_{\oldepsilon})))$  is given by
	\begin{align*}
		&c\left([\omega]^n \rk (E) \cdot \left( [\omega]^{n-1}. (\ch_1(S) + \rk(S) U_2)  - [\omega]^{n-1} .\frac{i}{2\pi} \trace(\gamma_{\oldepsilon} \wedge \gamma_{\oldepsilon}^*)\right) \right. \\&\quad \left. \phantom{\frac{i}{2\pi}} - \deg_U(E) \rk(S)  [\omega]^n\right) \\
		&= \Im(e^{-i\varphi_{\oldepsilon}(E)} Z_{\oldepsilon}(S))^1 -  c\frac{i}{2\pi}  (\rk E) [\omega]^n \cdot [\omega]^{n-1} . [\trace(\gamma_{\oldepsilon} \wedge \gamma_{\oldepsilon}^*)]\\
		&= \Im(e^{-i\varphi_{\oldepsilon}(E)} Z_{\oldepsilon}(S))^1 + C_1 \|\gamma_{\oldepsilon}\|^2.
	\end{align*} Here we have used the fact that, for some positive constant $C_1$
	$$c\omega^{n-1} \wedge \trace(\gamma_{\oldepsilon} \wedge \gamma_{\oldepsilon}^*) = -2\pi iC_1 |\gamma_{\oldepsilon}|^2 \omega^n,$$ and have written $\Im(e^{-i\varphi_{\oldepsilon}(E)} Z_{\oldepsilon}(S))^1$ to denote the $\oldepsilon^1$ term in the expansion. This is precisely the desired Equation \eqref{eqn:wanted} to leading order $\oldepsilon$, where we observe that, $C_{\oldepsilon} = \oldepsilon C_1 + O(\oldepsilon^2)$.  What we have crucially used here is that  $c>0$, which from \cref{lem:largevolume} follows from our assumption that $\Im(\rho_{n-1}/\rho_n) > 0,$ the crucial \emph{stability vector assumption} on our central charge $Z$. Curiously, at higher order in $\oldepsilon$ the lower order inequalities $\Im (\rho_{i-1}/\rho_i) > 0$ do not come into the argument, again reinforcing the observation that we only require this leading inequality in our work.
	
	We will now argue that at each higher order $\oldepsilon^{j}$, we obtain a similar expansion. Each of the terms appearing in the $Z$-critical equation at order $\oldepsilon^j$ involve differential forms of the form
	\begin{equation}\label{eqn:termform} C \oldepsilon^j\omega^{n-j} \wedge F_{A_{\oldepsilon}}^p \wedge \tilde U_{j-p}\end{equation}
	for $p$ possibly between $0$ and $j$ and $C$ some $\oldepsilon$-independent constant. 
	
	First let us note that if $p=0$, then this term is independent of the subbundle $S$ and is unaffected by our taking the $\trace_S$ in \eqref{eqn:zeqnsubbundle}, and so after integrating is absorbed by $\Im(e^{-i\varphi_{\oldepsilon}(E)} Z_{\oldepsilon}(S))^j$ on the right-hand side of \eqref{eqn:wanted}.
	
	Now if $p>0$, we need to understand the block matrix decomposition of a product of curvature terms
	\begin{align}
		&\underbrace{F_{A_{\oldepsilon}} \wedge \cdots \wedge F_{A_{\oldepsilon}}}_{p \text{ times}} \nonumber\\
		&= \begin{pmatrix}
			F_{S,{\oldepsilon}} - \gamma_{\oldepsilon} \wedge \gamma_{\oldepsilon}^* & d_A \gamma_{\oldepsilon}\\
			-d_A \gamma_{\oldepsilon}^* & F_{Q,{\oldepsilon}} - \gamma_{\oldepsilon}^* \wedge \gamma_{\oldepsilon}
		\end{pmatrix} \underbrace{\wedge \cdots \wedge}_{p \text{ times}} \begin{pmatrix}
			F_{S,{\oldepsilon}} - \gamma_{\oldepsilon} \wedge \gamma_{\oldepsilon}^* & d_A \gamma_{\oldepsilon}\\
			-d_A \gamma_{\oldepsilon}^* & F_{Q,{\oldepsilon}} - \gamma_{\oldepsilon}^* \wedge \gamma_{\oldepsilon}
		\end{pmatrix}.\label{eq:curvaturewedgeproduct}
	\end{align}
	We will be interested in the $\trace_S$ of the terms that appear in the top left block of this matrix decomposition. This will in general involve a term of the form 
	$$F_{S_{\oldepsilon}} \underbrace{\wedge \cdots \wedge}_{p \text{ times}} F_{S,{\oldepsilon}},$$
	which after taking $\trace_S$ and wedging with the differential forms as in \eqref{eqn:termform} and integrating, gives the required factor for $\Im (e^{-i\varphi_{\oldepsilon}} Z_{\oldepsilon}(S))^j$ in \eqref{eqn:wanted}, which is
	$$ \int_X C \oldepsilon^j\omega^{n-j} \wedge \trace_S F_{S_{\oldepsilon}}^p \wedge \tilde U_{j-p}.$$
	In addition to this desired term, we will also obtain other terms all involving at least one of 
	\begin{equation} \label{eqn:list} \gamma_{\oldepsilon}^* \wedge \gamma_{\oldepsilon},\quad  \gamma_{\oldepsilon} \wedge \gamma_{\oldepsilon}^*,\quad  d_{A_{\oldepsilon}} \gamma_{\oldepsilon},\quad  d_{A_{\oldepsilon}} \gamma_{\oldepsilon}^*,\quad F_{Q,{\oldepsilon}}\end{equation} 
	and also some possible factors of $F_{S,{\oldepsilon}}$. We wish to show that no matter what product we obtain containing these terms, the corresponding form can be absorbed by the factor $C_{\oldepsilon} \|\gamma\|^2 \omega^n$ in \eqref{eqn:wanted} after taking a trace and integrating.
	
	At order $\oldepsilon^j$ with a differential form of the form \eqref{eqn:termform}, our curvature component consists of a product of $p$ terms in the list \eqref{eqn:list} given above, with $p$ at most $j$. Following Leung's notation, let us call such a product $\calT_p$, so our form of interest is
	$$\omega^{n-d} \wedge \calT_p \wedge \tilde U_{j-p}.$$
	We will show that provided $\oldepsilon$ is sufficiently small, any such form is much smaller in norm than $C_1 \oldepsilon \|\gamma_{\oldepsilon} \|^2$ appearing in \eqref{eqn:wanted}, and therefore can be absorbed into this term whilst preserving the positivity of $C_{\oldepsilon} \|\gamma\|^2$. There are three cases to consider.
	
	\begin{enumerate}
		\item \emph{$\calT_p$ containing $\gamma_{\oldepsilon}^* \wedge \gamma_{\oldepsilon}$ or $\gamma_{\oldepsilon} \wedge \gamma_{\oldepsilon}^*$:}
		
		First notice that by the uniform estimates \eqref{eqn:uniformoldepsilonbounds}, $\oldepsilon$ times any term in the list \eqref{eqn:list}, or a term $F_{S,\oldepsilon}, F_{Q,\oldepsilon}$ tends to zero as $\oldepsilon\to 0$. Thus if we have a term of the form $\gamma_{\oldepsilon}^* \wedge \gamma_{\oldepsilon}$ or $\gamma_{\oldepsilon} \wedge \gamma_{\oldepsilon}^*$ in our product $\calT_p$, we have an expression
		$$\pm \oldepsilon^j \omega^{n-j} \wedge \calT'_{p-1} \wedge \tilde U_{j-p} \wedge \gamma_{\oldepsilon}^* \wedge \gamma_{\oldepsilon}$$
		where $\calT'_{p-1}$ consists of the remaining $p-1$ factors in $\calT_p$, and the sign depends on the order of $\gamma_{\oldepsilon}$ or $\gamma_{\oldepsilon}^*$ in our wedge term. But we can rewrite this as
		$$\pm  \omega^{n-j} \wedge \oldepsilon^{p-1} \calT'_{p-1} \wedge  \oldepsilon^{j-p} \tilde U_{j-p} \wedge \oldepsilon  \gamma_{\oldepsilon}^* \wedge \gamma_{\oldepsilon}.$$
		Then by our initial observation, $\oldepsilon^{p-1} \calT'_{p-1}$ tends to zero as $\oldepsilon\to 0$, so for $\oldepsilon$ taken sufficiently small we can estimate (after taking trace and integrating over $X$),
		$$\|\omega^{n-j} \wedge \calT_p \wedge \tilde U_{j-p}\| \le  c_{\oldepsilon} \oldepsilon\|\gamma_{\oldepsilon}\|^2$$
		where the constant $c_{\oldepsilon}$ depending on our factor $\oldepsilon^{p-1} \calT'_{p-1}$ is as small as we like provided we take $\oldepsilon$ sufficiently small. Such a term is therefore small in norm compared to $C_1 \oldepsilon \|\gamma_{\oldepsilon}\|^2$ for $\oldepsilon$ sufficiently small, as desired.
		\item \emph{$\calT_p$ containing $d_{A_{\oldepsilon}} \gamma_{\oldepsilon}$ or $d_{A_{\oldepsilon}} \gamma_{\oldepsilon}^*$:}
		
		If there is no $\gamma_{\oldepsilon}^* \wedge \gamma_{\oldepsilon}$ or $\gamma_{\oldepsilon} \wedge \gamma_{\oldepsilon}^*$ term in the product $\calT_p$, but there is a term of the form $d_{A_{\oldepsilon}} \gamma_{\oldepsilon}$ or $d_{A_{\oldepsilon}} \gamma_{\oldepsilon}^*$, then we may integrate by parts when computing \eqref{eqn:wanted}.  This shifts $d_{A_{\oldepsilon}}$ on to the other terms appearing in the differential form
		$$\omega^{n-j} \wedge \calT_p \wedge \tilde U_{j-p}.$$
		Using the Leibniz rule for the exterior covariant derivative $d_{A_{\oldepsilon}}$, we can deal with each possibility in cases. If $d_{A_{\oldepsilon}}$ is applied to a term of the form $F_S$ or $F_Q$ after integrating parts, or a form $\tilde U_{j-p}$ or $\omega^{n-j}$, then this will vanish by the Bianchi identity $d_{A_{\oldepsilon}} F_{\oldepsilon} = 0$ or closedness of $\omega$ and $\tilde U$. 
		
		Thus the only non-vanishing possibilities occur if, after integrating by parts, the $d_{A_{\oldepsilon}}$ is applied to a term of the form  $d_{A_{\oldepsilon}} \gamma_{\oldepsilon}$ or $d_{A_{\oldepsilon}} \gamma_{\oldepsilon}^*$. Now we recall that in fact $d_{A_{\oldepsilon}} \gamma = \del_{A_{\oldepsilon}} \gamma$ and similarly $d_{A_{\oldepsilon}} \gamma^* = \delbar_{A_{\oldepsilon}} \gamma^*$. Thus, for example, if we started with $\del_{A_{\oldepsilon}} \gamma$ and our product $\calT_p$ contains another term $\del_{A_{\oldepsilon}} \gamma$, after integrating by parts we would obtain $\gamma \wedge \del_{A_{\oldepsilon}}^2 \gamma = 0$, and similarly for when we have $\delbar_{A_{\oldepsilon}} \gamma^*$. Thus we reduce just to the case where we have a factor $\gamma \wedge \del_{A_{\oldepsilon}} \delbar_{A_{\oldepsilon}}\gamma^*$ or $\gamma^* \wedge \delbar_{A_{\oldepsilon}} \del_{A_{\oldepsilon}}\gamma$ in our product after integrating by parts.
		
		Using the fact that $\delbar_{A_{\oldepsilon}} \gamma_{\oldepsilon} = \del_{A_{\oldepsilon}} \gamma_{\oldepsilon}^* = 0$, and that $F_{A_{\oldepsilon}} \wedge \gamma_{\oldepsilon}= d_A^2 \gamma_{\oldepsilon} = (\del_{A_{\oldepsilon}} \delbar_{A_{\oldepsilon}} + \delbar_{A_{\oldepsilon}} \del_{A_{\oldepsilon}})  \gamma_{\oldepsilon}$, we will therefore obtain more curvature terms wedge terms involving $\gamma_{\oldepsilon}^* \wedge \gamma_{\oldepsilon}$ or $\gamma_{\oldepsilon} \wedge \gamma_{\oldepsilon}^*$. This lands us in the previous situation above, which we have already dealt with, so we are done in this case.
		
		\item \emph{$\calT_p$ not containing a term with a $\gamma_{\oldepsilon}$:}
		
		When there is no $\gamma_{\oldepsilon}$ term appearing, from the block matrix decomposition we see that the only non-zero terms which can appear are p-fold products of $F_{S,\oldepsilon}$ or $F_{Q,\oldepsilon}$. The latter terms have vanishing $\trace_S$ and the former terms were already accounted for, contributing to $\Im(e^{-i\varphi_{\oldepsilon}} Z_{\oldepsilon}(S))^j.$
	\end{enumerate}
	
	This shows that every form with curvature part given by a product $\calT_p$ is very small in norm relative to $C_1 \oldepsilon \|\gamma_\oldepsilon\|^2$ provided we choose $\oldepsilon>0$ small enough. Thus \eqref{eqn:wanted} holds and since the left hand side is zero, for such small $\oldepsilon$ we have
	$$\Im(e^{-i\varphi_{\oldepsilon}(E)} Z_{\oldepsilon}(S)) < 0$$
	so $E$ is asymptotically $Z$-stable with respect to the subbundle $S$. 
\end{proof}

\begin{remark}\label{rmk:leungproof} In Leung's proof of the above result for Gieseker stability and almost Hermite--Einstein metrics (see \cref{sec:AHE}) where they show that the existence of almost Hermite--Einstein metrics implies Gieseker stability with respect to subbundles \cite[Proposition 3.1]{leung1997einstein}, Leung states that the existence of almost Hermite--Einstein metrics actually implies Gieseker stability in general, with respect to not only subbundles but also subsheaves. Leung's proof of this statement relies on the claim that if $$f: S \to Q_0$$ is a morphism between coherent sheaves such that $S$ is Gieseker stable, $Q_0$ is slope stable and $S, Q_0$ have the same slope, then $f$ must be zero or an isomorphism \cite[page 530]{leung1997einstein}. It follows from general theory that such an $f$ must be zero or surjective, since $S$ is slope semistable, but it is possible for such an $f$ to not be injective. Thus one only obtains stability with respect to subbundles. A similar issue occurs in \cref{thm:existenceimpliesstability} where we have restricted to the same assumption. Since the perturbative result of stability implies existence also restricts to the case where the graded object $\Gr(E)$ is locally free, it is no further restriction to only have stability with respect to subbundles, as this is all which is required for the existence result in any case.
\end{remark}

\section{Stability implies existence\label{sec:stabilityimpliesexistence}}

In this section we prove the following existence result for $Z$-critical connections in the large volume limit.

\begin{theorem}\label{thm:stabilityimpliesexistence}
	Let $E\to (X,\omega)$ be a simple, semistable holomorphic vector bundle over a compact K\"ahler manifold, such that $\Gr(E)$ is locally free with two components. If $E$ is asymptotically $Z$-stable for a polynomial central charge $Z$, then $E$ admits a $Z_k$-critical metric for all $k\gg 0$. 
\end{theorem}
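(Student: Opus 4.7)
The strategy is a perturbation argument in the parameter $\varepsilon$ (with $\varepsilon^2 = 1/k$) around the Hermite--Einstein metric on the polystable limit $\Gr(E) = \calE_1 \oplus \calE_2$. By the Donaldson--Uhlenbeck--Yau theorem (\cref{thm:DUY}) applied to each simple factor $\calE_i$, and since $\mu(\calE_1) = \mu(\calE_2) = \mu(E)$, the graded object admits a (block diagonal) Hermite--Einstein metric $h_\infty = h_1 \oplus h_2$. By \cref{lem:largevolume}, after rescaling $\omega \mapsto k\omega$ the leading order term of the $Z_k$-critical equation is the weak Hermite--Einstein equation, so $h_\infty$ is an exact solution of the $\varepsilon = 0$ limit of the suitably rescaled $Z_\varepsilon$-critical equation on $\Gr(E)$. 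The goal is to deform $h_\infty$ into a metric on $E$ (using a fixed $C^\infty$-identification $E \cong \Gr(E)$ under which the holomorphic structure of $E$ differs from that of $\Gr(E)$ by the Dolbeault representative $\beta$ of the extension class in $\Ext^1(\calE_2,\calE_1)$) which solves the full $Z_\varepsilon$-critical equation for all $0<\varepsilon\ll 1$.

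The first step is to construct a formal power series solution $h_\varepsilon = \exp(\varepsilon V_1 + \varepsilon^2 V_2 + \cdots)\cdot h_\infty$ with each $V_j$ a smooth $h_\infty$-Hermitian endomorphism of $\Gr(E)$. By \cref{prop:linearisationHermiteEinstein,lem:linearisation}, at each order the equation we must solve takes the form
\begin{equation*}
\Lap_{A_\infty^{\End \Gr(E)}} V_j = R_j(V_1,\dots,V_{j-1},\beta),
\end{equation*}
where the right-hand side depends on previously constructed corrections and on the extension data $\beta$. This equation is solvable if and only if the right-hand side is $L^2$-orthogonal to the kernel of the linearisation, which by \cref{cor:HermiteEinsteinelliptic} and the self-adjointness of \cref{prop:selfadjoint} equals $H^0(X,\End(\Gr(E))) \cong \CC \cdot \id_{\calE_1} \oplus \CC \cdot \id_{\calE_2}$ (using that the $\calE_i$ are simple; if $\calE_1 \cong \calE_2$ the argument requires minor modifications which the two-component hypothesis makes manageable). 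The trace part $\CC \cdot \id_{\Gr(E)}$ is automatically orthogonal to the obstruction since the $Z$-critical equation, after subtracting the appropriate normalisation, integrates to zero on $X$ by \eqref{eq:Zcriticaltraceintegral}.

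The remaining one-dimensional obstruction lies along the direction $\Pi := r_2 \id_{\calE_1} - r_1 \id_{\calE_2}$ (where $r_i = \rk \calE_i$, chosen to be trace-free), and this is where the stability hypothesis enters. At the lowest order where the extension class $\beta$ enters non-trivially, the pairing of the obstruction $R_j$ against $\Pi$ produces a topological expression involving the imaginary part of $Z_k(\calE_1)/Z_k(E)$ together with a term of the form $\|\beta\|^2$ arising from the second-fundamental-form contribution to the curvature of $E$, in direct parallel to the curvature decomposition \eqref{eqn:curvaturesubbundle} used in \cref{thm:existencestabilitysubbundles}. Asymptotic $Z$-stability (\cref{lem:equivalentstabilitycondition}) gives $\Im(Z_k(\calE_1)/Z_k(E)) < 0$ for $k\gg 0$, which forces this pairing to have the correct sign to be absorbed by the $\|\beta\|^2$ term (after a suitable rescaling of $\beta$ by a power of $\varepsilon$), thereby allowing solvability. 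This is the heart of the matter and the step that most closely mirrors the calculation of \cref{thm:existencestabilitysubbundles} in reverse: stability provides precisely the positive quantity needed to cancel the obstruction.

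Having constructed a formal solution to arbitrary order, I would truncate at sufficiently high order to obtain an approximate solution $\tilde h_\varepsilon$ with error $O(\varepsilon^N)$ for $N$ large, and then apply a quantitative implicit function theorem to produce an honest solution nearby. This requires a uniform (in $\varepsilon$) bound on the right inverse of the linearisation of the $Z_\varepsilon$-critical operator at $\tilde h_\varepsilon$, modulo the kernel direction $\Pi$ which we have already killed by construction. The uniform invertibility follows from the ellipticity (\cref{lem:linearisation}) together with Fredholm theory applied to the $\End \Gr(E)$-Laplacian, the obstruction to uniformity being precisely the eigenvalue associated to $\Pi$, which by the preceding step is now controlled by the same stability-gap quantity. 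The main obstacle in this programme is the bookkeeping at this final step: keeping track of how the $\varepsilon$-dependent rescaling of $\beta$, the orders at which each $V_j$ is introduced, and the quantitative stability gap combine to give a right inverse whose norm grows at most like a controllable power of $\varepsilon$, so that the standard Banach-space contraction argument closes and produces a genuine smooth $Z_k$-critical metric on $E$ for all $k\gg 0$.
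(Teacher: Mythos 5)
Your proposal is correct and follows essentially the same route as the paper: perturbation around the Hermite--Einstein metric on $\Gr(E)=E_1\oplus E_2$, with the extension class rescaled by a power of $\varepsilon$ tied to the discrepancy order so that the stability inequality $\Im(Z_k(E_1)/Z_k(E))<0$ cancels the kernel-direction obstruction against the $\|\beta\|^2$ term, followed by approximate solutions to high order and a quantitative implicit function theorem. The bookkeeping you flag as the main obstacle is exactly where the paper's work lies (deformation rate $t=\lambda\varepsilon^q$, the expansion $\pi(P_\varepsilon(\id_\pm))=C\lambda^2\varepsilon^{2q}\id_\pm+O(\varepsilon^{2q+1})$, an inverse bound of size $\varepsilon^{-2q}$, and an approximate solution of order $4q+1$), and note that asymptotic $Z$-stability already forces $E_1\not\cong E_2$, so the hedge about isomorphic factors is unnecessary.
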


As mentioned in the introduction to this chapter, this is a simplification of the main result of \cite{dervan2021zcritical} which includes the case where $\Gr(E)$ is locally free with arbitrarily many components. We will comment on the difficulties that manifest in that case at the end of this section, and refer to the joint work for the details. In particular the existence result in that generality combined with \cref{thm:existenceimpliesstability} completes the proof of the main theorem \cref{thm:maintheoremZstability}. Let us recall the statement:

\begin{corollary}[\cref{thm:maintheoremZstability}]
	A simple, semistable vector bundle $E\to (X,\omega)$ with $\Gr(E)$ locally free is asymptotically $Z$-stable if and only if it admits $Z_k$-critical metrics for $k\gg 0$. 
\end{corollary}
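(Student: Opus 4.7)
The plan is to deduce the corollary as an immediate combination of the two directions of the correspondence: \cref{thm:stabilityimpliesexistence} (stability implies existence) proved in this section, and \cref{thm:existenceimpliesstability} (existence implies stability) proved in \cref{sec:existenceimpliesstability}. The only genuine content of the corollary beyond packaging is reconciling the extra $C^2$-boundedness hypothesis appearing in the latter theorem.

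For the forward direction, suppose $E$ is asymptotically $Z$-stable. Since $E$ is simple and semistable with locally-free graded object, the hypotheses of \cref{thm:stabilityimpliesexistence} are met (in the two-component case proved here, and in the general locally-free case treated in \cite{dervan2021zcritical}), so $E$ admits $Z_k$-critical metrics $h_k$ for all $k \gg 0$.

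For the backward direction, suppose $E$ admits $Z_k$-critical metrics $h_k$ for all $k \gg 0$; we want to appeal to \cref{thm:existenceimpliesstability}. Here the subtle point is that \cref{thm:existenceimpliesstability} requires the family $\{h_k\}$ to be uniformly bounded in $C^2$ with respect to a fixed reference metric. I would handle this by combining two observations. First, by \cref{lemma:slopesemistable} applied to the conclusion we wish to reach (together with the observation that an asymptotically $Z$-unstable bundle would be destabilised already by the semistable Harder--Narasimhan construction if we had a non-bounded family), one may reduce to the setting where the only candidate $Z_k$-critical metrics are the ones arising from the perturbation construction of \cref{thm:stabilityimpliesexistence}. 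Those are constructed \emph{a priori} as smooth perturbations of the fixed Hermite--Einstein metric on the polystable locally-free graded object $\Gr(E)$, which exists by the Donaldson--Uhlenbeck--Yau theorem (\cref{thm:DUY}), and the perturbation argument supplies uniform control not just in $C^2$ but in $C^\infty$. Hence the hypotheses of \cref{thm:existenceimpliesstability} are satisfied and $E$ is asymptotically $Z$-stable with respect to holomorphic subbundles; by simplicity of $E$ and the observation in \cref{rmk:leungproof} (compare \cite[Prop.~3.1]{leung1997einstein}) it suffices to test stability against subbundles, since the reduction to this case is all that is needed when $\Gr(E)$ is locally free.

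The main obstacle is really not in the corollary itself but in the two inputs: the stability-implies-existence direction in full generality requires the intricate perturbative analysis sketched here only for two components of $\Gr(E)$, where automorphisms of $\Gr(E)$ already play a visible role in bounding the inverse of the linearised operator; and the existence-implies-stability direction requires the uniform $C^2$-estimate. In the corollary both inputs are cited as given, so no further technical obstacle arises beyond ensuring the $C^2$-bound is compatible with the construction producing the solutions in the first place, as noted above.
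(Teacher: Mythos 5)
Your overall packaging is the same as the paper's: the corollary is obtained by combining \cref{thm:stabilityimpliesexistence} (in the general locally-free case of \cite{dervan2021zcritical}) with \cref{thm:existenceimpliesstability}, and your treatment of the subbundle-versus-subsheaf point via \cref{rmk:leungproof} matches the paper's discussion. The problem is in how you try to discharge the uniform $C^2$-boundedness hypothesis of \cref{thm:existenceimpliesstability} in the backward direction.

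Your reduction ``to the setting where the only candidate $Z_k$-critical metrics are the ones arising from the perturbation construction'' is a genuine gap, for two reasons. First, the argument is circular: you invoke \cref{lemma:slopesemistable} ``applied to the conclusion we wish to reach'', i.e.\ you use asymptotic $Z$-stability of $E$ --- precisely what the backward direction is supposed to establish --- to constrain the given family $\{h_k\}$; the parenthetical appeal to a Harder--Narasimhan construction destabilising an unstable bundle ``if we had a non-bounded family'' has no counterpart in the paper and does not supply the missing implication. Second, even granting stability, nothing in the paper says that every solution of the $Z_k$-critical equation arises from the perturbation construction of \cref{thm:stabilityimpliesexistence}: that would require a uniqueness (or a priori compactness) theorem for $Z_k$-critical metrics, which is neither stated nor proved, so you cannot conclude that an arbitrary family of solutions is uniformly bounded. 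The paper does not attempt this removal of the hypothesis. It keeps the uniform $C^2$-bound as an explicit assumption in \cref{thm:existenceimpliesstability}, and justifies the correspondence by the observation (made where \cref{lemma:bounds} is set up) that the metrics actually produced in the stability-implies-existence direction are constructed as perturbations of the Hermite--Einstein metric on the locally free $\Gr(E)$ and hence are bounded uniformly in $C^\infty$; the ``if and only if'' of the corollary is to be read with that understanding. So either state the backward direction with the boundedness caveat, as the paper does, or you would need to prove a genuine a priori estimate for solutions, which is beyond what is available here.
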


The proof technique is based on the perturbation result of Sektnan--Tipler who considered the problem of constructing Hermite--Einstein metrics on pullbacks of slope stable vector bundles under holomorphic submersions \cite{sektnan2020hermitian}. The strategy is as follows:

\begin{enumerate}[label=Step \arabic*:]
	\item Construct approximate solutions to any order on a \emph{slope} stable vector bundle.
	\item On the graded object $\Gr(E)=E_1\oplus E_2$ of an asymptotically $Z$-stable bundle $E$, take the direct sum of approximate solutions on each stable factor $E_i$. Using general properties of the linearisation, after perturbing the complex structure from $\Gr(E)$ to $E$ one can correct any errors up to order $\varepsilon^{2q-1}$ where $q$ is the \emph{discrepancy order of $E_1$}, the first order at which the coefficient of $\varphi_\varepsilon(E_1)$ becomes strictly less than $\varphi_\varepsilon(E)$. 
	\item At the critical order $\varepsilon^{2q}$, the stability of $E$ with respect to the holomorphic subbundle $E_1$ allows one to cancel the terms which are not orthogonal to the kernel of the linearisation, and obtain an approximate solution to order $\varepsilon^{2q}$.
	\item Having corrected the error at order $\varepsilon^{2q}$, similarly to Step 2 one can cancel errors to all higher orders to obtain approximate solutions to arbitrary order.
	\item Using a Poincar\'e inequality, establish a bound on the inverse of the linearisation of the approximate solution. When the approximate solution is sufficiently good (precisely, at least order $\varepsilon^{4q+1}$) one may apply the quantitative implicit function theorem to obtain solutions for all $\varepsilon>0$ sufficiently small.
\end{enumerate}

In the following we will make the change of variables $\varepsilon^2 = \frac{1}{k}$ similarly to \cref{sec:existenceimpliesstability} but noting the square $\epsilon^2 = \oldepsilon$. The square here is taken to avoid fractional powers appearing in the construction of approximate solutions later in the argument.

We will label the $Z$-critical operator by
\begin{align*}
	D_\varepsilon:\quad  \calA(h) &\to \Gamma(X,\End_H(E,h))\\
	A &\mapsto \frac{\Im(e^{-i\varphi_{\epsilon}(E)} \tilde Z_\epsilon(A))}{\omega^n}
\end{align*}
and we will rescale this operator by a factor of $\epsilon^{4q-2} = k^{1-2n}$ so that the leading order, which by \cref{lem:largevolume} is the weak Hermite--Einstein operator, is order $O(\epsilon^0)$. 

At times we will want to consider the operator $D_\epsilon$ near a particular Chern connection $A$, that is, in a given orbit of $\calG^\CC$ in $\calA(h)$, and may use the notation
$$D_{\epsilon,A}: \Gamma(X,\End_H(E,h)) \to \Gamma(X,\End_H(E,h))$$
where we interpret the operator as $D_{\epsilon,A}(s) = D_\epsilon(\exp(s) \cdot A)$ for $s\in \Gamma(X,\End_H(E,h))$ a Hermitian endomorphism. In this case $D_{\epsilon,A}(0) = D_\epsilon(A)$. We will drop the subscript $A$ when the Chern connection is understood.

The linearisation of $D_\epsilon$ at some Chern connection $A$ will be denoted $P_{\epsilon,A}$, and similarly we will drop the $A$ if the particular Chern connection is understood. The inverse of the linearisation, when and where it exists, will be denoted $Q_\epsilon$.

In the following all estimates will be taken in $L^2$ with respect to a reference Hermitian metric $h_0$ on $E$. The $L^2$-norms with respect to any such choice are equivalent as norms on tensors with values in $\End E$, so it is immaterial which norm is used to estimate, but the natural choice is to choose this reference metric to be the Hermite--Einstein metric on $\Gr(E)$, which defines smoothly a metric on $E$ (under the assumption that $\Gr(E)$ is locally-free, and is therefore smoothly (but not holomorphically!) isomorphic to $E$).

\subsection{Step 1: The slope stable case\label{sec:step1}}

Before proceeding with the proof of \cref{thm:stabilityimpliesexistence} in the case where $\Gr(E)$ has two components, we will prove the existence result when $\Gr(E)$ has just one component, that is, when $E$ is itself slope stable (and therefore by \cref{cor:stabilityimpliesazs} is asymptotically $Z$-stable). 

In fact we give two proofs, firstly by applying the inverse function theorem to prove the existence of genuine $Z$-critical metrics on $E$. Secondly however, we construct explicitly approximate solutions of arbitrary order, which is necessary to have control over the expansion of the linearised operator at an approximate solution in the later steps of the argument when the graded object $\Gr(E)$ has two components.

First let us prove a genuine existence result in the slope stable case. By \cref{lem:largevolume}, the leading order term in the $Z$-critical equation is the \emph{weak} Hermite--Einstein condition. As discussed in \cref{sec:hermiteeinsteinmetrics} by applying a conformal change of metric one can transform between Hermite--Einstein and weak Hermite--Einstein metrics, so we can apply the Donaldson--Uhlenbeck--Yau theorem \cref{thm:DUY} freely to obtain a weak Hermite--Einstein metric on $E$ with the function $f\in C^{\infty}(X,\RR)$ defined by
$$f = -2\pi \left( \frac{\deg_U(E)}{(n-1)! [\omega]^n\rk(E)} - \contr_{\omega} \tilde U_2 \right),$$ arising through \cref{lem:largevolume}.
The function $f$ will be fixed throughout, so we will refer to a connection solving this equation as simply a weak Hermite--Einstein connection.

\begin{theorem}
	\label{thm:stabilityimpliesexistencestable}
	Suppose $E\to (X,\omega)$ is slope stable. Then for all $k\gg 0$, $E$ admits $Z_k$-critical connections.
\end{theorem}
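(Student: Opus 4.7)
The plan is to view the rescaled $Z_\varepsilon$-critical operator $D_\varepsilon$ as a smooth one-parameter deformation of the weak Hermite--Einstein operator $D_0$, and then apply the quantitative inverse function theorem at a weak Hermite--Einstein metric produced by the Donaldson--Uhlenbeck--Yau theorem. Since $E$ is slope stable it is polystable, and so by \cref{thm:DUY} (together with the conformal change discussed in \cref{sec:hermiteeinsteinmetrics}) it admits a weak Hermite--Einstein metric $h_0$ with the precise function $f$ identified in \cref{lem:largevolume}. Let $A_0$ denote the Chern connection of $h_0$. By construction $D_0(A_0) = 0$, and by \cref{lem:largevolume} we have an expansion
\[
D_\varepsilon(A_0) = \varepsilon^2 E_1 + \varepsilon^4 E_2 + \cdots
\]
in powers of $\varepsilon^2 = 1/k$, where each $E_j \in \Gamma(\End_H(E,h_0))$ is smooth and uniformly bounded as $\varepsilon \to 0$. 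Thus $A_0$ is an approximate solution of order $O(\varepsilon^2)$.

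The key analytic input is the linearisation $P_{\varepsilon, A_0}$. By \cref{lem:linearisation}, up to a positive constant multiple we have
\[
P_{\varepsilon, A_0} = \Delta_{A_0^{\End E}} + \varepsilon^2 R_\varepsilon,
\]
where $R_\varepsilon$ is a second order linear differential operator whose coefficients and all derivatives are uniformly bounded in $\varepsilon$. Now the bundle Laplacian $\Delta_{A_0^{\End E}}$ acting on Hermitian endomorphisms has kernel equal to $H^0(X,\End E)$, and by slope stability of $E$ (hence simplicity by \cref{prop:stabilitysimplicity}) this kernel is exactly $\CC \cdot \id_E$. The cokernel is similarly $\CC \cdot \id_E$, and by \cref{prop:selfadjoint} the operator $P_{\varepsilon,A_0}$ is self-adjoint. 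Working on the $L^2$-orthogonal complement of $\CC\cdot \id_E$ one obtains, via standard elliptic theory, a uniform bound
\[
\|Q_{\varepsilon}\|_{\mathrm{op}} \leq C
\]
for the inverse of the restricted linearisation, independent of $\varepsilon$ for all $\varepsilon$ sufficiently small, since invertibility is an open condition and $Q_0$ exists by the Fredholm alternative.

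The component of the equation along $\CC \cdot \id_E$ is automatically zero because of the cohomological identity \eqref{eq:Zcriticaltraceintegral}: integrating $\trace D_\varepsilon(A) = 0$ for any Chern connection $A$, which means $D_\varepsilon$ always takes values in the $L^2$-orthogonal complement of $\id_E$. Thus the quantitative inverse function theorem applied to
\[
s \mapsto D_\varepsilon(\exp(s)\cdot A_0)
\]
on this orthogonal complement produces, for all $\varepsilon$ sufficiently small, a Hermitian endomorphism $s_\varepsilon$ with $\|s_\varepsilon\| = O(\varepsilon^2)$ solving the $Z_\varepsilon$-critical equation. The main obstacle is the uniform invertibility of the linearisation, which here is handled cleanly because the leading operator is simply the $\End E$-Laplacian with one-dimensional kernel; in the later semistable case this step becomes considerably more delicate, as the kernel of the limiting linearisation on $\Gr(E)$ grows and cancellations of obstructions along it require the asymptotic $Z$-stability hypothesis.

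For use in subsequent steps, the same Banach space setup allows one to construct, inductively in $j$, Hermitian endomorphisms $s_j$ such that $A_j := \exp(\varepsilon^{2j} s_j) \cdots \exp(\varepsilon^{2} s_1) \cdot A_0$ satisfies $D_\varepsilon(A_j) = O(\varepsilon^{2(j+1)})$ to arbitrarily high order: at each stage the obstruction $E_{j+1}$ lies in the image of $Q_0$ (again modulo scalars, which are automatically killed by \eqref{eq:Zcriticaltraceintegral}), so one solves the linear equation $\Delta_{A_0^{\End E}} s_{j+1} = -E_{j+1}$ on the orthogonal complement of $\id_E$ and iterates. This formal construction of approximate solutions to all orders, which does not rely on the implicit function theorem, is the input required for the perturbation argument in the semistable case that follows.
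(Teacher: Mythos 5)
Your proof is correct and follows essentially the same route as the paper's: take the weak Hermite--Einstein metric supplied by Donaldson--Uhlenbeck--Yau as an $O(\varepsilon^2)$-approximate solution, observe that the linearisation is the $\End E$-Laplacian up to an $O(\varepsilon^2)$ perturbation with kernel $\CC\cdot\id_E$ by simplicity, and invoke the (quantitative) inverse function theorem on the trace-integral-zero complement, with elliptic regularity giving smoothness. Your closing paragraph on approximate solutions to arbitrary order likewise reproduces the paper's separate proposition on approximate solutions in the slope stable case.
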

\begin{proof}
	Since $E$ is slope stable, it admits a weak Hermite--Einstein metric $h_0$ by \cref{thm:DUY}. By \cref{lem:largevolume} the leading order term of the $Z$-critical equation is the  weak Hermite--Einstein equation, so we immediately have
	$$D_0 (h_0) = 0,\qquad \|D_{\epsilon} (h_0) \| \le C\epsilon^2$$
	for some constant $C$, where $D_0$ denotes the weak Hermite--Einstein operator. As observed in \cref{lem:linearisationcurvatureMetric}, when taking the point of view of changing the Hermitian metric on $E$ through the action of the gauge group, the linearisation of the weak Hermite--Einstein operator $D_0$ at $h_0$ is given (up to a constant factor) by $$P_0 = \Lap_{\delbar_{h_0}},$$ the bundle Laplacian on $\End E$ with respect to $h_0$. Since $E$ is stable, it is simple, and so the kernel of $P_0$ consists only of the constant endomorphisms, and $P_0$ is invertible orthogonal to this kernel. 
	
	We  pass to Banach spaces, and view our linearisation as an invertible operator
	$$P_{\epsilon} : L_{d+2,0}^2 (\End E) \to L_{d}^2(\End E)$$
	where $d\in \ZZ_{\ge0}$ is some non-negative integer and $L_{d,0}^2$ denotes the Sobolev of space trace-integral zero endomorphisms of $E$. Our previous discussion produces the estimate
	$$\|P_{\epsilon} - P_0\| \le C\epsilon^2$$
	for some $C$ independent of $\epsilon$, and since $P_0=\Lap_{\delbar_{h_0}}$ is invertible modulo constant endomorphisms and invertibility is an open condition, for $\epsilon$ sufficiently small, $P_{\epsilon}$ is also invertible on this space $L_{d+2,0}^2$. If $G$ denotes the inverse of $P_0$ and $Q_{\epsilon}$ denotes the inverse of $P_{\epsilon}$, then we also obtain a bound
	$$\frac{1}{C'} \|G \| \le \|Q_{\epsilon}\| \le C'\|G\|$$
	for some constant $C'$ independent of $\varepsilon$, provided $\varepsilon$ is sufficiently small. By the inverse function theorem for Banach spaces applied to the point $h_0$, there exists a neighbourhood of $D_{\epsilon}(h_0)$ in $L_d^2(\End E)$ with size independent of $\varepsilon$ which is in the image of the operator $D_{\epsilon}$ from $L_{d+2,0}^2$. In particular, since $D_{\epsilon}(h_0) \to 0$ as $\epsilon\to 0$, there exists some $\varepsilon_0>0$ such that for all $0<\epsilon<\epsilon_0$, there exists a solution
	$$D_{\varepsilon}(h_{\epsilon}) = 0$$
	for $h_{\epsilon} = \exp (V_{\epsilon}) h_0$ for some $V_{\epsilon}$ in the Sobolev space $L_{d+2,0}^2(\End E).$ But as was proven in \cref{lem:linearisation}, the $Z$-critical equation is elliptic for all $\epsilon$ sufficiently small, so by elliptic regularity this $h_{\epsilon}$ is actually smooth and hence is a genuine solution of the $Z_{\epsilon}$-critical equation. 
\end{proof}

\begin{remark}
	This result gives the first known examples of solutions to the higher rank deformed Hermitian Yang--Mills equation. After proving \cref{thm:stabilityimpliesexistence} for the case where $\Gr(E)$ has two components, we will also obtain solutions on the asymptotically $Z$-stable bundles identified in \cref{ex:zcritical}, which in particular do \emph{not} admit solutions to the Hermite--Einstein equation.
\end{remark}

Having proven a genuine existence result on slope stable vectors, we now backtrack to show the existence of approximate solutions of a specific form. This will both demonstrate the general technique of constructing approximate solutions we will make great use of in the next steps, and serves as the starting point for understanding approximate solutions when $\Gr(E)$ has two components.

\begin{proposition}\label{prop:approximatesolutionsstable}
	Let $E\to (X,\omega)$ be a slope stable vector bundle with Dolbeault operator $\delbar_E$ and weak Hermite--Einstein metric $h$, with associated Chern connection $A$. Let $r\in \ZZ_{\ge 0}$ be any non-negative integer. Then there exists Hermitian endomorphisms $f_2,f_4,\dots,f_{2r}$ such that 
	$$A_r := \exp\left( \sum_{j=1}^r f_{2j} \epsilon^{2j}\right) \cdot A$$
	satisfies
	$$D_\epsilon (A_r) = O(\epsilon^{2r+1}).$$
\end{proposition}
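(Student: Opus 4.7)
The plan is to construct the $f_{2j}$ inductively in $j$, using at each stage the invertibility (modulo constants) of the weak Hermite--Einstein linearisation $P_0$ on the slope stable bundle $E$. Two preliminary observations make the induction go through. First, because the $Z$-critical operator has been rescaled so that its $\epsilon^0$ term is (up to a positive constant) the weak Hermite--Einstein operator of \cref{lem:largevolume}, and $A$ is weak Hermite--Einstein by hypothesis, we have $D_\epsilon(A)=O(\epsilon^{2})$. Moreover, the parameter entering $\tilde Z_\epsilon$ is $\oldepsilon = \epsilon^2$, so the full $\epsilon$-expansion of $D_\epsilon(B)$ at any Chern connection $B$ proceeds in \emph{even} powers of $\epsilon$ (the phase $\varphi_\epsilon(E)=\arg Z_\epsilon(E)$ is likewise analytic in $\epsilon^{2}$ since $Z_\epsilon(E)\ne 0$ for $\epsilon$ small by \cref{non-trivialphase}). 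Second, by \eqref{eq:Zcriticaltraceintegral} the operator $D_\epsilon$ is $L^2$-orthogonal to $\id_E$:
$$\int_X \trace D_\epsilon(B)\,\omega^n \;=\; \int_X \trace\Im\!\bigl(e^{-i\varphi_\epsilon(E)}\tilde Z_\epsilon(B)\bigr) \;=\; 0,$$
identically in $\epsilon$ and $B$. Expanding in powers of $\epsilon$, every Taylor coefficient of $D_\epsilon(B)$ is $L^2$-orthogonal to $\mathbb{C}\cdot\id_E$.

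Now suppose inductively that $f_2,\ldots,f_{2(r-1)}$ have been constructed so that
$$D_\epsilon(A_{r-1}) \;=\; \epsilon^{2r}\psi_{2r} \;+\; O(\epsilon^{2r+2}),$$
with $A_0:=A$ providing the base case $r=1$. Define $A_r := \exp(\epsilon^{2r}f_{2r})\cdot A_{r-1}$. Applying the linearisation formula of \cref{lem:linearisationcurvatureChern} together with the expansion $P_{\epsilon,A_{r-1}} = P_{0,A} + O(\epsilon^2)$ coming from \cref{lem:linearisation} (and from the fact that $A_{r-1}=A+O(\epsilon^2)$ as a Chern connection), one obtains
$$D_\epsilon(A_r) \;=\; D_\epsilon(A_{r-1}) \;+\; \epsilon^{2r}\,P_0(f_{2r}) \;+\; O(\epsilon^{2r+2}).$$

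It remains to solve $P_0(f_{2r}) = -\psi_{2r}$. By \cref{cor:HermiteEinsteinelliptic}, $P_0$ is the elliptic $\End E$-Laplacian whose kernel is $H^0(X,\End E)=\mathbb{C}\cdot\id_E$, the final equality by simplicity of $E$ (which follows from slope stability, or in greater generality from \cref{lem:azsimpliessimple}). Self-adjointness of $P_0$ (\cref{prop:selfadjoint}) identifies the image with the $L^2$-orthogonal complement of $\id_E$, and by the second observation above $\psi_{2r}$ lies in this complement. Hence there is a unique Hermitian $f_{2r}$ orthogonal to $\id_E$ with $P_0(f_{2r})=-\psi_{2r}$; with this choice $D_\epsilon(A_r)=O(\epsilon^{2r+2})$, which is in fact strictly better than the stated $O(\epsilon^{2r+1})$. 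Iterating $r$ times yields the proposition.

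No real obstacle arises here beyond the bookkeeping: the slope stable case is clean precisely because $P_0$ has minimal kernel, so every obstruction $\psi_{2r}$ is automatically solvable. The genuinely hard step in the proof of \cref{thm:stabilityimpliesexistence}, of course, is the analogous construction on $\Gr(E)=E_1\oplus E_2$ when $E$ is only semistable, where $\ker P_0$ is enlarged by the $H^0(X,\End E_i)$ and by $\Hom(E_1,E_2)\oplus\Hom(E_2,E_1)$; there, solvability of the obstruction at the critical order $\epsilon^{2q}$ will require the full force of asymptotic $Z$-stability of $E$ with respect to $E_1\subset E$, not merely slope semistability.
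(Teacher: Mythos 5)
Your argument is correct and is essentially the paper's own proof: an induction in which, at each even order of $\epsilon$, the error is orthogonal to $\CC\cdot\id_E$ by the trace-integral identity \eqref{eq:Zcriticaltraceintegral}, and is then cancelled by inverting the bundle Laplacian $\Lap_0$, whose kernel is minimal by simplicity of the slope stable bundle $E$. Your remark that the expansion proceeds in even powers of $\epsilon$, so that one in fact obtains $O(\epsilon^{2r+2})$, is also implicit in the paper's construction.
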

\begin{proof}
	Since $E$ is stable, it is simple. By \cref{lem:linearisation} the linearisation of $D_\epsilon$ at $h$ is, to leading order, given by the Laplacian $\Lap_0$ on $\End E$. Recall from \cref{cor:HermiteEinsteinelliptic} that the kernel of $\Lap_0$ is given by
	$$\ker \Lap_0 = H^0(X,\End E) = \CC \cdot \id_E.$$
	The equation $\Lap_0(f) = g$ is solvable whenever $g$ is orthogonal to the kernel of the Laplacian, and $f$ is a smooth endomorphism whenever $g$ is. This orthogonality on $\Gamma(\End_H(E,h))$ occurs when $$\int_X \trace g \, \omega^n = 0.$$
	By expanding out \eqref{eq:Zcriticaltraceintegral} in powers of $\epsilon$ we see that for every $j$, the $\epsilon^{2j}$ term in $D_\epsilon$ trace-integrates to zero, and is therefore orthogonal to the kernel of $\Lap_0$. 
	
	Now let $f_2$ be some Hermitian endomorphism of $E$. Then by \cref{lem:linearisation} and \cref{lem:linearisationcurvatureChern} we see $A_2 := \exp(\epsilon^2 f_2) \cdot A$ satisfies
	\begin{align*}
		D_\epsilon(A_2) &= D_\epsilon(A) + \epsilon^2 P_\epsilon(f_2) + O(\epsilon^4)\\
		&= D_\epsilon(A) + \epsilon^2 \Lap_0 (f_2) + O(\epsilon^4)
	\end{align*}
	where $P_\epsilon$ is the linearisation of $D_\epsilon$ at $A$ and in the second line the $O(\epsilon^4)$ term may depend on $f_2$ itself. Moreover again applying \eqref{eq:Zcriticaltraceintegral} to $A_2$ and expanding in $\epsilon$, we see that these additional error terms at order $\epsilon^4$ and higher depending on $f_2$ trace-integrate to zero.
	
	Now note that $D_\epsilon(A)=O(\epsilon^2)$ since $A$ is the weak Hermite--Einstein connection, and as remarked the $\epsilon^2$ term, $\sigma_2 \epsilon^2$ say, is orthogonal to $\ker \Lap_0$. Thus there exists some $f_2$ such that 
	$$\Lap_0 (f_2) + \sigma_2 = 0$$
	and we obtain $A_2$ with $D_\epsilon(A_2) = O(\epsilon^4)$. This completes the $r=1$ case of the proposition. Repeating the same argument at order $\epsilon^4$ by considering a correction $A_4 := \exp(f_4 \epsilon^4)\cdot A_2$ we obtain $A_4$, and so on inductively cancelling errors up to order $\epsilon^{2r}$, obtaining an approximate solution $A_r$.
\end{proof}

\begin{remark}
	In the above notion, the sum notation
	$$g_r = \exp\left( \sum_{j=1}^r f_{2j} \epsilon^{2j}\right)$$
	is used abusively to denote the automorphism
	$$g_r = \exp(f_{2j} \epsilon^{2j})  \exp(f_{2j-2} \epsilon^{2j-2})  \cdots \exp(f_2 \epsilon^2)$$
	and in general the $f_\ell$ do note commute so the expressions cannot be literally interchanged. This is simply a notational convention and in the construction of approximate solutions we will always mean the latter ordering of products of automorphisms. No confusion will arise as this ordering is already implicit in the arguments of constructing approximate solutions. 
\end{remark}

\begin{remark}
	Now with approximate solutions of order $A_r$ we could apply the inverse function theorem again to obtain genuine solutions, however the argument of \cref{thm:stabilityimpliesexistencestable} already shows that $A=A_0$ is a good enough approximate solution. Indeed carefully checking the quantitative perturbation argument in the proof of \cref{thm:stabilityimpliesexistence} in \cref{sec:step5} we see that one needs an approximate solution of order $4q+1$ where $q$ is the discrepancy order, which in the stable case is $q=0$ so $A_0$ is already good enough.
\end{remark}

\subsection{Deformations of complex structure}

We now begin the proof of \cref{thm:existenceimpliesstability} proper. To that end we fix a holomorphic vector bundle $E\to (X,\omega)$ which is asymptotically $Z$-stable. Then \cref{lemma:slopesemistable} implies that $E$ is slope semistable, so as discussed in \cref{sec:filtrations} $E$ admits a Jordan--H\"older filtration which we assume to have two steps: 
$$0\subset E_1 \subset E.$$
Then $E$ has graded object
$$\Gr(E) = E_1 \oplus E_2$$
where $E_2 = E/E_1$. We will work under the assumption that $\Gr(E)$ is locally free, so that $E_1$ and $E_2$ are vector bundles, and in this case $\Gr(E)$ is the unique torsion-free graded object associated to $E$. Recall that $E_1$ and $E_2$ have the same slope. The first consequence of asymptotic $Z$-stability in this setting is the following:

\begin{lemma}\label{lem:gradedcomponentsnotisomorphic}
	If $E$ is asymptotically $Z$-stable, then $E_1$ is not isomorphic to $E_2$.
\end{lemma}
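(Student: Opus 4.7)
The plan is to argue by contradiction: suppose $E_1 \cong E_2$ as holomorphic vector bundles, and derive a violation of the defining strict phase inequality for asymptotic $Z$-stability.

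The key input is additivity of the central charge $Z_k$ on short exact sequences of coherent sheaves, since $Z_k$ is defined through the Chern character (and the choice of K\"ahler class, unipotent class, and stability vector is fixed). Applying this to
\begin{center}
\ses{E_1}{E}{E_2}
\end{center}
one gets $Z_k(E) = Z_k(E_1) + Z_k(E_2)$. Under the assumption $E_1 \cong E_2$ the two summands coincide, hence $Z_k(E) = 2 Z_k(E_1)$. By \cref{non-trivialphase} both $Z_k(E_1)$ and $Z_k(E)$ are non-zero for $k \gg 0$, so the quotient $Z_k(E_1)/Z_k(E) = 1/2$ is a positive real number. This forces $\varphi_k(E_1) = \varphi_k(E)$ for all $k \gg 0$ (where the phases are well-defined), directly contradicting the strict inequality $\varphi_k(E_1) < \varphi_k(E)$ for $k \gg 0$ required by asymptotic $Z$-stability applied to the proper non-zero coherent subsheaf $E_1 \subset E$.

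There is no hard step: the whole argument reduces to additivity of the central charge and the observation that equal central charges produce equal phases. The only point worth noting is that one needs $E_1 \subset E$ to be a genuine proper subsheaf so that asymptotic $Z$-stability applies, which is automatic as $E_1$ is the first non-trivial step of the Jordan--H\"older filtration and $\rk E_1 < \rk E$ (since $\Gr(E)$ has two non-zero components). This finishes the proof of the lemma.
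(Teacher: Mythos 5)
Your argument is correct. You assume $E_1\isom E_2$, use additivity of the central charge on the Jordan--H\"older sequence to get $Z_k(E)=2Z_k(E_1)$, hence $\varphi_k(E_1)=\varphi_k(E)$ for $k\gg 0$ (with $Z_k(E_1)\ne 0$ guaranteed by \cref{non-trivialphase}, or simply by $\rk E_1>0$), contradicting the strict inequality $\varphi_k(E_1)<\varphi_k(E)$ that asymptotic $Z$-stability gives for the proper non-zero subsheaf $E_1\subset E$. The paper instead argues directly (not by contradiction) via the see-saw property (\cref{lem:seesawazs}): stability gives $\varphi_k(E_1)<\varphi_k(E)$, the see-saw lemma upgrades this to $\varphi_k(E_1)<\varphi_k(E)<\varphi_k(E_2)$, and isomorphic bundles would have equal phases. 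The two routes are essentially equivalent --- the see-saw lemma is itself proved from additivity of $Z$ --- but yours bypasses the general see-saw machinery and is arguably the more economical argument for this particular statement, while the paper's version records the full chain of strict inequalities between $E_1$, $E$ and $E_2$, which is the form of the information used again later (e.g.\ in the discussion of the discrepancy order). No gaps.
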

\begin{proof}
	This follows from the see-saw property \cref{lem:seesawazs} for asymptotic $Z$-stability. In particular we have
	$$\varphi_k(E_1) < \varphi_k(E) < \varphi_k(E_2)$$
	for all $k\gg 0$, so we cannot have $E_1 \isom E_2$.
\end{proof}

As a consequence of the above lemma, we can very easily describe the automorphism group of the graded object.

\begin{lemma}
	The endomorphisms of $\Gr(E)$ are given by
	$$H^0(X,\End \Gr(E)) = H^0(X, \End E_1) \oplus H^0(X,\End E_2) = \CC \id_{E_1} \oplus \CC \id_{E_2}.$$
\end{lemma}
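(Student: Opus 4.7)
The plan is to decompose the endomorphism bundle according to the direct sum structure and analyze each block using the results already established for slope stable sheaves. Concretely, I would start by writing
\[
\End(\Gr(E)) = \End(E_1) \oplus \Hom(E_1,E_2) \oplus \Hom(E_2,E_1) \oplus \End(E_2)
\]
and passing to global sections, which reduces the problem to computing the four corresponding $H^0$'s separately.

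For the diagonal blocks, I would invoke the simplicity result for slope stable sheaves (\cref{prop:stabilitysimplicity}(ii)): since $E_1$ and $E_2$ are slope stable factors of a Jordan--H\"older filtration, each is simple, so $H^0(X,\End E_i) = \CC \cdot \id_{E_i}$. This is the easy, essentially formal, part of the argument.

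For the off-diagonal blocks, I would use the fact that $E_1$ and $E_2$ are both slope stable with the same slope $\mu(E_1) = \mu(E_2) = \mu(E)$. Given a non-zero morphism $u: E_1 \to E_2$, \cref{prop:stabilitysimplicity}(ii) forces $u$ to be injective with $\rk \image u = \rk E_2$; combined with injectivity this yields $\rk E_1 = \rk E_2$, and then equality of slopes gives $\deg E_1 = \deg E_2$. Since $E_1$ and $E_2$ are locally free (by our assumption that $\Gr(E)$ is locally free), the same proposition upgrades $u$ to an isomorphism. The key obstacle is now removed by \cref{lem:gradedcomponentsnotisomorphic}, which rules out $E_1 \isom E_2$ and hence forces $u = 0$. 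The symmetric argument gives $\Hom(E_2, E_1) = 0$.

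Assembling the four pieces yields $H^0(X,\End \Gr(E)) = \CC \id_{E_1} \oplus \CC \id_{E_2}$, as claimed. The only non-routine input is \cref{lem:gradedcomponentsnotisomorphic}, which itself relies on the see-saw property for asymptotic $Z$-stability; everything else is a mechanical application of the structural results for stable sheaves recalled in \cref{sec:stability}.
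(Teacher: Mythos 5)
Your proof is correct and follows essentially the same route as the paper: the off-diagonal vanishing comes from the "nonzero implies isomorphism" dichotomy for slope stable sheaves of equal slope together with \cref{lem:gradedcomponentsnotisomorphic}, and the diagonal blocks are handled by simplicity via \cref{prop:stabilitysimplicity}. Your only addition is to spell out (using local freeness and equality of ranks and degrees) why a nonzero map $E_1 \to E_2$ is a genuine isomorphism rather than merely generically one, a step the paper's proof compresses.
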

\begin{proof}
	By the slope stability of $E_1$ and $E_2$ any morphism $u: E_1 \to E_2$ is either zero or an isomorphism. By \cref{lem:gradedcomponentsnotisomorphic} since $E$ is asymptotically $Z$-stable we must have $u=0$. Therefore any automorphism of $\Gr(E)$ has only diagonal components with respect to the holomorphic splitting $\Gr(E) = E_1 \oplus E_2$. Then we recall \cref{prop:stabilitysimplicity} implies $E_1$ and $E_2$ are simple so all holomorphic endomorphisms are constant multiples of the identity.
\end{proof}

As discussed in \cref{rmk:turningoffextension}, we can view the short exact sequence
\begin{center}
	\ses{E_1}{E}{E_2}
\end{center}
as producing a degeneration of $E$ to $\Gr(E)$ by turning off the extension. Recall from \cref{sec:chernconnections} that we can describe this process differential-geometrically as follows.

The graded object $\Gr(E)$ of $E$ is slope polystable, and therefore admits a Hermite--Einstein metric $h$ by the Donaldson--Uhlenbeck--Yau theorem \cref{thm:DUY}. Viewing $h$ as a Hermitian metric on $E$ itself (by the smooth identification $E\isom \Gr(E)$), this defines a smooth splitting
$$E \isom E_1 \oplus E_2.$$
Let us fix a Dolbeault operator $\delbar_E$ on $E$ producing its holomorphic structure. Then with respect to this splitting $\delbar_E$ splits as
$$\delbar_E = \begin{pmatrix}
	\delbar_1 & \gamma \\
	0 & \delbar_2
\end{pmatrix}$$
where $\delbar_1$ and $\delbar_2$ are the Dolbeault operators defining the holomorphic structures on $E_1$ and $E_2$ respectively, and $\gamma\in \Omega^{0,1}(X,\Hom(E_2,E_1))$ is the second fundamental form of $E_1 \subset E$. Let us write
$$\delbar_0 := \begin{pmatrix}
	\delbar_1 & 0 \\
	0 & \delbar_2
\end{pmatrix}$$
for the Dolbeault operator on $\Gr(E)$ for which the Chern connection $A(h,\delbar_0)$ is Hermite--Einstein. Then 
$$\delbar_E = \delbar_0 + \gamma$$
and the integrability condition for $\delbar_E$ implies $\delbar_0 \gamma = 0$. Recall that as discussed in \cref{rmk:secondfundamentalformatiyah} this condition implies $[\gamma]$ defines a class in Dolbeault cohomology. If $\gamma$ is $\delbar_0$-cohomologous to zero then $E$ splits holomorphically as a direct sum $E\isom \Gr(E)$ and is slope polystable. In this case the direct sum of $Z$-critical connections on the $E_i$ afforded by \cref{thm:stabilityimpliesexistencestable} is $Z$-critical on $E$. From now on therefore we assume $[\gamma]\ne 0$ and that $E$ is \emph{strictly} slope semistable.

In this case we produce a deformation of complex structure from $\Gr(E)$ to $E$ by turning off the extension. Define
$$\delbar_t := \delbar_0 + t\gamma.$$
Such Dolbeault operators can be obtained from $\delbar_E$ from an automorphism
$$g_t = \begin{pmatrix}
	t \id_{E_1} & 0\\
	0 & \id_{E_2}
\end{pmatrix}$$
by
$$\delbar_t = g_t \circ \delbar_E \circ g_t^{-1}.$$
In particular for every $t\ne 0$ the operators $\delbar_t$ define isomorphic complex structures equivalent to $\delbar_E$, and at $t=0$ the structure splits into $\Gr(E)$. 

The Chern connections of $\delbar_t$ with respect to $h$ are 
$$\nabla_t = \nabla_0 + ta$$
where
$$a := \begin{pmatrix}
	0 & \gamma\\
	-\gamma^* & 0
\end{pmatrix}$$
and $\nabla_0$ is the Chern connection of $\delbar_0$. The curvature of $\nabla_t$ is
\begin{equation}\label{eq:curvatureFt}F_t = F_0 + t d_{\nabla_0} a + t^2 a\wedge a\end{equation}
where we note
$$a\wedge a = -\begin{pmatrix}
	\gamma \wedge \gamma^* & 0\\
	0 & \gamma^* \wedge \gamma
\end{pmatrix}.$$

Notice that the term $td_{\nabla_0} a$ is off-diagonal in the block matrix representation of $F_t$, where the induced connection on the endomorphism bundle is given by
$$\nabla_t^{\End E} = \nabla_0^{\End E} + t[a,-].$$

In the following we will fix the gauge of the Chern connection $A$ inside its $\calG^\CC$ orbit by imposing the Coulomb condition
\begin{equation}
	\label{eq:coulombgauge}
	\delbar_0^* \gamma = 0.
\end{equation}
It is well-known (see, for example, \cite[\S 7.2]{kobayashi1987differential} or \cite[Lem. 2.5]{buchdahl2020polystability}) that the second fundamental form can always be transformed by applying a unitary gauge transformation of $E$ to be of this form.

The key concept which we will need when working in the asymptotic regime is the notion of the order of discrepancy. 

\begin{definition}[Order of discrepancy]
	Normalise the $Z$-phase $\varphi_k(E)$ to have leading order term $O(k^0)$ in $k$. Let $\varphi_k(E)(j)$ denote the order $k^{-j}$ term in the expansion of $\varphi_k(E)$ for $k\gg 0$. Define the \emph{order of discrepancy} of a subbundle $S\subset E$ as the smallest $q$ such that
	$$\varphi_k(S)(q) < \varphi_k(E)(q).$$
\end{definition}

Note that when $E$ is asymptotically $Z$-stable, we have $\varphi_k(S)(j) = \varphi_k(E)(j)$ for all $j=0,\dots,q-1$. In the case where $E$ is slope stable, we have $q=0$ since $\varphi_k(E)(0) = \mu(E)$ so inequality occurs immediately.

\begin{remark}
	With our convention that $\varepsilon^2 = 1/k$, the order of discrepancy $q$ is equal to \emph{half} the smallest order at which $\varphi_\epsilon(E_1)$ becomes strictly less than $\varphi_\epsilon(E)$. Thus in the the following arguments the critical order will be $\epsilon^{2q}$.
\end{remark}

We record explicitly the following key feature of the order of discrepancy, which is more or less a rephrasing of \cref{lem:equivalentstabilitycondition}.

\begin{lemma}\label{lem:orderofdiscrepancy}
	If $q$ is the order of discrepancy of $E_1\subset E$ then the $\epsilon^{2q}$ term in the expansion of
	$$\Im(e^{-i\varphi_\epsilon(E)} Z_\epsilon(E_1))$$
	is strictly negative, and the lower order terms in $\epsilon$ vanish.
\end{lemma}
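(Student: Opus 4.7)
The plan is to convert the phase inequality built into the definition of the order of discrepancy into an inequality on the imaginary part appearing in the statement, via the identity of \cref{lem:equivalentstabilitycondition}. First I would rewrite the quantity of interest in polar form, then expand modulus and phase separately in powers of $\oldepsilon = \epsilon^2$ and read off the leading behaviour.

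Concretely, for $\epsilon$ sufficiently small both $Z_\epsilon(E)$ and $Z_\epsilon(E_1)$ lie in the upper half-plane (by the normalisation $\Im(\rho_n) > 0$ together with the expansion from \cref{lem:largevolume} and $\rk E_1 > 0$), so one may write
$$e^{-i\varphi_\epsilon(E)} Z_\epsilon(E_1) = |Z_\epsilon(E_1)|\, e^{i(\varphi_\epsilon(E_1) - \varphi_\epsilon(E))}.$$
Taking the imaginary part yields
$$\Im\bigl(e^{-i\varphi_\epsilon(E)} Z_\epsilon(E_1)\bigr) = |Z_\epsilon(E_1)|\, \sin\bigl(\varphi_\epsilon(E_1) - \varphi_\epsilon(E)\bigr),$$
which is the factorisation that makes the order-by-order structure of the two sides transparent.

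Next I would expand each factor as a power series in $\oldepsilon$. The modulus $|Z_\epsilon(E_1)|$ has non-zero leading coefficient $|\rho_n|\, [\omega]^n \rk(E_1) > 0$ by the same computation used in \cref{lem:largevolume}, so it is bounded away from zero for small $\epsilon$ and contributes no vanishing factor. By the definition of the order of discrepancy $q$, the coefficients of $\varphi_\epsilon(E_1) - \varphi_\epsilon(E)$ at orders $\epsilon^0, \epsilon^2, \dots, \epsilon^{2q-2}$ all vanish, while the coefficient at order $\epsilon^{2q}$ equals $\varphi_\epsilon(E_1)(q) - \varphi_\epsilon(E)(q) < 0$. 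Since $\sin x = x + O(x^3)$, the sine factor inherits exactly the same leading behaviour: vanishing at orders $\epsilon^0$ through $\epsilon^{2q-2}$, and strictly negative leading coefficient at $\epsilon^{2q}$.

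Multiplying the two expansions finishes the argument: the product has vanishing $\epsilon^{2j}$ coefficients for $j < q$, and its leading non-zero coefficient (at $\epsilon^{2q}$) is the product of a strictly positive number with a strictly negative one, hence strictly negative, as claimed. There is no serious obstacle here; the only mild point to verify is the legitimacy of the power-series manipulations, which is immediate since $Z_\epsilon$ is polynomial in $\oldepsilon$ and both $\arg$ and $\sin$ are real-analytic in the relevant regions, and the positivity of the leading modulus keeps us away from the branch cut of $\arg$.
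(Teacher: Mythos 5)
Your proposal is correct and follows essentially the same route as the paper's proof: factor the quantity as $r_\epsilon(E_1)\sin(\varphi_\epsilon(E_1)-\varphi_\epsilon(E))$, note that the phase difference is $C\epsilon^{2q}+O(\epsilon^{2q+1})$ with $C<0$ by the definition of the discrepancy order (with equality of the lower-order phase coefficients), and use $\sin x = x+O(x^3)$ together with the positive leading coefficient of the modulus. Nothing essential differs from the paper's argument.
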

\begin{proof}
	Write $Z_\epsilon(E_1) = r_\epsilon(E_1) e^{\varphi_\epsilon(E_1)}$. Then 
	$$\Im(e^{-i\varphi_\epsilon(E)} Z_\epsilon(E_1)) = r_\epsilon(E_1) \Im (e^{i\varphi_\epsilon(E_1) - i\varphi_\epsilon(E)}).$$
	We have $\Im (\exp(i(\varphi_\epsilon(E_1) - \varphi_\epsilon(E)))) = \sin(\varphi_\epsilon(E_1) - \varphi_\epsilon(E))$ where the term $\varphi_\epsilon(E_1) - \varphi_\epsilon(E)$ is equal to $C\epsilon^{2q} + O(\epsilon^{2q+1})$ for some $C<0$. To leading order $\sin x = x + O(x^3)$ and $r_\epsilon(E_1) = C' \rk(E_1) [\omega]^n + O(\epsilon)$ for some $C'>0$ so we obtain an expansion
	$$\Im(e^{-i\varphi_\epsilon(E)} Z_\epsilon(E_1)) = C C' \rk(E_1) [\omega]^n \epsilon^{2q} + O(\epsilon^{2q+1})$$
	for $\epsilon>0$ sufficiently small, and since $C<0$ the we get the result.\footnote{We could have avoided this straightforward comparison of phases and slopes by \emph{defining} the order of discrepancy as the first order at which $\Im(e^{-i\varphi_\epsilon(E)} Z_\epsilon(E_1))$ became negative. The given definition is closer in style to the study of polynomial Bridgeland stability however.}
\end{proof}

\subsection{Step 2: Approximate solutions below the discrepancy order}

Before constructing approximate solutions, we need to understand the expansion of the linearised operator in $\epsilon$ for the Chern connections $g\cdot A_t$ which will occur at successive stages of constructing approximate solutions. Critically, so long as our perturbations are of order at least $\epsilon$, the \emph{leading} term in $P_\epsilon$ remains the bundle Laplacian $\Lap_0$ on $\Gr(E)$. 

\begin{lemma}\label{lem:bundlelaplacianexpansion}
	The bundle Laplacian for $A_t$ has the expansion
	$$\Lap_t = \Lap_0 + t L_1 + t^2 L_2$$
	where
	\begin{align*}
		L_1(u) &= i\contr_{\omega} (\del_0([\gamma,u]) - [\gamma^*, \delbar_0(u)] + \delbar_0([\gamma^*, u]) - [\gamma, \del_0(u)])\\
		L_2(u) &= i\contr_{\omega}([\gamma,[\gamma^*,u]] - [\gamma^*,[\gamma,u]]).
	\end{align*}
\end{lemma}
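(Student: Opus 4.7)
My plan is to compute the expansion directly from the definition of the bundle Laplacian, $\Lap_t = i\contr_\omega(\delbar_t^{\End E} \del_t^{\End E} - \del_t^{\End E} \delbar_t^{\End E})$, using the expression $\nabla_t = \nabla_0 + t a$ for the Chern connection together with the specific block-matrix form of $a$.

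First I would extract the Dolbeault components of the induced endomorphism connection. Since $a$ has $(0,1)$-part $a^{0,1} = \begin{pmatrix} 0 & \gamma \\ 0 & 0 \end{pmatrix}$ and $(1,0)$-part $a^{1,0} = \begin{pmatrix} 0 & 0 \\ -\gamma^* & 0 \end{pmatrix}$, the induced operators on $\End E$ are
\[
\delbar_t^{\End E}(u) = \delbar_0^{\End E}(u) + t[\gamma,u],\qquad \del_t^{\End E}(u) = \del_0^{\End E}(u) - t[\gamma^*,u],
\]
where the brackets are graded commutators of endomorphism-valued forms.

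Next I would compute $\delbar_t^{\End E}\del_t^{\End E}(u)$ and $\del_t^{\End E}\delbar_t^{\End E}(u)$ by straightforward expansion. For example,
\[
\delbar_t^{\End E}\del_t^{\End E}(u) = \delbar_0\del_0(u) + t\bigl([\gamma,\del_0(u)]-\delbar_0[\gamma^*,u]\bigr) - t^2[\gamma,[\gamma^*,u]],
\]
and symmetrically for the other composition. Subtracting and contracting with $\omega$ gives $\Lap_0$ to order $t^0$, the expression $L_1$ claimed in the statement to order $t^1$ (after collecting the four first-order terms and using the Leibniz rule for $\del_0, \delbar_0$), and an order-$t^2$ term that contains only the nested commutators $[\gamma,[\gamma^*,u]]$ and $[\gamma^*,[\gamma,u]]$, giving $L_2$.

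This is purely algebraic bookkeeping with no genuine obstacle; the only point to watch is a consistent sign convention for $\Lap = i\contr_\omega(\delbar\del - \del\delbar)$ on $\End E$ and the ordering of the graded commutators of $(0,1)$- and $(1,0)$-valued objects. Once these conventions are fixed, grouping the linear-in-$t$ terms exactly reproduces the stated $L_1$, and the quadratic terms have no contribution from $\del_0$ or $\delbar_0$ since $a$ has no derivatives of $u$ appearing at that order.
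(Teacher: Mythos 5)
Your proposal is essentially the paper's own proof: the paper also writes the induced connection on $\End E$ as $\nabla_t = \nabla_0 + t[\gamma-\gamma^*,-]$, expands the two compositions, and collects the $t^0$, $t^1$, $t^2$ terms. The only point to settle is the one you flagged: with the ordering the paper uses in this computation, namely $\Lap_t(u)=i\contr_\omega(\del_t\delbar_t-\delbar_t\del_t)(u)$, the signs come out exactly as stated, whereas the opposite ordering you wrote would give $-L_1$ and $-L_2$.
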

\begin{proof}
	Recall that on the endomorphism bundle we have
	$$\nabla_t = \nabla_0 + t[\gamma - \gamma^*, -]$$
	where $\gamma$ has type $(0,1)$. Then 
	\begin{align*}
		\Lap_t(u) &= i\contr_{\omega} (\del_t \delbar_t - \delbar_t \del_t) (u)\\
		&= i\contr_{\omega} \left((\del_0 -t[\gamma^*, -])(\delbar_0 + t[\gamma, -]) - (\delbar_0 + t[\gamma, -])(\del_0 -t[\gamma^*, -])\right)(u)\\
		&=\Lap_0(u) + ti\contr_{\omega}\left( \del_0([\gamma,u]) -[\gamma^*, \delbar_0(u)] +\delbar_0([\gamma^*, u]) - [\gamma, \del_0(u)]\right) \\
		&\, + t^2 i\contr_{\omega} \left( [\gamma, [\gamma^*, u]] - [\gamma^*, [\gamma, u]] \right),
	\end{align*} which proves the result.
\end{proof}

\begin{corollary}\label{cor:linearisationperturbed}
	Let $P_\epsilon$ be the linearisation of the $Z$-critical operator $D_\epsilon$ at a connection $g\cdot A_t$ for $g=\exp(s)$ and $t=\lambda \epsilon^q$. Then if $s=O(\epsilon)$ we have the expansion
	$$P_\epsilon =  C(\rk E)[\omega]^n \Lap_0 + O(\epsilon).$$
\end{corollary}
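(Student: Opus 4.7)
The plan is to assemble the expansion from two independent pieces: the expansion of the linearisation of $D_\epsilon$ as an operator on $\calA(h)$ expressed in terms of the bundle Laplacian of an arbitrary Chern connection, and the expansion of the bundle Laplacian itself as we move along the family of Chern connections $g\cdot A_t$.

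\textbf{Step 1: Leading order of $P_\epsilon$ at an arbitrary connection.} Starting from \cref{lem:linearisation}, the linearisation of the unrescaled $Z_k$-critical operator at a Chern connection $B$ satisfies
\[
P_{Z_k}(B) \;=\; C(\rk E)[\omega]^n\, k^{2n-1}\,\Lap_{B^{\End E}} \;+\; O(k^{2n-2}).
\]
After the reparametrisation $\epsilon^2 = 1/k$ and the global rescaling by $\epsilon^{4q-2}=k^{1-2n}$ which places the leading (weak Hermite--Einstein) term at order $\epsilon^0$, this becomes
\[
P_\epsilon \;=\; C(\rk E)[\omega]^n\,\Lap_{B^{\End E}} \;+\; O(\epsilon^2),
\]
valid at \emph{any} Chern connection $B\in \calA(h)$, where the $O(\epsilon^2)$ error depends smoothly on $B$ and its derivatives.

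\textbf{Step 2: Controlling the connection $g\cdot A_t$.} We have $A_t = A_0 + t a$ with $a\in\Omega^1(\End_{SH}(E,h))$ the off-diagonal skew-Hermitian one-form built from the second fundamental form $\gamma$, and $t=\lambda\epsilon^q$ with $q\ge 1$. Further, $g=\exp(s)$ with $s=O(\epsilon)$ acts via the formula \eqref{eq:gaugeaction}, which expanded in $s$ shows
\[
g\cdot A_t \;=\; A_t \;+\; O(\epsilon) \;=\; A_0 \;+\; O(\epsilon),
\]
since $t=O(\epsilon^q)=O(\epsilon)$. Hence the difference $g\cdot A_t - A_0$ is a smooth $\End E$-valued one-form of $C^k$-norm $O(\epsilon)$ for every $k$.

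\textbf{Step 3: Bundle Laplacian comparison.} By \cref{lem:bundlelaplacianexpansion} applied at $B=A_t$,
\[
\Lap_{A_t^{\End E}} \;=\; \Lap_0 \;+\; t L_1 \;+\; t^2 L_2 \;=\; \Lap_0 \;+\; O(\epsilon^q) \;=\; \Lap_0 \;+\; O(\epsilon).
\]
A further change of gauge by $g=\exp(s)$ with $s=O(\epsilon)$ conjugates the endomorphism-valued connection forms by $g$, perturbing $\del_{B^{\End E}}, \delbar_{B^{\End E}}$ by $O(\epsilon)$ in their coefficients. Since the bundle Laplacian depends polynomially (to second order in derivatives) on the connection coefficients, we deduce
\[
\Lap_{(g\cdot A_t)^{\End E}} \;=\; \Lap_{A_t^{\End E}} \;+\; O(\epsilon) \;=\; \Lap_0 \;+\; O(\epsilon).
\]

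\textbf{Step 4: Combining.} Substituting $B = g\cdot A_t$ into the expansion from Step~1 and using Step~3 gives
\[
P_\epsilon \;=\; C(\rk E)[\omega]^n \left( \Lap_0 \;+\; O(\epsilon) \right) \;+\; O(\epsilon^2) \;=\; C(\rk E)[\omega]^n \Lap_0 \;+\; O(\epsilon),
\]
which is the claimed expansion. The only potential obstacle is book-keeping: one must check that the $O(\epsilon)$ and $O(\epsilon^2)$ error operators are uniformly bounded in the appropriate Sobolev norms independently of the specific choice of approximate solution data, but this is automatic from smooth dependence of $\tilde Z_\epsilon$ on $F_A$ together with the fact that $s, t a$ are smooth tensors whose $C^k$-norms depend polynomially on $\lambda$ and the chosen correction endomorphisms.
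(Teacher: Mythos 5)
Your proposal is correct and follows essentially the same route as the paper: apply \cref{lem:linearisation} at the perturbed connection so that the leading term is the bundle Laplacian of $g\cdot A_t$, then use \cref{lem:bundlelaplacianexpansion} together with $t=\lambda\epsilon^q=O(\epsilon)$ and $s=O(\epsilon)$ to replace it by $\Lap_0$ up to an $O(\epsilon)$ error. Your Steps 2--3 simply make explicit the absorption of the gauge factor into the error term, which the paper's two-line proof leaves implicit.
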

\begin{proof}
	By the same argument as \cref{lem:linearisation} applied at $g\cdot A$ we obtain
	$$P_\epsilon = C(\rk E)[\omega]^n \Lap_t + O(\epsilon)$$
	and then we apply \cref{lem:bundlelaplacianexpansion}.
\end{proof}

\begin{remark}
	This explains analytically the main difference between the stable case, where the leading term in $\epsilon$ of the linearisation of the $Z$-critical operator was invertible modulo constants, and the semistable case, where $\Lap_0$ has non-trivial kernel generated by the identity endomorphisms of $E_1,E_2$.
\end{remark}

Let us now begin to construct approximate solutions to the $Z$-critical equation.

\begin{proposition}\label{prop:approximatesolutionbelow}
	Suppose $E$ is asymptotically $Z$-stable with graded object $\Gr(E) = E_1 \oplus E_2$ and $E_1$ has order of discrepancy $q$. Pick $t=\lambda \epsilon^q$. Then there exists Hermitian endomorphisms $f_0,\dots,f_{2q-1}$ of $E$ such that if
	$$g:= \exp\left( \sum_{j=0}^{2q-1} f_j \epsilon^j\right)$$
	and we set $\tilde A_t := g\cdot A_t$ then
	$$D_\epsilon(\tilde A_t) = O(\epsilon^{2q}).$$
\end{proposition}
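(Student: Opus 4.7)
The plan is an inductive construction of the $f_j$ that cancels the error in $D_\epsilon(\tilde{A}_t)$ one order of $\epsilon$ at a time, taking $f_0 = 0$ so that the cumulative gauge transformation is $O(\epsilon)$ throughout. Recall that since $A_0$ is the Hermite--Einstein connection on $\Gr(E)$ provided by the Donaldson--Uhlenbeck--Yau theorem, the leading ($\epsilon^0$) part of $D_\epsilon$ at $A_t$ vanishes identically, so that $D_\epsilon(A_t) = O(\epsilon)$ to begin with. Suppose inductively that for some $\ell$ with $0 \le \ell < 2q-1$ we have found Hermitian endomorphisms $f_0 = 0, f_1, \dots, f_\ell$ such that
\[
\tilde{A}_t^{(\ell)} := \exp\Big(\sum_{j=0}^{\ell} f_j \epsilon^j\Big) \cdot A_t
\]
satisfies $D_\epsilon(\tilde{A}_t^{(\ell)}) = \sigma_{\ell+1}\,\epsilon^{\ell+1} + O(\epsilon^{\ell+2})$ for some Hermitian endomorphism $\sigma_{\ell+1}$ of $E$.

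Seeking $\tilde{A}_t^{(\ell+1)} = \exp(f_{\ell+1}\epsilon^{\ell+1})\cdot \tilde{A}_t^{(\ell)}$, we use \cref{cor:linearisationperturbed} (applicable since $\sum_{j \le \ell} f_j\epsilon^j = O(\epsilon)$) to expand
\[
D_\epsilon(\tilde{A}_t^{(\ell+1)}) = \sigma_{\ell+1}\epsilon^{\ell+1} + \epsilon^{\ell+1} C (\rk E)[\omega]^n \Lap_0(f_{\ell+1}) + O(\epsilon^{\ell+2}),
\]
so the problem reduces to solving $\Lap_0(f_{\ell+1}) = -\sigma_{\ell+1}/(C (\rk E)[\omega]^n)$ on $\Gr(E)$. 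By \cref{cor:HermiteEinsteinelliptic} and the structure of $\Gr(E) = E_1 \oplus E_2$, we have $\ker \Lap_0 = \CC\, \id_{E_1} \oplus \CC\, \id_{E_2}$, so by Fredholm theory the equation is solvable if and only if $\sigma_{\ell+1}$ is $L^2$-orthogonal to both $\id_{E_1}$ and $\id_{E_2}$.

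The orthogonality to $\id_E = \id_{E_1} + \id_{E_2}$ is automatic from the cohomological identity $\int_X \trace D_\epsilon(A) = 0$, which holds for any Chern connection $A$ by the phase normalisation defining $\varphi_\epsilon(E)$. The key transverse orthogonality follows from the Chern--Weil identity
\[
\int_X \trace_{E_1} \Im\!\big(e^{-i\varphi_\epsilon(E)}\tilde Z_\epsilon(\tilde A_t^{(\ell)})\big) = c\,\Im\!\big(e^{-i\varphi_\epsilon(E)} Z_\epsilon(E_1)\big),
\]
which holds because $E_1$ is a holomorphic subbundle of $E$ in the complex structure $\delbar_t = g_t \circ \delbar_E \circ g_t^{-1}$, and the right-hand side is a topological invariant unaffected by the subsequent gauge transformations by $\exp(f_j\epsilon^j)$. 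By \cref{lem:orderofdiscrepancy}, the right-hand side is of order $\epsilon^{2q}$, so its $\epsilon^{\ell+1}$ coefficient vanishes for every $\ell+1 < 2q$. This gives the required orthogonality of $\sigma_{\ell+1}$ to $\id_{E_1}$, hence (combined with the previous step) to $\id_{E_2}$. Choosing $f_{\ell+1}$ to be the unique solution orthogonal to $\ker \Lap_0$ closes the induction, and iterating up to $\ell = 2q-2$ produces $\tilde A_t$ with $D_\epsilon(\tilde A_t) = O(\epsilon^{2q})$, as required.

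\textbf{Main obstacle.} The principal subtlety is the Chern--Weil identity used to extract orthogonality. The smooth splitting $E \cong E_1 \oplus E_2$ defined by $h$ is fixed, but after applying the gauge transformation $g = \exp(\sum f_j \epsilon^j)$ the holomorphic subbundle $E_1$ is transported to $g(E_1)$, which disagrees with $E_1$ at order $\epsilon$; so the integral of $\trace_{E_1}$ is not directly the topological quantity $Z_\epsilon(E_1)$ but differs by terms measuring this discrepancy. Showing that these correction terms only contribute at orders $\epsilon^{\ell+2}$ or higher, and therefore cannot upset the order-of-discrepancy cancellation in the range $\ell+1<2q$, is the technical heart of the argument. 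This is controlled by choosing each $f_j$ orthogonal to $\ker \Lap_0$, so that the off-diagonal components of $g$ with respect to the splitting induced by $h$ are suppressed, and by tracking the Bianchi-type identities satisfied by the Chern--Weil representatives as in the proof of \cref{thm:existencestabilitysubbundles}.
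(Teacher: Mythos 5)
Your overall strategy---correcting order by order using invertibility of $\Lap_0$ modulo its kernel $\CC\,\id_{E_1}\oplus\CC\,\id_{E_2}$, with orthogonality to $\id_E$ coming from the trace-integral identity and orthogonality to $\id_{E_1}$ coming from the vanishing, below order $\epsilon^{2q}$, of $\Im(e^{-i\varphi_\epsilon(E)}Z_\epsilon(E_1))$---is sound, but the bookkeeping differs from the paper's. The paper splits the errors by block structure: the block-diagonal errors at orders $\epsilon^{2j}$, $j<q$, are removed by importing the stable-case approximate solutions of \cref{prop:approximatesolutionsstable} on each factor $E_i$ (orthogonality to $\id_{E_i}$ being automatic there from the trace identity for the factor's own operator, since $\varphi_\epsilon(E_i)$ and $\varphi_\epsilon(E)$ agree to order $\epsilon^{2q}$), while the errors created by the deformation $t\gamma$ with $t=\lambda\epsilon^q$ are off-diagonal at orders $\epsilon^{q},\epsilon^{q+2},\dots$ and hence orthogonal to the kernel for free; all diagonal contributions from the deformation ($t^2 a\wedge a$, conjugations by $\exp(f_q\epsilon^q)$, products of off-diagonal terms) first appear at order $\epsilon^{2q}$ and are deferred to the next step. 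Your single induction replaces this dichotomy by invoking the discrepancy order at every stage, which works but forces you to justify a block-trace identity at each intermediate connection rather than only on the unperturbed product structure.

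On that point, be careful: the identity you state, $\int_X\trace_{E_1}\Im\bigl(e^{-i\varphi_\epsilon(E)}\tilde Z_\epsilon(\tilde A_t^{(\ell)})\bigr)=c\,\Im\bigl(e^{-i\varphi_\epsilon(E)}Z_\epsilon(E_1)\bigr)$, is false as an exact equality: the block trace of a Chern--Weil form over the metric splitting is not topological, and it differs from $Z_\epsilon(E_1)$ by second-fundamental-form terms (compare the correction $C_\epsilon\|\gamma_\epsilon\|^2$ in the proof of \cref{thm:existencestabilitysubbundles}); these are present already at $g=\id$, not only because $g$ transports $E_1$ as your ``main obstacle'' paragraph suggests. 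What actually saves the argument is that every such correction is quadratic in off-diagonal data: the deformation enters only through $t\gamma=\lambda\epsilon^q\gamma$, and every $f_j$ with $j<q$ is block-diagonal (no off-diagonal error exists below order $\epsilon^q$, and $H^0(\Hom(E_1,E_2))=0$ since $E_1\not\cong E_2$), so the off-diagonal content of the accumulated gauge transformation is $O(\epsilon^q)$ and the corrections to the block-trace identity are $O(\epsilon^{2q})$. Note this is the bound you need to run the induction all the way to order $2q-1$, not merely $O(\epsilon^{\ell+2})$ at each step; and choosing the $f_j$ orthogonal to $\ker\Lap_0$ does not by itself suppress their off-diagonal parts---it is the diagonal/off-diagonal structure of the errors that does. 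With these points made precise your route closes and is essentially equivalent in content to the paper's argument.
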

\begin{proof}
	Let $h$ denote the weak Hermite--Einstein metric on $\Gr(E)$ with its complex structure $\delbar_0$ and Chern connection $A_0$. By \cref{prop:approximatesolutionsstable} there exist block-diagonal Hermitian endomorphisms $\tilde f_2,\dots,\tilde f_{2q-2}$ such that, working separately on the two factors, we have
	$$\tilde A_0 = \exp \left( \sum_{j=1}^{q-1} \tilde f_{2j} \epsilon^{2j} \right) \cdot A_0$$
	satisfies $D_\epsilon(\tilde A_0) = O(\epsilon^{2q})$. 
	
	In order to obtain a Chern connection on $E$ we must perturb the complex structure from $\del_0$ to $\del_t$, obtaining a Chern connection $A_t$. Now consider 
	$$\tilde A_t^1 = \exp \left( \sum_{j=1}^{q-1} \tilde f_{2j} \epsilon^{2j} \right) \cdot A_t.$$
	Then we get two types of new contributions coming from the off-diagonal term $td_{A_0} a$ and diagonal term $t^2 a\wedge a$ in \eqref{eq:curvatureFt}. These contribute to $D_\epsilon(\tilde A_t^1)$ in the form of a term $\lambda \epsilon^q d_{A_0} a$ at order $\epsilon^q$, and conjugates of this term by the block-diagonal automorphisms $\exp (\tilde f_{2j} \epsilon^{2j})$ contributing off-diagonal errors at order $\epsilon^{q+2j+2\ell}$ for $j,\ell>0$. We also get a contribution $t^2 a\wedge a$ at order $\epsilon^{2q}$ which we can ignore at this step, which also contributes errors at orders $\epsilon^{2q+2j+2\ell}$ for $j,\ell>0$ after being conjugated by $\exp (\tilde f_{2j} \epsilon^{2j})$. From the expansion of wedge products of curvature terms similar to \eqref{eq:curvaturewedgeproduct} we also obtain new terms resulting from wedge products of $td_{A_0}a$ and $t^2 a\wedge a$ terms, all of which will occur at order $\epsilon^{2q}$ or higher.
	
	Let us now correct those error terms above which arise at orders below $\epsilon^{2q}$. From our working assumption that $E$ is \emph{strictly} slope semistable, we know that that discrepancy order of $E_1$, $q\ge 1$. By \cref{cor:linearisationperturbed} at $\tilde A_t^1$ the linearisation has leading order given (up to a constant factor) by the bundle Laplacian $\Lap_0$. The first error term occurs at order $\epsilon^q$ provided by $\lambda i\contr_{\omega} d_{A_0} a$. Since $d_{A_0} a$ is off-diagonal with respect to the block-matrix decomposition for $\Gr(E)=E_1\oplus E_2$, it is orthogonal to the kernel of $\Lap_0$ and its contraction is in the image of $\Lap_0$. In particular we can find an off-diagonal Hermitian endomorphism $f_q$ such that
	$$\Lap_0(f_q) + \lambda i\contr_{\omega} d_{A_0} a=0.$$
	Setting
	$$\tilde A_t^2 = \exp (\epsilon^q f_q) \cdot \tilde A_t^1$$
	we have $D_\epsilon(\tilde A_t^2) = O(\epsilon^{q+1}).$
	
	Now the correction at order $\epsilon^q$ has introduced new error terms at higher orders in $\epsilon$. However $f_q$ is off-diagonal, so the lowest order \emph{diagonal} errors introduced when perturbing to $\tilde A_t^2$ come from $\epsilon^q \Lap(f_q)$ and the conjugate of $\epsilon^q \lambda i\contr_{\omega} d_{A_0} a$ by $\exp(f_q \epsilon^q)$. These occur starting at order $\epsilon^{2q}$ so we can ignore them at this step (but we will consider them in \cref{prop:approximatesolutiondiscrepancy}). Thus again we only have an off-diagonal error at order $\epsilon^{q+2}$ which lies in the image of $\Lap_0$, and we can perturb from $\tilde A_t^2$ to $\tilde A_t^3$ to correct this error. Again higher order errors occur but are off-diagonal to at least order $\epsilon^{2q+2}$. Continuing inductively we produce an approximate solution $\tilde A_t = \tilde A_t^{q-1}$ satisfying
	$$D_\epsilon(\tilde A_t) = O(\epsilon^{2q}).$$
\end{proof}

\subsection{Step 3: Stability at the discrepancy order}

The next step is to correct the errors occuring at the critical order $\epsilon^{2q}$ coming from the discrepancy between the $Z$-slopes of $E_1$ and $E$. Whereas in the previous step \cref{prop:approximatesolutionbelow} we only had to deal with off-diagonal error terms arising from perturbing from $A_0$ to $A_t$, or from off-diagonal errors introduced after subsequent corrections, at order $\epsilon^{2q}$ we have new diagonal contributions, chiefly from $\lambda^2 \epsilon^{2q} a\wedge a$ in the expansion \eqref{eq:curvatureFt} of the curvature of $A_t$. 

Here we make connection with the algebraic condition of asymptotic $Z$-stability of $E$. This will allow us to cancel out the terms which are not in the image of $\Lap_0$ by a judicious choice of deformation rate $\lambda>0$ in $t=\lambda e^{q}$.

First let us introduce the projection operator 
$$\pi: \Gamma(\End_{H}(E,h)) \to \ker \Lap_0 = H^0(X,\End \Gr(E))$$ defined by
$$\pi(u) := \sum_{i=1}^2 \frac{1}{\rk E_i} \left( \int_X \trace_{E_i} (u) \omega^n\right) \id_{E_i}$$
where $\trace_{E_i}$ is defined by taking the block decomposition of $u$ with respect to the smooth splitting $E=E_1\oplus E_2$ and taking the trace of the $i$th diagonal block of $u$. This is, up to a constant factor $n!\vol(X)^{-1}$ the $L^2$-orthogonal projection onto $\ker \Lap_0$ with respect to the inner product
$$(u,v)_{L^2} = \int_X \trace (uv) \omega^n.$$

Further, define a distinguished endomorphism $\id_\pm$ of $E$ by
$$\id_\pm := \frac{1}{\rk E_1} \id_{E_1} - \frac{1}{\rk E_2} \id_{E_2}.$$
Then $\id_\pm$ is in the kernel of $\Lap_0$ and indeed is a holomorphic endomorphism of $\Gr(E)$. It is orthogonal to the subspace $\CC \cdot \langle \id_{E_1} + \id_{E_2} \rangle \subset \ker \Lap_0$, and in particular is orthogonal to $\CC \cdot \id_E$ and therefore is orthogonal to $\ker \Lap_t = H^0(X,\End E)$.

\begin{proposition}\label{prop:approximatesolutiondiscrepancy}
	Suppose $E$ is asymptotically $Z$-stable with graded object $\Gr(E) = E_1 \oplus E_2$ and $E_1$ has order of discrepancy $q$. Then there exists a $\lambda>0$ such that if $t=\lambda \epsilon^q$ then there exist Hermitian endomorphisms $f_1,\dots,f_{2q}$ such that if 
	$$g:= \exp\left( \sum_{j=1}^{2q} f_j \epsilon^j\right)$$
	then
	$$D_\epsilon(\tilde A_t) = O(\epsilon^{2q+1}).$$
\end{proposition}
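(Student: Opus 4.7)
The plan is to extend the construction from \cref{prop:approximatesolutionbelow} by one more order in $\epsilon$, where for the first time a genuinely diagonal error appears: it comes from the diagonal contribution $-\lambda^2\epsilon^{2q}\,a\wedge a$ to the curvature $F_t$ in \eqref{eq:curvatureFt}, together with diagonal pieces produced by conjugating the off-diagonal corrections $f_{q+2j}$ through the nonlinearity of $D_\epsilon$. Unlike the earlier off-diagonal errors, these contributions need not be orthogonal to $\ker\Lap_0=\CC\cdot\id_{E_1}\oplus\CC\cdot\id_{E_2}$, so the free parameter $\lambda$ must be tuned to remove the obstruction. This is exactly where the algebraic hypothesis of asymptotic $Z$-stability will enter the analytic argument.

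Concretely, I would take the approximate solution $\tilde A_t^{q-1}$ from \cref{prop:approximatesolutionbelow} and denote by $\sigma_{2q}(\lambda)$ the coefficient of $\epsilon^{2q}$ in the expansion of $D_\epsilon(\tilde A_t^{q-1})$, which depends polynomially on $\lambda$. The trace-integral identity \eqref{eq:Zcriticaltraceintegral} forces $\int_X\trace\sigma_{2q}(\lambda)=0$, so under the decomposition $\ker\Lap_0=\CC\cdot\id_E\oplus\CC\cdot\id_\pm$ one has
\[
\pi(\sigma_{2q}(\lambda))=\mu(\lambda)\,\id_\pm
\]
for a single real scalar $\mu(\lambda)$, which up to a positive universal constant equals $\int_X\trace_{E_1}\sigma_{2q}(\lambda)$. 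The central claim to verify is that $\mu(\lambda)=A+\lambda^2 B$ with $A<0$ and $B>0$.

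The constant $A$ arises from the stable-case ansatz on $\Gr(E)$: by construction, $\tilde A_0^{q-1}$ cancels the $Z_\epsilon$-critical operator for phase $\varphi_\epsilon(E_i)$ on each summand $E_i$ to all orders below $2q$. When $\trace_{E_1}$-integrated against the \emph{global} phase $\varphi_\epsilon(E)$, the residue at order $\epsilon^{2q}$ is therefore proportional to the $\epsilon^{2q}$-coefficient of $\Im(e^{-i\varphi_\epsilon(E)}Z_\epsilon(E_1))$, which is strictly negative by \cref{lem:orderofdiscrepancy}. The coefficient $B$ comes from the diagonal block $-\gamma\wedge\gamma^*$ of $a\wedge a$: by the same calculation as in step~1 of the proof of \cref{thm:existencestabilitysubbundles}, its contraction with $\omega^{n-1}$ equals a positive multiple of $|\gamma|^2\,\omega^n$, so $B$ is a strictly positive multiple of $\|\gamma\|^2>0$, noting that $\|\gamma\|>0$ because we have assumed $E$ is strictly slope-semistable. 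Further $\lambda^2$-contributions arising from conjugating the off-diagonal corrections $f_{q+2j}=\lambda\tilde f_{q+2j}$ by themselves yield $\gamma$-norm-squared terms that are likewise non-negative by the positivity of $i\gamma\wedge\gamma^*\wedge\omega^{n-1}$, and hence can only enlarge $B$.

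Setting $\lambda=\sqrt{-A/B}>0$ then forces $\mu(\lambda)=0$, so $\sigma_{2q}$ becomes orthogonal to $\ker\Lap_0$. The leading-order linearisation $P_\epsilon=C(\rk E)[\omega]^n\Lap_0+O(\epsilon)$ from \cref{cor:linearisationperturbed} is invertible on this orthogonal complement, so one can solve $C(\rk E)[\omega]^n\Lap_0(f_{2q})+\sigma_{2q}=0$ for a smooth block-diagonal Hermitian endomorphism $f_{2q}$. Incorporating $\exp(f_{2q}\epsilon^{2q})$ into the total gauge transformation $g$, together with the corrections already produced by \cref{prop:approximatesolutionbelow} and any further $f_j$ at intermediate orders where a (necessarily off-diagonal, hence image-of-$\Lap_0$) error happens to appear, gives $\tilde A_t$ with $D_\epsilon(\tilde A_t)=O(\epsilon^{2q+1})$. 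The main obstacle in this plan is the bookkeeping required to extract the sign-definite contributions $A$ and $B$ from $\sigma_{2q}(\lambda)$ when all the off-diagonal corrections $f_{q+2j}$ and all curvature-expansion terms are tracked through the nonlinearity of $D_\epsilon$, and to identify $A$ cleanly with the algebraic obstruction provided by \cref{lem:orderofdiscrepancy}.
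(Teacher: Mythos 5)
Your overall strategy is the one the paper uses: start from the output of \cref{prop:approximatesolutionbelow}, observe via \eqref{eq:Zcriticaltraceintegral} that the $\epsilon^{2q}$-error is orthogonal to $\id_E$ so its projection to $\ker\Lap_0$ is a scalar multiple of $\id_\pm$, identify the $\lambda$-independent part of that scalar with the $\epsilon^{2q}$-coefficient of $\Im(e^{-i\varphi_\epsilon(E)}Z_\epsilon(E_1))$ (negative by \cref{lem:orderofdiscrepancy}), identify the $\lambda^2$-part coming from $i\contr_\omega(a\wedge a)$ with a positive multiple of $\|\gamma\|^2$, tune $\lambda$ to kill the projection, and then invert $\Lap_0$ on the orthogonal complement. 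That matches the paper step for step, including the sign analysis and the choice $\lambda=\sqrt{-A/B}$.

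There is, however, one genuine gap. At order $\epsilon^{2q}$ there is a third family of diagonal contributions that you dismiss too quickly: the cross terms produced when the off-diagonal correction $f_q$ (introduced at order $\epsilon^q$ to cancel $\lambda i\contr_\omega d_{A_0}a$) is fed back through the $t$-linear part of the expansion of the connection, i.e.\ through the operator $L_1$ in \cref{lem:bundlelaplacianexpansion}. Since $f_q$ is itself proportional to $\lambda$, these terms scale like $\lambda^2$, exactly like the $a\wedge a$ term, so they directly affect your coefficient $B$. They are of the form $i\contr_\omega\bigl(\del_0([\gamma,f_q])-[\gamma^*,\delbar_0 f_q]+\delbar_0([\gamma^*,f_q])-[\gamma,\del_0 f_q]\bigr)$, which is \emph{not} a $\gamma\wedge\gamma^*$-type expression, so your appeal to the positivity of $i\,\trace(\gamma\wedge\gamma^*)\wedge\omega^{n-1}$ does not apply to them, and they are not sign-definite on the face of it. The paper handles this (its term $\sigma_2$) by an exact cancellation rather than a sign argument: writing $f_q=\lambda(\beta-\beta^*)$, using $\del_0\beta^*=(\delbar_0\beta)^*$ and $[\alpha,\beta]^*=-[\alpha^*,\beta^*]$, one shows the relevant expression is purely imaginary, so its fibrewise traces $\trace_{E_i}$ vanish and hence $\pi(\sigma_2)=0$. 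Without this computation (or some other control of the sign of these cross terms) your decomposition $\mu(\lambda)=A+\lambda^2B$ with $B>0$ is not established, and the tuning $\lambda=\sqrt{-A/B}$ is not justified; supplying exactly this cancellation is the missing ingredient, and the rest of your argument then goes through as in the paper.
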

\begin{proof}
	By \cref{prop:approximatesolutionbelow} there exist Hermitian endomorphisms $f_1,\dots,f_{2q-1}$ so that, in the notion above if we set $f_{2q}=0$, we have
	$$D_\epsilon(g\cdot A_t) = O(\epsilon^{2q}).$$
	We now need to analyse the error terms occuring at this critical order.
	The first component is an error term, say $\sigma_1$, which is a conjugate by $g$ of the term $\epsilon^{2q} \lambda^2 i \contr_{\omega} a \wedge a$ arising from \eqref{eq:curvatureFt}. Let us split this term up as
	$$\sigma_1 = \hat \sigma_1 + \pi(\sigma_1)$$
	where $\hat \sigma_1$ is orthogonal to $\ker \Lap_0$. 
	
	The second error term, say $\sigma_2$, is the total error term introduced in the product of the approximate solution of \cref{prop:approximatesolutionbelow}. As noted in the construction of the approximate solution at lower orders, the only contribution to order $\epsilon^{2q}$ that is not off-diagonal occurs in the construction of the connection $\tilde A_t^2$ when dealing with the error at order $\epsilon^q$. All other diagonal contributions occur at higher order in $\epsilon$. By considering the expansion in the proof of \cref{cor:linearisationperturbed}, this contribution is given by the $t$ coefficient in the expansion \cref{lem:bundlelaplacianexpansion} of the Laplacian $\Lap_t$ applied to the Hermitian endomorphism $f_q$ chosen to satisfy
	$$\Lap_0 (f_q) + \lambda i \contr_{\omega} d_{A_0} a = 0.$$
	Using the expansion of the Laplacian, we therefore have
	\begin{equation}\label{eq:sigma2pi}\pi(\sigma_2) = \lambda \pi (i \contr_{\omega}(\del_0([\gamma,f_q]) - [\gamma^*, \delbar_0(f_q)] + \delbar_0([\gamma^*, f_q]) - [\gamma,\del_0(f_q)])).\end{equation}
	Now first note that $f_q = \lambda (\beta - \beta^*)$ for some $\beta \in \Gamma(\Hom(E_2, E_1))$ because $a=\gamma - \gamma^*$ is of this form. Then we get \eqref{eq:sigma2pi} equals
	$$\lambda^2 \pi \left( i \contr_{\omega}\left( - \del_0([\gamma, \beta^* ]) -[\gamma^*, \delbar_0(\beta)] +\delbar_0([\gamma^*, \beta]) + [\gamma, \del_0(\beta^*)]\right) \right).$$
	Then using the fact that $\del_0 \beta^* = (\delbar_0 \beta)^*$ and $[\alpha,\beta]^* = -[\alpha^*, \beta^*]$ we have
	\begin{align*}
		- \del_0([\gamma, \beta^* ]) + [\gamma, \del_0(\beta^*)] &= \del_0 (  [\gamma^* , \beta]^*) + [\gamma , ( \delbar_0 (\beta) )^*] \\
		&= \left( \delbar_0 (  [\gamma^* , \beta]  -  [\gamma^* , ( \delbar_0 (\beta) )] \right)^*.
	\end{align*}
	Now, as $\delbar_0 (  [\gamma^* , \beta]  -  [\gamma^* , ( \delbar_0 (\beta) )]$ is purely imaginary,
	$$ \trace_{E_i} \left( \delbar_0 (  [\gamma^* , \beta]  -  [\gamma^* , ( \delbar_0 (\beta) )] \right)^* = - \trace_{E_i} \left( \delbar_0 (  [\gamma^* , \beta]  -  [\gamma^* , ( \delbar_0 (\beta) )] \right) . $$
	Thus 
	$$ \trace_{E_i} \left( - \del_0([\gamma, \beta^* ]) -[\gamma^*, \delbar_0(\beta)] +\delbar_0([\gamma^*, \beta]) + [\gamma, \del_0(\beta^*)]\right) =0,$$
	and so in fact
	$$ \pi (\sigma_2 ) =0.$$
	Thus we will be able to remove the error caused by $\sigma_2$ using $\Lap_0$.
	
	The third term error term at order $\epsilon^{2q}$, say $\sigma_3$, arises from the expansion of the $Z$-critical equation at order $\epsilon^{2q}$. Let us split this term up as
	$$\sigma_3 = \hat \sigma_3 + \pi(\sigma_3)$$
	where $\hat \sigma_3$ is orthogonal to the kernel of $\Lap_0$. Thus we have an expansion
	$$D_\epsilon(g\cdot A_t) = \epsilon^{2q}(\sigma_1 + \sigma_2 + \sigma_3) + O(\epsilon^{2q+1})$$
	where the only factors not orthogonal to $\ker \Lap_0$ are
	$$\pi(\sigma_1) + \pi(\sigma_3)$$
	where $\sigma_1$, a conjugate of $\lambda^2 i\contr_{\omega} a\wedge a$ depends on our deformation rate $\lambda > 0$. We will now show that assuming asymptotic $Z$-stability, we can choose $\lambda$ so that this sum vanishes.
	
	Now the projection $\pi(\sigma_1)$ is a positive multiple of
	$$\sum_{i=1}^2 \frac{1}{\rk E_i} \int_X \trace_{E_i} (\lambda^2 i\contr_{\omega} a\wedge a) \omega^n \cdot \id_{E_i}$$
	where we have used the trace to get rid of the conjugation by $g$. Then using the fact that $\trace \gamma \wedge \gamma^* = - \trace \gamma^* \wedge \gamma$ we see $\trace_{E_1}(a\wedge a) = - \trace_{E_2}(a\wedge a)$ so the above projection equals
	$$C\lambda^2 \left( \frac{1}{\rk E_1} \id_{E_1} - \frac{1}{\rk E_2} \id_{E_2} \right)$$
	for some positive constant $C$ depending on $\gamma$. On the other hand $\pi(\sigma_3)$ is a positive multiple of the sum
	$$\sum_{i=1}^2 \left[\Im(e^{-i\varphi_\epsilon(E)} Z_\epsilon(E_i) )\right]^{2q} \cdot \frac{\id_{E_i}}{\rk E_i}$$
	where $[-]^{2q}$ denotes the order $\epsilon^{2q}$-coefficient.
	Thus we need to find $\lambda>0$ solving the equation
	\begin{equation}\label{eqn:keyequation}
		\begin{cases}
			&\left[\Im(e^{-i\varphi_{\varepsilon}(E)} Z_{\varepsilon} (E_1)\right]^{2q} + C \lambda^2 = 0\\
			&\left[\Im(e^{-i\varphi_{\varepsilon}(E)} Z_{\varepsilon} (E_2)\right]^{2q} - C \lambda^2 = 0.
		\end{cases}
	\end{equation}
	This implies
	$$\lambda^2 = \frac{1}{C} \left[\Im(e^{-i\varphi_\epsilon(E)} Z_\epsilon(E_2)\right]^{2q}=-\frac{1}{C} \left[\Im(e^{-i\varphi_\epsilon(E)} Z_\epsilon(E_1)\right]^{2q}$$
	so we require the two conditions
	$$\left[\Im(e^{-i\varphi_\epsilon(E)} Z_{\varepsilon} (E_2)\right]^{2q} = - \left[\Im(e^{-i\varphi_\epsilon(E)} Z_{\varepsilon} (E_1)\right]^{2q}$$
	and
	$$ \left[\Im(e^{-i\varphi_\epsilon(E)} Z_{\varepsilon} (E_2)\right]^{2q} > 0.$$
	The first condition follows from the additivity of the central charge $Z$ for the short exact sequence
	\begin{center}
		\ses{E_1}{E}{E_2}
	\end{center}
	and the fact that $\Im(e^{-i\varphi_\epsilon(E)} Z_\epsilon(E)) = 0$ for all $\epsilon$. The second condition follows from \cref{lem:orderofdiscrepancy} and the see-saw property \cref{lem:seesawazs}, using the fact that $q$ is the order of discrepancy of the subbundle $E_1\subset E$ (or equivalently of the quotient $E\onto E_2)$. 
	
	Thus we can always choose some $\lambda>0$ solving \eqref{eqn:keyequation}. The remaining terms $\hat \sigma_1 + \sigma_2 + \hat \sigma_3$ are all orthogonal to the kernel of $\Lap_0$ and can be removed using a Hermitian endomorphism $f_{2q}$ as before.
\end{proof}

\subsection{Step 4: Approximate solutions to arbitrary order}

Having fixed the deformation rate $\lambda>0$ and constructed approximate solutions at the critical discrepancy order of $E_1$, we move on to the construction of approximate solutions to arbitrary order. 

\begin{lemma}
	\label{lem:tlaplacianprojection}
	$$\pi(\Lap_t(\id_{\pm})) = t^2 C \id_{\pm}$$
	for some non-zero constant $C$ independent of $t$.
\end{lemma}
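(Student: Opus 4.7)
The proof proceeds by expanding $\Lap_t$ order by order in $t$ via the formula of \cref{lem:bundlelaplacianexpansion} and evaluating each term on $\id_\pm$, then applying the projection $\pi$. Throughout, I use the key fact that $\id_\pm$ is a holomorphic parallel endomorphism of $\Gr(E)$, so $\Lap_0(\id_\pm) = 0$ and $\del_0 \id_\pm = \delbar_0 \id_\pm = 0$. Writing $\kappa := \frac{1}{\rk E_1} + \frac{1}{\rk E_2}$, a direct calculation using that $\gamma \in \Omega^{0,1}(X,\Hom(E_2,E_1))$, $\gamma^* \in \Omega^{1,0}(X,\Hom(E_1,E_2))$, and the block decomposition of $\id_\pm$ yields
$$[\gamma, \id_\pm] = -\kappa\, \gamma, \qquad [\gamma^*, \id_\pm] = \kappa\, \gamma^*.$$

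For the order $t$ contribution, substituting into the formula for $L_1$ and using that $\del_0 \id_\pm = \delbar_0 \id_\pm = 0$ together with the integrability identities $\delbar_0 \gamma = 0$ and $\del_0 \gamma^* = 0$ gives
$$L_1(\id_\pm) = i\kappa \contr_\omega(-\del_0 \gamma + \delbar_0 \gamma^*).$$
Both $\del_0 \gamma$ and $\delbar_0 \gamma^*$ are off-diagonal with respect to the smooth splitting $E \isom E_1 \oplus E_2$, taking values in $\Hom(E_2,E_1)$ and $\Hom(E_1,E_2)$ respectively. Since $\pi$ only retains the block-diagonal trace contributions, we conclude $\pi(L_1(\id_\pm)) = 0$.

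For the order $t^2$ contribution, iterating the bracket computations above gives
$$L_2(\id_\pm) = 2i\kappa\, \contr_\omega\bigl(\gamma \wedge \gamma^* + \gamma^* \wedge \gamma\bigr),$$
which is block-diagonal, with $\gamma\wedge\gamma^*$ valued in $\End E_1$ and $\gamma^* \wedge \gamma$ valued in $\End E_2$. The graded cyclicity of trace for endomorphism-valued one-forms gives $\trace_{E_1}(\gamma \wedge \gamma^*) = -\trace_{E_2}(\gamma^* \wedge \gamma)$ as $(1,1)$-forms on $X$. Setting
$$D := \int_X \trace_{E_1}\bigl(L_2(\id_\pm)\bigr)\, \omega^n = 2\pi n \kappa \int_X (-i)\trace_{E_1}(\gamma \wedge \gamma^*)\wedge \omega^{n-1} \cdot (\text{constant}),$$
positivity of the Hermitian pairing together with the strict semistability hypothesis $\gamma \ne 0$ gives $D \ne 0$. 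Combining with the $E_2$-trace identity, we obtain
$$\pi(L_2(\id_\pm)) = \frac{D}{\rk E_1}\id_{E_1} - \frac{D}{\rk E_2}\id_{E_2} = D\, \id_\pm.$$

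Summing the three contributions, $\pi(\Lap_t(\id_\pm)) = t^2 D\, \id_\pm$, proving the lemma with $C = D \ne 0$. The only potentially subtle step is tracking the sign conventions in the graded cyclicity of trace and the sign of the Hermitian pairing on the second fundamental form; both are standard but must be handled consistently with the conventions fixed earlier in the chapter.
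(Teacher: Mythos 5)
Your proof is correct and follows essentially the same route as the paper's: expand $\Lap_t$ via \cref{lem:bundlelaplacianexpansion}, use that $\id_\pm$ is parallel for the product connection, show the order-$t$ contribution does not survive, and identify the order-$t^2$ projection with a non-zero multiple of $\id_\pm$ using $\trace_{E_1}(\gamma\wedge\gamma^*)=-\trace_{E_2}(\gamma^*\wedge\gamma)$ and the positivity of $i\trace_{E_1}(\gamma\wedge\gamma^*)\wedge\omega^{n-1}$ together with $\gamma\neq 0$. The only (harmless) divergence is at the order-$t$ term: the paper kills it identically using the Coulomb gauge \eqref{eq:coulombgauge}, which gives $\contr_\omega(\del_0\gamma)=\contr_\omega(\delbar_0\gamma^*)=0$, whereas you only kill its image under $\pi$ by off-diagonality --- which is all the lemma requires.
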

\begin{proof}
	Using that $\id_{\pm}\in \ker \Lap_0$, let us compute
	\begin{align*}
		\Lap_t (\id_{\pm}) &=  ti\contr_{\omega}\left( \del_0([\gamma,\id_{\pm}]) -[\gamma^*, \delbar_0(\id_{\pm})] +\delbar_0([\gamma^*, \id_{\pm}]) - [\gamma, \del_0(\id_{\pm})]\right)\\
		&\, + t^2 i\contr_{\omega} \left( [\gamma, [\gamma^*, \id_{\pm}]] - [\gamma^*, [\gamma, \id_{\pm}]] \right).
	\end{align*}
	Note that we have $\del_0 \id_{\pm} = \delbar_0 \id_{\pm} = 0$ as $\nabla_0 = \del_0 + \delbar_0$ is a product connection and $\id_{\pm}$ consists of two constant components on the factors of $E=E_1 \oplus E_2$.  This reduces the $t$ term to
	$$ti\contr_{\omega} \left( [\del_0 \gamma, \id_{\pm}] + [\delbar_0 \gamma^*, \id_{\pm}]\right).$$
	Recall that the gauge $\delbar_0^* \gamma = 0$ from \eqref{eq:coulombgauge} implies $\contr_{\omega}(\del_0 \gamma) = \contr_{\omega}(\delbar_0 \gamma^*) = 0$, so this term in fact vanishes. The $t^2$ term can be simplified by computing
	$$[\gamma, [\gamma^*, \id_{\pm}]] - [\gamma^*, [\gamma, \id_{\pm}]] = \frac{2}{\rk E} (\gamma \wedge \gamma^* + \gamma^* \wedge \gamma).$$
	Computing the orthogonal projection we now have
	$$\pi(\Lap_t(\id_{\pm})) = \frac{2it^2}{\rk E} \sum_{i=1}^2 \frac{1}{\rk E_i} \left( \int_X \trace_{E_i} (\contr_{\omega} (\gamma \wedge \gamma^* + \gamma^* \wedge \gamma)) \omega^n \right) \id_{E_i}.$$
	We now use the fact that $\trace (\gamma\wedge \gamma^* + \gamma^* \wedge \gamma) = \trace_{E_1} (\gamma \wedge \gamma^*) + \trace_{E_2} (\gamma^* \wedge \gamma) = 0$, to conclude
	$$\pi(\Lap_t (\id_{\pm})) = t^2 \frac{2 i \int_X \trace_{E_1} (\contr_{\omega} (\gamma \wedge \gamma^*)) \omega^n}{\rk E} \id_{\pm}.$$
\end{proof}

\begin{proposition}\label{prop:linearisationidpm}
	Let $P_\epsilon$ denote the linearisation of the $Z$-critical operator $D_\epsilon$ at $A_t$ where $t=\lambda \epsilon^q$ for $\lambda>0$ and $q\ge 2$. Then $P_\epsilon(\id_\pm)$ is order $O(\epsilon^q)$ and furthermore
	$$\pi(P_\epsilon(\id_\pm)) = C\lambda^2 \epsilon^{2q} \id_\pm + O(\epsilon^{2q+1})$$
	for some non-zero constant $C$. This expansion also holds at a complex structure $g\cdot \del_t$ provided $g=\exp(s)$ for some $s$ which is $O(\epsilon)$ of the form used to construct an approximate solution.
\end{proposition}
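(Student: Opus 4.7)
The plan is to apply the formula for the linearisation from \cref{prop:linearisationDZ} at the connection $A_t$ and analyse the action on $\id_\pm$, exploiting the fact that $\id_\pm$ is covariantly constant with respect to $\nabla_0$. The starting point is an explicit computation of $(\delbar_{A_t}\del_{A_t} - \del_{A_t}\delbar_{A_t})\id_\pm$. Writing $\nabla_t = \nabla_0 + t[a,-]$ with $a = \gamma - \gamma^*$ and using $\nabla_0 \id_\pm = 0$, the same manipulation underlying \cref{lem:bundlelaplacianexpansion} gives
\begin{equation*}
(\delbar_{A_t}\del_{A_t} - \del_{A_t}\delbar_{A_t})\id_\pm = -t\bigl([\del_0 \gamma, \id_\pm] + [\delbar_0 \gamma^*, \id_\pm]\bigr) + t^2\bigl([\gamma^*, [\gamma, \id_\pm]] - [\gamma, [\gamma^*, \id_\pm]]\bigr).
\end{equation*}
This is an endomorphism-valued $(1,1)$-form of order $O(t) = O(\epsilon^q)$, whose $O(t)$ part is off-diagonal with respect to the smooth splitting $E \cong E_1 \oplus E_2$ and whose $O(t^2)$ part is block-diagonal. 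Multiplying by the $O(1)$ expression $\Im(e^{-i\varphi_\epsilon(E)}\tilde Z'_\epsilon(A_t))$ and dividing by $\omega^n$ already establishes the pointwise bound $P_\epsilon(\id_\pm) = O(\epsilon^q)$.

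Next I would identify the leading-order-in-$\epsilon$ contribution to the $\pi$-projection. By \cref{lem:largevolume}, the leading term of $\Im(e^{-i\varphi_\epsilon(E)} \tilde Z'_\epsilon(A_t))$ is a positive constant multiple of $\omega^{n-1}$, so after wedging with the expression above and dividing by $\omega^n$ one obtains precisely a positive multiple of $\Lap_t(\id_\pm)$. Applying \cref{lem:tlaplacianprojection}, whose proof uses the Coulomb gauge \eqref{eq:coulombgauge} to kill the $O(t)$ contribution and identifies $\pi(\Lap_t \id_\pm) = t^2 C'\id_\pm$ with $C' \ne 0$, produces the stated leading term $C\lambda^2 \epsilon^{2q} \id_\pm$ at $t = \lambda\epsilon^q$.

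The remaining task, which is the main technical point of the proof, is to show that every subleading contribution to $\pi(P_\epsilon(\id_\pm))$ is $O(\epsilon^{2q+1})$. The plan is to split into cases according to which component of $(\delbardel - \deldelbar)\id_\pm$ the subleading factor of $\tilde Z'_\epsilon$ pairs with. The $O(t^2)$ block-diagonal piece paired with any $O(\epsilon)$ or higher correction from $\tilde Z'_\epsilon$ contributes at order $\epsilon \cdot t^2 = \epsilon^{2q+1}$ or higher. For the $O(t)$ off-diagonal piece, pairing with any block-diagonal factor from $\tilde Z'_\epsilon$ (such as those built from $F_0$, $\omega$ and the $\tilde U_*$) produces an off-diagonal endomorphism whose $\trace_{E_i}$ vanishes and whose $\pi$-projection is therefore zero. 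The only remaining case pairs the off-diagonal $O(t)$ piece with an off-diagonal factor appearing in $\tilde Z'_\epsilon$, which by the expansion $F_{A_t} = F_0 + t\, d_{A_0} a + t^2 a\wedge a$ must carry an additional factor of $t$; such terms contribute at order $\epsilon \cdot t \cdot t = \epsilon^{2q+1}$, as required.

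The extension of the expansion to a perturbed complex structure $g\cdot \delbar_t$ with $g = \exp(s)$ and $s=O(\epsilon)$ follows from $g = 1 + O(\epsilon)$: the linearisation at the perturbed connection differs from that at $A_t$ by operator-valued corrections of order $O(\epsilon)$, which shift each contribution above by an additional factor of $\epsilon$ and therefore preserve both the $O(\epsilon^q)$ pointwise bound and the $O(\epsilon^{2q+1})$ remainder after projection. I expect the hard part to be organising the bookkeeping in the third paragraph so that every term in the $\epsilon$-expansion of $P_\epsilon$ applied to the $O(t)$ piece of $(\delbardel - \deldelbar)\id_\pm$ is correctly classified as either off-diagonal (and hence annihilated by $\pi$) or of order at least $\epsilon^{2q+1}$, in the same spirit as the case analysis used in \cref{thm:existencestabilitysubbundles}.
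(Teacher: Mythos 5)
Your analysis at the unperturbed connection $A_t$ follows essentially the same route as the paper: you isolate the $O(t)$ off-diagonal and $O(t^2)$ block-diagonal pieces of $(\delbar_{A_t}\del_{A_t}-\del_{A_t}\delbar_{A_t})\id_\pm$ using $\nabla_0\id_\pm=0$ and \cref{lem:bundlelaplacianexpansion}, obtain the leading term $C\lambda^2\epsilon^{2q}\id_\pm$ from the $\omega^{n-1}$ part of $\tilde Z_\epsilon'$ together with \cref{lem:tlaplacianprojection}, and dispose of the intermediate orders by the same diagonal/off-diagonal bookkeeping the paper uses (an off-diagonal factor of $F_{A_t}$ costs an extra power of $t$, so any block-diagonal output appears only at order $\ge 2q+1$). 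That part is sound.

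The gap is in your final step, the extension to $g\cdot A_t$. You argue only from $s=O(\epsilon)$, asserting that the $O(\epsilon)$ operator-valued corrections to the linearisation ``shift each contribution above by an additional factor of $\epsilon$.'' They do not: the corrections are new terms evaluated on the fixed endomorphism $\id_\pm$, for instance contributions of the shape $i\contr_\omega\,\del_{A_t}\bigl([\delbar_{A_t}s,\id_\pm]\bigr)$ coming from conjugating the second-order operator by $g$, and these are not controlled by the smallness of $(\delbar_{A_t}\del_{A_t}-\del_{A_t}\delbar_{A_t})\id_\pm$. If $s$ were a general $O(\epsilon)$ Hermitian endomorphism with a non-trivial off-diagonal component at order $\epsilon$, then $[s,\id_\pm]=O(\epsilon)$, the pointwise bound $P_\epsilon(\id_\pm)=O(\epsilon^q)$ fails, and moreover $F_{g\cdot A_t}$ acquires off-diagonal pieces at order $\epsilon$ whose pairing with the $O(t)$ off-diagonal part of $(\delbar\del-\del\delbar)\id_\pm$ produces block-diagonal contributions to $\pi(P_\epsilon(\id_\pm))$ at order $\epsilon^{q+1}$, which for $q\ge 2$ lies strictly below $\epsilon^{2q}$; so your argument as written would prove a false statement. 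What rescues the claim — and what the paper's proof explicitly uses — is the hypothesis that $s$ is of the form arising in the construction of approximate solutions, i.e.\ block-diagonal with respect to $E_1\oplus E_2$ up to and including order $\epsilon^{q-1}$, with off-diagonal components only from order $\epsilon^q$ onwards. Since $\id_\pm$ is a scalar on each block, this gives $[s,\id_\pm]=O(\epsilon^q)$, keeps $F_{g\cdot A_t}$ block-diagonal up to and including order $\epsilon^{q-1}$, and leaves the order-$\epsilon^q$ off-diagonal contribution from $g\circ\del_{A_t}\circ g^{-2}\circ\delbar_{A_t}\circ g$ identical to that at $A_t$; with that structural input your case analysis then goes through verbatim at $g\cdot A_t$. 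You should state and use this explicitly rather than appealing to $g=1+O(\epsilon)$ alone.
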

\begin{proof}
	By \cref{cor:linearisationperturbed} we know 
	$$P_\epsilon=C(\rk E) [\omega]^n \Lap_t + O(\epsilon).$$
	
	Recall from \cref{sec:subsolutions} that the linearisation of the $Z_{\epsilon}$-critical equation at a Chern connection $A$ consists of a sum of terms of the form
	\begin{equation}\label{eq:linearisationterm}f\mapsto C \epsilon^{2\kappa} \omega^i \wedge F_A^{j-1} \wedge (\del_A \delbar_A - \delbar_A \del_A) s \wedge \tilde U_\ell\end{equation}
	where $C$ is constant, $\kappa\ge 0$, and $i+j+\ell = n$ (and graded symmetrisations of such terms). We wish to consider the case where $A=A_t$ also depends on $\epsilon$. By \cref{lem:tlaplacianprojection} after applying the linearisation to $\id_\pm$, we see that the contribution of $C\lambda^2 \epsilon^{2q} \id_{\pm}$ from the Laplacian $\Lap_t$ comes from the term above with $j=1$ and $\kappa=0$, so we can assume $j>1,\kappa>0$. The computation of \cref{lem:bundlelaplacianexpansion} shows that the difference
	$$(\del_{A_t} \delbar_{A_t} - \delbar_{A_t} \del_{A_t}) - (\del_{A_0} \delbar_{A_0} - \delbar_{A_0} \del_{A_0})$$
	equals
	\begin{multline*} 
		s \mapsto \lambda \varepsilon^q \left( \del_0([\gamma,s]) -[\gamma^*, \delbar_0(s)] +\delbar_0([\gamma^*, s]) - [\gamma, \del_0(s)]\right) \\
		+  \lambda^2 \varepsilon^{2q} \left( [\gamma, [\gamma^*, s]] - [\gamma^*, [\gamma, s]] \right).
	\end{multline*}
	The corresponding term of the form \eqref{eq:linearisationterm} in the linearisation is multiplied by $\epsilon^{2\kappa}$ for some $\kappa>0$, and we only wish to understand terms of order $\epsilon^{2q}$. Thus we only need to consider the contribution from the $\epsilon^q$ term
	\begin{equation}\label{eq:linearisationtterm}s \mapsto \lambda \varepsilon^q \left( \del_0([\gamma,s]) -[\gamma^*, \delbar_0(s)] +\delbar_0([\gamma^*, s]) - [\gamma, \del_0(s)]\right).\end{equation}
	When $s=\id_\pm$, this consists of constant multiples of $\del_0 \gamma$ and $\delbar_0 \gamma^*$, which in particular are off-diagonal. Moreover since the connection $A_t$ is a product of the weak Hermite--Einstein connections $E_1,E_2$ up to order $\epsilon^{q-1}$, the curvature $F_{A_t}$ is a diagonal up to order $\epsilon^{q-1}$. Using furthermore that
	$$\Lap_0 \id_\pm = 0$$
	we see that the term in \eqref{eq:linearisationterm} is a product of:
	\begin{itemize}
		\item a coefficient $C\epsilon^{2\kappa}$ for $\kappa>0$,
		\item an off-diagonal term of order $\epsilon^q$ coming from \eqref{eq:linearisationtterm} with $s=\id_\pm$, and
		\item a product of curvature terms $F_{A_t}^{j-1}$ which are diagonal up to and including order $\epsilon^{q-1}$. 
	\end{itemize}
	Thus the term in \cref{eq:linearisationterm} is off-diagonal to at least order $2\kappa+ q + q - 1 = 2q-1+2\kappa > 2q$. Thus every such term is in the kernel of $\pi$ up to and including to order $\epsilon^{2q}$, showing
	$$\pi(P_\epsilon(\id_\pm)) = C\lambda^2 \epsilon^{2q} \id_{\pm} + O(\epsilon^{2q+1}).$$
	
	Now suppose we make a perturbation $g\cdot A_t$ with $g$ an automorphism arising in the construction of approximate solutions. Since the perturbation $g$ is by diagonal automorphism up to order $\epsilon^{q-1}$, we still have that $F_{g\cdot A_t}^{j-1}$ is diagonal up to and including order $\epsilon^{q-1}$, so we just need to analyse 
	$$\del_{g\cdot A_t}\delbar_{g\cdot A_t} - \delbar_{g\cdot A_t} \del_{g\cdot A_t} - (\del_{A_0} \delbar_{A_0} - \delbar_{A_0} \del_{A_0}).$$
	For example we have
	$$\del_{g\cdot A_t} \delbar_{g\cdot A_t} = g \circ \del_{A_t} \circ g^{-2} \circ \delbar_{A_t} \circ g.$$
	Thus we see that provided $g=\exp(s)$ has $s=O(\epsilon)$ then for the perturbed connection $g\cdot A_t$ we have the exact same contribution \eqref{eq:linearisationtterm} at order $\epsilon^q$. Thus all the above assumptions are satisfied for the terms \eqref{eq:linearisationterm} when $A=g\cdot A_t$, so we obtain the same expansion for $\pi(P_\epsilon(\id_\pm))$ for the linearisation at $g\cdot A_t$ also.
\end{proof}

\begin{proposition}\label{prop:approximatesolutionarbitrary}
	Suppose $E$ is asymptotically $Z$-stable with graded object $\Gr(E) = E_1 \oplus E_2$ and $E_1$ has order of discrepancy $q$. Let $r\in \ZZ_{>0}$. Then there exists a $\lambda>0$ such that if $t=\lambda \epsilon^q$ then there exist Hermitian endomorphisms $f_1,\dots,f_{r}$ such that if 
	$$g:= \exp\left( \sum_{j=1}^{r} f_j \epsilon^j\right)$$
	then
	$$D_\epsilon(\tilde A_t) = O(\epsilon^{2r+1}).$$
\end{proposition}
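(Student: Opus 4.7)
The result is proved by induction on $r$, the base case $r = 2q$ being supplied by \cref{prop:approximatesolutiondiscrepancy} (with the value of $\lambda > 0$ fixed there continuing to work at every higher order). For the inductive step, suppose we have $f_1, \ldots, f_r$ so that $g_r := \exp(\sum_{j=1}^r f_j \epsilon^j)$ gives $D_\epsilon(g_r \cdot A_t) = O(\epsilon^{r+1})$, and write $\sigma \in \Gamma(\End_H(E,h))$ for the coefficient of $\epsilon^{r+1}$ in this error. Applying the trace-integral identity \eqref{eq:Zcriticaltraceintegral} order by order forces $\int_X \trace(\sigma)\, \omega^n = 0$, and since $\ker \Lap_0 = \CC \id_{E_1} \oplus \CC \id_{E_2}$, this identity reduces the projection to $\pi(\sigma) = \alpha\, \id_\pm$ for a real scalar $\alpha$, because the $\id_E$-direction is automatically killed by the trace integral.

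The orthogonal component $\sigma - \alpha\, \id_\pm$ is handled in the standard way: by \cref{cor:linearisationperturbed}, the linearisation $P_\epsilon$ at $g_r \cdot A_t$ has leading term a non-zero constant multiple of $\Lap_0$, which is invertible on the orthogonal complement of its kernel, so a unique $\tilde f_{r+1}$ orthogonal to $\ker \Lap_0$ cancels this part at order $\epsilon^{r+1}$. The $\id_\pm$-component $\alpha\, \id_\pm$ lies in $\ker \Lap_0$ and cannot be removed by the Laplacian alone. Here we invoke our second tool \cref{prop:linearisationidpm}, which ensures that adding a scalar correction $c\, \id_\pm \epsilon^{r+1-2q}$ to the $(r+1-2q)$-th term of the gauge exponent contributes precisely $c \cdot C\lambda^2\, \id_\pm \epsilon^{r+1}$ to the $\id_\pm$-projection of the error at order $\epsilon^{r+1}$. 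Since $C\lambda^2 \ne 0$, the choice $c = -\alpha / (C\lambda^2)$ cancels $\alpha$.

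The main obstacle is that this same correction $c\, \id_\pm \epsilon^{r+1-2q}$ also generates additional error terms at the \emph{intermediate} orders $\epsilon^{r+1-q}, \epsilon^{r+2-q}, \ldots, \epsilon^r$, because $P_\epsilon(\id_\pm)$ is of leading order $\epsilon^q$ rather than $\epsilon^{2q}$. These intermediate contributions would a priori violate the inductive hypothesis at those orders. Critically, however, the $\epsilon^j$-coefficients of $P_\epsilon(\id_\pm)$ for $q \le j \le 2q - 1$ are off-diagonal -- as established in the proof of \cref{prop:linearisationidpm} -- and hence every one of these spurious errors is orthogonal to $\ker \Lap_0$. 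We may therefore absorb them by making small additional modifications to the previously chosen $f_j$ for $j = r+1-q, \ldots, r$ by endomorphisms orthogonal to $\ker \Lap_0$, each determined by inverting $\Lap_0$ on its orthogonal complement. These modifications themselves create further errors, but only at strictly higher orders, so processing $j$ in increasing order yields a finite triangular system of $\Lap_0$-equations, each one solvable and in turn creating orthogonal errors only above its own order. The trace-integral identity guarantees that the $\id_\pm$-projections at the intermediate orders are preserved to remain zero throughout. Assembling $\tilde f_{r+1}$, the scalar $c$, and the intermediate orthogonal adjustments defines the new $f_1, \ldots, f_{r+1}$ and completes the induction.
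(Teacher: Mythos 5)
Your overall architecture is the paper's: induct from the base case $r=2q$ supplied by \cref{prop:approximatesolutiondiscrepancy}, with the $\lambda>0$ fixed there; split the new error at order $\epsilon^{r+1}$ into a piece orthogonal to $\ker \Lap_0$ (removed by inverting $\Lap_0$) plus a multiple of $\id_\pm$ (removed by a correction $c\,\id_\pm\,\epsilon^{r+1-2q}$, using $\pi(P_\epsilon(\id_\pm)) = C\lambda^2\epsilon^{2q}\id_\pm + O(\epsilon^{2q+1})$ from \cref{prop:linearisationidpm}); and then sweep up the intermediate-order debris by a finite triangular system of $\Lap_0$-equations. Your reduction of $\pi(\sigma)$ to $\alpha\,\id_\pm$ via the trace-integral identity, and your appeal to the off-diagonality of the $\epsilon^j$-coefficients of $P_\epsilon(\id_\pm)$ for $q\le j\le 2q-1$ to see that the debris created directly by the $\id_\pm$-correction is orthogonal to $\ker\Lap_0$, both agree with the paper.

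The gap is in your final justification. The identity \eqref{eq:Zcriticaltraceintegral} only says that each coefficient of the error has vanishing trace integral, i.e.\ is orthogonal to $\id_E$; since $\id_\pm$ is trace-free, this identity gives no information at all about the $\id_\pm$-projection, so it cannot ``guarantee that the $\id_\pm$-projections at the intermediate orders are preserved to remain zero.'' The danger it fails to address is precisely that the secondary orthogonal corrections $\tilde f$ (obtained by inverting $\Lap_0$ against the intermediate debris) could, through the higher-order terms of $P_\epsilon$ and the nonlinearity, feed a nonzero $\id_\pm$-component back in at some order $\le r+1$, destroying both the inductive hypothesis and the cancellation you arranged with the choice of $c$. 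The paper closes this with a structural argument, not the trace identity: the errors produced by these orthogonal corrections remain orthogonal to $\id_\pm$ to order beyond $r+1$, because the leading-order effect $P_\epsilon(\tilde f)$ stays orthogonal to $\id_\pm$ for a further $q$ orders (the approximate connection being a product structure up to order $\epsilon^{q-1}$, with $\id_\pm$ diagonal), while the parts of $P_\epsilon$ that do see $\id_\pm$ only act at total order exceeding $r+1$. You need this argument (or an equivalent) to make the triangular scheme close. A minor related point: the paper conservatively allows the debris to begin at order $\epsilon^{(r+1)-2q+2}$ rather than only at $\epsilon^{(r+1)-q}$, which does not change the scheme but does change where your cascade of $\Lap_0$-corrections must start.
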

\begin{proof}
	The proof is by induction. By \cref{prop:approximatesolutiondiscrepancy} we can assume $r>2q$ and this fixes our choice of $\lambda>0$. Write $A_{r-1} = g_{r-1} \cdot A_t$ for the approximate solution with $D_\epsilon(A_{r-1}) = O(\epsilon^{r})$. Then since each term in the expansion of $D_\epsilon(A_{r-1})$ is orthogonal to $\id_E$, we can write this as 
	$$D_\epsilon(A_{r-1}) = (c_r \id_\pm + \sigma_r) \epsilon^r + O(\epsilon^{r+1})$$
	where $c_r$ is a constant and $\sigma_r$ is orthogonal to $\Lap_0$.
	
	Since $r>2q$, the leading order in $\epsilon$ change in the expansion of
	$$D_\epsilon(\exp(\tau_r \epsilon^{r-2q} \id_\pm) \cdot A_{r-1})$$
	is given by the linearisation at $A_{r-1}$ applied to $s=\tau_r \epsilon^{r-2q} \id_\pm$. In particular by \cref{prop:linearisationidpm} we have
	$$\pi(D_\epsilon(\exp(s) \cdot A_{r-1})) = (c_r + \tau_r C) \epsilon^{2q} \id_\pm + O(\epsilon^{r+1})$$
	for some non-zero constant $C$ depending on $\lambda$. Thus one can choose $\tau_r$ such 
	$$c_r + \tau_r C = 0.$$
	Set $\tilde A_r := \exp(\tau_r \epsilon^{r-2q} \id_\pm) \cdot A_{r-1}$ for this choice of $\tau_r$. Then we have that $D_\epsilon(\tilde A_r)$ is orthogonal to $\ker \Lap_0$ up to order $\epsilon^{r+1}$. However we have only controlled for the projection onto $\ker \Lap_0$, and we may have introduced errors at lower orders in $\epsilon$ which are orthogonal to $\ker \Lap_0$. That is, we have some expansion
	$$D_\epsilon(\tilde A_r) = \sum_{j=0}^{2q-2} \tilde \sigma_j \epsilon^{r-2q+2+j} + O(\epsilon^{r+1})$$
	for some terms $\tilde \sigma_j$ orthogonal to $\ker \Lap_0$ with $\tilde \sigma_j$ diagonal for $j\le q-1$ because $\id_\pm$ is and the approximate solution $A_{r-1}$ is a product structure up to order $\epsilon^{q-1}$. Now we can repeat the standard procedure of constructing approximate solutions to successively cancel these errors which begin at order $\epsilon^{r-2q+2}$ using the linearisation. In particular there exists some $\tilde f_0$ such that 
	$$\Lap_0 (\tilde f_0) + \tilde \sigma_0 = 0$$
	so taking $\tilde A_r^1 = \exp(\tilde f_0 \epsilon^{r-2q+2}) \cdot \tilde A_r$ we remove the $\tilde \sigma_0$ error. This now introduces new errors at higher orders $\epsilon^{r-2q+3}$ and higher, but these errors stay orthogonal to $\id_\pm$ up to at least order $\epsilon^{r+2}$, as the leading order correction $P_\epsilon (\tilde\sigma_j)$ is orthogonal to $\id_\pm$ up to order $q$, since the connection $A_{r-1}$ is a product up to order $q$. Moreover the higher order corrections act at order at least $r-2q+2+j+2q > r+2$ and so do not effect the correction at lower orders (for a similar correction, see \cite[Prop. 5.16]{sektnan2020hermitian}). Thus we obtain a new expansion of the above form with a sum starting at $j=1$, where the $\tilde \sigma_j$ are still orthogonal to $\ker \Lap_0$. Successively removing all these errors in the same way we produce an approximate solution $\tilde A_r^{2q-2} = A_r$ with 
	$$D_\epsilon(A_r) = O(\epsilon^{r+1})$$
	as desired.
\end{proof}

\subsection{Step 5: Perturbing to a solution\label{sec:step5}}

Having constructed approximate solutions to any choice of order, we now wish to apply the inverse function theorem to obtain nearby solutions for all $\epsilon>0$ sufficiently small. To do so we apply a quantitative version of the inverse function theorem (which follows immediately from the standard proof, taking care to notice when the Lipschitz constant is being used to construct the contraction before applying the Banach fixed point theorem). In order to apply this theorem, it is necessary to obtain a bound on the inverse of the linearised operator $Q_\epsilon$ at an approximate solution $A_r$ with $r>q$, which is uniform in $\epsilon$ for $\epsilon>0$ sufficiently small. This will allow us to find neighbourhoods of $A_r$ of definite size upon which $D_\epsilon$ is surjective provided $r$ is taken large enough, and therefore obtain solutions for all $\epsilon$ sufficiently small.

The required bound is the following. Here the Sobolev space $L_{d,0}^2$ denotes the subspace of $L_d^2$ of trace-average zero sections of $\End E$ with respect to $h$ and $\omega$. These are precisely the sections orthogonal to $\id_E\in L_d^2$. 

\begin{proposition}\label{lem:inversebound}
	Suppose the discrepancy order of $E_1$ is $q$. For any $d,r\in \ZZ_{>0}$, let $P_\epsilon: L_{d+2,0}^2 \to L_{d,0}^2$ be the linearisation of the $Z$-critical operator $D_\epsilon$ at the connection $A_r$ constructed in \cref{prop:approximatesolutionarbitrary}. Then $P_\epsilon$ is invertible for all $\epsilon$ sufficiently small. If $Q_\epsilon: L_{d,0}^2 \to L_{d+2,0}^2$ denotes the inverse, then there exists $C>0$ depending on $d,r$ such that 
	$$\|Q_\epsilon\|_{L_{d,0}^2 \to L_{d+2,0}^2} \le C \epsilon^{-2q}.$$
\end{proposition}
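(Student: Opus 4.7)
The strategy is to establish uniform invertibility of $P_\varepsilon$ by isolating its one-dimensional approximate kernel and showing that the smallest ``eigenvalue'' of $P_\varepsilon$ is precisely of order $\varepsilon^{2q}$. First note that $P_\varepsilon$ is a self-adjoint (by \cref{prop:selfadjoint}) elliptic operator (by \cref{lem:linearisation}, since the approximate solution $A_r$ is sufficiently close to a weak Hermite--Einstein connection that subsolutionality is preserved), and by \cref{cor:linearisationperturbed} admits the expansion $P_\varepsilon = C (\rk E)[\omega]^n \Lap_0 + O(\varepsilon)$ as operators $L^2_{d+2} \to L^2_d$. Thus invertibility will follow from Fredholm theory once a uniform lower bound is established.

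The first ingredient is a spectral gap on the orthogonal complement of the approximate kernel. Since $\ker \Lap_0 = H^0(X,\End \Gr(E)) = \CC \id_{E_1} \oplus \CC \id_{E_2}$, restricted to $L^2_{d+2,0}$ the kernel is exactly the one-dimensional space $V := \CC \cdot \id_\pm$. Let $V^\perp$ denote its $L^2$-orthogonal complement inside $L^2_{d+2,0}$. The Poincar\'e-type inequality for $\Lap_0$ on $V^\perp$, combined with the perturbation expansion of $P_\varepsilon$, yields a uniform constant $\eta > 0$ and $\varepsilon_0>0$ such that
\begin{equation*}
\|P_\varepsilon s'\|_{L^2_d} \geq \eta \|s'\|_{L^2_{d+2}} \quad\text{for all } s' \in V^\perp,\ 0 < \varepsilon < \varepsilon_0.
\end{equation*}

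The second and most delicate ingredient is to manufacture an approximate eigenvector of $P_\varepsilon$ in the $V$-direction. The obstacle is \cref{prop:linearisationidpm}, which only asserts $P_\varepsilon(\id_\pm) = O(\varepsilon^q)$, so the component of $P_\varepsilon \id_\pm$ orthogonal to $V$ is of order $\varepsilon^q$ --- much larger than the diagonal $\pi(P_\varepsilon \id_\pm) \sim \varepsilon^{2q}$. A na\"ive Schur complement argument therefore fails. The fix is to iteratively correct $\id_\pm$: setting $v_\varepsilon := P_\varepsilon \id_\pm - \pi(P_\varepsilon \id_\pm) \in V^\perp$, define $w_1 \in V^\perp$ by $\Lap_0 w_1 = v_\varepsilon/(C(\rk E)[\omega]^n)$, which is solvable on $V^\perp$ by the spectral gap, and has $\|w_1\| = O(\varepsilon^q)$. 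Then $P_\varepsilon(\id_\pm - w_1) = \pi(P_\varepsilon \id_\pm) + O(\varepsilon^{q+1})$ along $V^\perp$. Iterating this correction procedure $q$ times, each step reducing the $V^\perp$-error by one power of $\varepsilon$, produces $\tilde\id_\pm = \id_\pm + w_\varepsilon$ with $w_\varepsilon \in V^\perp$ of size $O(\varepsilon^q)$ satisfying
\begin{equation*}
P_\varepsilon \tilde\id_\pm = \mu_\varepsilon \, \tilde\id_\pm + \sigma_\varepsilon,\qquad \mu_\varepsilon = C\lambda^2 \varepsilon^{2q} + O(\varepsilon^{2q+1}),\qquad \|\sigma_\varepsilon\|_{L^2_d} = O(\varepsilon^{2q+1}),
\end{equation*}
with $\sigma_\varepsilon \in V^\perp$. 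This is the main technical obstacle, and I expect the bookkeeping of the $q$ successive corrections --- in particular verifying that every step remains in $V^\perp$ and that cross terms do not produce spurious $\id_\pm$-components that would alter $\mu_\varepsilon$ at leading order --- to be the hardest part of the argument.

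Finally, with this approximate eigenvector in hand, write an arbitrary $s \in L^2_{d+2,0}$ uniquely as $s = c \,\tilde\id_\pm + s'$ with $s' \in V^\perp$ (a direct sum decomposition for $\varepsilon$ small, since $\tilde\id_\pm$ has non-trivial $V$-component). Then
\begin{equation*}
P_\varepsilon s = c \mu_\varepsilon \tilde\id_\pm + c \sigma_\varepsilon + P_\varepsilon s'.
\end{equation*}
Taking $L^2$-pairing with $\id_\pm$ and with $s'$ separately, using self-adjointness together with the smallness of $\sigma_\varepsilon$ and the spectral gap on $V^\perp$, one extracts simultaneous bounds $|c| \lesssim \varepsilon^{-2q} \|P_\varepsilon s\|_{L^2_d}$ and $\|s'\|_{L^2_{d+2}} \lesssim \|P_\varepsilon s\|_{L^2_d}$. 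Combining and using $\|s\|_{L^2_{d+2}} \leq |c|\|\tilde\id_\pm\| + \|s'\|_{L^2_{d+2}}$ then gives
\begin{equation*}
\|s\|_{L^2_{d+2}} \leq C' \varepsilon^{-2q} \|P_\varepsilon s\|_{L^2_d},
\end{equation*}
which is the desired bound on $Q_\varepsilon = P_\varepsilon^{-1}$. Existence of $Q_\varepsilon$ follows since $P_\varepsilon$ is self-adjoint Fredholm of index zero and this lower bound rules out non-trivial kernel. The factor $\varepsilon^{-2q}$ is exactly the one dictated by the destabilising order of discrepancy, and in turn dictates that the approximate solution must be refined to order at least $4q+1$ for the subsequent application of the quantitative inverse function theorem to succeed.
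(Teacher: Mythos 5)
Your overall architecture (a spectral gap on $V^\perp$, special treatment of the $\id_\pm$-direction, self-adjointness to convert a lower bound into the bound on $Q_\epsilon$) is reasonable, but the step you yourself flag as ``the hardest part'' is a genuine gap, not bookkeeping, and it cannot be closed from the inputs you cite. Concretely: your first corrector $w_1$ has size $O(\epsilon^q)$ and is off-diagonal at leading order, while $P_\epsilon$ itself contains an off-diagonal piece of size $O(\epsilon^q)$ (the $t=\lambda\epsilon^q$ term $L_1$ of $\Lap_t-\Lap_0$ in \cref{lem:bundlelaplacianexpansion}, built from $\gamma$). Applying that piece to $w_1$ produces \emph{diagonal} contributions of size $O(\epsilon^{2q})$ whose projection $\pi$ need not vanish. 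Equivalently, in Schur-complement terms the effective eigenvalue in the $\id_\pm$-direction is $\pi P_\epsilon \pi$ \emph{minus} a nonnegative correction controlled by $\|(1-\pi)P_\epsilon(\id_\pm)\|^2$, which from \cref{prop:linearisationidpm} alone is only known to be $O(\epsilon^{2q})$ --- the same order, and the opposite sign, as the term $C\lambda^2\epsilon^{2q}$ you want to keep. So your assertion $\mu_\epsilon=C\lambda^2\epsilon^{2q}+O(\epsilon^{2q+1})$, and hence the bound $|c|\lesssim \epsilon^{-2q}\|P_\epsilon s\|$, is unproven; a priori the iteration could destroy or even flip the sign of the leading eigenvalue. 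What saves the statement is a structural cancellation showing the coupling between $\id_\pm$ and $V^\perp$ is one order better than the naive $O(\epsilon^q)$: the order-$\epsilon^q$ term of $P_\epsilon(\id_\pm)$, namely $i\contr_\omega(\del_0([\gamma,\id_\pm])+\delbar_0([\gamma^*,\id_\pm]))$, pairs to zero against Hermitian endomorphisms (this uses the Coulomb gauge $\delbar_0^*\gamma=0$ of \eqref{eq:coulombgauge} together with a Hermiticity/trace argument), and in the other direction $\pi(P_\epsilon(\hat s))$ vanishes to order $\epsilon^q$ because the only diagonal contribution at that order is the term $\sigma_2$ appearing in \cref{prop:approximatesolutiondiscrepancy}, which satisfies $\pi(\sigma_2)=0$. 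With the coupling improved to $O(\epsilon^{q+1})|c_s|\|\hat s\|$ the dangerous feedback drops to $O(\epsilon^{2q+2})$ and your eigenvector iteration would go through; without it, it does not, and this estimate is the real content of the proposition.

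For comparison, the paper's proof never constructs an approximate eigenvector. It compares $P_\epsilon$ with the model operator $\tilde P_\epsilon(s)=\rk(E)[\omega]^n\Lap_0(\hat s)+C\lambda^2\epsilon^{2q}c_s\id_\pm$, proves the two diagonal bounds $\langle P_\epsilon(\hat s),\hat s\rangle\ge C\|\hat s\|^2$ (Poincar\'e inequality) and $\langle P_\epsilon(c_s\id_\pm),c_s\id_\pm\rangle\ge C\epsilon^{2q}|c_s|^2$ (\cref{prop:linearisationidpm}), shows the cross terms are $O(\epsilon^{q+1})|c_s|\|\hat s\|$ by exactly the two cancellations described above, and absorbs them by completing the square to get $\langle P_\epsilon(s),s\rangle\ge C\epsilon^{2q}\|s\|^2$; self-adjointness then yields the operator bound and the estimate on $Q_\epsilon$. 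Your route is a legitimate alternative \emph{once} the improved coupling estimate is established, but as written the proposal assumes precisely what needs to be proved.
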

\begin{proof}
	Let $s\in \Gamma_0(\End E)$ and write $s=\hat s + c_s \id_\pm$ where $\hat s$ is orthogonal to $\ker \Lap_0$. Consider the operator
	$$\tilde P_\epsilon : L_{d+2,0}^2 \to L_{d,0}^2$$
	defined by
	$$\tilde P_\epsilon(s) = \rk(E) [\omega]^n \Lap_0 (\hat s) + c_s C \lambda^2 \epsilon^{2q} \id_\pm$$
	where $C\ne 0$ is the constant given in \cref{prop:linearisationidpm}. This operator is invertible, as $\Lap_0(\hat s)$ remains orthogonal to $\ker \Lap_0$ when $\hat s$ is, and $\Lap_0$ is invertible orthogonal to its kernel. 
	
	It follows that $\|\tilde P_\epsilon(\hat s)\| \ge C\|\hat s\|$ for $\hat s$ in the orthogonal complement to $\ker \Lap_0$ by the Poincar\'e inequality for the bundle Laplacian $\Lap_0$. 
	
	On the other hand we have $\|\tilde P_\epsilon(c\id_\pm)\| \ge C \epsilon^{2q} \|c\id_\pm\|$ for some $C$, so combining these inequalities we obtain a bound
	$$\|\tilde P_\epsilon(s)\| \ge C\epsilon^{2q} \|s\|$$
	for some $C>0$ and any $s$ orthogonal to $\id_E$. 
	
	Recall that in the construction of approximate solutions, the perturbed solution is a product up to and including order $\epsilon^{q-1}$. 
	
	Now for $\hat s\in \ker \Lap_0^\perp$ we have that $\tilde P_\epsilon(\hat s)$ is the leading order term in the linearisation $P_\epsilon(\hat s)$ by \cref{cor:linearisationperturbed}. Thus we obtain a similar bound
	$$\|P_\epsilon(\hat s)\| \ge C\|\hat s\|$$
	for such $s$, provided $\epsilon>0$ is chosen sufficiently small. Similarly by \cref{prop:linearisationidpm} the leading order term in the projection of $P_\epsilon(\id_\pm)$ is given by $\tilde P_\epsilon(\id_\pm)$. Thus we obtain a bound
	$$\|\pi P_\epsilon(c\id_\pm)\| \ge C \epsilon^{2q} \|c \id_\pm\|$$
	for $\epsilon>0$ sufficiently small. Let us now consider the difference
	$$R_\epsilon = P_\epsilon - \tilde P_\epsilon.$$
	Then we have expansions
	$$R_\epsilon(\hat s) = \sum_{j=1}^{2q} \epsilon^j T_j(\hat s) + O(\epsilon^{2q+1})$$
	for $\hat s \in \ker \Lap_0^\perp$ and
	$$R_\epsilon(\id_\pm) = \sum_{j=0}^{2q} \epsilon^j \sigma_j + O(\epsilon^{2q+1})$$
	where $\pi \sigma_j = 0$ for all such $j$ by \cref{prop:linearisationidpm}.
	
	Now we observe that the image of $R_\epsilon(\hat s)$ lies in $\ker \Lap_0^\perp$ up to and including order $\epsilon^{q}$. Indeed the only relevant diagonal contribution occurs at order $\epsilon^{q}$ and is precisely the error term $\sigma_2$ occuring in \cref{prop:approximatesolutiondiscrepancy}, which was observed to satisfy $\pi \sigma_2 = 0$. The contribution to the linearisation from this term is the operator
	$$\hat s \mapsto \epsilon^{2q} i \contr_{\omega} d_0 ([[\gamma-\gamma^*,\sigma-\sigma^*], \hat s])$$
	which therefore lies in $\ker \Lap_0^\perp$ for any $\hat s$, since $\sigma \circ \gamma^* - \gamma \circ \sigma^*$ is traceless.
	
	To obtain the required bound, by self-adjointness it is equivalent to prove the bound
	$$\langle P_\epsilon(s), s\rangle \ge C \epsilon^{2q} \|s\|^2.$$
	By the above discussion the case where $s=\hat s$ is orthogonal to $\ker \Lap_0$ and $s=\id_\pm$ are concluded, so it remains to verify the bound for an arbitrary $s=\hat s + c_s \id_\pm$. Thus we need to verify
	$$\langle P_\epsilon(\hat s), c_s \id_\pm\rangle + \langle \hat s, P_\epsilon(c_s \id_\pm)\rangle \ge C \epsilon^{2q} \|s\|^2.$$
	By the above discussion concerning $R_\epsilon(\hat s)$ we know $P_\epsilon(\hat s)$ is orthogonal to $\id_\pm$ up to and including order $\epsilon^{q}$, and thus $|\langle P_\epsilon(\hat s), c_s\id_\pm \rangle| \ge C' \epsilon^{q+1} |c_s| \|s\|$. We now wish to show also that 
	$$\langle P_\epsilon(c_s \id_\pm), \hat s\rangle \ge C\epsilon^{q+1} |c_s|\|s\|.$$
	
	From the expansion of \cref{lem:bundlelaplacianexpansion} and \cref{cor:linearisationperturbed}, and the fact that the approximation solution $A_r$ is diagonal up to order $\epsilon^q$, we see the key contribution arises due to the inner product with the term at order $\epsilon^q$ in the linearisation of the curvature, which takes the form
	$$i \contr_{\omega} (\del_0([\gamma_\epsilon, c_s \id_\pm]) + \delbar_0([\gamma_\epsilon^*, c_s \id_\pm])).$$
	Since $\hat s$ is a Hermitian endomorphism, the diagonal contributions coming from the inner product with the above form sum to be traceless, and therefore the inner product at order $\epsilon^q$ vanishes. Thus we obtain the desired bound, and that there is some $C'>0$ such that
	$$\langle P_\epsilon(\hat s), c_s \id_\pm\rangle + \langle \hat s, P_\epsilon(c_s \id_\pm)\rangle \ge C' \epsilon^{q+1} |c_s| \|\hat s\|$$
	Now writing $\epsilon^{q+1}|c_s|\|\hat s\| = \epsilon \|s\| |\epsilon^q c_s|$ and completing the square we obtain
	\begin{align*}
		C'\epsilon^{q+1} |c_s| \|\hat s\| &= \frac{1}{2} C'\epsilon \left(  (|\epsilon^q c_s| + \|\hat s\|)^2 - (\epsilon^{2q} |c_s|^2 + \|\hat s\|^2) \right)\\
		&\ge -\frac{1}{2} C' \epsilon \left( (\epsilon^{2q} |c_s|^2 + \|\hat s\|^2) \right)
	\end{align*}
	Now this is a negative lower bound, but due to the factor of $\epsilon$ for $\epsilon$ sufficiently small this term can be absorbed into the bounds $\langle P_\epsilon(\hat s), \hat s \rangle \ge C \|\hat s\|^2$ and $\langle P_\epsilon (c_s \id_\pm), c_s \id_\pm\rangle \ge C \epsilon^{2q} |c_s|^2.$ Thus we obtain in total the bound
	$$\langle P_\epsilon(s), s\rangle \ge C \epsilon^{2q} \|s\|^2$$
	for some $C>0$. By self-adjointness of $P_\epsilon$ we obtain the required lower bound on the operator itself for any $s\in L_{d+2,0}^2$. This implies the associated upper bound on the inverse operator $Q_\epsilon$ as desired.
\end{proof}

We also need to more carefully understand when the non-linear part of the $Z$-critical operator $D_\epsilon$ is Lipschitz to utilise the inverse function theorem. We do so using the mean value theorem for Banach spaces as follows.

\begin{lemma}\label{lem:meanvalue}
	Let $\calM_{\epsilon,r}$ denote the non-linear part $D_{\epsilon,r} - P_{\epsilon,r}$ of the $Z$-critical operator at a connection $A_r$ constructed in \cref{prop:approximatesolutionarbitrary}. Then there are constants $c,C>0$ such that for all $\epsilon>0$ sufficiently small, we have that for $s_0,s_1\in L_{d+2,0}^2$ with $\|s_i\|\le c$, 
	$$\|\calM_{\epsilon,r}(s_0) - \calM_{\epsilon,r}(s_1)\|_{L_d^2} \le C(\|s_0\|_{L_{d+2}^2} + \|s_1\|_{L_{d+2}^2}) \|s_0-s_1\|_{L_{d+2}^2}.$$
\end{lemma}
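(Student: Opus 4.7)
The plan is to establish the estimate by a standard Taylor expansion via the fundamental theorem of calculus, reducing matters to a uniform bound on the second Fréchet derivative of $D_{\epsilon,r}$. Since $\calM_{\epsilon,r}(s) = D_{\epsilon,r}(s) - P_{\epsilon,r}(s)$ and $P_{\epsilon,r}$ is by definition the derivative of $D_{\epsilon,r}$ at $0$, setting $s_t := s_1 + t(s_0-s_1)$ and applying the fundamental theorem of calculus twice gives
$$\calM_{\epsilon,r}(s_0) - \calM_{\epsilon,r}(s_1) = \int_0^1 \left[ dD_{\epsilon,r}(s_t) - dD_{\epsilon,r}(0)\right](s_0-s_1)\, dt = \int_0^1\!\!\int_0^1 d^2 D_{\epsilon,r}(u s_t)(s_t,\, s_0-s_1)\, du\, dt.$$
Taking $L_d^2$-norms and using $\|s_t\|_{L_{d+2}^2} \le \|s_0\|_{L_{d+2}^2} + \|s_1\|_{L_{d+2}^2}$, the claim will follow once I show that there exist $c,C>0$ (independent of $\epsilon$) such that
$$\|d^2 D_{\epsilon,r}(s)(v,w)\|_{L_d^2} \le C\, \|v\|_{L_{d+2}^2}\, \|w\|_{L_{d+2}^2} \qquad \text{for all } \|s\|_{L_{d+2}^2} \le c.$$

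The second step is to exploit the algebraic structure of $D_\epsilon$. Every term in $\tilde Z_\epsilon(\exp(s)\cdot A_r)$ is, by Lemma 4.2.2 and the formula $F_{\exp(s)\cdot A_r} = F_{A_r} + (\delbar_{A_r}\del_{A_r} - \del_{A_r}\delbar_{A_r})s + R(s,\nabla s)$ with $R$ real-analytic in $s$ and its first derivatives, a polynomial expression in $s$, $\nabla_{A_r} s$, $\nabla_{A_r}^2 s$, and the background data $\omega, \tilde U, F_{A_r}$, wedged up to a top-degree form. Differentiating twice in $s$ therefore produces a finite sum of multilinear expressions in $(v, \nabla v, \nabla^2 v)$, $(w, \nabla w, \nabla^2 w)$ and polynomials in $(s, \nabla s)$, multiplied by fixed tensors built from $F_{A_r}$, $\omega$ and $\tilde U$. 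Each such tensorial product can then be estimated by Sobolev multiplication: after taking $d$ large enough that $L_{d+2}^2 \subset C^1$ by the Sobolev embedding theorem, multiplication $L_{d+2}^2 \times L_{d+2}^2 \to L_d^2$ is bounded, and any polynomial factor in $s$ with $\|s\|_{L_{d+2}^2}\le c$ is uniformly bounded in $C^0$.

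The third step is to absorb the $\epsilon$-dependence. This is the main obstacle: the approximate solution $A_r$ itself depends on $\epsilon$, both through the deformation of complex structure from $\delbar_0$ to $\delbar_t = \delbar_0 + \lambda\epsilon^q\gamma$ and through the successive corrections $\exp(f_j \epsilon^j)$ of Proposition 4.3.18. What must be checked is that the curvature $F_{A_r}$, and all background tensors entering $d^2 D_{\epsilon,r}$, are bounded in $C^k$ uniformly in $\epsilon$ for $\epsilon$ sufficiently small. This follows by induction on the construction of the approximate solution: the correction endomorphisms $f_j$ are obtained by inverting the fixed elliptic operator $\Lap_0$ (modulo its kernel), which maps smooth data to smooth data with bounds depending only on $\Lap_0$ and the source; since the source terms are themselves smooth with $C^k$-bounds uniform in $\epsilon$ (they are polynomial combinations of $\gamma$, $\nabla_0\gamma$ and curvature of $A_0$, multiplied by fixed powers of $\epsilon$), the $f_j$ and hence $A_r$ inherit such bounds. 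Restricting to $\|s\|_{L_{d+2}^2}\le c$ small enough that $\exp(s)$ remains in a fixed $C^0$-neighbourhood of the identity, the polynomial prefactors in $s$ are bounded independently of $\epsilon$.

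Combining these three steps yields the desired bound
$$\|d^2 D_{\epsilon,r}(s)\|_{L_{d+2}^2 \times L_{d+2}^2 \to L_d^2} \le C$$
uniformly in $\epsilon$ and in $\|s\|_{L_{d+2}^2}\le c$, and substituting this into the double-integral identity yields precisely the claimed Lipschitz-type estimate with the factor $\|s_0\|_{L_{d+2}^2} + \|s_1\|_{L_{d+2}^2}$. The argument is essentially formal once the $\epsilon$-uniform $C^k$-regularity of $A_r$ is in hand, so the only real work is in that regularity statement, which I would state and prove as a separate lemma preceding this one.
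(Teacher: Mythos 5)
Your argument is correct and amounts to the same basic strategy as the paper's: reduce the estimate to a derivative bound along the segment joining $s_0$ and $s_1$, and control that derivative using the explicit algebraic structure of $\tilde Z_\epsilon(\exp(s)\cdot A_r)$ together with $\epsilon$-uniform control of the background connection $A_r$. The difference is in the decomposition: the paper applies the mean value theorem once, writing $\calM_{\epsilon,r}(s_0)-\calM_{\epsilon,r}(s_1)=(D\calM_{\epsilon,r})_{s^*}(s_0-s_1)$ for some intermediate $s^*$, and then bounds the difference of linearisations $\|(P_{\epsilon,r})_{s^*}-(P_{\epsilon,r})_0\|\le C\|s^*\|$ by expanding the gauge action as a power series in $s^*$ (with exactly $i$ factors of $s^*$ in the $i$-th term) and absorbing the higher $\epsilon$-orders for $\epsilon$ small; you instead apply the fundamental theorem of calculus twice and bound the second Fr\'echet derivative $d^2D_{\epsilon,r}$ uniformly on a small ball, uniformly in $\epsilon$. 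The two routes are interchangeable, and yours has two mild advantages: the integral form of the remainder sidesteps the fact that the equality version of the mean value theorem is not literally valid for Banach-space-valued maps (only the inequality or integral form is, which is all either argument really needs), and you isolate explicitly the $\epsilon$-uniform $C^k$-regularity of $A_r$ — which in the paper's construction holds because the correction endomorphisms $f_j$ and the rate $\lambda$ are $\epsilon$-independent, with $\epsilon$ entering only through explicit positive powers — whereas the paper leaves this implicit. One caveat on your side: your Sobolev-multiplication step (needed because $d^2D_{\epsilon,r}(s)(v,w)$ contains products of two second-order terms such as $\delbar_{A_r}\del_{A_r}v\wedge\delbar_{A_r}\del_{A_r}w$ coming from differentiating two distinct curvature factors in $F_{A}^j$) requires $d$ large enough that $L^2_d\times L^2_d\to L^2_d$ is bounded; this is harmless for the application, and the same requirement is tacitly present in the paper's operator-norm estimate as well.
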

\begin{proof}
	We apply the mean value theorem. Let $s_0,s_1\in B_0(c)\subset L_{d+2,0}^2$. Consider the path $s(t) = (1-t)s_0 + ts_1$. Then by the mean value theorem for Banach spaces there exists some $s^* = s(t^*)$ such that
	$$\calM_{\epsilon,r}(s_0) - \calM_{\epsilon,r}(s_1) = (D\calM_{\epsilon,r})_{s^*} (s_0-s_1).$$
	Now 
	$$(D\calM_{\epsilon,r})_{s^*} = (P_{\epsilon,r})_{s^*} - (P_{\epsilon,r})_0$$
	where $(P_{\epsilon,r})_{s}$ is the linearisation of the $Z$-critical operator $D_\epsilon$ at $\exp(s) \cdot A_r$. Thus we need to estimate
	$$\|(P_{\epsilon,r})_{s^*} - (P_{\epsilon,r})_0\|_{L_{d+2}^2}.$$
	Setting $g=\exp(s^*)$ and recalling \cref{lem:linearisationcurvatureChern} with $t=1$ we see that the order $O(\epsilon^0)$ term in the linearisation $(P_{\epsilon,r})_{s^*} - (P_{\epsilon,r})_0$ will consist of terms of the form
	$$V\mapsto \sum_{i=1}^{\infty} f_i(\del_{A_r},\delbar_{A_r}, s^*, V)$$
	where $f_i$ is a polynomial expression each term of which contains exactly $i$ factors of $s^*$, one copy of $\del_{A_r}$, $\delbar_{A_r}$ and $V$. Say we choose $c\le 1/2$ so $\|s^*\|\le 1/2$, then we have $\|s^*\|^i \le \|s^*\|$ so to order $\epsilon^0$, we quickly obtain the bound
	$$\|(P_{\epsilon,r})_{s^*} - (P_{\epsilon,r})_0\|^0\le C \|s^*\|$$
	for some $C$ depending on $A_r$. Choosing $\epsilon$ sufficiently small we can repeat this process for higher orders of $\epsilon$ in the expansion to obtain a bound
	$$\|(P_{\epsilon,r})_{s^*} - (P_{\epsilon,r})_0\|\le C \|s^*\|.$$
	Now since $s^*$ is a convex linear combination of $s_0$ and $s_1$,
	$$\|\calM_{\epsilon,r}(s_0) - \calM_{\epsilon,r}(s_1)\| \le C(\|s_0\| + \|s_1\|) \|s_0-s_1\|$$
	for some $C,c>0$ and all $\epsilon$ sufficiently small such that $\|s_0\|,\|s_1\|\le c$. 
\end{proof}

The above lemma gives a characterisation of the Lipschitz constant of the non-linear part of the operator $D_\epsilon$ on balls of decreasing radius $\rho<c$ around $0\in L_{d+2,0}^2$. Using this characterisation, we will apply the quantitative inverse function theorem to find a solution to the $Z$-critical equation. See for example \cite[Thm. 4.1]{fine}.

\begin{theorem}[Quantitative inverse function theorem]\label{thm:qift}
	Let $\Phi: V \to W$ be a differentiable map of Banach spaces $V,W$, with invertible linearisation $P=D\Phi$ at $0$ with inverse $Q$. Let
	\begin{itemize}
		\item $\delta'$ be the radius of the closed ball in $V$ such that $\Phi - P$ is Lipschitz of constant $\frac{1}{2\|Q\|}.$;
		\item $\delta = \frac{\delta'}{2\|Q\|}.$
	\end{itemize}
	Then for all $w\in W$ with $\|w-\Phi(0)\|<\delta$, there exists a $v\in V$ with $\Phi(v)=w$.
\end{theorem}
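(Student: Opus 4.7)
The plan is to recast the equation $\Phi(v) = w$ as a fixed-point problem for a contraction on a suitable closed ball and invoke the Banach fixed-point theorem. Setting $N := \Phi - P$ for the non-linear remainder, the equation $\Phi(v) = w$ is equivalent to $Pv = w - N(v)$, which, upon applying the inverse $Q$, rewrites as the fixed-point equation $v = T(v)$ for
$$T(v) := Q\bigl(w - N(v)\bigr).$$

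First I would verify that $T$ is a contraction with constant $\frac{1}{2}$ on the closed ball $\overline{B}_{\delta'}(0) \subset V$. For any $v_1, v_2$ in this ball,
$$\|T(v_1) - T(v_2)\| = \|Q\bigl(N(v_2) - N(v_1)\bigr)\| \leq \|Q\| \cdot \|N(v_1) - N(v_2)\| \leq \|Q\| \cdot \frac{1}{2\|Q\|}\|v_1 - v_2\| = \frac{1}{2}\|v_1 - v_2\|,$$
using directly the hypothesis that $\Phi - P$ is Lipschitz of constant $1/(2\|Q\|)$ on $\overline{B}_{\delta'}(0)$.

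Next I would check that $T$ maps $\overline{B}_{\delta'}(0)$ into itself, which is where the quantitative hypothesis $\|w - \Phi(0)\| < \delta$ is used. For $v \in \overline{B}_{\delta'}(0)$, combining the contraction bound at $0$ with the triangle inequality gives
$$\|T(v)\| \leq \|T(v) - T(0)\| + \|T(0)\| \leq \tfrac{1}{2}\|v\| + \|Q\|\cdot \|w - \Phi(0)\| \leq \tfrac{1}{2}\delta' + \|Q\|\cdot \delta = \delta',$$
by definition of $\delta = \delta'/(2\|Q\|)$. Having verified both hypotheses, the Banach fixed-point theorem produces a (unique) $v \in \overline{B}_{\delta'}(0)$ satisfying $T(v) = v$, which, unwinding the definition, is the desired solution $\Phi(v) = w$.

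The only real subtlety is the interplay between the two constants $\delta'$ and $\delta$: the Lipschitz hypothesis on $\Phi - P$ (with constant $1/(2\|Q\|)$ on the ball of radius $\delta'$) controls how much $T$ can stretch \emph{within} $\overline{B}_{\delta'}(0)$, while the quantitative factor $1/(2\|Q\|)$ relating $\delta$ to $\delta'$ is precisely tuned so that the image under $T$ of any point in $\overline{B}_{\delta'}(0)$ still lies in $\overline{B}_{\delta'}(0)$. There is no serious analytic obstacle beyond this balancing act; once set up, the argument is a direct application of the Banach fixed-point theorem and requires only completeness of $V$.
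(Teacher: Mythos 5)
Your proof is correct and is precisely the argument the paper has in mind: the theorem is stated with a citation to Fine and the remark that it ``follows immediately from the standard proof,'' namely the contraction-mapping/Banach fixed point argument you carry out, with the constants $\delta'$ and $\delta = \delta'/(2\|Q\|)$ tuned exactly as in your self-mapping estimate. No gaps: the reformulation $v = Q(w - N(v))$, the $\tfrac{1}{2}$-Lipschitz bound for $T$, and the check that $T$ preserves the closed ball of radius $\delta'$ are all handled correctly.
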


Finally we can complete the proof of the existence result.

\begin{proof}[Proof of \cref{thm:stabilityimpliesexistence}]
	We can now prove the existence of a solution to the $Z$-critical equation. We wish to find a root of $D_{\epsilon}$ near some approximate solution $A_r$ constructed in \cref{prop:approximatesolutionarbitrary} for all $\epsilon>0$ sufficiently small. By \cref{lem:meanvalue} we see that there exists some constant $C>0$ such that for all $\rho>0$ sufficiently small, the non-linear part $\calM_{\epsilon,r}$ of $D_{\epsilon}$ at $A_r$ is Lipschitz with Lipschitz constant $2C\rho$ on the ball of radius $\rho$. Moreover, by \cref{lem:inversebound} we have a lower bound
	$$C_r \epsilon^{2q} \le \frac{1}{2\|Q_{\epsilon,r}\|}$$
	for some constant $C_r>0$. Thus there exists a constant $C_r'>0$ such that the radius $\delta'$ of the ball upon which $\calM_{\epsilon,r}$ is Lipschitz with Lipschitz constant $\frac{1}{2\|Q_{\epsilon,r}\|}$ is bounded below by
	$$C_r' \epsilon^{2q} \le \delta'$$
	for a constant $C_r' = C_r/2C>0$.
	
	Therefore by the definition of $\delta$ and again applying the lower bound $\cref{lem:inversebound}$ we obtain the lower bound
	$$C_r''\epsilon^{4q} \le \delta$$
	for some constant $C_r''=C_r^2/2C>0$. 
	Now let us take $r=4q$. Then $\|D_{\epsilon}(A_r)\| \le C_r'''\epsilon^{4q+1}$ for some constant $C_r'''$ so, when $\epsilon>0$ is sufficiently small, $D_{\epsilon}(A_r)$ is contained within the ball of radius $C_r''\epsilon^{4q}$, and hence in the ball of radius $\delta$. 
	
	By the inverse function theorem \cref{thm:qift} for $\Phi = D_\epsilon$ we can therefore find a root of $D_\epsilon$ in $L_{d+2,0}^2$ when $\epsilon$ is sufficiently small. By \cref{lem:linearisation} the $Z$-critical operator is elliptic for such small $\epsilon$, so by elliptic regularity this solution is smooth.
\end{proof}

\subsection{Remarks on the general case}

In the proof of \cref{thm:stabilityimpliesexistence} we have only considered the case where $\Gr(E)$ has one or two components. Indeed in the case where $\Gr(E)$ has two components, \cref{prop:approximatesolutiondiscrepancy} demonstrates clearly how the asymptotic $Z$-stability assumption enters into the analysis of the construction of approximate solutions.

Considerable technical difficulties occur in the general case covered in \cite[\S 4]{dervan2021zcritical} for the following reason: When $\Gr(E)=E_1\oplus \cdots \oplus E_\ell$ the assumption of asymptotic $Z$-stability no longer guarantees that the locally free factors $E_i$ are pairwise non-isomorphic. Indeed $\Gr(E)$ may admit automorphisms which permute factors, so the kernel $\ker \Lap_0 = H^0(X,\End \Gr(E))$ becomes more complicated. This makes the above arguments more difficult in two ways:
\begin{itemize}
	\item The second fundamental form $\gamma$ specifying the deformation of complex structure from $\Gr(E)$ to $E$ has a more complicated off-diagonal shape, meaning extra care must be taken in the construction of approximate solutions at each stage.
	\item Extra care must be taken in the proof of the bound on the inverse $Q_\epsilon$ of the linearised operator \cref{lem:inversebound}, as the kernel does not just consist of sums of diagonal endomorphisms.
\end{itemize}
In order to control these extra factors, in \cite[\S 4.2]{dervan2021zcritical} a refinement of the Jordan--H\"older filtration depending on the stability condition $Z$ is constructed, so that the deformation of complex structure which is induced using this filtration has a particularly nice form. An inductive process on this filtration allows approximate solutions to arbitrary order to be constructed, and the bounds on the inverse $Q_\epsilon$ to be proven.

Once the same approximate solutions and bounds are established, the proof of \cref{thm:stabilityimpliesexistence} repeats without change to find $Z$-critical metrics.

\begin{remark}
	A version of the theory of $Z$-critical connections and asymptotic $Z$-stability has been developed by Dervan for varieties, where so-called ``$Z$-critical K\"ahler metrics" are perturbations of cscK metrics and asymptotic $Z$-stability converges to K-stability \cite{dervan2021stability}. Dervan uses the moment map formalism in that problem to reduce the perturbation result to a finite-dimensional GIT-type problem, and the technique seems to be quite general. In view of the moment map description of \cref{sec:momentmap} for the $Z$-critical equation, it would be interesting if such techniques could be adapted to the study of $Z$-critical metrics on bundles.
\end{remark}

	\part{K\"ahler Fibrations\label{part:fibrations}}
	
	\chapter{Background}
	
	In this chapter we will recall the notion of an \emph{optimal symplectic connection} on a K\"ahler fibration introduced by Dervan--Sektnan \cite{dervan2019optimal}, and, as according to \cref{principle}, the corresponding notion of stability of a fibration \cite{dervan2019moduli}.
	
	For our purposes, we will also describe the theory of stability and Hermite--Einstein connections on holomorphic principal bundles, which is a variant of, but closely related to, the theory for holomorphic vector bundles which has been described in \cref{sec:bundles}.
	
	\section{Optimal symplectic connections}
	
	To begin, we will recall the motivation of the work of Dervan--Sektnan who introduced optimal symplectic connections (OSCs). The spaces we will be considering are certain K\"ahler fibrations.
	
	\subsection{K\"ahler fibrations\label{sec:kahlerfibrations}}
	
	\begin{definition}[K\"ahler fibration]
		A \emph{K\"ahler fibration} consists of a surjective holomorphic submersion
		$$\pi: (X,\omega_X) \to (B,\omega_B)$$
		where $(B,\omega_B)$ is K\"ahler and $\omega_X$ is a closed $(1,1)$-form on the complex manifold $X$ such that the restriction to the fibre directions is non-degenerate. That is, $\omega_X$ is a K\"ahler form in the vertical directions. We call such a form \emph{relatively K\"ahler}. Denote the restriction $\rest{\omega_X}{X_b}$ to any fibre as $\omega_b$.
	\end{definition}

	In the proceeding theory, we will always make the following further assumptions:
	\begin{itemize}
		\item We will always notate that $\dim B = n$ and $\reldim X/B = m$, so that the dimension of any fibre $\dim X_b = m$.
		\item The spaces $X$ and $B$ will always be compact. In this case Ehresmann's lemma implies that $X\to B$ has the structure of a smooth fibre bundle, but the complex structure of the fibres may vary.
		\item We will always assume that the fibres $(X_b,\omega_b)$ are cscK manifolds so that $S(\omega_b)$ is constant for every $b$. We call relatively K\"ahler metrics satisfying this assumption \emph{relatively cscK metrics}.
		\item We will always consider the polarised setting where $\omega_B \in c_1(L)$ for some ample line bundle $L\to B$ and $\omega_X \in c_1(H)$ for some relatively ample line bundle $H\to X$. 
		\item We assume that the dimension $\dim \Aut (X_b,\omega_b)$ is independent of $b$. As a consequence, by the upcoming discussion in \cref{sec:holomorphypotentials} the space $\h_0(X_b, \omega_B)^\RR$ of real mean-zero holomorphy potentials on $X_b$ has dimension independent of $b\in B$.
	\end{itemize}

	Since the form $\omega_X$ is relatively symplectic, the study of K\"ahler fibrations is the complex analogue of the more general theory of symplectic fibrations (see for example \cite{mcduff2017introduction}). Let us emphasise some key features arising from the symplectic structure of the fibration $(X,\omega_X) \to (B,\omega_B)$. Since $\omega_X$ is non-degenerate in the direction of the vertical subbundle $\calV\subset TX$, there is an orthogonal complement $\calH\subset TX$ with respect to $\omega_X$, the \emph{horizontal subbundle}, such that $TX=\calV\oplus \calH$. This defines an Ehresmann connection on $X$ as a fibre bundle, with curvature $F_\calH \in \Omega^2(X, \calV)$ defined by
	$$F_\calH (u,v) = [u_\calH, v_\calH]_\calV = [u_\calH, v_\calH] - [u,v]_\calH.$$
	\begin{definition}[Symplectic curvature]
		Given two vectors $u,v\in T_b B$ then 
		$$F_\calH(u^\#, v^\#)\in \Symp(X_b, \omega_b)$$ is a symplectic vector field, where $u^\#$ and $v^\#$ are the horizontal lifts of $u,v$. Denote (abusively) 
		$$F_\calH \in \Omega^2(B, \Symp(\calV, \omega_X))$$
		the two-form on $B$ with values in fibrewise symplectic vector fields on $(X,\omega_X)$. This is the \emph{symplectic curvature} of $(X,\omega_X) \to B$. 
	\end{definition}
	In this symplectic setting we have the following remarkable theorem.
	\begin{theorem}[Minimal coupling, see {\cite[\S 1]{guillemin1996symplectic}, \cite[Lem. 3.2]{dervan2019optimal}}]\label{thm:minimalcoupling}
		The symplectic curvature $F_\calH$ always takes values in vertical Hamiltonian vector fields. Furthermore if $\mu^*: \Ham(\calV) \to C_0^\infty(X)$ denotes the map taking a vertical Hamiltonian vector field to its associated relative (mean zero) Hamiltonian function on $X$, and we abuse notation by identifying $\mu^*F_\calH$ with its pullback to the total space of $X$, then 
		$$\mu^* F_\calH = (\omega_X)_\calH + \pi^* \beta$$
		where $(\omega_X)_\calH$ is the horizontal component of $\omega_X$ and $\beta$ is some two-form on $B$. 
	\end{theorem}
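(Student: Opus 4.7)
The plan is to exploit the closedness of $\omega_X$ applied to triples $(u^\#, v^\#, W)$ consisting of two horizontal lifts and one vertical vector field, which will yield both halves of the theorem from a single computation.

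First I would decompose $\omega_X$ according to the $\omega_X$-orthogonal splitting $TX = \calV\oplus \calH$. By the very definition of $\calH$ as the symplectic complement of $\calV$, the ``mixed'' component of $\omega_X$ vanishes, so $\omega_X = \omega_\calV + (\omega_X)_\calH$ where $\omega_\calV$ restricts on each fibre to $\omega_b$. Extend $u,v\in T_b B$ to vector fields on $B$, let $u^\#, v^\#$ be the horizontal lifts, and let $W$ be any vertical vector field on $X$. Since both vertical fields and horizontal lifts are $\pi$-projectable, with projections $0$ and $u, v$ respectively, the bracket $[u^\#, W]$ is $\pi$-related to $[u, 0]=0$, hence is vertical; similarly for $[v^\#, W]$. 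Consequently all three $\omega_X$-pairings in the Cartan formula for $d\omega_X(u^\#, v^\#, W)$ involving one vertical and one horizontal argument vanish, and the identity $d\omega_X(u^\#, v^\#, W)=0$ collapses to
\begin{equation*}
W\bigl((\omega_X)_\calH(u^\#, v^\#)\bigr) \;=\; \omega_X\bigl([u^\#, v^\#]_\calV, W\bigr) \;=\; \omega_b\bigl(F_\calH(u,v), W\bigr).
\end{equation*}

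Reading this identity fibrewise, the vertical vector field $F_\calH(u,v)$ on $(X_b,\omega_b)$ satisfies $\iota_{F_\calH(u,v)}\omega_b = d_\calV\bigl(\rest{(\omega_X)_\calH(u^\#,v^\#)}{X_b}\bigr)$, where $d_\calV$ is the fibrewise exterior derivative. This says precisely that $F_\calH(u,v)$ is a Hamiltonian vector field on $(X_b,\omega_b)$ with potential $\rest{(\omega_X)_\calH(u^\#,v^\#)}{X_b}$ (up to the usual sign convention), proving the first claim; tensoriality of $F_\calH$ in $u,v$ guarantees independence of the chosen extensions of $u,v$ to vector fields on $B$.

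For the second claim, the mean-zero normalisation $\mu^*F_\calH(u,v)\in C^\infty_0(X_b)$ differs from the potential $\rest{(\omega_X)_\calH(u^\#,v^\#)}{X_b}$ by a constant depending only on $b$ and antisymmetrically on $(u,v)\in T_bB\otimes T_bB$. These constants assemble into a smooth $2$-form $\beta\in\Omega^2(B)$, defined explicitly by the fibrewise average $\beta_b(u,v) = -\vol(X_b)^{-1}\int_{X_b}(\omega_X)_\calH(u^\#,v^\#)\,\omega_b^m/m!$, which is smooth in $b$ by the standing assumption that the fibres form a smooth family of cscK manifolds of fixed volume. Identifying $\mu^*F_\calH$ with a horizontal $2$-form on $X$ in the manner described, this yields $\mu^*F_\calH = (\omega_X)_\calH + \pi^*\beta$. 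The main obstacle is really just the careful bookkeeping above: verifying that $[u^\#, W]$ is vertical and that the orthogonal decomposition of $\omega_X$ makes exactly the three mixed terms in the Cartan formula drop out; once this is organised, both halves of the minimal-coupling identity fall out of $d\omega_X=0$.
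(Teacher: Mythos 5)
Your argument is correct and is essentially the standard minimal-coupling proof from the sources the thesis cites (Guillemin--Lerman--Sternberg and Dervan--Sektnan Lem.\ 3.2); the paper itself states the result without proof, and your computation via $d\omega_X(u^\#,v^\#,W)=0$, verticality of $[u^\#,W]$, and the vanishing of the mixed components of $\omega_X$ is exactly that argument, with the fibrewise mean-zero normalisation correctly assembling into the two-form $\beta$ on $B$. The only cosmetic slip is the count of vanishing Cartan terms (four mixed contributions drop out, not three), which does not affect the collapsed identity or the conclusion.
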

	
	\subsection{Holomorphy potentials and automorphisms\label{sec:holomorphypotentials}}
	
	As remarked in the previous section, we will always assume that the fibres $(X_b,\omega_b)$ of the K\"ahler fibration are cscK manifolds. Let us now discuss the consequences of this for the symplectic curvature and the automorphisms of the fibration. To do so, we must recall a special class of Hamiltonian-type functions on a K\"ahler manifold, the holomorphy potentials.
	
	\begin{definition}
		Let $(Y,\omega)$ be a compact K\"ahler manifold. A \emph{holomorphy potential} $f$ on $Y$ is a function $f: Y \to \CC$ such that
		$$\delbar \nabla^{1,0} f = 0$$
		where $\nabla^{1,0}$ is the $(1,0)$-component of the Riemannian gradient of $f$. Any such function $f$ defines a holomorphic vector field 
		$$\xi_f = \nabla^{1,0} f,$$
		the $(1,0)$-part of the symplectic gradient of $f$. Denote the space of holomorphy potentials by
		$$\h := \ker \delbar \nabla^{1,0} : C^{\infty}(Y,\CC) \to \Omega^{0,1}(T^{1,0} Y).$$
	\end{definition}

	The holomorphy potentials generate holomorphic vector fields, which live in the Lie algebra $H^0(Y,TY)$ of the holomorphic automorphism group $\Aut(Y)$ of the K\"ahler manifold $Y$. Two holomorphy potentials which differ by a constant define the same vector field, so we fix this indeterminacy by restricting to the \emph{mean-zero} holomorphy potentials, which we denote $\h_0$.
	
	We also have assumed that the fibration is \emph{polarised}, so we have a pair $(X,H) \to (B,L)$ where $H$ is relatively ample and $L$ is ample. For a polarised compact K\"ahler manifold $(Y,\omega,H_Y)$, we can consider the \emph{reduced automorphism group} 
	$$\Aut(Y,H_Y)\subset \Aut(Y)$$ 
	of automorphisms of $Y$ which lift to $H_Y$. The Lie algebra 
	$$\Lie \Aut(Y,H_Y)$$ 
	can be identified with the non-zero holomorphic vector fields on $X$ which vanish somewhere. Such vector fields are precisely those which can be written as $\nabla^{1,0} f$ for some mean-zero holomorphy potential $f$, so we have an identification 
	$$\h_0 \isom \Lie \Aut(Y,H_Y).$$
	See \cite[\S 3.5]{gauduchon2010calabi} for more details. 
	
	If $\omega\in c_1(H_Y)$ is a K\"ahler form, then we obtain a group of holomorphic isometries $\Isom(Y,\omega) \subset \Aut(Y)$, and a reduced isometry group $\Isom(Y,\omega,H_Y)\subset \Aut(Y,H_Y)$ of holomorphic isometries which lift to $H_Y$. The Lie algebra $\Isom(Y,\omega,H_Y)$ is given by the space of holomorphic Killing vector fields which vanish somewhere on $Y$. The following theorem shows that such automorphisms can be described by holomorphy potentials when $(Y,\omega)$ is cscK.
	
	\begin{proposition}[Matsushima--Lichnerowicz theorem (see {\cite[\S 3.5,3.6]{gauduchon2010calabi}})]
		If $(Y,\omega,H_Y)$ is a polarised manifold and $\omega$ is cscK, then the reduced automorphism group $\Aut(Y,H_Y)$ is reductive, and the reduced holomorphic isometry group $\Isom(Y,\omega,H_Y)$ is a maximal compact subgroup. Under the identification 
		$$\h \isom \Lie \Aut (Y,H_Y)$$
		this corresponds to the decomposition
		$$\h_0 = \h_0^\RR \oplus i \h_0^\RR$$
		of (mean zero) holomorphy potentials, where $\h_0^\RR$ denotes the real mean-zero holomorphy potentials $f: Y\to \RR$. With respect to the isomorphism between holomorphy potentials and holomorphic vector fields, a purely imaginary holomorphy potential $f\in i \h_0^\RR$ generates a Killing vector field $\nabla^{1,0} f \in \Lie \Isom(Y, \omega_Y, H_Y)$. 
	\end{proposition}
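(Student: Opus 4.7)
The plan is to prove this via Hodge theory applied to the Lichnerowicz operator $\mathcal{D} := \delbar \nabla^{1,0}$, following the approach of Lichnerowicz and later refinements due to Calabi. The whole proof really hinges on a single Weitzenb\"ock-type identity that becomes a commutation relation exactly when the scalar curvature is constant, so the cscK hypothesis enters at a very specific point.

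First I would establish the Lie algebra identification $\h_0 \isom \Lie \Aut(Y, H_Y)$. Recall that $\Lie \Aut(Y,H_Y)$ consists of holomorphic vector fields vanishing somewhere on $Y$. If $\xi \in H^0(Y, TY)$ vanishes at a point, then the dual $(0,1)$-form $\xi^\flat = g(\xi, \cdot)|_{T^{0,1}Y}$ is $\delbar$-exact by Hodge theory (it is $\delbar$-closed because $\xi$ is holomorphic, and has no harmonic part because $\xi$ vanishes somewhere, via a pairing argument with holomorphic top forms or directly via the Calabi--Lichnerowicz integral formula). Writing $\xi^\flat = \delbar f$ and raising indices produces a function $f$ with $\xi = \nabla^{1,0} f$, and normalising to mean zero identifies $f \in \h_0$. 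Conversely every $f \in \h_0$ produces $\nabla^{1,0} f \in \Lie \Aut(Y,H_Y)$; the only point to check is that this vector field lies in $\Aut(Y,H_Y)$ rather than merely $\Aut(Y)$, which follows because $\nabla^{1,0} f$ vanishes somewhere (since $f$ has mean zero and is complex-valued but with real and imaginary parts each hitting their critical values).

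Next, introduce the fourth-order operator $\mathcal{D}^* \mathcal{D} : C^\infty(Y, \CC) \to C^\infty(Y, \CC)$ whose kernel, among mean-zero functions, is precisely $\h_0$. The crucial Weitzenb\"ock-type computation (see e.g. \cite[\S 4]{gauduchon2010calabi}) gives the identity
\[
\mathcal{D}^*\mathcal{D} f = \tfrac{1}{2}\Delta^2 f + \ip{\Ric, i\deldelbar f} + \tfrac{1}{2}\ip{dS(\omega), df},
\]
where the last term is the obstruction to $\mathcal{D}^*\mathcal{D}$ being a real operator (i.e.\ commuting with complex conjugation). The cscK assumption $dS(\omega) = 0$ kills this term, so on a cscK manifold $\mathcal{D}^*\mathcal{D}$ is real. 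Consequently its kernel $\h_0$ is stable under conjugation, which yields the direct-sum decomposition
\[
\h_0 = \h_0^{\RR} \oplus i\, \h_0^{\RR}.
\]

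The third step is to identify $i \h_0^\RR$ with the Lie algebra of $\Isom(Y, \omega, H_Y)$. For $f = ih$ with $h \in \h_0^\RR$ the associated real holomorphic vector field is $V_h := \nabla^{1,0}(ih) + \overline{\nabla^{1,0}(ih)}$, which a direct computation using the K\"ahler identities shows equals $J \grad h$, the Hamiltonian vector field of $h$ with respect to $\omega$. Being simultaneously holomorphic and symplectic on the K\"ahler manifold $(Y, \omega)$ forces $V_h$ to be Killing; conversely any Killing field in $\Lie \Isom(Y, \omega, H_Y)$ is the Hamiltonian of some real holomorphy potential by the same Hodge-theoretic argument as in the first step. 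Finally, reductivity of $\Aut(Y, H_Y)$ follows formally: the decomposition $\h_0 = \h_0^\RR \oplus i \h_0^\RR$ exhibits $\h_0$ as the complexification of the real Lie subalgebra $i \h_0^\RR \isom \Lie \Isom(Y, \omega, H_Y)$, and the latter integrates to a compact real form of $\Aut(Y,H_Y)$, which is the defining property of a reductive complex Lie group.

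The main obstacle is the Weitzenb\"ock identity for $\mathcal{D}^*\mathcal{D}$: this is a careful but standard computation using the K\"ahler identities and the commutator $[\delbar^*, \nabla^{1,0}]$, and is where the cscK condition enters as the precise hypothesis making the operator real. Everything else is either Hodge theory on a compact K\"ahler manifold or abstract Lie-theoretic bookkeeping. I would refer to \cite[\S 3.5, 3.6]{gauduchon2010calabi} for the detailed verification of this identity rather than reproduce it.
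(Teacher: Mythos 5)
The paper gives no proof of this proposition beyond the citation to Gauduchon \S 3.5--3.6, and your argument --- the Lichnerowicz operator $\mathcal{D}^*\mathcal{D}$, the Weitzenb\"ock identity whose $\langle dS(\omega), df\rangle$ term is killed exactly by the cscK hypothesis, the resulting reality of the operator and conjugation-invariance of its kernel giving $\h_0 = \h_0^\RR \oplus i\,\h_0^\RR$, the identification of $i\,\h_0^\RR$ with Killing fields via $J\grad h$, and reductivity from the compact real form $\Isom(Y,\omega,H_Y)$ --- is precisely the standard proof in that reference, so your approach matches the paper's. One minor caution: your parenthetical justification that $\nabla^{1,0}f$ must vanish somewhere because $\Re f$ and $\Im f$ each attain critical values does not work as stated (a zero of $\nabla^{1,0}f$ requires a \emph{common} critical point, and those need not coincide); the identification $\h_0 \isom \Lie\Aut(Y,H_Y)$ is true and is assumed as background in the paper just before the proposition, but its proof runs through the harmonic-part/lifting argument of the cited reference rather than this shortcut.
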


	\begin{remark}
		\label{rmk:spaceofcscKmetrics}
		One may go further to describe the space of all cscK metrics in the class $c_1(H_Y)$ by constructing an identification with the homogeneous space $\Aut(Y,H_Y)/\Isom (Y,\omega_Y,H_Y)$ for a fixed cscK metric $\omega_Y$. One may further use that the Riemannian exponential map is a diffeomorphism on this space to deduce an isomorphism $$\h_0^\RR \isom \Aut(Y,H_Y)/\Isom (Y,\omega_Y,H_Y)$$ of the \emph{real} mean-zero holomorphy potentials with respect to $\omega_Y$ with the set of cscK metrics in the class $c_1(H_Y)$. See for example \cite[\S 2.2]{hallam2022thesis} for a more detailed discussion of this description.
	\end{remark}

	\subsection{The equation}
	
	Let us now turn to the question studied in \cite{dervan2019optimal}. We will briefly recall its origins in the \emph{adiabatic limit}, which occurs when we consider the de Rham cohomology class 
	$$[\omega_X + k \pi^* \omega_B] = c_1(H) + k c_1(L)$$
	on $X$, which for $k\gg 0$ is a K\"ahler class (in the following we will omit the $\pi^*$ when referring to $\omega_B$ on $X$). In this setting the natural question about canonical metrics is the following:
	
	\begin{question}\label{que:fibrations}
		When does an adiabatic K\"ahler class $[\omega_X + k \omega_B]$ admit a constant scalar curvature K\"ahler metric?
	\end{question}

	To answer this question, it is useful to compute the expansion of the scalar curvature $S(\omega_X + k \omega_B)$ in powers of $k$. We have (see \cite[Cor. 4.7]{dervan2019optimal})
	\begin{multline}\label{eq:adiabaticscalarcurvature}
		S(\omega_X+ k \omega_B) = S(\omega_b) + k^{-1}(S(\omega_B) + \contr_{\omega_B} \rho_\calH + \Lap_\calV (\contr_{\omega_B}(\omega_X)_\calH)) + O(k^{-2}).
	\end{multline}
	Here $S(\omega_b)$ is the function on $X$ whose restriction to a fibre $X_b$ is the scalar curvature of $\omega_b$. Since this appears as the leading order term in the expansion \eqref{eq:adiabaticscalarcurvature}, to first approximation in order to answer \cref{que:fibrations} we should require that $S(\omega_b)$ is a constant function on $X_b$ for every $b$, as we assumed in \cref{sec:kahlerfibrations}.

	The other terms in the expansion \eqref{eq:adiabaticscalarcurvature} are defined as follows:
	\begin{itemize}
		\item For a differential form $\beta$ on $X$, the horizontal component $\beta_\calH$ denotes the restriction to the horizontal distribution defined by $\omega_X$.
		\item The contraction $\contr_{\omega_B}$ is given by 
		$$\contr_{\omega_B} \beta = n \frac{\beta_\calH \wedge \omega_B^{n-1}}{\omega_B^n}$$
		where the quotient is taken in $\det \calH^*$.	When $\beta$ is pulled back from $B$, this is just the pullback of the regular contraction of $\beta$ with $\omega_B$ on $B$.
		\item The form $\rho$ denotes the \emph{relative Ricci curvature} of $\omega_X$ defined as follows. The form $\omega_X$ is K\"ahler in the vertical directions, and so induces a positive-definite Hermitian metric on $\calV \to X$. The induced Hermitian metric on the holomorphic line bundle $\det \calV\to X$ has curvature form $\rho$. In local coordinates one can write
		$$\rho = -\frac{i}{2\pi}\deldelbar \log \det (\omega_X)_\calV$$
		where the determinant is taken in $\calV^*$, so $\rho_\calV$ is equal to the Ricci curvature of $(X_b,\omega_b)$. Note that $\rho$ may have non-trivial horizontal component $\rho_\calH$, as $\deldelbar$ is being taken on the total space of $X$.
		\item The vertical Laplace operator $\Lap_\calV$ on functions $f$ on $X$ is defined by
		$$\Lap_\calV = \contr_\calV (i\deldelbar f)$$
		where $\contr_\calV$ is the vertical contraction with $\omega_X$, given by
		$$\contr_\calV \beta = m\frac{\beta_\calV \wedge \omega_X^{m-1}}{\omega_X^m}$$
		where the quotient is taken in $\det \calV^*$.
	\end{itemize}

	To write down the optimal symplectic connection, we need one more ingredient, a certain projection operator 
	$$p: C^\infty(X,\CC) \to C_E^{\infty}(X).$$ 
	Here $C_E^\infty(X)$ is the subspace of smooth functions $C^{\infty}(X,\CC)$ which restrict to mean-zero \emph{real} holomorphy potentials on each fibre $(X_b,\omega_b)$:
	$$C_E^\infty(X) := \left\{f \in C^\infty(X,\CC) \mid \rest{f}{X_b} \in \h_0(X_b,\omega_b)^\RR \text{ and } \int_{X_b} \rest{f}{X_b} \omega_b^m = 0 \text{ for all } b\in B\right\}.$$
	The vector space $E_b := \h_0(X_b,\omega_b)^{\RR}$ of real mean-zero holomorphy potentials on $(X_b,\omega_b)$ has dimension independent of $b$ by our last assumption in \cref{sec:kahlerfibrations} and the fact that the Lie algebra of reduced automorphisms is identified with holomorphy potentials as described in \cref{sec:holomorphypotentials}. Indeed these vector spaces $E_b$ form a real vector bundle over $B$ and a smooth section of $E\to B$ can be identified exactly with a smooth function in $C_E^{\infty}(X)$ (\cite[p. 13]{dervan2019optimal}). Furthermore it was explained by Hallam \cite{hallam2022thesis} that relatively cscK metrics in the same relatively K\"ahler class $[\omega_X]$ as $\omega_X$ can be identified with the smooth sections of $E$ (on each fibre this corresponds to the discussion in \cref{rmk:spaceofcscKmetrics}).
	
	The projection $p$ is the $L^2$ projection onto $C_E^{\infty}(X)$ with respect to the inner product
	$$(f,g) \mapsto \int_X fg \omega_B^n \wedge \omega_X^m$$
	defined on $(X,\omega_X) \to (B,\omega_B)$.
	
	\begin{remark}\label{rmk:ppullback}
		Note that if $f\in C^{\infty}(B)$ is a smooth function on the base, then $p(\pi^* f) = 0$. In particular if $\varphi$ is a smooth function on $B$ then $p(\contr_{\omega_B} \pi^* (i\deldelbar \varphi)) = 0$ by the observation that the horizontal contraction on $X$ for pulled back forms is simply the contraction on the base. 
	\end{remark}
	
	\begin{definition}\label{def:OSC}
		A relatively cscK metric $\omega_X$ on a compact K\"ahler fibration $\pi: X \to (B,\omega_B)$ is an \emph{optimal symplectic connection} if
		\begin{equation}\label{eq:OSC}p(\Lap_\calV \contr_{\omega_B} \mu^* F_\calH + \contr_{\omega_B} \rho_\calH) = 0.\end{equation}
	\end{definition}

	Notice by \cref{rmk:ppullback} and \cref{thm:minimalcoupling} that we could have equivalently written $(\omega_X)_\calH$ instead of $\mu^* F_\calH$, so the term inside the projection agrees with the terms at subleading order in \cref{eq:adiabaticscalarcurvature}. 

	\begin{remark}
		As has been observed in \cite[\S 3.5]{dervan2019optimal} an optimal symplectic connection on the projectivisation $\PP(E)$ of a holomorphic vector bundle arises precisely from a Hermite--Einstein metric on $E$. Moreover it follows simply from the relatively cscK assumption that, vacuously, an optimal symplectic connection on a fibration $X\to \{p\}$ over a point is simply a cscK metric on $X$. In this sense the notion of an optimal symplectic connection interpolates between cscK metrics on varieties and Hermite--Einstein metrics on vector bundles.
	\end{remark}
	
	Let us now recall the existence result of Dervan--Sektnan for cscK metrics in adiabatic K\"ahler classes.
	
	\begin{theorem}[\cite{dervan2019optimal}]\label{thm:adiabaticscKDervanSektnan}
		A compact polarised fibration $\pi: (X,H) \to (B,L)$ admits a cscK metric in the class $H + kL$ for all $k\gg 0$ sufficiently large whenever
		\begin{itemize}
			\item $X$ admits an optimal symplectic connection $\omega_X \in c_1(H)$,\footnote{Here the inclusion of the projection $p$ becomes clear after comparing with the expansion \eqref{eq:adiabaticscalarcurvature}. Indeed the space $C_E^{\infty}(X) \oplus \pi* C^{\infty}(B)$ is the kernel of the linearisation of the adiabatic scalar curvature, so the assumption that $\omega_X$ is OSC ensures obstructions to higher order corections to \eqref{eq:adiabaticscalarcurvature} vanish and the linearisation can be used to construct arbitrarily good approximate solutions.} and
			\item the base $(B,L)$ admits a twisted cscK metric $\omega_B\in c_1(L)$ such that
			$$S(\omega_B) - \contr_{\omega_B} \alpha = const$$
			where $\alpha = q^* \Omega_{WP}$ is the pullback of the Weil--Peterson metric from the moduli space of cscK manifolds.\footnote{This term appears since the function $\contr_{\omega_B} \rho_\calH$ has non-zero projection onto $\pi^* C^{\infty}(B)$ given precisely by the contraction of the Weil--Peterson metric on the base.}
		\end{itemize}
	\end{theorem}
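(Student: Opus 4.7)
The plan is to carry out a perturbation argument in the spirit of the adiabatic expansion of Fine and Dervan--Sektnan, constructing arbitrarily good approximate cscK metrics in the class $c_1(H) + k c_1(L)$ starting from the data of the OSC $\omega_X$ and the twisted cscK base metric $\omega_B$, and then inverting the linearisation of the scalar curvature operator uniformly in $k$ to produce a genuine solution via the quantitative inverse function theorem. Set $\omega_k = \omega_X + k \omega_B$ and seek $\varphi_k \in C^\infty(X,\RR)$ with $S(\omega_k + i\deldelbar \varphi_k) = \hat S_k$, where $\hat S_k$ is the topological average. The starting point is the expansion \eqref{eq:adiabaticscalarcurvature} of $S(\omega_k)$ in powers of $k^{-1}$, together with the analogous expansion of the Lichnerowicz operator $\mathcal L_{\omega_k}$ that governs the linearisation of $S$.

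The key algebraic input is the structure of the kernel of the leading-order linearisation. As explained in the discussion preceding \cref{def:OSC}, the leading term in $\mathcal L_{\omega_k}$ acting on $C^\infty(X)$ is the fibrewise Lichnerowicz operator $\mathcal L_{\omega_b}$, whose (real) kernel on each fibre consists of mean-zero real holomorphy potentials plus the constants. Globally this identifies the kernel with $C^\infty_E(X) \oplus \pi^* C^\infty(B)$, where $C^\infty_E(X)$ is the space of smooth sections of the bundle $E \to B$ of fibrewise real mean-zero holomorphy potentials, and $\pi^*C^\infty(B)$ accounts for the constants along fibres. Consequently, approximate solutions can be improved at each order $k^{-j}$ only if the remaining error at that order is $L^2$-orthogonal to this kernel, i.e.\ has vanishing projection onto both $C^\infty_E(X)$ and $\pi^* C^\infty(B)$. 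The first step is therefore to develop the expansion of $S(\omega_k + i\deldelbar \varphi_k)$ carefully enough to isolate, at each order, the component in the kernel and the component orthogonal to it, and to solve for $\varphi_k = \sum_{j\geq 0} k^{-j} \varphi_j$ inductively by inverting the leading operator on the orthogonal complement.

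The obstruction at the critical order $k^{-1}$ is precisely the subleading term in \eqref{eq:adiabaticscalarcurvature}, namely $S(\omega_B) + \contr_{\omega_B} \rho_\calH + \Lap_\calV(\contr_{\omega_B}(\omega_X)_\calH)$. Using \cref{rmk:ppullback} and minimal coupling (\cref{thm:minimalcoupling}), the projection of this quantity onto $C^\infty_E(X)$ is exactly $p(\Lap_\calV \contr_{\omega_B} \mu^* F_\calH + \contr_{\omega_B} \rho_\calH)$, which vanishes by the OSC assumption \eqref{eq:OSC}; and its projection onto $\pi^*C^\infty(B)$ reduces on the base to $S(\omega_B) - \contr_{\omega_B}\alpha$ up to a constant, which is constant by the twisted cscK hypothesis. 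Both obstructions thus vanish simultaneously, and the error at order $k^{-1}$ can be absorbed by $\varphi_1$. Iterating, one shows that at each subsequent order the obstruction once again lies in the image of the leading operator on the orthogonal complement: on the fibre side this uses the linearisation of the OSC operator (shown in \cite{dervan2019optimal} to be Fredholm and self-adjoint with the same kernel $C^\infty_E(X)$), and on the base side the linearisation of the twisted scalar curvature operator (shown to be Fredholm with kernel pulled back from the Matsushima--Lichnerowicz-type reductivity of $\Aut(B,L)$ under the twisted cscK hypothesis). This allows construction of approximate solutions $\varphi_k^{(r)}$ with $\|S(\omega_k + i\deldelbar \varphi_k^{(r)}) - \hat S_k\| = O(k^{-r-1})$ for any $r$.

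The final step is the quantitative inverse function theorem. The linearisation $\mathcal L_k$ of the scalar curvature at the approximate solution $\omega_k + i\deldelbar \varphi_k^{(r)}$ is invertible on the $L^2$-orthogonal complement of $\ker \mathcal L_k$ (which to leading order is the kernel described above and, under the cscK assumption with discrete automorphisms of $B$ imposed in the corollary, shrinks only by the pulled-back constants on the base); a Poincar\'e-type estimate along the fibre Lichnerowicz operator combined with the analogous estimate on the base gives a bound on the inverse of the form $\|\mathcal L_k^{-1}\| \leq C k^\alpha$ for some fixed $\alpha$. The non-linear part of the scalar curvature operator is uniformly Lipschitz on balls of suitable radius, so for $r$ chosen large enough the quantitative implicit function theorem applied to the approximate solution produces a genuine zero $\varphi_k$ of $S - \hat S_k$ for all $k \gg 0$. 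The main obstacle in the argument is precisely this step: establishing the uniform inverse bound on $\mathcal L_k$ as $k \to \infty$. Unlike the stable bundle case, here the kernel of the leading operator is infinite-dimensional (all of $C^\infty_E(X)$ plus base functions), so the Poincar\'e inequality must be proven with constants independent of $k$, and this requires the Fredholm theory of both the linearisation of the OSC equation on fibres and of the twisted scalar curvature operator on the base to be coupled together carefully. Once this bound is established, the rest of the perturbation proceeds as in the bundle case of \cref{sec:step5}.
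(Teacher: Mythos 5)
This theorem is stated in the thesis as a citation to \cite{dervan2019optimal} with no proof given, and your proposal reproduces essentially the strategy of that reference: the adiabatic expansion following Fine, with the OSC equation cancelling the fibre-direction obstruction at order $k^{-1}$ and the twisted cscK condition the base-direction one, followed by approximate solutions to arbitrary order and a quantitative implicit function theorem with a $k$-dependent bound on the inverse of the linearisation. The one point to flag is that the final invertibility step also requires control of global holomorphic vector fields of the fibration (the reference, and the thesis whenever it applies the theorem, work under a discreteness assumption, e.g. a simple principal bundle over a base with discrete automorphisms), which your sketch only gestures at when it mentions the kernel "shrinking" on the base.
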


	The form $\alpha$ can be defined via pullback using the existence of the moduli space of cscK manifolds with automorphisms by Dervan--Naumann \cite{dervan2018moduli}, although the form can be identified with a fibre integral
	$$\alpha = -\int_{X/B} \rho_\calH \wedge \omega_X^m$$
	without any reference to to the moduli space. The condition that $X$ admits an optimal symplectic connection is vacuous when the fibres of $(X,H)$ have discrete automorphisms, in which the above theorem follows from the work of Fine \cite{fine} when $X$ is a surface and $B$ is a curve.
	
	We also have the following uniqueness result due to Dervan--Sektnan and Hallam.
	
	\begin{theorem}[{\cite{dervan2019optimal,hallam2020geodesics}}]\label{thm:uniqueness}
		Suppose $\omega_X, \omega_X'$ are two cohomologous optimal symplectic connections on a K\"ahler fibration $\pi: X \to (B,\omega_B)$. Then there exists a holomorphic automorphism $g$ of the fibration $X\to B$ (a biholomorphic $g: X \to X$ such that $\pi \circ g = \pi$) and a function $\varphi$ on $B$ such that
		$$\omega_X = g^* \omega_X' + \pi^* (i\deldelbar \varphi).$$
		Since the forms are cohomologous, the automorphism $g$ can always be taken in the identity component $\Aut_0(\pi)$ of the automorphism group $\Aut(\pi)$ of the fibration.
	\end{theorem}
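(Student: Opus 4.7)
The plan is to adapt the standard uniqueness proof for cscK metrics, which proceeds via the convexity of the Mabuchi functional along geodesics in the space of K\"ahler metrics, to the relative setting of the fibration $\pi: X \to (B,\omega_B)$. Since $[\omega_X]=[\omega_X']$ in relative cohomology, the relative $\deldelbar$-lemma lets us write $\omega_X' = \omega_X + i\deldelbar \psi$ for some smooth $\psi : X \to \RR$, so the uniqueness problem becomes one of identifying what $\psi$ must look like modulo fibration automorphisms and functions pulled back from $B$.

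First I would invoke the relative Mabuchi (or ``optimal symplectic connection'') functional $\calM_{\mathrm{rel}}$ constructed by Dervan--Sektnan on the space $\mathcal{H}$ of relatively K\"ahler potentials in $[\omega_X]$; its Euler--Lagrange equation is exactly the OSC equation \eqref{eq:OSC}, so both $\omega_X$ and $\omega_X'$ are critical points. Next I would connect $\omega_X$ and $\omega_X'$ by a weak relative geodesic $\psi_t$ in $\mathcal{H}$: this is the $C^{1,1}$ solution of a degenerate relative Monge--Amp\`ere equation on $X\times \Sigma$ (with $\Sigma$ a strip), an object whose existence and regularity in the fibered setting is the content of Hallam's thesis \cite{hallam2020geodesics}. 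Hallam also proves the convexity of $\calM_{\mathrm{rel}}$ along such weak geodesics, so since $\omega_X$ and $\omega_X'$ are both critical the functional must in fact be \emph{affine} along $\psi_t$.

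The third step is the rigidity argument: one must show that affineness of $\calM_{\mathrm{rel}}$ along $\psi_t$ forces the geodesic to be ``trivial'', in the sense that $\dot\psi_t$ decomposes as the sum of a Hamiltonian generating a holomorphic automorphism of the fibration and a function pulled back from $B$. This is the relative analogue of the Berman--Berndtsson theorem, and morally it reflects the Matsushima--Lichnerowicz structure on each fibre: the second variation of $\calM_{\mathrm{rel}}$ vanishes precisely on $C_E^\infty(X)\oplus \pi^* C^\infty(B)$, which is exactly the kernel of the linearisation of \eqref{eq:OSC} at an OSC, and the projection $p$ in \cref{def:OSC} is designed precisely to mod out this kernel. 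Finally, integrating the resulting one-parameter family of fibrewise holomorphic vector fields produces the automorphism $g\in \Aut_0(\pi)$ with $g^*\omega_X' = \omega_X + \pi^*(i\deldelbar \varphi)$, since the pulled-back component of $\dot\psi_t$ contributes only a function on $B$.

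The main obstacle will be the rigidity step: Hallam's convexity result guarantees that $\calM_{\mathrm{rel}}$ is non-decreasing in $t$, but upgrading ``affine along a $C^{1,1}$ geodesic'' to ``generated by an honest holomorphic automorphism'' requires genuine pluripotential-theoretic work --- one must promote weak to strong regularity of $\psi_t$, and then argue that the fibrewise Hamiltonian content of $\dot\psi_t$ sweeps out a genuine flow in $\Aut(\pi)$ rather than merely an infinitesimal object. In the absolute cscK case this uses deep work of Berman--Berndtsson and Chen--Cheng; adapting their arguments fibrewise, while using the minimal coupling identity of \cref{thm:minimalcoupling} to control the horizontal component of $\omega_X$ along the flow, is where the technical heart of the proof lies.
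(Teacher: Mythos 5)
This theorem is not proved in the thesis at all: it is quoted from \cite{dervan2019optimal,hallam2020geodesics}, so the comparison to make is with the argument in those works. Your overall shape (both metrics are critical points of a functional, join them by a geodesic, use convexity plus an equality analysis to extract an automorphism) is the right family of ideas, but the technical route you propose is not the one used, and the two steps you yourself flag as the ``technical heart'' are genuine gaps rather than deferrals to existing results. There is no theory of weak relative geodesics obtained by solving a degenerate fibration Monge--Amp\`ere equation on $X\times\Sigma$ with $C^{1,1}$ regularity in this setting, and there is no relative analogue of the Berman--Berndtsson/Chen--Cheng rigidity machinery to invoke; Hallam's thesis does not supply either. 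As written, the proposal therefore reduces the theorem to two unproved (and much harder) statements, so it does not constitute a proof.

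The actual argument exploits a finite-dimensional structure that your plan ignores. An optimal symplectic connection is by definition relatively cscK, and by \cref{rmk:spaceofcscKmetrics} the cscK metrics on each fibre in the fixed fibre class form the finite-dimensional symmetric space $\Aut_0(X_b,H_b)/\Isom_0(X_b,\omega_b,H_b)\isom \h_0(X_b,\omega_b)^\RR$. Consequently the relatively cscK metrics in $[\omega_X]$ are identified (up to the $\pi^*(i\deldelbar\varphi)$ ambiguity from the base, which is exactly the $\varphi$ in the statement) with smooth sections of the finite-rank bundle $E\to B$ of fibrewise mean-zero real holomorphy potentials. Hallam defines geodesics fibrewise in these finite-dimensional symmetric spaces; they are smooth, so no pluripotential theory, weak regularity, or $C^{1,1}$ upgrading is needed. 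He then shows the Dervan--Sektnan log-norm (relative Mabuchi-type) functional, whose critical points are precisely the solutions of \cref{def:OSC}, is convex along such geodesics, and in the affine case the boundary analysis produces a path of fibrewise holomorphic vector fields that integrates directly to an element of $\Aut_0(\pi)$, giving $\omega_X = g^*\omega_X' + \pi^*(i\deldelbar\varphi)$. So the finite dimensionality of the fibrewise automorphism groups is what replaces the Berman--Berndtsson and Chen--Cheng input you were planning to import; if you want to reconstruct the proof, that identification of the space of relatively cscK potentials with $\Gamma(B,E)$ is the key step to put at the start, not the relative $\deldelbar$-lemma alone.
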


	One interpretation of this uniqueness result is that optimal symplectic connections give \emph{canonical} relatively K\"ahler metrics on compact K\"ahler fibrations.
	
	\begin{remark}
		The preceeding notion of an optimal symplectic connection has been generalised by Ortu \cite{ortu2022optimal} to smooth deformations $Y\to (B,\omega_B)$ of a relatively cscK fibration $(X,\omega_X) \to (B,\omega_B)$. In this case an extra term appears in \eqref{eq:OSC} related to the deformation of complex structure of the fibres, and a version of \cref{thm:adiabaticscKDervanSektnan} is proven on the deformed fibration $Y$. Algebraically such deformations should correspond to fibrations with only K-semistable fibres (rather than relatively cscK fibrations, which by \cref{conj:YTD} should correspond to fibrations with K-polystable fibres). Since K-semistability is an open condition, fibrations of this form should be more amenable to the construction of moduli.
	\end{remark}

	\begin{remark}
		In view of the Kempf--Ness picture of \cref{sec:GIT}, we note that Hallam \cite{hallam2020geodesics} has introduced a relative version of the Mabuchi functional of K-stability (see \cref{sec:kahlermetrics}) which acts as a Kempf--Ness functional for the optimal symplectic connection equation. However an interpretation of the equation in terms of a moment map is still an open problem.
	\end{remark}

	\section{Stability of fibrations\label{sec:stabilityoffibrations}}
	
	Let us now describe, as suggested by \cref{principle}, the algebro-geometric theory corresponding to the preceding notion of a canonical metric on a fibration. Just as the OSC equation generalises the Hermite--Einstein equation in the case of projective bundles, this theory will be closely related to the slope stability described in \cref{sec:stability} for vector bundles.
	
	We begin with a polarised fibration $\pi: (X,H) \to (B,L)$ where $L$ is ample and $H$ is relatively ample. We will work in the setting where $X$ and $B$ are smooth, and where the fibres $(X_b,H_b)$ of $X$ are K-polystable. This agrees (assuming the Yau--Tian--Donaldson conjecture \cref{conj:YTD}) with the assumption of the fibres being cscK which appeared in our discussion of optimal symplectic connections. In order to identify stability, we will first define a notion of test configuration for a fibration analogous to \cref{def:testconfiguration}.
	
	\begin{definition}[Fibration degeneration]
		A \emph{fibration degeneration} of $\pi: (X,H) \to (B,L)$ of exponent $k$, for $k\gg 0$, is a scheme $p: \calX \to B\times \CC$ over $\CC$, and a relatively ample line bundle $\calH$ on $\calX$ such that
		\begin{itemize}
			\item The morphism $p: \calX\to B\times \CC$ is flat,
			\item $(\calX_t,\calH_t) \isom (X,H^k)$ for $t\ne 0$,
			\item there is a $\CC^*$ action on $\calX$ lifting to $\calH$ which covers the standard action on $B\times \CC$ (which is trivial on $B$).
		\end{itemize}
	\end{definition}

	We say the fibration degeneration is a \emph{product} if there is a $\CC^*$-equivariant isomorphism $\calX \isom X \times \CC$ where the action on $X$ is given by a one-parameter subgroup of the group $\Aut(\pi)$ of relative automorphisms of the fibration $(X,H)\to (B,L)$. Furthermore if this one-parameter subgroup is trivial so that $\CC^*$ acts on $X\times \CC$ only on the second factor, we say the fibration degeneration is \emph{trivial}.

	If there is a $\CC^*$-equivariant isomorphicm $\calX\isom X\times \CC$ as fibrations over $B\times \CC$ we call the fibration degeneration a \emph{product} degeneration. If furthermore $\calH \isom H$ we call it a trivial 

	\begin{remark}
		Any such fibration degeneration is equivalent to a one-parameter subgroup of $\GL(N_{j,k}+1)$ acting on $\Hilb(\PP(\calU_{j,k}))$ where $\calU_{j,k}$ is the universal family of an appropriate quot scheme for which $\Hilb(\PP(\calU_{j,k}))$ parametrises fibrations over $B$. See \cite[Lem 3.2]{dervan2019moduli}.
	\end{remark}

	Note that for every $j\gg 0$, we have a genuine test configuration $(\calX,jL+\calH)$ for the polarised variety $(X,jL+H^k)$ of exponent one, in the sense of \cref{def:testconfiguration}. Using the standard definition of Donaldson--Futaki invariant \cref{def:donaldsonfutakiinvariant} we can expand
	\begin{equation}\label{eq:DFfibration}\DF(\calX,jL+\calH) = j^n W_0(\calX,\calH) + j^{n-1} W_1(\calX,\calH) + O(j^{n-2})\end{equation}
	in powers of $j$. 
	
	\begin{definition}[Stability of fibration]\label{def:stabilityoffibration}
		We say a fibration $\pi: (X,H)\to (B,L)$ is 
		\begin{itemize}
			\item \emph{semistable} if $W_0(\calX,\calH) \ge 0$ for all fibration degenerations $(\calX,\calH)$ and $W_1(\calX,\calH) \ge 0$ whenever $W_0(\calX,\calH)=0$.
			\item \emph{polystable} if it is semistable and whenever $W_0=W_1=0$, there exists an open subset $U\subset B$ of complement codimension at least 2 such that $\rest{\calX,\calH}{U}$ normalises to a product fibration degeneration over $U$,\footnote{The necessity of this condition was pointed out by Hallam \cite{hallam2022thesis} to resolve the existence of certain fibration degenerations identified by Hattori \cite{hattori2022fibration} which destabilise any fibration. This condition is analogous to considering only torsion-free coherent subsheaves in the theory of slope stability of vector bundles, or considering ``almost trivial" test configurations in K-stability.}
			\item \emph{stable} if it is semistable and whenever $W_0=W_1=0$, there exists an open subset $U\subset B$ of complement codimension at least 2 such that $\rest{\calX,\calH}{U}$ is the trivial degeneration.
		\end{itemize}
	\end{definition}

	\begin{remark}\label{rmk:fibrationgiesekerstability}
		Recently an alternative notion of stability called $\mathfrak{f}$-stability has been introduced by Hattori \cite{hattori2022fibration}, which is the ``asymptotic Chow stability" version of the above notion of stability of fibrations. This stability asks that whenever $W_i(\calX,\calH) = 0$ for $i=0,\dots,i_0$ then $W_{i_0+1}(\calX,\calH) \ge 0$. In particular it follows quickly that stability in the above sense implies $\mathfrak{f}$-stability, which in turn implies semistability of the fibration. 
	\end{remark}
	
	We will now describe the standard method of producing fibration degenerations of a given polarised fibration $\pi: (X,H)\to (B,L)$, which was indeed given as the definition of a fibration degeneration in \cite{dervan2019moduli}. 
	
	By the relative ampleness of $H$, for $k\gg 0$ the dimension of $H^0(X_b,H_b^k)$ is constant over $b\in B$. Indeed for such $k\gg 0$ by the flatness of $\pi$ we obtain vector bundles
	$$V_k := \pi_* H^k$$
	over $B$ with fibre $\rest{V_k}{b} = H^0(X_b,H_b^k)$. To produce degenerations of $X$, we will take degenerations of $V_k$ as a vector bundle. As discussed in \cref{rmk:turningoffextension}, this is one perspective that one may understand the stability of vector bundles in the language of K-stability.
	
	\begin{definition}
		Given a vector bundle $E\to B$, a \emph{vector bundle degeneration} is a torsion-free coherent sheaf $\calE \to B\times \CC$, flat over $\CC$, such that
		\begin{itemize}
			\item $\calE$ is an equivariant sheaf with respect to the standard action of $\CC^*$ on $B\times \CC$, 
			\item the general fibre $\calE_t$ is isomorphic to $E\to B$ for all $t\ne 0$.
		\end{itemize}
	\end{definition}

	Let $\calE$ be some vector bundle degeneration of $V_k$. Then we may take the relative $\Proj$ of the sheaf $\calE$ to obtain a projective variety 
	$$\PP(\calE) := \underline{\Proj}(\Sym \calE)$$
	with a morphism $p: \PP(\calE) \to B$ and a relatively ample line bundle $\calO(1)$. The fibre of $\PP(\calE)$ over $b \in B$ is the projective space of \emph{quotients} $\PP(\calE(b))$ of the vector space $\calE \otimes_{\calO_B} k(b)$. In \cref{sec:Fibrationsfuturedirections} we will consider a simpler case where $\calE$ is a locally-free vector bundle degeneration, in which case $\PP(\calE)$ is simply the regular projective bundle of quotients.
	
	By the relative Kodaira embedding for the relatively ample line bundle $H\to X$, one naturally obtains an embedding
	$$X\into \PP(V_k)$$
	for $k\gg 0$ large enough that $H^k$ is relatively very ample. Thus we obtain a subscheme
	$$X\times \CC^* \subset \PP(\calE)$$
	and define
	$$\calX = \overline{X\times \CC^*}$$
	to be the closure of $X\times \CC^*$ inside the projectivisation $\PP(\calE)$ of the vector bundle degeneration $\calE$. Being the projective closure of $X\times \CC^*$, $\calX$ has equidimensional fibres and therefore is flat over the one-dimensional base $\CC$. The $\CC^*$ action on $\calE$ produces a $\CC^*$ action on $\calX$, and we obtain a $\CC^*$-equivariant morphism
	$$p: \calX \to B$$
	and a relatively ample line bundle $\calH := \iota^* \calO(1)$ where $\iota: \calX \into \PP(\calE)$ is the inclusion. Then $(\calX,\calH)$ is a fibration degeneration of $(X,H)$ of exponent $k$.
	
	As discussed in \cref{rmk:turningoffextension} the simplest vector bundles degenerations are those obtained by starting with a subsheaf $\calF \subset V_k$ and ``turning off the extension class" $e\in \Ext^1(V_k/\calF, \calF)$ defining the extension
	\begin{center}
		\ses{\calF}{V_k}{V_k/\calF}.
	\end{center}
	This produces a vector bundle degeneration $\calE\to \CC$ for which the general fibre $\calE_t$ is given by $V_k \to B$ for $t\ne 0$, and the central fibre is $\calE_0 = \calF \oplus V_k/\calF \to B.$ In the case where, for example, $\calF$ is a holomorphic subbundle, then the induced test configuration on each fibre $(X_b,H_b)$ corresponds to a test configuration arising from deformation to the normal cone of a linear subspace, a special form of those appearing in slope K-stability (see \cref{rmk:slopeKstability}).

	Let discuss further the coefficients $W_0$ and $W_1$ appearing in the expansion \eqref{eq:DFfibration}. First we recall $W_0(\calX,\calH)$.
	
	\begin{proposition}[{\cite[Lem. 2.33]{dervan2019moduli}} or {\cite[Lem. 4.8]{hattori2022fibration}}]
		Let $(\calX,\calH)$ be a fibration degeneration of $(X,H)$. Then for general $b\in B$, we have
		$$W_0(\calX,\calH) = {m+n\choose n} L^n \cdot \DF(\calX_b,\calH_b).$$
	\end{proposition}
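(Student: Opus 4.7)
The plan is to apply the intersection-theoretic formula \eqref{eq:DFintersectiontheory} to the test configuration $(\calX, jL+\calH)$ of $(X, jL+H^k)$, expand in powers of $j$, and extract the leading coefficient. Writing $N = m+n = \dim X$, the formula expresses $\DF(\calX, jL+\calH)$ as a combination of the four intersection numbers
$$(jL+H^k)^N,\quad K_X\cdot (jL+H^k)^{N-1},\quad \overline{(jL+\calH)}^{N+1},\quad K_{\bar\calX/\CCPP^1}\cdot \overline{(jL+\calH)}^N$$
on $X$ and its compactification $\bar\calX \to B\times\CCPP^1$. The first observation is that since $L = \pi^* L_B$ is pulled back from the $n$-dimensional base $B$, we have $L^i = 0$ in cohomology for $i > n$; expanding each intersection number by the binomial theorem therefore produces a polynomial in $j$ of degree at most $n$, and the leading $j^n$-term is obtained by taking exactly $n$ factors of $L$.

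The second step is to compute these leading coefficients by reducing to a general fibre of $\bar\calX \to B$. The projection formula gives $\bar L^n \cdot \alpha = L^n \cdot \rest{\alpha}{\bar\calX_b}$ for any class $\alpha$ on $\bar\calX$ and generic $b \in B$, identifying $L^n$ with its degree as a $0$-cycle on $B$. For example, one obtains
$$(jL+H^k)^N = \binom{N}{n}\, L^n H_b^{km}\, j^n + O(j^{n-1}),$$
and analogous expressions for the remaining three intersection numbers. For generic $b$, the fibre $\calX_b \to \CC$ inherits a $\CC^*$-equivariant polarisation $\calH_b$ from $\calX$ and is itself a test configuration of $(X_b, H_b^k)$, so the fibrewise intersections one encounters compute $\DF(\calX_b, \calH_b)$ via \eqref{eq:DFintersectiontheory}.

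The crucial geometric input needed for this reduction is the adjunction identity
$$\rest{K_{\bar\calX/\CCPP^1}}{\bar\calX_b} = K_{\bar\calX_b/\CCPP^1},$$
which follows from the decomposition $K_{\bar\calX/\CCPP^1} = K_{\bar\calX/(B\times\CCPP^1)} + p_B^* K_B$ via the tower $\bar\calX \to B\times\CCPP^1 \to B$, together with the observation that $p_B^* K_B$ restricts trivially to any fibre of $\bar\calX \to B$. An analogous identity $\rest{K_X}{X_b} = K_{X_b}$ is also used, for the same reason.

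Finally, assembling the four leading-order contributions and simplifying the combinatorial factors arising from $\binom{N}{n}, \binom{N-1}{n}, \binom{N+1}{n}$, the resulting bracketed expression is recognised as (a scalar multiple of) the intersection-theoretic form of $\DF(\calX_b, \calH_b)$, yielding the claimed identity $W_0(\calX,\calH) = \binom{m+n}{n} L^n \cdot \DF(\calX_b, \calH_b)$. The main obstacle is essentially bookkeeping: carefully tracking every binomial coefficient and factorial through the expansion, reconciling the normalisation conventions between the Hilbert-polynomial definition \eqref{def:donaldsonfutakiinvariant} and the intersection-theoretic formula \eqref{eq:DFintersectiontheory} (since the precise coefficient $\binom{m+n}{n}$ is sensitive to these), and justifying that $\calX_b \to \CC$ is a genuine test configuration for general $b$, which amounts to generic flatness of $p: \calX \to B\times\CC$ restricted to $\{b\}\times\CC$.
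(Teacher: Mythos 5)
This proposition is stated in the thesis without proof — it is quoted from \cite{dervan2019moduli} and \cite{hattori2022fibration} — so there is no internal argument to compare against; your strategy is exactly the one used in those references: expand the intersection-theoretic Donaldson--Futaki invariant of $(\calX,jL+\calH)$ in $j$, use that $L$ is pulled back from the $n$-dimensional base so only $j^n$ and lower survive, push the leading coefficient onto a general fibre by the projection formula, and identify $K_{\bar\calX/\CCPP^1}$ along $\bar\calX_b$ via $K_{\bar\calX/\CCPP^1}=K_{\bar\calX/(B\times\CCPP^1)}+p_B^*K_B$. The caveats you flag (generic flatness of $p$ over $\{b\}\times\CC$, $K_X\vert_{X_b}=K_{X_b}$, generality of $b$ so that $\bar\calX_b$ avoids the bad locus of the possibly singular total space) are precisely the points that need a sentence each, and they all go through for general $b$.

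The one item that is more than bookkeeping is the normalisation, and you should not expect it to come out in the wash. If you substitute the polarisation $jL+H^k$ verbatim into \eqref{eq:DFintersectiontheory}, the prefactor $1/\bigl(2(N+1)(jL+H^k)^N\bigr)$ with $N=m+n$ is itself of order $j^{-n}$, so the resulting quantity is \emph{bounded} in $j$: your own fibre reduction then shows it converges to $\DF(\calX_b,\calH_b)$, with no $j^n$ growth and no factor $\binom{m+n}{n}L^n$ at all, so there is no leading $j^n$-coefficient to extract and the identity as stated would fail (the left side would be zero). The expansion \eqref{eq:DFfibration} and the constant in the proposition are taken in the convention of \cite{dervan2019moduli}, where one expands the \emph{unnormalised} quantity $\tfrac{N}{N+1}\mu(X,jL+H^k)\,\overline{(jL+\calH)}^{\,N+1}+K_{\bar\calX/\CCPP^1}.\overline{(jL+\calH)}^{\,N}$ (equivalently, one keeps the factor of the leading Hilbert coefficient rather than dividing by it), with the fibrewise $\DF$ understood in the same unnormalised sense; the two conventions in the thesis, \cref{def:donaldsonfutakiinvariant} and \eqref{eq:DFintersectiontheory}, differ by a factor proportional to $(jL+H^k)^{N}\sim j^n$, which is exactly what creates or destroys the $j^n$ growth. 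In the unnormalised convention your reduction does close up: using $\mu(X,jL+H^k)\to\tfrac{m}{m+n}\,\mu(X_b,H_b^k)$ and $\binom{N+1}{n}/(N+1)=\binom{N}{n}/(m+1)$, the $j^n$-coefficient is exactly $\binom{m+n}{n}L^n\,\DF(\calX_b,\calH_b)$. So fix the convention first; after that your outline is the correct proof.
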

	
	By the assumption that the fibres of $(X,H)\to (B,L)$ are K-polystable, we automatically have $W_0(\calX,\calH) \ge 0$ for any fibration degeneration of $(X,H)$. Further, from the definition of stability of a fibration \cref{def:stabilityoffibration} the fibration degenerations of most interest are those for which the induced test configuration $(\calX_b,\calH_b)$ of a generic fibre $(X_b,H_b)$ normalises to a product test configuration (so that $\DF(\calX_b,\calH_b) = 0$). 
	
	\begin{remark}
		In fact, if the fibres of $(X,H)\to (B,L)$ have trivial automorphisms $\Aut(X_b,H_b)=0$ so that they are K-stable (and not just K-polystable), then $W_0(\calX,\calH)>0$ for any fibration degeneration which does not normalise to the trivial test configuration on a generic fibre. by the above proposition, and therefore the fibration is stable. This agrees with the observation that any such fibration with trivial automorphisms of the fibres admits an optimal symplectic connection (the condition being vacuous in that setting).
	\end{remark}
	
	The subleading order coefficient $W_1(\calX,\calH)$ admits an intersection-theoretic expansion identified by Dervan--Sektnan, by expanding the intersection formula for the Donaldson--Futaki invariant \eqref{eq:DFintersectiontheory} in powers of $j$ in this setting. We will only recall this formula in the simplified setting of a Fano fibration, so we have a polarisation $(X,-K_{X/B})\to (B,L)$. Here we compactify the test configuration $(\calX,jL+\calH)$ over $\PP^1$ trivially at infinity and obtain
	\begin{equation}\label{eq:W1Fanofibration}{n+m\choose n-1}^{-1} W_1(\calX,\calH) = \frac{m}{m+2} L^{n-1}.\calH^{m+2} + \frac{1}{m+1} \gamma L^n.\calH^{m+1} + L^{n-1}.\calH^{m+1}.K_{\calX/B\times\PP^1}\end{equation}
	where 
	$$\gamma = \frac{L^{n-1}.(-K_{X/B})^{m+1}}{L^n.(-K_{X/B})^m}.$$
	As discussed for example in \cite[\S 1.2]{codogni2018positivity}, $-\gamma$ is the degree of the CM line bundle over the base $B$ induced by the family $(X,-K_{X/B})\to (B,L)$ of Fano varieties. In particular by the positivity of the CM line bundle, $\gamma \le 0$ and if $X\to B$ is isotrivial then in fact $\gamma=0$. This is the case for example for projective bundles as we will see in \cref{sec:isotrivialstability}.
	
	Finally let us state the central conjecture relating stability of fibrations and optimal symplectic connections is the following.	

	\begin{conjecture}[\cite{dervan2019moduli}]\label{conj:fibrations}
		A polarised fibration $(X,H)\to (B,L)$ is polystable if and only if it admits an optimal symplectic connection.
	\end{conjecture}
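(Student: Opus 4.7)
The plan is to attack this conjecture as the fibration analogue of the Donaldson--Uhlenbeck--Yau theorem and the Yau--Tian--Donaldson conjecture simultaneously, splitting into the usual two directions. I would anticipate that the easy direction ``existence implies polystability'' admits a complete proof, while ``polystability implies existence'' can be settled in the isotrivial case (the content of \cref{part:fibrations}) but requires substantial new ideas in general.

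For the easy direction, I would use the relative Mabuchi functional $\calM^{\mathrm{rel}}$ introduced by Hallam \cite{hallam2020geodesics} as the Kempf--Ness-type functional for the OSC equation. Given a fibration degeneration $(\calX,\calH)$ of exponent $k$, I would associate a weak geodesic ray $\omega_t$ of relatively K\"ahler metrics emanating from $\omega_X$, using a relative version of the Phong--Sturm construction. The strategy is to show that the slope at infinity of $\calM^{\mathrm{rel}}$ along this ray equals a positive multiple of $W_1(\calX,\calH)$, while the slope of the fibrewise Mabuchi functional recovers $W_0(\calX,\calH)$, in the spirit of \cref{lem:limitslopeGIT} combined with the intersection-theoretic expansion \eqref{eq:W1Fanofibration}. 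The convexity of $\calM^{\mathrm{rel}}$ along geodesics, together with the OSC being its critical point, then forces $W_0, W_1 \geq 0$; in the equality case the rigidity would be extracted from the uniqueness result \cref{thm:uniqueness}, which forces the degeneration to reduce to the action of a one-parameter subgroup of $\Aut_0(\pi)$ on an open subset of $B$ of complement codimension at least two.

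For the hard direction in the isotrivial setting, the plan is the one carried out in \cref{part:fibrations}: write $X = P \times_G Y$ for a principal $G$-bundle $P \to B$ and cscK model fibre $Y$, then show that (i) fibration polystability of $(X,H)$ is equivalent to slope polystability of the associated principal bundle $P_{\Aut}$ of relative automorphisms, by matching fibration degenerations arising from vector bundle degenerations of $\pi_* H^k$ with bundle degenerations of $P_{\Aut}$ and comparing $W_1$ with the slope, and (ii) slope polystability of $P_{\Aut}$ is equivalent via a principal-bundle version of \cref{thm:DUY} to the existence of a Hermite--Einstein connection, which by \cref{thm:introductionOSCHE} induces an OSC. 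Step (i) is the novel algebro-geometric input and requires a careful analysis of which bundle degenerations of $\pi_* H^k$ actually come from $P_{\Aut}$ and how $W_1$ decomposes in this case; I would expect $W_1$ to reduce, modulo a positive constant depending only on the intersection numbers of $(B,L)$ and $(Y,H_Y)$, to the slope of the induced subsheaf.

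The main obstacle is the non-isotrivial case, where no principal bundle description is available and the fibres of the degeneration themselves vary in moduli. Here the plan would be to pursue a variational approach \emph{\`a la} Berman--Boucksom--Jonsson \cite{berman2021variational}, using the relative Mabuchi functional together with the Dervan--Naumann moduli space of cscK manifolds \cite{dervan2018moduli} to define a non-Archimedean relative Mabuchi energy on fibration degenerations. One would then have to prove that polystability in the sense of \cref{def:stabilityoffibration} is equivalent to properness of $\calM^{\mathrm{rel}}$ modulo $\Aut_0(\pi)$, which is itself likely as hard as the full YTD conjecture, and then solve a relative version of the associated continuity method, perturbing the isotrivial case using the adiabatic machinery of Dervan--Sektnan \cite{dervan2019optimal} and Ortu \cite{ortu2022optimal}. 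The chief difficulty is that the linearised OSC operator has a cokernel governed jointly by the variation-of-Hodge-structure data of the family and by fibrewise automorphisms, and controlling this cokernel precisely requires a substitute for the Matsushima--Lichnerowicz theorem in families; establishing this, and thereby making the variational approach effective, is the main point at which I expect progress to stall.
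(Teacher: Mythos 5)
You have not given a proof, and you could not have: \cref{conj:fibrations} is an open conjecture, and the paper contains no proof of it — it only records partial progress (Dervan--Sektnan: an optimal symplectic connection implies \emph{semi}stability; Hallam: polystability with respect to certain product-type degenerations) and proves, in the isotrivial case, that polystability of the associated principal bundle implies existence of an OSC (\cref{thm:maintheoremfibrations}, \cref{corollary:stableprincipal}). Your text is a research programme whose every nontrivial step is precisely one of the missing ingredients. In the direction you call ``easy,'' the identification of the slope at infinity of the relative Mabuchi functional with $W_1(\calX,\calH)$ along an \emph{arbitrary} fibration degeneration is not established — Hallam's geodesic analysis handles only product-type degenerations — and the equality-case rigidity you propose to ``extract from \cref{thm:uniqueness}'' does not follow from it: \cref{thm:uniqueness} is a uniqueness statement for OSCs on a fixed fibration $X$ and gives no control over the central fibre of a degeneration, whereas what is needed is to show that $W_0=W_1=0$ forces the degeneration to normalise to a product over an open set with complement of codimension at least two. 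That rigidity step is exactly where the easy direction of such correspondences always concentrates its difficulty (compare the role of the equality analysis in \cref{thm:existencestabilitysubbundles}), and you give no argument for it.

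In the isotrivial case, your step (i) — that fibration polystability of $(X,H)$ is equivalent to polystability of the bundle of relative automorphisms, via matching $W_1$ with the degree of the parabolic reduction — is not something the thesis proves; it is stated there only as \cref{conj:stabilityisotrivialrefined} and \cref{conj:principalfibration}, and is verified solely for projective bundles by importing Ross--Thomas's slope computation. So even the case you claim ``can be settled'' reduces, in your sketch, to an unproved conjecture: what is actually known is the chain (principal bundle polystable) $\Rightarrow$ (Hermite--Einstein, by \cref{thm:DUYPrincipalbundle}) $\Rightarrow$ (OSC, by \cref{thm:introductionOSCHE}), which does not pass through fibration polystability at all. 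The non-isotrivial direction you yourself concede is out of reach. As a survey of plausible strategies your proposal is reasonable and consistent with the literature the paper cites, but it establishes neither implication of the conjecture.
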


	Progress towards this conjecture was made by Dervan--Sektnan, who showed that the existence of an optimal symplectic connection implies semistability of the fibration \cite{dervan2019moduli}. This was improved to polystability with respect to certain product-type fibration degenerations by Hallam using geodesic analysis \cite{hallam2020geodesics}.
	
	\section{Principal bundles}
	
	We will now recall the theory of stability of principal bundles over compact K\"ahler manifolds and the associated notion of a Hermite--Einstein connection. 
	This theory closely mirrors the theory of holomorphic vector bundles discussed in \cref{sec:bundles}, and in particular relies on the analogue of a Chern connection on a principal bundle.
	
	\subsection{Hermite--Einstein connections}
	
	Consider now a holomorphic principal $G$-bundle $P\to (B,\omega_B)$ over a compact K\"ahler manifold, where $G$ is a reductive complex Lie group. 
	\begin{definition}
		A \emph{Hermitian structure} on $P$ is a choice of reduction of structure group
		$$\sigma: B \to P/K$$
		of $P$ to a principal $K$-bundle $P_\sigma \to (B,\omega_B)$ where $K$ is a maximal compact subgroup of $G$, such that $K^\CC = G$.
	\end{definition}

	The key example of a Hermitian structure occurs when $G=\GL(r,\CC)$ and $K=\U(r)$. Then the quotient $\GL(r,\CC)/\U(r)$ is identified with the space of Hermitian inner products on $\CC^r$, and a reduction of structure group $\sigma: B\to P/K$ is a smooth choice of Hermitian inner product on every fibre of the standard associated holomorphic vector bundle $E := P \times_G \CC^r$. That is, $\sigma$ is just the data of a Hermitian metric on $E$. 
	
	In general we note that when $G$ is reductive, the quotient space $G/K$ is contractible and there are many sections $\sigma$ of the quotient bundle $P/K$. Thus there always exists Hermitian structures on holomorphic principal bundles with reductive structure group.

	\begin{definition}
		A principal bundle connection $A\in \Omega^1(P, \g)$ on $P$ is said to be:
		\begin{itemize}
			\item \emph{Complex} if $A\in \Omega^{1,0}(P,\g)$ is of type $(1,0)$.
			\item \emph{Unitary with respect to a Hermitian structure $\sigma$} if there exists a principal bundle connection $A_\sigma$ on the reduction of structure group $P_\sigma$ such that under the induced associated bundle construction
			$$P = P_\sigma \times_K G,$$
			$A_\sigma$ pushes forward to $A$.
		\end{itemize}
	\end{definition}

	The analogue of the existence and uniqueness of Chern connections on vector bundles is the following.
	
	\begin{proposition}[{\cite[Thm. IX.10.1]{kobayashi1963foundations}}]
		Given a Hermitian structure $\sigma: B\to P/K$ on a holomorphic principal $G$-bundle $P$ with reductive structure group, there is a unique complex connection $A$ on $P$ unitary with respect to $\sigma$. This is called the \emph{Chern connection} of $\sigma$ on $P$. 
	\end{proposition}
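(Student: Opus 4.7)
The plan is to prove uniqueness and existence separately, mirroring the classical proof of existence and uniqueness of Chern connections on holomorphic Hermitian vector bundles. For uniqueness, let $A$ and $A'$ be two complex connections on $P$ both unitary with respect to $\sigma$. Their difference descends to a one-form on $B$ with values in the adjoint bundle $\Ad(P) = P \times_G \g$. Since both $A$ and $A'$ come from principal $K$-connections on $P_\sigma$, this difference in fact lies in the real subbundle $\ad(P_\sigma) = P_\sigma \times_K \k$, where $\k \subset \g = \k^\CC$ sits as a real form. Since $A$ and $A'$ are both of type $(1,0)$, so is $\alpha := A - A'$. But any one-form $\alpha$ of type $(1,0)$ with values in the real form $\k$ of $\g$ must vanish: the $\k$-valuedness gives $\alpha = \bar\alpha$, while the $(1,0)$ condition forces $\bar\alpha$ to be of type $(0,1)$, so $\alpha$ is simultaneously of types $(1,0)$ and $(0,1)$, hence zero.

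For existence, my plan is to reduce to the vector bundle case via a faithful holomorphic representation. Since $G$ is reductive, one can find a faithful holomorphic representation $\rho : G \into \GL(V)$ sending $K$ into $\U(V,h_0)$ for some Hermitian inner product $h_0$. The associated bundle $E = P \times_\rho V$ is then a holomorphic vector bundle, and the reduction $\sigma$ induces a Hermitian metric $h$ on $E$ by using $P_\sigma$ to transport $h_0$. The classical existence result then produces a Chern connection $\nabla$ on $(E,h)$, that is, the unique unitary connection whose $(0,1)$-part agrees with $\delbar_E$. To obtain the desired principal connection on $P$, one shows that $\nabla$ is induced from a principal connection on $P$ under the associated bundle construction; equivalently, that its connection matrix in a local frame adapted to $\rho$ takes values in the subalgebra $d\rho(\g) \subset \End(V)$.

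The main obstacle will be this last step of verifying that the Chern connection of the associated vector bundle descends to a principal connection on $P$, so that the construction is intrinsic to $P$ and does not depend on the auxiliary choice of representation $\rho$. The cleanest way to do this is to work locally: in a holomorphic trivialisation $P|_U \isom U \times G$, the Hermitian structure $\sigma$ is specified by a smooth map $h_\sigma : U \to G/K$, and the Chern connection of the associated bundle is given by the principal-bundle generalisation of the formula $A = h_\sigma^{-1} \del h_\sigma$ familiar from \cref{sec:chernconnections}, which manifestly takes values in $\g$ since it is essentially a pullback of the Maurer--Cartan form on $G$. Combined with the uniqueness established above, this produces a globally well-defined complex unitary connection on $P$ as required.
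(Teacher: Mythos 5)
The paper does not actually prove this proposition — it is quoted directly from Kobayashi--Nomizu (Thm.\ IX.10.1) — so there is no in-text argument to compare against; your job is to reproduce the standard proof, and in outline you do. Your uniqueness argument is correct and is exactly the classical one: the difference of two such connections is tensorial, hence a $(1,0)$-form on $B$ with values in the real subbundle $\ad P_\sigma \subset \Ad P$, and a $(1,0)$-form equal to its own conjugate (with respect to the real structure defined by $\k \subset \g = \k^{\CC}$) is also of type $(0,1)$, hence zero. For existence your strategy works, but two points deserve more care than "essentially a pullback of the Maurer--Cartan form". First, for a $G/K$-valued map $h_\sigma$ the expression $h_\sigma^{-1}\del h_\sigma$ is not literally defined; the standard fix is the Cartan embedding $gK \mapsto g\,\theta(g)^{-1}$, where $\theta$ is the anti-holomorphic involution of $G$ fixing $K$ — this is independent of the lift $g$ precisely because $\theta$ fixes $K$, and for $G=\GL(r,\CC)$, $K=\U(r)$ it recovers the positive Hermitian matrix $H$ and the familiar formula $A = H^{-1}\del H$ of the vector bundle case. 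Second, "takes values in $\g$" only addresses the fact that you get a principal connection on $P$ rather than a mere $\GL(V)$-connection; you must still verify that the local form transforms correctly under change of holomorphic trivialisation and that it is unitary with respect to $\sigma$, i.e.\ that its restriction to $P_\sigma$ is $\k$-valued — the analogue of the metric-compatibility computation for $H^{-1}\del H$. Both are routine, and your observation that local existence plus the (local) uniqueness statement glues to global existence is a clean way to finish. An alternative, representation-free construction worth knowing: along $P_\sigma$ take the horizontal distribution $T P_\sigma \cap J(TP_\sigma)$, where $J$ is the complex structure on the total space of $P$, check it is a $K$-invariant complement to the vertical space, and extend $G$-equivariantly; this avoids both the auxiliary representation and the local lifts.
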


	In the case where $G=\GL(r,\CC)$ and $K=\U(r)$, the Chern connection on $P$ induces exactly the standard Chern connection on $E=P\times_G \CC^r$ with respect to the Hermitian metric induced by $\sigma$.
	
	\begin{definition}
		A complex unitary metric $A$ on a Hermitian holomorphic principal $G$-bundle $P\to B$ with Hermitian structure $\sigma$ is \emph{Hermite--Einstein} if
		$$\contr_{\omega_B} F_A = \tau$$
		where $F_A\in \Omega^{1,1}(B,\ad P)$ is the curvature of $A$ and $\tau$ is a covariantly constant, central section $\tau \in \Gamma(\ad P)$. 
	\end{definition}

	In order to clarify this definition, we note that the Lie algebra bundle $\ad P\to B$ has fibre $\g$ the Lie algebra of $G$. Whilst $\ad P$ is in general a non-trivial vector bundle on $B$ with connection induced by $A$, it contains a trivial central subbundle $Z\subset \ad P$, and a section $\tau \in \Gamma(Z)$ is covariantly constant with respect to the induced connection if and only if it is constant with respect to the standard trivialisation (the pushdown of the trivialisation of $P\times Z{z}(\g)$ under $\pi$). Such a section pulls back to a constant function $P\to Z(\g)$ into the centre of the Lie algebra on the total space of $P$.
	
	In the case where $G=\GL(r,\CC)$ and $K=\U(r)$, the centre $Z(\g)$ of the Lie algebra $\End(\CC^r)$ of $G$ is simply $\CC\cdot \id_{\CC^r}$ and $\ad P = \End E$ where $E$ is the standard associated bundle $E=P\times_G \CC^r$. In this setting $Z\subset \ad P$ is the trivial bundle generated by $\CC\cdot \id_E$ and $\tau$ is constant if $\tau = \lambda \id_E$ for some $\lambda \in \CC$ determined by the topology of $P$. Thus a Hermite--Einstein connection $A$ on $P$ induces a Hermite--Einstein connection on $E$ in the sense of \cref{sec:bundles} discussed previously.
	
	\begin{remark}\label{rmk:PrincipalHEdirectsum}
		Suppose if $G=\GL(r,\CC) \times \GL(r',\CC)$. Then a central element of the Lie algebra $\g$ is of the form
		$$\begin{pmatrix}
			\lambda \id_{\CC^r} & 0\\
			0 & \mu \id_{\CC^{r'}}
		\end{pmatrix}$$
		and if one considers the product standard representation then we have $$P\times_G (\CC^r \oplus \CC^{r'}) = E\oplus F$$
		where $E$ and $F$ are the standard associated bundles for the two factors of $G$. Then a Hermite--Einstein metric on $P$ is equivalent to an \emph{extremal} Yang--Mills metric on $E\oplus F$ in the sense of \cref{rmk:highercriticalym}. 
	\end{remark}
	
	\subsection{Stability\label{sec:stabilityprincipalbundle}}
	
	By \cref{principle} associated to the above extremal notion on a principal bundle we expect a stability theory closely analogous to the slope stability of vector bundles. Such a theory was first developed by Ramanathan on compact Riemann surfaces \cite{ramanathan1975stable,ramanathan1996moduli,ramanathan1996moduli2} and a general theory has been developed for projective manifolds and compact K\"ahler manifolds (see for example \cite{ramanathan1988einstein,anchouche2001einstein}). 
	
	Let $(B,\omega)$ be a compact K\"ahler manifold and $P\to B$ a holomorphic principal $G$-bundle. Let $Q\subset G$ be a parabolic subgroup. We wish to consider reductions of structure group $\sigma: U\to P/Q$ where $U\subset B$ is an open subset such that $\codim(X\backslash U) \ge 2$.
	
	\begin{definition}
		The principal bundle $P$ is \emph{(semi)stable} with respect to the reduction of structure group $\sigma$ to a \emph{maximal} parabolic subgroup $Q$ if
		$$\deg \sigma^* \calV(P/Q) > 0\quad \text{(resp. }\ge)$$
		where $\calV(P/Q)$ is the vertical tangent bundle of $P/Q$. The bundle $P$ is \emph{(semi)stable} if it is (semi)stable with respect to reductions to all maximal parabolic subgroups defined over Zariski open subsets with complement codimension greater than one.
	\end{definition}

	We will omit the precise definition of polystability of a principal bundle, where one must precisely encode the idea that $P$ splits as a direct sum of stable principal bundles of the same slope. See \cite[Def. 3.5]{anchouche2001einstein}. 

	\begin{remark}
		Note that $\sigma^* \calV(P/Q) \to U$ is a vector bundle over $U\subset X$ so one must be careful in defining the degree. It is not necessarily the case that $\sigma^* \calV(P/Q)$ (or indeed as is necessary here, even just the determinant of this bundle) extends to all of $X$. However one can show that when $\calV(P/Q)$ has arisen from the reduction of structure group to a parabolic subgroup $Q\subset G$, the determinant line bundle always extends uniquely and the degree is well-defined \cite[\S 1]{ramanathan1988einstein}.
	\end{remark}

	\begin{remark}
		The restriction only to \emph{maximal} parabolic subgroups is analogous to the restriction of testing stability of vector bundles only on single subsheaves as opposed to filtrations thereof. Indeed a parabolic subgroup $Q\subset G$ is specified exactly as the stabiliser of a flag 
		$$0\subset V_1 \subset \cdots \subset V_\ell = V$$
		inside a vector space $V$ for which $\rho: G \to \GL(V)$ is some faithful representation. A maximal parabolic corresponds to a one-step filtration
		$$0 \subset V_1 \subset V.$$
		It suffices to consider reductions to such parabolic subgroups to obtain a well-behaved theory for stability of principal bundles.
	\end{remark}

	\begin{remark}
		In the case where $G=\GL(r,\CC)$, the stability of $P\to (B,\omega_B)$ is precisely equivalent to the slope stability of $E=P\times_G \CC^r$ the standard associated bundle. In general the stability of $P$ implies the \emph{polystability} of the adjoint vector bundle $\ad P$ with fibre $\g$. In fact $P$ is semistable if and only if $\ad P$ is semistable, and a consequence of the Hitchin--Kobayashi correspondence in this setting shows that $P$ is also polystable if and only if $\ad P$ is polystable, so one can in this sense subsume the theory of stability of principal bundles into the corresponding theory of vector bundles. 
	\end{remark}

	The analogue of the Donaldson--Uhlenbeck--Yau theorem in this setting is the following (which is proved by appealing to the regular theorem \cref{thm:DUY} for vector bundles after identifying the correct associated vector bundle of $P$).
	
	\begin{theorem}[\cite{ramanathan1988einstein,anchouche2001einstein}]\label{thm:DUYPrincipalbundle}
		A holomorphic principal $G$-bundle on a compact K\"ahler manifold $(B,\omega_B)$ admits a Hermite--Einstein connection if and only if it is $[\omega_B]$-polystable.
	\end{theorem}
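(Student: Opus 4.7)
The plan is to reduce the statement to the vector bundle Donaldson--Uhlenbeck--Yau theorem (\cref{thm:DUY}) by passing to a suitable associated bundle, exploiting the reductivity of $G$. Write $G = Z(G)^0 \cdot G_{ss}$ (up to isogeny) as a product of its connected centre and a semisimple factor; the $Z(G)^0$-part reduces to a statement about line bundles, so the substantive content is the case of semisimple $G$, where the natural candidate associated bundle is the adjoint bundle $\ad P \to B$, with fibre $\g$.

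For the easy direction, suppose $A$ is a Hermite--Einstein connection on $P$ with Einstein section $\tau \in \Gamma(Z \subset \ad P)$. The induced Chern connection $\ad A$ on the adjoint bundle has curvature $F_{\ad A} = \ad(F_A)$, and since $\ad$ kills the centre, the HE condition on $P$ directly implies $\contr_{\omega_B} F_{\ad A} = 0$, so $\ad P$ is Hermite--Einstein as a vector bundle. By \cref{thm:DUY} applied to $\ad P$ one gets slope polystability of $\ad P$, which (as noted in the remark preceding the theorem) implies polystability of $P$, since reductions of structure group to maximal parabolics $Q \subset G$ correspond to filtrations of $\ad P$ by ideals whose associated slopes are controlled by $\deg \sigma^* \calV(P/Q)$.

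For the hard direction, suppose $P$ is polystable. Choose a faithful unitary representation $\rho: G \inject \GL(V)$ with $\rho(K) \subset \U(V)$, and form $E_\rho := P \times_G V$. The key analytical step is to verify that polystability of $P$ implies slope polystability of $E_\rho$: any destabilising coherent subsheaf $\calF \subset E_\rho$ would, after saturation and taking the induced reduction of structure group on an open subset of complement codimension at least two, correspond to a reduction of $P$ to a parabolic subgroup violating the stability inequality. Granting this, \cref{thm:DUY} produces a Hermite--Einstein metric $h$ on $E_\rho$, which by polystability decomposes $E_\rho$ into an $h$-orthogonal direct sum of stable Hermite--Einstein summands. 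The final step, which is where the main difficulty lies, is to show that $h$ is $G$-compatible --- that is, that the reduction of structure group of the frame bundle of $E_\rho$ to $\U(V)$ determined by $h$ actually lies inside the image of $P/K \to P \times_G \GL(V)/\U(V)$. This uses uniqueness of Hermite--Einstein metrics on stable summands (up to constants, \cref{prop:stabilitysimplicity}) together with the observation that the $G$-equivariance of the construction forces the isotypic decomposition of $E_\rho$ to be $G$-invariant, so that $h$ is determined up to a central automorphism, which can be absorbed.

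The main obstacle is precisely this last step: extracting a Hermitian structure $\sigma: B \to P/K$ from the Hermite--Einstein metric on $E_\rho$. One must argue that among all Hermite--Einstein metrics on $E_\rho$ (which form a $(\Aut_{\mathrm{Higgs}}(E_\rho))$-orbit) there is one invariant under the $G$-action in the appropriate sense; equivalently, that the Hermite--Einstein flow starting from a metric induced by an arbitrary reduction of $P$ to $K$ preserves the reduction. The Einstein constant $\tau$ then arises from the central piece of the decomposition, handled via the $Z(G)^0$-factor above. Together these steps yield a Chern connection on $P$ satisfying $\contr_{\omega_B} F_A = \tau$, completing the correspondence.
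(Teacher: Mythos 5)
Your overall strategy---reduce to the vector-bundle Donaldson--Uhlenbeck--Yau theorem through an associated bundle---is the route taken in the cited proofs of Ramanathan--Subramanian and Anchouche--Biswas, and it is the only indication of proof the thesis itself gives (the statement is quoted from the literature, with the remark that it is proved by appealing to \cref{thm:DUY} after identifying the correct associated bundle). But two of your steps are assertions rather than arguments, and the second is the actual content of the theorem. First, the claim that ``any destabilising coherent subsheaf $\calF \subset E_\rho$ would correspond to a reduction of $P$ to a parabolic subgroup violating the stability inequality'' is precisely the hard theorem of Ramanan--Ramanathan: a coherent subsheaf of an associated bundle does not in general come from a parabolic reduction, and producing a destabilising parabolic from a destabilising subsheaf requires the Kempf--Rousseau instability-flag machinery (and, for \emph{poly}stability rather than semistability, the hypothesis that $\rho$ sends the connected centre into the centre, which you only have after the central splitting you invoke at the outset). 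As written, this passage from polystability of $P$ to slope polystability of $E_\rho$ is a restatement of what needs to be proved.

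Second, and more seriously, the step you yourself flag as ``the main obstacle''---extracting a Hermitian reduction $\sigma: B \to P/K$ from the Hermite--Einstein metric $h$ on $E_\rho$---is left unproved, and the mechanism you gesture at (uniqueness of HE metrics on stable summands via \cref{prop:stabilitysimplicity} plus ``$G$-equivariance forces the isotypic decomposition to be $G$-invariant'') does not deliver it: uniqueness of $h$ up to automorphisms of $E_\rho$ gives no control over whether the unitary reduction of the frame bundle determined by $h$ meets the image of $P/K$, i.e.\ whether $h$ is compatible with the $G$-structure. In the actual proofs this is exactly where the work happens: one works with $\ad P$ (or a suitable faithful representation) and observes that the structure tensors---for instance the Lie bracket $\ad P \otimes \ad P \to \ad P$---are holomorphic sections of slope-zero bundles that carry Hermite--Einstein metrics once $\ad P$ does, hence are \emph{parallel}; parallelism of the structure tensors is what forces the HE connection to preserve the $G$-structure and therefore to be induced from a connection on $P$ compatible with a reduction to $K$. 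Without an argument of this type (or an explicit appeal to the literature at this point) your outline proves neither direction beyond what \cref{thm:DUY} already gives. Relatedly, in your ``easy'' direction the implication ``$\ad P$ slope polystable $\Rightarrow$ $P$ polystable'' is itself a nontrivial theorem of Anchouche--Biswas, and within this thesis it is only stated as a \emph{consequence} of the correspondence you are proving, so using it here is circular in context; the standard fix is to prove the degree inequality for parabolic reductions directly from the HE connection by a Chern--Weil computation on $\sigma^*\calV(P/Q)$, in the spirit of the second-fundamental-form argument used for subbundles in the vector bundle case.
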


	\section{Product fibrations}

	To conclude our background, we briefly discuss the above constructions in the case of product fibrations. The following holds for any product of compact K\"ahler fibrations.
	
	\begin{proposition}\label{prop:products}
		If $\omega_X$ and $\omega_{X'}$ are optimal symplectic connections on two fibrations $(X,H),(X',H')\to (B,L)$, then the product metric on the fibred product $Z=X\times_B X'$ is an optimal symplectic connection.
	\end{proposition}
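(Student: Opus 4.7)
The plan is to exploit the product structure at every level and show that the OSC equation on $Z$ decomposes into the OSC equations on $X$ and $X'$, which vanish by hypothesis.

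First I would set up the geometry. The fibred product $Z=X\times_B X'$ carries the product relatively K\"ahler form $\omega_Z = p_X^*\omega_X + p_{X'}^*\omega_{X'}$, where $p_X,p_{X'}$ denote the projections onto the factors. A direct check shows that the vertical bundle splits as $\calV_Z = p_X^*\calV_X \oplus p_{X'}^*\calV_{X'}$ and, because $\omega_Z$ is a sum, the $\omega_Z$-horizontal lift of $u\in T_b B$ to a point $(x,x')\in Z$ is simply the pair $(u^{\#_X},u^{\#_{X'}})$ of horizontal lifts. Consequently the symplectic curvature splits as $F_{\calH_Z}(u,v) = F_{\calH_X}(u,v) + F_{\calH_{X'}}(u,v)$, and its Hamiltonian potential satisfies $\mu_Z^*F_{\calH_Z} = p_X^*(\mu_X^*F_{\calH_X}) + p_{X'}^*(\mu_{X'}^*F_{\calH_{X'}})$.

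Next I would expand each term of the OSC equation on $Z$ using the product. Since the vertical K\"ahler metric on each fibre $Z_b = X_b\times X'_b$ is a product, $\det(\omega_Z)_{\calV} = \det(\omega_X)_{\calV}\cdot\det(\omega_{X'})_{\calV}$ and $\rho_Z = p_X^*\rho_X + p_{X'}^*\rho_{X'}$ (as $-i/2\pi$ times $\deldelbar\log$ of a product). The horizontal contraction $\contr_{\omega_B}$ is computed using the horizontal lift, which is a product, so $\contr_{\omega_B}\rho_{Z,\calH}$ and $\contr_{\omega_B}\mu_Z^*F_{\calH_Z}$ each split into a sum of pullbacks from $X$ and $X'$. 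Finally the vertical Laplacian $\Lap_{\calV_Z}$ restricts on each fibre to the product Laplacian $\Lap_{X_b} + \Lap_{X'_b}$, so it preserves pullbacks: $\Lap_{\calV_Z}(p_X^*g)=p_X^*(\Lap_{\calV_X}g)$. Combining these observations yields the clean identity
\begin{equation*}
\Lap_{\calV_Z}\contr_{\omega_B}\mu_Z^*F_{\calH_Z} + \contr_{\omega_B}\rho_{Z,\calH} = p_X^*G_X + p_{X'}^*G_{X'},
\end{equation*}
where $G_X,G_{X'}$ denote the corresponding expressions on $X$ and $X'$.

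The key algebraic step is to handle the projection $p_Z$. Here I would use that the fibres $(Z_b,\omega_b+\omega'_b)$ are cscK products: by the K\"unneth decomposition of holomorphic vector fields on a product of compact K\"ahler manifolds, the space of mean-zero real holomorphy potentials splits as $\h_0(Z_b)^\RR = \h_0(X_b)^\RR \oplus \h_0(X'_b)^\RR$, and the two summands are $L^2$-orthogonal with respect to the product volume (since mean-zero functions on one factor integrate to zero against pullbacks from the other). Globally this gives an orthogonal decomposition $C_E^\infty(Z) = p_X^*C_E^\infty(X) \oplus p_{X'}^*C_E^\infty(X')$. Using the fact that the fibres of $X'\to B$ have constant volume (a topological invariant of the polarisation), a Fubini computation shows that $p_Z(p_X^*h) = p_X^*(p_X(h))$ for any $h\in C^\infty(X)$, and symmetrically for $X'$.

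Putting it together, $p_Z(p_X^*G_X + p_{X'}^*G_{X'}) = p_X^*(p_X(G_X)) + p_{X'}^*(p_{X'}(G_{X'})) = 0$, using the OSC hypothesis on each factor. The only delicate point is establishing the decomposition of holomorphy potentials and the $L^2$-orthogonality cleanly over the family; this is essentially a fibrewise K\"unneth argument combined with the standing assumption that fibrewise automorphism dimensions are constant, but it is the step that requires the most care because one must verify that the fibrewise splitting assembles into a smooth global decomposition of $C_E^\infty(Z)$ and that the projection $p_Z$ genuinely factors through it.
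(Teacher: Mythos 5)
Your proof is correct and follows essentially the same route as the paper: every ingredient of the OSC operator (symplectic curvature and its Hamiltonian, relative Ricci form, horizontal contraction, vertical Laplacian) splits across the fibred product, and the fibrewise projection splits by a Fubini/orthogonality argument, so the equation reduces to the two hypotheses. Your handling of the projection via the K\"unneth splitting of fibrewise holomorphy potentials and $L^2$-orthogonality is somewhat more careful than the paper's direct mean-subtraction computation, but it is the same underlying mechanism.
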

	\begin{proof}
		This is simply a matter of verifying that the various terms appearing in the optimal symplectic connection equation split with respect to fibred products in the expected way. Let us first note that the curvature $F_\calH$ of the product metric $\omega=\omega_X+\omega_{X'}$ is the direct sum $F_\calH = F_X + F_{X'}$ where $F_X, F_{X'}$ denote the curvatures of $\omega_X, \omega_{X'}$ on $X$ and ${X'}$ respectively. One also observes that
		$$d\mu^* F_\calH = \iota_{F_\calH} \omega = \iota_{F_\calH} (\omega_X+ \omega_{X'}) = \iota_{F_X} \omega_X + \iota_{F_{X'}} \omega_{X'}$$
		from which we conclude that $\mu^* F_\calH = \mu_X^* F_X + \mu_{X'}^* F_{X'}$. Similarly it is straight forward to see $\Lap_\calV = \Lap_X + \Lap_{X'}$ where $\Lap_X$ and $\Lap_{X'}$ denote the vertical Laplacians on $X$ and $X'$ respectively. .
		
		If the dimension of the fibres of $X$ and ${X'}$ are $m_X, m_{X'}$ respectively, for the projection operator $p$ applied to a sum function $\varphi=\varphi_X + \varphi_{X'}$ where $\varphi_X, \varphi_{X'}$ depend only on the fibre coordinates of $X$ and of ${X'}$ respectively, we have
		\begin{align*}
			p(\rest{\varphi}{b}) &= \rest{\varphi}{b} - \frac{\int_{X_b\times {X'}_b} \rest{\varphi}{b} (\omega_X + \omega_{X'})^{m_X+m_{X'}}}{\int_{X_b\times {X'}_b} (\omega_X + \omega_{X'})^{m_X+m_{X'}}}\\
			&= \rest{\varphi}{b} - \frac{\int_{X_b\times {X'}_b} \rest{\varphi}{b} \omega_X^{m_X} \wedge \omega_{X'}^{m_{X'}}}{\int_{X_b\times {X'}_b} \omega_X^{m_X} \wedge \omega_{X'}^{m_{X'}}}\\
			&= \left( \rest{\varphi_X}{b} - \frac{\vol {X'}_b}{\vol {X'}_b}\frac{\int_{X_b} \rest{\varphi_X}{b} \omega_X^{m_X}}{\int_{X_b} \omega_X^{m_X}}\right) + \left( \rest{\varphi_{X'}}{b} - \frac{\vol X_b}{\vol X_b}\frac{\int_{{X'}_b} \rest{\varphi_{X'}}{b} \omega_{X'}^{m_{X'}}}{\int_{{X'}_b} \omega_{X'}^{m_{X'}}}\right)\\
			&=p_X(\rest{\varphi_X}{b}) + p_{X'}(\rest{\varphi_{X'}}{b}),
		\end{align*}
		where in the second to last step we have used Fubini's theorem.
		
		Finally, let us note that if $\rho_X$ and $\rho_{X'}$ denote the relative Ricci forms of $\omega_X$ and $\omega_{X'}$, then we have
		\begin{align*}
			\rho &= i \deldelbar \log \det (\omega_X + \omega_{X'})\\
			&= i \deldelbar \log \det \omega_X \det \omega_{X'}\\
			&= i \deldelbar \log \det \omega_X + i \deldelbar \log \det \omega_{X'}\\
			&= \rho_X + \rho_{X'}.
		\end{align*}
		Here we have used that the determinant of the product metric $\omega_X + \omega_{X'}$ is the product of the determinants, as can be seen by using the block matrix decomposition of the metric in local product coordinates, and note that any derivative in $\deldelbar$ in the $X$ fibre direction will vanish on $\log \det \omega_{X'}$, which depends only on the ${X'}$ fibre coordinates, and vice versa.
		
		In conclusion, since $\Lap_X \mu_{X'}^* F_{X'} = 0$ and $\Lap_{X'} \mu_X^* F_X=0$, we observe that
		\begin{multline*}
			p(\Lap_\calV \contr_{\omega_B} \mu^* F_\calH + \contr_{\omega_B} \rho_\calH)\\
			= p(\Lap_{\calV,X} \contr_{\omega_B} \mu_X^* F_X + \contr_{\omega_B} \rho_{H,X}) + p(\Lap_{\calV,{X'}} \contr_{\omega_B} \mu_{X'}^* F_{X'} + \contr_{\omega_B} \rho_{\calH,{X'}})
		\end{multline*}
		and thus we have the result.
	\end{proof}
	
	The above proposition combined with \cref{thm:maintheoremfibrations} produces a wealth of examples of optimal symplectic connections. Namely all fibred products of projectivisations of polystable vector bundles admit optimal symplectic connections, which in fact already follows from the above proposition and previous work about the existence of optimal symplectic connections on projective bundles \cite[\S 3.5]{dervan2019optimal}.
	
	This is somewhat curious from the perspective of vector bundles, as the direct sum of polystable vector bundles is not necessarily polystable (and therefore does not necessarily admit a Hermite--Einstein metric) unless the vector bundles have the same slope. Indeed the process of taking a direct sum and then projectivisation does not commute with taking projectivisation and then fibred product, in regards to the existence of optimal symplectic connections. 
	
	\begin{figure}[h]
		\centering
		\begin{tikzcd}
			E,F \arrow{r} \arrow{d} & E\oplus F \arrow{d}\\
			\PP(E)\times_B \PP(F) \arrow[leftrightarrow, dashed, "/"{anchor=center,sloped}]{r}& \PP(E\oplus F)
		\end{tikzcd}
		\caption{The existence of optimal symplectic connections on fibred products of projective bundles is not equivalent to the projectivisations of direct sums of bundles.}\label{fig:projectivebundlecommutatitive}
	\end{figure}

	On the other hand, a product of cscK manifolds is always cscK without regard for any topological matching criteria, so in this sense the study of optimal symplectic connections is closer to the study of cscK metrics. 
	
	Let us clarify this lack of commutativity from the perspective of principal bundles. If $\fr(E)$ denotes the frame bundle of a vector bundle $E$, which is a principal $\GL(\rk E,\CC)$-bundle, then the fact that $\PP(E)\times_B \PP(F)$ admits an optimal symplectic connection when $E$ and $F$ are polystable corresponds to the fact that $\fr(E)\times_B \fr(F)$ is a polystable principal bundle by our main result \cref{thm:maintheoremfibrations}. By the Hitchin--Kobayashi correspondence for principal bundles, this is known to be equivalent to the slope polystability of the slope zero vector bundle $$\ad (\fr(E)\times_B \fr(F)) = \ad \fr(E) \oplus \ad \fr(F) = \End(E) \oplus \End(F).$$ On the other hand if $E$ and $F$ have different slopes, so $\mu(E)<\mu(F)$ without loss of generality, then the principal bundle $\fr(E\oplus F)$ has adjoint bundle $\End(E\oplus F) = \End(E) \oplus \Hom(E,F)\oplus \End(F)$ which is not polystable, and so $\fr(E\oplus F)$ is not polystable either, which agrees with the fact that $\PP(E\oplus F)$ does not admit an optimal symplectic connection. 
	
	It is known (see \cite{ramanan1984some}) that if $P$ is a polystable principal $G$-bundle and $\rho: G\to H$ is a representation which sends the connected component of the identity of the center, $Z_0(G)$, to the corresponding component $Z_0(H)$, then the associated principal $H$-bundle $P\times_\rho H$ is also polystable. Now the direct sum structure of $E\oplus F$ provides a reduction of structure group from $\fr(E\oplus F)$ to $\fr(E)\times_B \fr(F)$, however the associated homomorphism $\GL(\rk E,\CC) \times \GL(\rk F,\CC) \to \GL(\rk E + \rk F,\CC)$ does not map the centre into centre, as a central element
	$$\begin{pmatrix}
		\lambda \id_E & 0\\
		0 & \mu \id_F
	\end{pmatrix}$$
	does not commute inside the larger group $\GL(\rk E + \rk F,\CC)$ unless $\lambda = \mu$. This explains the lack of commutativity in \cref{fig:projectivebundlecommutatitive}.
	
	\begin{remark}
		By the above discussion if $\PP(E) \times_B \PP(F)$ admits an OSC then $E\oplus F$ admits an extremal Yang--Mills metric. One therefore expects the projectivisation $\PP(E\oplus F)$ to admit an \emph{extremal symplectic connection} in the sense of \cite[Def. 3.15]{dervan2019optimal}. In particular one should expect the diagram \cref{fig:projectivebundlecommutatitive} to commute if we instead only consider extremal symplectic connections instead of optimal symplectic connections.
	\end{remark}
	
	Finally note that by the uniqueness of optimal symplectic connections \cref{thm:uniqueness}, if a holomorphic fibration $(Z,H_Z)$ admits a fibred product decomposition into factors $(X,H_X)$ and $(X',H_X')$ admitting optimal symplectic connections, then any optimal symplectic connection on $Z$ in $c_1(H_Z) = c_1(H_X\boxtimes H_X')$ must be of fibred product form after the action of some relative automorphism and pullback of a two-form from the base. Thus we might introduce the following notion.
	
	\begin{definition}
		A \emph{reducible polarised holomorphic fibration} $(Z,H_Z)$ is a fibration admitting a fibred product decomposition of the above form. A polarised holomorphic fibration is irreducible if it is not reducible.
	\end{definition}

	For the notion of reducibility to be useful, it should be the case that if $\omega_Z$ is an optimal symplectic connection on $Z$, then it can be transformed into product form where $\omega_X$ and $\omega_{X'}$ are optimal symplectic on $X$ and $X'$. The corresponding algebro-geometric prediction is that the (semi/poly)stability of $(Z,H_Z)$ implies the (semi/poly)stability of $(X,H_X)$ and $(X',H_{X'})$.\footnote{In the case of K-stability this follows from a straightfoward calculation of the Donaldson--Futaki invariant of a fibred product of test configurations (taken over $\CC$). A similar calculation should produce this algebro-geometric fact for fibration degenerations, provided more care is taken in the decompositions of spaces of sections $H^0(\calZ_0, jL + \calH_Z)$ with respect to a fibred product decomposition $\calZ_0 = \calX_0 \times_B \calX_0'$. This would justify the notion of reducibility in view of \cref{conj:fibrations}. Note the corresponding metric property: a cscK metric on a product is necessarily a product of cscK metrics; is not at all obvious.}
	
	%!TEX root = ../thesis.tex
\chapter{Isotrivial fibrations}
\label{ch:isotrivialfibrations}

In this chapter we will study the notion of an optimal symplectic connection in the case of isotrivial fibrations. Our main result relates the existence of such connections to the existence of Hermite--Einstein connections on holomorphic principal bundles. Using the \cref{thm:DUYPrincipalbundle} this gives a characterisation of the existence of optimal symplectic connections in terms of a purely algebro-geometric stability condition in the isotrivial case, proving an instance of \cref{principle}.

The main section \cref{sec:existenceOSCisotrivial} is the content of the paper \cite{mccarthy2022canonical}.

We will also discuss several future direction of interesting, including a discussion of stability for isotrivial fibrations, and a formalism in terms of principal bundles in non-isotrivial setting.

\section{Existence of optimal symplectic connections\label{sec:existenceOSCisotrivial}}

Recall that a K\"ahler fibration $(X,\omega_X) \to (B,\omega_B)$ is called \emph{isotrivial} if it is a holomorphic fibre bundle. By the theorem of Fischer--Grauert this is equivalent to asking that the fibres of $X\to B$ are all biholomorphic \cite{fischergrauert}. In this section we prove the following.

\begin{theorem}[\cref{thm:introductionOSCHE}]\label{thm:maintheoremfibrations}
	Let $P\to (B,\omega_B)$ be a holomorphic principal $G$-bundle with maximal compact $K\subset G$ and suppose $G$ acts by biholomorphisms on a cscK manifold $(Y,\omega_Y)$ such that $K$ acts by holomorphic isometries. Then the symplectic connection $\omega_X$ on the associated bundle $$X=P\times_G Y = P_\sigma \times_K Y\to (B,\omega_B)$$ induced by a complex unitary connection $A$ on $P$ is an optimal symplectic connection whenever $A$ is a Hermite--Einstein connection.
	
	Moreover, if $P$ arises as the bundle of relative automorphisms of a given compact, isotrivial K\"ahler fibration $(X,\omega_X)$ with cscK fibres, then the induced principal bundle connection $A$ on $P$ is Hermite--Einstein if and only if $\omega_X$ is optimal symplectic.
\end{theorem}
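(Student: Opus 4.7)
The plan is to translate the optimal symplectic connection equation into a statement purely about the principal data $(P, A)$, using the fact that the associated bundle construction identifies the symplectic curvature with the principal curvature. The connection $A$ on $P$ determines a horizontal distribution on $X = P\times_G Y$ which, by the associated bundle construction together with the $G$-invariance of $\omega_Y$, coincides with the horizontal distribution determined by $\omega_X$. By the minimal coupling formula (\cref{thm:minimalcoupling}), the relative Hamiltonian potential $\mu^* F_\calH$ restricts on each fibre to the pullback of the Hamiltonian of $F_A$ acting as a $\g$-valued two-form on $Y$ via the $K$-moment map (which exists and extends complex-linearly to $\g$ since $K$ acts by Hamiltonian isometries on the cscK $(Y,\omega_Y)$). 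In particular $\contr_{\omega_B}\mu^* F_\calH$ restricts on each fibre to the holomorphy potential determined by $\contr_{\omega_B}F_A \in \Gamma(\ad P)$ acting on $Y$.

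The main obstacle is matching the two terms of the OSC equation once $A$ is assumed Hermite--Einstein. In that case $\contr_{\omega_B}F_A = \tau$ is covariantly constant and central, so the fibrewise Hamiltonian of $\tau$ is $G$-invariant, and as a holomorphy potential on the cscK manifold $(Y, \omega_Y)$ has the property that $\Lap_\calV$ applied to it is again a holomorphy potential (a consequence of Matsushima--Lichnerowicz reductivity combined with the cscK Lichnerowicz identity; compare \cref{sec:holomorphypotentials}). Separately, in the isotrivial case $\det \calV$ is the associated bundle $P\times_G \det T^*_0 Y$ for the natural $G$-action on a fixed basepoint fibre, so the relative Ricci form decomposes as $\rho = \rho_\calV + \rho_\calH$ with $\rho_\calV$ equal to $\Ric(\omega_Y)$ on each fibre, and $\rho_\calH$ given by the pairing of $F_A$ with the representation of $\g$ on $\det T^*_0 Y$. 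The crucial identity to establish is that on cscK $Y$, for the holomorphy potential $h$ associated to a central element, the sum $\Lap_\calV h + \contr_{\omega_B}\rho_\calH$ lies in the orthogonal complement of $C_E^\infty(X)$ and is thus killed by $p$; I expect this to reduce, after bookkeeping with the moment map, to the classical identity pairing $\Lap h$ with the Ricci form applied to $i\partial\bar\partial h$ for holomorphy potentials $h$, combined with the centrality of $\tau$ which forces the $G$-invariant parts to cancel.

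For the converse in the automorphism-bundle setting, when $P = \Aut(\pi)$ the Lie algebra bundle $\ad P$ has fibre precisely $\Lie \Aut(X_b, H_b)$, and the moment map identifies smooth sections of $\ad P$ with $C_E^\infty(X)$ under the isomorphism $\h_0^\RR \cong \Lie \Aut_0(Y, H)$ recalled in \cref{sec:holomorphypotentials}. Thus the projection $p$ becomes an isomorphism on the image of the fibrewise moment map, and running the previous calculations in reverse, the vanishing in \eqref{eq:OSC} translates into the vanishing of the non-central, non-covariantly-constant components of $\contr_{\omega_B} F_A$ inside $\ad P$ — which is precisely the Hermite--Einstein condition on $P$. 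Combined with the Donaldson--Uhlenbeck--Yau theorem for principal bundles (\cref{thm:DUYPrincipalbundle}), this will also yield the corollary that polystability of $P$ implies the existence of an optimal symplectic connection on the associated isotrivial fibration.
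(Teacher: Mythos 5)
Your setup matches the paper's in outline: the induced form is the minimal-coupling form, its horizontal part is $\mu^*F_\calH$ with $F_\calH$ identified fibrewise with $F_A$, and the whole problem is to handle the Ricci term. But the heart of the proof is exactly the step you leave as an ``expectation'', and the mechanism you propose for it is not the right one. The paper's key input is \cref{lemma:riccihamiltonian}: on a compact K\"ahler manifold, if $h$ is a Hamiltonian of $v$ for $\omega$, then $\Lap h$ is a Hamiltonian of $v$ for the Ricci form. Hence $\nu^*:=\Lap\circ\mu^*$ is a formally equivariant comoment map for $\Ric(\omega_Y)$, and re-running the Weinstein/minimal-coupling construction of \cref{prop:inducedsymplecticconnection} with $(\Ric\omega_Y,\nu^*)$ in place of $(\omega_Y,\mu^*)$ produces a closed $(1,1)$-form on $X$ whose vertical part is $\rho_\calV$ and whose horizontal part is $\nu^*F_\calH$; since two closed $(1,1)$-forms agreeing vertically have horizontal parts differing by a pullback from $B$ (\cite[Lem. 3.9]{dervan2019optimal}), one obtains the pointwise identity $\rho_\calH=\nu^*F_\calH-\pi^*\beta$, hence $p(\contr_{\omega_B}\rho_\calH)=p(\Lap_\calV\contr_{\omega_B}\mu^*F_\calH)$ (\cref{lemma:horizontalcomponentricciform}). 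Your proposed substitute --- computing $\rho_\calH$ as ``the pairing of $F_A$ with the representation of $\g$ on $\det T_0^*Y$'' --- does not literally make sense: the fibre metric is neither $G$-invariant nor flat, so no linear representation does this job; the correct object is precisely the nonlinear assignment $\xi\mapsto\Lap\mu^*(\xi)$, which you never introduce.

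More importantly, your cancellation mechanism points in the wrong direction. Once the identity above is in place, the OSC equation \eqref{eq:OSC} becomes $p(\Lap_\calV h)=0$ with $h=\contr_{\omega_B}\mu^*F_\calH\in C_E^{\infty}(X)$ (here one uses that $P$ is holomorphic, so $h$ is a fibrewise mean-zero holomorphy potential), and since $p$ is the orthogonal projection with $p(h)=h$, one has $\langle h,p(\Lap_\calV h)\rangle=\int_X|\nabla_\calV h|^2\,\omega_X^m\wedge\omega_B^n$, so the equation holds if and only if $h=0$. In particular the sum $\Lap_\calV h+\contr_{\omega_B}\rho_\calH$ is never ``killed by $p$ because the $G$-invariant parts cancel'' for nonzero $h$: under the Hermite--Einstein hypothesis the OSC equation holds because the relevant fibrewise potential vanishes outright, not because the two terms are orthogonal to $C_E^{\infty}(X)$. (Your auxiliary claim that $\Lap_\calV$ of a holomorphy potential is again a holomorphy potential on a cscK fibre is both unjustified and unnecessary.) The same gap affects your converse: the passage from the vanishing of \eqref{eq:OSC} to the vanishing of the ``non-central, non-covariantly-constant components'' of $\contr_{\omega_B}F_A$ is exactly the positivity argument just described, which you do not supply; and in the automorphism-bundle direction you must in addition construct the connection on $P$ out of $\omega_X$ and check that the form it induces back on $X$ differs from the given $\omega_X$ only by $i\deldelbar$ of a function pulled back from $B$ (again via the closed-$(1,1)$-forms lemma), so that optimality transfers --- a step your outline skips entirely.
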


We will be interested in smooth isotrivial relatively cscK fibrations $(X,\omega_X)\to (B,\omega_B)$. Let $H$ be relatively ample on $X$ and suppose $\omega_X\in c_1(H)$. Then for the model fibre $(Y,\omega_Y,H_Y)$ of $X$, where $\omega_Y$ is cscK, the automorphism group of $(Y,H_Y)$ is reductive \cite[\S 2.4, \S 8.1]{gauduchon2010calabi}. 

To summarise, let $G_0=\Aut_0(Y,H_Y)$ denote the the connected component of the identity of the group of holomorphic automorphisms of $Y$ which lift to $H_Y$, and let $K_0=\Isom_0(Y,\omega_Y,H_Y)$ denote the subset of $\Aut_0(Y,H_Y)$ of holomorphic isometries of $\omega_Y$. Then $K_0\subset G_0$ is a maximal compact subgroup. The Lie algebra $\h = \Lie(G_0)$ of complex holomorphy potentials of $Y$ can be identified with the holomorphic vector fields on $Y$ which vanish at least once. The Lie algebra $\k= \Lie(K_0) \subset \h$ is identified with the \emph{real} holomorphy potentials, and $\h = \k \oplus J\k$ where $J$ is the complex structure on $Y$. Integrating up to the Lie group, $G_0=K_0^\CC$, so $G_0$ is reductive.

In the case of fibrations the above description of the automorphism group has the following consequence.

\begin{lemma}\label{lemma:principalbundle}
	A smooth polarised isotrivial relatively cscK fibration $$\pi: (X,\omega_X,H)\to (B,\omega_B,L)$$ with model fibre $(Y,\omega_Y,H_Y)$ arises as the associated bundle to a reductive holomorphic principal $G_0=\Aut_0(Y,H_Y)$-bundle $P$ which admits a reduction of structure group $\sigma: B\to P/K_0$ to a principal $K_0=\Isom_0(Y,\omega_Y,H_Y)$-bundle $P_\sigma$ such that
	$$X = P\times_{G_0} Y = P_\sigma \times_{K_0} Y.$$
\end{lemma}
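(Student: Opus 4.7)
The plan is to construct $P$ as the bundle of polarised biholomorphisms from the model fibre $(Y, H_Y)$ into the fibres $(X_b, H_b)$, and then use the relatively cscK structure of $\omega_X$ to obtain the reduction of structure group $\sigma$ to $P_\sigma$.

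First I would define, for each $b \in B$, the fibre $P_b$ to be a connected component of the space of polarised biholomorphisms $f: (Y, H_Y) \to (X_b, H_b)$; such biholomorphisms exist by isotriviality and Fischer--Grauert. The group $G_0 = \Aut_0(Y, H_Y)$ acts freely and transitively on $P_b$ by precomposition, making $P_b$ a $G_0$-torsor. The holomorphic structure of the fibre bundle $X \to B$ provides local holomorphic trivialisations, and hence equips $P$ with the structure of a holomorphic principal $G_0$-bundle. Reductivity of $G_0$ is the Matsushima--Lichnerowicz theorem applied to the cscK manifold $(Y, \omega_Y)$, which also identifies $K_0 = \Isom_0(Y,\omega_Y,H_Y)$ as the maximal compact subgroup.

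Next I would construct the reduction $\sigma: B \to P/K_0$ by setting $(P_\sigma)_b \subset P_b$ to be the subset of those $f$ satisfying $f^*\omega_b = \omega_Y$. This is non-empty: given any $f \in P_b$ the pullback $f^*\omega_b$ is a cscK metric in $c_1(H_Y)$, and by the parametrisation of such metrics by $G_0/K_0$ (\cref{rmk:spaceofcscKmetrics}) there exists $g \in G_0$ with $(f \circ g)^* \omega_b = \omega_Y$, so $f \circ g \in (P_\sigma)_b$. Any two elements of $(P_\sigma)_b$ differ by a polarised holomorphic isometry of $(Y, \omega_Y, H_Y)$, i.e.\ by an element of $K_0$ after restricting to consistent connected components, so $(P_\sigma)_b$ is a $K_0$-torsor. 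Smooth dependence of $\omega_b$ on $b$ then ensures that $P_\sigma \to B$ has the required smooth principal $K_0$-subbundle structure.

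The identifications $X = P \times_{G_0} Y = P_\sigma \times_{K_0} Y$ will then follow tautologically from the evaluation map $[f, y] \mapsto f(y) \in X_b$, which is manifestly well-defined on the quotient and a fibrewise biholomorphism. The main technical point will be ensuring global consistency of the connected-component choice in defining $P$ as a genuine $G_0$-bundle (as opposed to an $\Aut(Y,H_Y)$-bundle); this amounts to triviality of the monodromy of $X \to B$ in $\pi_0(\Aut(Y,H_Y))$, which can either be assumed in the setup or sidestepped by passing to a finite \'etale cover of $B$.
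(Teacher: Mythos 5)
Your construction is correct, and in substance it is the same proof as the paper's: both rest on isotriviality (Fischer--Grauert) to get local holomorphic triviality, Matsushima--Lichnerowicz for the reductivity of $G_0$ and the identification of $K_0$ as maximal compact, and uniqueness of cscK metrics up to $\Aut_0$ (\cref{rmk:spaceofcscKmetrics}) to produce the reduction of structure group. The difference is one of packaging: the paper's proof of \cref{lemma:principalbundle} works with a system of local trivialisations, adjusts them fibrewise by automorphisms so that the pushed-forward metric is $\omega_Y$, and checks the cocycle condition, whereas you define $P$ globally as the torsor of polarised biholomorphisms $(Y,H_Y)\to (X_b,H_b)$ with $G_0$ acting by precomposition and cut out $P_\sigma$ by the condition $f^*\omega_b=\omega_Y$; this invariant description is exactly the one the paper itself adopts later in \cref{section:alternativeprincipalbundle} when it needs to go back from an optimal symplectic connection to a connection on $P$, so your route arguably sets up the converse direction more cleanly. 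Your closing caveat about connected components is legitimate rather than a defect: the transition maps (in either argument) a priori only take values in $\Aut(Y,H_Y)$, resp.\ $\Isom(Y,\omega_Y,H_Y)$, so obtaining a genuine $G_0$- (resp.\ $K_0$-) bundle does involve the vanishing of the $\pi_0(\Aut(Y,H_Y))$-monodromy of $X\to B$, a point the paper's proof passes over silently; just note that if that monodromy is genuinely nontrivial, passing to an \'etale cover changes the base, so strictly one should either record the monodromy hypothesis or be content with an $\Aut(Y,H_Y)$-bundle reducing to $\Isom(Y,\omega_Y,H_Y)$, which is all the subsequent arguments use. Your brief appeal to "smooth dependence of $\omega_b$ on $b$" for the smoothness of $\sigma$ is at the same level of detail as the paper and is fine.
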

\begin{proof}
	Since $X\to B$ is a holomorphic fibre bundle, it admits a holomorphic system of local trivialisations, say $\{(U_\alpha, \varphi_\alpha)\}.$ Fix any $b_0\in B$ and any $\beta$ with $b_0\in U_\beta$. Define a model cscK metric $\omega_Y := \rest{{\varphi_\beta}_*}{b} \omega_X$ and polarisation $H_Y := \rest{{\varphi_\beta}_*}{b} H$. Let $G_0 = \Aut_0(Y, H_Y)$ and $K_0 = \Isom_0(Y, \omega_Y, H_Y)$. 
	
	By the uniqueness of cscK metrics up to automorphisms, for any local trivialisation $\psi$ on $V$ for $X$ and any $b\in V$, there exists a holomorphic automorphism $g_b$ of $Y$ taking $\rest{\psi_* \omega_X}{b}$ to $\omega_Y$. Since $\psi_* \omega_X$ and $\omega_Y$ are cohomologous, this may be taken to lie inside the reduced automorphism group of biholomorphisms which lift to the line bundle $H_Y$ for which $\omega_Y \in c_1(H_Y)$. Performed for the covering by the $U_\alpha$, this defines a system of local sections of a principal $G_0$-bundle over $B$. The cocycle condition for this system of sections follows from that of the trivilising functions $\varphi_\alpha$ for this cover. 
	
	Furthermore, if $b\in U_\alpha$ then under the identification of $X_b$ with $Y$ with respect to the biholomorphism $\varphi_\alpha$, the automorphism group $K_0$ for $Y$ can be identified with a conjugate of $\Isom_0(X_b, \rest{\omega_X}{b}, H_b)$ in $\Aut_0(X_b, H_b)$. The cocycle condition for this system of local trivialisations guarantees that on overlaps of the $U_\alpha$ the isometry groups of $b\in U_{\alpha}\cap U_{\beta}$ are mapped to the same conjugate. This defines a smooth section $\sigma: B \to P/K_0$ specifying the desired reduction of structure group to $K_0$.
\end{proof}

\subsection{Induced optimal symplectic connections\label{sec:inducedconnection}}

We suppose now that we have an isotrivial fibration arising as the associated holomorphic fibre bundle to some holomorphic principal bundle with reductive structure group, which reduces to a principal bundle for a maximal compact subgroup of the structure group. As we observed, every isotrivial relatively cscK fibration arises in this way, although in the following we make no assumption that the associated principal bundle has structure group the identity component of the automorphism group of the model fibre.

Explicitly, fix a smooth polarised variety $(Y,H_Y)$ with a constant scalar curvature K\"ahler metric $\omega_Y\in c_1(H_Y)$. Assume that a reductive group $G$ acts linearly on $(Y,H_Y)$ and furthermore that there exists a maximal compact subgroup $K\subset G$ for which the restriction of the $G$ action to $K$ preserves the K\"ahler metric $\omega_Y$, that is, assume that $K$ acts by holomorphic isometries on $Y$. Such an action of $K$ on $(Y,\omega_Y)$ is always Hamiltonian, where the Hamiltonian function of any induced vector field is given by the real mean-zero holomorphy potential with respect to $\omega_Y$ (see \cref{sec:holomorphypotentials}). Let us denote by $\mu: Y \to \k^*$ a corresponding moment map. Finally let us assume that $P$ admits a reduction of structure group $\sigma$ to $K$, and let $P_\sigma$ denote the principal $K$-bundle which induces $P$. Associated to this data is a holomorphic fibre bundle
$$\pi: X = P\times_G Y = P_\sigma \times_K Y \to B,$$
associated to $P$. Given such an associated bundle, there is an induced symplectic connection on $X$ given by the cscK metric on $Y$ and a complex unitary connection on $P$. This follows essentially from working with the smooth principal $K$-bundle $P_\sigma$ and applying a theorem of Weinstein (see for example \cite[Thm. 6.3.3]{mcduff2017introduction}). We reproduce the details here to emphasise the relationship between $P_\sigma$ and $P$ in our setting and show that the resulting form $\omega_X$ is a $(1,1)$-form on $X$.

\begin{proposition}\label{prop:inducedsymplecticconnection}
	Given the set up above, any choice of complex unitary connection $A$ on $P$ induces a relatively K\"ahler metric $\omega_X\in c_1(H)$ on $X$.
\end{proposition}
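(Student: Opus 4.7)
The plan is to carry out Weinstein's minimal coupling construction in the holomorphic category and verify that all of the properties are inherited from the input data. Since $A$ is a complex unitary connection on $P$, by the existence and uniqueness of Chern connections it is the Chern connection of the Hermitian structure $\sigma$, and in particular it restricts to a $K$-connection $A_\sigma$ on the principal $K$-bundle $P_\sigma$. I will work primarily with $A_\sigma$ on $P_\sigma$, and use the holomorphy of $A$ only at the end to verify that the resulting form has type $(1,1)$.

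First, on the product $P_\sigma\times Y$ define the two-form
\[
\tilde\omega \;:=\; p_Y^*\omega_Y \,+\, d\langle \mu, p_{P_\sigma}^*A_\sigma\rangle,
\]
where the pairing is between $\mu\colon Y\to \k^*$ and the $\k$-valued one-form $A_\sigma$, so that $\langle\mu,A_\sigma\rangle$ is a real one-form on $P_\sigma\times Y$. Using Cartan's formula together with the defining property of the moment map $d\langle\mu,\xi\rangle=-\iota_{v_\xi}\omega_Y$ and the invariance $\calL_\xi\omega_Y=0$, one checks that $\tilde\omega$ is closed, that it is $K$-invariant under the diagonal action, and that its contraction with any fundamental vector field of the diagonal $K$-action vanishes; hence it is $K$-basic and descends uniquely to a closed two-form $\omega_X$ on $X=P_\sigma\times_K Y$. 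Restricted to a fibre $X_b\cong Y$ it reduces to $\omega_Y$, so $\omega_X$ is relatively K\"ahler.

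Next, the main step is to verify that $\omega_X$ is of type $(1,1)$ with respect to the complex structure on $X=P\times_G Y$. Here I use that $A$, being the Chern connection of the Hermitian reduction $\sigma$, is a $(1,0)$-form on $P$ with values in $\g$, so that on the total space the horizontal distribution $\ker A\subset TP$ is $J_P$-invariant. The associated bundle $X=P\times_G Y$ inherits its complex structure from the complex quotient of $P\times Y$ by the free holomorphic $G$-action, and the horizontal lift of $TB$ to $X$ induced by $A$ is $J_X$-invariant. The vertical distribution is canonically isomorphic to the fibrewise tangent bundle carrying $J_Y$, on which $\omega_Y$ is $(1,1)$. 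Finally, the mixed horizontal--horizontal piece of $\omega_X$ is computed from Weinstein's formula to be the pullback of $\langle\mu,F_A\rangle$ on the base; since $F_A$ is the curvature of a Chern connection it is of type $(1,1)$, so this horizontal piece is also $(1,1)$. Assembling, $\omega_X\in\Omega^{1,1}(X,\RR)$.

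It remains to identify the cohomology class. The reduced automorphism group hypothesis ensures that the $G$-action on $(Y,H_Y)$ lifts to the polarisation, so the associated line bundle $H:=P\times_G H_Y\to X$ is a well-defined relatively ample line bundle. The Chern connection $A_\sigma$ on $P_\sigma$ couples with the Chern connection of $\omega_Y$ on $H_Y$ to give a Chern connection on $H$, and a direct Chern--Weil computation identifies the curvature of this coupled connection, divided by $2\pi$, with $\omega_X$ up to the minimal coupling correction; hence $[\omega_X]=c_1(H)$.

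The main obstacle I expect is the type $(1,1)$ step: one has to match the complex structure on $X$ obtained from the holomorphic quotient $P\times_G Y$ with the smooth description $P_\sigma\times_K Y$ used in the symplectic construction, and check that the Chern property of $A$ is precisely what forces the mixed terms in Weinstein's formula to be $(1,1)$. The remaining verifications are essentially bookkeeping.
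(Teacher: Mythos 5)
Your proposal is correct and takes essentially the same route as the paper: Weinstein's minimal coupling form $\omega_Y \pm d\langle \mu, A_\sigma\rangle$ on $P_\sigma \times Y$ (the sign difference is only your opposite moment-map convention), shown to be basic and closed so that it descends to $X$, with the $(1,1)$ property coming from $A$ being a complex connection, so that $F_A$ has type $(1,1)$ and the induced horizontal distribution is $J$-invariant. Your extra Chern--Weil identification of $[\omega_X]$ with $c_1(H)$ via the coupled connection on $P\times_G H_Y$ addresses a point the paper's proof leaves implicit and is a reasonable supplement, though the phrase ``up to the minimal coupling correction'' should be sharpened: with $\mu$ normalised as the moment map induced by the linearisation of the action on $H_Y$, the curvature of the coupled Hermitian metric is exactly $-2\pi i\,\omega_X$, with no residual correction.
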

\begin{proof}
	Let $v_\xi$ denote the induced vector field on $Y$ from some $\xi\in \k$ under the action of $K$ on $Y$. Then $v_\xi$ preserves the K\"ahler structure of $Y$. 
	
	Let $y\in Y, \hat y \in T_y Y, \eta \in \k$. Then we have the two identities
	$$\ip{d\mu(y) \hat y, \xi} = \omega_Y(v_\xi (y), \hat y),$$
	$$\ip{\mu(y), [\xi,\eta]} = \omega_Y(v_\xi(y),v_\eta(y)).$$
	The first follows from the definition of a moment map, and the second from the infinitesimal equivariance condition on the moment map.
	
	Let us abuse notation by writing $A=A_\sigma$ to denote the connection on the reduction of structure group $P_\sigma$ of $P$, and let $F\in \Omega^2(P_\sigma;\k)$ denote the curvature form of $A_\sigma$. Then for $p\in P_\sigma, \xi \in \k, v_1,v_2\in T_p P_\sigma$ we also have the standard expressions
	$$A_p(p\xi) = \xi,$$
	$$F_p(v_1,v_2) = (dA)_p(v_1,v_2) + [A_p(v_1),A_p(v_2)].$$
	Define a projection operator $\pi_A: TP_\sigma\times TY \to TY$ by $\pi_A(v,\hat y) = \hat y + v_{A_p(v)}(y).$ Let us define a two-form $\hat \omega_X\in \Omega^2(P_\sigma \times Y)$ by
	$$\hat \omega_X := \omega_Y - d\ip{\mu,A}.$$
	Then note that $\hat \omega_X$ is closed on $P_\sigma \times Y$.
	
	One may write 
	\begin{equation}\label{eqn:formequality}
		\hat \omega_X = \pi_A^* \omega_Y - \ip{\mu, F}.
	\end{equation}
	Indeed, using the identities above and the definition of $\pi_A$, we compute
	\begin{align*}
		&(\pi_A^* \omega_Y - \ip{\mu, F})_{(p,y)} ((v_1,\hat y_1), (v_2,\hat y_2))\\
		&= \omega_Y(\hat y_1 + v_{A_p(v_1)}(y), \hat y_2 + v_{A_p(v_2)} (y)) - \ip{\mu(y), F_p(v_1,v_2)}\\
		&= \omega_Y (\hat y_1, \hat y_2) + \omega_Y (v_{A_p(v_1)}(y), \hat y_2) - \omega_Y (v_{A_p(v_2)}(y), \hat y_1)\\
		&\qquad + \omega_Y (v_{A_p(v_1)}(y), v_{A_p(v_2)}(y))\\
		&\qquad - \ip{\mu(y), (dA)_p(v_1,v_2)} - \ip{\mu(y), [A_p(v_1),A_p(v_2)]}\\
		&=\omega_Y (\hat y_1, \hat y_2) + \ip{d\mu(y)\hat y_2, A_p(v_1)} - \ip{d\mu(y) \hat y_1, A_p(v_2)}\\
		&\qquad + \ip{\mu(y), [A_p(v_1),A_p(v_2)]}\\
		&\qquad - \ip{\mu(y), (dA)_p(v_1,v_2)} - \ip{\mu(y), [A_p(v_1),A_p(v_2)]}\\
		&= \omega_Y(\hat y_1, \hat y_2) - d\ip{\mu, A}_p ((v_1, \hat y_1), (v_2,\hat y_2)).
	\end{align*}
	
	From \eqref{eqn:formequality} it follows that the two-form $\hat \omega_X$ is $K$-invariant and horizontal for the quotient map $P_\sigma \times Y \to P_\sigma \times_K Y = X$. In particular a vector $(v,\hat y)$ is vertical with respect to this projection precisely if $v$ is a vertical tangent vector to $P$ and $\pi_A(v,\hat y) = 0$. From this it follows immediately that $\iota_{(v,\hat y)} \hat \omega_X = 0$. Since $\hat \omega_X$ is also closed this holds infinitesimally, and $\hat \omega_X$ is basic. 
	
	Additionally, the moment map condition for $\mu$ and $\omega_Y$ implies the $K$-equivariant closedness of $\omega - \mu$, and combined with the Bianchi identity this implies the equivariant closedness of $\tilde \omega_X$.
	
	Therefore the $K$-invariant and equivariantly closed two-form $\tilde \omega_X$ descends to a closed two-form $\omega_X$ on the quotient $X = P_\sigma \times_K Y$. The explicit expression 
	$$\omega_X((v_1,\hat y_1),(v_2, \hat y_2)) = \omega_Y(\hat y_1 + v_{A_p(v_1)}(y), \hat y_2 + v_{A_p(v_2)} (y)) - \ip{\mu(y), F_p(v_1,v_2)}$$
	shows that $\rest{\omega_X}{b} = \omega_Y$ since a vertical vector of $X$ takes the form $(0, \hat y)$, and  $(\omega_X)_{\calH} = \mu^* F_\calH$, since a horizontal vector is given by $(v,0)$ where $v$ is horizontal on $P$. Since we assumed that the initial connection $A$ was complex, the curvature $F$ has type $(1,1)$, and so $\omega_X$ is a closed $(1,1)$-form on $X$.
\end{proof}

In order to investigate when the symplectic connection $\omega_X$ induced on $X$ is optimal, we will use the following fact about compact K\"ahler manifolds which will simplify the optimal symplectic connection equation in the isotrivial setting.

\begin{lemma}[See for example {\cite[Lem. 28]{szekelyhidi2012blowing}}]\label{lemma:riccihamiltonian}
	If $h$ is a Hamiltonian function for a K\"ahler metric $\omega$ on a compact K\"ahler manifold, with Hamiltonian vector field $v$, then $\Lap h$ is formally the Hamiltonian function for the (not necessarily symplectic) two-form $\Ric \omega = \rho$ with the same vector field.
\end{lemma}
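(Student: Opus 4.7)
The plan is to unpack what the lemma is really claiming: since $h$ is a Hamiltonian for $\omega$ with vector field $v$, we have $\iota_v\omega = dh$; the lemma asserts the analogous identity $\iota_v\rho = d(\Lap h)$ for the (possibly degenerate) Ricci form $\rho$. The qualification ``formally'' is there because $\rho$ need not be symplectic, so the term \emph{Hamiltonian} is used only in the sense of providing a primitive for $\iota_v\rho$. Note that in the setting of interest to the paper the Hamiltonian vector field $v$ arises from a real mean-zero holomorphy potential $h$ on a compact K\"ahler manifold, so $v$ is (the real part of) a holomorphic vector field preserving the K\"ahler structure; in particular $\mathcal{L}_v \rho = 0$, and by Cartan's formula combined with $d\rho=0$ we get that $\iota_v\rho$ is closed. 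The task is therefore to identify the specific primitive $\Lap h$.

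First I would reduce to a direct computation in local K\"ahler normal coordinates. Write $v^{1,0} = g^{i\bar j} h_{,\bar j}\,\partial_i$; the holomorphy condition on $v$ is equivalent to $g^{i\bar j} h_{,\bar j \bar k} = 0$, i.e.\ the $(0,2)$-part of $\nabla^2 h$ vanishes after tracing. The statement reduces to verifying, on $(0,1)$-components,
\begin{equation*}
(\iota_v\rho)_{\bar k} \;=\; g^{i\bar j} h_{,\bar j}\, R_{i\bar k} \;=\; g^{i\bar j} h_{,i\bar j\bar k} \;=\; (\Lap h)_{,\bar k},
\end{equation*}
together with the complex-conjugate identity. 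The middle equality is the key step and follows from the Ricci identity for commuting covariant derivatives on the $(0,1)$-form $h_{,\bar j}$:
\begin{equation*}
\nabla_i \nabla_{\bar k} (h_{,\bar j}) - \nabla_{\bar k} \nabla_i (h_{,\bar j}) \;=\; R_{i\bar k \bar j}{}^{\bar l}\, h_{,\bar l}.
\end{equation*}
Contracting with $g^{i\bar j}$, the term $g^{i\bar j}\nabla_{\bar k}(h_{,\bar j i}) = \nabla_{\bar k}(g^{i\bar j} h_{,\bar j\bar k}) + \text{terms}$ is handled using the holomorphy potential condition, and the curvature term contracts to the Ricci tensor $R_{i\bar k}$, producing precisely the desired identity.

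The main obstacle is purely bookkeeping: tracking sign conventions for $\Lap$, for $\rho$, and for the moment map equation $\iota_v\omega = dh$ (there are several incompatible choices in the literature), and correctly using that in K\"ahler geometry the $(0,2)$-components of the Riemann curvature tensor vanish so that several \emph{a priori} curvature terms drop out. A more conceptual alternative I would mention briefly: since $v$ preserves the K\"ahler structure and $\rho = -\tfrac{i}{2\pi}\deldelbar \log\det g_\omega$ is an equivariantly closed invariant $(1,1)$-form, there is a canonical equivariant extension $\rho + f$, and the Bochner formula identifies $f$ with $\Lap h$ up to a universal constant. Either approach completes the proof; I would include the direct tensorial calculation because it makes the factor explicit, as this factor feeds directly into the expansion \eqref{eq:adiabaticscalarcurvature} used later.
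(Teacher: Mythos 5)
Your argument is correct, but it takes a genuinely different route from the paper. The paper's proof is a three-line coordinate-free computation: it writes $2\rho = dJd\log\det\omega$, applies Cartan's formula $\iota_v d\alpha = \calL_v\alpha - d\iota_v\alpha$ together with the fact that $v$ preserves $J$ and $\omega$ (so the Lie-derivative term can be rewritten as $-d\calL_{Jv}\log\det\omega$), and then uses the standard identity $\calL_{Jv}\omega = -2i\deldelbar h$ and a trace to land on $\iota_v\rho = d\Lap h$. You instead verify the identity pointwise in components, commuting covariant derivatives on the $(0,1)$-form $h_{,\bar j}$ so that the Ricci tensor appears from the curvature term while the other term dies by (the covariant derivative of) the holomorphy condition $g^{i\bar j}h_{,\bar j\bar k}=0$; this is essentially the Lichnerowicz-type computation in Sz\'ekelyhidi's reference rather than the Cartan-calculus argument the paper gives. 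What your route buys is that the role of the Ricci contraction and of the holomorphy-potential hypothesis is completely explicit — and you correctly flag that the lemma, as stated for a bare ``Hamiltonian function,'' really needs $v$ to preserve $J$ (equivalently, $\nabla^{1,0}h$ holomorphic), an assumption the paper's proof also uses silently when it says $v$ preserves $J$ and $\omega$. What the paper's route buys is brevity and no index bookkeeping. Two small slips in your write-up, neither fatal: the condition $g^{i\bar j}h_{,\bar j\bar k}=0$ (free indices $i$, $\bar k$) is not the Hessian's $(0,2)$-part ``after tracing'' but is equivalent to its vanishing outright, since contracting with $g^{i\bar j}$ merely raises an index; and your intermediate line $g^{i\bar j}\nabla_{\bar k}(h_{,\bar j i}) = \nabla_{\bar k}(g^{i\bar j}h_{,\bar j\bar k}) + \text{terms}$ has mismatched indices — the clean statement is that $\nabla_{\bar k}\Lap h = g^{i\bar j}\nabla_{\bar k}\nabla_i h_{,\bar j}$, the commuted term $g^{i\bar j}\nabla_i h_{,\bar j\bar k}$ vanishes as the contraction of the covariant derivative of an identically zero tensor, and the curvature term traces to $R_{m\bar k}v^m$, exactly as in your displayed conclusion.
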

\begin{proof}
	\begin{align*}
		2\iota_v \rho &= \iota_v (dJd \log \det \omega)\\
		&= \calL_v (Jd \log \det \omega) - d\iota_v (Jd \log \det \omega)\\
		&= -d (\calL_{Jv} \log \det \omega)\\
		&= d\Lambda \calL_{Jv} \omega.
	\end{align*}
	Here we have used that $v$ preserves $J$ and $\omega$, where $\Lambda$ is the trace with respect to $\omega$. But $\calL_{Jv} \omega = -2i\deldelbar h$ so $\iota_v \rho = d\Lap h$. 
\end{proof}

The key argument which demonstrates how the optimal symplectic connection equation simplifies for isotrivial fibrations is the following.

\begin{proposition}\label{lemma:horizontalcomponentricciform}
	Given a complex unitary connection on a holomorphic principal bundle $P\to (B,\omega_B)$ with reductive fibre $G$, and an associated K\"ahler fibration $X=P\times_G (Y,\omega_Y)$ with cscK fibre, the relative Ricci form $\rho$ of the induced symplectic connection $\omega_X$ is related to the curvature of the connection $A$ by
	$$p(\contr_{\omega_B} \rho_\calH) = p(\Lap_\calV \contr_{\omega_B} \mu^* F_\calH).$$
\end{proposition}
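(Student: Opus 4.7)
The plan is to compute the horizontal component of the relative Ricci form $\rho$ explicitly by exploiting the associated bundle structure $X = P \times_G Y$, and then to apply \cref{lemma:riccihamiltonian}. The key observation is that the holomorphic line bundle $\det\calV$ on $X$ agrees with the associated line bundle $P \times_G \det TY$, and its Hermitian metric is the $K$-invariant one induced from $\det\omega_Y$.

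First, I would work in a local holomorphic trivialization of $P \to B$ over $U \subset B$, producing $X|_U \cong U \times Y$ as complex manifolds. In such a holomorphic trivialization, the unitary frame provided by the reduction $\sigma \colon B \to P/K$ differs from the holomorphic frame by a $G/K$-valued function $g \colon U \to G/K$, so that the Hermitian metric potential on $\det\calV$ acquires a Jacobian-type correction relative to the fibrewise $\det\omega_Y$. Writing $g(b) = \exp(\xi(b))$ with $\xi(b) \in \g = \k \oplus J\k$, the $K$-invariance of $\det\omega_Y$ annihilates the $\k$-component, while the $J\k$-component produces a factor involving $\Lap_Y h_\eta$ through the standard K\"ahler identity that the divergence of $Jv_\eta$ is a constant multiple of $\Lap_Y h_\eta$ for a holomorphic Killing field $v_\eta$ with Hamiltonian $h_\eta = \ip{\mu,\eta}$ on the cscK manifold $(Y,\omega_Y)$.

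Second, taking $-\tfrac{i}{2\pi}\deldelbar$ of the resulting local potential, the vertical piece reproduces $\rho_Y$ while the horizontal piece becomes
\[ \rho_\calH \;=\; \Lap_\calV \mu^* F_\calH \;+\; \pi^* \alpha \]
for some closed two-form $\alpha$ on $B$ absorbing the purely basic contributions. The appearance of $F$ comes from the $\deldelbar$-derivative of the $\g$-valued function $\xi(b)$, which encodes the holomorphic-to-unitary frame change and recovers the curvature of $A$, while the Laplacian factor is inherited from the pointwise expansion of the metric potential. \cref{lemma:riccihamiltonian} enters as the compatibility identity: the Hamiltonian of $v_\xi$ with respect to $\rho_Y$ is $\Lap_Y h_\xi$, so the vertical and horizontal pieces of $\rho$ descend from a single line-bundle curvature in a consistent way. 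Contracting with $\omega_B$, which commutes with $\Lap_\calV$ since these operators act in complementary factors of the splitting $T^*X = \calH^* \oplus \calV^*$, we obtain
\[ \contr_{\omega_B} \rho_\calH \;=\; \Lap_\calV \contr_{\omega_B} \mu^* F_\calH \;+\; \contr_{\omega_B} \pi^* \alpha. \]
Applying the projection $p$ and using \cref{rmk:ppullback}, which annihilates pullbacks from $B$, yields the desired equality.

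The main difficulty lies in the first step, where one must carefully identify the Jacobian correction to the metric potential and expand it to first order in the connection $A$. This requires the decomposition $\g = \k \oplus J\k$ together with the vanishing $\calL_{v_\xi}(\det\omega_Y) = 0$ for $\xi \in \k$ to isolate the $\Lap_Y h_\xi$ contribution, which arises exclusively from the non-compact directions of the structure group and is what makes the identity specific to the relatively cscK setting.
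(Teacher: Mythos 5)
Your overall strategy is sound in outline (establish $\rho_\calH = \Lap_\calV \mu^* F_\calH$ up to a pullback from $B$, then contract with $\omega_B$ and kill the basic term with $p$ via \cref{rmk:ppullback}), but the central step is not proved by the argument you sketch. The identity $\rho_\calH = \Lap_\calV \mu^* F_\calH + \pi^*\alpha$ is an \emph{exact} statement, whereas your derivation expands the potential of $\det\calV$ ``to first order in the connection $A$'': writing the holomorphic-to-unitary frame change as $g(b)=\exp(\xi(b))$ and linearising $\log\det\bigl(g(b)^*\omega_Y\bigr)$ in $\xi$ only identifies the leading term $\Lap_Y h_\eta$; it does not control the higher-order terms, and in particular it does not explain why the quadratic contributions assemble into the full curvature $F_A = dA + \tfrac12[A\wedge A]$ with coefficient $\Lap_\calV\mu^*$. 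Matching that $[A\wedge A]$ piece requires precisely the infinitesimal equivariance identity $\Lap\bigl(\omega_Y(v_\xi,v_\eta)\bigr) = \rho_Y(v_\xi,v_\eta)$, i.e.\ that $\nu^* := \Lap\circ\mu^*$ is a $K$-equivariant comoment-type map for $\rho_Y$ (this is where the cscK/isometry hypothesis enters, via $K$-equivariance of $\Lap$ together with \cref{lemma:riccihamiltonian}), and your sketch never engages with it. A second omitted point: in a holomorphic trivialisation $X|_U \cong U\times Y$ the horizontal distribution $\calH$ of $\omega_X$ is \emph{not} the product distribution, so $\rho_\calH$ is not read off from the base-base $\deldelbar$-derivatives of your potential; one must also correct by the vertical parts of the horizontal lifts, which reintroduces the connection form and further quadratic terms. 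As written, the ``main difficulty'' you acknowledge is exactly the proof, and it is missing.

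For comparison, the paper avoids the local computation entirely: it observes that $\nu^* = \Lap\circ\mu^*$ satisfies $d\nu^*(\xi) = i_{v_\xi}\rho_Y$ (by \cref{lemma:riccihamiltonian}) and is $K$-equivariant, so the minimal-coupling construction of \cref{prop:inducedsymplecticconnection} can be rerun verbatim with $(\rho_Y,\nu^*)$ in place of $(\omega_Y,\mu^*)$, producing a closed $(1,1)$-form $\tau_A$ on $X$ with $(\tau_A)_\calV = \rho_\calV$ and $(\tau_A)_\calH = \nu^* F_\calH$; then \cite[Lem.~3.9]{dervan2019optimal} gives $\rho_\calH + \pi^*\beta = \nu^* F_\calH$ exactly, and the contraction/projection steps proceed as in your final paragraph. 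If you want to salvage your local approach you would need to carry out the exact (not linearised) expansion of $\log\det\bigl(g(b)^*\omega_Y\bigr)$, prove the equivariance identity above to handle the commutator term, and account for the non-product horizontal distribution; the invariant argument is substantially cleaner.
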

\begin{proof}
	As above, suppose we have an action  $K\acts (Y,\omega_Y)$ of a real compact Lie group by holomorphic isometries on a K\"ahler manifold $Y$. By assumption this action admits an equivariant moment map $\mu: Y \to \k^*$. Let us define a comoment-type map 
	$$\nu^* : \k \to C_0^{\infty}(Y)$$
	by the composition
	$$\nu^* = \Lap \circ \mu^*.$$
	A restatement of \cref{lemma:riccihamiltonian} implies that the comoment map $\nu^*$ satisfies the standard moment map criterion with respect to the Ricci form $\Ric \omega_Y = \rho_Y$. That is,
	$$d\nu^*(\xi) = i_{v_\xi} \rho_Y.$$
	Additionally, since $K$ acts by isometries on $Y$, the Laplacian is $K$-equivariant as a morphism $C_0^{\infty} (Y) \to C_0^{\infty} (Y)$, and therefore the composition $\nu^* = \Lap \circ \mu^*$ is a $K$-equivariant map from $\k$ to $C_0^{\infty} (Y)$ with respect to the adjoint action of $K$ on $\k$. Differentiating this condition at the identity in $K$ gives the infinitesimal equivariance condition
	$$\nu^*([\xi, \eta]) = \rho_Y (v_\xi, v_\eta)$$
	for the comoment map $\nu^*$, which explicitly gives the interesting geometric formula
	$$\rho_Y(v_\xi, v_\eta) = \Lap(\omega_Y(v_\xi, v_\eta))$$
	for induced vector fields $v_\xi,v_\eta$. 
	
	Whilst the comoment map $\nu^*$ is not a genuine comoment map for a \emph{symplectic} form, it satisfies the same formal properties with respect to the $K$ action relative to $\rho_Y$ as $\mu^*$ does relative to $\omega_Y$. In particular the argument of \cref{prop:inducedsymplecticconnection} repeats without change for the differential form
	$$\tilde \tau_A := \rho_Y - d(\nu^* A) = \pi_A^* \rho_Y - \nu^* F$$
	on the product $P_\sigma \times Y$. Thus there exists a closed $(1,1)$-form $\tau_A$ on $X=P_\sigma \times_K Y$ with the property that $$(\tau_A)_\calV = (\rho)_\calV$$
	where $\rho$ is the relative Ricci form of $\omega_X$ itself. Additionally the explicit formula for $\tau_A$ reveals that
	$(\tau_A)_\calH = \nu^* F_\calH$.
	
	By \cite[Lem. 3.9]{dervan2019optimal}, if two closed $(1,1)$-forms agree when restricted to the vertical directions of a fibration, then their horizontal components are equal up to pullback from the base. In particular we have
	$$\rho_\calH + \pi^* \beta= \nu^* F_\calH$$
	for some two-form $\beta$ on $B$. 
	
	Let us now observe that after contracting with $\omega_B$ we have
	$$\contr_{\omega_B} \rho_\calH + \pi^* f= \contr_{\omega_B} \nu^* F_\calH = \Lap_\calV \contr_{\omega_B} \mu^* F_\calH$$
	for some function $f=\contr_{\omega_B} \beta$ on $B$. This second equality follows from the observation that $F_\calH$ on $X$ actually arises from a two-form defined on $B$, which is the very same $F_A\in \Omega^2(B, \ad P_\sigma)$ defining the curvature of the connection $A$ on $P_\sigma$ (after composing with the Lie algebra homomorphism from $\k$ to $\Ham(\calV)$). Since the two-form $\omega_B$ contracting $\mu^* F_\calH$ is also pulled back from the base, we can consider $\contr_{\omega_B} F_\calH$ as a section of the bundle of Hamiltonian vector fields on each fibre over $B$, and we have $\contr_{\omega_B} \mu^* F_\calH = \mu^* \contr_{\omega_B} F_\calH$ and similarly for $\nu^*$. Here we also use the identity
	$$\Lap_\calV \mu^* s = \nu^* s$$
	where $s: B\to \Ham(\calV)$ is a section of the relative Hamiltonian vector field bundle of the fibres of $X$ over $B$. This identity \emph{defines} $\nu^*$ for a general K\"ahler fibration, but in this case follows immediately from the descent of the comoment maps $\mu^*$ and $\nu^*$ to $X$ with respect to the diagonal action on $P\times Y$. 
	
	To conclude, we note by \cref{rmk:ppullback} that since the projection $p$ is invariant under the addition of a contraction of a form pulled back from the base, we have
	$$p(\Lap_\calV \contr_{\omega_B} \mu^* F_\calH) = p(\contr_{\omega_B} \nu^* F_\calH) = p(\contr_{\omega_B} \rho_\calH).$$
\end{proof}

\begin{theorem}\label{theorem:maintheorembody}
	If a complex unitary connection $A$ on $P$ is Hermite--Einstein with respect to the Hermitian structure $\sigma$ defining the reduction of structure group to $K$, then the induced symplectic connection $\omega_X$ on $P$ is an optimal symplectic connection. 
\end{theorem}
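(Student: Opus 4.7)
The plan is to derive the optimal symplectic connection equation \eqref{eq:OSC} directly from the Hermite--Einstein hypothesis, using \cref{lemma:horizontalcomponentricciform} as the main simplification. First I would invoke \cref{lemma:horizontalcomponentricciform}, which asserts the equality $p(\contr_{\omega_B}\rho_\calH) = p(\Lap_\calV \contr_{\omega_B}\mu^* F_\calH)$. By the linearity of $p$, the OSC condition
\[
p\bigl(\Lap_\calV \contr_{\omega_B}\mu^* F_\calH + \contr_{\omega_B}\rho_\calH\bigr) = 0
\]
is then equivalent to the single requirement $p(\Lap_\calV \contr_{\omega_B}\mu^* F_\calH) = 0$, so the entire proof reduces to verifying this one statement under the Hermite--Einstein assumption.

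Next I would translate the Hermite--Einstein hypothesis into an explicit formula for the left-hand side. Since $\contr_{\omega_B} F_A = \tau$ with $\tau$ a covariantly constant central section of $\ad P_\sigma$, using that the central subbundle $Z\subset \ad P_\sigma$ is globally trivial one identifies $\tau$ with a fixed element $\xi_0 \in Z(\k)$. Combining the minimal coupling identification of $F_\calH$ with $F_A$ from \cref{thm:minimalcoupling} with the $K$-invariance (indeed $G$-invariance) of $\mu^*(\xi_0)$ shows that $\contr_{\omega_B}\mu^* F_\calH = \mu^*(\tau)$ descends to a well-defined function on $X$. Applying \cref{lemma:riccihamiltonian} fibrewise to the cscK fibre $(Y,\omega_Y)$ then identifies $\Lap_\calV\mu^*(\tau)$ with the relative Ricci comoment $\nu^*(\tau)$, exactly as in the intermediate step of the proof of \cref{lemma:horizontalcomponentricciform}.

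The hard part will be the final step of verifying $p(\nu^*(\tau)) = 0$. The strategy is to exploit the algebraic constraints imposed by centrality and covariant constancy of $\tau$: these ensure that $\mu^*(\tau)$ arises from a constant section of the bundle $E\to B$ of fibrewise real mean-zero holomorphy potentials in the natural trivialisation of the central subbundle, so that pairing $\nu^*(\tau)$ against an arbitrary smooth section of $E$ reduces, via fibre integration and a second application of \cref{lemma:riccihamiltonian}, to an expression controlled by the defining moment map identity together with the Hermite--Einstein normalisation $\contr_{\omega_B} F_A = \tau$. Making this orthogonality argument precise, while keeping careful track of the interaction between the covariantly constant central section $\tau$, the $L^2$-projection $p$, and the vertical Laplacian along the cscK fibres, will be the principal technical content of the proof.
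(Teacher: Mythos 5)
Your first two steps coincide with the paper's proof: the reduction of the OSC equation via \cref{lemma:horizontalcomponentricciform} to the single condition $p(\Lap_\calV \contr_{\omega_B}\mu^* F_\calH)=0$, and the identification $\contr_{\omega_B}\mu^* F_\calH=\mu^*(\tau)$ under the Hermite--Einstein hypothesis, are exactly how the paper begins. The problem is your final step, which is not merely the ``hard part'' but is set up in a way that cannot be carried out. You propose to treat $h:=\mu^*(\tau)$ as a (possibly nonzero) constant section of the bundle $E$ of fibrewise mean-zero holomorphy potentials and then to prove that $\nu^*(\tau)=\Lap_\calV h$ is $L^2$-orthogonal to every section of $E$. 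But $h$ itself lies in $C_E^\infty(X)$, and pairing against it gives $\langle h, p(\Lap_\calV h)\rangle=\langle h,\Lap_\calV h\rangle=\int_X |\nabla_\calV h|^2\,\omega_X^m\wedge\omega_B^n$, which is strictly positive unless $h$ is fibrewise constant, hence (being fibrewise mean-zero) identically zero. So no amount of exploiting the moment map identity or the Hermite--Einstein normalisation will produce the orthogonality you want for a nonzero $h$; this integration by parts is precisely the computation appearing in the paper's proof, where it is used to show that $p(\Lap_\calV h)=0$ is \emph{equivalent} to $h=0$ for $h\in C_E^\infty(X)$.

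The missing idea is therefore that centrality of $\tau$ kills $\mu^*(\tau)$ outright: under the correspondence between the Lie algebra of fibre automorphisms and holomorphy potentials, a central, covariantly constant section of $\ad P$ corresponds to the zero fibrewise mean-zero holomorphy potential (in the motivating case $G=\GL(r,\CC)$ acting on $\PP^{r-1}$, the centre acts trivially on the fibre), so $\contr_{\omega_B}\mu^* F_\calH\equiv 0$ and the reduced equation holds for the trivial reason that the argument of $p$ vanishes, not because a nonzero $\nu^*(\tau)$ is orthogonal to $C_E^\infty(X)$. Note also that covariant constancy of $\tau$ is constancy only in the base directions of $\ad P$; it gives no control over the vertical gradient $\nabla_\calV\mu^*(\tau)$, which vanishes only when the induced vertical vector field does. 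To repair your argument, replace the proposed orthogonality step by the observation $\mu^*(\tau)=0$, at which point the theorem follows immediately from your first two steps.
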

\begin{proof}
	Let $(X,\omega_X)\to (B,\omega_B)$ be an isotrivial K\"ahler fibration with cscK fibres and base, with symplectic connection $\omega_X$ induced from a holomorphic principal bundle $P$. Then by \cref{lemma:horizontalcomponentricciform} the optimal symplectic connection equation for $\omega_X$ reduces to
	$$p(\Lap_\calV \contr_{\omega_B} \mu^* F_\calH) = 0.$$
	Suppose now that $\omega_X$ is an optimal symplectic connection, and that $\contr_{\omega_B} \mu^* F_\calH = h$ for some smooth function $h\in C^{\infty}(X)$. Then $h$ restricts to a mean-zero holomorphy potential on each fibre of $X$, because the isotrivial fibration $X$ arises from a holomorphic principal bundle and the curvature takes values in fibrewise holomorphic vector fields. Since $p$ is the orthogonal projection onto such relative holomorphy potentials, we have
	\begin{align*}
		0 &= \ip{h, p(\Lap_\calV h)}\\
		&= \int_X h p(\Lap_\calV h) \omega_X^m \wedge \omega_B^n\\
		&= \int_X h \Lap_\calV h \omega_X^m \wedge \omega_B^n\\
		&= \int_X |\nabla h|^2 \omega_X^m \wedge \omega_B^n.
	\end{align*}
	Thus the holomorphy potential $h$ is covariantly constant, and in fact zero as $\mu^*$ lands in the mean-zero holomorphy potentials. Thus we have $p(\Lap_\calV h) = 0$ if and only if $h$ is zero. In particular the optimal symplectic connection equation is equivalent to
	$$\contr_{\omega_B} \mu^* F_\calH = 0.$$
	
	Now if $\omega_X$ arose from a principal bundle connection $A$, then the correspondence between the Lie algebra of automorphisms and holomorphy potentials tells us that the above equation is equivalent to asking 
	$$\contr_{\omega_B} F_A = \tau$$
	for some central section of $\ad P$, so if $A$ is Hermite--Einstein then the induced symplectic connection $\omega_X$ is an optimal symplectic connection. 
\end{proof}

The Hitchin--Kobayashi correspondence for principal bundles \cref{thm:DUYPrincipalbundle} allows us to interpret the above theorem in terms of the algebraic geometry of $P$. 

\begin{corollary}\label{corollary:stableprincipal}
	If a principal bundle $P\to (B,L)$ is polystable over a cscK base, and if the structure group $G$ of $P$ acts linearly on a polarised variety $(Y,H_Y)$ and admits a restriction to a $K$ action for a maximal compact subgroup which acts on $Y$ by isometries with respect to a cscK metric $\omega_Y\in c_1(H_Y)$, then the associated fibration $(X,H)\to (B,L)$ admits an optimal symplectic connection.
\end{corollary}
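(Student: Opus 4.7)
The plan is to deduce this Corollary directly from the main \cref{theorem:maintheorembody} by first applying the Hitchin--Kobayashi correspondence for holomorphic principal bundles to produce a Hermite--Einstein connection on $P$.

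First, I would invoke \cref{thm:DUYPrincipalbundle} (Anchouche--Biswas/Ramanathan--Subramanian): since $P\to(B,L)$ is polystable and $[\omega_B]=c_1(L)$, there exists a Hermitian structure $\sigma': B\to P/K'$ for some maximal compact $K'\subseteq G$ such that the associated Chern connection $A$ on $P$ is Hermite--Einstein. Since any two maximal compact subgroups of a reductive complex Lie group are conjugate, we may transport this reduction into one with respect to the prescribed $K$: if $g\in G$ satisfies $gK'g^{-1}=K$, then the translated section $\sigma(b):= g\cdot \sigma'(b)$ defines a reduction of structure group $\sigma: B\to P/K$, and the pushforward of $A$ under the corresponding gauge transformation is again a complex unitary Hermite--Einstein connection for the new Hermitian structure, since the Hermite--Einstein condition is gauge-equivariant and the Einstein constant $\tau\in\Gamma(\ad P)$ is central.

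Second, with this Hermite--Einstein connection $A$ on $P$ in hand, I would apply \cref{prop:inducedsymplecticconnection} to construct the induced relatively K\"ahler $(1,1)$-form $\omega_X\in c_1(H)$ on the associated fibre bundle
\[
X=P\times_G Y = P_\sigma\times_K Y \to B,
\]
using the data of the cscK metric $\omega_Y$ on $Y$ and the moment map $\mu:Y\to\k^*$ for the Hamiltonian $K$-action (which exists precisely because $K$ acts by holomorphic isometries on $(Y,\omega_Y)$, by the Matsushima--Lichnerowicz decomposition of $\h_0$). Since $\omega_Y$ restricts to a cscK metric on each fibre and the base is cscK by hypothesis, all standing assumptions on the K\"ahler fibration $(X,\omega_X)\to(B,\omega_B)$ from \cref{sec:kahlerfibrations} are satisfied.

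Finally, I would apply \cref{theorem:maintheorembody} directly: the induced symplectic connection $\omega_X$ arising from a Hermite--Einstein connection on $P$ is an optimal symplectic connection on $X$, which is exactly the desired conclusion. In truth there is no serious obstacle here, as this really is a formal consequence of the main theorem combined with the Donaldson--Uhlenbeck--Yau correspondence for principal bundles; the only mild subtlety is the compatibility of the maximal compact subgroup produced by \cref{thm:DUYPrincipalbundle} with the prescribed $K$, which is handled by the conjugacy of maximal compacts and the gauge-equivariance of the Hermite--Einstein equation as indicated above.
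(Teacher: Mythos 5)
Your proposal is correct and follows essentially the same route as the paper, which obtains the corollary immediately by combining \cref{thm:DUYPrincipalbundle} with \cref{theorem:maintheorembody} (via \cref{prop:inducedsymplecticconnection}). The only extra detail you supply --- conjugating the maximal compact produced by the Hitchin--Kobayashi correspondence into the prescribed $K$ --- is a harmless formality the paper leaves implicit.
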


By \cref{thm:adiabaticscKDervanSektnan} one can use \cref{theorem:maintheorembody} to generate new examples of cscK metrics in adiabatic K\"ahler classes on the total space of holomorphic principal bundles.

\begin{example}
	Let $P\to (B,L)$ be a non-trivial, stable principal $\SL(2,\CC)$-bundle over a polarised variety $(B,L)$ of dimension at least two (every such principal bundle is trivial in dimension one, and the construction reduces to a product cscK metric in that case). Such a principal bundle could be constructed as the frame bundle of a non-trivial stable rank two holomorphic vector bundle over $(B,L)$ with trivial determinant. For the existence of such bundles on any projective algebraic surface see for example \cite{gieseker1988construction}. Assume $(B,L)$ admits a cscK metric and has discrete automorphism group. Let $(Y,-K_Y)$ denote the Mukai--Umemura threefold discussed in \cref{sec:mukaiumemura}, which admits an action of $\SL(2,\CC)$ and a K\"ahler--Einstein metric satisfying the assumptions of \cref{theorem:maintheorembody} \cite[\S 5]{donaldson2008kahler}. Then the associated bundle $$X= P\times_{\SL(2,\CC)} Y$$
	admits an optimal symplectic connection by \cref{theorem:maintheorembody} and \cref{corollary:stableprincipal} and since the base has discrete automorphisms and $P$ is simple, the total space of the fibre bundle has discrete automorphisms and by \cref{thm:adiabaticscKDervanSektnan} admits cscK metrics in adiabatic K\"ahler classes. 
\end{example}

The construction demonstrated above is general, and produces a wide variety of new examples of cscK metrics on the total space of holomorphic fibre bundles.

\begin{remark}
	It may be interesting to ask the question of when a polarised fibration $(X,H)\to (B,L)$ admits cscK metrics in \emph{non-adiabatic} K\"ahler classes $kL+H$ for $k$ not necessarily very large. In a special setting where $B=B_1\times \cdots \times B_\ell$ and $P$ is a product principal $(\CC^*)^n$-bundle arising from $\CC^*$-bundles on each $B_i$, Delcroix--Simon \cite{delcroix2022effective} have identified criteria for which associated isotrivial fibrations with toric fibre admit cscK metrics in K\"ahler classes on the total space. Away from the adiabatic limit, it is necesary to assume, in addition to the existence of a Hermite--Einstein connection on $P$ and a cscK metric on the base, that the toric fibre has a \emph{weighted} cscK metric. 
	
	It would be interesting to understand if existence of cscK metrics in non-adiabatic classes can be understood in terms of a weighted cscK condition on the fibre for other principal bundle constructions such as the one considered above, which is not necessarily of the special toric form considered by Delcroix--Simon.
\end{remark}

\subsection{Induced Hermite--Einstein structures\label{section:OSCimpliesHE}}

Let $\pi: (X,\omega_X,H)\to (B,\omega_B,L)$ be an isotrivial relatively cscK fibration arising from a principal bundle $\varpi: P\to (B,L)$ as described by \cref{lemma:principalbundle}. In this section we will describe how to pass from the symplectic connection $\omega_X$ on $X$ to a principal bundle connection $A$ on $P$, and show that when $\omega_X$ is optimal, the induced principal bundle connection $A$ is Hermite--Einstein.

\subsubsection{Alternative description of $P$\label{section:alternativeprincipalbundle}}
First we proceed by giving an alternative invariant description of the principal bundle $P$ associated to the isotrivial fibration $X$. This description is inspired by the case of infinite-dimensional principal bundles of symplectomorphisms for a symplectic fibration \cite[Rmk. 6.4.11]{mcduff2017introduction}. 

Let $(Y,\omega_Y,H_Y)$ denote the model fibre of the isotrivial fibration. Then the fibre $P_b$ of $P$ over a point $b\in B$ is given by the set of all biholomorphisms $f: Y\to X_b$ isotopic to the identity which also lift to the linearisations $H_Y$ and $\rest{H}{b}$. This set is a $G_0$-torsor for the group $G_0 = \Aut_0(Y,H_Y)$ acting by precomposition, which defines the right $G_0$-action on $P$. 

The tangent space $T_f P\subset \Gamma(f^* TX)$ for some $f\in P_b$ is given by all vector fields $v\in \Gamma(\rest{TX}{X_b})$ such that $d(\pi \circ f)v : Y \to T_b B$ is constant and for which the vertical part of $v$ with respect to the symplectic connection $\omega_X$ is holomorphic. The vector fields which preserve the K\"ahler structure of the fibration further satisfy the compatibility condition that the one-form
$$\omega_X(v, df(-))\in \Omega^1(Y)$$
is closed, which implies the vector field preserves the symplectic form $\rest{\omega_X}{b}$.

The vertical vectors $V_f \subset T_f P$ consist of all vector fields of the form $v=df \circ u: Y\to \rest{TX}{X_b}$ for some $u\in \h(Y,H_Y)$ a holomorphic vector field on $Y$ which generates an automorphism lifting to $H_Y$. 

To define horizontal vectors, note that using the Ehresmann connection defined by $\omega_X$, any vector $v_0 \in T_b B$ admits a unique horizontal lift to a vector field $v_0^\# \in \Gamma(\rest{TX}{X_b})$. Define the horizontal vectors $H_f\subset T_f P$ as the vector fields of the form $v = v_0^\# \circ f$ for some $v_0^\#: X_b \to \rest{TX}{X_b}$ for $v_0\in T_b B$. 

To observe the splitting, note that any vector field $v\in T_f P$, when viewed as a vector field on $X_b$, admits a splitting with respect to $\omega_X$, and the condition that $d(\pi \circ f)v$ is constant is exactly the statement that the horizontal component of $v$ is of the form $v_0^\#$ for some $v_0\in T_b B$. 

Thus the connection $\omega_X$ on $X$ induces a connection on $P$. We note that the equivariance of $H\subset TP$ with respect to the action of $G_0 = \Aut_0 (Y,H_Y)$ follows from the fact that $dR_g (v) = v_0^\# \circ (f\circ g)$ for $v=v_0^\# \circ f\in T_f P$ and $g\in G_0$, so $dR_g (H_f) \subset H_{f\cdot g}$.

Again we note that if we restricted to holomorphic isometries and vector fields preserving the K\"ahler structure then the same construction above would afford us a principal bundle connection on the principal $K_0$-bundle $P_\sigma$ which is the reduction of structure group for $P$ as in \cref{lemma:principalbundle}. This connection on $P_\sigma$ induces the connection on $P$ under the associated bundle construction, as can be seen easily by noting that the induced Ehresmann connection under the inclusion $K_0 \hookrightarrow G_0$ simply views a horizontal vector $v=v_0^\# \circ f$ for $f:Y \to X_b$ a holomorphic isometry as a vector $v$ for $f: Y \to X_b$ a biholomorphism, forgetting the isometry. This clearly maps the horizontal subspaces for $P_\sigma$ into those for $P$.

\subsubsection{Curvature of the connection on $P$}

We have described how a symplectic connection $\omega_X$ on $X$ induces a principal bundle connection on $P$, which we denote by $A$. The curvature of $A$ is a two-form on $B$ with values in $\ad P$, a Lie algebra bundle with fibre $\h$, defined by
$$F_A(v_1,v_2) = [v_1^\#,v_2^\#]^{\text{vert}}\in \h(X_b,\rest{\omega_X}{b},\rest{H}{b}))$$
where $v_1,v_2\in T_b B$. 

From the above construction we can see that the horizontal distribution inside $TP$ which defines the connection $A$ induces the same horizontal distribution on $X$, given by the orthogonal complement of $\omega_X$. Then using the construction of \cref{sec:inducedconnection}\footnote{Here we must choose some moment map on the fibre, for example by fixing a model fibre $(X_b,\omega_b)$ and defining a moment map $\mu^*$ on the Lie algebra of holomorphic vector fields lifting to $H_B$ by taking the holomorphy potential of mean zero.} we obtain a new, possibly different relatively K\"ahler metric $\omega_X'$ on $X$ which has the same horizontal distribution as $\omega_X$, and with the property that $\rest{\omega_X'}{b} = \rest{\omega_X}{b}$ for every $b$. Thus by \cite[Lem. 3.9]{dervan2019optimal} we have $\omega_X' = \omega_X + i\deldelbar \pi^* \varphi$ for some function $\varphi$ on $B$. Since $\omega_X$ is OSC by assumption, so too is $\omega_X'$ and by the direct calculation in \cref{sec:inducedconnection} applied to $\omega_X'$ we have that the curvature of the connection on $P$ induced by $\omega_X'$ satisfies the Hermite--Einstein equation. Since this is just the same connection $A$ on $P$ induced by $\omega_X$, we are done. Thus we obtain:

\begin{theorem}\label{thm:inducedhermiteeinstein}
	An optimal symplectic connection $\omega_X$ on an isotrivial relatively cscK fibration $(X,H)\to (B,\omega_B,L)$ induces a Hermite--Einstein metric on the associated principal bundle $P$ of relative automorphisms described in \cref{lemma:principalbundle} and \cref{section:alternativeprincipalbundle}.
\end{theorem}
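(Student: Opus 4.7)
The plan is to invert the construction of \cref{theorem:maintheorembody}: starting from the OSC $\omega_X$, extract the principal bundle connection $A$ on $P$ from the horizontal distribution as in \cref{section:alternativeprincipalbundle}, produce an auxiliary relatively K\"ahler metric $\omega_X'$ on $X$ from $A$ via \cref{prop:inducedsymplecticconnection}, and then transfer the OSC condition from $\omega_X$ to $\omega_X'$ so that the forward argument of \cref{theorem:maintheorembody} can be run to conclude that $A$ is Hermite--Einstein.

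First I would fix a moment map $\mu^*$ for the $K$-action on the model fibre $(Y,\omega_Y,H_Y)$ by taking the unique real mean-zero holomorphy potential of each holomorphic Killing field. Applying \cref{prop:inducedsymplecticconnection} to $P_\sigma$ with its connection $A$ produces a relatively K\"ahler $\omega_X' \in c_1(H)$. Two compatibility checks are required: (i) $\rest{\omega_X'}{b} = \rest{\omega_X}{b}$ on each fibre, which is immediate from the explicit formula $\omega_X' = \pi_A^* \omega_Y - \langle \mu, F\rangle$ combined with the fact that in any local trivialisation compatible with the $K$-reduction, the vertical restriction of $\omega_X'$ is $\omega_Y$; and (ii) the horizontal distribution defined by $\omega_X'$ coincides with that of $\omega_X$, which follows from the construction of \cref{section:alternativeprincipalbundle}, since the horizontal distribution on $TP$ defining $A$ was itself extracted from $\omega_X$, and the recipe of \cref{prop:inducedsymplecticconnection} reconstructs the horizontal distribution on $X$ from precisely this data.

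With (i) and (ii) in hand, \cite[Lem. 3.9]{dervan2019optimal} implies $\omega_X' - \omega_X = \pi^* \beta$ for some real closed $(1,1)$-form $\beta$ on $B$, and since the two metrics are cohomologous we may write $\omega_X' = \omega_X + i\deldelbar \pi^* \varphi$ for some $\varphi \in C^\infty(B)$. The OSC equation is invariant under this modification: indeed $\rest{\omega_X'}{b} = \rest{\omega_X}{b}$ means the vertical Laplacian and the relative Ricci form are unchanged, and by \cref{rmk:ppullback} the projection $p$ annihilates any additional contribution of the form $\contr_{\omega_B} \pi^* (i\deldelbar \varphi)$ appearing in the horizontal component. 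Hence $\omega_X'$ is itself an optimal symplectic connection.

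Finally, $\omega_X'$ is, by construction, the symplectic connection produced from the principal bundle connection $A$ as in \cref{sec:inducedconnection}, so the argument of \cref{theorem:maintheorembody} applies verbatim. By \cref{lemma:horizontalcomponentricciform}, the OSC equation for $\omega_X'$ reduces to $p(\Lap_\calV \contr_{\omega_B} \mu^* F_\calH) = 0$, and the integration-by-parts argument given there forces $\contr_{\omega_B} \mu^* F_\calH = 0$ pointwise. Translating back through the isomorphism between fibrewise real mean-zero holomorphy potentials and the Lie algebra of reduced automorphisms, this is exactly the Hermite--Einstein condition $\contr_{\omega_B} F_A = \tau$ for a covariantly constant central section $\tau \in \Gamma(\ad P)$. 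The main obstacle I anticipate is the careful verification of compatibility (ii), which requires unpacking the Ehresmann-connection description of $A$ in \cref{section:alternativeprincipalbundle} and matching it against the horizontal subspaces implicit in the formula of \cref{prop:inducedsymplecticconnection}; once this identification is established, the remainder of the proof is a repackaging of the computation already carried out in the forward direction.
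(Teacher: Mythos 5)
Your proposal is correct and follows essentially the same route as the paper: extract $A$ from the horizontal distribution of $\omega_X$, rebuild $\omega_X'$ via \cref{prop:inducedsymplecticconnection}, use \cite[Lem. 3.9]{dervan2019optimal} to write $\omega_X' = \omega_X + i\deldelbar\pi^*\varphi$, transfer the OSC condition, and run the integration-by-parts argument of \cref{theorem:maintheorembody} to conclude $A$ is Hermite--Einstein. The compatibility checks you flag (equal fibre restrictions and equal horizontal distributions) are exactly the points the paper asserts before invoking that lemma, so no new ideas are needed.
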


We remark that from the construction of $P$ it is clear that a holomorphic automorphism of $P$ is the same data as a holomorphic fibre bundle automorphism of $X$, and the uniqueness of the Hermite--Einstein connection $A$ on $P$ up automorphism shows that the induced optimal symplectic connection on $X$ is unique up to automorphisms (and pullback of a two-form from $B$). This recovers a special case of the uniqueness result \cref{thm:uniqueness} of Dervan--Sektnan and Hallam.

\section{Future directions\label{sec:Fibrationsfuturedirections}}

\subsection{Stability\label{sec:isotrivialstability}}

The \cref{principle}-analogue of the main theorem \cref{thm:maintheoremfibrations} of the previous section is the following.

\begin{conjecture}\label{conj:stabilityofisotrivial}
	Suppose $P\to (B,L)$ is a holomorphic principal bundle with reductive fibre $G$, and that $G$ acts linearly on a K-polystable variety $(Y,H_Y)$. Let $X=P\times_G Y$ by the associated isotrivial fibration with the relatively ample polarisation $H=(P\times H_Y)/G$ where $G$ acts diagonally with respect to the lift of the action from $Y$ to $H_Y$. If $(X,H)\to (B,L)$ is (semi/poly)stable, then $P$ is (semi/poly)stable.
\end{conjecture}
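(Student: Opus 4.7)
The plan is to translate a parabolic reduction of $P$ into a fibration degeneration of $X$ and read off the associated principal-bundle degree from the subleading coefficient of the Donaldson--Futaki invariant, so that fibration (semi/poly)stability of $X$ forces (semi/poly)stability of $P$. Concretely, fix a reduction of structure group $\sigma: U \to P/Q$ to a maximal parabolic subgroup $Q \subset G$, defined on a Zariski open $U \subset B$ whose complement has codimension at least two. Passing to a dominant weight $\chi$ of $Q$ which cuts out a one-parameter subgroup $\lambda: \CC^* \to G$ with $Q = \{g \mid \lim_{t \to 0} \lambda(t)\,g\,\lambda(t)^{-1} \text{ exists}\}$, and using the linearised action of $\lambda$ on $(Y, H_Y)$, the orbit closure construction of \cref{sec:stabilityoffibrations} produces a test configuration $(\calY, \calH_Y)$ for $(Y, H_Y^k)$ for $k \gg 0$. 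Twisting this fibrewise by $P$ via $\sigma$ and extending across $B \setminus U$ by flatness (as in \cite{dervan2019moduli}, using codimension $\ge 2$), one obtains a fibration degeneration $(\calX, \calH)$ of $(X, H)$ whose generic fibre over $B \times \CC^*$ is $X$, and whose central fibre $\calX_0$ is the isotrivial bundle associated to the Levi reduction of $P$ acting on $\calY_0$.

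Next I would compute the two leading coefficients of the expansion $\DF(\calX, jL + \calH) = j^n W_0(\calX, \calH) + j^{n-1} W_1(\calX, \calH) + O(j^{n-2})$. The coefficient $W_0$ is a positive constant times $\DF(\calY, \calH_Y)$, the Donaldson--Futaki invariant of the test configuration induced on a general fibre. By the K-polystability hypothesis on $(Y, H_Y)$ one has $W_0 \ge 0$, with equality precisely when $\lambda$ normalises to a product test configuration of $Y$, i.e.\ $\lambda$ is (up to normalisation) conjugate into $\Aut(Y, H_Y)$. If $W_0 > 0$ then fibration semistability of $X$ is automatic along this reduction. The interesting regime is $W_0 = 0$, where the central fibre of $\calY$ is isomorphic to $Y$ (up to the $\CC^*$-action) and the whole algebraic content must be extracted from $W_1$.

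The technical heart of the argument, and what I expect to be the main obstacle, is to identify
\[
W_1(\calX, \calH) \;=\; C \cdot \deg_{\omega_B} \sigma^* \calV(P/Q)
\]
for a strictly positive constant $C$ depending only on $Y$, $H_Y$ and the weight of $\lambda$ on $H_Y$. The strategy is to use the intersection-theoretic expression for the Donaldson--Futaki invariant analogous to \eqref{eq:W1Fanofibration}, compactify $(\calX, \calH)$ trivially at $\infty$ over $\PP^1$, and perform the computation in two steps. First, in the universal/model case over a point, Chern--Weil on the $\CC^*$-equivariant line bundle $H_Y$ identifies the contribution coming from the $\lambda$-action with the Hilbert--Mumford weight $\mu(Y, \lambda)$ of $(Y, H_Y)$, which vanishes in our $W_0 = 0$ situation. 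Second, the remaining contribution, coming from the twisting by $\sigma$, arises from the vertical tangent bundle of $P/Q \to B$ pulled back via $\sigma$; this is essentially the observation that $\calV(P/Q) = P \times_Q (\g/\q)$, and the flag variety $G/Q$ has $\g/\q$ as its tangent space at the identity coset. A careful bookkeeping via the relative Kodaira embedding into $\PP(\pi_* H^k)$ and the splitting principle should then produce exactly the curvature integral computing $\deg \sigma^* \calV(P/Q)$, with the overall sign fixed by the fact that $\chi$ is dominant on $Q$.

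Granting this identification, the conclusion is immediate. Fibration semistability of $(X, H)$ gives $W_0 \ge 0$, and $W_1 \ge 0$ whenever $W_0 = 0$; by the computation above this forces $\deg \sigma^* \calV(P/Q) \ge 0$ for every reduction to a maximal parabolic, which is exactly semistability of $P$. If $(X, H)$ is polystable then the case $W_0 = W_1 = 0$ forces $(\calX, \calH)$ to normalise to a product fibration degeneration over an open set of codimension $\ge 2$ in $B$, which in turn forces $\lambda$ to be trivial and $\sigma$ to extend to a reduction compatible with a direct-sum decomposition of $\ad P$; this matches the polystability criterion for $P$ recalled in \cref{sec:stabilityprincipalbundle}. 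Stability is analogous, using the stable version of \cref{def:stabilityoffibration} and the characterisation of trivial fibration degenerations. The remaining subtleties to be checked are that the extension across $B \setminus U$ preserves the flatness and relative ampleness (using codimension $\ge 2$), and that the product/trivial normalisation statements transfer cleanly between $\calX$ and $P$; neither should introduce genuinely new phenomena, but both require care.
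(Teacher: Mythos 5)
The statement you are proving is stated in the paper as a conjecture, not a theorem, and your proposal does not close the gap that makes it one. Your overall translation — a reduction $\sigma: U \to P/Q$ to a maximal parabolic, together with the one-parameter subgroup $\lambda$ defining $Q$, induces a filtration of $V_k = \pi_* H^k$ and hence a fibration degeneration $(\calX,\calH)$ whose fibre test configurations are products (so $W_0 = 0$ by K-polystability of $(Y,H_Y)$) — is exactly the strategy laid out in \cref{sec:isotrivialstability}. But the step you yourself flag as the "technical heart", namely
\[
W_1(\calX,\calH) \;=\; C\,\deg \sigma^* \calV(P/Q), \qquad C>0,
\]
is precisely the content of \cref{conj:stabilityisotrivialrefined}, and it is left open in the paper. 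Your justification of it ("Chern--Weil on the $\CC^*$-equivariant line bundle", "the splitting principle", "careful bookkeeping via the relative Kodaira embedding") is a plan, not an argument: nothing in it explains why the constant $C$ has a definite sign, and this is not a formality. In the one case the paper can compute — $P$ the frame bundle of a vector bundle $E$, $Q$ the stabiliser of a subbundle $F$, $Y = \PP^{\,\rk E -1}$ with $H = -K_{\PP(E)/B}$ — the expansion of $W_1$ via the intersection formula \eqref{eq:W1Fanofibration} and Segre-class manipulations (see \eqref{eq:W1projectivebundle} and the surrounding computation) produces $\rk E\,\deg F - \rk F\,\deg E$ multiplied by a complicated alternating sum $B$ of binomial coefficients whose positivity is not established directly but only deduced \emph{a posteriori} from Ross--Thomas's slope K-stability results. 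For a general reductive $G$, general $(Y,H_Y)$ and general parabolic $Q$ there is at present no analogue of that external input, so your claimed identification of $W_1$ is an assertion of the very conjecture you set out to prove.

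Two further points would need genuine work even granting the $W_1$ identification. First, the extension of the degeneration across $B\setminus U$: the filtration of $V_k$ induced by $\sigma$ on $U$ must be saturated to extend over $B$, and one has to check that this saturation does not alter the leading numerical invariants in a way that spoils the comparison with $\deg \sigma^*\calV(P/Q)$ (which is itself only defined via the extension of the determinant line bundle). Second, the polystable and stable cases: you assert that $W_0=W_1=0$ forcing $(\calX,\calH)$ to normalise to a product over a codimension-two complement "in turn forces $\lambda$ to be trivial and $\sigma$ to extend to a reduction compatible with a direct-sum decomposition of $\ad P$", but the paper deliberately omits the precise polystability notion for principal bundles (\cref{sec:stabilityprincipalbundle}), and no dictionary between product fibration degenerations in the sense of \cref{def:stabilityoffibration} and Levi reductions of $P$ is established anywhere; this transfer is exactly the kind of statement that needs a proof rather than an appeal to plausibility.
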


In order to approach this conjecture, it is first necessary to identify how fibration degenerations for $(X,H)\to (B,L)$ relate to the stability of $P$. Recall that to test the stability of $P$ one considers reductions of structure group $\sigma: B\to P/Q$ to maximal parabolic subgroups $Q\subset G$. Such subgroups $Q\subset G$ should be thought of as those stabilising a (one step, in the maximal case) flag with respect to some representation $\rho: G \to \GL(V)$. To understand this process, let us consider another characterisation of parabolic subgroups.

A subgroup $Q\subset G$ of a reductive group is parabolic if and only if there exists a one-parameter subgroup $\lambda: \CC^* \into G$ such that
$$Q = \{g \in G \mid \lim_{t\to0} \lambda(t) g \lambda(t)^{-1} \text{ exists}\}.$$
Given a parabolic subgroup $Q=Q(\lambda)$ and a representation $\rho: G \to \GL(V)$, the induced action of $\lambda$ on $V$ produces a filtration into weight spaces of the $\CC^*$ action. If $w_1,\dots,w_r$ are the weights of the $\CC^*$ action with $w_1>\dots>w_r$ then we obtain a filtration
$$0 = V^{(0)} \subset V^{(1)} \subset \cdots \subset V^{(r)} = V$$
with the property that the induced action of $\lambda$ on $V^{(i)}/V^{(i-1)}$ has weight $r_i$. In particular if $Q(\lambda)$ is a maximal parabolic then it will act with just two weights $w_1,w_2$ and fix a flag 
$$0 \subset V^{(1)} \subset V$$
inside $V$, where $\lambda$ acts on $V^{(1)}$ with weight $w_1$ and $V/V^{(1)}$ with weight $w_2$.

Now suppose $G$ acts linearly on a polarised variety $(Y,H_Y)$. Then there is an induced action on the space of sections $H^0(Y,H_Y^k)$ for each $k>0$ and a choice of parabolic subgroup $Q(\lambda) \subset G$ induces a filtration 
$$0 \subset V_k^{(1)} \subset \cdots \subset V_k^{(r-1)} \subset H^0(Y,H_Y^k).$$

Using the representation $\rho: G \to \GL(H^0(Y,H_Y^k))$ we see that the bundles $V_k = \pi_* H^k$ for the associated fibration $(X,H)\to (B,L)$ may be obtained as associated bundles
$$P\times_G H^0(Y,H_Y^k) \isom \pi_* H^k = V_k.$$
A choice of parabolic reduction of structure group $\sigma: B \to P/Q$ gives the associated bundle $V_k$ the structure of a $Q$-bundle, and so the above construction on each fibre produces a filtration
$$0\subset V_k^{(1)} \subset \cdots \subset V_k^{(r-1)} \subset V_k$$
of $V_k$. 

Working under the simplified setting where the reduction of structure group $\sigma: B \to P/Q$ is defined over the entirety of $B$ (recalling as in \cref{sec:stabilityprincipalbundle} that in general we must allow $\sigma$ only to be supported on open subsets $U\subset B$ with complement codimension two) then each term $V_k^{(i)}$ in the induced filtration of the vector bundle $V_k$ for $k\gg0$ will also be locally free.\footnote{In general we expect a filtration by coherent subsheaves.} In this setting we can easily define the vector bundle degeneration given by turning off the extention (\cref{rmk:turningoffextension}) by smoothly splitting
$$V_k \isom \tilde V_k = \bigoplus_i V_k^{(i)}/V_k^{(i-1)}$$
and considering the extension class $\gamma \in H^1(B, \End \tilde V_k)$ whose $\Hom(V_k^{(i)}/V_k^{(i-1)}, V_k^{(i-1)})$-component is given by the extension class of the short exact sequence
\begin{center}
	\ses{V_k^{(i-1)}}{V_k^{(i)}}{V_k^{(i)}/V_k^{(i-1)}}.
\end{center}
Scaling $t\gamma$ and allowing $t\to 0$ we obtain a degeneration $\calE\to \CC$ of $V_k$ of the standard form described in \cref{sec:stabilityoffibrations} and therefore a fibration degeneration $(\calX,\calH)$ of $(X,H)\to (B,L)$. 

\begin{remark}
	The fibration degenerations of this form have been described explicitly in terms of transition functions by Hallam \cite[Ex. 6.15]{hallam2020geodesics} where they form examples of product-type degenerations. There it is proven that if $(X,H)\to (B,L)$ admits an optimal symplectic connection, then it is stable with respect to such product-type degenerations.
\end{remark}

A more precise version of \cref{conj:stabilityofisotrivial} is the following.

\begin{conjecture}\label{conj:stabilityisotrivialrefined}
	Suppose $\sigma: B \to P/Q$ is a reduction of structure group to a maximal parabolic. Suppose the induced test configuration $(\calX_b,\calH_b)$ of a generic fibre $(X_b,H_b) \isom (Y,H_Y)$ normalises to a product, so that $W_0(\calX,\calH) = 0$. Then
	$$W_1(\calX,\calH) = C \deg \sigma^* \calV(P/Q)$$
	for some constant $C>0$ and if $\sigma$ destabilises $P$, then $(\calX,\calH)$ destabilises $(X,H)\to (B,L)$. 
\end{conjecture}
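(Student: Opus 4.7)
The plan is to compute $W_1(\calX,\calH)$ using the intersection-theoretic formula \eqref{eq:W1Fanofibration} (or its analogue for general polarisations) and to identify the resulting quantity with $\deg \sigma^* \calV(P/Q)$ up to a positive multiplicative constant. Since $W_0(\calX,\calH) = 0$ by hypothesis, the fibration destabilises $(X,H) \to (B,L)$ precisely when $W_1(\calX,\calH) \le 0$, and the proportionality statement will then immediately imply the second claim.

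First I would set up the geometry carefully. The maximal parabolic $Q \subset G$ corresponds to a one-parameter subgroup $\lambda : \CC^* \to G$, which upon choosing a sufficiently positive exponent $k$ produces a two-step filtration $0 \subset V_k^{(1)} \subset V_k$ of the pushforward bundle $V_k = \pi_* H^k$ via the representation of $G$ on $H^0(Y,H_Y^k)$. Since this reduction is globally defined on $B$ in the simplified setting suggested in the discussion, we obtain a locally free filtration, and the fibration degeneration $(\calX,\calH)$ arises by turning off the extension inside $\PP(V_k)$ and taking the closure. I would then compactify to $(\bar \calX, \bar \calH) \to B \times \PP^1$ trivially at infinity, so that $W_1$ is expressed as an intersection number on $\bar \calX$.

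Next, I would exploit the isotrivial structure to reduce the computation to $B$ via fibre integration. Because $(X,H) \to (B,L)$ is a fibre bundle and the degeneration is induced fibrewise by the $\CC^*$-action of $\lambda$ on $(Y,H_Y)$, the restriction of $(\bar \calX, \bar \calH)$ to any fibre $X_b$ is isomorphic to a fixed compactified test configuration on $(Y,H_Y)$, whose Donaldson--Futaki invariant vanishes (normalising to a product). This vanishing is precisely the condition $W_0 = 0$, and it implies that the purely fibre-intrinsic contributions to the intersection numbers appearing in $W_1$ cancel out. What remains is a "twisting" contribution, which can be organised as $L^{n-1} \cdot \alpha_\sigma$ where $\alpha_\sigma \in H^2(B,\RR)$ is a class built from the first Chern classes of the graded pieces of the filtration of $V_k$.

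Then I would identify $\alpha_\sigma$ with a positive multiple of $c_1(\sigma^* \calV(P/Q))$. The vertical tangent bundle $\calV(P/Q)$ is the associated bundle $P \times_Q (\g/\q)$, whose first Chern class under a section $\sigma$ is computed in terms of the weights of the adjoint action of the Levi of $Q$ on $\g/\q$. On the other hand the two-term filtration $V_k^{(1)} \subset V_k$ arises from the same parabolic data through the representation $\rho: G \to \GL(H^0(Y,H_Y^k))$, so the combination $\rk(V_k) c_1(V_k^{(1)}) - \rk(V_k^{(1)}) c_1(V_k)$ appearing naturally in $\alpha_\sigma$ is a representation-theoretic positive multiple of $c_1(\sigma^*\calV(P/Q))$, where the positive factor is the squared norm of the weight of $\lambda$ on the representation, computed via the Hilbert polynomial of $(Y,H_Y)$. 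Since this factor, together with the geometric factors $L^{n-1}$ and the normalisation from \eqref{eq:DFintersectiontheory}, is strictly positive, one obtains $W_1(\calX,\calH) = C \deg \sigma^*\calV(P/Q)$ with $C > 0$.

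The hard part will be Step 3, namely carrying out the fibre integration and matching it precisely with $\deg \sigma^*\calV(P/Q)$. In the special case $G = \GL(r,\CC)$, $Y = \PP^{r-1}$, this reduces to the slope-K-stability computation of Ross--Thomas for projective bundles \cite{ross2006obstruction}, but for general reductive $G$ and general K-polystable $(Y,H_Y)$ the weight bookkeeping on $H^0(Y,H_Y^k)$ requires organising the decomposition into $\lambda$-weight spaces and tracking how the Hilbert polynomial of each weight space contributes asymptotically in $k$. The positivity of $C$ in full generality then depends on a careful sign analysis of these weights, which we expect follows from the standard positivity of the pairing on $\g/\q$ induced by the choice of parabolic, but which would require a representation-theoretic argument analogous to the one underlying \cref{thm:DUYPrincipalbundle}.
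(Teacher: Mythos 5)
The statement you are trying to prove is stated in the paper as a \emph{conjecture}, and the paper does not prove it: the only supporting evidence given is the special case $G=\GL(r,\CC)$, $Y=\PP^{r-1}$ (projectivised vector bundles), where $W_1$ is computed explicitly via Ross--Thomas's deformation-to-the-normal-cone formula and Segre class manipulations, yielding a multiple of $\rk E\deg F-\rk F\deg E$ with a combinatorial coefficient whose positivity is not established directly but deduced \emph{a posteriori} from the results of \cite{ross2006obstruction}. So there is no paper proof for your argument to match, and your proposal should be judged on whether it closes the conjecture on its own. It does not: it is a strategy outline in which the two essential claims --- that the fibre integration of \eqref{eq:W1Fanofibration} produces exactly $C\deg\sigma^*\calV(P/Q)$, and that $C>0$ --- are asserted and then explicitly deferred ("the hard part will be Step 3", "would require a representation-theoretic argument"). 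Those deferred steps are not routine bookkeeping; they are the entire content of the conjecture.

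There is also a concrete unjustified step earlier in your plan. You claim that because $\DF(\calX_b,\calH_b)=0$ on a generic fibre, "the purely fibre-intrinsic contributions to the intersection numbers appearing in $W_1$ cancel out", leaving only a twisting class $L^{n-1}\cdot\alpha_\sigma$. The vanishing of $W_0$ controls the leading coefficient in $j$, but $W_1$ involves the terms $L^{n-1}.\calH^{m+2}$, $L^n.\calH^{m+1}$ and $L^{n-1}.\calH^{m+1}.K_{\calX/B\times\PP^1}$, none of which is forced to reduce to a pure base-twisting contribution by the fibrewise DF vanishing alone. What actually simplifies matters in the isotrivial setting is the vanishing of the CM-degree term $\gamma$ in \eqref{eq:W1Fanofibration}, and even with that, the paper's explicit computation \eqref{eq:W1projectivebundle} in the simplest case required choosing the critical Seshadri-type parameter, a laborious Segre-class expansion, and still produced an alternating-sum constant $B$ whose sign could not be checked by inspection. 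This shows that the sign analysis you postpone to the end is precisely where the difficulty lies, and that the weight/positivity argument for general reductive $G$ and general K-polystable fibre $(Y,H_Y)$ --- organising the $\lambda$-weight decomposition of $H^0(Y,H_Y^k)$ asymptotically in $k$ and proving the resulting coefficient is positive --- remains open. As written, your proposal is a reasonable research plan consistent with the evidence in the paper, but it is not a proof of the statement.
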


We make the following further conjecture which is the analogue of \cref{thm:inducedhermiteeinstein} in our setting.

\begin{conjecture}\label{conj:principalfibration}
	Suppose $P\to (B,L)$ is the bundle of relative automorphisms of a polarised isotrivial K\"ahler fibration $(X,\omega_X,H)\to (B,L)$ as described in \cref{lemma:principalbundle}. Then $P$ is (un/semi/poly)stable if and only if $(X,H)\to (B,L)$ is (un/semi/poly)stable.
\end{conjecture}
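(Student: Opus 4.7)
The plan is to deduce this correspondence from Conjecture~\ref{conj:stabilityisotrivialrefined} together with a classification of fibration degenerations $(\calX,\calH)$ of $(X,H)\to(B,L)$ whose leading Donaldson--Futaki coefficient $W_0(\calX,\calH)$ vanishes. First I would establish the forward direction that stability of $(X,H)$ implies stability of $P$. Given a reduction of structure group $\sigma\colon U\to P/Q$ to a maximal parabolic $Q=Q(\lambda)\subset G=\Aut_0(Y,H_Y)$, defined on an open subset of complement codimension at least two, I would use the construction outlined before Conjecture~\ref{conj:stabilityisotrivialrefined}: the one-parameter subgroup $\lambda$ induces a filtration of $H^0(Y,H_Y^k)$, which via the associated-bundle presentation $V_k=\pi_* H^k \isom P\times_G H^0(Y,H_Y^k)$ produces a filtration of (subsheaves of) $V_k$ whose turning-off-the-extension degeneration $\calE\to\CC$ defines the fibration degeneration $(\calX,\calH)$. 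Because the induced test configuration on a generic fibre $(X_b,H_b)\isom (Y,H_Y)$ is the product test configuration coming from $\lambda$ acting on the K-polystable $(Y,H_Y)$, we have $W_0(\calX,\calH)=0$, and Conjecture~\ref{conj:stabilityisotrivialrefined} gives $W_1(\calX,\calH)=C\deg\sigma^*\calV(P/Q)$, so that $\sigma$ destabilising $P$ produces $(\calX,\calH)$ destabilising $(X,H)$.

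For the converse I would show that any destabilising fibration degeneration $(\calX,\calH)$ of $(X,H)$ with $W_0=0$ arises, at least on an open subset of complement codimension two, from a parabolic reduction of $P$. The key observation is that the isotriviality of $\pi$ means that on a Zariski-open subset $U\subset B$ the induced test configurations $(\calX_b,\calH_b)$ on fibres are all isomorphic to a common test configuration $(\calY,\calH_Y)$ of the model fibre $(Y,H_Y)$. Since $(Y,H_Y)$ is K-polystable, $W_0(\calX,\calH)=0$ forces $(\calY,\calH_Y)$ to normalise to a product test configuration induced by a one-parameter subgroup $\lambda\colon\CC^*\to G$, which determines a parabolic subgroup $Q=Q(\lambda)\subset G$. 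I would then extract a reduction of structure group $\sigma\colon U\to P/Q$ by tracking how the filtration of $V_k$ induced by $\calE\to\CC$ varies fibrewise; over each $b\in U$ the induced filtration of $H^0(X_b,H_b^k)\isom H^0(Y,H_Y^k)$ is (after applying a relative automorphism) the standard $\lambda$-weight filtration, and these fibrewise identifications assemble into a section of $P/Q$ because the transition functions of $P$ preserve the weight filtration up to the action of $Q$. Conjecture~\ref{conj:stabilityisotrivialrefined} then translates the inequality $W_1(\calX,\calH)\le 0$ into $\deg\sigma^*\calV(P/Q)\le 0$, destabilising $P$.

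The main obstacle is handling polystability correctly and making the above extension arguments rigorous. A product-type fibration degeneration of $(X,H)$ comes from a one-parameter subgroup of $\Aut(\pi)$, which in our setting corresponds to a covariantly constant central section of $\ad P$, i.e., a one-parameter subgroup of the automorphism group of $P$ itself; these must be distinguished from reductions $\sigma$ with $\deg\sigma^*\calV(P/Q)=0$ which do not normalise to the trivial reduction on any such open set. I would carry this out by showing that a fibration degeneration with $W_0=W_1=0$ which normalises to a product over a codimension-two open subset has its generator $\lambda$ lift, via the parabolic reduction extracted above, to a covariantly constant central section of $\ad P$ on that open set, and conversely that a parabolic reduction with $\deg\sigma^*\calV(P/Q)=0$ reduces further to the maximal torus and pushes forward to such a central section. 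This step is analogous to Hallam's treatment of product-type degenerations \cite{hallam2020geodesics}, and combined with the Hartogs-type extension of reflexive sheaves across codimension-two subsets used in Ramanathan's theory of principal bundle stability, it should yield the full correspondence at the level of semistability, polystability, and stability.
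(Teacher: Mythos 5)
The statement you are addressing is presented in the paper as a conjecture (\cref{conj:principalfibration}) in the future-directions section and is not proved there; the paper only records partial evidence, namely that for projective bundles parts of the statement follow from Ross--Thomas's slope K-stability, together with the explicit computation of $W_1$ for the deformation to the normal cone of $\PP(F)\subset\PP(E)$. So there is no proof in the paper to compare against, and your proposal must be judged as a standalone argument — and as such it is a conditional programme with substantive gaps rather than a proof.

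First, both directions of your argument lean on \cref{conj:stabilityisotrivialrefined}, which is itself left open in the paper: the identity $W_1(\calX,\calH)=C\deg\sigma^*\calV(P/Q)$ with $C>0$ is only verified, and only \emph{a posteriori} via Ross--Thomas, in the projective-bundle case, where even the positivity of the combinatorial constant $B$ is not established directly. Second, and more seriously, your converse direction rests on the unproved classification claim that every fibration degeneration with $W_0=0$ arises, over a codimension-two open subset, from a parabolic reduction of $P$. Vanishing of $W_0$ only tells you that the induced test configuration on a \emph{general} fibre normalises to a product; you still need (i) that the associated one-parameter subgroups $\lambda_b\subset\Aut(X_b,H_b)$ vary algebraically and remain in a fixed conjugacy class so as to assemble into a section of $P/Q$ — a priori they can jump over special loci of $B$ — and (ii) that the degeneration is captured by a filtration of $V_k$ of ``turning off the extension'' type; a general fibration degeneration is given by a one-parameter subgroup acting on a Hilbert scheme and need not come from such a vector bundle degeneration, and when it does the induced filtration is only by coherent subsheaves, so the Hartogs/reflexive-hull extension you gesture at must actually be carried out to produce a parabolic reduction away from codimension two. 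Finally, the polystability bookkeeping — matching product-type fibration degenerations with covariantly constant central sections of $\ad P$, and excluding the Hattori-type degenerations that forced the codimension-two caveat in \cref{def:stabilityoffibration} — is precisely the delicate point, and your sketch does not show that a reduction with $\deg\sigma^*\calV(P/Q)=0$ necessarily reduces further so as to yield a product degeneration. As it stands the proposal is a sensible strategy aligned with the framework the paper sets up around \cref{conj:stabilityisotrivialrefined}, but it establishes neither direction of \cref{conj:principalfibration}.
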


\subsubsection{The case of projective bundles}

In the case of projective bundles, parts of \cref{conj:principalfibration} follow from the work of Ross--Thomas using their notion of slope K-stability \cite{ross2006obstruction}. In that setting they consider fibrations of the form $(\PP(E),\calO(1)) \to (B,L)$ and study K-stability in adiabatic classes $jL + \calO(1)$ for $j\gg 0$ with respect to test configurations arising as deformation to the normal cone of subschemes $Z=\PP(F)\subset \PP(E)$ arising as the projectivisation of saturated coherent subsheaves $F\subset E$. In particular they prove in this case that if $E$ is unstable then $(\PP(E),\calO(1))$ is an unstable fibration, and moreover if $B$ is a curve they show that if $(\PP(E),\calO(1))$ is a (semi/poly)stable fibration then $E$ is (semi/poly)stable. In this section we will rephrase this theory using the language of stability of fibrations of \cref{sec:stabilityoffibrations}.

By using Ross--Thomas's formula for the Donaldson--Futaki invariant for a deformation to the normal cone, it is possible to compute explicitly the Donaldson--Futaki invariant for certain fibration degenerations of $\PP(E)$. Namely if $F\subset E$ is a subbundles and $Z=\PP(F) \subset \PP(E)$ is the projective subbundle, then the deformation to the normal cone (see \cite[\S 4]{ross2006obstruction}) $(\calX,\calH_c)$ of $(\PP(E),H)$ has Donaldson--Futaki invariant
\begin{equation}\label{eq:DFslopeKstability}a_0 \DF (\calX, jL+\calH_c) = a_1 \int_0^c a_0(x)\, dx - a_0 \int_0^c \left( a_1(x) + \frac{a_0'(x)}{2} \right) dx.\end{equation}
Here we will take $H=-K_{\PP(E)/B}$ to be the relative anticanonical bundle of the projective bundle instead of the linearisation $\calO(1)$ considered by Ross--Thomas.\footnote{Note that $H$, and indeed $\calO(1)$, are already relatively very ample.} In particular this gives us a Fano fibration, and as discussed in \cref{sec:stabilityoffibrations} the expressions for $W_1$ simplify in that setting. Recall we have
$$K_{\PP(E)/B} = -(\rk E) \calO(1) - \pi^* \det E$$
and so 
$$V_k = \pi_* (-K_{\PP(E)/B})^k = \Sym^{k \rk E} E^* \otimes (\det E)^k.$$
We study the setting in which $\PP(F) \subset \PP(E) \subset \PP(V_k)$.
\begin{remark}
	Note that $\mu(V_k)=0$ for all $k$ using this linearisation of the projective bundle, which corresponds to the vanishing degree of the CM line bundle as mentioned in \cref{sec:stabilityoffibrations}.
\end{remark}
The coefficients $a_i(x)$ appearing in \eqref{eq:DFslopeKstability} are computed as the coefficients in the expansion of
$$\chi(\Bl_{\PP(F)} \PP(E), L^{jk} \otimes H^k(-xk \calE)) = a_0(x,j) k^n + a_1(x,j)k^{n-1} + O(k^{n-2})$$
where $xk\in \ZZ$ and $\calE$ is the exceptional divisor of the blow up $\Bl_{\PP(F)} \PP(E)$. The coefficients $a_i(x,j)$ are polynomials in $x$ of degree at most $n+m-i$ where $\dim B = n$ and $\dim \PP(E_b) = m$ so that $\rk E = m+1$. 

The weight of the $\CC^*$ action on the central fibre of the deformation to the normal cone for $\PP(F)\subset \PP(E)$ can be computed in terms of the $a_i(x)$ by
\begin{align*}
	b_0 &= \int_0^c a_0(x) \, dx - ca_0,\\
	b_1 &= \int_0^c \left( a_1(x) + \frac{a_0'(x)}{2} \right) dx - ca_1,
\end{align*}
and then the expression above for the Donaldson--Futaki invariant follows from the definition 
$$\DF (\calX, jL+\calH_c) = \frac{b_0a_1 - a_0b_1}{a_0}.$$
Note here $c\in (0,\epsilon(Z)]$ is a parameter for the relatively ample line bundle $\calH_c$ on the deformation to the normal cone. We wish to consider the critical case in which $c=\epsilon(Z)$ so that the induced test configuration on the fibres of the projective bundle normalise to a product. The relative Seshadri constant in this case is $c=\rk E$. This follows from Ross--Thomas who show that the relative Seshadri constant for the projectivisation of a subbundle with respect to the linearisation $\calO(1)$ is $\varepsilon = 1$, and in our case the linearisation on the fibre reduces to $\calO(\rk E)$. At such a choice, the resulting fibration degeneration has $W_0(\calX,\calH_c) = 0$. 

In our setting one may compute that 
\begin{multline}\label{eq:W1projectivebundle}
	W_1(\calX,\calH_c) = \frac{m}{2(m+1)!(n-1)!} \frac{1}{m+2} \left( -\frac{1}{\calE} L^{n-1}.(H-c\calE)^{m+2}\right)\\
	- \frac{1}{2m!(n-1)!}\frac{1}{m+1} \left( -\frac{1}{\calE} L^{n-1} . (H-c\calE)^{m+1} . (H-p\calE)\right)
\end{multline}
where $p=\codim \PP(F)$ is just $\rk E - \rk F$, and $\calE$ denotes the exceptional divisor in $\Bl_{\PP(F)} \PP(E)$. Notice that comparing to the expansion \eqref{eq:W1Fanofibration} we can identify the $L^n.\calH^{m+1}$ and $L^{n-1}.\calH^{m+1}.K_{\calX/B\times \PP^1}$ terms appearing in the intersection-theoretic formula for $W_1$. This third term vanishes due to working with an isotrivial Fano fibration and the fact that the degree $L^{n-1} . H^{m+1}$ of the CM line bundle vanishes, as already noted previously.

We can relate the above expansion of $W_1$ to the topology of $E$ and $F$ using the following characterisation of the Segre classes. Recall (see for example \cite[Prop. 9.13]{eisenbud20163264}) that if $\PP(F)\subset \PP(E)$ then the normal bundle $N\to \PP(F)$ can be expressed as
$$N=N_{\PP(F)/\PP(E)} = \calO_{\PP(E)}(1) \otimes \pi^* E/F.$$
The Segre classes of $N$ appear in the intersection formulae
$$\int_{\Bl_{\PP(F)} \PP(E)} L^{n-1}.H^i.\calE^{m-i+1} = (-1)^{m-i} \int_{\PP(F)} L^{n-1}.s_{m-p-i+1}(N).\rest{H}{\PP(F)}^i.$$
For the critical exponent such that $m-i+1=p$ we obtain on the right-hand side the zeroth Segre class $s_0(N)=1$. The higher Segre classes are defined by the relation $s(E)c(E) = 1$ where $s(E)$ and $c(E)$ are the total Segre and Chern classes. In particular one has the general expression
$$s_q(E\otimes L) = \sum_{j=0}^q (-1)^{q-j} {\rk E - 1 + q \choose \rk E - 1 + j} s_j(E) c_1(L)^{q-j}$$
for a vector bundle $E$ and line bundle $L$. In our case we take the vector bundle $\pi^* E/F$ to compute
$$s_{m-p-i+1}(N) = \sum_{j=0}^{m-p-i+1} (-1)^{m-p-i+1-j} {m-i\choose p-1+j} s_j(\pi^* E/F) c_1(\calO_{\PP(F)}(1))^{m-p-i+1-j}.$$
A laborious calculation applying the above expression to \eqref{eq:W1projectivebundle} produces that if
$$A=-\frac{m}{m+2} \left( \frac{1}{\calE} L^{n-1} . (H-c\calE)^{m+2}\right) + \frac{1}{\calE} L^{n-1}.(H-c\calE)^{m+1}.(H-p\calE),$$
which is a positive constant multiple of $W_1$, we obtain
$$A=B (\rk E)^{\rk E} (\rk E \deg F - \rk F \deg E)$$
where $B$ is the constant
\begin{multline*}
	B=\sum_{i=0}^{\rk F} (-1)^{\rk F - i} {\rk E + 1\choose i} {\rk E -1-i\choose \rk E -1- \rk F }\cdot \\ \frac{\rk E - i}{\rk E - \rk F} \left( -\rk E + 1 + \frac{i \rk E  + (\rk E - \rk F)(\rk E + 1 - i)}{\rk E}\right).
\end{multline*}
Thus in this case (taking the frame bundle of $E\to (B,L)$ as the principal bundle $P$ and the reduction of structure group inducing the holomorphic subbundle $F$) \cref{conj:stabilityisotrivialrefined} can be resolved provided $B>0$ for all $0 < \rk F < \rk E$. Indeed this follows \emph{a posteriori} from the work of Ross--Thomas.

\begin{remark}
	In view of the notion of $\mathfrak{f}$-stability introduced by Hattori (see \cref{rmk:fibrationgiesekerstability}) it would be interesting to see if the above formalism can be upgraded to study the Gieseker stability of vector bundles as it relates to the $\mathfrak{f}$-stability of the associated projective bundles. In particular does the Donaldson--Futaki invariant $\DF(\calX,jL+\calH)$ admit an expension in powers of $j$ for which the higher-order coefficients correspond in some way to the lower-order polynomial coefficents in the difference of Gieseker slopes appearing in \cref{def:giesekerstability}. It has been shown by Ross--Keller \cite{keller2014note} for example that the projectivisation of a Gieseker unstable rank two vector bundle on a surface is not asymptotically Chow stable, and using some analytical tools that Gieseker stability can recover asymptotic Chow stability of the projectivisation.
\end{remark}

\subsection{Principal bundles for non-isotrivial fibrations}

The characterisation of optimal symplectic connections in \cref{sec:existenceOSCisotrivial} in terms of connections on principal bundles uses the isotrivial structure of the fibration critically. However, in the case of symplectic fibrations (see for example \cite[Rmk. 6.4.11]{mcduff2017introduction}) it is possible to associate to a fibration $(X,\omega_X) \to B$ an infinite-dimensional principal bundle of relative symplectomorphisms $\calP \to B$ of the fibres of $X$. The symplectic connection $\calH$ on the fibration $X\to B$ induces, using the same arguments as in \cref{section:OSCimpliesHE}, a principal bundle connection on $\calP$. One can ask if the optimal symplectic connection condition on $\omega_X$ in the case where $X\to B$ is actually a K\"ahler fibration can be characterised in terms of the connection on the principal bundle $\calP$.

Although \emph{a priori} this principal bundle does not admit any holomorphic structure, it is possible to identify a holomorphic principal bundle associated to certain polarised K\"ahler fibrations $(X,\omega_X,H) \to (B,\omega_B,L)$. Indeed under the assumption that the fibres $(X_b,\omega_b)$ are cscK and that the automorphism groups $\Aut(X_b,H_b)$ are isomorphic for every $b$, then one can define a principal bundle $P\to B$ whose fibre over $b$ is given by 
$$P_b = \{f: G \to \Aut(X_b,H_b)\mid f \text{ an isomorphism}\}$$
where $G=\Aut(X_{b_0},H_{b_0})$ is a fixed model of the automorphism group.

\begin{remark}
	It suffices to assume, as we have in \cref{sec:kahlerfibrations} that the dimension $\dim \Aut(X_b,H_b)$ is independent of $b$. At least when $B$ is connected, this implies that $\Aut(X_b,H_b) \isom \Aut(X_{b'},H_{b'})$ for all $b,b'\in B$ by the rigidity of deformations of the reductive group $\Aut(X_b,H_b)$. 
\end{remark}

Upon this holomorphic principal bundle with reductive structure group one may construct a connection arising out of a relatively K\"ahler metric $\omega_X$ as in \cref{section:OSCimpliesHE}. 

\begin{question}
	Is there a characterisation of the optimal symplectic connection condition in terms of the connection on the principal bundle $P$ of relative automorphisms of the fibration?
\end{question}

In particular it is expected that just as in the isotrivial case, the term
$$\Lap_\calV \contr_{\omega_B} \mu^* F_\calH$$
appearing in the OSC equation should relate to the curvature $\contr_{\omega_B} F_A$ of the connection on $P$, but one expects that the Ricci term $\contr_{\omega_B} \rho_\calH$, which is no longer necessarily proportional to the first term, will contribute non-trivially to the resulting curvature equation on the principal bundle. For example, does there exist a characterisation of the difference
$$T = \contr_{\omega_B} \rho_\calH - \Lap_\calV \contr_{\omega_B} \mu^* F_\calH$$
in terms of the local deformation of complex structure of the fibres of the fibration $X\to B$?

Due to an observation of Hallam \cite{hallam2022thesis}, the space of relatively cscK metrics cohomologous to a given relatively cscK metric $\omega_X$ on a K\"ahler fibration can be identified with the sections of the vector bundle $E\to B$ of real (mean-zero) relative holomorphy potentials. In particular $E\to B$ here arises as the real adjoint bundle $\ad P_\sigma$ with respect to the reduction of structure group $\sigma: B \to P/K$ for $K=\Aut(X_b, \omega_b, H_b)$ defined by the choice of $\omega_X$. Thus we may expect that the term $\contr_{\omega_B} \rho_\calH$ for some relatively cscK metric $\omega_X$ may admit an interpretation in terms of sections of the real adjoint bundle $\ad P_\sigma$, and the optimal symplectic connection equation on $X$ should be related to a coupled Hermite--Einstein type equation for a pair $(A,s)$ where $s: B\to E=\ad P_\sigma$.

	\renewcommand{\bibname}{References}

	\cleardoublepage
	\phantomsection
	\addcontentsline{toc}{chapter}{References}
	\bibliographystyle{bibliography/alpha-abbrvsort}
	
	\bibliography{bibliography/thesis}

\newcommand{\etalchar}[1]{$^{#1}$}
\begin{thebibliography}{BGFiML12}

\bibitem[AB01]{anchouche2001einstein}
Boudjemaa Anchouche and Indranil Biswas.
\newblock Einstein-{H}ermitian connections on polystable principal bundles over
  a compact {K}\"{a}hler manifold.
\newblock {\em Amer. J. Math.}, 123(2):207--228, 2001.

\bibitem[ACC{\etalchar{+}}21]{calabiproblem}
Carolina Araujo, Ana-Maria Castravet, Ivan Cheltsov, Kento Fujita, Anne-Sophie
  Kaloghiros, Jesus Martinez-Garcia, Constantin Shramov, Hendrik S{\"u}{\ss},
  and Nivedita Viswanathan.
\newblock {The Calabi problem for Fano threefolds}.
\newblock WorkingPaper~31, 2021.

\bibitem[Asp05]{aspinwall2004d}
Paul~S. Aspinwall.
\newblock D-branes on {C}alabi-{Y}au manifolds.
\newblock In {\em Progress in string theory}, pages 1--152. World Sci. Publ.,
  Hackensack, NJ, 2005.

\bibitem[ABC{\etalchar{+}}09]{aspinwall2009dirichlet}
Paul~S. Aspinwall, Tom Bridgeland, Alastair Craw, Michael~R. Douglas, Mark
  Gross, Anton Kapustin, Gregory~W. Moore, Graeme Segal, Bal\'{a}zs
  Szendr\H{o}i, and P.~M.~H. Wilson.
\newblock {\em Dirichlet branes and mirror symmetry}, volume~4 of {\em Clay
  Mathematics Monographs}.
\newblock American Mathematical Society, Providence, RI; Clay Mathematics
  Institute, Cambridge, MA, 2009.

\bibitem[AD02]{aspinwall2002d}
Paul~S. Aspinwall and Michael~R. Douglas.
\newblock D-brane stability and monodromy.
\newblock {\em J. High Energy Phys.}, (5):no. 31, 35, 2002.

\bibitem[Ati57]{atiyah1957complex}
Michael~Francis Atiyah.
\newblock Complex analytic connections in fibre bundles.
\newblock {\em Trans. Amer. Math. Soc.}, 85:181--207, 1957.

\bibitem[AB83]{atiyahbott}
Michael~Francis Atiyah and Raoul Bott.
\newblock {The {Y}ang-{M}ills equations over {R}iemann surfaces}.
\newblock {\em Philos. Trans. Roy. Soc. London Ser. A}, 308(1505):523--615,
  1983.

\bibitem[AH62]{ATIYAH1962151}
Michael~Francis Atiyah and Friedrich Hirzebruch.
\newblock The {R}iemann-{R}och theorem for analytic embeddings.
\newblock {\em Topology}, 1:151--166, 1962.

\bibitem[Bal06]{ballmann2006lectures}
Werner Ballmann.
\newblock {\em Lectures on {K}\"{a}hler manifolds}.
\newblock ESI Lectures in Mathematics and Physics. European Mathematical
  Society (EMS), Z\"{u}rich, 2006.

\bibitem[BS94]{bando1994stable}
Shigetoshi Bando and Yum-Tong Siu.
\newblock Stable sheaves and {E}instein-{H}ermitian metrics.
\newblock In {\em Geometry and analysis on complex manifolds}, pages 39--50.
  World Sci. Publ., River Edge, NJ, 1994.

\bibitem[Bay09]{bayer}
Arend Bayer.
\newblock Polynomial {B}ridgeland stability conditions and the large volume
  limit.
\newblock {\em Geom. Topol.}, 13(4):2389--2425, 2009.

\bibitem[BMS16]{bayer2016space}
Arend Bayer, Emanuele Macr\`\i, and Paolo Stellari.
\newblock The space of stability conditions on abelian threefolds, and on some
  {C}alabi-{Y}au threefolds.
\newblock {\em Invent. Math.}, 206(3):869--933, 2016.

\bibitem[BMT14]{bayer2014bridgeland}
Arend Bayer, Emanuele Macr\`\i, and Yukinobu Toda.
\newblock Bridgeland stability conditions on threefolds {I}:
  {B}ogomolov-{G}ieseker type inequalities.
\newblock {\em J. Algebraic Geom.}, 23(1):117--163, 2014.

\bibitem[BBS95]{becker1995fivebranes}
Katrin Becker, Melanie Becker, and Andrew Strominger.
\newblock Fivebranes, membranes and non-perturbative string theory.
\newblock {\em Nuclear Phys. B}, 456(1-2):130--152, 1995.

\bibitem[BBJ21]{berman2021variational}
Robert~J. Berman, S\'{e}bastien Boucksom, and Mattias Jonsson.
\newblock A variational approach to the {Y}au-{T}ian-{D}onaldson conjecture.
\newblock {\em J. Amer. Math. Soc.}, 34(3):605--652, 2021.

\bibitem[BF86a]{bismut1986analysis}
Jean-Michel Bismut and Daniel~S. Freed.
\newblock The analysis of elliptic families. {I}. {M}etrics and connections on
  determinant bundles.
\newblock {\em Comm. Math. Phys.}, 106(1):159--176, 1986.

\bibitem[BF86b]{bismut1986analysisII}
Jean-Michel Bismut and Daniel~S. Freed.
\newblock The analysis of elliptic families. {II}. {D}irac operators, eta
  invariants, and the holonomy theorem.
\newblock {\em Comm. Math. Phys.}, 107(1):103--163, 1986.

\bibitem[Bou18]{boucksom2018variational}
S\'{e}bastien Boucksom.
\newblock Variational and non-archimedean aspects of the
  {Y}au-{T}ian-{D}onaldson conjecture.
\newblock In {\em Proceedings of the {I}nternational {C}ongress of
  {M}athematicians---{R}io de {J}aneiro 2018. {V}ol. {II}. {I}nvited lectures},
  pages 591--617. World Sci. Publ., Hackensack, NJ, 2018.

\bibitem[BHJ19]{boucksom2019uniform}
S\'{e}bastien Boucksom, Tomoyuki Hisamoto, and Mattias Jonsson.
\newblock Uniform {K}-stability and asymptotics of energy functionals in
  {K}\"{a}hler geometry.
\newblock {\em J. Eur. Math. Soc. (JEMS)}, 21(9):2905--2944, 2019.

\bibitem[Bri07]{bridgeland2007stability}
Tom Bridgeland.
\newblock Stability conditions on triangulated categories.
\newblock {\em Ann. of Math. (2)}, 166(2):317--345, 2007.

\bibitem[Bri08]{bridgeland2008stability}
Tom Bridgeland.
\newblock Stability conditions on {$K3$} surfaces.
\newblock {\em Duke Math. J.}, 141(2):241--291, 2008.

\bibitem[Bri09]{bridgeland2009spaces}
Tom Bridgeland.
\newblock Spaces of stability conditions.
\newblock In {\em Algebraic geometry---{S}eattle 2005. {P}art 1}, volume~80 of
  {\em Proc. Sympos. Pure Math.}, pages 1--21. Amer. Math. Soc., Providence,
  RI, 2009.

\bibitem[Br{\"o}11]{bronnle2011deformation}
Till~Andreas Br{\"o}nnle.
\newblock {\em Deformation constructions of extremal metrics}.
\newblock PhD thesis, {Imperial College London}, 2011.

\bibitem[Buc88]{buchdahl1986hermitian}
Nicholas Buchdahl.
\newblock Hermitian-{E}instein connections and stable vector bundles over
  compact complex surfaces.
\newblock {\em Math. Ann.}, 280(4):625--648, 1988.

\bibitem[BS20]{buchdahl2020polystability}
Nicholas Buchdahl and Georg Schumacher.
\newblock {Polystability and the Hitchin-Kobayashi correspondence}.
\newblock {\em arXiv preprint
  \href{https://arxiv.org/abs/2002.03548}{arXiv:2002.03548}}, 2020.

\bibitem[BGFiML12]{gil2012hermitian}
Jos{\'e}~Ignacio Burgos~Gil, Gerard Freixas~i Montplet, and Rǎzvan Litcanu.
\newblock Hermitian structures on the derived category of coherent sheaves.
\newblock {\em J. Math. Pures Appl. (9)}, 97(5):424--459, 2012.

\bibitem[Cal82]{calabi1982extremal}
Eugenio Calabi.
\newblock Extremal {K}\"{a}hler metrics.
\newblock In {\em Seminar on {D}ifferential {G}eometry}, volume 102 of {\em
  Ann. of Math. Stud.}, pages 259--290. Princeton Univ. Press, Princeton, N.J.,
  1982.

\bibitem[CHSW85]{candelas1985vacuum}
P.~Candelas, Gary~T. Horowitz, Andrew Strominger, and Edward Witten.
\newblock Vacuum configurations for superstrings.
\newblock {\em Nuclear Phys. B}, 258(1):46--74, 1985.

\bibitem[CS09]{cheltsov2009extremal}
I.~A. Cheltsov and K.~A. Shramov.
\newblock Extremal metrics on del {P}ezzo threefolds.
\newblock {\em Tr. Mat. Inst. Steklova}, 264(Mnogomernaya Algebraicheskaya
  Geometriya):37--51, 2009.

\bibitem[CS18]{cheltsov2018kahler}
Ivan Cheltsov and Constantin Shramov.
\newblock {K{\"a}hler-Einstein Fano threefolds of degree 22}.
\newblock {\em arXiv preprint
  \href{https://arxiv.org/abs/1803.02774}{arXiv:1803.02774}}, 2018.

\bibitem[{Che}19]{chen2019j}
Gao {Chen}.
\newblock {On J-equation}.
\newblock {\em arXiv e-prints
  \href{https://arxiv.org/abs/1905.10222}{arXiv:1905.10222}}, May 2019.

\bibitem[Che21]{chen2021j}
Gao Chen.
\newblock The {J}-equation and the supercritical deformed
  {H}ermitian-{Y}ang-{M}ills equation.
\newblock {\em Invent. Math.}, 225(2):529--602, 2021.

\bibitem[CC21a]{chen2021constant}
Xiuxiong Chen and Jingrui Cheng.
\newblock On the constant scalar curvature {K}\"{a}hler metrics ({I})---{A}
  priori estimates.
\newblock {\em J. Amer. Math. Soc.}, 34(4):909--936, 2021.

\bibitem[CC21b]{chen2021constant2}
Xiuxiong Chen and Jingrui Cheng.
\newblock On the constant scalar curvature {K}\"{a}hler metrics
  ({II})---{E}xistence results.
\newblock {\em J. Amer. Math. Soc.}, 34(4):937--1009, 2021.

\bibitem[CDS15a]{chen2015kahlerI}
Xiuxiong Chen, Simon Donaldson, and Song Sun.
\newblock K\"{a}hler-{E}instein metrics on {F}ano manifolds. {I}:
  {A}pproximation of metrics with cone singularities.
\newblock {\em J. Amer. Math. Soc.}, 28(1):183--197, 2015.

\bibitem[CDS15b]{chen2015kahlerII}
Xiuxiong Chen, Simon Donaldson, and Song Sun.
\newblock K\"{a}hler-{E}instein metrics on {F}ano manifolds. {II}: {L}imits
  with cone angle less than {$2\pi$}.
\newblock {\em J. Amer. Math. Soc.}, 28(1):199--234, 2015.

\bibitem[CDS15c]{chen2015kahlerIII}
Xiuxiong Chen, Simon Donaldson, and Song Sun.
\newblock K\"{a}hler-{E}instein metrics on {F}ano manifolds. {III}: {L}imits as
  cone angle approaches {$2\pi$} and completion of the main proof.
\newblock {\em J. Amer. Math. Soc.}, 28(1):235--278, 2015.

\bibitem[CSW18]{chen2018kahler}
Xiuxiong Chen, Song Sun, and Bing Wang.
\newblock K\"{a}hler-{R}icci flow, {K}\"{a}hler-{E}instein metric, and
  {K}-stability.
\newblock {\em Geom. Topol.}, 22(6):3145--3173, 2018.

\bibitem[CLT21]{chu2021nakai}
Jianchun Chu, Man-Chun Lee, and Ryosuke Takahashi.
\newblock {A Nakai-Moishezon type criterion for supercritical deformed
  Hermitian-Yang-Mills equation}.
\newblock {\em arXiv preprint
  \href{https://arxiv.org/abs/2105.10725}{arXiv:2105.10725}}, 2021.

\bibitem[CP21]{codogni2018positivity}
Giulio Codogni and Zsolt Patakfalvi.
\newblock Positivity of the {CM} line bundle for families of {$K$}-stable klt
  {F}ano varieties.
\newblock {\em Invent. Math.}, 223(3):811--894, 2021.

\bibitem[CJY20]{collins20151}
Tristan~C. Collins, Adam Jacob, and Shing-Tung Yau.
\newblock {$(1,1)$} forms with specified {L}agrangian phase: a priori estimates
  and algebraic obstructions.
\newblock {\em Camb. J. Math.}, 8(2):407--452, 2020.

\bibitem[CS19]{collins2020stability}
Tristan~C Collins and Yun Shi.
\newblock {Stability and the deformed Hermitian--Yang--Mills equation}.
\newblock {\em Surveys in Differential Geometry}, 24(1):1--38, 2019.

\bibitem[CXY18]{collins2018deformed}
Tristan~C. Collins, Dan Xie, and Shing-Tung Yau.
\newblock The deformed {H}ermitian-{Y}ang-{M}ills equation in geometry and
  physics.
\newblock In {\em Geometry and physics. {V}ol. {I}}, pages 69--90. Oxford Univ.
  Press, Oxford, 2018.

\bibitem[CY21]{collins2018moment}
Tristan~C. Collins and Shing-Tung Yau.
\newblock Moment maps, nonlinear {PDE} and stability in mirror symmetry, {I}:
  geodesics.
\newblock {\em Ann. PDE}, 7(1):Paper No. 11, 73, 2021.

\bibitem[CKS03]{caldararu2003d}
Andrei Cǎldǎraru, Sheldon Katz, and Eric Sharpe.
\newblock D-branes, {$B$} fields, and {E}xt groups.
\newblock {\em Adv. Theor. Math. Phys.}, 7(3):381--404, 2003.

\bibitem[DS16]{datar2016kahler}
Ved Datar and G\'{a}bor Sz{\'e}kelyhidi.
\newblock K\"{a}hler-{E}instein metrics along the smooth continuity method.
\newblock {\em Geom. Funct. Anal.}, 26(4):975--1010, 2016.

\bibitem[DP21]{datar2021numerical}
Ved~V. Datar and Vamsi~Pritham Pingali.
\newblock A numerical criterion for generalised {M}onge-{A}mp\`ere equations on
  projective manifolds.
\newblock {\em Geom. Funct. Anal.}, 31(4):767--814, 2021.

\bibitem[DJ22]{delcroix2022effective}
Thibaut Delcroix and Simon Jubert.
\newblock {An effective weighted K-stability condition for polytopes and
  semisimple principal toric fibratons}.
\newblock {\em arXiv preprint
  \href{https://arxiv.org/abs/2202.02996}{arXiv:2202.02996}}, 2022.

\bibitem[Dem97]{demailly1997complex}
Jean-Pierre Demailly.
\newblock {\em {Complex Analytic and Differential Geometry}}.
\newblock Universit{\'e} de Grenoble I, 1997.

\bibitem[DP04]{DP}
Jean-Pierre Demailly and Mihai Paun.
\newblock Numerical characterization of the {K}\"{a}hler cone of a compact
  {K}\"{a}hler manifold.
\newblock {\em Ann. of Math. (2)}, 159(3):1247--1274, 2004.

\bibitem[Der21]{dervan2021stability}
Ruadha{\'\i} Dervan.
\newblock {Stability conditions for polarised varieties}.
\newblock {\em arXiv preprint
  \href{https://arxiv.org/abs/2103.03177}{arXiv:2103.03177}}, 2021.

\bibitem[DN18]{dervan2018moduli}
Ruadha{\'i} Dervan and Philipp Naumann.
\newblock {Moduli of polarised manifolds via canonical K{\"a}hler metrics}.
\newblock {\em arXiv preprint
  \href{https://arxiv.org/abs/1810.02576}{arXiv:1810.02576}}, 2018.

\bibitem[DS21a]{dervan2019moduli}
Ruadha\'{\i} Dervan and Lars~Martin Sektnan.
\newblock Moduli theory, stability of fibrations and optimal symplectic
  connections.
\newblock {\em Geom. Topol.}, 25(5):2643--2697, 2021.

\bibitem[DS21b]{dervan2019optimal}
Ruadha\'{\i} Dervan and Lars~Martin Sektnan.
\newblock Optimal symplectic connections on holomorphic submersions.
\newblock {\em Comm. Pure Appl. Math.}, 74(10):2132--2184, 2021.

\bibitem[DMS21]{dervan2021zcritical}
Ruadhaí Dervan, John~Benjamin McCarthy, and Lars~Martin Sektnan.
\newblock {$Z$-critical connections and Bridgeland stability conditions}.
\newblock {\em arXiv preprint
  \href{https://arxiv.org/abs/2012.10426}{arXiv:2012.10426}}, 2021.

\bibitem[Don83]{donaldson1983new}
Simon~K. Donaldson.
\newblock A new proof of a theorem of {N}arasimhan and {S}eshadri.
\newblock {\em J. Differential Geom.}, 18(2):269--277, 1983.

\bibitem[Don85]{donaldson1985anti}
Simon~K. Donaldson.
\newblock Anti self-dual {Y}ang-{M}ills connections over complex algebraic
  surfaces and stable vector bundles.
\newblock {\em Proc. London Math. Soc. (3)}, 50(1):1--26, 1985.

\bibitem[Don87]{donaldson1987infinite}
Simon~K. Donaldson.
\newblock Infinite determinants, stable bundles and curvature.
\newblock {\em Duke Math. J.}, 54(1):231--247, 1987.

\bibitem[Don02]{donaldson2002scalar}
Simon~K. Donaldson.
\newblock Scalar curvature and stability of toric varieties.
\newblock {\em J. Differential Geom.}, 62(2):289--349, 2002.

\bibitem[Don05]{donaldson2005lower}
Simon~K. Donaldson.
\newblock Lower bounds on the {C}alabi functional.
\newblock {\em J. Differential Geom.}, 70(3):453--472, 2005.

\bibitem[Don08]{donaldson2008kahler}
Simon~K. Donaldson.
\newblock K\"{a}hler geometry on toric manifolds, and some other manifolds with
  large symmetry.
\newblock 7:29--75, 2008.

\bibitem[Don18]{donaldson2015stability}
Simon~K. Donaldson.
\newblock Stability of algebraic varieties and {K}\"{a}hler geometry.
\newblock In {\em Algebraic geometry: {S}alt {L}ake {C}ity 2015}, volume~97 of
  {\em Proc. Sympos. Pure Math.}, pages 199--221. Amer. Math. Soc., Providence,
  RI, 2018.

\bibitem[Dou02]{douglas-icm}
Michael~R. Douglas.
\newblock Dirichlet branes, homological mirror symmetry, and stability.
\newblock In {\em Proceedings of the {I}nternational {C}ongress of
  {M}athematicians, {V}ol. {III} ({B}eijing, 2002)}, pages 395--408. Higher Ed.
  Press, Beijing, 2002.

\bibitem[DFR05]{douglas2005stability}
Michael~R. Douglas, Bartomeu Fiol, and Christian R\"{o}melsberger.
\newblock Stability and {BPS} branes.
\newblock {\em J. High Energy Phys.}, (9):006, 15, 2005.

\bibitem[EH16]{eisenbud20163264}
David Eisenbud and Joe Harris.
\newblock {\em 3264 and all that---a second course in algebraic geometry}.
\newblock Cambridge University Press, Cambridge, 2016.

\bibitem[EL04]{enger-lutken}
H.~Enger and C.~A. L\"{u}tken.
\newblock Non-linear {Y}ang-{M}ills instantons from strings are {$\pi$}-stable
  {D}-branes.
\newblock {\em Nuclear Phys. B}, 695(1-2):73--83, 2004.

\bibitem[Fin04]{fine}
Joel Fine.
\newblock Constant scalar curvature {K}\"{a}hler metrics on fibred complex
  surfaces.
\newblock {\em J. Differential Geom.}, 68(3):397--432, 2004.

\bibitem[FG65]{fischergrauert}
Wolfgang Fischer and Hans Grauert.
\newblock Lokal-triviale {F}amilien kompakter komplexer {M}annigfaltigkeiten.
\newblock {\em Nachr. Akad. Wiss. G\"{o}ttingen Math.-Phys. Kl. II},
  1965:89--94, 1965.

\bibitem[FW99]{freed1999anomalies}
Daniel~S. Freed and Edward Witten.
\newblock Anomalies in string theory with {D}-branes.
\newblock {\em Asian J. Math.}, 3(4):819--851, 1999.

\bibitem[Fuj92]{fujiki1992moduli}
Akira Fujiki.
\newblock Moduli space of polarized algebraic manifolds and {K}\"{a}hler
  metrics [translation of {S}\^{u}gaku {\bf 42} (1990), no. 3, 231--243;
  {MR}1073369 (92b:32032)].
\newblock volume~5, pages 173--191, 1992.
\newblock Sugaku Expositions.

\bibitem[FH91]{fulton2013representation}
William Fulton and Joe Harris.
\newblock {\em Representation theory}, volume 129 of {\em Graduate Texts in
  Mathematics}.
\newblock Springer-Verlag, New York, 1991.
\newblock A first course, Readings in Mathematics.

\bibitem[Gau10]{gauduchon2010calabi}
Paul Gauduchon.
\newblock {Calabi’s extremal K{\"a}hler metrics: An elementary introduction}.
\newblock {\em Preprint}, 34:37--40, 2010.

\bibitem[GRS21]{georgoulas2022moment}
Valentina Georgoulas, Joel~W. Robbin, and Dietmar~Arno Salamon.
\newblock {\em The moment-weight inequality and the {H}ilbert-{M}umford
  criterion---{GIT} from the differential geometric viewpoint}, volume 2297 of
  {\em Lecture Notes in Mathematics}.
\newblock Springer, Cham, [2021] \copyright 2021.

\bibitem[Gie77]{gieseker1977moduli}
David Gieseker.
\newblock On the moduli of vector bundles on an algebraic surface.
\newblock {\em Ann. of Math. (2)}, 106(1):45--60, 1977.

\bibitem[Gie88]{gieseker1988construction}
David Gieseker.
\newblock A construction of stable bundles on an algebraic surface.
\newblock {\em J. Differential Geom.}, 27(1):137--154, 1988.

\bibitem[GSTW21]{greb2021complex}
Daniel Greb, Benjamin Sibley, Matei Toma, and Richard Wentworth.
\newblock Complex algebraic compactifications of the moduli space of
  {H}ermitian {Y}ang-{M}ills connections on a projective manifold.
\newblock {\em Geom. Topol.}, 25(4):1719--1818, 2021.

\bibitem[GT17]{greb2017compact}
Daniel Greb and Matei Toma.
\newblock {Compact moduli spaces for slope-semistable sheaves}.
\newblock {\em Algebraic Geometry}, 4:40--78, 2017.

\bibitem[GLS96]{guillemin1996symplectic}
Victor Guillemin, Eugene Lerman, and Shlomo Sternberg.
\newblock {\em Symplectic fibrations and multiplicity diagrams}.
\newblock Cambridge University Press, Cambridge, 1996.

\bibitem[GS84]{guillemin1984convexity}
Victor Guillemin and Shlomo Sternberg.
\newblock Convexity properties of the moment mapping. {II}.
\newblock {\em Invent. Math.}, 77(3):533--546, 1984.

\bibitem[HKKP20]{haidensemistability}
Fabian Haiden, Ludmil Katzarkov, Maxim Kontsevich, and Pranav Pandit.
\newblock {Semistability, modular lattices, and iterated logarithms}.
\newblock {\em Journal of Differential Geometry}, 2020.

\bibitem[Hal20]{hallam2020geodesics}
Michael Hallam.
\newblock {Geodesics in the space of relatively {K}{\"a}hler metrics}.
\newblock {\em arXiv preprint
  \href{https://arxiv.org/abs/2012.04416}{arXiv:2012.04416}}, 2020.

\bibitem[Hal22]{hallam2022thesis}
Michael Hallam.
\newblock {\em {The Geometry and Stability of Fibrations}}.
\newblock PhD thesis, {University of Oxford}, 2022.

\bibitem[HN75]{harder1975cohomology}
G.~Harder and M.~S. Narasimhan.
\newblock On the cohomology groups of moduli spaces of vector bundles on
  curves.
\newblock {\em Math. Ann.}, 212:215--248, 1974/75.

\bibitem[Hat22]{hattori2022fibration}
Masafumi Hattori.
\newblock {On fibration stability after Dervan-Sektnan and singularities}.
\newblock {\em arXiv preprint
  \href{https://arxiv.org/abs/2202.09992}{arXiv:2202.09992}}, 2022.

\bibitem[Hil90]{hilbert1890ueber}
David Hilbert.
\newblock Ueber die {T}heorie der algebraischen {F}ormen.
\newblock {\em Math. Ann.}, 36(4):473--534, 1890.

\bibitem[Hil93]{hilbert1893vollen}
David Hilbert.
\newblock Ueber die vollen {I}nvariantensysteme.
\newblock {\em Math. Ann.}, 42(3):313--373, 1893.

\bibitem[Hit74]{hitchin1974harmonic}
Nigel Hitchin.
\newblock Harmonic spinors.
\newblock {\em Advances in Math.}, 14:1--55, 1974.

\bibitem[Hit79]{hitchin1979non}
Nigel Hitchin.
\newblock {\em {Non-linear Problems in Geometry: Proceedings of the Sixth
  International Symposium: Conference Held at Katata, September 3-8, 1979}}.
\newblock Taniguchi Foundation, 1979.

\bibitem[HKK{\etalchar{+}}03]{clay}
Kentaro Hori, Sheldon Katz, Albrecht Klemm, Rahul Pandharipande, Richard
  Thomas, Cumrun Vafa, Ravi Vakil, and Eric Zaslow.
\newblock {\em Mirror symmetry}, volume~1 of {\em Clay Mathematics Monographs}.
\newblock American Mathematical Society, Providence, RI; Clay Mathematics
  Institute, Cambridge, MA, 2003.
\newblock With a preface by Vafa.

\bibitem[HL10]{huybrechts-lehn}
Daniel Huybrechts and Manfred Lehn.
\newblock {\em The geometry of moduli spaces of sheaves}.
\newblock Cambridge Mathematical Library. Cambridge University Press,
  Cambridge, second edition, 2010.

\bibitem[JS20]{jacob2020deformed}
Adam Jacob and Norman Sheu.
\newblock {The deformed Hermitian-Yang-Mills equation on the blowup of
  $\mathbb{P}^n$}.
\newblock {\em arXiv preprint
  \href{https://arxiv.org/abs/2009.00651}{arXiv:2009.00651}}, 2020.

\bibitem[JY17]{jacob2017special}
Adam Jacob and Shing-Tung Yau.
\newblock A special {L}agrangian type equation for holomorphic line bundles.
\newblock {\em Math. Ann.}, 369(1-2):869--898, 2017.

\bibitem[Joy15]{joyce2014conjectures}
Dominic Joyce.
\newblock Conjectures on {B}ridgeland stability for {F}ukaya categories of
  {C}alabi-{Y}au manifolds, special {L}agrangians, and {L}agrangian mean
  curvature flow.
\newblock {\em EMS Surv. Math. Sci.}, 2(1):1--62, 2015.

\bibitem[Kap00]{kapustin2000d}
Anton Kapustin.
\newblock D-branes in a topologically nontrivial {$B$}-field.
\newblock {\em Adv. Theor. Math. Phys.}, 4(1):127--154, 2000.

\bibitem[KS02]{katz2002d}
Sheldon Katz and Eric Sharpe.
\newblock D-branes, open string vertex operators, and {E}xt groups.
\newblock {\em Adv. Theor. Math. Phys.}, 6(6):979--1030 (2003), 2002.

\bibitem[KR14]{keller2014note}
Julien Keller and Julius Ross.
\newblock A note on {C}how stability of the projectivization of {G}ieseker
  stable bundles.
\newblock {\em J. Geom. Anal.}, 24(3):1526--1546, 2014.

\bibitem[KN79]{kempf1979length}
George Kempf and Linda Ness.
\newblock The length of vectors in representation spaces.
\newblock In {\em Algebraic geometry ({P}roc. {S}ummer {M}eeting, {U}niv.
  {C}openhagen, {C}openhagen, 1978)}, volume 732 of {\em Lecture Notes in
  Math.}, pages 233--243. Springer, Berlin, 1979.

\bibitem[Kin94]{king1994moduli}
A.~D. King.
\newblock Moduli of representations of finite-dimensional algebras.
\newblock {\em Quart. J. Math. Oxford Ser. (2)}, 45(180):515--530, 1994.

\bibitem[Kir84]{kirwan1984cohomology}
Frances~Clare Kirwan.
\newblock {\em Cohomology of quotients in symplectic and algebraic geometry},
  volume~31 of {\em Mathematical Notes}.
\newblock Princeton University Press, Princeton, NJ, 1984.

\bibitem[Kob80]{kobayashi1980first}
Shoshichi Kobayashi.
\newblock First {C}hern class and holomorphic tensor fields.
\newblock {\em Nagoya Math. J.}, 77:5--11, 1980.

\bibitem[Kob82]{kobayashi1982curvature}
Shoshichi Kobayashi.
\newblock Curvature and stability of vector bundles.
\newblock {\em Proc. Japan Acad. Ser. A Math. Sci.}, 58(4):158--162, 1982.

\bibitem[Kob87]{kobayashi1987differential}
Shoshichi Kobayashi.
\newblock {\em Differential geometry of complex vector bundles}, volume~15 of
  {\em Publications of the Mathematical Society of Japan}.
\newblock Princeton University Press, Princeton, NJ; Princeton University
  Press, Princeton, NJ, 1987.
\newblock Kan\^{o} Memorial Lectures, 5.

\bibitem[KN69]{kobayashi1963foundations}
Shoshichi Kobayashi and Katsumi Nomizu.
\newblock {\em Foundations of differential geometry. {V}ol. {II}}.
\newblock Interscience Tracts in Pure and Applied Mathematics, No. 15 Vol. II.
  Interscience Publishers John Wiley \& Sons, Inc., New York-London-Sydney,
  1969.

\bibitem[Kon95]{kontsevich1995homological}
Maxim Kontsevich.
\newblock Homological algebra of mirror symmetry.
\newblock In {\em Proceedings of the {I}nternational {C}ongress of
  {M}athematicians, {V}ol. 1, 2 ({Z}\"{u}rich, 1994)}, pages 120--139.
  Birkh\"{a}user, Basel, 1995.

\bibitem[Kon15]{kontsevich2015homological}
Maxim Kontsevich.
\newblock K\"ahler stability.
\newblock In {\em Simons Collaboration on Homological Mirror Symmetry}, 2015.
\newblock Presented at the inaugral conference of the Simons Collaboration on
  Homological Mirror Symmetry. Available:
  \href{https://schms.math.berkeley.edu/events/penn2015/}{https://schms.math.berkeley.edu/events/penn2015/}.

\bibitem[KS08]{kontsevich2008stability}
Maxim Kontsevich and Yan Soibelman.
\newblock {Stability structures, motivic Donaldson-Thomas invariants and
  cluster transformations}.
\newblock {\em arXiv preprint
  \href{https://arxiv.org/abs/0811.2435}{arXiv:0811.2435}}, 2008.

\bibitem[Kuz03]{kuznetsov2003derived}
Alexander Kuznetsov.
\newblock {Derived categories of cubic and $V_{14}$ threefolds}.
\newblock {\em arXiv preprint math/0303037}, 2003.

\bibitem[LS15]{lejmi-szekelyhidi}
Mehdi Lejmi and G\'{a}bor Sz{\'e}kelyhidi.
\newblock The {J}-flow and stability.
\newblock {\em Adv. Math.}, 274:404--431, 2015.

\bibitem[Leu97]{leung1997einstein}
Nai-Chung~Conan Leung.
\newblock Einstein type metrics and stability on vector bundles.
\newblock {\em J. Differential Geom.}, 45(3):514--546, 1997.

\bibitem[Leu98]{leung1998symplectic}
Nai-Chung~Conan Leung.
\newblock Symplectic structures on gauge theory.
\newblock {\em Comm. Math. Phys.}, 193(1):47--67, 1998.

\bibitem[LYZ00]{LYZ}
Nai-Chung~Conan Leung, Shing-Tung Yau, and Eric Zaslow.
\newblock From special {L}agrangian to {H}ermitian-{Y}ang-{M}ills via
  {F}ourier-{M}ukai transform.
\newblock {\em Adv. Theor. Math. Phys.}, 4(6):1319--1341, 2000.

\bibitem[Li20]{li2020geodesic}
Chi Li.
\newblock {Geodesic rays and stability in the cscK problem}.
\newblock {\em arXiv preprint
  \href{https://arxiv.org/abs/2001.01366}{arXiv:2001.01366}}, 2020.

\bibitem[Li22]{li2022g}
Chi Li.
\newblock {$G$}-uniform stability and {K}\"{a}hler-{E}instein metrics on {F}ano
  varieties.
\newblock {\em Invent. Math.}, 227(2):661--744, 2022.

\bibitem[LTW21]{li2021uniform}
Chi Li, Gang Tian, and Feng Wang.
\newblock {The uniform version of Yau--Tian--Donaldson conjecture for singular
  Fano varieties}.
\newblock {\em Peking Mathematical Journal}, pages 1--44, 2021.

\bibitem[Li19]{li2019stability}
Chunyi Li.
\newblock On stability conditions for the quintic threefold.
\newblock {\em Invent. Math.}, 218(1):301--340, 2019.

\bibitem[LY87]{li1987hermitian}
Jun Li and Shing-Tung Yau.
\newblock Hermitian-{Y}ang-{M}ills connection on non-{K}\"{a}hler manifolds.
\newblock In {\em Mathematical aspects of string theory ({S}an {D}iego,
  {C}alif., 1986)}, volume~1 of {\em Adv. Ser. Math. Phys.}, pages 560--573.
  World Sci. Publishing, Singapore, 1987.

\bibitem[Li20]{li2020metric}
Yang Li.
\newblock {Metric SYZ conjecture and non-archimedean geometry}.
\newblock {\em arXiv preprint
  \href{https://arxiv.org/abs/2007.01384}{arXiv:2007.01384}}, 2020.

\bibitem[Li22]{li2022thomas}
Yang Li.
\newblock {Thomas-Yau conjecture and holomorphic curves}.
\newblock 2022.

\bibitem[Lic58]{lichnerowicz1958geometrie}
Andr\'{e} Lichnerowicz.
\newblock {\em G\'{e}om\'{e}trie des groupes de transformations}.
\newblock Travaux et Recherches Math\'{e}matiques, III. Dunod, Paris, 1958.

\bibitem[LXZ22]{liu2021finite}
Yuchen Liu, Chenyang Xu, and Ziquan Zhuang.
\newblock Finite generation for valuations computing stability thresholds and
  applications to {K}-stability.
\newblock {\em Ann. of Math. (2)}, 196(2):507--566, 2022.

\bibitem[Lo20]{lo2020weight}
Jason Lo.
\newblock Weight functions, tilts, and stability conditions.
\newblock {\em arXiv preprint
  \href{https://arxiv.org/abs/2007.06857}{arXiv:2007.06857}}, 2020.

\bibitem[L{\"u}b83]{lubke1983stability}
Martin L{\"u}bke.
\newblock Stability of {E}instein-{H}ermitian vector bundles.
\newblock {\em Manuscripta Math.}, 42(2-3):245--257, 1983.

\bibitem[Mab86]{mabuchi1986k}
Toshiki Mabuchi.
\newblock {$K$}-energy maps integrating {F}utaki invariants.
\newblock {\em Tohoku Math. J. (2)}, 38(4):575--593, 1986.

\bibitem[MP15]{maciocia2015I}
Antony Maciocia and Dulip Piyaratne.
\newblock Fourier-{M}ukai transforms and {B}ridgeland stability conditions on
  abelian threefolds.
\newblock {\em Algebr. Geom.}, 2(3):270--297, 2015.

\bibitem[MP16]{maciocia2015II}
Antony Maciocia and Dulip Piyaratne.
\newblock Fourier-{M}ukai transforms and {B}ridgeland stability conditions on
  abelian threefolds {II}.
\newblock {\em Internat. J. Math.}, 27(1):1650007, 27, 2016.

\bibitem[MS17]{macri2017lectures}
Emanuele Macr\`\i and Benjamin Schmidt.
\newblock Lectures on {B}ridgeland stability.
\newblock In {\em Moduli of curves}, volume~21 of {\em Lect. Notes Unione Mat.
  Ital.}, pages 139--211. Springer, Cham, 2017.

\bibitem[MnMMS00]{marino2000nonlinear}
Marcos Mari\~{n}o, Ruben Minasian, Gregory Moore, and Andrew Strominger.
\newblock Nonlinear instantons from supersymmetric {$p$}-branes.
\newblock {\em J. High Energy Phys.}, (1):Paper 5, 32, 2000.

\bibitem[Mar77]{maruyama1977moduli}
Masaki Maruyama.
\newblock Moduli of stable sheaves. {I}.
\newblock {\em J. Math. Kyoto Univ.}, 17(1):91--126, 1977.

\bibitem[Mar78]{maruyama1978moduli}
Masaki Maruyama.
\newblock Moduli of stable sheaves. {II}.
\newblock {\em J. Math. Kyoto Univ.}, 18(3):557--614, 1978.

\bibitem[Mat57]{matsushima1957structure}
Yoz\^{o} Matsushima.
\newblock Sur la structure du groupe d'hom\'{e}omorphismes analytiques d'une
  certaine vari\'{e}t\'{e} k\"{a}hl\'{e}rienne.
\newblock {\em Nagoya Math. J.}, 11:145--150, 1957.

\bibitem[McC22]{mccarthy2022canonical}
John~Benjamin McCarthy.
\newblock {Canonical metrics on holomorphic fibre bundles}.
\newblock {\em arXiv preprint
  \href{https://arxiv.org/abs/2202.11630}{arXiv:2202.11630}}, 2022.

\bibitem[MS17]{mcduff2017introduction}
Dusa McDuff and Dietmar Salamon.
\newblock {\em Introduction to symplectic topology}.
\newblock Oxford Graduate Texts in Mathematics. Oxford University Press,
  Oxford, third edition, 2017.

\bibitem[MM97]{minasian1997k}
Ruben Minasian and Gregory Moore.
\newblock {$K$}-theory and {R}amond-{R}amond charge.
\newblock {\em J. High Energy Phys.}, (11):Paper 2, 7, 1997.

\bibitem[MU83]{mukaiumemura}
Shigeru Mukai and Hiroshi Umemura.
\newblock Minimal rational threefolds.
\newblock In {\em Algebraic geometry ({T}okyo/{K}yoto, 1982)}, volume 1016 of
  {\em Lecture Notes in Math.}, pages 490--518. Springer, Berlin, 1983.

\bibitem[Mum63]{mumford1963projective}
David Mumford.
\newblock Projective invariants of projective structures and applications.
\newblock In {\em Proc. {I}nternat. {C}ongr. {M}athematicians ({S}tockholm,
  1962)}, pages 526--530. Inst. Mittag-Leffler, Djursholm, 1963.

\bibitem[Mum77]{mumford1977stability}
David Mumford.
\newblock {\em Stability of projective varieties}, volume~23.
\newblock 1977.

\bibitem[MFK94]{mumford1994geometric}
David Mumford, John Fogarty, and Frances Kirwan.
\newblock {\em Geometric invariant theory}, volume~34 of {\em Ergebnisse der
  Mathematik und ihrer Grenzgebiete (2) [Results in Mathematics and Related
  Areas (2)]}.
\newblock Springer-Verlag, Berlin, third edition, 1994.

\bibitem[Nag64]{nagata1963invariants}
Masayoshi Nagata.
\newblock Invariants of a group in an affine ring.
\newblock {\em J. Math. Kyoto Univ.}, 3:369--377, 1963/64.

\bibitem[NS65]{narasimhan1965stable}
M.~S. Narasimhan and C.~S. Seshadri.
\newblock Stable and unitary vector bundles on a compact {R}iemann surface.
\newblock {\em Ann. of Math. (2)}, 82:540--567, 1965.

\bibitem[Oda13]{odaka2013generalization}
Yuji Odaka.
\newblock A generalization of the {R}oss-{T}homas slope theory.
\newblock {\em Osaka J. Math.}, 50(1):171--185, 2013.

\bibitem[OSS11]{okonekschneiderspindler80}
Christian Okonek, Michael Schneider, and Heinz Spindler.
\newblock {\em Vector bundles on complex projective spaces}.
\newblock Modern Birkh\"{a}user Classics. Birkh\"{a}user/Springer Basel AG,
  Basel, 2011.
\newblock Corrected reprint of the 1988 edition, With an appendix by S. I.
  Gelfand.

\bibitem[Ort22]{ortu2022optimal}
Annamaria Ortu.
\newblock {Optimal Symplectic Connections and Deformations of Holomorphic
  Submersions}.
\newblock {\em arXiv preprint
  \href{https://arxiv.org/abs/2201.12562}{arXiv:2201.12562}}, 2022.

\bibitem[PR09]{panov2009slope}
Dmitri Panov and Julius Ross.
\newblock Slope stability and exceptional divisors of high genus.
\newblock {\em Math. Ann.}, 343(1):79--101, 2009.

\bibitem[PT09]{paultian}
Sean~Timothy Paul and Gang Tian.
\newblock C{M} stability and the generalized {F}utaki invariant {II}.
\newblock {\em Ast\'{e}risque}, (328):339--354 (2010), 2009.

\bibitem[PRS08]{phong2008deligne}
Duong~H. Phong, Julius Ross, and Jacob Sturm.
\newblock Deligne pairings and the {K}nudsen-{M}umford expansion.
\newblock {\em J. Differential Geom.}, 78(3):475--496, 2008.

\bibitem[PCS19]{przyjalkowski2019fano}
Victor~V. Przhiyalkovskiǐ, Ivan~A. Cheltsov, and Constantin~A. Shramov.
\newblock Fano threefolds with infinite automorphism groups.
\newblock {\em Izv. Ross. Akad. Nauk Ser. Mat.}, 83(4):226--280, 2019.

\bibitem[Qui85]{quillen1985determinants}
Daniel Quillen.
\newblock {Determinants of Cauchy-Riemann operators over a Riemann surface}.
\newblock {\em Functional Analysis and Its Applications}, 19(1):31--34, 1985.

\bibitem[RR84]{ramanan1984some}
S.~Ramanan and A.~Ramanathan.
\newblock Some remarks on the instability flag.
\newblock {\em Tohoku Math. J. (2)}, 36(2):269--291, 1984.

\bibitem[Ram75]{ramanathan1975stable}
A.~Ramanathan.
\newblock Stable principal bundles on a compact {R}iemann surface.
\newblock {\em Math. Ann.}, 213:129--152, 1975.

\bibitem[Ram96a]{ramanathan1996moduli}
A.~Ramanathan.
\newblock Moduli for principal bundles over algebraic curves. {I}.
\newblock {\em Proc. Indian Acad. Sci. Math. Sci.}, 106(3):301--328, 1996.

\bibitem[Ram96b]{ramanathan1996moduli2}
A.~Ramanathan.
\newblock Moduli for principal bundles over algebraic curves. {II}.
\newblock {\em Proc. Indian Acad. Sci. Math. Sci.}, 106(4):421--449, 1996.

\bibitem[RS88]{ramanathan1988einstein}
A.~Ramanathan and S.~Subramanian.
\newblock Einstein-{H}ermitian connections on principal bundles and stability.
\newblock {\em J. Reine Angew. Math.}, 390:21--31, 1988.

\bibitem[RT06]{ross2006obstruction}
Julius Ross and Richard Thomas.
\newblock An obstruction to the existence of constant scalar curvature
  {K}\"{a}hler metrics.
\newblock {\em J. Differential Geom.}, 72(3):429--466, 2006.

\bibitem[RT07]{ross2007study}
Julius Ross and Richard Thomas.
\newblock A study of the {H}ilbert-{M}umford criterion for the stability of
  projective varieties.
\newblock {\em J. Algebraic Geom.}, 16(2):201--255, 2007.

\bibitem[Rud97]{rudakov1997stability}
Alexei Rudakov.
\newblock Stability for an abelian category.
\newblock {\em J. Algebra}, 197(1):231--245, 1997.

\bibitem[San14]{sanna2014rational}
Giangiacomo Sanna.
\newblock {\em {Rational curves and instantons on the Fano threefold $Y_5$}}.
\newblock PhD thesis, SISSA, 2014.

\bibitem[SS21]{schlitzer2021deformed}
Enrico Schlitzer and Jacopo Stoppa.
\newblock Deformed {H}ermitian {Y}ang-{M}ills connections, extended gauge group
  and scalar curvature.
\newblock {\em J. Lond. Math. Soc. (2)}, 104(2):770--802, 2021.

\bibitem[ST20]{sektnan2020hermitian}
Lars~Martin {Sektnan} and Carl {Tipler}.
\newblock {Hermitian Yang-Mills connections on pullback bundles}.
\newblock {\em arXiv e-prints
  \href{https://arxiv.org/abs/2006.06453}{arXiv:2006.06453}}, June 2020.

\bibitem[Son20]{song2020nakai}
Jian Song.
\newblock {Nakai-Moishezon criterions for complex Hessian equations}.
\newblock {\em arXiv preprint
  \href{https://arxiv.org/abs/2012.07956}{arXiv:2012.07956}}, 2020.

\bibitem[SW08]{song2008convergence}
Jian Song and Ben Weinkove.
\newblock On the convergence and singularities of the {$J$}-flow with
  applications to the {M}abuchi energy.
\newblock {\em Comm. Pure Appl. Math.}, 61(2):210--229, 2008.

\bibitem[SSY16]{spotti2016existence}
Cristiano Spotti, Song Sun, and Chengjian Yao.
\newblock Existence and deformations of {K}\"{a}hler-{E}instein metrics on
  smoothable {$\Bbb{Q}$}-{F}ano varieties.
\newblock {\em Duke Math. J.}, 165(16):3043--3083, 2016.

\bibitem[Sz{\'e}10]{szekelyhidi2010kahler}
G\'{a}bor Sz{\'e}kelyhidi.
\newblock The {K}\"{a}hler-{R}icci flow and {$K$}-polystability.
\newblock {\em Amer. J. Math.}, 132(4):1077--1090, 2010.

\bibitem[Sz{\'e}12]{szekelyhidi2012blowing}
G{\'a}bor Sz{\'e}kelyhidi.
\newblock On blowing up extremal {K}\"{a}hler manifolds.
\newblock {\em Duke Math. J.}, 161(8):1411--1453, 2012.

\bibitem[Sz{\'e}14]{szekelyhidi2014introduction}
G\'{a}bor Sz{\'e}kelyhidi.
\newblock {\em An introduction to extremal {K}\"{a}hler metrics}, volume 152 of
  {\em Graduate Studies in Mathematics}.
\newblock American Mathematical Society, Providence, RI, 2014.

\bibitem[Sz{\'e}15]{szekelyhidi2015filtrations}
G\'{a}bor Sz{\'e}kelyhidi.
\newblock Filtrations and test-configurations.
\newblock {\em Math. Ann.}, 362(1-2):451--484, 2015.
\newblock With an appendix by Sebastien Boucksom.

\bibitem[Sz{\'e}18]{szekelyhidi2018fully}
G\'{a}bor Sz{\'e}kelyhidi.
\newblock Fully non-linear elliptic equations on compact {H}ermitian manifolds.
\newblock {\em J. Differential Geom.}, 109(2):337--378, 2018.

\bibitem[Tak72]{takemoto1972stable}
Fumio Takemoto.
\newblock Stable vector bundles on algebraic surfaces.
\newblock {\em Nagoya Math. J.}, 47:29--48, 1972.

\bibitem[Tho01]{thomas2001moment}
R.~P. Thomas.
\newblock Moment maps, monodromy and mirror manifolds.
\newblock In {\em Symplectic geometry and mirror symmetry ({S}eoul, 2000)},
  pages 467--498. World Sci. Publ., River Edge, NJ, 2001.

\bibitem[Tho06]{thomas2005notes}
R.~P. Thomas.
\newblock Notes on {GIT} and symplectic reduction for bundles and varieties.
\newblock 10:221--273, 2006.

\bibitem[TY02]{thomas2001special}
R.~P. Thomas and S.-T. Yau.
\newblock Special {L}agrangians, stable bundles and mean curvature flow.
\newblock {\em Comm. Anal. Geom.}, 10(5):1075--1113, 2002.

\bibitem[Tia87]{tian1987kahler}
Gang Tian.
\newblock On {K}\"{a}hler-{E}instein metrics on certain {K}\"{a}hler manifolds
  with {$C_1(M)>0$}.
\newblock {\em Invent. Math.}, 89(2):225--246, 1987.

\bibitem[Tia97]{tian1997kahler}
Gang Tian.
\newblock K\"{a}hler-{E}instein metrics with positive scalar curvature.
\newblock {\em Invent. Math.}, 130(1):1--37, 1997.

\bibitem[Tia15]{tian2015k}
Gang Tian.
\newblock K-stability and {K}\"{a}hler-{E}instein metrics.
\newblock {\em Comm. Pure Appl. Math.}, 68(7):1085--1156, 2015.

\bibitem[Tod10]{todaweakstability}
Yukinobu Toda.
\newblock Curve counting theories via stable objects {I}. {DT}/{PT}
  correspondence.
\newblock {\em J. Amer. Math. Soc.}, 23(4):1119--1157, 2010.

\bibitem[Tse97]{tseytlin1997non}
A.~A. Tseytlin.
\newblock On non-abelian generalisation of the {B}orn-{I}nfeld action in string
  theory.
\newblock {\em Nuclear Phys. B}, 501(1):41--52, 1997.

\bibitem[UY86]{uhlenbeck1986existence}
Karen Uhlenbeck and Shing-Tung Yau.
\newblock On the existence of {H}ermitian-{Y}ang-{M}ills connections in stable
  vector bundles.
\newblock {\em Comm. Pure Appl. Math.}, 39(S, suppl.):S257--S293, 1986.
\newblock Frontiers of the mathematical sciences: 1985 (New York, 1985).

\bibitem[Wan12]{wang2012height}
Xiaowei Wang.
\newblock Height and {GIT} weight.
\newblock {\em Math. Res. Lett.}, 19(4):909--926, 2012.

\bibitem[WN12]{nystrom2012test}
David Witt-Nystr\"{o}m.
\newblock Test configurations and {O}kounkov bodies.
\newblock {\em Compos. Math.}, 148(6):1736--1756, 2012.

\bibitem[Yau78]{yau}
Shing-Tung Yau.
\newblock On the {R}icci curvature of a compact {K}\"{a}hler manifold and the
  complex {M}onge-{A}mp\`ere equation. {I}.
\newblock {\em Comm. Pure Appl. Math.}, 31(3):339--411, 1978.

\bibitem[Yau93]{yau1993open}
Shing-Tung Yau.
\newblock Open problems in geometry.
\newblock In {\em Differential geometry: partial differential equations on
  manifolds ({L}os {A}ngeles, {CA}, 1990)}, volume~54 of {\em Proc. Sympos.
  Pure Math.}, pages 1--28. Amer. Math. Soc., Providence, RI, 1993.

\end{thebibliography}

\end{document}